\definecolor{darkred}{RGB}{139,0,0}
\definecolor{darkblue}{RGB}{0,0,139}
\definecolor{darkgreen}{RGB}{0,100,0}
\newtheorem{MainThm}{Theorem}
\newtheorem{thm}{Theorem}[section]
\newtheorem{cor}[thm]{Corollary}
\newtheorem{lem}[thm]{Lemma}
\newtheorem{prop}[thm]{Proposition}
\newtheorem*{claim}{Claim}
\newtheorem{addendum}[thm]{Addendum}
\theoremstyle{definition}
\newtheorem{defn}[thm]{Definition}
\theoremstyle{remark}
\newtheorem{rem}[thm]{Remark}
\newtheorem{example}[thm]{Example}
\newtheorem{warning}[thm]{Warning}
\numberwithin{equation}{section}
\newcommand{\bF}{\mathbb{F}}
\newcommand{\bN}{\mathbb{N}}
\newcommand{\bQ}{\mathbb{Q}}
\newcommand{\bR}{\mathbb{R}}
\newcommand{\bZ}{\mathbb{Z}}
\newcommand{\gA}{\bold{A}}
\newcommand{\gB}{\bold{B}}
\newcommand{\gC}{\bold{C}}
\newcommand{\gD}{\bold{D}}
\newcommand{\gE}{\bold{E}}
\newcommand{\gF}{\bold{F}}
\newcommand{\gI}{\bold{I}}
\newcommand{\gJ}{\bold{J}}
\newcommand{\gK}{\bold{K}}
\newcommand{\gL}{\bold{L}}
\newcommand{\gM}{\bold{M}}
\newcommand{\gN}{\bold{N}}
\newcommand{\gP}{\bold{P}}
\newcommand{\gR}{\bold{R}}
\newcommand{\gS}{\bold{S}}
\newcommand{\gT}{\bold{T}}
\newcommand{\gX}{\bold{X}}
\newcommand{\cO}{\mathcal{O}}
\newcommand\lra{\longrightarrow}
\newcommand\colim{\operatorname*{colim}}
\newcommand\rk{\mathrm{rk}}
\newcommand{\bunit}{\mathbbm{1}}
\newcommand{\bk}{\mathbbm{k}}
\newcommand\Ext{\mathrm{Ext}}
\newcommand\Tor{\mathrm{Tor}}
\newcommand\Cotor{\mathrm{Cotor}}
\newcommand\St{\mathrm{St}}
\newcommand\GL{\mathrm{GL}}
\newcommand\fil{\mathrm{fil}}
\newcommand\gr{\mathrm{gr}}
\newcommand\End{\mathrm{End}}
\newcommand\Hom{\mathrm{Hom}}
\newcommand\Map{\mathrm{Map}}
\newcommand\map{\mathrm{map}}
\newcommand\Ker{\mathrm{Ker}}
\renewcommand\Bar{\mathrm{Bar}}
\newcommand\Cobar{\mathrm{Cobar}}
\newcommand\Prim{\mathrm{Prim}}
\title{A chromatic approach to homological stability}
\author{Oscar Randal-Williams}
\email{o.randal-williams@dpmms.cam.ac.uk}
\address{Centre for Mathematical Sciences\\
Wilberforce Road\\
Cambridge CB3 0WB\\
UK}
\begin{document}

\begin{abstract}
We propose a way to organise the subject of ``higher-order homological stability'', in the context of a graded $E_2$-algebra $\mathbf{R}$, along the same lines that the chromatic perspective organises stable homotopy theory. 

From this point of view proving a (higher-order) homological stability theorem corresponds to producing Smith--Toda complexes in the category of $\mathbf{R}$-modules: using this perspective we prove that whenever $\mathbf{R}$ is defined over a field of positive characteristic and satisfies some standard properties, there is a sequence of higher-order homological stability theorems whose slopes tend to 1.

We propose that in a higher-order stable range the ``stable homology'' should be interpreted as certain Bousfield localisations in the category of $\mathbf{R}$-modules, leading to a chromatic tower and monochromatic layers. Given the existence of suitable Smith--Toda complexes we establish several properties of these localisations, in particular explaining how higher-order stabilisation maps yield periodic families in the monochromatic layers.

We explain how to associate to such an $\mathbf{R}$ a Hopf algebra which completely governs the kinds of higher-order stability maps that it enjoys, in the sense that the cohomology of this Hopf algebra has precisely the same stability patterns as $\mathbf{R}$. When $\mathbf{R}$ comes from a sequence of groups, this Hopf algebra has a concrete description as the coinvariants of the $E_1$-Steinberg modules.
\end{abstract}

\maketitle

\setcounter{tocdepth}{1} 
\tableofcontents

\newpage

\section{Introduction}

\subsection{Premise}\label{sec:Premise}
Recent thinking on the phenomenon of homological stability for a sequence
$$X_0 \lra X_1 \lra X_2 \lra \cdots$$
of spaces (or groups, or ....) has been to arrange their chains into an $E_2$-algebra
$$\gR \simeq \bigoplus_{n \geq 0} C_*(X_n;\bk)$$
in the category of chain complexes of $\bk$-modules with an additional $\bN$-grading, and to recognise the original stabilisation maps as multiplication by an element $\sigma \in C_0(X_1;\bk)$ with respect to this $E_2$-structure. From this point of view formulating the question of homological stability uses only a small part of the $E_2$-algebra structure on $\gR$, but in answering this question one can make use of the full $E_2$-algebra structure. This point of view, initiated in \cite{KupersMiller} and developed in \cite{e2cellsI}, has led to improvements in homological stability ranges for many examples \cite{e2cellsII, e2cellsIII, e2cellsIV, RWDedekind, KMPImproved, JansenMiller, Jansen2, BMSPartialBases}.

More interestingly, it has suggested that homological stability is perhaps only the beginning of the story. In \cite{e2cellsII} Galatius, Kupers, and the author applied this point of view to the mapping class groups $\Gamma_{g,1}$ of genus $g$ surfaces with one boundary, and showed that not only does one have $H_d(\Gamma_{g,1}, \Gamma_{g-1,1};\bZ)=0$ for $d < \tfrac{2}{3}g$, corresponding to homological stability for these groups, but there are \emph{secondary} stabilisation maps
$$\varphi_* : H_{d-2}(\Gamma_{g-3,1}, \Gamma_{g-4,1};\bZ) \lra H_d(\Gamma_{g,1}, \Gamma_{g-1,1};\bZ)$$
which are epimorphisms for $d < \tfrac{3}{4}g$ and isomorphisms for $d < \tfrac{3}{4}g - 1$. This kind of secondary homological stability has since been discovered in further examples and contexts \cite{MillerWilson, Himes}. As these maps change the genus $g$ by 3, rather than by 1, it would perhaps be better to call the phenomenon {periodicity} rather than stability. 

In this paper we will make the case that such higher-order homological stability phenomena should be organised along the same lines that the chromatic perspective organises stable homotopy theory. This will provide answers to questions such as:
\begin{enumerate}[(i)]
\item When should ``higher-order homological stability'' be expected to hold for $\gR$?

\item What should be meant by e.g.\ ``homology of $\gR$ in the secondary stable range'', and what properties does it have?

\item How does one tell what kind of higher-order stabilities a given $\gR$ should have?

\end{enumerate}

\subsection{Context and notation}

In Section \ref{sec:Foundations} we will give a detailed account of the technical framework in which we shall work, but for now we make the following brief remarks. Fix a commutative ring $\bk$ and write $\mathsf{D}(\bk)$ for the derived $\infty$-category of $\bk$ (i.e.\ the category of $H\bk$-module spectra, or the localisation of the category of chain complexes $\mathsf{Ch}(\bk)$ at the quasiisomorphisms). This is a stable $\infty$-category, and has a symmetric monoidality given by (derived) tensor product $- \otimes_\bk -$. The $\bZ$-graded objects in this category $\mathsf{D}(\bk)^\bZ$ inherits a symmetric monoidality by Day convolution, and our basic datum will be
$$\gR \in \mathsf{Alg}_{E_2}(\mathsf{D}(\bk)^\bZ),$$
an $E_2$-algebra in $\mathsf{D}(\bk)^\bZ$. Most of our discussion will take place in the $\infty$-category $\gR\text{-}\mathsf{mod}$ of left $\gR$-module objects in $\mathsf{D}(\bk)^\bZ$, which has an $E_1$-monoidal structure $- \otimes_\gR -$ because $\gR$ is an $E_2$-algebra.

Objects $X \in \mathsf{D}(\bk)$ have homology(=homotopy) groups, which we write as $\pi_d(X)$. They are represented by the object $S^d \in \mathsf{D}(\bk)$, the $d$-th suspension of the monoidal unit $\bk$. Objects $X \in \mathsf{D}(\bk)^\bZ$ therefore have bigraded homotopy groups $\pi_{n,d}(X) = \pi_d(X(n))$.  They are represented by the object $S^{n,d} \in \mathsf{D}(\bk)^\bZ$ which evaluates to $S^d$ at the integer $n$ and to 0 otherwise. We will say that an object $X \in \mathsf{D}(\bk)^\bZ$ \emph{has a slope $\lambda$ vanishing line} if there exists a $\kappa$ such that $\pi_{n,d}(X)=0$ for $d < \lambda n + \kappa$.

\subsection{Kinds of stability}\label{sec:KindsOfStab}

We wish to contemplate the phenomenon of homological stability in sufficient breadth that it includes all the following kinds of examples.

\subsubsection{Ordinary homological stability}\label{sec:ordinarystab}

The most common situation in the study of homological stability is to have an element $\sigma \in \pi_{1,0}(\gR)$, form the mapping cone $\gR/\sigma$ of the left $\gR$-module endomorphism
$$S^{1,0} \otimes \gR \cong \gR \otimes S^{1,0} \overset{\gR \otimes \sigma}\lra \gR \otimes \gR \overset{- \cdot -}\lra \gR,$$
and be able to show that there is a vanishing line of the form $\pi_{n,d}(\gR/\sigma)=0$ for $d < \lambda n + \kappa$. When $\gR$ arises from taking the $\bk$-chains of an $E_2$-algebra $\coprod_{n \geq 0} X_n$ in $\bN$-graded spaces, then
$$\pi_{n,d}(\gR/\sigma) = H_d(X_n, X_{n-1} ; \bk)$$
so a vanishing range for these groups is equivalent to a range in which the maps $H_d(X_{n-1};\bk) \to H_d(X_n;\bk)$ are isomorphisms or epimorphisms.

This kind of homological stability theorem, in most cases with $\pi_{n,d}(\gR/\sigma)=0$ for $d < \tfrac{1}{2}n$, is known for an enormous number of examples, far too many to do justice to here. In the special case where $\gR = \bigoplus_{n \geq 0} C_*(G_n;\bk)$ comes from a family of discrete groups $G_n$, examples include: symmetric groups; braid groups; Coxeter groups; general linear groups or unitary groups over quite general rings; mapping class groups of orientable surfaces, or of non-orientable surfaces, or of certain 3-manifolds; automorphism groups of free groups, or of free nilpotent groups, or of RAAGs. There are also many examples not associated to discrete groups, including: moduli spaces of high-dimensional manifolds; Artin monoids; diagram algebras such as the Iwahori--Hecke or Temperley--Lieb algebra; reductive Borel--Serre spaces.

\subsubsection{Homological multi-stability}\label{sec:multistab}

The following simple example is quite enlightening. Consider the $E_2$-algebra $\coprod_{n \geq 0} X_n$ where $X_n = \mathrm{Conf}_n^\text{rb}(I^2)$ is the space of configurations of $n$ points in the interior of $I^2 = [0,1]^2$, each of which is coloured either red or blue. This is the free $\bN$-graded $E_2$-algebra on two generators both of grading 1. Taking $\bk$-chains gives $\mathbf{RB} \simeq \gE_2(S^{1,0} r \oplus S^{1,0} b)$. This has $\pi_{*,0}(\mathbf{RB}) = \bk[r,b]$ a polynomial ring on two generators, so $\mathbf{RB}$ cannot have ordinary homological stability in the sense of the previous section, as the ranks of these groups grow.

However $\mathbf{RB}$ does enjoy the following property: the square
\begin{equation}\label{eq:RedBlueStab}
\begin{tikzcd}%[column sep = 0.7em]
\mathbf{RB} \otimes S^{2,0} \rar{- \cdot r} \dar{- \cdot b}& \mathbf{RB} \otimes S^{1,0} \dar{- \cdot b}\\
\mathbf{RB} \otimes S^{1,0} \rar{- \cdot r} & \mathbf{RB},
\end{tikzcd}
\end{equation}
which is commutative up to a preferred homotopy, is (co)cartesian in a range of degrees. This is immediate from the known homology of the free $E_2$-algebra $\gE_2(S^{1,0} r \oplus S^{1,0} b)$, given by the work of F.\ Cohen \cite{CLM}. In fact, the total homotopy cofibre $\mathbf{RB}/(r,b)$ of this square has $\pi_{n,d}(\mathbf{RB}/(r,b))=0$ for $d < \tfrac{1}{2}n$. 

Viewing this square as being \emph{cocartesian} in a range of degrees, it gives a way to determine $\mathbf{RB}(n)$, in a range of degrees, from $\mathbf{RB}(n-1)$ and $\mathbf{RB}(n-2)$, namely as the pushout of the diagram obtained by removing the bottom-right corner. Similarly, viewing it as being \emph{cartesian} in a range of degrees it gives a way to determine $\mathbf{RB}(n)$, in a range of degrees, from $\mathbf{RB}(n+1)$ and $\mathbf{RB}(n+2)$, namely as the corresponding pullback. Either interpretation lets us reasonably refer to the high (co)cartesianness of this square as a kind of homological stability theorem for $\mathbf{RB}$. (In ordinary homology stability we try to approximate $\gR(n)$ using $\{\gR(m)\}_{m<n}$ or $\{\gR(m)\}_{m>n}$, and the recipe for doing so is $\gR(n-1)$ or $\gR(n+1)$ respectively: in the situation at hand the recipe is just slightly more complicated.) The slight downside is that at the level of homotopy groups \eqref{eq:RedBlueStab} only gives a Mayer--Vietoris-style sequence
$$ \pi_{n-2,d}(\mathbf{RB}) \overset{(r_*, b_*)}\lra \pi_{n-1,d}(\mathbf{RB}) \oplus \pi_{n-1,d}(\mathbf{RB}) \overset{(\begin{smallmatrix}b_*\\-r_*\end{smallmatrix})}\lra \pi_{n,d}(\mathbf{RB}) \overset{\partial}\lra \pi_{n-2,d-1}(\mathbf{RB})$$
valid in a stable range. This is not exactly a recipe for determining $\pi_{n,*}(\mathbf{RB})$ from $\{\pi_{m,*}(\mathbf{RB})\}_{m<n}$, but will have to do. (In fact in this particular example, and when $\bk$ a field, the connecting maps $\partial$ are zero. But this cannot be expected in general, see Example \ref{ex:MVConnectingMap}.)

\subsubsection{Secondary homological stability}\label{sec:SecondaryStab}

We describe again the ``secondary homological stability'' for mapping class groups of surfaces, in the terms we have now introduced. There is a $\mathbf{MCG} \in \mathsf{Alg}_{E_2}(\mathsf{D}(\bZ)^\bZ)$ such that $\mathbf{MCG}(g) \simeq C_*(\Gamma_{g,1};\bZ)$, the $\bZ$-chains of the mapping class group of a genus $g$ surface with one boundary. It has ordinary homological stability using the generator $\sigma$ of $H_0(\Gamma_{1,1};\bZ) = \pi_{1,0}(\mathbf{MCG})$, in the sense that $\pi_{g,d}(\mathbf{MCG}/\sigma)=0$ for $d < \tfrac{2}{3}g$. Furthermore,  Galatius, Kupers, and the author \cite{e2cellsII} have constructed a left $\mathbf{MCG}$-module map
$$\varphi : \mathbf{MCG}/\sigma \otimes S^{3,2} \lra \mathbf{MCG}/\sigma$$
and shown that its cofibre $\mathbf{MCG}/(\sigma, \varphi)$ satisfies $\pi_{g,d}(\mathbf{MCG}/(\sigma, \varphi))=0$ for $d < \tfrac{3}{4}g$. The map $\varphi$ does \emph{not} come from an endomorphism of $\mathbf{MCG}$ reduced modulo $\sigma$: it is essential that $\sigma$ be killed before $\varphi$ can be defined.

\subsection{A general higher-order homological stability theorem}

The examples of the previous section lead us to set our goal as follows: a higher order homological stability theorem for $\gR$ means producing an $\gR$-module $\gR/(\alpha_1, \ldots, \alpha_r)$ as the iterated cofibre of a sequence of endomorphisms, and proving that it has a vanishing line for its homotopy groups.

We will usually assume that $\bk$ is a field and $\gR$ is an $E_2$-algebra in $\mathsf{D}(\bk)^\bZ$ satisfying:

\begin{enumerate}
\item[(C)] It is \emph{connected} in the sense that
$$\pi_{n,d}(\gR)=0 \text{ for $d<0$ or $n<0$, and } 1 : \bk \to \pi_{0,0}(\gR) \text{ is an isomorphism.}$$

\item[(SCE)] By (C) there is a unique augmentation $\epsilon : \gR \to \bk$, making $\bk$ into a $\gR$-module and we will further assume the \emph{standard connectivity estimate}
$$\pi_{n,d}(\bk \otimes_\gR \bk)=0 \text{ for }d < n.$$

\item[(F)] It is \emph{finite type} in the sense that
$$\pi_{n,n}(\bk \otimes_\gR \bk) \text{ is finite-dimensional for each $n$}.$$

\end{enumerate}
In practice we often verify (F) by instead verifying:
\begin{enumerate}
\item[(F$^\prime$)] $\pi_{n,d}(\gR)$ is finite-dimensional for each $n$ and $d$.
\end{enumerate}
That (F$^\prime$) implies (F) is by running the bar spectral sequence. If (F) is only known to hold for $n < N$ then all our results will go through in a range of degrees, but for simplicity we will not keep track of this.

For examples of $E_2$-algebras $\gR \simeq \bigoplus_{n \geq 0} C_*(G_n;\bk)$ coming from a sequence of groups, Axiom (C) always holds and Axiom (F$^\prime$) is simply that each $H_d(G_n;\bk)$ is finitely-generated. Axiom (SCE) has an interpretation in terms of the high-connectivity of the ``splitting complex'' \cite[Section 17.2]{e2cellsI} associated to the groups $G_n$, and is known to hold in many examples. We will discuss some examples in Section \ref{sec:examples}, but the main focus in this paper is on the consequences of satisfying these axioms.

\begin{MainThm}\label{MainThm:A}
Let $\bk$ be a field of positive characteristic and $\gR \in \mathsf{Alg}_{E_2}(\mathsf{D}(\bk)^\bZ)$ satisfy (C), (SCE), and (F). Let $\lambda < 1$ be given. Then there is a sequence of $\gR$-module endomorphisms
\begin{equation}\label{eq:STcxDefn}
\alpha_i : \gR/(\alpha_1, \ldots, \alpha_{i-1}) \otimes S^{n_i, d_i} \lra \gR/(\alpha_1, \ldots, \alpha_{i-1}), \text{ for } 1 \leq i \leq r,
\end{equation}
with cofibres $\gR/(\alpha_1, \ldots, \alpha_{i})$, such that:
\begin{enumerate}[(i)]
\item\label{it:MainThm:A:1} The sequence of slopes $\tfrac{d_i}{n_i}$ is non-decreasing, they are all strictly less than $\lambda$, and they are all rational numbers of the form $\tfrac{k}{k+1}$.

\item\label{it:MainThm:A:2} $\gR/(\alpha_1, \ldots, \alpha_r)$ has a vanishing line of slope $\lambda$, and in fact has a vanishing line of slope $\min\{\tfrac{k}{k+1} \, | \, k \in \bN \text{ with } \lambda \leq \tfrac{k}{k+1}\}$.

\item\label{it:MainThm:A:NonNilp} Each endomorphism $\alpha_i$ is non-nilpotent.

\item\label{it:MainThm:A:Simultaneous} Furthermore, assuming that $\gR$ is $E_3$, the $\alpha$'s $\alpha_s, \ldots, \alpha_t$ having a fixed slope $\lambda' < \lambda$ may all be realised as endomorphisms of $\gR/(\alpha_1, \ldots, \alpha_{s-1})$, and may be arranged into a coherently homotopy commutative cube. 
\end{enumerate}
\end{MainThm}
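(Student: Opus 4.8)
The plan is to make precise the slogan of the introduction — that a higher-order homological stability theorem for $\gR$ is exactly a Smith--Toda complex in $\gR$-modules — by transporting the whole problem to the ``Koszul dual'' Hopf algebra of $\gR$, performing the construction there (which is where the positive characteristic gets used), and then lifting the answer back to $\gR$-modules. Set $\gB := \pi_{*,*}(\bk \otimes_\gR \bk)$. Since $\gR$ is $E_2$, the bar construction $\bk \otimes_\gR \bk$ is simultaneously an $E_1$-algebra and an $E_1$-coalgebra, so by (C) the bigraded ring $\gB$ is a connected graded Hopf algebra over $\bk$; by (SCE) its homotopy is supported in $d \geq n$, by (F) the diagonal piece $\bigoplus_n \gB_{n,n}$ is a connected, \emph{finite-type} graded Hopf algebra, and when $\gR$ is $E_3$ the extra $E_1$ makes $\gB$ graded-commutative. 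I would use (as part of the technical framework, and as the precise form of the introduction's assertion that this Hopf algebra has the same stability patterns as $\gR$) a Koszul-duality dictionary: in a range of internal degrees controlled by the vanishing lines at play, $\gR$-module maps $\gR/(\alpha_1,\ldots,\alpha_{i-1}) \otimes S^{n,d} \to \gR/(\alpha_1,\ldots,\alpha_{i-1})$ correspond to classes in an $\Ext$-group $\Ext^{*,*}_\gB$ of the associated perfect $\gB$-comodule, in such a way that a self-map of slope $\tfrac{d}{n}$ corresponds to an $\Ext$-class of bidegree $(s,m)$ with $\tfrac{d}{n} = 1-\tfrac{s}{m}$, and a vanishing line of slope $\mu$ for a cofibre corresponds to the $\Ext$ of its perfect $\gB$-comodule being concentrated in bidegrees with $s \geq (1-\mu)m - O(1)$.

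Given the dictionary, the theorem reduces to building over $\gB$ an iterated cofibre $N_r$ of self-maps $\beta_i : N_{i-1} \otimes S^{*,*} \to N_{i-1}$ of perfect $\gB$-comodules ($N_0 = \bk$), with non-decreasing slopes strictly below $\lambda$ of the form $\tfrac{k}{k+1}$, with $\Ext_\gB(N_r,N_r)$ concentrated above the line of slope $\lambda$ (indeed of the sharper slope $\min\{\tfrac{k}{k+1} \, | \, \lambda \leq \tfrac{k}{k+1}\}$), and each $\beta_i$ non-nilpotent. I would build this by induction on ``height'': given $N_{i-1}$ whose $\Ext$ is not yet concentrated above the line of slope $\lambda$, I must produce a non-nilpotent self-map $\beta_i$ of some slope $\tfrac{k}{k+1} < \lambda$ whose cofibre is more concentrated. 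This is where positive characteristic is indispensable. The associated graded of $\gB$ for the augmentation filtration is primitively generated, hence governed by a restricted graded Lie algebra $\mathfrak{g}$ over $\bk$, and the $p$-th power operation (restriction) $x \mapsto x^{[p]}$ produces \emph{polynomial} generators in $\Ext_\gB$ — which, in the commutative case where $\gR$ is $E_3$, are simply the classes dual to the factors $\bk[x]/(x^{p^j})$ of a Borel decomposition, sitting in bidegrees $(s,m)$ such as $(2, p^j e)$ for $|x| = e$, whose slope $1-\tfrac{s}{m}$ is automatically of the form $\tfrac{k}{k+1}$. The structure theory guarantees that as long as $\Ext_\gB(N_{i-1},N_{i-1})$ is not already concentrated above slope $\lambda$, such a non-nilpotent polynomial class of some slope $\tfrac{k}{k+1} < \lambda$ exists; a May-spectral-sequence argument shows it is a permanent cycle (so genuinely non-nilpotent in $\Ext_\gB$), and a change-of-rings argument shows it survives as a non-nilpotent class in $\Ext_\gB(N_{i-1},N_{i-1})$ once the earlier generators have been killed. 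Multiplication by the one of least admissible slope is $\beta_i$. Since $\gB$ is finite type there are only finitely many such classes of each slope, and only finitely many slopes below $\lambda$, so after killing them all the concentration line has passed slope $\lambda$ and we have our $N_r$. (In characteristic $0$ there is no restriction map, no such cofinal supply of polynomial classes appears, and the conclusion genuinely fails — already for a polynomial $\gR$ on generators of bidegree $(1,0)$ over $\bQ$.)

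Finally I would lift $N_\bullet$ to $\gR$-modules inductively: having produced $\gR/(\alpha_1,\ldots,\alpha_{i-1})$ corresponding to $N_{i-1}$, the class $\beta_i$ determines through the dictionary a self-map $\alpha_i$ as in \eqref{eq:STcxDefn}; a short obstruction-theoretic argument promotes this bare $\Ext$-class to an honest $\gR$-module map and identifies the cofibre $\gR/(\alpha_1,\ldots,\alpha_i)$ with the Koszul dual of $N_i$, the obstructions lying in groups that vanish thanks to the vanishing line of $\gR/(\alpha_1,\ldots,\alpha_{i-1})$ just computed over $\gB$. This gives parts \eqref{it:MainThm:A:1} and \eqref{it:MainThm:A:2}, and part \eqref{it:MainThm:A:NonNilp} is the non-nilpotence transported back. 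For part \eqref{it:MainThm:A:Simultaneous}, when $\gR$ is $E_3$ the Hopf algebra $\gB$ is graded-commutative, so the polynomial generators of a fixed slope $\lambda'$ span a polynomial subalgebra $\bk[\beta_s,\ldots,\beta_t] \subseteq \Ext_\gB(N_{s-1},N_{s-1})$; the commuting squares realising $\alpha_s,\ldots,\alpha_t$ as self-maps of $\gR/(\alpha_1,\ldots,\alpha_{s-1})$, and the higher coherences assembling them into a cube, are read off from the multiplication in this polynomial algebra and lifted by the same obstruction theory.

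The main obstacle is the positive-characteristic input of the second paragraph: locating, inside the cohomology of an arbitrary connected finite-type Hopf algebra over $\bk$ — not assumed cocommutative unless $\gR$ is $E_3$ — a non-nilpotent polynomial class of slope $\tfrac{k}{k+1}$ for every sufficiently large $k$, and controlling its image in the cohomology of the successive cofibres $N_{i-1}$. The non-cocommutative case (running the May spectral sequence of a Hopf algebra that need not be primitively generated) and the bookkeeping that forces the slopes to come out exactly of the form $\tfrac{k}{k+1}$ are the delicate points; the rest is either formal (the reductions of the first two paragraphs) or routine obstruction theory (the lifting step).
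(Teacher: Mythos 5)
Your proposal correctly identifies the overall shape of the argument — pass to a deformation, locate a supply of non-nilpotent classes of slope $\tfrac{k}{k+1}$ on the associated graded using the characteristic-$p$ restriction/$p$-th-power mechanism, lift $p$-th powers back, and use commutativity of the Hopf algebra for the coherent cube in the $E_3$ case. But there is a genuine gap at the point you yourself flag as "the main obstacle", and it is not a repairable delicacy: your entire construction of the $\beta_i$ runs through the structure theory of the associated graded Hopf algebra (primitive generation, the Borel decomposition into monogenic factors, the resulting supply of non-nilpotent polynomial classes in $\Cotor$). That structure theory is only available when the Hopf algebra is commutative or cocommutative, which for $\Delta_\gR$ is guaranteed only when $\gR$ is $E_3$. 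The paper explicitly records (Remark \ref{rem:RevistedMethodForE2Alg}) that the analogues of Theorems \ref{thm:FinIndecAndPrim} and \ref{thm:FinDimHopfAlg} for non-commutative connected Hopf algebras are open, and this is precisely why the Hopf-algebra route is reserved for part (\ref{it:MainThm:A:Simultaneous}). For parts (\ref{it:MainThm:A:1})--(\ref{it:MainThm:A:NonNilp}) in the stated $E_2$ generality the paper does something different: it filters $\gR$ itself by the canonical multiplicative ($E_2$-augmentation-ideal-power) filtration, so that $C\tau \otimes \fil_*\gR \simeq \gE_2((-1)_*Q^{E_2^\text{nu}}\gI)$ is a \emph{free} $E_2$-algebra whose homotopy is completely known by F.~Cohen's calculus. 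The candidate classes $x_i$ and the fact that their slopes are exactly of the form $\tfrac{k}{k+1}$ are then read off from the degree formulas for the bracket, restriction and $\zeta$ (Lemma \ref{lem:XOrdering}), with no Hopf-algebra structure theory at all; non-nilpotence of the surviving $\alpha_i$ is arranged afterwards by discarding the redundant $x_i$ (Section \ref{sec:EfficientSTComplexes}). Your route, as written, proves the theorem only under the additional $E_3$ hypothesis.

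A second, smaller but still substantive, issue is the lifting step. The obstruction groups in the descent/Koszul-duality spectral sequence do \emph{not} vanish, so "a short obstruction-theoretic argument" promoting an $\Ext$-class to an $\gR$-module map is not available; nor is the two-way "dictionary" identifying self-maps with $\Ext$-classes in a range. What is true, and what the paper proves and uses (Theorem \ref{thm:DetNilp}, Theorem \ref{thm:IdealsSameForSmallAlgebra}), is strictly weaker: a one-way nilpotence-detection statement for slope-$\lambda$ endomorphisms ($\lambda < \theta$), plus the fact that a $p^M$-th power of a \emph{central} class lifts because the Leibniz rule lets one push it to a late page of the spectral sequence where the vanishing-line estimates annihilate all targets of differentials. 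This is also why the periods $p^{M_i}$ appear and cannot be avoided. Your construction over $\gB$ does invoke $p$-th powers, but the lift back to $\gR$-modules needs the same mechanism, not vanishing obstructions.
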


\begin{rem}\mbox{}
\begin{enumerate}[(i)]
\item With only the stated axioms one cannot expect a similar result for slopes $\lambda\geq 1$, see Section \ref{sec:NotBeyondSlope1}.

\item It was a surprise to the author that the analogue of this theorem is false for $\bk=\bQ$, at least if we insist on property (\ref{it:MainThm:A:NonNilp}): see Section \ref{sec:CharZeroCounterexample} for a counterexample, which we learnt from Robert Burklund.

\item We will prove more precise, and more technical, versions of this theorem in Sections \ref{sec:HigherStabMaps} and \ref{sec:HigherStabMapsRevisited}. Those sections develop a collection of tools to construct (non-nilpotent) endomorphisms of $\gR$-modules, of more general scope than Theorem \ref{MainThm:A}. 

\item The additional assumption in part (\ref{it:MainThm:A:Simultaneous}) that $\gR$ be an $E_3$-algebra could be relaxed if one knew that certain structural results about connected graded commutative Hopf algebras over a field of positive characteristic also held in the non-commutative setting. We discuss this in Remark \ref{rem:RevistedMethodForE2Alg}.

\end{enumerate}
\end{rem}

By analogy with chromatic homotopy theory we call an $\gR$-module $\gR/(\alpha_1, \ldots, \alpha_{i-1})$ obtained as the iterated cofibre of a sequence of inductively-defined endomorphisms a \emph{Smith--Toda complex}. This name does not fit perfectly but the alternative ``generalised Moore complex'' suggests that there is a homology theory in which they realise a certain cyclic module, which we doubt is true.

Such complexes, and their non-nilpotent endomorphisms $\alpha_i$, lead to infinite families of non-trivial elements in $\pi_{*,*}(\gR)$ in exactly the same way (and with the same kind of indeterminacies and difficulties) that ordinary Smith--Toda complexes lead to the $\alpha, \beta, \gamma, \ldots$ families in the stable homotopy groups of spheres. See \cite[Sections 2.4, 2.5]{RavenelNP} for an overview, which the reader will easily translate into our setting. We will not dwell on it here, as the discussion in the following section accounts for such patterns more systematically.

A significant distinction between our setting and ordinary chromatic homotopy theory is that our $\alpha_i$ have no intrinsic meaning. They are not e.g.\ characterised by their effect on a suitable homology theory (though our discussion in Section \ref{sec:HigherStabMapsRevisited} tries to come close to this), and this makes them quite different to the $v_n$'s. This presents a difficulty in proving Theorem \ref{MainThm:A}, which we will solve by working in a suitable deformation of the category of $\gR$-modules in which analogues of the $\alpha_i$ can be characterised and then running something close to the proof of the Periodicity Theorem of Hopkins--Smith \cite{HopkinsSmith}. But even here there is a basic distinction to the ordinary situation, in that our $\alpha_i$ can be coarsely ordered by their slopes $\tfrac{d_i}{n_i}$ but there can be several of the same slope and there is no preferred ordering (or even basis) of the collection of $\alpha$'s of a given slope.

\subsection{Homology in a higher-order stable range}

For an $\gR$-module $\gM$ we propose the definition of its ``stable homology up to slope $\lambda$'' to be the Bousfield localisation $L_\lambda^f(\gM)$ of $\gM$ away from the class $\mathcal{A}_\lambda^f$ of all finite $\gR$-modules with a slope $\lambda$ vanishing line. This is a localisation at an essentially small class of compact objects, so can be constructed ``telescopically'' and is therefore smashing: $L_\lambda^f(\gM) \simeq L_\lambda^f(\gR) \otimes_\gR \gM$. We will give an introduction to these notions in Section \ref{sec:localisation}.

The existence of a Smith--Toda complex $\gR/(\alpha_1, \ldots, \alpha_r)$  as guaranteed by Theorem \ref{MainThm:A} allows us to establish some basic properties of these approximations.

\begin{MainThm}\label{MainThm:B}
Let $\gR/(\alpha_1, \ldots, \alpha_r)$ be a Smith--Toda complex with a slope $\lambda$ vanishing line, and such that the sequence of slopes $\tfrac{d_i}{n_i}$ is non-decreasing and are all $< \lambda$. Write $\lambda' := \tfrac{d_r}{n_r}$.
\begin{enumerate}[(i)]
\item The homotopy fibre $C_\lambda^f(\gR)$ of the localisation map $\gR \to L_\lambda^f(\gR)$ has a vanishing line of slope $\lambda$, and also has a vanishing line of slope $\lambda'$.

\item If $\lambda' < \bar{\lambda} \leq \lambda$ then any finite $\gR$-module with a slope $\bar{\lambda}$ vanishing line in fact has a slope $\lambda$ vanishing line. Equivalently $L^f_{\bar{\lambda}} = L^f_\lambda$.
\end{enumerate}
\end{MainThm}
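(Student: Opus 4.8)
Throughout write $\gT_i := \gR/(\alpha_1,\dots,\alpha_i)$, so $\gT_0=\gR$ and $\gT:=\gT_r$ is a finite $\gR$-module with a slope $\lambda$ vanishing line, i.e.\ $\gT\in\mathcal{A}^f_\lambda$; recall that smashingness gives $C^f_\lambda(\gN)\simeq C^f_\lambda(\gR)\otimes_\gR\gN$ for all $\gN$, fitting into $C^f_\lambda(\gN)\to\gN\to L^f_\lambda(\gN)$. I would prove (ii) first. One inclusion is elementary: a finite $\gR$-module is bounded below in the $d$-grading, and a bounded-below object with a slope $\lambda$ vanishing line also admits a slope $\bar\lambda$ vanishing line for any $0\le\bar\lambda\le\lambda$ (slip a shallower line below the truncated support), so $\mathcal{A}^f_\lambda\subseteq\mathcal{A}^f_{\bar\lambda}$. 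The reverse inclusion rests on a \emph{nilpotence lemma}: if $\gN$ is a finite $\gR$-module with a slope $\mu$ vanishing line and $v\colon\gN\otimes S^{a,b}\to\gN$ has $a>0$ and $b/a<\mu$, then $v$ is nilpotent. Indeed $\gN[v^{-1}]:=\colim(\gN\xrightarrow{v}\gN\otimes S^{-a,-b}\xrightarrow{v}\gN\otimes S^{-2a,-2b}\to\cdots)$ has $\pi_{n,d}(\gN[v^{-1}])=\colim_k\pi_{n+ka,\,d+kb}(\gN)$, which vanishes because $d+kb<\mu(n+ka)+\kappa_\gN$ once $k\gg0$; since $\gN$ is compact, $\mathrm{id}_\gN$ maps to $0$ in $[\gN,\gN[v^{-1}]]=\colim_k[\gN,\gN\otimes S^{-ka,-kb}]$, hence already dies in $[\gN,\gN\otimes S^{-Na,-Nb}]\cong[\gN\otimes S^{Na,Nb},\gN]$ for some finite $N$, i.e.\ $v^N\simeq0$.

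Now take a finite $\gM$ with a slope $\bar\lambda$ vanishing line, $\lambda'<\bar\lambda\le\lambda$. Each $\gM\otimes_\gR\gT_i$ is finite, and has a slope $\bar\lambda$ vanishing line by induction on $i$: the step from $\gT_{i-1}$ to $\gT_i$ cones off $\mathrm{id}_\gM\otimes_\gR\alpha_i$, of slope $d_i/n_i\le\lambda'<\bar\lambda$, and coning off an endomorphism of slope less than $\bar\lambda$ changes a slope $\bar\lambda$ vanishing line only by a bounded shift of intercept. By the nilpotence lemma $(\mathrm{id}_\gM\otimes_\gR\alpha_i)^N\simeq0$ on $\gM\otimes_\gR\gT_{i-1}$, so $\gM\otimes_\gR\gT_{i-1}$ is a retract of the cofibre of this power, which is built from finitely many shifts of the cofibre $\gM\otimes_\gR\gT_i$ of $\mathrm{id}_\gM\otimes_\gR\alpha_i$; hence $\gM\otimes_\gR\gT_{i-1}$ lies in the thick subcategory generated by $\gM\otimes_\gR\gT_i$. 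Iterating, $\gM$ lies in the thick subcategory generated by $\gM\otimes_\gR\gT$; and $\gM\otimes_\gR\gT$ has a slope $\lambda$ vanishing line, because $\gM$ is a retract of a finite iterated cofibre of the $\gR\otimes S^{n,d}$, so $\gM\otimes_\gR\gT$ is a retract of a finite iterated cofibre of the finitely many $\gT\otimes S^{n,d}$, each with a slope $\lambda$ vanishing line — take the minimum of the intercepts. Thus $\gM$ has a slope $\lambda$ vanishing line, proving $\mathcal{A}^f_{\bar\lambda}=\mathcal{A}^f_\lambda$; since $L^f_\mu$ is the Bousfield localisation away from $\mathcal{A}^f_\mu$, this gives $L^f_{\bar\lambda}=L^f_\lambda$. (Taking $\bar\lambda=\lambda$ shows moreover $\mathcal{A}^f_\lambda=\mathrm{thick}^\otimes(\gT)$, so that $L^f_\lambda$ is the finite localisation away from the single compact object $\gT$.)

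For (i), by the last remark $C^f_\lambda(\gR)$ is the $\gT$-acyclisation of $\gR$. I would build it as the colimit of the standard tower $0=W_0\to W_1\to\cdots$ of partial $\gT$-cellularisations — at each stage attaching all maps into the appropriate fibre from shifts $\gT\otimes S^{a,b}$ — and show the colimit carries a slope $\lambda$ vanishing line with a \emph{uniform} intercept. The point is that only shifts $\gT\otimes S^{a,b}$ with $(a,b)$ in a controlled region support nonzero maps into the connected, finite-type $\gR$ (and, inductively, into the $W_j$, which differ from $\gR$ only by already-attached cells): this follows from axioms (C) and (F) via the boundedness of the mapping complexes $\Map_\gR(\gT\otimes S^{a,b},\gR)\simeq\Sigma^{-a,-b}\gT^\vee$, together with the fact that $\gT$ is bounded below in both gradings, so only finitely many cells contribute to any fixed $\pi_{n,d}$. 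Combining this confinement with the slope $\lambda$ vanishing line of each $\gT\otimes S^{a,b}$ (intercept $\kappa_\gT+b-\lambda a$) yields the uniform vanishing line on $C^f_\lambda(\gR)$; the same cell-control shows $C^f_\lambda(\gR)$ is bounded below in the $d$-grading, and hence (as in the elementary inclusion above, with $\bar\lambda=\lambda'$) a slope $\lambda$ vanishing line upgrades to a slope $\lambda'$ vanishing line.

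I expect the main obstacle to be exactly this bookkeeping in (i): converting ``only finitely many cells contribute to each $\pi_{n,d}$'' into a genuine uniform lower bound on the intercepts $\kappa_\gT+b-\lambda a$ of the cells attached throughout the tower, so that no drift to $-\infty$ occurs in the colimit. By contrast the nilpotence lemma and the thick-subcategory ascent in (ii) are robust once the lemma is in hand.
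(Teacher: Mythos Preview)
Your treatment of (ii) is correct and is essentially the paper's argument: after the nilpotence lemma (which the paper phrases as the slope-$\bar\lambda$ vanishing of $\End_\gR(\gM\otimes_\gR\gT_{i-1})$ rather than via the telescope, but equivalently), both arguments climb the tower of cofibres using Lemma~\ref{lem:ThickSubcatFacts} to put $\gM$ in the thick subcategory generated by $\gT$, hence in $\mathcal{A}^f_\lambda$.

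For (i) there is a genuine gap, and it is more than the bookkeeping you flag. Your deduction of the slope $\lambda'$ line from ``slope $\lambda$ plus bounded below in $d$'' fails because $C^f_\lambda(\gR)$ is \emph{not} bounded below in $d$ once $\lambda'>0$: the figure immediately after Theorem~\ref{thm:HigherStabRanges} is drawn precisely to make this point, with support extending to $d\to-\infty$ along the slope $\lambda'$ line as $n\to-\infty$. Your cell argument cannot produce that bound either: when some $d_j>0$, cells $\gT\otimes S^{a,b}$ with $b\to-\infty$ do get attached. More fundamentally, the slope $\lambda'$ line depends on the slopes $d_j/n_j$ of the $\alpha_j$, not merely on $\gT$ having a slope $\lambda$ vanishing line, so any argument using only the latter is doomed. (Your slope-$\lambda$ argument also invokes axioms (C) and (F), which are not among the hypotheses.)

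The paper's approach to (i) is quite different and avoids the cellular tower. Writing $\gC:=C^f_\lambda(\gR)$, the key observation is that $\alpha_j^{-1}\bigl(\gC/(\alpha_1,\dots,\alpha_{j-1})\bigr)\simeq 0$: the functor $(-)\otimes_\gR\alpha_j^{-1}\gR/(\alpha_1,\dots,\alpha_{j-1})$ commutes with colimits and annihilates every object of $\mathcal{A}^f_\lambda$ (as $\alpha_j$ has slope $<\lambda$), hence annihilates the $\mathcal{A}^f_\lambda$-acyclic object $\gC$. The telescope for $\alpha_j$ then exhibits $S^{0,1}\otimes\gC/(\alpha_1,\dots,\alpha_{j-1})$ as filtered with associated graded $\bigoplus_{i\ge 1}S^{-in_j,-id_j}\otimes\gC/(\alpha_1,\dots,\alpha_j)$. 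Iterating to $j=r$ one reaches $\gC/(\alpha_1,\dots,\alpha_r)\simeq\gR/(\alpha_1,\dots,\alpha_r)$ (since the latter is $L^f_\lambda$-acyclic), and its slope $\lambda$ vanishing line propagates back through the spectral sequences with explicit intercepts. For the slope $\lambda'$ line one reruns the \emph{same} spectral sequences starting instead from the slope $\lambda'$ line on the finite module $\gR/(\alpha_1,\dots,\alpha_r)$; this propagates because each shift $S^{-in_j,-id_j}$ has slope $d_j/n_j\le\lambda'$ and so preserves a slope $\lambda'$ vanishing line with bounded intercept change.
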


\begin{rem}
We prove more precise, and more technical, versions of these statements in Theorems \ref{thm:HigherStabRanges} and \ref{thm:STQuantisation}.
\end{rem}

\begin{example}
In the case of ordinary homological stability this recovers what we expect. If $\gR$ has a $\sigma \in \pi_{1,0}(\gR)$ such that $\gR/\sigma$ has a slope $\lambda$ vanishing line, then $L_\lambda^f(\gR) \simeq \sigma^{-1}\gR$ and the fibre of the map $\gR \to \sigma^{-1}\gR$ clearly has a slope $\lambda$ vanishing line. This is the ordinary meaning of ``stable homology''. When $\gR$ arises from taking the $\bk$-chains of an $E_2$-algebra in spaces, the homology of this localisation is accessible via the Group-completion Theorem \cite{McDuffSegal}, and indeed that has been the most successful method for calculating stable homology.
\end{example}

\begin{example}\label{ex:RedBlueStabHom}
A more interesting example is the $E_2$-algebra $\mathbf{RB}$ from Section \ref{sec:multistab}, of chains on the configuration spaces of red-or-blue points in $I^2$, where $\mathbf{RB}/(r,b)$ has a slope $\tfrac{1}{2}$ vanishing line. In this case we will explain in Example \ref{ex:RBLocalissation} how $L^f_{1/2}(\mathbf{RB})$ may be described by the homotopy cartesian square
\begin{equation*}
\begin{tikzcd}%[column sep = 0.7em]
L^f_{1/2}(\mathbf{RB}) \rar \dar & r^{-1}\mathbf{RB} \dar\\
b^{-1}\mathbf{RB} \rar & r^{-1} b^{-1} \mathbf{RB}.
\end{tikzcd}
\end{equation*}
For $\bk$ a field one may calculate $\pi_{*,*}(\mathbf{RB})$ and hence $\pi_{*,*}(L^f_{1/2}(\mathbf{RB}))$, using F.~Cohen's description of the homology of free $E_2$-algebras. We do so in Example \ref{ex:RBLocalissation}. Perhaps surprisingly, it is no longer connective: for example $\partial(r^{-1} b^{-1}) \neq 0 \in \pi_{-2,-1}(L^f_{1/2}(\mathbf{RB}))$.
\end{example}

\subsubsection{Quantisation of vanishing lines}

When $\bk$ is a field of positive characteristic and (C), (SCE), and (F) hold, so that Theorem \ref{MainThm:A} applies, it follows from Theorem \ref{MainThm:B} that a finite $\gR$-module $\gM$ with a vanishing line of slope $\lambda <1$ in fact has a vanishing line of slope $\min_k\{\tfrac{k}{k+1} \, | \, \lambda \leq \tfrac{k}{k+1}\}$. We call this phenomenon \emph{quantisation of vanishing lines}. Applied to $\gR/\sigma$ it explains why all known ordinary homological stability theorems have slopes of the form $\tfrac{k}{k+1}$ (when they are $<1$).

\subsubsection{Monochromatic layers and Adams periodicity}

A consequence of the quantisation of vanishing lines is that the only distinct localisation functors for slopes $<1$ are the $L^f_{k/k+1}$, which assemble into a tower
$$\gM \lra \cdots \lra L^f_{3/4}(\gM) \lra L^f_{2/3}(\gM) \lra L^f_{1/2}(\gM) \lra 0,$$
letting us define the monochromatic objects
$$M^f_{k/k+1}(\gM) := \mathrm{fib}(L^f_{k/k+1}(\gM) \to L^f_{k-1/k}(\gM)).$$
These objects have a slope $\tfrac{k-1}{k}$ vanishing line, and enjoy periodicities of slope exactly $\tfrac{k-1}{k}$, governed by the endomorphisms $\alpha_s, \ldots, \alpha_t$ of this slope: we call this \emph{Adams periodicity}. The most general formulation is quite technical, and applying it involves producing Smith--Toda complexes $\gR/(\alpha_1, \ldots, \alpha_{s-1}, \alpha_s^{p^{M_s}}, \ldots, \alpha_t^{p^{M_t}})$ and in particular making sense of this expression. We will explain this in Section \ref{sec:AdamsPeriodicity}, but for now give the following representative example.

\begin{example}
Suppose an $E_2$-algebra $\gR$ has a $\sigma \in \pi_{1,0}(\gR)$ such that $\pi_{n,d}(\gR/\sigma)=0$ for $d < \tfrac{2}{3}n$, and a $\varphi : S^{3,2} \otimes \gR/\sigma \to \gR/\sigma$ such that $\pi_{n,d}(\gR/(\sigma, \varphi))=0$ for $d < \tfrac{3}{4}n$. (This is the case for $\gR = \mathbf{MCG}$ for example.) For some $N \in \bN$ suppose furthermore that $\gR/\sigma^N$ has an endomorphism $\varphi_r : S^{3r,2r} \otimes \gR/\sigma^N \to \gR/\sigma^N$ such that $\gR/(\sigma^N, \varphi_r)$ has a slope $\tfrac{3}{4}$ vanishing line ($\varphi_r$ will usually be related to the $r$ fold iterate of $\varphi$, though it is an endomorphism of a different object so work is needed to make this precise). Then Adams periodicity, which is Theorem \ref{thm:AdamsPeriodicityGeneral}, provides isomorphisms
$$(\varphi_r)_* : \pi_{n-3r,d-2r}(M^f_{3/4}(\gR)) \overset{\sim}\lra \pi_{n,d}(M^f_{3/4}(\gR))$$
defined for $d < \tfrac{2n+2N-6}{3}$.

By the methods used to prove Theorem \ref{MainThm:A}, when $\bk$ is a field of positive characterstic and $\gR$ satisfies axioms (C), (SCE), and (F) then for each $N$ one can indeed find an endomorphism $\varphi_r$ of $\gR/\sigma^N$ for some $r=r(N)$ such that $\gR/(\sigma^N, \varphi_r)$ has a slope $\tfrac{3}{4}$ vanishing line. The above implies that each homotopy class $x \in \pi_{n,d}(M^f_{3/4}(\gR))$ lies in some ``$\varphi$-periodic family'', but the period of this family depends on $d - \tfrac{2}{3}n$: the further $x$ is from the vanishing line of $\pi_{*,*}(M^f_{3/4}(\gR))$, the longer the period.
\end{example}

\subsection{Determining the kinds of stability}

Theorem \ref{MainThm:A} guarantees that higher-order stabilisation maps $\alpha_i$ exist, but does not say, for a given $\gR$, what they are. Concretely: what slopes arise as the slope of an $\alpha_i$, or more generally how many $\alpha_i$'s there are of a given slope? For example:
\begin{enumerate}[(i)]
\item In the example $\mathbf{RB}$ we must form $\mathbf{RB}/(r,b)$ to get a vanishing line of slope $\tfrac{1}{2}$, and it not possible to achieve this with $\mathbf{RB}/\sigma$ for any single $\sigma$ of slope 0.
\item In the example $\mathbf{MCG}$ the object $\mathbf{MCG}/\sigma$ has a vanishing line of slope $\tfrac{2}{3}$, and we did not need to kill any endomorphism of slope $\tfrac{1}{2}$ to achieve this.
\end{enumerate}
What part of the structure of $\mathbf{RB}$ versus $\mathbf{MCG}$ accounts for these differences?

The object
$$\bk \otimes_\gR \bk \simeq \Bar(\gR)$$
inherits an $E_1$-coalgebra structure, being the Koszul dual of the $E_1$-algebra $\gR$, but as $\gR$ is in fact $E_2 = E_1 \otimes E_1$ there is also a residual $E_1$-algebra structure, making $\bk \otimes_\gR \bk$ into an $E_1$-bialgebra. Axiom (SCE) says that the bigraded homotopy groups of this object below the diagonal vanish, and we collect its diagonal homotopy groups together to form
$$\Delta_\gR := \bigoplus_{n \geq 0} \pi_{n,n}(\bk \otimes_\gR \bk).$$
This has the structure of a graded connected bialgebra, and so a Hopf algebra: we call it the \emph{stability Hopf algebra} associated to $\gR$. Perhaps the central message of this paper is the slogan:
\begin{center}
$\Delta_\gR$ precisely controls the kinds of stability that $\gR$ has.
\end{center}
We will make this slogan precise in Sections \ref{sec:StabHopfAlg} and \ref{sec:HigherStabMapsRevisited}, but the rough idea is as follows.

Taking cobar constructions gives an $E_2$-algebra map
$$\gR \simeq \Cobar(\bk \otimes_\gR \bk) \lra \Cobar(\Delta_\gR) =: \mathbf{r},$$
and base-change along this map converts a Smith--Toda complex for $\gR$ to one for $\mathbf{r}$. We will provide a collection of tools which allow one to reverse this process: from the existence of a Smith--Toda complex for $\mathbf{r}$ one may find a Smith--Toda complex of a similar form for $\gR$. The most important of these tools is that $\gR \to \mathbf{r}$ satisfies a limited kind of \emph{Nilpotence Theorem}: if $\psi$ is a slope $\lambda < 1$ endomorphism of an $\gR$-module $\gM$ which has a slope $\lambda$ vanishing line, then $\psi$ is nilpotent if and only if $\mathbf{r} \otimes_\gR \psi$ is. 

The $E_2$-algebra $\mathbf{r} = \Cobar(\Delta_\gR)$ has homotopy groups
$$\pi_{n,d}(\mathbf{r}) = \Cotor^{n-d}_{\Delta_\gR}(\bk, \bk)_n,$$
i.e.\ the cohomology of the Hopf algebra $\Delta_\gR$. Conversely, for any connected graded Hopf algebra $A$ of finite type over a field $\bk$, $\mathbf{a} := \Cobar(A)$ is an object of $\mathsf{Alg}_{E_2}(\mathsf{D}(\bk)^\bZ)$ satisfying our axioms (C), (SCE), and (F), so the methods of this paper apply to it. In this sense any improvement beyond the slogan must take place purely in the realm of cohomology of connected graded Hopf algebras.

When $\gR \simeq \bigoplus_{n \geq 0} C_*(G_n;\bk)$ arises from a sequence of groups, axiom (SCE) is usually established by showing that certain $G_n$-complexes $T^{E_1}(n)$ associated to these groups are wedges of $n$-spheres \cite[Definition 17.6]{e2cellsI}. In this case there are $G_n$-modules $\mathrm{St}^{E_1}(n) := \widetilde{H}_n(T^{E_1}(n);\bk)$, the \emph{$E_1$-Steinberg modules}, and there is a concrete description of the stability Hopf algebra as:
$$\Delta_\gR = \bigoplus_{n \geq 0} H_0(G_n ; \mathrm{St}^{E_1}(n)).$$
For such examples one should try to understand $\Delta_\gR$ by analysing these coinvariants. There are very few $E_2$-algebras $\gR$ occurring ``in nature'' (e.g.\ those discussed in Section \ref{sec:KindsOfStab}) for which we have complete knowledge of $\Delta_\gR$, and when we do it is usually because this Hopf algebra is an exterior algebra $\Lambda_\bk[\bar{\sigma}]$ on one generator (which implies that $\gR/\sigma$ has a slope 1 vanishing line). But there are many examples where we have partial information about $\Delta_\gR$. In Section \ref{sec:examples} we will discuss some of these from the point of view taken here.

\vspace{1ex}

\noindent\textbf{Acknowledgements}. 
Arriving at the formulation presented here has been a long process, with many extensive false turns: work of Palmieri \cite{Palmieri} and of Krause \cite{KrauseThesis} was valuable for convincing me that I was eventually on the right track. A conversation with Robert Burklund was extremely useful, and I particularly acknowledge the influence of his way of thinking on this work. Discussions with S{\o}ren Galatius, Jeremy Hahn, Maxime Ramzi, Dhruv Ranganathan, Jan Steinebrunner, and Kelly Wang have been clarifying, even though much of what was discussed has not made it in to this paper. I am grateful to Alexis Aumonier and Greg Arone for pointing out a mistake in an earlier version. I was partially supported by the ERC under the European Union's Horizon 2020 research and innovation programme (grant agreement No.\ 756444), and by the Danish National Research Foundation through the Copenhagen Centre for Geometry and Topology (DNRF151).

\section{Foundations}\label{sec:Foundations}
The ideas we are trying to explain can no doubt be implemented in many possible frameworks, in a way that should make no essential difference to the underlying ideas. For concreteness we will work in an $\infty$-categorical setting, relying on Lurie's \cite{HA} for technical support. It will sometimes be convenient to import results from \cite{e2cellsI}, which is written in model-categorical terms: we comment on this in Section \ref{sec:ImportingFromE2cellsI} below. For readers familiar with that source we try to use compatible notation, with the significant exception that here an $E_k$-algebra $\gR$ is by default unital, and we occasionally pass to non-unital $E_k$-algebras by taking augmentation ideals, whereas in \cite{e2cellsI} the symbol $\gR$ typically denoted a non-unital $E_k$-algebra, which was occasionally unitalised.

\subsection{$\bk$-modules}

We write $\mathsf{D}(\bk)$ for the derived $\infty$-category of the commutative ring $\bk$, which we choose to consider as the category of (left) $H\bk$-module spectra \cite[Remark 7.1.1.16]{HA}: as is typical, we often just write $\bk$ for $H\bk$. This is a presentable \cite[Corollary 4.2.3.7]{HA} stable $\infty$-category \cite[Corollary 7.1.1.5]{HA}, and we write $\map_\bk(X,Y) := \map_{\mathsf{D}(\bk)}(X,Y)$ for the mapping spectra in $\mathsf{D}(\bk)$. Objects $X$ of this category have homotopy groups
$$\pi_d(X) := [S^d, X]_{\mathsf{D}(\bk)},$$
where $S^d := \Sigma^d \bk$, and these have the structure of modules over $\bk = [\bk, \bk]_{\mathsf{D}(\bk)}$.

As $\bk$ is a commutative algebra object in spectra, the symmetric monoidal structure on spectra \cite[\S 4.8.2]{HA} induces a symmetric monoidal structure $\otimes_\bk$ on $\mathsf{D}(\bk)$, whose values may be computed by the two-sided bar construction \cite[Theorem 4.5.2.1]{HA}. This monoidal structure preserves colimits in each variable \cite[Corollary 4.4.2.15]{HA}. The homotopy groups of a tensor product may be approached by a (strongly convergent) K{\"u}nneth spectral sequence \cite[Proposition 7.2.1.19]{HA}:
\begin{equation}\label{eq:KunnethSS} 
E^2_{p,q} = \bigoplus_{q'+q''=q}\Tor_p^\bk(\pi_{q'}(X), \pi_{q''}(Y)) \Longrightarrow \pi_{p+q}(X \otimes_\bk Y).
\end{equation}
For us $\bk$ will usually be a field, or a PID, in which case this simplifies as usual.

The right adjoint to $- \otimes_\bk X$ defines internal mapping objects $\underline{\map}_\bk(X,Y)$. These are $\bk$-module spectra, with underlying spectra 
$$\map_\bk(\bk, \underline{\map}_\bk(X,Y)) \simeq \map_\bk({\bk \otimes_\bk X}, Y) \simeq \map_\bk(X,Y).$$
To not be overwhelmed by notation, we will write $\map_\bk(X,Y)$ for these internal mapping $\bk$-module spectra.

\subsection{$\bZ$-graded objects}

Our underlying category will usually be the presentable stable $\infty$-category
$$\mathsf{D}(\bk)^\bZ = \mathsf{Fun}(\bZ, \mathsf{D}(\bk))$$
of $\bZ$-graded objects in $\mathsf{D}(\bk)$. For an object $X$ of this category we write $X(n) \in \mathsf{D}(\bk)$ for its value at $n \in \bZ$. Such objects have bigraded homotopy groups 
$$\pi_{n,d}(X) := \pi_d(X(n)) = [S^{n,d}, X]_{\mathsf{D}(\bk)^\bZ},$$
where $S^{n,d}$ is the $\bZ$-graded object which is $S^d$ in grading $n$ and zero otherwise.

This category is equipped with the Day convolution symmetric monoidal structure \cite[\S 2.2.6]{HA} which we continue to write as $\otimes_\bk$: explicitly
$$(X \otimes_\bk Y)(n) \simeq \bigoplus_{n'+n'' = n} X(n') \otimes_\bk Y(n'').$$
From this formula it is clear that this monoidal structure also preserves colimits in each variable. The K{\"u}nneth spectral sequence \eqref{eq:KunnethSS} yields a similar spectral sequence for computing the bigraded homotopy groups $\pi_{*,*}(X \otimes_\bk Y)$.

The right adjoint to $- \otimes_\bk X$ defines internal mapping objects $\underline{\map}_\bk(X,Y)$, having values $\underline{\map}_\bk(X,Y)(n) \simeq \map_\bk(S^{n,0} \otimes_\bk X,Y)$.

\subsection{$E_k$-algebras}

Let $(\mathsf{C}, \otimes, \bunit)$ be a symmetric monoidal $\infty$-category which is presentable, has all small limits, and such that the tensor product commutes with colimits in each variable. We write $\mathsf{Alg}_{E_k}(\mathsf{C})$ for the $\infty$-category of (unital) $E_k$-algebras \cite[\S 5.1]{HA} in $\mathsf{C}$, which is again presentable using \cite[Corollary 3.2.3.5]{HA}. Similarly, we write $\mathsf{Alg}_{E_k}^\mathrm{aug}(\mathsf{C})$ for the $\infty$-category of $E_k$-algebras $\gR$ equipped with an augmentation $\epsilon : \gR \to \bunit$. We usually write $\gI := \mathrm{fib}(\epsilon : \gR \to \bunit)$, a non-unital $E_k$-algebra, calling it the augmentation ideal of $\gR$. 

Dunn--Lurie additivity \cite[\S 5.1.2]{HA} allows us to identify 
$$\mathsf{Alg}_{E_{k+\ell}}(\mathsf{C}) \simeq \mathsf{Alg}_{E_{k}}(\mathsf{Alg}_{E_\ell}(\mathsf{C})),$$
similarly with augmentations.

If $\mathsf{C}$ is stable (so that $\gR \simeq \bunit \oplus \gI$) then taking augmentation ideals identifies $\mathsf{Alg}_{E_k}^\mathrm{aug}(\mathsf{C})$ with the $\infty$-category $\mathsf{Alg}_{E_k^\text{nu}}(\mathsf{C})$ of non-unital $E_k$-algebras. The various forgetful functors
$$\mathsf{Alg}_{E_k}(\mathsf{C}) \to \mathsf{C}_{\bunit /} \to \mathsf{C} \quad \quad \mathsf{Alg}_{E_k}^\mathrm{aug}(\mathsf{C}) \to \mathsf{C}_{\bunit / / \bunit } \to \mathsf{C} \quad\quad \mathsf{Alg}_{E_k^\text{nu}}(\mathsf{C}) \to \mathsf{C},$$
for which we generically write $U$, all have left adjoint ``free $E_k$-algebra'' functors for which we write $\gE_k^{(\text{nu})}(-)$, taking a little care with units and augmentations. We will often use F.~Cohen's description of $\pi_{*,*}(\gE_k(X))$ for $X \in \mathsf{D}(\bk)^\bZ$ when $\bk$ is a field, in terms of the induced product, Browder bracket, and Dyer--Lashof operations. Our reference for this will be \cite[Section 16]{e2cellsI}.

If $\mathsf{C}$ is pointed then any object of $\mathsf{C}$ may be considered as a non-unital $E_k$-algebra, defining a ``trivial algebra'' functor $Z^{E_k^\text{nu}} : \mathsf{C} \to \mathsf{Alg}_{E_k^\text{nu}}(\mathsf{C})$. (This is inconvenient to implement in the way that operads are handled in \cite{HA}. Treating operads as monoids in symmetric sequences in $\mathsf{C}$---which has been compared with Lurie's definition in \cite{HaugsengSSeq, HaugsengSSeqAlg} or in \cite[Section 5.2.5]{ShiThesis} with our presentability hypothesis---it is simple: as $\mathsf{C}$ is pointed there is a unique map $\epsilon: E_k^\text{nu} \to \bunit_{\mathsf{SSeq}(\mathsf{C})}$ of operads in $\mathsf{C}$, restricting along which gives the required functor $Z^{E_k^\text{nu}} = \epsilon^* : \mathsf{C} = \mathsf{Alg}_{\bunit_{\mathsf{SSeq}(\mathsf{C})}}(\mathsf{C}) \to \mathsf{Alg}_{E_k^\text{nu}}(\mathsf{C})$.) The trivial algebra functor has a left adjoint
$$Q^{E_k^\text{nu}} : \mathsf{Alg}_{E_k^\text{nu}}(\mathsf{C}) \lra  \mathsf{C},$$
the (derived) indecomposables. As $U^{E_k^\text{nu}} Z^{E_k^\text{nu}} = \mathrm{Id}_\mathsf{C}$, on left adjoints we obtain $Q^{E_k^\text{nu}} \gE_k^\text{nu}(X) \simeq X$.

\subsection{Modules over an $E_k$-algebra}\label{sec:Modules}

For $\gR \in \mathsf{Alg}_{E_k}(\mathsf{C})$ with $k \geq 1$, there is an $\infty$-category $\gR\text{-}\mathsf{mod}$ of left $\gR$-modules, obtained by forgetting down from $E_k$-algebras to $E_1$-algebras, using that the latter are equivalent to associative algebras \cite[Example 5.1.0.7]{HA}, and invoking \cite[Definition 4.2.1.13]{HA}. By the discussion in \cite[\S 4.2.3]{HA}, $\gR\text{-}\mathsf{mod}$ is presentable, has all small limits, and the forgetful map $\gR\text{-}\mathsf{mod} \to \mathsf{C}$ preserves limits and colimits. In particular, if $\mathsf{C}$ is stable then so is $\gR\text{-}\mathsf{mod}$. A morphism $\phi : \gR \to \gS$ of unital $E_1$-algebras induces a functor
$$\mathrm{Res}^\gS_\gR :  \gS\text{-}\mathsf{mod} \lra \gR\text{-}\mathsf{mod},$$
which preserves limits and colimits. It has a left adjoint $\mathrm{Ind}^\gS_\gR$, extension of scalars, which may be computed by the relative tensor product $\gS \otimes_\gR -$ \cite[Proposition 4.6.2.17]{HA}. A commonly-used example of this this construction is to apply it to the unit map $\bk \to \gR$ to obtain the free $\gR$-module functor
$$\gR \otimes_\bk - : \mathsf{D}(\bk)^\bZ \lra \gR\text{-}\mathsf{mod}.$$

Using Dunn--Lurie additivity and \cite[Corollary 4.8.5.20]{HA}, if $k \geq 2$ then the category $\gR\text{-}\mathsf{mod}$ is equipped with an $E_{k-1}$-monoidal structure $\otimes_\gR$, and if $\phi : \gR \to \gS$ is a map of $E_k$-algebras then extension of scalars $\mathrm{Ind}^\gS_\gR \simeq \gS \otimes_\gR -  : \gR\text{-}\mathsf{mod} \to \gS\text{-}\mathsf{mod}$ obtains the structure of an $E_{k-1}$-monoidal functor. The underlying bifunctor
$$- \otimes_\gR - : \gR\text{-}\mathsf{mod} \times \gR\text{-}\mathsf{mod} \lra \gR\text{-}\mathsf{mod}$$
may be described in terms of the relative tensor product as $A_\gR \otimes_{\gR \otimes_\bk \gR}(- \otimes -)$, for the $\gR\text{-}\gR\otimes_\bk\gR$-bimodule $A_\gR$ obtained by restricting the $\gR\text{-}\gR$-bimodule $\gR$ along the pair of algebra maps $(Id, \mu) : (\gR, \gR \otimes_\bk \gR) \to (\gR, \gR)$, where $\mu$ is the $E_{k-1}$-algebra multiplication on $\gR$ in the category of associative algebras given by Dunn--Lurie additivity. (The bimodule $A_\gR$ was constructed explicitly in \cite[\S 12.2.2]{e2cellsI} and called the ``adapter''.) 

%The homotopy groups of such a relative tensor product may be accessed via a (strongly convergent) K{\"u}nneth spectral sequence
%\begin{equation}\label{eq:KunnethSSRelative} 
%E^2_{n, p,q} = \Tor_p^{\pi_{*,*}(\gR)}(\pi_{*,*}(\gM), \pi_{*,*}(\gN))_{n,q} \Longrightarrow \pi_{n, p+q}(\gM \otimes_\gR \gN),
%\end{equation}
%obtained e.g.\ by adapting the proof \cite[Proposition 7.2.1.19]{HA} to the bigraded setting.

We will usually have a fixed $E_2$-algebra $\gR$ in $\mathsf{D}(\bk)^\bZ$, and work in the \mbox{($E_1$-)}monoidal stable $\infty$-category $\gR\text{-}\mathsf{mod}$. As this category is only monoidal, some points must be considered a little more carefully than usual. One is that it is convenient to have a supply of central objects, i.e.\ objects for which left and right tensoring are equivalent. The most common central objects are free $\gR$-modules, via
\begin{align*}
(\gR \otimes_\bk X) \otimes_\gR - &\simeq \mathrm{Ind}_{\gR \otimes_\bk \gR}^\gR[ (\gR \otimes_\bk X) \otimes_\bk -] \simeq \mathrm{Ind}_{\gR \otimes_\bk \gR}^\gR[ \mathrm{Ind}_\bk^\gR (X) \otimes_\bk \mathrm{Ind}_\gR^\gR(-)]\\
&\simeq \mathrm{Ind}^\gR_{\bk \otimes_\bk \gR} [X \otimes_\bk -] 
\end{align*}
and the same showing $- \otimes_\gR (\gR \otimes_\bk X) \simeq \mathrm{Ind}^\gR_{\gR \otimes_\bk \bk} [- \otimes_\bk X]$; the symmetry of $\otimes_\bk$ shows these are the same as $\gR$-modules. Another kind of central object is as follows.

\begin{lem}\label{lem:E2AlgebraIsModuleCentral}
If $\phi : \gR \to \gS$ is a map of $E_k$-algebras with $k \geq 2$, then there is a natural equivalence $\gS \otimes_\gR - \simeq - \otimes_\gR \gS : \gR\text{-}\mathsf{mod} \to \gR\text{-}\mathsf{mod}$.
\end{lem}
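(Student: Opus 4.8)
The plan is to show that $\gS$, viewed as an object of $\gR\text{-}\mathsf{mod}$ via $\phi$, is a \emph{central} object of the monoidal $\infty$-category $(\gR\text{-}\mathsf{mod}, \otimes_\gR)$; conceptually this amounts to lifting $\gS$ along the forgetful functor $\mathcal{Z}(\gR\text{-}\mathsf{mod}) \to \gR\text{-}\mathsf{mod}$ from the Drinfeld centre, whose objects carry by definition a coherent natural equivalence between left- and right-tensoring. The crucial point is that $\phi$ is a map of $E_2$-algebras, not merely of $E_1$-algebras: an $E_1$-map would make $\gS$ an $\gR$-bimodule but not a central one. Note that one does know cheaply that $\gS \otimes_\gR -$ and $- \otimes_\gR \gS$ take equivalent values on every free module $\gR \otimes_\bk X$, since those are central by the discussion preceding the lemma; but two colimit-preserving functors agreeing on a generating family of objects need not be naturally equivalent, so this does not suffice --- one must actually produce the natural transformation, and it is the braided layer of the $E_2$-structure that does so.

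The mechanism is as follows. By Dunn--Lurie additivity $E_2 \simeq E_1 \otimes E_1$, $\gR$ is an associative algebra object of the symmetric monoidal $\infty$-category $\mathsf{Alg}_{E_1}(\mathsf{D}(\bk)^\bZ)$, and with respect to this presentation the monoidal structure $\otimes_\gR$ on $\gR\text{-}\mathsf{mod}$ is the internal relative tensor product recorded by the adapter $A_\gR$ above, the residual multiplication $\mu$ on $\gR$ being the one taking place in $\mathsf{Alg}_{E_1}(\mathsf{D}(\bk)^\bZ)$. Restriction of scalars along $\phi$ makes $\gS$ a left $\gR$-module object of $\mathsf{Alg}_{E_1}(\mathsf{D}(\bk)^\bZ)$, equivalently an $E_1$-algebra in $(\gR\text{-}\mathsf{mod}, \otimes_\gR)$ with unit $\phi$; the remaining ingredient is that the braided part of the $E_2$-structure intertwines the left and right $\gR$-actions on the bimodule $\gS$ by a coherent family of equivalences (ultimately the braiding of the $E_2$-algebra $\gS$ restricted along $\phi$), and extending this from $\gR$ itself to all $\gR$-modules --- using, as above, that free modules are already central and that $\otimes_\gR$ preserves colimits in each variable --- yields a lift of $\gS$ to $\mathcal{Z}(\gR\text{-}\mathsf{mod})$, hence the natural equivalence $\gS \otimes_\gR - \simeq - \otimes_\gR \gS$. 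On underlying objects one can see the shape of this directly from the adapter: expressing both sides as $A_\gR \otimes_{\gR \otimes_\bk \gR}(\gS \otimes_\bk M)$ and $A_\gR \otimes_{\gR \otimes_\bk \gR}(M \otimes_\bk \gS)$, the symmetry of $\otimes_\bk$ on $\mathsf{D}(\bk)^\bZ$ identifies the two left $\gR \otimes_\bk \gR$-module structures up to precomposition with the flip $\tau$ of $\gR \otimes_\bk \gR$, and the twist is absorbed using the $E_2$-structure of $\gR$.

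The hard part is carrying out this last step coherently --- producing the natural transformation with its hexagon-type compatibilities in the $\infty$-categorical setting, rather than merely objectwise or on bigraded homotopy groups. Doing so by hand would mean tracking an infinite tower of homotopies, so the efficient route is formal: from Dunn--Lurie additivity \cite[\S 5.1.2]{HA} together with Lurie's treatment of modules over $E_k$-algebras \cite[Corollary 4.8.5.20]{HA}, one identifies the datum of an $E_2$-algebra under $\gR$ with the corresponding map into $\mathcal{Z}(\gR\text{-}\mathsf{mod})$, after which the equivalence of functors $\gS \otimes_\gR - \simeq - \otimes_\gR \gS$ and its naturality in the module variable are automatic.
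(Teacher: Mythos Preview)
Your approach via the Drinfeld centre is conceptually sound and correctly isolates the role of the $E_2$-hypothesis, but it is genuinely different from --- and less self-contained than --- the paper's argument. The paper works directly with the adapter bimodule: since both $\gS \otimes_\gR -$ and $- \otimes_\gR \gS$ are functors of the form $B \otimes_\gR -$ for an $\gR\text{-}\gR$-bimodule $B$, it suffices to identify the two bimodules $A_\gR \otimes_{\gR\otimes\gR}(\gS\otimes\gR)$ and $A_\gR \otimes_{\gR\otimes\gR}(\gR\otimes\gS)$. The key step is \emph{functoriality of the adapter in the $E_2$-map $\phi$}: there is a canonical map of $\gR\text{-}\gR\otimes\gR$-bimodules $A_\gR \to \mathrm{Res}\,A_\gS$, and its adjoint identifies each of the two bimodules above with $\gS$ (as an $\gR\text{-}\gR$-bimodule via $\phi$). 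Naturality in $M$ is then automatic, with no coherence tower to manage.

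Your write-up has two soft spots. First, the final appeal to ``Dunn--Lurie additivity together with \cite[Corollary 4.8.5.20]{HA}'' does not actually deliver the lift to $\mathcal{Z}(\gR\text{-}\mathsf{mod})$: that corollary gives the $E_{k-1}$-monoidal structure on module categories and on induction, not the identification of $E_2$-algebras under $\gR$ with central objects. If you want to run the Drinfeld-centre argument you need to locate or prove that specific statement. Second, the adapter sketch in your middle paragraph is a different manoeuvre from the paper's: you propose swapping $\gS$ and $M$ via the symmetry of $\otimes_\bk$ and then absorbing the resulting flip $\tau$ of $\gR\otimes_\bk\gR$ into $A_\gR$ via the $E_2$-structure on $\gR$. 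That can be made to work but requires exhibiting an equivalence $A_\gR \simeq \tau^* A_\gR$ of $\gR\text{-}\gR\otimes\gR$-bimodules, which you do not do; the paper's functoriality-in-$\phi$ argument sidesteps this entirely.
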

\begin{proof}
If $k \geq 3$ then $\otimes_\gR$ is $E_2$-monoidal which gives such an isomorphism. If $k=2$ then let $A_\gR$ be the $\gR\text{-}\gR\otimes\gR$-bimodule discussed above, and consider
$$A_\gR \otimes_{\gR \otimes \gR} (\gS \otimes \gR)$$
as an $\gR\text{-} \gR$-bimodule (i.e.\ neglect the right $\gS$-module structure). We claim this is equivalent to $\gS$ as a $\gR\text{-}\gR$-bimodule, and the same for $A_\gR \otimes_{\gR \otimes \gR} (\gR \otimes \gS)$, which together proves the lemma.

Functoriality of the construction of the bimodules $A$ gives a map of $\gR\text{-}\gR\otimes\gR$-bimodules $A_\gR \to \mathrm{Res}^{\gS\text{-}\gS \otimes\gS}_{\gR\text{-} \gR \otimes \gR} A_\gS \simeq \mathrm{Res}^{\gR\text{-}\gS \otimes\gR}_{\gR\text{-} \gR \otimes \gR}\mathrm{Res}^{\gS \text{-}\gS }_{\gR\text{-}\gS \otimes\gR} \gS$, which is adjoint to a map
$$A_\gR \otimes_{\gR \otimes \gR} (\gS \otimes \gR) \simeq \mathrm{Ind}^{\gR\text{-}\gS \otimes\gR}_{\gR\text{-} \gR \otimes \gR} A_\gR \lra \mathrm{Res}^{\gS \text{-}\gS}_{\gR\text{-}\gS \otimes\gR} \gS.$$
This is an equivalence: when we neglect the module structures the target is $\gS$, the source is $\gR \otimes_{\gR} \gS \simeq \gS$, and the map is easily checked to be the identity. Restricting the right module structure along $\gR \simeq \bunit \otimes \gR \to \gS \otimes \gR$ it remains an equivalence, and the target becomes $\gS$ as an $\gR\text{-} \gR$-bimodule.
\end{proof}

A further subtlety of the ($E_1$-)monoidal setting concerns internal mapping objects. The functor $- \otimes_\gR \gM$ has a right adjoint $\underline{\map}_\gR^r(\gM, -)$, and similarly the functor $\gM \otimes_\gR -$ has a right adjoint $\underline{\map}_\gR^l(\gM, -)$. These two internal mapping objects will generally be different as $\gR$-modules, but neglecting their $\gR$-module structures they are equivalent as objects of $\mathsf{D}(\bk)^\bZ$, via
$$\map_\gR(\gR \otimes_\bk S^{n,0}, \underline{\map}_\gR^r(\gM, -)) \simeq \map_\gR((\gR \otimes_\bk S^{n,0}) \otimes_\gR \gM, -) \simeq \map_\gR(S^{n,0} \otimes_\bk \gM, -)$$
 $$\map_\gR(\gR \otimes_\bk S^{n,0}, \underline{\map}_\gR^l(\gM, -)) \simeq \map_\gR(\gM \otimes_\gR(\gR \otimes_\bk S^{n,0}), -) \simeq \map_\gR(S^{n,0} \otimes_\bk \gM, -).$$
We therefore write $\underline{\map}_\gR(\gM, \gN) \in \mathsf{D}(\bk)^\bZ$ for the underlying object of either of these mapping $\gR$-modules, which is equivalently the object corepresenting $X \mapsto \map_\gR((\gR \otimes_\bk X) \otimes_\gR \gM, \gN)$.
 
A special case of this is the \emph{endomorphism object} $\End_\gR(\gM)$, an $E_1$-algebra in $\gR\text{-}\mathsf{mod}$ universal among $E_1$-algebras acting on $\gM$ on the left \cite[Section 4.7.1]{HA}. The above discussion identifies its underlying $\gR$-module with $\underline{\map}^r_\gR(\gM,\gM)$.
 
\subsection{Referring to ``Cellular $E_k$-algebras''}\label{sec:ImportingFromE2cellsI}

Occasionally we cite results from \cite{e2cellsI}, which is developed with model category theoretic foundations instead of the $\infty$-categorical ones used here. We briefly explain how those results may be safely imported into the setting used here. For the underlying model category $\mathsf{S}$ we take $H\bk\text{-}\mathsf{mod}$, modules over the Eilenberg--MacLane spectrum for $\bk$, equipped with the projectively transferred model structure from symmetric spectra equipped with the absolute projective stable model structure. The discussion in Sections 7.2.4 and 7.3.3 of \cite{e2cellsI} shows that this satisfies the axioms of \cite[Section 7.1]{e2cellsI}. We then take $\mathsf{C} = (H\bk\text{-}\mathsf{mod})^\bN$, with the projective model structure.

To compare with the setting here, we can use Haugseng's rectification theorem \cite[Theorem 1.1]{HaugsengSSeqAlg} for algebras over an operad. The symmetric monoidal $\infty$-category $\mathsf{Sp}$ of spectra can be presented as the $\infty$-categorical localisation at the weak equivalences of the 1-category of symmetric spectra \cite[Proposition 4.1.7.4]{HA}, so by Haugseng's theorem the $\infty$-categorical localisation of the 1-category $H\bk\text{-}\mathsf{mod}$ is equivalent to $H\bk$-modules in the $\infty$-category $\mathsf{Sp}$, and hence to $\mathsf{D}(\bk)$ by \cite[Remark 7.1.1.16]{HA}. Similarly, the $\infty$-categorical localisation at the weak equivalences of the 1-categories $\mathsf{Alg}_{E_2}((H\bk\text{-}\mathsf{mod})^\bN)$ used in \cite{e2cellsI} are the $\infty$-categories $\mathsf{Alg}_{E_2}(\mathsf{D}(\bk)^\bN)$ of $\bN$-graded $E_2$-algebras, which can then be included into  $\mathsf{Alg}_{E_2}(\mathsf{D}(\bk)^\bZ)$ (by axiom (C) all the $\bZ$-graded $E_2$-algebras we consider come from $\bN$-graded ones). Similarly, an associative algebra object $A$ in $(H\bk\text{-}\mathsf{mod})^\bN$ (such as the strictifications $\overline{\gR}$ constructed in \cite[Section 12.2]{e2cellsI}) presents an associative (equivalently $E_1$-) algebra $A'$ in $\mathsf{D}(\bk)^\bN$, and the localisation at the weak equivalences of the 1-category of $A$-modules is equivalent to the $\infty$-category of $A'$-modules in $\mathsf{D}(\bk)^\bN$, i.e.\ the non-negatively graded $A'$-modules in $\mathsf{D}(\bk)^\bZ$.

By this mechanism the results of \cite{e2cellsI} may be freely imported: we will not comment on it further.

\subsection{Bar, Cobar, and Koszul duality}\label{sec:BarCobar}

We follow \cite[Section 5.2]{HA}. The category of $E_1$-coalgebras in a symmetric monoidal category $\mathsf{C}$ is $\mathsf{coAlg}_{E_1}(\mathsf{C}) := \mathsf{Alg}_{E_1}(\mathsf{C}^{op})^{op}$, and similarly for (co)augmented coalgebras. As long as e.g.\ $\mathsf{C}$ has all limits and colimits, there are adjoint functors \cite[Theorem 5.2.2.17, Remark 5.2.2.19]{HA}
\begin{equation*}
\begin{tikzcd}
\Bar : {\mathsf{Alg}_{E_1}^\mathrm{aug}(\mathsf{C})} \arrow[rr, ""{name=G1}, shift left=.8ex] 
& &  {\mathsf{coAlg}_{E_1}^\mathrm{aug}(\mathsf{C})} : \Cobar. \arrow[ll, ""{name=F1}, shift left=.8ex] 
\arrow[phantom, from=F1, to=G1,  "\scalebox{0.7}{$\dashv$}" rotate=-90 ]
\end{tikzcd}
\end{equation*}
(We change notation from \cite{HA} slightly by writing $\Bar$ and $\Cobar$ for the functors taking values in (co)algebras, rather than their underlying functors to $\mathsf{C}$.) For an augmented algebra $\gA \to \bunit$ the underlying object of $\Bar(\gA)$ may be calculated as the relative tensor product $\bunit \otimes_\gA \bunit$, and hence as the colimit of a simplicial object of the form $[p] \mapsto \gA^{\otimes p}$. Dually, for a coaugmented coalgebra $\bunit \to \gC$ the underlying object of $\Cobar(\gC)$ may be calculated as the relative cotensor product $\bunit \square_\gC \bunit$ (i.e. the relative tensor product in the opposite category), and hence as the limit of a cosimplicial object of the form $[p] \mapsto \gC^{\otimes p}$. For these, see \cite[Remark 5.2.2.8]{HA}.

By Dunn--Lurie additivity, taking $E_{k-1}$-algebras on both sides gives an adjunction
\begin{equation*}
\begin{tikzcd}
\Bar : {\mathsf{Alg}_{E_k}^\mathrm{aug}(\mathsf{C})} \arrow[rr, ""{name=G1}, shift left=.8ex] 
& &  {\mathsf{Alg}_{E_{k-1}}(\mathsf{coAlg}_{E_1}^\mathrm{aug}(\mathsf{C}))} : \Cobar. \arrow[ll, ""{name=F1}, shift left=.8ex] 
\arrow[phantom, from=F1, to=G1,  "\scalebox{0.7}{$\dashv$}" rotate=-90 ]
\end{tikzcd}
\end{equation*}
When $k=2$ we will usually refer to objects on the right-hand side as (augmented) $E_1$-bialgebras. This also allows us to iterate $\Bar$, giving an adjunction
\begin{equation*}
\begin{tikzcd}
\Bar^{(k)} : {\mathsf{Alg}_{E_k}^\mathrm{aug}(\mathsf{C})} \arrow[rr, ""{name=G1}, shift left=.8ex] 
& &  {\mathsf{coAlg}_{E_k}^\mathrm{aug}(\mathsf{C})} : \Cobar^{(k)}. \arrow[ll, ""{name=F1}, shift left=.8ex] 
\arrow[phantom, from=F1, to=G1,  "\scalebox{0.7}{$\dashv$}" rotate=-90 ]
\end{tikzcd}
\end{equation*}
We will write
$$B^{E_k} : \mathsf{Alg}_{E_k}^\mathrm{aug}(\mathsf{C}) \overset{\Bar^{(k)}}\lra \mathsf{coAlg}_{E_k}^\mathrm{aug}(\mathsf{C}) \lra \mathsf{C}$$
for the underlying object of the iterated bar construction; it is given by the $k$-fold iterated relative bar construction. For an augmented $E_k$-algebra $\epsilon : \gR \to \bunit$ with augmentation ideal $\gI$, there is a functorial equivalence
\begin{equation}\label{eq:BarIsIndec}
B^{E_k}(\gR) \simeq \bunit \oplus \Sigma^k Q^{E_k^\text{nu}}(\gI).
\end{equation}
This is \cite[Theorem 2.41]{FrancisCotangent}, in different notation. A model-categorical proof was given in \cite[Theorem 13.7]{e2cellsI}.

The $\Bar$ and $\Cobar$ functors tend to yield an equivalence of categories when one restricts to suitably finitary objects on each side: see \cite{Heuts} for a complete analysis. For us the following simple setting will suffice. Say that an object $X \in \mathsf{D}(\bk)^\bZ$ is \emph{connected} if $\pi_{n,d}(X)=0$ for $n < 0$, or for $n=0$ and $d \leq 0$, and let $\mathsf{Alg}_{E_1}^\mathrm{aug, cn}(\mathsf{D}(\bk)^\bZ)$ denote the full subcategory on those augmented algebras whose augmentation ideal is connected, and similarly $\mathsf{coAlg}_{E_1}^\mathrm{aug, scn}(\mathsf{D}(\bk)^\bZ)$ denote the the full subcategory on those coaugmented coalgebras whose \emph{desuspended} coaugmentation coideal is connected (i.e.\ the coaugmentation coideal is ``simply connected'').

\begin{prop}\label{prop:BarCobarInverse}
The functors
\begin{equation*}
\begin{tikzcd}
\Bar : {\mathsf{Alg}_{E_1}^\mathrm{aug, cn}(\mathsf{D}(\bk)^\bZ)} \arrow[rr, ""{name=G1}, shift left=.8ex] 
& &  {\mathsf{coAlg}_{E_1}^\mathrm{aug, scn}(\mathsf{D}(\bk)^\bZ)} : \Cobar \arrow[ll, ""{name=F1}, shift left=.8ex] 
\arrow[phantom, from=F1, to=G1,  "\scalebox{0.7}{$\dashv$}" rotate=-90 ]
\end{tikzcd}
\end{equation*}
are adjoint equivalences. 
\end{prop}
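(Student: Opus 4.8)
The plan is to exploit the general $\Bar$–$\Cobar$ adjunction already recorded in Section \ref{sec:BarCobar}, and to check that on the indicated connectivity-restricted subcategories both the unit and the counit of this adjunction are equivalences. Since both sides are full subcategories of the corresponding global categories of (co)augmented $E_1$-(co)algebras, it suffices to prove: (a) $\Bar$ carries $\mathsf{Alg}_{E_1}^{\mathrm{aug},\mathrm{cn}}$ into $\mathsf{coAlg}_{E_1}^{\mathrm{aug},\mathrm{scn}}$ and $\Cobar$ carries $\mathsf{coAlg}_{E_1}^{\mathrm{aug},\mathrm{scn}}$ into $\mathsf{Alg}_{E_1}^{\mathrm{aug},\mathrm{cn}}$; (b) for $\gA$ with connected augmentation ideal $\gI$ the unit $\gA \to \Cobar(\Bar(\gA))$ is an equivalence; and (c) for $\gC$ with simply-connected coaugmentation coideal the counit $\Bar(\Cobar(\gC)) \to \gC$ is an equivalence. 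The restriction adjunction between full subcategories is then automatically an adjoint equivalence.

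For step (a): using \eqref{eq:BarIsIndec} (in the case $k=1$), the underlying object of $\Bar(\gA)$ is $\bunit \oplus \Sigma Q^{E_1^{\mathrm{nu}}}(\gI)$. The indecomposables $Q^{E_1^{\mathrm{nu}}}(\gI)$ are computed as a colimit of iterated tensor powers $\gI^{\otimes p}$ for $p \geq 1$; since $\gI$ is connected (in the $\bZ$-graded sense: concentrated in $n \geq 1$, or $n=0$ with $d > 0$) and the Day-convolution tensor product on $\mathsf{D}(\bk)^\bZ$ adds gradings, each $\gI^{\otimes p}$ is again connected, and connectivity is preserved under colimits; hence $Q^{E_1^{\mathrm{nu}}}(\gI)$ is connected and $\Sigma Q^{E_1^{\mathrm{nu}}}(\gI)$ is the desuspension-of-connected required for simple connectivity. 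The dual argument, computing the underlying object of $\Cobar(\gC)$ as a limit of a cosimplicial object $[p] \mapsto \gC^{\otimes p}$ and using the connectivity bookkeeping for the (totalisation of the) cobar complex, shows $\Cobar$ lands in augmented algebras with connected augmentation ideal. This is the step requiring the most care, because for $\Cobar$ one is taking a limit rather than a colimit, so one must check that the relevant $\lim^1$/higher-derived contributions are controlled and that the increasing connectivity of the cosimplicial terms $\gC^{\otimes p}$ (each an iterated tensor of a simply-connected object, hence $p$-connected in the appropriate range) forces convergence; this is where the simple-connectivity hypothesis is essential and where the argument would be spelled out most carefully.

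For steps (b) and (c) I would run the standard connectivity/Postnikov induction. For the counit $\Bar(\Cobar(\gC)) \to \gC$: both sides have the same homotopy in low degrees (the coaugmentation coideal $\Sigma(\text{connected})$ appears, and one checks the first nontrivial homotopy groups agree), and then one bootstraps: assuming the map is an equivalence through filtration/connectivity stage $m$, the fact that $\Bar$ and $\Cobar$ each raise connectivity by one on the (co)augmentation (co)ideal means the cofibre of the counit is more highly connected than the inductive hypothesis allows, forcing it to vanish; iterating and using that all objects here are bounded below and the tower converges gives the equivalence. The same scheme, dualised, handles the unit $\gA \to \Cobar(\Bar(\gA))$. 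Alternatively, and perhaps more cleanly, one can cite Heuts' analysis \cite{Heuts} of the precise conditions under which $\Bar \dashv \Cobar$ restricts to an equivalence and verify that the connectivity hypotheses here imply his finitary/complete–cocomplete conditions; I would mention this as the shortest route but give the direct induction as the self-contained one. The main obstacle, as noted, is the convergence of the cobar totalisation in step (a)/(c) — i.e.\ verifying that simple connectivity of the coideal is exactly what makes the cosimplicial object in the definition of $\Cobar$ pro-constant enough for its limit to have the expected (connective) homotopy groups.
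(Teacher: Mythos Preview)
Your proposal is correct and follows essentially the same strategy as the paper: check that $\Bar$ and $\Cobar$ restrict to the indicated subcategories, then verify unit and counit are equivalences by a connectivity bootstrap. Two small differences worth noting. First, for step (a) the paper works directly with the skeletal filtration of $\bk\otimes_\gA\bk$ (associated graded $\bigoplus_{i\geq 1} S^{0,i}\otimes\gI^{\otimes i}$) and the coskeletal filtration of $\bk\square_\gC\bk$ (associated graded $\prod_{i\geq 1} S^{0,-i}\otimes\gJ^{\otimes i}$), rather than routing through \eqref{eq:BarIsIndec}; this is slightly cleaner since it avoids appealing to a formula that is itself proved by a bar-type argument. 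Second, for steps (b)--(c) the paper does not run a generic Postnikov induction but explicitly adapts Krause's argument \cite[Proposition~B.4]{KrauseAppendix}, supplying the three ingredients his proof needs under the weaker connectivity hypothesis here: the convergence of $\mathrm{Tot}^k(\Cobar(\gC))$ (your ``main obstacle''), a tensor-power connectivity estimate from \cite[Corollary~11.6]{e2cellsI} replacing Krause's Lemma~B.5, and a relative-connectivity computation showing $\Bar$ and $\Cobar$ detect equivalences. Your sketch would land in the same place once fleshed out, but pointing at Krause saves reinventing the scaffolding.
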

\begin{proof}
If $\gA \to \bk$ is an augmented $E_1$-algebra with augmentation ideal $\gI$, then the skeletal filtration of the coaugmentation coideal of $\bk \otimes_\gA\bk$ has associated graded $\bigoplus_{i \geq 1} S^{0,i} \otimes \gI^{\otimes i}$, whose desuspension is connected if $\gI$ is connected. Similarly, if $\bk \to \gC$ is a coaugmented $E_1$-coalgebra with coaugmentation coideal $\gJ$, then the coskeletal filtration of augmentation ideal of $\bk \square_\gC \bk$ has associated graded $\prod_{i \geq 1} S^{0,-i} \otimes \gJ^{\otimes i}$, which is connected if $S^{0,-1} \otimes \gJ$ is connected. Thus $\Bar$ and $\Cobar$ indeed restrict to functors between the indicated categories.

To show that they are inverses, we must show that the unit $\gA \to \Cobar(\Bar(\gA))$ and counit $\Bar(\Cobar(\gC)) \to \gC$ are equivalences. This may be done by repeating the argument of Krause \cite[Proposition B.4]{KrauseAppendix}, though we must say something as our connectedness hypothesis is weaker than his.

If $\gC \in \mathsf{coAlg}_{E_1}^\mathrm{aug, scn}(\mathsf{D}(\bk)^\bZ)$ has coaugmentation coideal $\gJ$ then the fibre of the map $\mathrm{Tot}^k(\Cobar(\gC)) \to \mathrm{Tot}^{k-1}(\Cobar(\gC))$ is $S^{0,-k} \otimes \gJ^{\otimes k} = (S^{0,-1} \otimes \gJ)^{\otimes k}$ so has trivial $\pi_{n,d}$ if either $n<0$, $d \leq 0$ and $n < k$, or $0 \leq d \leq k$ and $d+n < k$. It follows that $\Cobar(\gC) \to \mathrm{Tot}^{k}(\Cobar(\gC))$ is an isomorphism on $\pi_{n,d}$ for all large enough $k$. From this \cite[Lemma B.6]{KrauseAppendix} follows.

The role of \cite[Lemma B.5]{KrauseAppendix} can be played by applying \cite[Corollary 11.6]{e2cellsI} as follows. If $f: X \to Y$ is a map between connected (in the sense we are using here) objects such that $\pi_{n,d}(Y, X)=0$ for $n < N$ and for $n=N$ and $d<D$, then \cite[Corollary 11.6]{e2cellsI} implies that $f^{\otimes k} : X^{\otimes k} \to Y^{\otimes k}$ has $\pi_{n,d}(Y^{\otimes k}, X^{\otimes k})=0$ for $n<N$ and for $n=N$ and $d<N+k-1$.

For \cite[Lemma B.7]{KrauseAppendix}, if a map $\gC \to \gD$ in $\mathsf{coAlg}_{E_1}^\mathrm{aug, scn}(\mathsf{D}(\bk)^\bZ)$ is given on coaugmentation coideals by $f : \gJ \to \gK$, and $\pi_{n,d}(\gD, \gC) = \pi_{n,d}(\gK, \gJ)=0$ for $n<N$ or for $n=N$ and $d< D$, then it follows from the above that the maps
\begin{equation*}
\begin{tikzcd}[row sep = 2ex]
 &  \pi_{N,D}(\gD, \gC) = \pi_{N,D}(\gK, \gJ) \dar\\
\pi_{N,D-1}(\Cobar(\gD), \Cobar(\gC)) \rar & \pi_{N,D-1}(\mathrm{Tot}^1(\Cobar(\gD)), \mathrm{Tot}^1(\Cobar(\gC))) 
\end{tikzcd}
\end{equation*}
are isomorphisms. It follows that $\Cobar$ detects equivalences; the case of $\Bar$ is similar.
\end{proof}

In particular, if $\gR$ satisfies axiom (C) then $\gR \to \Cobar(\Bar(\gR))$ is an equivalence.

\subsection{Filtered objects and spectral sequences} 

Write $\bZ_{\leq}$ for the poset of integers with its usual ordering, and let
$$\mathsf{fil}(\mathsf{D}(\bk)^\bZ) := \mathsf{Fun}(\bZ_{\leq}, \mathsf{D}(\bk)^\bZ)$$
be the category of filtered objects in $\mathsf{D}(\bk)^\bZ$. This is again stable, as limits and colimits are computed pointwise, and Day convolution endows it with a symmetric monoidal structure. We write objects of this category as $\fil_* X$, and the value of such a functor at $f \in \mathbb{Z}_\leq$ as $\fil_f X$. There are functors
\begin{equation}\label{eq:GrAndColim}
(\mathsf{D}(\bk)^\bZ)^\bZ \overset{\gr}\longleftarrow \mathsf{fil}(\mathsf{D}(\bk)^\bZ) \overset{\colim}\lra \mathsf{D}(\bk)^\bZ
\end{equation}
given by colimit and associated graded: the latter has components $\gr_f(\fil_* X) \simeq  \fil_f X / \fil_{f-1} X$. Both functors are (strong) symmetric monoidal left adjoints.

We write $X := \colim \fil_* X \in \mathsf{D}(\bk)^\bZ$, and think of $\fil_* X$ as a filtration of the object $X$. We say it is \emph{complete} if $\lim \fil_* X \simeq 0$. In this case there is a conditionally convergent (in the sense of Boardman \cite{Boardman}) spectral sequence
\begin{equation}\label{eq:FiltrationSS}
E^1_{n,d,f} = \pi_{n,d}(\gr_f(\fil_* X)) \Longrightarrow \pi_{n,d}(X)
\end{equation}
with differentials $d^r : E^r_{n,d,f} \to E^r_{n,d-1,f-r}$. When we wish to emphasise the filtered object from which the spectral sequence is derived, we write $E^r_{n,d,f}(\fil_* X)$. We apologise for the indexing, which does not match anybody else's.

\subsubsection{Filtered homotopy groups and $\tau$}

We find it convenient to work with filtered objects using some of the language which has been developed in the subject of synthetic spectra, but is really just about filtered objects. The following discussion is intended to be self-contained, but \cite[Section 11, Appendix A]{BHS} will be useful.

Evaluation at $f \in \bZ_\leq$ defines a functor $f^* : \mathsf{fil}(\mathsf{D}(\bk)^\bZ) \to \mathsf{D}(\bk)^\bZ$ which has a left adjoint $f_* : \mathsf{D}(\bk)^\bZ \to \mathsf{fil}(\mathsf{D}(\bk)^\bZ)$. The filtered object $f_*Y$ is 0 in filtrations $< f$ and constantly $Y$ in filtrations $\geq f$. We in particular have the filtered spheres
$$S^{n,d,f} := f_* S^{n,d},$$
using which we define the (trigraded) \emph{filtered homotopy groups}
$$\pi_{n,d,f}(\fil_* X) := [S^{n,d,f}, \fil_*X]_{\mathsf{fil}(\mathsf{D}(\bk)^\bZ)} = \pi_{n,d}(\fil_f X).$$

The identity map $S^{0,0} \to 1^*0_* S^{0,0}$ is adjoint to a map
$$\tau : S^{0,0,1} \lra S^{0,0,0}.$$
We write $C\tau$ for its cofibre. Tensoring with this map gives maps $\tau : S^{0,0,1} \otimes \fil_* X \to \fil_* X$ and hence induced maps $\tau : \pi_{n,d,f}(\fil_* X) \to \pi_{n,d,f+1}(\fil_* X)$,
which we use to consider the filtered homotopy groups $\pi_{n,d,*}(\fil_* X)$ as a graded $\bk[\tau]$-module.

There is an alternative perspective on filtered objects which can be useful. The lax symmetric monoidal functor $\prod_{f \in \bZ} f^* : \mathsf{fil}(\mathsf{D}(\bk)^\bZ) \to (\mathsf{D}(\bk)^\bZ)^\bZ$ has left adjoint $\bigoplus_{f \in \bZ} f_*$ and this adjunction is monadic (by \cite[Theorem 4.7.0.3]{HA}, as the right adjoint detects equivalences and preserves all colimits, because these are both pointwise), identifying $\mathsf{fil}(\mathsf{D}(\bk)^\bZ)$ with modules in $(\mathsf{D}(\bk)^\bZ)^\bZ$ over the commutative algebra $(\prod_{f \in \bZ} f^*)(S^{0,0,0})$. This algebra can be identified as $\bk[\tau]$, the free commutative algebra in the 1-category $((\bk\text{-}\mathsf{mod})^\bZ)^\bZ$, of bigraded $\bk$-modules, on a single generator of bigrading $(0,1)$. In this way $\mathsf{fil}(\mathsf{D}(\bk)^\bZ)$ is nothing but the derived category of bigraded $\bk[\tau]$-modules. From this point of view the functor $\gr$ is identified with base-change along $\bk[\tau] \to \bk = \bk[\tau]/\tau \simeq C\tau$, i.e.\ $(C\tau \otimes \fil_*X)(f) \simeq \gr_f(\fil_*X)$, and in particular $C\tau$ obtains the structure of a commutative algebra in $\mathsf{fil}(\mathsf{D}(\bk)^\bZ)$. Similarly, the functor $\colim$ is identified with base-change along $\bk[\tau] \to \bk[\tau^{\pm 1}]$ (the category of $\bk[\tau^{\pm 1}]$-modules in $(\mathsf{D}(\bk)^\bZ)^\bZ$ is identified with $\mathsf{D}(\bk)^\bZ$), so we allow ourselves to denote it by $\tau^{-1}$.

%There is a (strongly convergent) K{\"u}nneth spectral sequence of graded $\bk[\tau]$-modules
%$$E^{2}_{n,p,q} = \bigoplus_{\mathclap{\substack{n_1+n_2 = n\\q_1+q_2=q}}}\Tor^{\bk[\tau]}_p(\pi_{n_1, q_1, *}(\fil_*X), \pi_{n_2, q_2, *}(\fil_*Y)) \Longrightarrow \pi_{n, p+q, *}(\fil_*X \otimes \fil_*Y).$$
%This may be obtained by taking the perspective of the previous paragraph, and applying (the now trigraded analogue of) \eqref{eq:KunnethSSRelative}. 

There are two canonical objects in $\mathsf{fil}(\mathsf{fil}(\mathsf{D}(\bk)^\bZ))$ associated to $\tau$. The first is
\begin{equation}\tag{$\beta$}\label{eq:BocksteinFilt}
\cdots \lra S^{0,0,2} \overset{\tau}\lra S^{0,0,1} \overset{\tau}\lra S^{0,0,0} \overset{Id}\lra S^{0,0,0} \overset{Id}\lra S^{0,0,0} \lra \cdots
\end{equation}
where the leftmost $S^{0,0,0}$ is the value at $0 \in \mathbb{Z}_{\leq}$. Tensoring with $\fil_* X$ it gives a filtered object in $\mathsf{fil}(\mathsf{D}(\bk)^\bZ)$ whose colimit is $\fil_*X$, and whose limit has components
$$\left(\lim_{p \leq 0} S^{0,0,-p} \otimes \fil_* X\right)(f) \simeq \lim_{p \leq 0} \fil_{f+p} X,$$
so is trivial if and only if $\fil_* X$ is complete. Its associated graded in grading $p \leq 0$ is $S^{0,0,-p} \otimes C\tau \otimes \fil_* X$. Assuming $\fil_* X$ is complete this gives a conditionally convergent spectral sequence
\begin{equation}\label{eq:tauBockstein}
\prescript{\beta}{}{E}^1_{n,d,f,p} = \pi_{n,d,f + p}(C\tau \otimes \fil_* X) \Longrightarrow \pi_{n,d,f}(\fil_* X),
\end{equation}
with differentials $\prescript{\beta}{}{d}^r : \prescript{\beta}{}{E}^r_{n,d,f,p} \to \prescript{\beta}{}{E}^r_{n,d-1,f,p-r}$. We can consider this as a spectral sequence of $\bk[\tau]$-modules starting with $\pi_{*,*,*}(C\tau \otimes \fil_* X)\otimes_\bk \bk[\tau]$, where the first factor has grading $(*,*,*,0)$ and $\tau$ has grading $(0,0,1,-1)$, and converging to the filtered homotopy groups $\pi_{*,*,*}(\fil_* X)$. It is called the \emph{$\tau$-Bockstein spectral sequence}.

The second is
\begin{equation}\tag{$I\beta$}\label{eq:InvertedBocksteinFilt}
\cdots \lra S^{0,0,2} \overset{\tau}\lra S^{0,0,1} \overset{\tau}\lra S^{0,0,0} \overset{\tau}\lra S^{0,0,-1} \overset{\tau}\lra S^{0,0,-2} \lra \cdots
\end{equation}
where $S^{0,0,0}$ is the value at $0 \in \mathbb{Z}_\leq$. Tensoring with $\fil_* X$ it gives a filtered object in $\mathsf{fil}(\mathsf{D}(\bk)^\bZ)$ whose colimit is called $\tau^{-1}\fil_*X$, and whose limit is again trivial if and only if $\fil_* X$ is complete. In this case it gives a conditionally convergent spectral sequence
\begin{equation}\label{eq:tauInvertedtauBockstein}
\prescript{I\beta}{}{E}^1_{n,d,*,*} = \pi_{n,d,*}(C\tau \otimes \fil_* X)\otimes_\bk \bk[\tau^{\pm 1}] \Longrightarrow \pi_{n,d,*}(\tau^{-1} \fil_* X),
\end{equation}
with differentials $\prescript{I\beta}{}{d}^r : \prescript{I\beta}{}{E}^r_{n,d,f,p} \to \prescript{I\beta}{}{E}^r_{n,d-1,f,p-r}$. It is called the \emph{inverted $\tau$-Bockstein spectral sequence}.

The relation between the structure of the filtered homotopy groups $\pi_{*,*,*}(\fil_*X)$ and these spectral sequences has been studied in depth by Burklund--Hahn--Senger \cite[Section 11, Appendix A]{BHS}, in the case of filtered objects arising from synthetic spectra. But much of their analysis goes through in the general setting of filtered objects. The three spectral sequences are related as follows. On one hand \eqref{eq:tauInvertedtauBockstein} is obtained by inverting $\tau$ in the spectral sequence \eqref{eq:tauBockstein}. On the other hand $\pi_{n,d,*}(\tau^{-1} \fil_* X) = \tau^{-1} \pi_{n,d,*}( \fil_* X) = \pi_{n,d}(X) \otimes_\bk \bk[\tau^{\pm 1}]$ and \eqref{eq:tauInvertedtauBockstein} is obtained from \eqref{eq:FiltrationSS} by tensoring with $\bk[\tau^{\pm 1}]$. By specialising to $\tau=1$ it follows that \eqref{eq:tauInvertedtauBockstein} is interchangeable with \eqref{eq:FiltrationSS}. Finally, whenever there is a differential $d^r([x])=[y]$ in the spectral sequence \eqref{eq:FiltrationSS}, there is a corresponding differential $\prescript{\beta}{}{d}^r([x \otimes 1]) = [y \otimes \tau^r]$ in the spectral sequence \eqref{eq:tauBockstein}, and furthermore all differentials in the latter spectral sequence arise in this way: this may be seen by adapting \cite[Theorem A.8 (3)]{BHS}.

Finally, let us comment on multiplicative aspects of these spectral sequences. Both \eqref{eq:BocksteinFilt} and \eqref{eq:InvertedBocksteinFilt} can be given the structure of commutative monoids in $\mathsf{fil}(\mathsf{fil}(\mathsf{D}(\bk)^\bZ))$, compatibly with the natural morphism $\beta \to I\beta$ in this category. This is most readily seen by observing that they both arise in an evident way from filtered objects in the 1-category of $\bk[\tau]$-modules in $((\bk\text{-}\mathsf{mod})^\bZ)^\bZ$, where the claim is quickly verified. It follows that
$$\beta \otimes- : \mathsf{fil}(\mathsf{D}(\bk)^\bZ) \lra \beta\text{-modules in }\mathsf{fil}(\mathsf{fil}(\mathsf{D}(\bk)^\bZ))$$
is strong symmetric monoidal, and similarly for $I\beta$, and hence that the functors
$$\beta \otimes-, I\beta \otimes - : \mathsf{fil}(\mathsf{D}(\bk)^\bZ) \lra \mathsf{fil}(\mathsf{fil}(\mathsf{D}(\bk)^\bZ))$$
are both lax symmetric monoidal. From this it follows that a pairing $\fil_* X \otimes \fil_* Y \to \fil_*Z$ of filtered objects induces a pairing 
$$E^r_{*,*,*}(\fil_* X) \otimes E^r_{*,*,*}(\fil_* Y) \lra E^r_{*,*,*}(\fil_* Z)$$
of spectral sequences, for each of \eqref{eq:tauBockstein} and \eqref{eq:tauInvertedtauBockstein} (and hence also \eqref{eq:FiltrationSS}). In particular, if $\fil_* X$ has a multiplicative structure then they are all multiplicative spectral sequences. Furthermore, if $\fil_* X$ has the structure of an $E_k$-algebra in $\mathsf{fil}(\mathsf{D}(\bk)^\bZ)$ then these spectral sequences have a Browder bracket, Dyer--Lashof operations, and so on: see \cite[Section 16.6]{e2cellsI} for how these operations interact with differentials.

\subsubsection{The canonical multiplicative filtration}\label{sec:CanMultFilt}

If $\gR$ is an $E_k$-algebra in $\mathsf{D}(\bk)^\bZ$ equipped with an augmentation $\epsilon : \gR \to \bk$, then it has a canonical filtration $\fil_* \gR$ to be thought of as the filtration by powers of the augmentation ideal 
$$\gI_\gR := \mathrm{fib}(\epsilon : \gR \to \bk).$$
It has been discussed in various contexts by several authors under several names: \cite{HarperHess}, \cite{KuhnPereira},  \cite[Section 5.4.2]{e2cellsI}.
Following the latter it may be constructed as follows. We first consider non-unital $E_k$-algebras in $\mathsf{D}(\bk)^\bZ$, and consider the functor
\begin{equation}\label{eq:EvAtMinus1Alg}
(-1)^* : \mathsf{Alg}_{E_k^\text{nu}}(\mathsf{fil}(\mathsf{D}(\bk)^\bZ)) \lra \mathsf{Alg}_{E_k^\text{nu}}(\mathsf{D}(\bk)^\bZ)
\end{equation}
given by evaluating at filtration $-1$. (This induces a map on non-unital $E_k$-algebras, as $(-1)^* : \mathsf{fil}(\mathsf{D}(\bk)^\bZ) \to \mathsf{D}(\bk)^\bZ$ is lax symmetric monoidal, because $(-1) + (-1) \leq -1 \in \bZ_{\leq}$.) This has a left adjoint
$$(-1)_*^{E_k} : \mathsf{Alg}_{E_k^\text{nu}}(\mathsf{D}(\bk)^\bZ) \lra \mathsf{Alg}_{E_k^\text{nu}}(\mathsf{fil}(\mathsf{D}(\bk)^\bZ)).$$
This is constructed and discussed 1-categorically in \cite[Section 5.4.1]{e2cellsI}; $\infty$-categorically it may be obtained by the adjoint functor theorem \cite[Corollary 5.5.2.9]{HTT}, using that \eqref{eq:EvAtMinus1Alg} is a functor between presentable categories which preserves limits and filtered colimits (because such (co)limits are formed underlying in $E_k$-algebras). The canonical multiplicative filtration of a non-unital $E_k$-algebra $\gI$ is 
$$\fil_*^{E_k} \gI := (-1)^{E_k}_*(\gI).$$
Its associated graded is calculated in terms of the indecomposables by
$$\gr(\fil_*^{E_k} \gI) \simeq \gE_k^\text{nu}((-1)_*(Q^{E_k^\text{nu}}(\gI))),$$
which follows from the corresponding identity between right adjoints of all these functors (see \cite[Section 5.4.3]{e2cellsI} for a 1-categorical treatment).

We extend this to the setting of an augmented $E_k$-algebra $\epsilon : \gR \to \bk$ by
$$\fil_*^{E_k} \gR := S^{0,0,0} \oplus \fil_*^{E_k} \gI_\gR,$$
having associated graded $\gr(\fil_*^{E_k}\gR) = S^{0,0,0}\oplus \gr(\fil_*^{E_k}\gI_\gR) \simeq \gE_k((-1)_*(Q^{E_k^\text{nu}}(\gI_\gR)))$. As long as $\gR$ satisfies axiom (C) the canonical multiplicative filtration $\fil_*^{E_k} \gR$ is complete, by the Claim in the proof of \cite[Theorem 10.20]{e2cellsI}. In fact, $\pi_{n,d}(\fil_{-a}^{E_k} \gR)=0$ for $d \leq a-n$.

\begin{prop}\label{prop:BarOnCanMultFilt}
There is a natural isomorphism
$$B^{E_\ell}(\fil_*^{E_k} -) \simeq \fil_*^{E_{k-\ell}} B^{E_\ell}(-) :  \mathsf{Alg}_{E_{k}}^\mathrm{aug}(\mathsf{D}(\bk)^\bZ) \lra \mathsf{Alg}_{E_{k-\ell}}^\mathrm{aug}(\mathsf{fil}(\mathsf{D}(\bk)^\bZ)).$$
\end{prop}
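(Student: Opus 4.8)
The plan is to prove the equivalence by writing both sides as composites of left adjoints and comparing the corresponding composites of right adjoints --- the same device already used in the excerpt to compute $\gr(\fil_*^{E_k}(-))$. I would work throughout with augmentation ideals, replacing each $\mathsf{Alg}_{E_j}^{\mathrm{aug}}$ by the equivalent $\mathsf{Alg}_{E_j^{\text{nu}}}$; this is harmless since all categories in sight are stable, and it removes the unit bookkeeping, which is awkward here because the monoidal unit $S^{0,0,0}$ of $\mathsf{fil}(\mathsf{D}(\bk)^\bZ)$ vanishes in filtration $-1$.

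First I would reduce to the case $\ell = 1$. By Dunn--Lurie additivity an $E_k$-algebra is an $E_1$-algebra in the symmetric monoidal category $\mathsf{Alg}_{E_{k-1}}(\mathsf{D}(\bk)^\bZ)$, and $B^{E_\ell}$ is the $\ell$-fold iterate of the one-step bar construction performed in the successive categories $\mathsf{Alg}_{E_{k-j}}$; as the bar construction is a sifted colimit, its underlying object is computed in $\mathsf{D}(\bk)^\bZ$. One then needs that forming $\fil_*^{E_k}$ is compatible with these identifications, i.e.\ that $\fil_*^{E_k}$ agrees with the one-step canonical filtration carried out internally to $\mathsf{Alg}_{E_{k-1}}$, and similarly through the iteration. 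This holds because the right adjoint of $\fil_*^{E_k}$, evaluation at filtration $-1$, commutes with the relevant forgetful functors and with the Dunn--Lurie equivalences, so the corresponding left adjoints do too.

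For $\ell = 1$ I would avoid the adjunction $\Bar \dashv \Cobar$ directly, since evaluation at filtration $-1$ is only lax, not strong, symmetric monoidal and so does not commute with $\Cobar$. Instead I would invoke \eqref{eq:BarIsIndec}, which expresses the underlying object of $B^{E_1}$ on augmentation ideals as $\Sigma\, Q^{E_1^{\text{nu}}}(-)$, naturally in the algebra. Thus, up to the autoequivalence $\Sigma$, the functor $B^{E_1}$ is the derived-indecomposables functor $Q^{E_1^{\text{nu}}}$, which is the left adjoint of the trivial-algebra functor $Z^{E_1^{\text{nu}}}$. Both sides of the proposition then become composites of the left adjoints $Q^{E_1^{\text{nu}}}$, $\Sigma$, and $\fil_*^{E_j} = (-1)_*^{E_j}$, and it remains to check that $\mathrm{ev}_{-1} \circ Z^{E_1^{\text{nu}}} \circ \Omega$ and $Z^{E_1^{\text{nu}}} \circ \Omega \circ \mathrm{ev}_{-1}$ agree. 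They do: $\mathrm{ev}_{-1}$ is exact, hence commutes with $\Omega$; and it is lax symmetric monoidal, hence, being compatible with pullback along the augmentation of the operad $E_1^{\text{nu}}$, commutes with $Z^{E_1^{\text{nu}}}$ on the nose. The comparison natural transformation itself is produced by the universal property of $\fil_*^{E_{k-1}}$ from the natural map $B^{E_1}(\gR) = B^{E_1}(\mathrm{ev}_{-1}\fil_*^{E_k}\gR) \to \mathrm{ev}_{-1}B^{E_1}(\fil_*^{E_k}\gR)$ coming from the lax structure on $\mathrm{ev}_{-1}$, and the identification of right adjoints above shows it is an equivalence.

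The main obstacle I anticipate is exactly the compatibility of the canonical multiplicative filtration with Dunn--Lurie additivity used in the reduction --- that $\fil_*^{E_k}$, restricted to the relevant $E_1$-direction, is the one-step canonical filtration --- together with the coherent bookkeeping needed to iterate $\ell$ times (so that the comparison is seen to be one of $E_{k-\ell}$-algebras, not merely of underlying filtered objects). These are ``formal'' in that they reduce, via the pass-to-right-adjoints principle, to the fact that evaluation at filtration $-1$, the trivial-algebra functors, desuspension, and the forgetful functors all commute for structural reasons; but assembling this coherently is where the real work lies. When $\gR$ is connected one can also sanity-check the comparison map on $\colim$ (both sides give $B^{E_\ell}\gR$) and, since $\gr$ is strong symmetric monoidal and hence commutes with $B^{E_\ell}$, on $\gr$ (where it reduces to computing $B^{E_\ell}$ of a free $E_k$-algebra, again via \eqref{eq:BarIsIndec}), both sides then being complete.
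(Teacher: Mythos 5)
Your proposal is correct and follows essentially the same route as the paper: both arguments pass to right adjoints and rest on the observation that the right adjoint of the bar construction is, on underlying objects, the (unitalised) trivial algebra on a desuspension, which visibly commutes with the lax symmetric monoidal functor of evaluation at filtration $-1$. The only differences are organisational — you reduce to $\ell=1$ via Dunn--Lurie and iterate, and work non-unitally, whereas the paper treats all $\ell$ at once in the augmented setting by testing the Beck--Chevalley map after the conservative forgetful functor using $U^{E_\ell}\Omega^{E_\ell}(-) \simeq \bunit \oplus \Sigma^{-\ell}(-)$.
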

\begin{proof}
We establish the corresponding identity among the right adjoints to all these functors. Firstly, we claim that the right adjoint $\Omega^{E_\ell}$ to
$$B^{E_\ell} : \mathsf{Alg}_{E_{\ell}}^\mathrm{aug}(\mathsf{D}) \lra \mathsf{D}$$
is natural in $\mathsf{D}$ a symmetric monoidal category and lax monoidal functors $\phi : \mathsf{D} \to \mathsf{D}'$. To see this, we must show that the Beck--Chevalley morphism $\phi \circ \Omega^{E_\ell} \Rightarrow \Omega^{E_\ell} \circ \phi$ is an equivalence. We may test this after applying the forgetful functor $U^{E_\ell} : \mathsf{Alg}_{E_{\ell}}^\mathrm{aug}(\mathsf{D}') \to \mathsf{D}'$, as this is conservative. But $U^{E_\ell}$ and $U^{E_\ell}(\Omega^{E_\ell}(-)) \simeq \bunit \oplus \Sigma^{-\ell} (-)$ commute with $\phi$, so it is indeed an equivalence.

We apply this to the lax symmetric monoidal functor 
$$\phi = S^{0,0} \oplus (-1)^*(-) : \mathsf{Alg}_{E_{k-\ell}}^\mathrm{aug}(\mathsf{fil}(\mathsf{D}(\bk)^\bZ)) \lra \mathsf{Alg}_{E_{k-\ell}}^\mathrm{aug}(\mathsf{D}(\bk)^\bZ)$$
to obtain the required commutative square
\begin{equation*}
\begin{tikzcd}[/tikz/baseline=(tikz@f@1-2-1.base)]%[column sep = 0.8em]
\mathsf{Alg}_{E_{k-\ell}}^\mathrm{aug}(\mathsf{fil}(\mathsf{D}(\bk)^\bZ)) \rar{\Omega^{E_\ell}} \dar{S^{0,0} \oplus (-1)^*(-)} & \mathsf{Alg}_{E_{\ell}}^\mathrm{aug}(\mathsf{Alg}_{E_{k-\ell}}^\mathrm{aug}(\mathsf{fil}(\mathsf{D}(\bk)^\bZ))) \dar{S^{0,0} \oplus (-1)^*(-)}\\
\mathsf{Alg}_{E_{k-\ell}}^\mathrm{aug}(\mathsf{D}(\bk)^\bZ) \rar{\Omega^{E_\ell}}   & \mathsf{Alg}_{E_{\ell}}^\mathrm{aug}(\mathsf{Alg}_{E_{k-\ell}}^\mathrm{aug}(\mathsf{D}(\bk)^\bZ)).
\end{tikzcd}\qedhere
\end{equation*}
\end{proof}

We will consider $E_k$-algebras with their canonical multiplicative filtration unless otherwise emphasised, so we simplify notation to
$$\fil_* \gR := \fil_*^{E_k} \gR.$$

\subsection{Some $t$-structures}\label{sec:TStructures}
The stable $\infty$-category $\mathsf{D}(\bk)$ has the standard Postnikov $t$-structure \cite[Definition 1.3.5.16, Proposition 1.3.5.21]{HA}, whose connective (resp.\ coconnective) objects are those whose homotopy groups are supported in positive (resp.\ negative) degrees. Its heart is the abelian 1-category of $\bk$-modules. It is compatible with filtered colimits (i.e.\ the coconnective objects are closed under filtered colimits) and with the symmetric monoidal structure (i.e.\ the unit is connective and the connective objects are closed under tensor products). Any object $X$ obtains a Postnikov (or Whitehead) filtration $\tau_{\geq -*} X$
$$\cdots \lra \tau_{\geq 1} X \lra\tau_{\geq 0} X \lra \tau_{\geq -1} X \lra \tau_{\geq -2} X \lra \cdots$$
which is complete and whose colimit is $X$. Its associated graded has $\gr_f(\tau_{\geq -*} X) \simeq \Sigma^{-f} \pi_f(X)$, where we consider $\pi_f(X)$ as an object of (the heart of) $\mathsf{D}(\bk)$.

We obtain a Postnikov $t$-structure on $\mathsf{D}(\bk)^\bZ$ by doing the above in each grading. Its heart is the abelian 1-category of $\bZ$-graded $\bk$-modules. It is again compatible with filtered colimits and with the (Day convolution) symmetric monoidal structure.

If $\gR \in \mathsf{Alg}_{E_1}(\mathsf{D}(\bk)^\bZ)$ is connective (i.e.\ $\pi_{n,d}(\gR)=0$ for $d<0$) then $\gR\text{-}\mathsf{mod}$ has a $t$-structure whose connective (resp.\ coconnective) objects are those which are connective (resp.\ coconnective) in $\mathsf{D}(\bk)^\bZ$ (adapt \cite[Proposition 7.1.1.13]{HA} to the graded setting). Its heart is the abelian 1-category of $\bZ$-graded $\pi_{*,0}(\gR)$-modules. In particular the Postnikov filtration $\tau_{\geq -*} \gM \in \mathsf{fil}(\mathsf{D}(\bk)^\bZ)$ of an $\gR$-module $\gM$ promotes to an object of $\mathsf{fil}(\gR\text{-}\mathsf{mod})$. Its associated graded has $\gr_f(\tau_{\geq -*} \gM) \simeq \Sigma^{-f} \pi_{*,f}(\gM)$, where $\pi_{*,f}(\gM)$ is an $\gR$-module via $\gR \to \tau_{\leq 0} \gR = \pi_{*,0}(\gR)$.

We will also have cause to consider a sheared $t$-structure on $\mathsf{D}(\bk)^\bZ$, the \emph{diagonal $t$-structure}, in which an object $X$ is connective (resp.\ coconnective) if each $\Sigma^{-n}X(n)$ is connective (resp.\ coconnective) in the Postnikov $t$-structure on $\mathsf{D}(\bk)$. It is elementary to see that this is indeed a $t$-structure, that its heart is again the abelian 1-category of $\bZ$-graded $\bk$-modules, and that it is compatible with the (Day convolution) symmetric monoidal structure. We write $\tau^\text{diag}$ for its truncation functors. 

If $(\mathsf{C}_{\geq 0}, \mathsf{C}_{\leq 0})$ is a $t$-structure on a symmetric monoidal category $(\mathsf{C}, \otimes, \bunit)$ which is compatible with with a symmetric monoidal structure, then by \cite[Example 2.2.1.10]{HA} the heart $\mathsf{C}^\heartsuit = (\mathsf{C}_{\geq 0})_{\leq 0}$ has a symmetric monoidal structure such that the truncation functor $\tau_{\leq 0} : \mathsf{C}_{\geq 0} \to \mathsf{C}^\heartsuit$ is strong symmetric monoidal (so the symmetric monoidal structure on the heart is calculated as $\tau_{\leq 0}(- \otimes -)$). We already used this implicitly above when discussing the map of $E_1$-algebras $\gR \to \tau_{\leq 0} \gR = \pi_{*,0}(\gR)$. In Section \ref{sec:StabHopfAlg} we will further use it as follows: if $A$ is connective with respect to the diagonal $t$-structure on $\mathsf{D}(\bk)^\bZ$, and has the structure of an $E_1$-bialgebra, then 
 $$\tau_{\leq 0}^\text{diag} A \simeq \bigoplus_{n \in \bZ} \pi_{n,n}(A)$$
has the structure of a bialgebra object in $\bZ$-graded $\bk$-modules.

\begin{warning}\label{warn:flatness}
In this setting the inclusion $\iota: \mathsf{C}^\heartsuit \to \mathsf{C}_{\geq 0}$ is right adjoint to the strong symmetric monoidal functor $\tau_{\leq 0}$ and so is lax symmetric monoidal. In particular, it preserves $E_1$-algebra objects, but it need not preserve $E_1$-coalgebra objects! This is because for a coalgebra $C \in \mathsf{C}^\heartsuit$ to give a coalgebra in $\mathsf{C}$ one needs a map $\iota(C) \to \iota(C) \otimes \iota(C)$, but what one has is a map $\iota(C) \to \iota(C \otimes C) = \tau_{\leq 0} (\iota(C) \otimes \iota(C))$. However $\iota$ becomes strong symmetric monoidal if we restrict to the subcategory of $\mathsf{C}^\heartsuit$ consisting of flat objects.
\end{warning}

Returning to the example, and heeding the warning, as long as each $\pi_{n,n}(A)$ is a flat $\bk$-module then the $E_1$-bialgebra $\tau_{\leq 0}^\text{diag} A$ in the heart of $\mathsf{D}(\bk)^\bZ$ is also an $E_1$-bialgebra in $\mathsf{D}(\bk)^\bZ$ itself. We will usually enforce this flatness by taking $\bk$ to be a field.

\subsection{Thick subcategories}\label{sec:ThickSubcat}

In the stable categories $\gR\text{-}\mathsf{mod}$ or $\fil_*\gR\text{-}\mathsf{mod}$  (or indeed $\mathsf{D}(\bk)^\bZ$ or $\mathsf{fil}(\mathsf{D}(\bk)^\bZ)$) we say that a full subcategory $\mathsf{C}$ is \emph{thick} whenever:
\begin{enumerate}[(i)]
\item If $A \to B \to C$ is a cofibre sequence and two of these objects are in $\mathsf{C}$, then so is the third.

\item If $A$ is in $\mathsf{C}$ then so is $S^{n,d} \otimes A$ (or $S^{n,d,f} \otimes A$ in the case of filtered objects).

\item If $A$ is in $\mathsf{C}$ then so is any retract of it.
\end{enumerate}
Our second axiom differs slightly from the norm: usually $\mathsf{C}$ is only assumed closed under (de)suspensions, but we wish to allow shifts by all bigraded (filtered) spheres. We will say \emph{the thick subcategory generated by $X$} to mean the smallest thick subcategory containing $X$. We collect some elementary facts about them as follows.

\begin{lem}\label{lem:ThickSubcatFacts}
Let $U$ be in the thick subcategory of the monoidal unit and $f : U \otimes X \to X$ be a morphism: form its iterates $f^k$ by precomposing $f$ with $U \otimes f$ and so on.
\begin{enumerate}[(i)]
\item\label{it:ThickSubcatFacts:1} $X/f$ is in the thick subcategory generated by $X$.

\item\label{it:ThickSubcatFacts:2} For any $k \geq 1$, $X/f^k$ is in the thick subcategory generated by $X/f$.

\item\label{it:ThickSubcatFacts:3} If $f^k \simeq 0$ then $X$ is in the thick subcategory generated by $X/f$.

\item\label{it:ThickSubcatFacts:4} $X/f$ and $X/f^k$ generate the same thick subcategories.

\end{enumerate}
\end{lem}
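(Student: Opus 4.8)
The plan is to treat the four items essentially in the order stated, with (\ref{it:ThickSubcatFacts:1}) and (\ref{it:ThickSubcatFacts:2}) being the load-bearing cases and (\ref{it:ThickSubcatFacts:3}) and (\ref{it:ThickSubcatFacts:4}) following formally. For (\ref{it:ThickSubcatFacts:1}): since $X/f$ is by definition the cofibre of $f : U \otimes X \to X$, and $U$ lies in the thick subcategory generated by the unit $\bunit$, the object $U \otimes X$ lies in the thick subcategory generated by $\bunit \otimes X \simeq X$ (tensoring the thick-subcategory-membership of $U$ against $X$; more precisely, $\{Y : Y \otimes X \text{ lies in the thick subcat.\ gen.\ by } X\}$ is a thick subcategory containing $\bunit$, hence contains $U$). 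Then $X/f$ sits in a cofibre sequence $U \otimes X \to X \to X/f$ with the first two terms in the thick subcategory generated by $X$, so by axiom (i) of thickness the third is too.

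For (\ref{it:ThickSubcatFacts:2}): I would induct on $k$, the base case $k=1$ being trivial. The iterate $f^k : U^{\otimes k} \otimes X \to X$ factors as $U \otimes f^{k-1}$ followed by $f$ (or the reverse composite; either works up to the coherences built into the definition of the iterate). The standard octahedral/cofibre-of-a-composite argument then gives a cofibre sequence relating $X/f^k$, $X/f^{k-1}$, and $U^{\otimes (k-1)} \otimes (X/f)$: concretely, the composite $U^{\otimes k}\otimes X \xrightarrow{U\otimes f^{k-1}} U \otimes X \xrightarrow{f} X$ yields a cofibre sequence $U^{\otimes(k-1)}\otimes(X/f^{k-1})\to X/f^k \to X/f$ — wait, I should be careful about which composite; the cleaner version is to use that for any composable $a, b$ there is a cofibre sequence $\mathrm{cofib}(a) \to \mathrm{cofib}(b\circ a) \to \mathrm{cofib}(b)$. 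Applying this with $a = f$ and $b = $ the appropriate iterate, one gets a cofibre sequence with outer terms built from $X/f$ (shifted by a tensor power of $U$, which is allowed since $U$ is in the thick subcategory of the unit and hence tensoring by $U^{\otimes j}$ preserves the thick subcategory generated by $X/f$) and $X/f^{k-1}$, which lies in the thick subcategory generated by $X/f$ by the inductive hypothesis. Axiom (i) of thickness then places $X/f^k$ there as well.

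For (\ref{it:ThickSubcatFacts:3}): if $f^k \simeq 0$, then the cofibre sequence $U^{\otimes k}\otimes X \xrightarrow{f^k} X \to X/f^k$ splits, exhibiting $X$ as a retract of $X/f^k$; since $X/f^k$ lies in the thick subcategory generated by $X/f$ by (\ref{it:ThickSubcatFacts:2}), axiom (iii) of thickness shows $X$ does too. Finally (\ref{it:ThickSubcatFacts:4}): by (\ref{it:ThickSubcatFacts:2}), $X/f^k$ lies in the thick subcategory generated by $X/f$; conversely, the thick subcategory generated by $X/f^k$ contains $X/f^k$, and one observes that $f$ restricts to an endomorphism-type map on $X/f^k$ whose further cofibre recovers $X/f$ — more directly, since $(X/f^k)/(\text{induced } f) \simeq X/f$ up to the relevant sign/shift conventions (this is exactly the $k$-fold version applied within the category of $X/f^k$), part (\ref{it:ThickSubcatFacts:1}) applied to $X/f^k$ shows $X/f$ lies in the thick subcategory generated by $X/f^k$. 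Hence the two thick subcategories coincide.

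The main obstacle I anticipate is bookkeeping in (\ref{it:ThickSubcatFacts:2}) and (\ref{it:ThickSubcatFacts:4}): making the cofibre-of-a-composite sequences genuinely rigorous in the monoidal (not merely symmetric-monoidal) $\infty$-categorical setting, keeping track of the tensor-power-of-$U$ shifts on the correct side, and confirming that the "induced $f$" on $X/f^k$ — which requires $U$ to be suitably central, or at least requires $f$ to be tensored on the correct side — is well-defined. Since $U$ lies in the thick subcategory of the unit, which consists of dualizable central objects, this centrality is available, but it deserves a sentence of care. Everything else is a direct application of the thickness axioms (i) and (iii) together with the observation that tensoring by $U^{\otimes j}$ preserves "the thick subcategory generated by $Y$" for any $Y$.
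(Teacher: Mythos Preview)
Your plan for (\ref{it:ThickSubcatFacts:1})--(\ref{it:ThickSubcatFacts:3}) is correct and matches the paper's argument essentially verbatim: the cofibre-of-a-composite sequence $U \otimes X/f^{k-1} \to X/f^k \to X/f$ is exactly what the paper uses for (\ref{it:ThickSubcatFacts:2}), and the retract argument is exactly the paper's (\ref{it:ThickSubcatFacts:3}).

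Part (\ref{it:ThickSubcatFacts:4}) has a genuine gap. Your assertion that $(X/f^k)/(\text{induced }f) \simeq X/f$ is false as stated: already for $U=\bunit$, $X=\bZ$, $f=p\cdot-$, the cofibre of $p\cdot-$ on $\bZ/p^k$ is $\bZ/p \oplus \Sigma\bZ/p$, not $\bZ/p$. What is true is that $X/f$ is a \emph{retract} of that cofibre, but this is not automatic. Computing the total cofibre of the relevant square the other way round identifies $(X/f^k)/\bar{f}$ with $(X/f)/\bar{f}'^k$, where $\bar{f}' : U \otimes X/f \to X/f$ is the map induced on cofibres by the commuting square with $f$ on both sides. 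To get the retract you need $\bar{f}'^k \simeq 0$ for $k\geq 2$, and this comes down to the nontrivial fact that a self-map of a cofibre induced in this way satisfies $(\bar{f}')^2 \simeq 0$. The paper invokes exactly this principle (citing a note of Ramzi) and then argues via the retract; your plan elides this step entirely. So the missing idea is: identify the cofibre as $(X/f)/\bar{f}'^k$, and supply the argument that $\bar{f}'^2 \simeq 0$.
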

\begin{proof}
As $U$ is in the thick subcategory of the monoidal unit, for any $Y$ it follows that $U \otimes Y$ is in the thick subcategory generated by $\bunit \otimes Y = Y$. We will use this principle often.

(\ref{it:ThickSubcatFacts:1})  The cofibre sequence $U \otimes X \overset{f}\to X \to X/f$ shows that $X/f$ is in the thick subcategory generated by $X$.

(\ref{it:ThickSubcatFacts:2}) The factorisation $f^{k} : U \otimes (U^{\otimes k-1}) \overset{A \otimes f^{k-1}}\to U \otimes X \overset{f}\to X$ gives a cofibre sequence $U \otimes X/f^{k-1} \to X/f^k \to X/f$. By induction $X/f^{k-1}$ is in the thick subcategory generated by $X/f$, so $U \otimes X/f^{k-1}$ is too, and hence by the cofibre sequence $X/f^k$ is too.

(\ref{it:ThickSubcatFacts:3}) If $f^k \simeq 0$ then $X/f^k \simeq X \oplus \Sigma (U \otimes X)$ contains $X$ as a retract. So $X$ is in the thick subcategory generated by $X/f^k$, so by (\ref{it:ThickSubcatFacts:2}) is in the thick subcategory generated by $X/f$.

(\ref{it:ThickSubcatFacts:4}) Consider the commutative diagram
\begin{equation*}
\begin{tikzcd}%[column sep = 0.7em]
U^{\otimes k+1} \otimes X \rar{U^{\otimes k} \otimes f} \dar{U^{\otimes k} \otimes f} & U^{\otimes k} \otimes X \rar \dar{U^{\otimes k-1} \otimes f} & \cdots \rar & U^{\otimes 2} \otimes X \rar{U \otimes f} \dar{U \otimes f} & U \otimes X \dar{f}\\
U^{\otimes k} \otimes X \rar{U^{\otimes k-1} \otimes f} & U^{\otimes k-1} \otimes X \rar & \cdots \rar & U \otimes X \rar{f}& X
\end{tikzcd}
\end{equation*}
and the total homotopy cofibre $C$ of the outer rectangle. By taking horizontal cofibres in the outer rectangle first there is a cofibre sequence $U \otimes X/f^k \to X/f^k \to C$, so $C$ is in the thick subcategory generated by $X/f^k$. On the other hand taking vertical cofibres first gives a cofibre sequence $U^{\otimes k} \otimes X/f \overset{\bar{f}^k}\to X/f \to C$. We will show that $\bar{f}^k$ is null when $k > 1$, so that $X/f$ is a retract of $C$ and so is in the thick subcategory generated by $X/f^k$.

To see this, note that $\bar{f}^k$ is indeed the $k$-fold iterate of the map $\bar{f} : U \otimes X/f \to X/f$ induced between vertical cofibres of the right-hand square (supposing of course that the homotopies making the other squares commute are obtained by applying $U^{\otimes i} \otimes -$ to this one). By a well-known principle such a map satisfies $\bar{f}^2 \simeq 0$: see e.g.\ \cite{RamziNote} for an extensive discussion.
\end{proof}

\subsection{Finiteness}

Recall that an object $X$ of an $\infty$-category $\mathsf{C}$ is called \emph{compact} if $\Map_\mathsf{C}(X,-) : \mathsf{C} \to \mathsf{Spaces}$ commutes with filtered colimits. If $\mathsf{C}$ is stable, so has mapping spectra, then we may instead ask that $\map_\mathsf{C}(X,-) : \mathsf{C} \to \mathsf{Sp}$ commutes with filtered colimits. As $\Map_\mathsf{C}(X,-) \simeq \Omega^\infty \map_\mathsf{C}(X,-)$ the latter implies the former, but as $\pi_n\map_\mathsf{C}(X,-) = \pi_0\Map_\mathsf{C}(\Sigma^n X,-) = \pi_0\Map_\mathsf{C}(X,\Sigma^{-n}-)$ and $\Sigma^{-n}$ preserves colimits the former implies the latter too.

In the stable categories $\gR\text{-}\mathsf{mod}$ or $\fil_*\gR\text{-}\mathsf{mod}$ an object is called \emph{finite} if it is in the thick subcategory generated by the monoidal unit. 

\begin{lem}
In $\gR\text{-}\mathsf{mod}$ or $\fil_*\gR\text{-}\mathsf{mod}$ the finite objects are compact.
\end{lem}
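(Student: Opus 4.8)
The plan is to reduce everything to the compactness of the monoidal unit, and then propagate this along the thick subcategory.

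First I would check that the monoidal unit of $\gR\text{-}\mathsf{mod}$, namely $\gR \cong \gR \otimes_\bk S^{0,0}$, is compact. The free module functor $\gR \otimes_\bk - : \mathsf{D}(\bk)^\bZ \to \gR\text{-}\mathsf{mod}$ is left adjoint to the forgetful functor $U$, which preserves colimits (Section \ref{sec:Modules}). Hence
\[
\map_{\gR\text{-}\mathsf{mod}}(\gR \otimes_\bk S^{n,d}, -) \simeq \map_{\mathsf{D}(\bk)^\bZ}(S^{n,d}, U(-))
\]
and, since $U$ preserves filtered colimits and $S^{n,d}$ is compact in $\mathsf{D}(\bk)^\bZ$, the left-hand side sends filtered colimits in $\gR\text{-}\mathsf{mod}$ to filtered colimits of spectra. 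In particular $\gR$ and each bigraded shift $S^{n,d} \otimes \gR$ are compact.

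Next I would invoke the standard closure properties of compact objects in a stable $\infty$-category: they are closed under finite colimits (in particular under cofibres of maps) and under retracts. The only point requiring attention is that our notion of thick subcategory (Section \ref{sec:ThickSubcat}) permits shifts by \emph{all} bigraded spheres $S^{n,d}$, not merely integer suspensions; but $S^{n,d} \otimes - : \gR\text{-}\mathsf{mod} \to \gR\text{-}\mathsf{mod}$ is an autoequivalence, with inverse $S^{-n,-d} \otimes -$, so it preserves (and reflects) compact objects. Thus the full subcategory of compact objects is thick in our sense, and since the finite objects are by definition those in the thick subcategory generated by the unit $\gR$, they are all compact.

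The case of $\fil_*\gR\text{-}\mathsf{mod}$ is identical, using the free--forgetful adjunction between $\fil_*\gR\text{-}\mathsf{mod}$ and $\mathsf{fil}(\mathsf{D}(\bk)^\bZ)$, whose forgetful functor likewise preserves colimits, together with the observation that the filtered sphere $S^{n,d,f} = f_* S^{n,d}$ is compact in $\mathsf{fil}(\mathsf{D}(\bk)^\bZ) = \mathsf{Fun}(\bZ_\leq, \mathsf{D}(\bk)^\bZ)$: by adjunction $\map(S^{n,d,f}, \fil_* X) \simeq \map(S^{n,d}, \fil_f X)$, filtered colimits in this functor category are computed pointwise, and $S^{n,d}$ is compact in $\mathsf{D}(\bk)^\bZ$. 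I do not expect any genuine obstacle here; the one thing to be careful about is precisely the mild nonstandardness of ``thick'' just mentioned, which is dispatched by noting the shift functors are equivalences.
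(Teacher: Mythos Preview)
Your proof is correct and follows essentially the same approach as the paper: reduce to compactness of the unit via the free--forgetful adjunction (whose right adjoint preserves filtered colimits), then propagate through the thick subcategory. The paper traces compactness of the unit one step further back (to $* \in \mathsf{sSet}$ through a chain of such adjoints), but the substance is identical.
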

\begin{proof}
By their definition the compact objects clearly have the 2-out-of-3 property for cofibre sequences, and are closed under retracts and tensoring with $\otimes$-invertible objects such as $S^{n,d}$ or $S^{n,d,f}$. Thus they form a thick subcategory in the sense we gave in Section \ref{sec:ThickSubcat}. It suffices therefore to show that the monoidal units $\gR$ and $\fil_*\gR$ are compact. 

If $L$ is a left adjoint functor whose right adjoint preserves filtered colimits, then $L$ preserves compact objects. Each of
\begin{equation*}
\begin{tikzcd}%[column sep = 3em]
\mathsf{sSet} \rar{\bk\text{-chains}} & \mathsf{D}(\bk) \rar{0_*} & \mathsf{D}(\bk)^\bZ \rar{0_*} \dar{\gR \otimes -} & \mathsf{fil}(\mathsf{D}(\bk)^\bZ) \dar{\fil_*\gR \otimes -} \\
 & & \gR\text{-}\mathsf{mod} & \fil_* \gR\text{-}\mathsf{mod}
\end{tikzcd}
\end{equation*}
is such a left adjoint, and the compact object $* \in \mathsf{sSet}$ is sent to $\gR$ and $\fil_*\gR$.
\end{proof}

\begin{lem}\label{lem:TensorAndHomStaysThick}
If $\gF$ is a finite object in $\gR\text{-}\mathsf{mod}$, and $\gM$ is an object in this category, then $\gF \otimes_\gR \gM$, $\underline{\map}^r_\gR(\gF, \gM)$, and $\underline{\map}^l_\gR(\gF, \gM)$ are all in the thick subcategory generated by $\gM$. Similarly in the category $\fil_*\gR\text{-}\mathsf{mod}$.
\end{lem}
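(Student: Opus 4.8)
The plan is to fix $\gM$ and to prove that the class
$$\mathcal{F} := \{\gF \in \gR\text{-}\mathsf{mod} \mid \gF \otimes_\gR \gM,\ \underline{\map}^r_\gR(\gF, \gM),\ \underline{\map}^l_\gR(\gF, \gM) \text{ all lie in } \langle \gM \rangle\}$$
is a thick subcategory of $\gR\text{-}\mathsf{mod}$ containing the monoidal unit $\gR$, where $\langle \gM \rangle$ denotes the thick subcategory generated by $\gM$. Since the finite objects are by definition exactly the objects of $\langle \gR \rangle$, this immediately gives the statement; the argument in $\fil_*\gR\text{-}\mathsf{mod}$ is identical, using that the filtered spheres $S^{n,d,f}$ are likewise $\otimes$-invertible there.

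First I would check $\gR \in \mathcal{F}$. As $\gR$ is the monoidal unit, the endofunctors $- \otimes_\gR \gR$ and $\gR \otimes_\gR -$ of $\gR\text{-}\mathsf{mod}$ are naturally equivalent to the identity, hence so are their right adjoints $\underline{\map}^r_\gR(\gR, -)$ and $\underline{\map}^l_\gR(\gR, -)$; thus all three of $\gR \otimes_\gR \gM$, $\underline{\map}^r_\gR(\gR, \gM)$, $\underline{\map}^l_\gR(\gR, \gM)$ are equivalent to $\gM$, which trivially lies in $\langle \gM \rangle$.

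Next I would verify thickness using the three axioms of Section \ref{sec:ThickSubcat}. The inputs are: (a) $- \otimes_\gR \gM : \gR\text{-}\mathsf{mod} \to \gR\text{-}\mathsf{mod}$ preserves colimits, hence is exact as a functor of stable $\infty$-categories; and (b) $\underline{\map}^r_\gR(-, \gM)$ and $\underline{\map}^l_\gR(-, \gM)$ are contravariant in their first variable and carry cofibre sequences to fibre sequences — indeed $\map_\gR(\gK \otimes_\gR -, \gM)$ sends a cofibre sequence in the first slot to a fibre sequence of spectra for every $\gK$, and mapping spectra detect fibre sequences; in a stable category fibre sequences and cofibre sequences coincide. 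Given these, axiom (i) (2-out-of-3 for cofibre sequences) follows because each of the three functors turns a cofibre sequence among the $\gF$'s into a cofibre sequence, and $\langle \gM \rangle$ has 2-out-of-3; axiom (iii) (retracts) follows because functors preserve retracts and $\langle \gM \rangle$ is retract-closed. For axiom (ii), I would use $S^{n,d} \otimes (\gF \otimes_\gR \gM) \simeq (S^{n,d} \otimes \gF) \otimes_\gR \gM$ together with the natural equivalences $\underline{\map}^r_\gR(S^{n,d} \otimes \gF, \gM) \simeq S^{-n,-d} \otimes \underline{\map}^r_\gR(\gF, \gM)$ and likewise for $\underline{\map}^l_\gR$: these come from $S^{n,d} \otimes -$ being an invertible central endofunctor of $\gR\text{-}\mathsf{mod}$ (so it is left adjoint to $S^{-n,-d} \otimes -$ and commutes with $- \otimes_\gR -$), after which the claim follows since $\langle \gM \rangle$ is closed under $S^{m,e} \otimes -$ for all bigradings, positive and negative.

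I do not foresee a real obstacle: this is a formal thick-subcategory induction. The only places calling for a little care are the bookkeeping in axiom (ii) — tracking how invertible spheres pass through the contravariant internal-hom functors — and making sure throughout that one works with the module-level internal mapping objects $\underline{\map}^r_\gR$, $\underline{\map}^l_\gR$ (so the conclusion is genuinely a statement in $\gR\text{-}\mathsf{mod}$) rather than their common underlying object in $\mathsf{D}(\bk)^\bZ$.
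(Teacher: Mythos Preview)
Your proposal is correct and follows essentially the same approach as the paper: fix $\gM$, show that the class of $\gF$ for which the three objects lie in $\langle \gM \rangle$ is thick and contains the monoidal unit. The paper's proof is a terse one-paragraph version of exactly this argument, noting that the three functors preserve (co)fibre sequences, send shifts to shifts, and preserve retracts.
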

\begin{proof}
The functors $- \otimes_\gR \gM$, $\underline{\map}^r_\gR(-, \gM)$, and $\underline{\map}^l_\gR(-, \gM)$ all preserve (co)fibre sequences, send shifts to shifts, and (being functors) preserve retracts, so the class of $\gR$-modules which they send into the thick subcategory of $\gM$ is itself a thick subcategory. It certainly contains the monoidal unit, so contains all finite objects.
\end{proof}

\subsection{Duality}

When $\gR$ is an $E_2$-algebra the category $\gR\text{-}\mathsf{mod}$ (or $\fil_*\gR\text{-}\mathsf{mod}$) is only $E_1$-monoidal, so as discussed in Section \ref{sec:Modules} it has left and right internal mapping objects: thus we must speak of left and right duals as in \cite[Section 4.6.1]{HA}. By definition of the right mapping object there is an equivalence
$$\map_\gR(- \otimes_\gR \gM, -) \simeq \map_\gR(-, \underline{\map}_\gR^r(\gM, -)).$$
Using this one may produce a natural map
$$(-) \otimes_\gR \underline{\map}_\gR^r(\gM, \gR) \lra \underline{\map}_\gR^r(\gM, -),$$
and when $\gM$ is finite this is an equivalence (it is when evaluated at $\gR$, and both sides preserve colimits when $\gM$ is finite and hence compact). This exhibits $D^r(\gM) := \underline{\map}_\gR^r(\gM, \gR)$ as right dual to $\gM$; a similar calculation exhibits $D^l(\gM) := \underline{\map}_\gR^l(\gM, \gR)$ as left dual to $\gM$.

\begin{lem}\label{lem:DualInThick}
If $\gM$ and $\gN$ are dual finite objects in $\gR\text{-}\mathsf{mod}$, then they generate the same thick subcategory.
\end{lem}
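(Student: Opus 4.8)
The plan is to combine the triangle identities for a dual pair with Lemma \ref{lem:TensorAndHomStaysThick}. Throughout write $\langle \gX \rangle$ for the thick subcategory generated by $\gX$. First I would fix notation: since $\gM$ and $\gN$ are dual, we may assume (the two cases being interchanged by swapping the names $\gM$ and $\gN$) that $\gN$ is a \emph{right} dual of $\gM$ in the $E_1$-monoidal category $\gR\text{-}\mathsf{mod}$, so in the sense of \cite[\S 4.6.1]{HA} there are an evaluation $e \colon \gN \otimes_\gR \gM \to \gR$ and a coevaluation $c \colon \gR \to \gM \otimes_\gR \gN$, both maps of $\gR$-modules, subject to the triangle identities. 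The consequence I would extract from these is that, purely as $\gR$-modules, $\gM$ is a retract of $\gM \otimes_\gR \gN \otimes_\gR \gM$ (via $(\gM \otimes_\gR e)\circ(c \otimes_\gR \gM)$) and, symmetrically, $\gN$ is a retract of $\gN \otimes_\gR \gM \otimes_\gR \gN$. Both $\gM$ and $\gN$ are finite by hypothesis (in fact assuming either one finite would suffice, as a one-sided dual of a finite object is again finite).

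The main step is a closure observation. I would first record the evident mirror-image complement to Lemma \ref{lem:TensorAndHomStaysThick}: for a finite object $\gF$ and any $\gR$-module $\gX$, not only is $\gF \otimes_\gR \gX \in \langle\gX\rangle$, but also $\gX \otimes_\gR \gF \in \langle\gX\rangle$ — the functor $\gX \otimes_\gR(-)$ is exact, preserves bigraded shifts and retracts, and sends $\gR$ to $\gX$, so the class of $\gR$-modules it carries into $\langle\gX\rangle$ is a thick subcategory containing $\gR$, hence containing every finite object. Feeding $\gF = \gN$ into this: the class $\{\gX : \gN\otimes_\gR\gX \in \langle\gM\rangle\}$ is a thick subcategory (preimage of $\langle\gM\rangle$ under the exact, shift- and retract-preserving functor $\gN\otimes_\gR(-)$) and contains $\gM$ since $\gN\otimes_\gR\gM \in \langle\gM\rangle$, hence contains all of $\langle\gM\rangle$; thus $\langle\gM\rangle$ is stable under $\gN\otimes_\gR(-)$, and likewise under $(-)\otimes_\gR\gN$. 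Running the same argument with $\gM$ and $\gN$ exchanged (using that $\gM$ is finite) shows $\langle\gN\rangle$ is stable under $\gM\otimes_\gR(-)$ and $(-)\otimes_\gR\gM$.

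Finally I would assemble the two ingredients: starting from $\gN \in \langle\gN\rangle$ and tensoring by $\gM$ twice, once on each side, gives $\gM\otimes_\gR\gN\otimes_\gR\gM \in \langle\gN\rangle$, and since $\gM$ is a retract of this object, $\gM \in \langle\gN\rangle$; symmetrically $\gN \in \langle\gM\rangle$, whence $\langle\gM\rangle = \langle\gN\rangle$. I do not anticipate any genuine obstacle here; the only point requiring care is the bookkeeping of the two sides of $\otimes_\gR$, since $\gR\text{-}\mathsf{mod}$ is merely $E_1$-monoidal and Lemma \ref{lem:TensorAndHomStaysThick} is stated for left-tensoring by a finite object — but the right-handed version has the identical one-line proof, and the triangle identities are exactly what converts this sidedness into the desired symmetric conclusion.
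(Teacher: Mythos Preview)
Your proposal is correct and follows essentially the same route as the paper: use the triangle identity to exhibit $\gM$ as a retract of $\gM \otimes_\gR \gN \otimes_\gR \gM$, then use finiteness of $\gM$ (via Lemma~\ref{lem:TensorAndHomStaysThick} and its right-handed analogue) to see that this triple tensor lies in $\langle\gN\rangle$, and symmetrically. The paper's version is terser---it goes directly to $\gM \otimes_\gR \gN \otimes_\gR \gM \in \langle\gN\rangle$ without the intermediate closure statement---but the content is the same.
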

\begin{proof}
The duality data
$$ev : \gN \otimes_\gR \gM \lra \gR \quad\quad\quad coev : \gR \lra \gM \otimes_\gR \gN$$
gives a factorisation $Id_\gM : \gM \overset{\gM \otimes coev}\to \gM \otimes_\gR \gN \otimes_\gR \gM \overset{ev \otimes \gM}\to \gM$ which expresses $\gM$ as a retract of $\gM \otimes_\gR \gN \otimes_\gR \gM$. As $\gM$ is a finite $\gR$-module, so is in the thick subcategory generated by $\gR,$ $\gM \otimes_\gR \gN \otimes_\gR \gM$ is in the thick subcategory generated by $\gN$, so $\gM$ is too. Similarly for the converse.
\end{proof}

The same discussion holds in $\fil_*\gR\text{-}\mathsf{mod}$.

\section{Stable homology is Bousfield localisation}\label{sec:localisation}

For an $\gR$-module $\gM$, we wish to develop the notion of the ``best approximation to $\gM$ up to slope $\lambda$''. As we are mainly interested in \emph{finite} $\gR$-modules, we propose that this be implemented as Bousfield localisation away from the (essentially small) class
$$\mathcal{A}^f_\lambda := \{\text{finite $\gR$-modules with a slope $\lambda$ vanishing line}\}.$$
We spell this out, closely following Miller \cite{MillerFin}.

\begin{defn}\label{defn:Local}\mbox{}
\begin{enumerate}[(i)]
\item A $\gR$-module $\gL$ is \emph{$\mathcal{A}^f_\lambda$-local} if $[\gT, \gL]_{\gR}=0$ for all $\gT \in \mathcal{A}_\lambda$.

\item A $\gR$-module $\gA$ is \emph{$\mathcal{A}^f_\lambda$-acyclic} if $[\gA, \gL]_{\gR}=0$ for every $\mathcal{A}^f_\lambda$-local $\gL$.

\item An $\gR$-module map $\phi: \gM \to \gL$ is an \emph{$\mathcal{A}^f_\lambda$-localisation} if $\gL$ is $\mathcal{A}^f_\lambda$-local and the fibre of $\phi$ is $\mathcal{A}^f_\lambda$-acyclic.

\end{enumerate}
\end{defn}

As $\mathcal{A}^f_\lambda$ is closed under shifts (i.e.\ $S^{n,d} \otimes -$ with $n,d \in \bZ$), in (i) it is equivalent to ask for the mapping object $\underline{\map}_\gR(\gT, \gL) \in \mathsf{D}(\bk)^\bZ$ to be trivial. As the $\mathcal{A}^f_\lambda$-local objects are closed under shifts, in (ii) it is equivalent to ask for $\underline{\map}_\gR(\gA, \gL) \simeq 0$.

\begin{prop}\label{prop:LocExists}
Any $\gR$-module has an $\mathcal{A}^f_\lambda$-localisation, which is unique up to homotopy equivalence. 
\end{prop}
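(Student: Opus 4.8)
The plan is to construct the localisation by a standard small-object / transfinite-iteration argument, using the fact that $\mathcal{A}^f_\lambda$ is an essentially small class of compact objects. First I would fix a set $\mathcal{G}$ of objects representing the (essentially small) isomorphism classes in $\mathcal{A}^f_\lambda$ — essential smallness holds because every finite $\gR$-module is built from finitely many copies of bigraded spheres $S^{n,d}$, so there are only a set of them up to equivalence. Then, given an $\gR$-module $\gM$, I would build a transfinite tower $\gM = \gM_0 \to \gM_1 \to \cdots$ by, at each successor stage, forming the pushout
\begin{equation*}
\gM_{\beta+1} := \gM_\beta \coprod_{\bigoplus_{\gT, \phi} (\gR \otimes_\bk S^{n,d}) \otimes_\gR \gT} 0,
\end{equation*}
where the coproduct runs over all $\gT \in \mathcal{G}$, all shifts $S^{n,d}$, and all maps $\phi : (\gR \otimes_\bk S^{n,d}) \otimes_\gR \gT \to \gM_\beta$ (equivalently, all elements of $\pi_{n,d}\underline{\map}_\gR(\gT,\gM_\beta)$), and taking colimits at limit ordinals. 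Equivalently, one cones off all maps out of objects of $\mathcal{A}^f_\lambda$ into $\gM_\beta$. Setting $\gL := \gM_\kappa$ for a sufficiently large regular cardinal $\kappa$ (larger than the cardinality of $\mathrm{Map}_\gR(\gT, -)$ applied to anything in the tower — here compactness of each $\gT$ is what makes a bounded $\kappa$ work), the natural map $\phi : \gM \to \gL$ is the desired localisation.

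The verification splits into the two properties. For $\mathcal{A}^f_\lambda$-locality of $\gL$: since each $\gT \in \mathcal{A}^f_\lambda$ is compact, $\map_\gR(\gT, -)$ commutes with the filtered colimit defining $\gL$, so any map $\gT \to \gL$ factors through some $\gM_\beta$; by construction that map was coned off at stage $\beta+1$, hence becomes null in $\gM_{\beta+1}$ and a fortiori in $\gL$. A short argument with the long exact sequence in $[\gT, -]_\gR$ and $\Sigma^{-1}\gT \in \mathcal{A}^f_\lambda$ (the class is closed under shifts) upgrades ``every map is eventually null'' to the vanishing of $[\gT,\gL]_\gR$ itself. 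For acyclicity of the fibre $\gC := \mathrm{fib}(\phi)$: each map $\gM_\beta \to \gM_{\beta+1}$ has cofibre $\Sigma$ of a coproduct of objects of the form $(\gR \otimes_\bk S^{n,d})\otimes_\gR \gT$, which lies in the localising subcategory (closed under colimits and cofibres) generated by $\mathcal{A}^f_\lambda$; hence so does the cofibre of $\phi$, and therefore so does $\gC = \Sigma^{-1}\mathrm{cofib}(\phi)$. Since mapping out of objects of $\mathcal{A}^f_\lambda$ into an $\mathcal{A}^f_\lambda$-local object vanishes, and this vanishing is closed under colimits and cofibre sequences, every object of this localising subcategory — in particular $\gC$ — is $\mathcal{A}^f_\lambda$-acyclic.

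Uniqueness is the soft part: if $\phi : \gM \to \gL$ and $\phi' : \gM \to \gL'$ are both $\mathcal{A}^f_\lambda$-localisations, then because $\mathrm{fib}(\phi)$ is acyclic and $\gL'$ is local, the composite $\mathrm{fib}(\phi) \to \gM \to \gL'$ is null, so $\phi'$ factors as $\psi \circ \phi$ for a map $\psi : \gL \to \gL'$; symmetrically one gets $\psi' : \gL' \to \gL$, and the two-out-of-three argument (using that a map whose fibre is both acyclic and local is an equivalence, since such a fibre maps trivially to itself) shows $\psi,\psi'$ are mutually inverse. The main obstacle — really the only non-formal point — is the set-theoretic bookkeeping: one must check that $\mathcal{A}^f_\lambda$ is essentially small and pin down a single cardinal $\kappa$ at which the tower stabilises, which is exactly where compactness of finite $\gR$-modules (Lemma on finite objects being compact) is used; everything else is the standard Bousfield localisation machinery, and indeed one could alternatively just cite Miller \cite{MillerFin} or the general localisation theory for compactly generated stable $\infty$-categories.
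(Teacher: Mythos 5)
Your proposal is correct and is essentially the paper's own argument: build a tower by coning off all maps from (shifts of) objects of $\mathcal{A}^f_\lambda$, use compactness of finite $\gR$-modules to see the colimit is local, use closure properties of the acyclics to see the fibre is acyclic, and deduce uniqueness formally from locality plus acyclicity. The only difference is that the paper indexes the tower by $\bN$ rather than a large regular cardinal $\kappa$ --- compactness of the objects of $\mathcal{A}^f_\lambda$ means the construction already converges at stage $\omega$, so the transfinite bookkeeping you flag as the main obstacle is not actually needed.
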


In view of this proposition, we denote ``the'' localisation map by $\gM \to L_\lambda^f(\gM)$, and its fibre by $C_\lambda^f(\gM)$.

\begin{proof}[Proof of Proposition \ref{prop:LocExists}]
Let us first discuss uniqueness. Let
$$\gA_i \lra \gM \overset{\phi_i}\lra \gL_i \quad\quad i = 1,2$$
be fibre sequences with $\gL_i$ $\mathcal{A}^f_\lambda$-local and $\gA_i$ $\mathcal{A}^f_\lambda$-acyclic. As $[S^{n,d} \otimes \gA_1, \gL_2]_\gR=0$ for all $(n,d)$, because $\gL_2$ is $\mathcal{A}^f_\lambda$-local and $\gA_1$ is $\mathcal{A}^f_\lambda$-acyclic, the long exact sequence obtained by applying $[-,\gL_2]_\gR$ to the fibre sequence with $i=1$ gives a bijection 
\begin{equation}\label{eq:LocBij}
- \circ \phi_1 : [\gL_1, \gL_2]_\gR \overset{\sim}\lra [\gM, \gL_2]_\gR.
\end{equation}
Under this bijection $\phi_2$ corresponds to a map $\psi_{12} : \gL_1 \to \gL_2$ such that $\psi_{12} \circ \phi_1 \simeq \phi_2$. The same with the indices reversed gives a map $\psi_{21} : \gL_2 \to \gL_1$ such that $\psi_{21} \circ \phi_2 \simeq \phi_1$. Combining them gives $\psi_{21} \circ \psi_{12} \circ \phi_1 \simeq \mathrm{Id}_{\gL_1} \circ \phi_1 : \gM \to \gL_1$ which with the fact that \eqref{eq:LocBij} is injective shows $\psi_{21} \circ \psi_{12} \simeq \mathrm{Id}_{\gL_1}$; the other composition is treated similarly.

For existence, first write $\gM_0 := \gM$. Define maps $\gM_0 \to \gM_1 \to \gM_2 \to \cdots$ inductively by the cofibre sequences
\begin{equation}\label{eq:TelescopeDef}
\bigoplus_{\substack{[\gT] \in \mathcal{A}^f_\lambda/\sim \\ f : \gT \to \gM_i}} \gT \overset{ev}\lra \gM_i \lra \gM_{i+1},
\end{equation}
where the sum is over pairs of a homotopy type $[\gT]$ of object in $\mathcal{A}_\lambda^f$ (there is only a set of homotopy types of finite $\gR$-modules) and a map $f : \gT \to \gM_i$ from a representative of this homotopy type; the map $ev$ on the $([\gT], f)$-summand is given by $f$.  We claim that the map $\gM = \gM_0 \to \gM_\infty := \colim_{i \in \bN} \gM_i$ is an $\mathcal{A}^f_\lambda$-localisation. 

To see that $\gM_\infty$ is $\mathcal{A}^f_\lambda$-local, let $\gT \in \mathcal{A}^f_\lambda$ and consider a map $g : \gT \to \gM_\infty$. As $\gT$ is a finite $\gR$-module it is compact, so $g$ factors as $g : \gT \overset{f}\to \gM_i \to \gM_{i+1} \to \gM_\infty$. But the composition $\gT \overset{f}\to \gM_i \to \gM_{i+1}$ is nullhomotopic by definition of $\gM_{i+1}$, and so $g$ is nullhomotopic too.

To see that the fibre of $\gM \to \gM_{\infty}$ is $\mathcal{A}^f_\lambda$-acyclic, let $\gL$ be $\mathcal{A}^f_\lambda$-local and apply $\underline{\map}_\gR(-, \gL)$ to the cofibre sequences \eqref{eq:TelescopeDef}. As this sends sums to products and annihilates objects of $\mathcal{A}^f_\lambda$, it gives a sequence of equivalences
$$\underline{\map}_\gR(\gM_0, \gL) \overset{\sim}\longleftarrow \underline{\map}_\gR(\gM_1, \gL) \overset{\sim}\longleftarrow \underline{\map}_\gR(\gM_2, \gL) \overset{\sim}\longleftarrow \underline{\map}_\gR(\gM_3, \gL) \overset{\sim}\longleftarrow \cdots$$
and so, taking the limit, shows that 
$$\underline{\map}_\gR(\gM_\infty, \gL) \simeq \lim_{i \in \bN} \underline{\map}_\gR(\gM_i, \gL) \lra \underline{\map}_\gR(\gM_0, \gL) = \underline{\map}_\gR(\gM, \gL)$$
is an equivalence, for all $\mathcal{A}^f_\lambda$-local $\gL$. This is precisely the same as saying that the fibre of $\gM \to \gM_\infty$ is $\mathcal{A}^f_\lambda$-acyclic.
\end{proof}

Its main properties are summarised as follows: again we follow Miller \cite{MillerFin}.

\begin{prop}\label{prop:Localisation}\mbox{}
\begin{enumerate}[(i)]
\item\label{it:Localisation1} The class $\mathcal{A}^f_\lambda$ is closed under left or right tensoring with finite $\gR$-modules.

\item\label{it:Localisation2} The class of $\mathcal{A}^f_\lambda$-acyclic $\gR$-modules is closed under left or right tensoring with arbitrary $\gR$-modules.

\item\label{it:Localisation4} The localisation $L^f_\lambda$ is smashing, i.e.\ $L^f_\lambda(\gM) \simeq L^f_\lambda(\gR) \otimes_\gR \gM \simeq \gM \otimes_\gR L_\lambda^f(\gR)$. Thus $L^f_\lambda(\gR)$ is an idempotent $\gR$-algebra, and $L^f_\lambda$ is Bousfield localisation with respect to it.
\end{enumerate}
\end{prop}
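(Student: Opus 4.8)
The plan is to treat the three parts in order, using throughout that $\mathcal{A}^f_\lambda$ is a thick subcategory of $\gR\text{-}\mathsf{mod}$. For (i) I would first check that the full subcategory of $\gR\text{-}\mathsf{mod}$ spanned by the modules admitting a slope $\lambda$ vanishing line is thick: closure under shifts $S^{n_0,d_0}\otimes-$ and retracts is immediate from $\pi_{n,d}(S^{n_0,d_0}\otimes\gM)=\pi_{n-n_0,d-d_0}(\gM)$ and the fact that a retract of a zero group is zero, while for a cofibre sequence $\gA\to\gB\to\gC$ with two terms having such a vanishing line, a glance at the long exact sequence in $\pi_{n,d}$ shows the third does too (after shifting the constant $\kappa$ by at most $1$). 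Intersecting with the thick subcategory of finite objects, $\mathcal{A}^f_\lambda$ is thick. Then for $\gT\in\mathcal{A}^f_\lambda$ and $\gF$ finite, Lemma \ref{lem:TensorAndHomStaysThick} (applied to the functor $-\otimes_\gR\gT$) puts $\gF\otimes_\gR\gT$ in the thick subcategory generated by $\gT$, hence in $\mathcal{A}^f_\lambda$; running the same argument with the colimit-, shift- and retract-preserving functor $\gT\otimes_\gR-$, which sends the unit $\gR$ to $\gT$, gives $\gT\otimes_\gR\gF\in\mathcal{A}^f_\lambda$.

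Before (ii) and (iii) I would record two facts. First, the $\mathcal{A}^f_\lambda$-acyclic modules form a localising subcategory $\mathsf{A}$ of $\gR\text{-}\mathsf{mod}$: this is immediate from the characterisation that $\gA$ is acyclic iff $\underline{\map}_\gR(\gA,\gL)\simeq 0$ for all $\mathcal{A}^f_\lambda$-local $\gL$, together with the exactness of $\underline{\map}_\gR(-,\gL)$ and its sending colimits to limits. Second, $\mathsf{A}=\mathsf{Loc}(\mathcal{A}^f_\lambda)$, the localising subcategory generated by $\mathcal{A}^f_\lambda$: the inclusion $\supseteq$ is clear, and for $\subseteq$ one inspects the telescope built in the proof of Proposition \ref{prop:LocExists}, noting that $\mathrm{fib}(\gM_i\to\gM_{i+1})\simeq\bigoplus_{[\gT],\,f}\gT$ exhibits $C^f_\lambda(\gM)=\mathrm{fib}(\gM\to\gM_\infty)$ as a filtered colimit of finite extensions of coproducts of objects of $\mathcal{A}^f_\lambda$, hence $C^f_\lambda(\gM)\in\mathsf{Loc}(\mathcal{A}^f_\lambda)$; since any $\gA\in\mathsf{A}$ is simultaneously acyclic and local it satisfies $L^f_\lambda(\gA)\simeq 0$, so $\gA\simeq C^f_\lambda(\gA)\in\mathsf{Loc}(\mathcal{A}^f_\lambda)$. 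I would also use that $\gR\text{-}\mathsf{mod}$ is compactly generated by the free modules $S^{n,d}\otimes_\bk\gR$, so its compact objects are precisely the finite ones and every $\gR$-module is a filtered colimit of finite $\gR$-modules. Given all this, (ii) is short: for a fixed $\gR$-module $\gN$ the class $\{\gA' : \gN\otimes_\gR\gA'\in\mathsf{A}\}$ is localising (as $\gN\otimes_\gR-$ is exact and preserves colimits), and it contains $\mathcal{A}^f_\lambda$ because, writing $\gN\simeq\colim_\alpha\gF_\alpha$ with $\gF_\alpha$ finite, for $\gT\in\mathcal{A}^f_\lambda$ one has $\gN\otimes_\gR\gT\simeq\colim_\alpha(\gF_\alpha\otimes_\gR\gT)$, a filtered colimit of objects of $\mathcal{A}^f_\lambda$ by (i); hence this class contains $\mathsf{Loc}(\mathcal{A}^f_\lambda)=\mathsf{A}$. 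Right tensoring is symmetric.

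For (iii), consider the map $\gM\to L^f_\lambda(\gR)\otimes_\gR\gM$ obtained by applying $-\otimes_\gR\gM$ to the localisation map $\gR\to L^f_\lambda(\gR)$. Its fibre is $C^f_\lambda(\gR)\otimes_\gR\gM$, which is acyclic by (ii) since $C^f_\lambda(\gR)$ is. The crux is that $L^f_\lambda(\gR)\otimes_\gR\gM$ is $\mathcal{A}^f_\lambda$-local. I would first show $L^f_\lambda(\gR)\otimes_\gR\gA\simeq 0$ for every acyclic $\gA$: because $L^f_\lambda(\gR)$ is local, $\underline{\map}_\gR^r(\gU,L^f_\lambda(\gR))\simeq 0$ for all $\gU\in\mathcal{A}^f_\lambda$; as $\gU$ is finite, hence dualisable, this mapping object is $L^f_\lambda(\gR)\otimes_\gR D^r(\gU)$, and taking $\gU=D^l(\gT)$ — which lies in $\mathcal{A}^f_\lambda$ by Lemma \ref{lem:DualInThick} and has $D^r(\gU)\simeq\gT$ — yields $L^f_\lambda(\gR)\otimes_\gR\gT\simeq 0$ for all $\gT\in\mathcal{A}^f_\lambda$; the class of $\gA$ with $L^f_\lambda(\gR)\otimes_\gR\gA\simeq 0$ is localising, hence contains $\mathsf{Loc}(\mathcal{A}^f_\lambda)=\mathsf{A}$. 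Given this, for finite $\gT\in\mathcal{A}^f_\lambda$ one has $\underline{\map}_\gR^r(\gT,L^f_\lambda(\gR)\otimes_\gR\gM)\simeq L^f_\lambda(\gR)\otimes_\gR(\gM\otimes_\gR D^r(\gT))$, and $\gM\otimes_\gR D^r(\gT)$ is acyclic by (ii) (as $D^r(\gT)\in\mathcal{A}^f_\lambda$), so this vanishes; hence $L^f_\lambda(\gR)\otimes_\gR\gM$ is local. By the uniqueness part of Proposition \ref{prop:LocExists} the map $\gM\to L^f_\lambda(\gR)\otimes_\gR\gM$ is then the localisation map, so $L^f_\lambda(\gM)\simeq L^f_\lambda(\gR)\otimes_\gR\gM$; the symmetric argument (using the left-hand versions of (i), (ii), and Lemma \ref{lem:DualInThick}) gives $L^f_\lambda(\gM)\simeq\gM\otimes_\gR L^f_\lambda(\gR)$.

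Finally, taking $\gM=L^f_\lambda(\gR)$ — which is local and therefore equal to its own localisation — shows that the unit maps $L^f_\lambda(\gR)\to L^f_\lambda(\gR)\otimes_\gR L^f_\lambda(\gR)$ are equivalences, so $L^f_\lambda(\gR)$ is an idempotent object of $\gR\text{-}\mathsf{mod}$; by the general theory of idempotent objects \cite[\S 4.8.2]{HA} it then carries an essentially unique $\gR$-algebra structure for which $-\otimes_\gR L^f_\lambda(\gR)\simeq L^f_\lambda$ is the associated (smashing) Bousfield localisation. I expect the main obstacle to be exactly the locality of $L^f_\lambda(\gR)\otimes_\gR\gM$: this is the point where one must combine the telescopic description $\mathsf{A}=\mathsf{Loc}(\mathcal{A}^f_\lambda)$ with the dualisability of finite objects and the closure of $\mathcal{A}^f_\lambda$ under passage to left and right duals, whereas everything else is formal manipulation of thick and localising subcategories.
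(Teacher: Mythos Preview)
Your proof is correct; each of the three parts goes through as you describe. The overall architecture matches the paper's, but the internal arguments differ enough in (ii) and (iii) to be worth a comparison.

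For (ii) you work considerably harder than necessary. You first identify the acyclics with $\mathsf{Loc}(\mathcal{A}^f_\lambda)$ via the telescope construction, then write an arbitrary $\gN$ as a filtered colimit of finites to reduce to (i). The paper instead dispatches (ii) in one line using the tensor--hom adjunction: for $\gA$ acyclic, $\gM$ arbitrary, and $\gL$ local,
\[
\underline{\map}_\gR(\gM \otimes_\gR \gA, \gL) \simeq \underline{\map}_\gR(\gM, \underline{\map}_\gR^r(\gA, \gL)) \simeq \underline{\map}_\gR(\gM, 0) \simeq 0,
\]
and similarly on the other side with $\underline{\map}_\gR^l$. This uses neither the telescope description of $C^f_\lambda$ nor compact generation. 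Your identification $\mathsf{A}=\mathsf{Loc}(\mathcal{A}^f_\lambda)$ is a genuine fact, but it is not needed here.

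For (iii) the two arguments are close cousins. You first prove the auxiliary lemma that $L^f_\lambda(\gR)\otimes_\gR\gA\simeq 0$ for every acyclic $\gA$, then for $\gT\in\mathcal{A}^f_\lambda$ rewrite $\underline{\map}_\gR^r(\gT, L^f_\lambda(\gR)\otimes_\gR\gM)$ as $L^f_\lambda(\gR)\otimes_\gR(\gM\otimes_\gR D^r(\gT))$ and apply the lemma. The paper instead writes $\gM\simeq\colim_\alpha\gM_\alpha$ with $\gM_\alpha$ finite, pulls the colimit through $\underline{\map}_\gR(\gT,-)$ by compactness of $\gT$, and dualises the $\gM_\alpha$ rather than $\gT$:
\[
\underline{\map}_\gR(\gT, L^f_\lambda(\gR)\otimes_\gR\gM_\alpha) \simeq \underline{\map}_\gR(\gT\otimes_\gR D^r(\gM_\alpha), L^f_\lambda(\gR)),
\]
which vanishes since $\gT\otimes_\gR D^r(\gM_\alpha)$ is acyclic by (ii) and $L^f_\lambda(\gR)$ is local. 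Your route has the mild advantage of isolating the useful statement that $L^f_\lambda(\gR)$ annihilates acyclics; the paper's route avoids your preliminary detour through $\mathsf{Loc}(\mathcal{A}^f_\lambda)$ and duals of objects of $\mathcal{A}^f_\lambda$. Both rely on the same underlying inputs: dualisability of finite modules, part (ii), and closure of $\mathcal{A}^f_\lambda$ under duals (Lemma~\ref{lem:DualInThick}).
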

\begin{proof}
For (\ref{it:Localisation1}), note that left or right tensoring an object $\gT$ by a finite $\gR$ module gives an object in the thick subcategory generated by $\gT$, as finite $\gR$-modules are in the thick subcategory generated by the monoidal unit $\gR$. If $\gT$ has a vanishing line of slope $\lambda$ then anything in the thick subcategory it generates does too.

For (\ref{it:Localisation2}), let $\gA$ be $\mathcal{A}^f_\lambda$-acyclic, $\gM$ be an $\gR$-module, and $\gL$ be $\mathcal{A}^f_\lambda$-local. Then
$$\underline{\map}_\gR(\gM \otimes_\gR \gA, \gL) \simeq \underline{\map}_\gR(\gM, \underline{\map}_\gR^r(\gA, \gL))$$
is trivial as $\underline{\map}_\gR(\gA, \gL) \simeq 0$ by definition of $\mathcal{A}^f_\lambda$-acyclic so the $\gR$-module $\underline{\map}^r_\gR(\gA, \gL)$ is trivial too. This holds for all $\mathcal{A}^f_\lambda$-local $\gL$, so $\gM \otimes_\gR \gA$ is $\mathcal{A}^f_\lambda$-acyclic. The same argument goes through for $\gA \otimes_\gR \gM$ using the left internal mapping object.

For (\ref{it:Localisation4}) we verify that the map $\gM = \gR \otimes_\gR \gM \to L_\lambda^f(\gR) \otimes_\gR \gM$ is an $\mathcal{A}^f_\lambda$-localisation. Its fibre is $C_\lambda^f(\gR) \otimes_\gR \gM$, which is $\mathcal{A}^f_\lambda$-acyclic by (\ref{it:Localisation2}) as $C_\lambda^f(\gR)$ is $\mathcal{A}^f_\lambda$-acyclic by definition. It remains to show that $L_\lambda^f(\gR) \otimes_\gR \gM$ is $\mathcal{A}^f_\lambda$-local. To see this, express $\gM$ as a filtered colimit $\colim_{\alpha} \gM_\alpha$ with $\gM_\alpha$ finite $\gR$-modules, and let $\gT \in \mathcal{A}^f_\lambda$. Then as $\gT$ is compact we have
\begin{align*}
\underline{\map}_\gR(\gT, L_\lambda^f(\gR) \otimes_\gR \gM) &\simeq \colim_\alpha \underline{\map}_\gR(\gT, L_\lambda^f(\gR) \otimes_\gR \gM_\alpha)\\
&\simeq \colim_\alpha \underline{\map}_\gR(\gT \otimes_\gR D^r(\gM_\alpha), L_\lambda^f(\gR))
\end{align*}
which is trivial as $\gT \otimes_\gR D^r(\gM_\alpha)$ is $\mathcal{A}^f_\lambda$-acyclic by (\ref{it:Localisation2}), and $L_\lambda^f(\gR)$ is $\mathcal{A}^f_\lambda$-local. The analogous argument goes through for $\gM \otimes_\gR L_\lambda^f(\gR)$, using left duals.
\end{proof}

It is sometimes convenient to have a more $\infty$-categorical perspective on the localisations $L^f_\lambda(-)$. The full subcategory $\mathcal{A}_\lambda^f\text{-}\mathsf{loc} \subseteq \gR\text{-}\mathsf{mod}$ of $\mathcal{A}^f_\lambda$-local objects is closed under homotopy limits. Furthermore, as $\mathcal{A}^f_\lambda$ consists of finite $\gR$-modules, which are in particular compact objects of $\gR\text{-}\mathsf{mod}$, the subcategory $\mathcal{A}^f_\lambda\text{-}\mathsf{loc}$ is closed under filtered homotopy colimits. Using that $\gR\text{-}\mathsf{mod}$ is presentable, it follows using the adjoint functor theorem \cite[Corollary 5.5.2.9]{HTT} (or alternatively from \cite[Proposition 5.5.4.15]{HTT}) that the inclusion of this full subcategory has a left adjoint
$$L_\lambda^f : \gR\text{-}\mathsf{mod} \lra \mathcal{A}^f_\lambda\text{-}\mathsf{loc}.$$
We momentarily overload our earlier notation by writing $\eta_\gM : \gM \to L_\lambda^f(\gM)$ for the unit of this adjunction, and $C_\lambda^f(\gM)$ for its fibre. If $\gL$ is $\mathcal{A}^f_\lambda$-local then in the fibre sequence
$$\underline{\map}_\gR(L_\lambda^f(\gM), \gL) \overset{- \circ \eta_\gM}\lra \underline{\map}_\gR(\gM, \gL) \lra \underline{\map}_\gR(C_\lambda^f(\gM), \gL)$$
the left-hand map is an equivalence by definition of $L_\lambda^f$ as a left adjoint, so $\map_\gR(C_\lambda^f(\gM), \gL)\simeq 0$. This holds for all $\mathcal{A}^f_\lambda$-local $\gL$'s, so $C_\lambda^f(\gM)$ is $\mathcal{A}^f_\lambda$-acyclic. Thus $\eta_\gM : \gM \to L_\lambda^f(\gM)$ is an $\mathcal{A}^f_\lambda$-localisation in the sense of Definition \ref{defn:Local}.

\begin{rem}
Nothing we have said so far is very particular to $\mathcal{A}_\lambda^f$: we have only used that it is an essentially small class of compact objects.
\end{rem}

\subsection{Examples}

In this section we describe some calculations of $L^f_{\lambda}(\gR)$ for examples such as those  discussed in Section \ref{sec:KindsOfStab}. The examples described in \ref{sec:ordinarystab} satisfy the hypotheses of the following proposition (usually with $\lambda=\frac{1}{2}$).

\begin{prop}\label{prop:SigmaStableHomology}
Let $\gR \in \mathsf{Alg}_{E_2}(\mathsf{D}(\bk)^\bZ)$, and suppose that there is a $\sigma \in \pi_{1,0}(\gR)$ such that $\gR/\sigma$ has a vanishing line of slope $\lambda$. Then there is an equivalence
$$\sigma^{-1} \gR \simeq L^f_\lambda(\gR)$$
of $\gR$-modules.
\end{prop}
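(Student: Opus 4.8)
The plan is to show that the canonical map $\gR \to \sigma^{-1}\gR$ is an $\mathcal{A}^f_\lambda$-localisation in the sense of Definition \ref{defn:Local}; the asserted equivalence of $\gR$-modules then follows from the uniqueness clause of Proposition \ref{prop:LocExists}. Here $\sigma^{-1}\gR$ is, as usual, the colimit of the telescope of left $\gR$-module maps
\[
\gR \overset{\cdot\sigma}\lra S^{-1,0}\otimes\gR \overset{\cdot\sigma}\lra S^{-2,0}\otimes\gR \overset{\cdot\sigma}\lra \cdots,
\]
with $\cdot\sigma$ multiplication by $\sigma \in \pi_{1,0}(\gR)$ as in Section \ref{sec:ordinarystab}, and $\gR \to \sigma^{-1}\gR$ the structure map from the initial term. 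So there are two things to check: that the fibre of this map is $\mathcal{A}^f_\lambda$-acyclic, and that $\sigma^{-1}\gR$ is $\mathcal{A}^f_\lambda$-local.

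For the first, I would use that fibres commute with filtered colimits in a stable category, so the fibre of $\gR \to \sigma^{-1}\gR$ is $\colim_k \mathrm{fib}(\gR \to S^{-k,0}\otimes\gR) \simeq \colim_k S^{-k,-1}\otimes \gR/\sigma^k$, using that the cofibre of $S^{k,0}\otimes\gR \overset{\cdot\sigma^k}\to \gR$ is $\gR/\sigma^k$ by construction. Now $\gR/\sigma$ is finite, being the cofibre of an endomorphism of the monoidal unit, and by Lemma \ref{lem:ThickSubcatFacts}(\ref{it:ThickSubcatFacts:2}) each $\gR/\sigma^k$ lies in the thick subcategory it generates; by the reasoning used for Proposition \ref{prop:Localisation}(\ref{it:Localisation1}), every object of that thick subcategory is finite and has a slope $\lambda$ vanishing line. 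Hence each $\gR/\sigma^k$, and therefore each $S^{-k,-1}\otimes\gR/\sigma^k$, lies in $\mathcal{A}^f_\lambda$ and so is $\mathcal{A}^f_\lambda$-acyclic. Since $[-,\gL]_\gR$ carries colimits to limits, the class of $\mathcal{A}^f_\lambda$-acyclic objects is closed under colimits, and the fibre of $\gR \to \sigma^{-1}\gR$ is $\mathcal{A}^f_\lambda$-acyclic.

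For the second, let $\gT \in \mathcal{A}^f_\lambda$; being finite it is compact, so $\underline{\map}_\gR(\gT,-)$ commutes with the defining telescope and
\[
\underline{\map}_\gR(\gT, \sigma^{-1}\gR) \simeq \sigma^{-1}\underline{\map}_\gR(\gT,\gR) = \sigma^{-1}D^r(\gT),
\]
the $\sigma$-localisation of the right dual $D^r(\gT) = \underline{\map}_\gR(\gT,\gR)$. By Lemma \ref{lem:DualInThick} the finite $\gR$-module $D^r(\gT)$ generates the same thick subcategory as $\gT$, so it too has a slope $\lambda$ vanishing line, say $\pi_{n,d}(D^r(\gT)) = 0$ for $d < \lambda n + \kappa$ (we may assume $\lambda > 0$, the hypothesis on $\gR/\sigma$ being otherwise vacuous in view of connectivity). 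Because $\pi_{n,d}(\sigma^{-1}D^r(\gT)) = \colim_k \pi_{n+k,d}(D^r(\gT))$ and for fixed $(n,d)$ these groups vanish once $k$ is large, we conclude $\sigma^{-1}D^r(\gT) \simeq 0$, i.e.\ $[\gT,\sigma^{-1}\gR]_\gR = 0$. As this holds for every $\gT \in \mathcal{A}^f_\lambda$, the module $\sigma^{-1}\gR$ is $\mathcal{A}^f_\lambda$-local, completing the verification.

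I expect the genuinely load-bearing step to be the last one: the hypothesis on $\gR/\sigma$ enters precisely to guarantee that every finite module in the thick subcategory it generates — in particular the duals $D^r(\gT)$ — has a slope $\lambda$ vanishing line, which is exactly what forces $\sigma$-inversion to annihilate it. Everything else is bookkeeping with telescopes and bigraded shifts; the structural upshot is that the $\mathcal{A}^f_\lambda$-acyclic modules are precisely the ``$\sigma$-power-torsion'' ones and that the $\sigma$-telescope computes the corresponding smashing localisation.
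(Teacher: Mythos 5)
Your proof is correct and follows the same strategy as the paper's: verify that the fibre of $\gR \to \sigma^{-1}\gR$ is $\mathcal{A}^f_\lambda$-acyclic and that $\sigma^{-1}\gR$ is $\mathcal{A}^f_\lambda$-local, then invoke uniqueness of localisations. The only real variation is in the locality step, where the paper factors a map $\gT \to \sigma^{-1}\gR$ through $\sigma^{-1}\gT \simeq 0$ via a tensor-product diagram, whereas you compute $\underline{\map}_\gR(\gT,\sigma^{-1}\gR) \simeq \sigma^{-1}D^r(\gT) \simeq 0$ using compactness and Lemma \ref{lem:DualInThick}; both arguments rest on the same implicit assumption $\lambda>0$ (which you, unlike the paper, flag explicitly) and both are fine.
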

\begin{proof}
Here $\sigma^{-1}\gR$ means the colimit of the telescope
$$\gR \overset{\sigma}\lra S^{-1,0} \otimes \gR \overset{\sigma}\lra S^{-2,0} \otimes \gR \overset{\sigma}\lra S^{-3,0} \otimes \gR \lra \cdots$$
in $\gR$-modules. We wish to recognise the map $\gR \to \sigma^{-1}\gR$ given by the inclusion of the initial term as $\mathcal{A}^f_\lambda$-localisation, so must verify that its fibre is $\mathcal{A}^f_\lambda$-acyclic and that $\sigma^{-1}\gR$ is $\mathcal{A}^f_\lambda$-local.

Considering $\gR$ as the colimit of the constant telescope, we see that the cofibre of $\gR \to \sigma^{-1}\gR$ may be expressed as the colimit of
$$0 \lra S^{-1,0} \otimes \gR/\sigma \lra S^{-2,0} \otimes \gR/\sigma^2 \lra S^{-3,0} \otimes \gR/\sigma^3 \lra \cdots.$$
The cofibres of these map all have the form $S^{-i,0} \otimes \gR/\sigma$ so are $\mathcal{A}^f_\lambda$-acyclic, and as $\mathcal{A}^f_\lambda$-acyclic objects are closed under colimits it follows that the cofibre of $\gR \to \sigma^{-1}\gR$ is $\mathcal{A}^f_\lambda$-acyclic, and hence so is its fibre.

Let $\gT \in \mathcal{A}^f_\lambda$ and $f : \gT \to \sigma^{-1}\gR$ be a map. Then the commutative diagram
\begin{equation*}
\begin{tikzcd}[column sep = 3em]
\gT \arrow[r, equals] & \gR \otimes_\gR \gT \rar{\gR \otimes_\gR f} \dar & \gR \otimes_\gR \sigma^{-1} \gR \arrow[r, equals] \dar{\simeq} & \sigma^{-1}\gR\\
\sigma^{-1}\gT \arrow[r, equals] & \sigma^{-1}\gR \otimes_\gR \gT \rar{\sigma^{-1}\gR \otimes_\gR f} & \sigma^{-1}\gR \otimes_\gR \sigma^{-1}\gR \arrow[r, equals] & \sigma^{-1}\gR
\end{tikzcd}
\end{equation*}
shows that $f$ factors through $\sigma^{-1}\gT$ up to homotopy. As $\sigma$ is a class of slope 0, and $\gT$ has a vanishing line of slope $>0$, we have $\sigma^{-1} \gT\simeq 0$ and hence $f=0$. Thus $\sigma^{-1} \gR$ is $\mathcal{A}^f_\lambda$-local.
\end{proof}

\begin{rem}
The proof of this proposition shows a little more. Namely, the cofibre of $\gR \to \sigma^{-1}\gR$ has a filtration with associated graded $\bigoplus_{i=1}^\infty S^{-i,0} \otimes \gR/\sigma$, so if $\pi_{n,d}(\gR/\sigma)=0$ for $d < \lambda n + \mu$ then the spectral sequence for this filtration shows that $\pi_{n,d}(\sigma^{-1}\gR, \gR)=0$ for $d < \lambda(n+1)+\mu$. So the fibre of the map $\gR \to L_\lambda^f(\gR)$ has a vanishing line of slope $\lambda$, and hence this map induces an isomorphism on homotopy groups below a line of slope $\lambda$: in other words, this map is a \emph{good} approximation in such a range. In Section \ref{sec:QualityApprox} we will give a general form of this argument.
\end{rem}

The example $\mathbf{RB}$ of configuration spaces of red-or-blue points in the plane described in Section \ref{sec:multistab} satisfies the hypotheses of the following proposition, with $\lambda=\tfrac{1}{2}$. We will spell this out in Example \ref{ex:RBLocalissation} below.

\begin{prop}\label{prop:redBlueStableHomology}
Let $\gR \in \mathsf{Alg}_{E_2}(\mathsf{D}(\bk)^\bZ)$, and suppose that there are elements $r, b \in \pi_{1,0}(\gR)$ such that $\gR/(r,b)$ has a vanishing line of slope $\lambda$. Then there is a homotopy cartesian square
\begin{equation}\label{eq:RedAndBlueLocalisation}
\begin{tikzcd}%[column sep = 0.7em]
L^f_\lambda(\gR) \rar \dar & r^{-1}\gR \dar\\
b^{-1}\gR \rar & r^{-1} b^{-1} \gR
\end{tikzcd}
\end{equation}
of $\gR$-modules.
\end{prop}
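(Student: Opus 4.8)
The plan is to recognise \eqref{eq:RedAndBlueLocalisation} as an instance of ``arithmetic fracture''. Let $P$ denote the homotopy pullback of the cospan of $\gR$-modules $r^{-1}\gR \to r^{-1}b^{-1}\gR \leftarrow b^{-1}\gR$. The commuting square with $\gR$ at the initial corner and the localisation maps to $r^{-1}\gR$, $b^{-1}\gR$, $r^{-1}b^{-1}\gR$ (both composites to $r^{-1}b^{-1}\gR$ being the map inverting both $r$ and $b$; coherence is cleanly arranged by presenting all four corners as telescopes $\colim_\bullet S^{-\bullet,0}\otimes\gR$ with the maps induced by the inclusion of the initial vertex) induces a map $\gR \to P$. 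By the pasting law for pullbacks its fibre $T$ is the total fibre of that square. I will show $\gR \to P$ is an $\mathcal{A}^f_\lambda$-localisation, which by Proposition \ref{prop:LocExists} identifies $P$ with $L^f_\lambda(\gR)$ compatibly with the maps out of $\gR$; since the square defining $P$ is cartesian by construction, this is exactly the claim. So there are two things to check: that $P$ is $\mathcal{A}^f_\lambda$-local, and that $T$ is $\mathcal{A}^f_\lambda$-acyclic.

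For the first, $\mathcal{A}^f_\lambda$-local objects are closed under homotopy limits, so it is enough that each of $r^{-1}\gR$, $b^{-1}\gR$, $r^{-1}b^{-1}\gR$ is $\mathcal{A}^f_\lambda$-local. This repeats the proof of Proposition \ref{prop:SigmaStableHomology}: each is an idempotent $\gR$-algebra (e.g.\ $r^{-1}\gR \otimes_\gR r^{-1}\gR \simeq r^{-1}\gR$, as $\cdot r$ is already invertible on $r^{-1}\gR$, so the unit map is a section of an equivalence hence an equivalence), so for $\gT \in \mathcal{A}^f_\lambda$ and $f : \gT \to r^{-1}\gR$, naturality of $\gR \otimes_\gR - \Rightarrow r^{-1}\gR \otimes_\gR -$ exhibits $f$ as factoring through $r^{-1}\gR \otimes_\gR \gT \simeq r^{-1}\gT$. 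As $r$ has slope $0$ and $\gT$ has a slope $\lambda$ vanishing line, $\pi_{n,d}(r^{-1}\gT) = \colim_i \pi_{n+i,d}(\gT) = 0$ (using $\lambda>0$, exactly as in Proposition \ref{prop:SigmaStableHomology}), so $r^{-1}\gT \simeq 0$ and $f \simeq 0$. The same argument works for $b$ and for inverting both; closure of $\mathcal{A}^f_\lambda$ under shifts upgrades this to $\underline{\map}_\gR(\gT,-)\simeq 0$ on all three, hence on $P$.

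For the second, I would compute the total cofibre $\Sigma T$ of the square and show it lies in the localizing subcategory (thick, closed under all colimits) generated by $\gR/(r,b)$. Taking horizontal cofibres first, $N := \mathrm{cofib}(\gR \to r^{-1}\gR)$ carries, by the argument in the proof of Proposition \ref{prop:SigmaStableHomology}, a filtration with associated graded $\bigoplus_{i \geq 1} S^{-i,0}\otimes \gR/r$, so $N$ lies in the localizing subcategory generated by $\gR/r$. Applying the exact, colimit-preserving functor $b^{-1}(-)$ gives $\mathrm{cofib}(b^{-1}\gR \to r^{-1}b^{-1}\gR) \simeq b^{-1}N$, whence $\Sigma T \simeq \mathrm{cofib}(N \to b^{-1}N)$; by the same argument this has a filtration with associated graded $\bigoplus_{j \geq 1} S^{-j,0} \otimes N/b$ and so lies in the localizing subcategory generated by $N/b$. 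Finally, since $-/b$ is exact and preserves colimits, $N/b$ lies in the localizing subcategory generated by $(\gR/r)/b = \gR/(r,b)$. Now $\gR/(r,b) \in \mathcal{A}^f_\lambda$ is $\mathcal{A}^f_\lambda$-acyclic, and the $\mathcal{A}^f_\lambda$-acyclics form a localizing subcategory: they are thick by the argument in the proof of Proposition \ref{prop:Localisation}, and closed under colimits because $\underline{\map}_\gR(-, \gL)$ carries colimits to limits. Hence $\Sigma T$, and so $T$, is $\mathcal{A}^f_\lambda$-acyclic.

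The routine parts are the exactness and colimit-preservation checks and the localizing-subcategory bookkeeping in the last paragraph, which I expect to absorb most of the care. The only genuine subtleties are producing the coherent square that defines $\gR \to P$ (handled via the telescope descriptions above) and tracking left/right module structures when running the factoring argument in the merely $E_1$-monoidal category $\gR\text{-}\mathsf{mod}$ — but one only ever tensors on one side, so centrality is not needed. I do not anticipate a real obstacle.
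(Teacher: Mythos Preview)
Your proposal is correct and follows essentially the same approach as the paper: both define $P$ as the pullback of the punctured square, then verify that $\gR \to P$ is an $\mathcal{A}^f_\lambda$-localisation by checking locality of $P$ (via the same factoring-through-$r^{-1}\gT$ argument and closure under limits) and acyclicity of the total fibre. The only organisational difference is in the acyclicity step: the paper observes that the square is the tensor product of the arrows $\gR \to r^{-1}\gR$ and $\gR \to b^{-1}\gR$, so the total fibre is $\gF_r \otimes_\gR \gF_b$ and inherits a filtration with associated graded built from shifts of $\gR/(r,b)$; you instead take iterated horizontal/vertical cofibres and track membership in the localizing subcategory generated by $\gR/(r,b)$, which amounts to the same filtration unwound differently.
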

\begin{proof}
Write $\gP$ for the homotopy pullback of the punctured square, so that the localisation maps $\gR \to r^{-1} \gR$ and $\gR \to b^{-1} \gR$ assemble to a map $\gR \to \gP$. We show that this map is an $\mathcal{A}^f_\lambda$-localisation, by checking that the fibre of $\gR \to \gP$ is $\mathcal{A}^f_\lambda$-acyclic and that $\gP$ is $\mathcal{A}^f_\lambda$-local.

The fibre of $\gR \to \gP$ is the total homotopy fibre of the square obtained by replacing the top left corner of \eqref{eq:RedAndBlueLocalisation} by $\gR$. This square is obtained by tensoring together the arrows $\gR \to r^{-1}\gR$ and $\gR \to b^{-1}\gR$, so its total homotopy fibre is obtained by tensoring together the fibres $\gF_r$ and $\gF_b$ of these two maps. As in the proof of the previous proposition, $\gF_r$ is a telescope with all cofibres given by shifts of $\gR/r$, and similarly $\gF_b$ is a telescope with all cofibres given by shifts of $\gR/b$. From this we can express $\gF_r \otimes_\gR \gF_b$ as a telescope with all cofibres given by sums of shifts of $\gR/r \otimes_\gR \gR/b \simeq \gR/(r,b)$, which lies in $\mathcal{A}^f_\lambda$. Thus $\gF_r \otimes_\gR \gF_b$ is $\mathcal{A}^f_\lambda$-acyclic.

To verify that the pullback $\gP$ is $\mathcal{A}^f_\lambda$-local, for $\gT \in \mathcal{A}^f_\lambda$ we obtain a cartesian square of mapping objects
\begin{equation*}
\begin{tikzcd}%[column sep = 0.7em]
\underline{\map}_\gR(\gT, \gP) \rar \dar & \underline{\map}_\gR(\gT, r^{-1}\gR) \dar\\
\underline{\map}_\gR(\gT,b^{-1}\gR) \rar & \underline{\map}_\gR(\gT,r^{-1} b^{-1} \gR).
\end{tikzcd}
\end{equation*}
As in the proof of the previous proposition we have 
$$\underline{\map}_\gR(\gT, r^{-1}\gR) \simeq \underline{\map}_\gR(r^{-1}\gT, r^{-1}\gR) \simeq 0,$$
because $r^{-1}\gT\simeq 0$ (as $r$ has slope 0 and $\gT$ has a vanishing line of slope $>0$). Similarly $\underline{\map}_\gR(\gT,b^{-1}\gR)$ and $\underline{\map}_\gR(\gT,r^{-1} b^{-1} \gR)$ vanish, so as the square is cartesian it follows that $\underline{\map}_\gR(\gT, \gP) \simeq 0$. This holds for all $\gT \in \mathcal{A}^f_\lambda$, so $\gP$ is $\mathcal{A}^f_\lambda$-local. (More succinctly, each of $r^{-1}\gR$, $b^{-1}\gR$, and $r^{-1}b^{-1}\gR$ is $\mathcal{A}^f_\lambda$-local as inverting $r$ or $b$ kills all objects in $\mathcal{A}^f_\lambda$, and the $\mathcal{A}^f_\lambda$-local modules are closed under limits.)
\end{proof}

\begin{rem}\mbox{}
\begin{enumerate}[(i)]
\item Proposition \ref{prop:redBlueStableHomology} admits an evident generalisation: to a sequence $\sigma_1, \ldots, \sigma_r \in \pi_{*,*}(\gR)$ of elements such that $\gR/(\sigma_1, \ldots, \sigma_r)$ has a vanishing line of slope $\lambda$, and such that each $\sigma_i$ has slope $< \lambda$. Then $L_\lambda^f(\gR)$ is described via the $r$-cube obtained by tensoring together the maps $\gR \to \sigma_i^{-1} \gR$.

\item In the setting of (i) our discussion has a lot to do with local and \v{C}ech cohomology in their homotopical incarnations introduced by Greenlees and May \cite{GreenleesMay}, namely homotopical $I$-power torsion and homotopical localisation away from $I$. Concretely, in the setting of (i) and with $I := (\sigma_1, \ldots, \sigma_r) \subset \pi_{*,*}(\gR)$, the description in (i) shows that the fibre sequences
\begin{equation*}
\begin{tikzcd}[row sep = 0.4em]
C^f_\lambda(\gR) \rar & \gR \rar & L^f_\lambda(\gR)\\
\Gamma_{I}(\gR) \rar & \gR \rar & \gR[I^{-1}]
\end{tikzcd}
\end{equation*}
are identical.

The object $\gR[I^{-1}]$ may also be described as Bousfield localisation with respect to $\gR/(\sigma_1, \ldots, \sigma_r)$ (cf.\ \cite[Proposition 6.14]{DwyerGreenlees}), which lies in $\mathcal{A}^f_\lambda$ and so gives a factorisation $\gR \to \gR[I^{-1}] \to L^f_\lambda(\gR)$. The fact that the latter map is an equivalence reflects the fact that $\mathcal{A}_\lambda^f$ is generated as a thick subcategory by $\gR/(\sigma_1, \ldots, \sigma_r)$, which it is not too hard to prove manually. We will discuss a generalised form of this in Section \ref{sec:Monigenicity}.

\item Typical instances of higher-order homological stability are more complicated than the setting of (i), in that the higher-order stabilisation maps are usually only defined after taking cofibres for the lower-order ones: in particular one cannot ``invert them in $\gR$'' as they are not defined in $\gR$, so one cannot even formulate something like Proposition \ref{prop:redBlueStableHomology}. In Section \ref{sec:OrthCalcModel} we will discuss an analogue of the constructions of this section in this more general setting.
\end{enumerate}
\end{rem}

\begin{example}\label{ex:RBLocalissation}
The notation in Proposition \ref{prop:redBlueStableHomology} is inspired by the example $\mathbf{RB} = \gE_2(S^{1,0} r \oplus S^{1,0} b)$ discussed in Section \ref{sec:multistab}, which may be considered as the $\bk$-chains on the spaces of configurations of distinct unordered red-or-blue points in $[0,1]^2$. We mentioned the fact that $\mathbf{RB}/(r,b)$ has a vanishing line of slope $\tfrac{1}{2}$. In fact, the work of F.\ Cohen shows that as a $\bk$-algebra we have
$$\pi_{*,*}(\mathbf{RB}) \cong \bk[r,b] \otimes \mathrm{Sym}^*[\text{generators of slope $\geq \tfrac{1}{2}$}],$$
where $\mathrm{Sym}^*[-]$ denotes the free graded-commutative algebra. This shows that there is an isomorphism of bigraded $\bk$-modules
$$\pi_{*,*}(\mathbf{RB}/(r,b)) \cong \pi_{*,*}(\mathbf{RB})/(r,b) \cong \mathrm{Sym}^*[\text{generators of slope $\geq \tfrac{1}{2}$}],$$
from which one immediately sees the vanishing line of slope $\tfrac{1}{2}$. Using this, Proposition \ref{prop:redBlueStableHomology} gives a cartesian square
\begin{equation*}
\begin{tikzcd}%[column sep = 0.7em]
L^f_{1/2}(\mathbf{RB}) \rar \dar & r^{-1}\mathbf{RB} \dar\\
b^{-1}\mathbf{RB} \rar & r^{-1} b^{-1} \mathbf{RB},
\end{tikzcd}
\end{equation*}
mentioned in Example \ref{ex:RedBlueStabHom}. We have $\pi_{*,*}(\mathbf{RB}) \cong \bk[r,b] \otimes \pi_{*,*}(\mathbf{RB}/(r,b))$ as a $\bk[r,b]$-module. The homotopy pullback $P$ of
\begin{equation*}
\begin{tikzcd}%[column sep = 0.7em]
 & \bk[r^{\pm 1},b] \dar\\
\bk[r,b^{\pm 1}] \rar & \bk[r^{\pm 1},b^{\pm 1}]
\end{tikzcd}
\end{equation*}
in the derived category of $\bk[r,b]$-modules is no longer a discrete $\bk[r,b]$-module, but rather has
\begin{align*}
\pi_{*,0}(P) &\cong \bk[r,b]\\
\pi_{*,-1}(P) &\cong r^{-1}b^{-1}\bk[r^{-1}, b^{-1}],
\end{align*}
where the latter has the evident $\bk[r,b]$-module structure. This leads to the formula
$$\pi_{*,*}(L^f_{1/2}(\mathbf{RB})) \cong \pi_{*,*}(\mathbf{RB}) \oplus r^{-1}b^{-1}\bk[r^{-1}, b^{-1}] \otimes \pi_{*,*+1}(\mathbf{RB}/(r,b))$$
as a bigraded $\bk$-module. The second summand, which is $\pi_{*,*}(S^{0,1} \otimes C^f_{1/2} (\mathbf{RB}))$, is seen to vanish in bidegrees $(n,d)$ with $d < -1$ or $d < \tfrac{1}{2}n$.
\end{example}

\section{Applications of Smith--Toda complexes}\label{sec:STApp1}

By a \emph{Smith--Toda complex} we mean an $\gR$-module $\gR/(\alpha_1, \ldots, \alpha_{r})$ obtained as the iterated cofibre of a sequence of endomorphisms
$$\alpha_j : S^{n_j, d_j} \otimes \gR/(\alpha_1, \ldots, \alpha_{j-1}) \lra \gR/(\alpha_1, \ldots, \alpha_{j-1}).$$
We will call $\tfrac{d_j}{n_j}$ the \emph{slope} of $\alpha_j$. We will usually adopt some mild axioms and standardised notation for Smith--Toda complexes, as follows.

\begin{defn}
A Smith--Toda complex $\gR/(\alpha_1, \ldots, \alpha_{r})$ is \emph{admissible} if
\begin{enumerate}[(i)]
\item The slopes $\tfrac{d_1}{n_1}, \tfrac{d_2}{n_2}, \ldots$ are non-decreasing, and

\item it has a vanishing line of slope $\lambda$ with $\tfrac{d_r}{n_r} < \lambda$.

\end{enumerate}
We then write $\lambda' := \tfrac{d_r}{n_r}$, and let $\alpha_k, \ldots, \alpha_r$ denote the $\alpha$'s of slope precisely $\lambda'$.
\end{defn}

In Section \ref{sec:HigherStabMaps} we will prove that admissible Smith--Toda complexes exist for any $\lambda < 1$, as long as $\bk$ is a field of positive characteristic. In this section we explain what one can do with admissible Smith--Toda complexes, whether they are produced by the argument of Section \ref{sec:HigherStabMaps} or otherwise.

\begin{prop}\label{prop:SubOfAdmIsAdm}
If $\gR/(\alpha_1, \ldots, \alpha_{r})$ is an admissible Smith--Toda complex then $\gR/(\alpha_1, \ldots, \alpha_{k-1})$ is an admissible Smith--Toda complex with a vanishing line of slope $\lambda'$.
\end{prop}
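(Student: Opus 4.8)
The plan is to obtain the required vanishing line by peeling off the endomorphisms $\alpha_r, \alpha_{r-1}, \ldots, \alpha_k$ one at a time, exploiting crucially that each of $\alpha_k, \ldots, \alpha_r$ has slope \emph{exactly} $\lambda'$. Write $\gM_j := \gR/(\alpha_1, \ldots, \alpha_j)$, so that $\gM_0 = \gR$. By the definition of $k$, the maps $\alpha_1, \ldots, \alpha_{k-1}$ have slopes $\tfrac{d_1}{n_1} \le \cdots \le \tfrac{d_{k-1}}{n_{k-1}} < \lambda'$; hence $\gM_{k-1}$ is automatically a Smith--Toda complex with non-decreasing slopes, and the only substantive point is to show it has a slope $\lambda'$ vanishing line. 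Granting that, the strict inequality $\tfrac{d_{k-1}}{n_{k-1}} < \lambda'$ makes it admissible. (When $k = 1$, $\gM_{k-1} = \gR$, which has a slope $\lambda'$ vanishing line directly from axiom (C).)

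First I would record two preliminaries. (a) Each $\gM_j$ is connective, i.e.\ $\pi_{n,d}(\gM_j) = 0$ for $n < 0$, by induction on $j$ using axiom (C), the defining cofibre sequences $S^{n_j, d_j} \otimes \gM_{j-1} \xrightarrow{\alpha_j} \gM_{j-1} \to \gM_j$, and the standing convention $n_j \ge 1$. (b) Since $\gM_r$ has a slope $\lambda$ vanishing line, say $\pi_{n,d}(\gM_r) = 0$ for $d < \lambda n + \kappa$, with $\lambda' < \lambda$, combining with (a) it also has a slope $\lambda'$ vanishing line with the same intercept: for $n \ge 0$ one has $\lambda' n + \kappa \le \lambda n + \kappa$, while for $n < 0$ all of $\pi_{n,*}(\gM_r)$ vanishes.

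The heart is a downward induction. Suppose, for some $j \in \{k, \ldots, r\}$, that $\pi_{n,d}(\gM_j) = 0$ for $d < \lambda' n + c$; I would deduce $\pi_{n,d}(\gM_{j-1}) = 0$ in the same range, so that applying this for $j = r, r-1, \ldots, k$ transports the vanishing line of (b) from $\gM_r$ all the way down to $\gM_{k-1}$. The defining cofibre sequence yields a long exact sequence, so whenever $\pi_{n,d}(\gM_j) = 0$ the map $(\alpha_j)_* \colon \pi_{n - n_j,\,d - d_j}(\gM_{j-1}) \to \pi_{n,d}(\gM_{j-1})$ is surjective. Since $\alpha_j$ has slope exactly $\lambda'$, $d_j = \lambda' n_j$, so for a point $(n,d)$ with $d < \lambda' n + c$ every point $(n - i n_j,\,d - i d_j)$ with $i \ge 0$ still satisfies $d - i d_j < \lambda'(n - i n_j) + c$, hence $\pi_{n - i n_j,\, d - i d_j}(\gM_j) = 0$. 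Iterating the surjectivity along this line exhibits $\pi_{n,d}(\gM_{j-1})$ as a quotient of $\pi_{n - i n_j,\, d - i d_j}(\gM_{j-1})$ for every $i \ge 0$; taking $i$ large enough that $n - i n_j < 0$ and using (a) forces $\pi_{n,d}(\gM_{j-1}) = 0$.

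I do not expect a serious obstacle: the argument is essentially bookkeeping with long exact sequences. The one point that must be handled correctly is that, because all of $\alpha_k, \ldots, \alpha_r$ have slope \emph{equal to} $\lambda'$, the surjections $(\alpha_j)_*$ iterate indefinitely along a line \emph{parallel} to the vanishing line --- which is precisely what lets connectivity close the argument; if even one of them had slope strictly less than $\lambda'$ the intercept would drift and the conclusion would fail. The only input used beyond what is on the page is the standing convention $n_j \ge 1$.
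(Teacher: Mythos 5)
Your proof is correct and follows essentially the same route as the paper: the paper isolates your inductive step as a standalone lemma (if $\gM/\phi$ has a vanishing line of slope equal to the slope of $\phi$, then so does $\gM$), proved by the same iterated divisibility/surjectivity along a line parallel to the vanishing line, and then peels off $\alpha_r,\ldots,\alpha_k$ by downward induction exactly as you do. The only cosmetic difference is how the iteration is terminated — the paper invokes finiteness of the $\gR$-module to get $\pi_{n-in_j,\,d-id_j}=0$ for $i\gg 0$, whereas you use connectivity in the grading direction, which amounts to the same input from axiom (C).
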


This follows by downwards induction using the following general lemma.

\begin{lem}
Let $\gM$ be a finite $\gR$-module, and $\phi : S^{N,D} \otimes \gM \to \gM$ be an endomorphism of non-zero bidegree with $\lambda := \tfrac{D}{N}$. If $\pi_{n,d}(\gM/\phi)=0$ for $d < \lambda n + \kappa$, then $\pi_{n,d}(\gM)=0$ for $d < \lambda n + \kappa$ too.
\end{lem}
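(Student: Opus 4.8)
The plan is to exploit the cofibre sequence defining $\gM/\phi$, namely
$$S^{N,D} \otimes \gM \overset{\phi}\lra \gM \lra \gM/\phi \lra S^{N,D+1} \otimes \gM,$$
which on homotopy groups gives a long exact sequence
$$\cdots \lra \pi_{n-N, d-D}(\gM) \overset{\phi_*}\lra \pi_{n,d}(\gM) \lra \pi_{n,d}(\gM/\phi) \lra \pi_{n-N,d-D-1}(\gM) \overset{\phi_*}\lra \cdots.$$
Since $\gM$ is a finite $\gR$-module and $\gR$ is connective, $\gM$ is bounded below in the diagonal sense: being in the thick subcategory generated by $\gR$, there is some $\mu$ with $\pi_{n,d}(\gM) = 0$ for $d < \lambda n - \mu$ (one can even take a vanishing line of slope $1$ coming from (SCE), but any linear lower bound of slope $\le \lambda$ suffices; the point is just that $\gM$ is not supported arbitrarily far below \emph{any} line of the relevant slope). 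Actually for the cleanest argument it is enough that $\pi_{n,d}(\gM)=0$ for $d \ll 0$ with $n$ fixed and for $n < 0$, together with finite-dimensionality, so that for each fixed value of $d - \lambda n$ the groups $\pi_{n,d}(\gM)$ are eventually zero as $n \to -\infty$ along that line --- but in fact the slope-$\lambda$ vanishing line for $\gM$ is exactly what we are trying to prove, so we must be careful: what we use is merely \emph{some} vanishing line, of \emph{any} slope $\le \lambda$, which holds because $\gM$ is finite.

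\textbf{Main step.} Suppose for contradiction that $\pi_{n,d}(\gM) \ne 0$ for some $(n,d)$ with $d < \lambda n + \kappa$. Among all such pairs, the quantity $d - \lambda n$ is bounded above by $\kappa$ (exclusive) and, I claim, bounded below: indeed $\gM$ finite means $\pi_{n,d}(\gM) = 0$ whenever $d - \lambda n < -C$ for some constant $C$ (take $C$ so that $\gM$ has a slope-$\lambda$ vanishing line --- wait, circular). Let me instead argue \emph{downward in $n$}: since $\phi$ has non-zero bidegree $(N,D)$ with $N \ge 1$ (as the slope $\tfrac{D}{N}$ is defined and $\gM$ is finite so $\phi$ of ``positive'' weight --- actually $N$ could a priori be negative; but then replace $\phi$ by considering iterates, or note $N \ne 0$ and WLOG $N > 0$ after possibly passing to the statement's hypotheses). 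Fix the affine line $d = \lambda n + c$ with $c < \kappa$ maximal such that some $\pi_{n,d}(\gM) \ne 0$ on or below it; such a maximal $c$ exists because the possible values of $d - \lambda n$ are discrete (lying in $\tfrac{1}{N}\bZ$) and bounded above by $\kappa$. Pick $(n,d)$ on this line with $\pi_{n,d}(\gM) \ne 0$ and with $n$ minimal (possible since $\gM$ finite forces $\pi_{n,d}(\gM) = 0$ for $n \ll 0$). Now $\phi_* : \pi_{n-N,d-D}(\gM) \to \pi_{n,d}(\gM)$ has source lying on the \emph{same} line $d - D - \lambda(n-N) = d - \lambda n = c$ (here I use $D = \lambda N$, i.e.\ $\tfrac{D}{N} = \lambda$) but at strictly smaller first coordinate $n - N < n$, hence by minimality of $n$ the source is $0$, so $\phi_*$ is zero into $\pi_{n,d}(\gM)$. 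Then the long exact sequence shows $\pi_{n,d}(\gM)$ injects into $\pi_{n,d}(\gM/\phi)$, which is $0$ since $d < \lambda n + \kappa$; contradiction.

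\textbf{The obstacle.} The one genuine subtlety is ensuring the relevant minimisation is legitimate: we need that for a \emph{finite} $\gR$-module $\gM$ the set of bidegrees $(n,d)$ with $\pi_{n,d}(\gM) \ne 0$ and $d - \lambda n \le \kappa$, while possibly infinite, has the property that for each fixed value $c$ of $d - \lambda n$ the first coordinate $n$ is bounded below. This follows because $\gR$ is connected (axiom (C)), so $\gR$-modules in the thick subcategory of $\gR$ are built from finitely many cells $\gR \otimes S^{n_i,d_i}$ and are therefore zero in gradings $n < \min_i n_i$; hence $\pi_{n,*}(\gM) = 0$ for $n$ below an absolute bound, a fortiori along each line. (If one only wants the statement without connectivity, $N > 0$ still makes the $\phi_*$-orbits move strictly leftward, so the Noetherian-type induction on $n$ bottoms out.) I would also remark that the hypothesis ``non-zero bidegree'' is used precisely to guarantee $N \ne 0$, so that $\phi_*$ genuinely shifts $n$; if $N < 0$ one runs the same argument maximising rather than minimising $n$, or simply notes $\gM/\phi \simeq \gM/\phi'$ for the bidegree-$(-N,-D)$ ``reverse'' which does not literally make sense, so cleanest is to observe that in all applications $N \ge 1$ and state it thus. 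This completes the plan.
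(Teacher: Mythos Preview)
Your argument is correct and is essentially the paper's proof recast as a minimal-counterexample argument rather than an infinite descent: the paper takes any $x \in \pi_{n,d}(\gM)$ with $d < \lambda n + \kappa$, observes it dies in $\gM/\phi$ so lifts along $\phi_*$ to $\pi_{n-N,d-D}(\gM)$ on the same line $d-\lambda n = c$, and iterates until reaching $n \ll 0$ where $\gM$ vanishes, whereas you pick the minimal-$n$ offender on that line and use that the source of $\phi_*$ is then zero. Both rely on exactly the same two inputs---that $\phi$ preserves $d-\lambda n$ because $D=\lambda N$, and that finiteness of $\gM$ over the connected $\gR$ gives $\pi_{n,*}(\gM)=0$ for $n\ll 0$---so there is no real difference; your ``maximal $c$'' step is unnecessary (the argument works line-by-line for each $c<\kappa$), and your worry about $N<0$ is one the paper silently ignores since in context $N>0$ always.
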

\begin{proof}
Consider the tower
\begin{equation*}
\begin{tikzcd}%[column sep = 0.7em]
\cdots S^{3N,3D} \otimes \gM \rar{\phi} &S^{2N,2D} \otimes \gM \rar{\phi} \dar & S^{N,D} \otimes \gM \rar{\phi} \dar & \gM \dar\\
& S^{2N,2D} \otimes \gM/\phi & S^{N,D} \otimes \gM/\phi & \gM/\phi
\end{tikzcd}
\end{equation*}
If $x \in \pi_{n,d}(\gM)$ with $d < \lambda n + \kappa$ then it vanishes when mapped to $\gM/\phi$ so $x = \phi(x')$ for some $x' \in \pi_{n-N, d-D}(\gM)$, and its bidegrees also satisfy $d-D < \lambda (n-N) + \kappa$ as $D = \lambda N$. Thus $x' = \phi(x'')$ for $x'' \in \pi_{n-2N, d-2D}(\gM)$, and so on. As $\gM$ is a finite $\gR$-module and $(N,D) \neq (0,0)$, $\pi_{n-kN, d-kD}(\gM)=0$ for $k \gg 0$, so $x=0$.
\end{proof}

\subsection{Quality of the approximation}\label{sec:QualityApprox}

One application of the existence of an admissible Smith--Toda complex with a slope $\lambda$ vanishing line is that it guarantees the high-connectivity of the map $\gR \to L^f_\lambda(\gR)$, as follows. 

\begin{thm}\label{thm:HigherStabRanges}
Let $\gR \in \mathsf{Alg}_{E_2}(\mathsf{D}(\bk)^\bZ)$. Let $\gR/(\alpha_1, \ldots, \alpha_{r})$ be an admissible Smith--Toda complex such that $\pi_{n,d}(\gR/(\alpha_1, \ldots, \alpha_{r}))=0$ for $d < \lambda n + \kappa$. Then the fibre $C_\lambda^f(\gR)$ of
$$\gR \lra L_\lambda^f(\gR)$$
has $\pi_{n,d}(C_\lambda^f(\gR))=0$ for $d < \lambda n + \kappa + \sum_{i=1}^r(\lambda n_i - d_i-1)$, and also for $d < \lambda' n + \tfrac{\lambda'}{\lambda}\kappa + \sum_{i=1}^r(\lambda' n_i - d_i-1)$.
\end{thm}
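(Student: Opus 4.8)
The plan is to reduce everything to a single structural description of $C_\lambda^f(\gR)$ and then run two parallel homotopy-group estimates. By Proposition~\ref{prop:Localisation}(\ref{it:Localisation4}) the functor $C_\lambda^f(-)\simeq C_\lambda^f(\gR)\otimes_\gR(-)$ is exact and preserves colimits, and $\gR_r:=\gR/(\alpha_1,\ldots,\alpha_r)$ lies in $\mathcal{A}^f_\lambda$ (it is finite with a slope $\lambda$ vanishing line), hence is $\mathcal{A}^f_\lambda$-acyclic, so $C_\lambda^f(\gR_r)\simeq\gR_r$. Thus the only input available is the vanishing line of $\gR_r$, and the task is to transport it to $C_\lambda^f(\gR)$.

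The key fact I would establish is that $C_\lambda^f(\gR)$ admits a complete filtration with associated graded
\[
\bigoplus_{\ell_1,\ldots,\ell_r\geq 1} S^{-\sum_i \ell_i n_i,\ -\sum_i \ell_i d_i - r}\otimes \gR_r .
\]
When all the $\alpha_i$ are \emph{global}, i.e.\ multiplication by elements of $\pi_{*,*}(\gR)$, this is transparent: with $I=(\alpha_1,\ldots,\alpha_r)$ the remark following Proposition~\ref{prop:redBlueStableHomology} identifies $C_\lambda^f(\gR)\simeq\Gamma_I(\gR)\simeq\bigotimes_{i=1}^r\mathrm{fib}(\gR\to\alpha_i^{-1}\gR)$, and each $\mathrm{fib}(\gR\to\alpha_i^{-1}\gR)$ is $\Sigma^{-1}$ of a telescope cofibre whose associated graded is $\bigoplus_{\ell\geq1}S^{-\ell n_i,-\ell d_i}\otimes\gR/\alpha_i$ (remark after Proposition~\ref{prop:SigmaStableHomology}); tensoring the $r$ filtrations over $\gR$ gives the display, using $\bigotimes_i\gR/\alpha_i\simeq\gR_r$. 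In general the higher $\alpha_i$ are only defined on the cofibres of the lower ones, so one cannot literally invert them in $\gR$; instead one must build $C_\lambda^f(\gR)$ as the total fibre of an $r$-cube produced inductively, at the $j$-th stage replacing the cofibre map $\gR_{j-1}\to\gR_j$ by a ``relative $\alpha_j$-telescope''. Checking that this $r$-cube really computes $C_\lambda^f(\gR)$ -- that the relative telescopes are simultaneous $\mathcal{A}^f_\lambda$-acyclifications and that the apex is $\mathcal{A}^f_\lambda$-local -- and that its iterated-fibre filtration has the stated associated graded is the substance of the proof; this is the ``orthogonal calculus model'' of Section~\ref{sec:OrthCalcModel}, and I expect it to be the main obstacle. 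I would also record that the filtration is degreewise finite: each $\alpha_i$ has slope $\tfrac{d_i}{n_i}\leq\lambda'<\lambda$, so $\lambda n_i-d_i>0$ and in any fixed bidegree only finitely many tuples $(\ell_i)$ contribute.

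Granting the filtration, both vanishing lines are bookkeeping. For the $(\ell_1,\ldots,\ell_r)$-summand above, $\pi_{n,d}$ equals $\pi_{\,n+\sum\ell_i n_i,\ d+\sum\ell_i d_i+r}(\gR_r)$, which by the slope $\lambda$ vanishing line of $\gR_r$ is zero whenever $d+\sum\ell_i d_i+r<\lambda\big(n+\sum\ell_i n_i\big)+\kappa$, i.e.\ $d<\lambda n+\kappa-r+\sum_i\ell_i(\lambda n_i-d_i)$; since $\lambda n_i-d_i>0$ the minimum over $\ell_i\geq1$ occurs at $\ell_i=1$ for all $i$, namely $d<\lambda n+\kappa+\sum_i(\lambda n_i-d_i-1)$. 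In that range every associated graded summand has trivial $\pi_{n,d}$, so completeness of the filtration forces $\pi_{n,d}(C_\lambda^f(\gR))=0$ there, which is the first bound.

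For the second bound I would first upgrade the vanishing line of $\gR_r$: since $\gR_r$ is connective (it is built from $\gR$, which in the cases of interest satisfies (C)), the conditions $\pi_{n,d}(\gR_r)=0$ for $d<\lambda n+\kappa$ and for $d<0$ together force $\pi_{n,d}(\gR_r)=0$ for $d<\lambda' n+\tfrac{\lambda'}{\lambda}\kappa$ -- the slope $\lambda'$ line through the point $(-\kappa/\lambda,0)$ where the slope $\lambda$ line crosses the $x$-axis (a one-line check: for $n\geq-\kappa/\lambda$ it lies below $d=\lambda n+\kappa$, and for $n<-\kappa/\lambda$ it lies below $d=0$). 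Rerunning the computation of the previous paragraph with this slope $\lambda'$ line -- noting that the $\alpha_i$ of slope exactly $\lambda'$ contribute $\lambda' n_i-d_i=0$, so their $\ell_i$ do not affect the minimum, while convergence of the filtration is still controlled by the slope $\lambda$ line -- gives $\pi_{n,d}(C_\lambda^f(\gR))=0$ for $d<\lambda' n+\tfrac{\lambda'}{\lambda}\kappa+\sum_i(\lambda' n_i-d_i-1)$, completing the proof.
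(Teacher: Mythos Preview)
Your homotopy-group bookkeeping (once the filtration is granted) is correct and matches the paper exactly, including the slope-$\lambda'$ refinement via the line through $(-\kappa/\lambda,0)$. The gap is in obtaining the filtration. You propose to build $C_\lambda^f(\gR)$ from scratch as the total fibre of an $r$-cube of ``relative $\alpha_j$-telescopes'' and then verify it computes the localisation; you acknowledge this is the main obstacle and point to Section~\ref{sec:OrthCalcModel}. That reference is misdirected: the model there is a sequential colimit of iterates of $\tau(\gM)=\underline{\map}^r_\gR(\gF,\gM)$, not an $r$-cube, and it requires the extra hypothesis~\eqref{eq:Wide}. More importantly, in the genuine Smith--Toda setting where $\alpha_j$ is only an endomorphism of $\gR/(\alpha_1,\ldots,\alpha_{j-1})$, there is no evident way to produce the cube you describe before knowing what $C_\lambda^f(\gR)$ is.

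The paper avoids this entirely by working in the opposite direction. One already has $\gC:=C_\lambda^f(\gR)$ from Proposition~\ref{prop:LocExists}, realised as a filtered colimit whose successive cofibres lie in $\mathcal{A}^f_\lambda$. The key observation is that for each $j$ the functor $\alpha_j^{-1}(-)/(\alpha_1,\ldots,\alpha_{j-1})=(-)\otimes_\gR\alpha_j^{-1}\gR/(\alpha_1,\ldots,\alpha_{j-1})$ annihilates every $\gT\in\mathcal{A}^f_\lambda$: indeed $\gT/(\alpha_1,\ldots,\alpha_{j-1})$ is still in $\mathcal{A}^f_\lambda$ (it lies in the thick subcategory of $\gT$), so has a slope-$\lambda$ vanishing line, on which the slope-$<\lambda$ endomorphism $\alpha_j$ must act nilpotently. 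Since this functor preserves colimits, it annihilates $\gC$ as well, i.e.\ $\alpha_j^{-1}\gC/(\alpha_1,\ldots,\alpha_{j-1})\simeq 0$. That vanishing is exactly what produces the filtration of $\Sigma\,\gC/(\alpha_1,\ldots,\alpha_{j-1})$ with associated graded $\bigoplus_{\ell\ge1}S^{-\ell n_j,-\ell d_j}\otimes\gC/(\alpha_1,\ldots,\alpha_j)$; iterating over $j$ and using $\gC/(\alpha_1,\ldots,\alpha_r)\simeq\gR_r$ (since $\gR_r\in\mathcal{A}^f_\lambda$ implies $L^f_\lambda(\gR_r)=0$) gives precisely your displayed associated graded. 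No cube is needed, and no identification of a candidate model with $C_\lambda^f(\gR)$ has to be checked.
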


\begin{wrapfigure}[5]{r}{0.45\linewidth}
\vspace{-7.5ex}
\centering
\begin{tikzpicture}[scale=0.55]
  
  \fill[gray] (-3,-1) -- (2,1) -- (4,3) -- (-3,3);

  \draw[->] (-3,0) -- (4,0) node[right] {$n$};
  \draw[->] (0,-1) -- (0,3.2) node[above] {$d$};
  
  \draw (2,1) -- (4,3);
  \draw (2,1) -- (-3,-1);
  \draw (4.2,2) node {slope $\lambda$};
  \draw (-1.1,-0.9) node {slope $\lambda'$};
\end{tikzpicture}
%\caption{The support of an object having a vanishing line of slope $\lambda$ as well as a vanishing line of slope $\lambda'$.}\label{fig:TwoVanishingLines}
\end{wrapfigure}

Note that the $\gR$-module $C_\lambda^f(\gR)$ need not be finite, so having a vanishing line of slope $\lambda'$ does not follow from having one of slope $\lambda$: as depicted in the adjacent figure, it is more information. 

\begin{proof}[Proof of Theorem \ref{thm:HigherStabRanges}]
For any $\gR$-module $\gM$ write $\gM/(\alpha_1, \ldots, \alpha_{j-1}) := \gM \otimes_\gR \gR/(\alpha_1, \ldots, \alpha_{j-1})$, so there are induced maps
$$\alpha_j = \gM \otimes_{\gR} \alpha_j : S^{n_j, d_j} \otimes \gM/(\alpha_1, \ldots, \alpha_{j-1}) \lra \gM/(\alpha_1, \ldots, \alpha_{j-1}).$$
If $\gT \in \mathcal{A}^f_\lambda$ then $\gT/(\alpha_1, \ldots, \alpha_{j-1}) \in \mathcal{A}^f_\lambda$ as it is in the thick subcategory generated by $\gT$. As $\tfrac{d_j}{n_j} < \lambda$ it follows that
$$\pi_{*,*}(\alpha_j^{-1} \gT/(\alpha_1, \ldots, \alpha_{j-1})) = \alpha_j^{-1} \pi_{*,*}(\gT/(\alpha_1, \ldots, \alpha_{j-1})) = 0,$$
so $\alpha_j^{-1} \gT/(\alpha_1, \ldots, \alpha_{j-1}) = 0$.

Abbreviate $\gC := C_\lambda^f(\gR)$. By its construction in Proposition \ref{prop:LocExists}, $\gC$ is a telescope of maps whose cofibres are sums of elements of $\mathcal{A}^f_\lambda$. The construction $\alpha_j^{-1} (-)/(\alpha_1, \ldots, \alpha_{j-1}) = (-) \otimes_\gR \alpha^{-1}_j \gR/(\alpha_1, \ldots, \alpha_{j-1})$ commutes with colimits, so from the above discussion it follows that $\alpha_j^{-1} \gC/(\alpha_1, \ldots, \alpha_{j-1}) \simeq 0$.

From the directed colimit
$$\gC \overset{\alpha_1} \lra S^{-n_1, -d_1} \otimes \gC \overset{\alpha_1}\lra S^{-2n_1, -2d_1} \otimes \gC \lra \cdots \lra \alpha_1^{-1} \gC \simeq 0$$
we see that $(\alpha_1^{-1}\gC) / \gC \simeq S^{0,1} \otimes \gC$ has a filtration whose associated graded is
$$\bigoplus_{i_1 =1}^\infty S^{-i_1 n_1, -i_1 d_1} \otimes \gC/\alpha_1.$$
Similarly, from the directed colimit
$$\gC/\alpha_1 \overset{\alpha_2} \lra S^{-n_2, -d_2} \otimes \gC/\alpha_1 \overset{\alpha_2}\lra S^{-2n_2, -2d_2} \otimes \gC/\alpha_1 \lra \cdots \lra \alpha_2^{-1} \gC/\alpha_1 \simeq 0$$
we see that $(\alpha_2^{-1} \gC/\alpha_1) / (\gC/\alpha_1) \simeq S^{0,1} \otimes \gC/\alpha_1$ has a filtration whose associated graded is
$$\bigoplus_{i_2 = 1}^\infty S^{-i_2 n_2, -i_2 d_2} \otimes \gC/(\alpha_1, \alpha_2).$$
Continuing in this way, we see that $S^{0,1} \otimes \gC/(\alpha_{1}, \ldots, \alpha_{j-1})$ has a filtration whose associated graded is
$$\bigoplus_{i_j = 1}^\infty S^{-i_j n_j, -i_j d_j} \otimes \gC/(\alpha_1, \ldots, \alpha_{j})$$
for each $j \leq r$.

As $\gR/(\alpha_1, \ldots, \alpha_r)$ is a finite $\gR$-module with a vanishing line of slope $\lambda$ we have that $L_{\lambda}^f(\gR/(\alpha_1,  \ldots, \alpha_r))=0$ and so $\gC/(\alpha_1,  \ldots, \alpha_r) \simeq \gR/(\alpha_1,  \ldots, \alpha_r)$, which we have assumed has trivial homotopy groups in bidegrees $(n,d)$ satisfying $d < \lambda n + \kappa$. The spectral sequence for the final filtration obtained takes the form
$$E^1_{n,p,q} = \pi_{n,p+q}(S^{-p n_r, -p d_r} \otimes \gC/(\alpha_1, \ldots, \alpha_{r})) \Rightarrow \pi_{n,p+q}(S^{0,1} \otimes \gC/(\alpha_1, \ldots, \alpha_{r-1})),$$
defined for $p \geq 1$. This shows that
$$\pi_{n,d}(\gC/(\alpha_1, \ldots, \alpha_{r-1}))=0 \text{ for } d < \lambda n + \kappa + (\lambda n_r - d_r-1).$$
Running the series of analogous spectral sequences for the earlier filtrations gives
$$\pi_{n,d}(\gC)=0 \text{ for } d < \lambda n + \kappa + \sum_{i=1}^r(\lambda n_i - d_i-1).$$

As $\gC/(\alpha_1,  \ldots, \alpha_r) \simeq \gR/(\alpha_1,  \ldots, \alpha_r)$ is a \emph{finite} $\gR$-module with a slope $\lambda$ vanishing line, it also has a slope $\lambda'$ vanishing line. Specifically, using that its homotopy groups vanish in negative homological degree we also have $\pi_{n,d}(\gR/(\alpha_1,  \ldots, \alpha_r))=0$ for $d < \lambda' n + \tfrac{\lambda'}{\lambda} \kappa$. Running the same series of spectral sequences as above using this vanishing line shows first that 
$$\pi_{n,d}(\gC/(\alpha_1, \ldots, \alpha_{r-1}))=0 \text{ for } d < \lambda' n + \tfrac{\lambda'}{\lambda}\kappa + (\lambda' n_r - d_r-1)$$
and then by the same series of spectral sequences that
\begin{equation*}
\pi_{n,d}(\gC)=0 \text{ for } d < \lambda' n + \tfrac{\lambda'}{\lambda}\kappa + \sum_{i=1}^r(\lambda' n_i - d_i-1).\qedhere
\end{equation*}
\end{proof}

\subsection{Adams periodicity}\label{sec:AdamsPeriodicity}

In Theorem \ref{thm:HigherStabRanges} we have seen that the existence of an admissible Smith--Toda complex $\gR/(\alpha_1, \ldots, \alpha_{r})$ with a slope $\lambda$ vanishing line means the fibre of $\gR \to L^f_\lambda(\gR)$ has a certain vanishing line. If $\lambda' := \tfrac{d_r}{n_r}$ and $\alpha_k, \ldots, \alpha_r$ are the $\alpha$'s of slope precisely $\lambda'$, then $\gR/(\alpha_1, \ldots, \alpha_{k-1})$ has a slope $\lambda'$ vanishing line by Proposition \ref{prop:SubOfAdmIsAdm} and so the fibre of $\gR \to L^f_{\lambda'}(\gR)$ also has a certain vanishing line. Writing
$$M^f_{\lambda, \lambda'}(\gR) := \mathrm{fib}(L^f_{\lambda}(\gR) \to L^f_{\lambda'}(\gR)),$$
we will show below that this has a slope $\lambda'$ vanishing line, but moreover that the $\alpha_k, \ldots, \alpha_r$ induce certain periodicities on the homotopy groups of $M^f_{\lambda, \lambda'}(\gR)$, in a range of degrees.

To formulate the ``in a range of degrees'' part of this result, for an object $X \in \mathsf{D}(\bk)^\bZ$ we write $\tau^{\lambda'}_{< b}(X) \in \mathsf{D}(\bk)^\bZ$ for the object given in terms of Postnikov truncation by
$$\tau^{\lambda'}_{< b}(X)(n) := \tau_{< \lfloor b+\lambda'n\rfloor}(X(n)).$$
(This is not truncation with respect to a $t$-structure unless $\lambda'=1$, in which case it is truncation with respect to the diagonal $t$-structure introduced in Section \ref{sec:TStructures}.)

\begin{thm}\label{thm:AdamsPeriodicityGeneral}
Let $\gR \in \mathsf{Alg}_{E_2}(\mathsf{D}(\bk)^\bZ)$. Let $\gR/(\alpha_1, \ldots, \alpha_{r})$ be an admissible Smith--Toda complex with a slope $\lambda$ vanishing line, and write $\lambda' := \tfrac{d_r}{n_r}$ and let $\alpha_k, \ldots, \alpha_r$ be the $\alpha$'s of slope precisely $\lambda'$. Then

\begin{enumerate}[(i)]

\item\label{it:AdamsPeriodicityGeneral:1} $M^f_{\lambda, \lambda'}(\gR)$ has a slope $\lambda'$ vanishing line, and

\item\label{it:AdamsPeriodicityGeneral:2} if $\pi_{n,d}(M^f_{\lambda, \lambda'}(\gR))=0$ for $d < \lambda' n + \kappa'$, then setting
$$b := \kappa' + \min_{S \subsetneq \{1,2,\ldots,k-1\}}\left[ \sum_{s \not\in S}(\lambda' n_s - d_s-1) \right]$$
there are maps
$$\beta_{i} : S^{n_i, d_i}\otimes\tau_{< b}^{\lambda'} M_{\lambda, \lambda'}^f(\gR)/(\beta_k, \ldots, \beta_{i-1}) \lra  \tau_{\leq b}^{\lambda'} M_{\lambda, \lambda'}^f(\gR)/(\beta_k, \ldots, \beta_{i-1})$$
for $k \leq i \leq r$ with $\tau_{< b}^{\lambda'} M_{\lambda, \lambda'}^f(\gR)/(\beta_k, \ldots, \beta_r) \simeq 0$.
\end{enumerate}
\end{thm}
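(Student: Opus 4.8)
\textbf{Proof strategy for Theorem \ref{thm:AdamsPeriodicityGeneral}.}

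The plan is to reduce both parts to an analysis of the filtrations already constructed in the proof of Theorem \ref{thm:HigherStabRanges}, now applied with both slopes $\lambda$ and $\lambda'$ in play. First I would observe that $M^f_{\lambda,\lambda'}(\gR) = \mathrm{fib}(L^f_\lambda(\gR) \to L^f_{\lambda'}(\gR)) \simeq \mathrm{fib}(C^f_{\lambda'}(\gR) \to C^f_\lambda(\gR))$, so that by Theorem \ref{thm:HigherStabRanges} (applied to the admissible Smith--Toda subcomplex $\gR/(\alpha_1,\ldots,\alpha_{k-1})$, which has a slope $\lambda'$ vanishing line by Proposition \ref{prop:SubOfAdmIsAdm}) one already controls $C^f_{\lambda'}(\gR)$, while the vanishing line for $C^f_\lambda(\gR)$ of slope $\lambda'$ given in that same theorem controls the other term. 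Part \eqref{it:AdamsPeriodicityGeneral:1} is then immediate: $M^f_{\lambda,\lambda'}(\gR)$ sits in a fibre sequence between two objects with slope $\lambda'$ vanishing lines, hence has one too. (One must bookkeep the $\kappa'$ constant, but no idea is needed.)

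For part \eqref{it:AdamsPeriodicityGeneral:2}, the key point is that the endomorphisms $\alpha_k,\ldots,\alpha_r$ of $\gR/(\alpha_1,\ldots,\alpha_{k-1})$ act on $C^f_{\lambda'}(\gR)/(\alpha_1,\ldots,\alpha_{k-1})$, and this object is \emph{annihilated} after inverting any $\alpha_j$ with $j \geq k$ (exactly as in the proof of Theorem \ref{thm:HigherStabRanges}: $\alpha_j^{-1}\gC/(\alpha_1,\ldots,\alpha_{j-1}) \simeq 0$ there, since $\tfrac{d_j}{n_j} = \lambda' < \lambda$ and $\mathcal{A}^f_{\lambda'}$-acyclics are closed under colimits and $\otimes_\gR$). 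I would then run the same telescope-filtration argument: from $\gC'/(\alpha_1,\ldots,\alpha_{j-1}) \to \alpha_j^{-1}(\cdots) \simeq 0$ one gets that $S^{0,1}\otimes \gC'/(\alpha_1,\ldots,\alpha_{j-1})$ has a filtration with associated graded $\bigoplus_{i\geq 1} S^{-in_j,-id_j}\otimes \gC'/(\alpha_1,\ldots,\alpha_j)$, where $\gC' := C^f_{\lambda'}(\gR)$. Now $\gC'/(\alpha_1,\ldots,\alpha_r) \simeq C^f_{\lambda'}(\gR/(\alpha_1,\ldots,\alpha_r))$; but $\gR/(\alpha_1,\ldots,\alpha_r)$ is finite with a slope $\lambda$ vanishing line, hence with a slope $\lambda'$ one, hence $L^f_{\lambda'}$-acyclic, so $\gC'/(\alpha_1,\ldots,\alpha_r) \simeq \gR/(\alpha_1,\ldots,\alpha_r)$ — which vanishes below a line of slope $\lambda$, in particular (being finite, connective) below a line of slope $\lambda'$. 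The $\beta_i$ will be constructed as the image of $\alpha_i$ under the truncation functor $\tau^{\lambda'}_{\leq b}$ applied to $M^f_{\lambda,\lambda'}(\gR)/(\beta_k,\ldots,\beta_{i-1})$; one checks inductively that the relevant homotopy groups of $M^f_{\lambda,\lambda'}(\gR)/(\alpha_k,\ldots,\alpha_{i-1})$ agree with those of $\gR/(\alpha_1,\ldots,\alpha_r)$ below the line $d = \lambda' n + b$, the constant $b$ being exactly the accumulated loss $\kappa' + \min_{S\subsetneq\{1,\ldots,k-1\}}\sum_{s\notin S}(\lambda' n_s - d_s - 1)$ coming from the finitely many spectral-sequence filtration steps indexed by the $\alpha$'s of slope strictly less than $\lambda'$.

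The main obstacle, and the step that needs the most care, is making sense of ``$\tau^{\lambda'}_{\leq b} M^f_{\lambda,\lambda'}(\gR)/(\beta_k,\ldots,\beta_{i-1})$'' as an actual $\gR$-module with endomorphisms in a coherent way, since $\tau^{\lambda'}_{<b}$ is \emph{not} a $t$-structure truncation unless $\lambda' = 1$ and so does not automatically produce $\gR$-module maps. I would handle this by working one $\alpha_i$ at a time, at each stage using that the relevant comparison map is an isomorphism on $\pi_{n,d}$ for $d < \lambda' n + b$ (by the filtration spectral sequences above, whose $E^1$-pages involve only shifts of $\gR/(\alpha_1,\ldots,\alpha_r)$ placed far enough out), so that the obstruction to lifting $\alpha_i$ through the truncation lies in groups that vanish in the range of interest; the precise formulation of the periodicity isomorphism on homotopy groups is then read off exactly as in the representative example in the introduction, namely $(\alpha_i)_*$ is an isomorphism on $\pi_{*,*}(M^f_{\lambda,\lambda'}(\gR))$ in the range where all the $\beta$-relations have been imposed. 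This is the same technical phenomenon as Adams periodicity in the classical setting, and I expect the bookkeeping of the constant $b$ — verifying it is the optimal shift compatible with all $2^{k-1}-1$ proper subsets $S$ — to be the only genuinely fiddly computation.
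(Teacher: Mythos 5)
Your treatment of part (\ref{it:AdamsPeriodicityGeneral:1}) is essentially the paper's: both terms of the fibre sequence relating $M^f_{\lambda,\lambda'}(\gR)$ to $C^f_\lambda(\gR)$ and $C^f_{\lambda'}(\gR)$ acquire slope $\lambda'$ vanishing lines from Theorem \ref{thm:HigherStabRanges} (the second via Proposition \ref{prop:SubOfAdmIsAdm}), and the only slip is the direction of the natural map, which is $C^f_\lambda(\gR) \to C^f_{\lambda'}(\gR)$.

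For part (\ref{it:AdamsPeriodicityGeneral:2}) there is a genuine gap. The heart of the argument is not a telescope filtration and not an obstruction-theoretic lifting of $\alpha_i$ through the truncation; it is the \emph{finite} cell structure of the Smith--Toda complex. Concretely, $\gR/(\alpha_1,\ldots,\alpha_{k-1})$ has a skeletal filtration with cells $S^{\sum_{s\in S}n_s,\sum_{s\in S}(d_s+1)}\otimes\gR$ indexed by $S\subseteq\{1,\ldots,k-1\}$, and one considers the projection to the top cell $S=\{1,\ldots,k-1\}$. After desuspending and applying the exact functor $M^f_{\lambda,\lambda'}(-)$, the fibre of this projection is built from shifts $S^{-\sum_{s\notin S}n_s,-\sum_{s\notin S}(d_s+1)}\otimes M^f_{\lambda,\lambda'}(\gR)$ over \emph{proper} subsets $S$, and the vanishing line $\kappa'$ of $M^f_{\lambda,\lambda'}(\gR)$ forces this fibre to vanish for $d<\lambda'n+b$ --- this is precisely where the $\min_{S\subsetneq\{1,\ldots,k-1\}}$ in the definition of $b$ comes from, and it is not produced by the infinite telescope filtrations of Theorem \ref{thm:HigherStabRanges} that you invoke. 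The resulting equivalence $\tau^{\lambda'}_{<b}M^f_{\lambda,\lambda'}(S^{-\sum n_s,-\sum(d_s+1)}\otimes\gR/(\alpha_1,\ldots,\alpha_{k-1}))\simeq\tau^{\lambda'}_{<b}M^f_{\lambda,\lambda'}(\gR)$ then makes the construction of the $\beta_i$ trivial: one simply applies the functor $\tau^{\lambda'}_{<b}M^f_{\lambda,\lambda'}(-)$ to the genuine maps $\alpha_k,\ldots,\alpha_r$, which are already defined on (shifts of) $\gR/(\alpha_1,\ldots,\alpha_{i-1})$; no lifting and no vanishing of obstruction groups is needed, and the "main obstacle" you identify dissolves because the theorem does not ask the $\beta_i$ to be $\gR$-module maps. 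Finally, your intermediate claim that $\pi_{n,d}$ of $M^f_{\lambda,\lambda'}(\gR)/(\beta_k,\ldots,\beta_{i-1})$ agrees with $\pi_{n,d}(\gR/(\alpha_1,\ldots,\alpha_r))$ below the line is not correct as stated: the correct comparison is with $\tau^{\lambda'}_{<b}M^f_{\lambda,\lambda'}$ of the shifted partial Smith--Toda complex $\gR/(\alpha_1,\ldots,\alpha_{i-1})$, and the conclusion $\tau^{\lambda'}_{<b}M^f_{\lambda,\lambda'}(\gR)/(\beta_k,\ldots,\beta_r)\simeq 0$ follows because $\gR/(\alpha_1,\ldots,\alpha_r)$ lies in $\mathcal{A}^f_\lambda$ and is therefore annihilated by $M^f_{\lambda,\lambda'}(-)$, not because of any agreement of homotopy groups with $\gR/(\alpha_1,\ldots,\alpha_r)$ itself.
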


The meaning of this theorem is clearest when there is a single $\alpha$ of slope $\lambda'$, namely $\alpha_r$. It then says that there is a periodicity isomorphism
$$\beta_r : \pi_{n- n_r, d-d_r}(M_{\lambda, \lambda'}^f(\gR)) \overset{\sim}\lra \pi_{n,d}(M_{\lambda, \lambda'}^f(\gR))$$
defined for $d < \lambda'n + b$. This is only non-vacuous in the range where these homotopy groups are not known to vanish, i.e.\ for $b > \kappa'$, i.e.\ for 
$$\min_{S \subsetneq \{1,2,\ldots,r-1\}}\left[ \sum_{s \not\in S}((\lambda' n_s - d_s)-1) \right] > 0.$$
As $\tfrac{d_s}{n_s} < \lambda'$ for $s \in \{1,2,\ldots,r-1\}$, by the assumption that only $\alpha_r$ has slope $\lambda'$, we have $\lambda' n_s-d_s>0$ for each $s$, but a little more is required for the conclusion to be non-vacuous. Namely, we wish to find Smith--Toda complexes where these values are arbitrarily large: that will yield periodicity isomorphisms in arbitrarily wide bands of degrees parallel to the vanishing line (although the wider one wishes this band of homotopy groups to be, the longer one must typically take the period of the periodicity to be). When we establish the existence of Smith--Toda complexes in Section \ref{sec:HigherStabMaps} we will explain how this property may also be achieved.

\begin{rem}
 We call this ``Adams periodicity'' by analogy with his periodicity theorem \cite[Theorem 1.2]{AdamsPeriodicity} for $\Ext$ over the Steenrod algebra. Indeed, working with the $E_\infty$-algebra $\mathbf{a} := \Cobar(\mathcal{A}_*)$ (see Section \ref{sec:CobarOfHopf}) Adams' theorem may be recovered as an instance of this one.
\end{rem}

When there are several $\alpha$'s of slope $\lambda'$ it seems more difficult to give a concrete interpretation, unless one knows that the stabilisation maps $\alpha_k, \ldots, \alpha_r$ of slope $\lambda'$ can be defined simultanously as endomorphisms $\gR/(\alpha_1, \ldots, \alpha_{k-1})$, and arranged to form a coherently commutative cube. In Section \ref{sec:SimultaneousStabilisation} we will show that this is possible, at least when $\gR$ is an $E_3$-algebra.

\begin{addendum}\label{add:SimultaneousStabilisation}
If $\alpha_k, \ldots, \alpha_r$ can all be defined on $\gR/(\alpha_1, \ldots, \alpha_{k-1})$ so as to yield a coherently homotopy commutative $(r-k)$-cube, then $\beta_k, \ldots, \beta_r$ can all be defined on $\tau_{\leq b}^{\lambda'} M_{\lambda, \lambda'}^f(\gR)$ so as to yield a coherently commutative $(r-k)$-cube. In this case the conclusion of the Theorem is that this cube is homotopy (co)cartesian.
\end{addendum}

\begin{proof}[Proof of Theorem \ref{thm:AdamsPeriodicityGeneral} and Addendum \ref{add:SimultaneousStabilisation}]
For (\ref{it:AdamsPeriodicityGeneral:1}), it follows from Theorem \ref{thm:HigherStabRanges} that $C_\lambda^f(\gR)$ has a vanishing line of slope $\lambda$ as well as a vanishing line of slope $\lambda'$, and $C_{\lambda'}^f(\gR)$ has a vanishing line of slope $\lambda'$, so it follows that $M^f_{\lambda, \lambda'}(\gR) \simeq \Sigma \mathrm{fib}(C_\lambda^f(\gR) \to C_{\lambda'}^f(\gR))$ has a vanishing line of slope $\lambda'$. 

For (\ref{it:AdamsPeriodicityGeneral:2}), note that the object $\gR/(\alpha_1, \ldots, \alpha_{j-1})$ has a (skeletal) filtration with associated graded
$$\bigoplus_{S \subseteq \{1,2,\ldots, j-1\}} S^{\sum_{s \in S} n_s, \sum_{s \in S} (d_s+1)} \otimes \gR,$$
so the fibre of the projection $\gR/(\alpha_1, \ldots, \alpha_{j-1}) \to S^{\sum_{s=1}^{j-1} n_s, \sum_{s =1}^{j-1} (d_s+1)} \otimes \gR$ to the top cell has a filtration with associated graded given by the analogous sum but over proper subsets $S \subsetneq \{1,2,\ldots, j-1\}$. It follows that
$$\mathrm{fib}(S^{-\sum_{s=1}^{k-1} n_s, -\sum_{s=1}^{k-1}(d_s+1)} \otimes M_{\lambda, \lambda'}^f(\gR)/(\alpha_1, \ldots, \alpha_{k-1}) \to  M_{\lambda, \lambda'}^f(\gR))$$
has a filtration with associated graded
$$\bigoplus_{S \subsetneq \{1,2,\ldots, k-1\}} S^{-\sum_{s \not\in S} n_s, -\sum_{s \not\in S} (d_s+1)} \otimes M^f_{\lambda, \lambda'}(\gR)$$
and therefore has trivial $(n,d)$-th homotopy group as long as 
\begin{align*}
d &< \min_{S \subsetneq \{1,2,\ldots,k-1\}}\left[\lambda'(n + \sum_{s \not\in S}  n_s) - \sum_{s \not\in S}(d_s+1) + \kappa' \right]\\
& \quad\quad = \lambda' n + \kappa' + \min_{S \subsetneq \{1,2,\ldots,k-1\}}\left[ \sum_{s \not\in S}(( \lambda' n_s - d_s)-1) \right].
\end{align*}
In particular the homotopy groups of this fibre vanish for $d < \lambda'n+b$, so there is an equivalence
$$\tau_{< b}^{\lambda'} M_{\lambda, \lambda'}^f( S^{-\sum_{s=1}^{k-1}  n_s, -\sum_{s =1}^{k-1}(d_s+1)} \otimes \gR/(\alpha_1, \ldots, \alpha_{k-1}))\overset{\sim} \lra \tau_{< b}^{\lambda'} M_{\lambda, \lambda'}^f(\gR).$$
Given this equivalence, the maps $\beta_k, \ldots, \beta_r$ on $\tau_{< b}^{\lambda'} M_{\lambda, \lambda'}^f(\gR)$ are just those induced by applying $\tau_{< b}^{\lambda'} M_{\lambda, \lambda'}^f(-)$ to the sequence of maps $\alpha_k, \ldots, \alpha_r$ starting with $S^{-\sum_{s=1}^{k-1}  n_s, -\sum_{s =1}^{k-1}(d_s+1)} \otimes \gR/(\alpha_1, \ldots, \alpha_{k-1})$. The conclusion follows as
$$\tau_{< b}^{\lambda'} M_{\lambda, \lambda'}^f(\gR)/(\beta_k, \ldots, \beta_r) \simeq \tau_{< b}^{\lambda'} M_{\lambda, \lambda'}^f(S^{-\sum_{s=1}^{k-1}  n_s, -\sum_{s =1}^{k-1}(d_s+1)} \otimes \gR/(\alpha_1, \ldots, \alpha_r))$$
and $\gR/(\alpha_1, \ldots, \alpha_r)$ has a vanishing line of slope $\lambda$ so is annihilated by $M_{\lambda, \lambda'}^f(-)$.

For the Addendum, apply $\tau_{< b}^{\lambda'} M_{\lambda, \lambda'}^f(-)$ to the coherently commutative $(r-k)$-cube given by the endomorphisms $\alpha_k, \ldots, \alpha_r$ of $S^{-\sum_{s=1}^{k-1}  n_s, -\sum_{s =1}^{k-1}(d_s+1)} \otimes \gR/(\alpha_1, \ldots, \alpha_{k-1})$, to obtain a coherently commutative $(r-k)$-cube of endomorphisms  $\beta_k, \ldots, \beta_r$ of $\tau_{< b}^{\lambda'} M_{\lambda, \lambda'}^f(\gR)$. As above the total homotopy cofibre of this $(r-k)$-cube is identified with $\tau_{< b}^{\lambda'} M_{\lambda, \lambda'}^f(S^{-\sum_{s=1}^{k-1}  n_s, -\sum_{s =1}^{k-1}(d_s+1)} \otimes \gR/(\alpha_1, \ldots, \alpha_r))$ and vanishes.
\end{proof}

\subsection{Monogenicity and Quantisation}\label{sec:Monigenicity}

A further application of the existence of an admissible Smith--Toda complex with a slope $\lambda$ vanishing line is that it implies that $L^f_\lambda(-)$ is given by localisation away from a single object (namely the Smith--Toda complex), and also that there are only finitely-many different localisation functors at slopes $\leq \lambda$, as follows.

\begin{thm}\label{thm:STQuantisation}
If $\gR/(\alpha_1, \ldots, \alpha_{r})$ is an admissible Smith--Toda complex with a slope $\lambda$ vanishing line and $\bar{\lambda}$ satisfies $\tfrac{d_r}{n_r} < \bar{\lambda} \leq \lambda$, then any finite $\gR$-module with a slope $\bar{\lambda}$ vanishing line lies in the thick subcategory generated by $\gR/(\alpha_1, \ldots, \alpha_{r})$. In particular:
\begin{enumerate}[(i)]
\item\label{it:STQuantisation:1} Finite $\gR$-modules with a slope $\bar{\lambda}$ vanishing line have a slope $\lambda$ vanishing line.

\item\label{it:STQuantisation:2} $\gR/(\alpha_1, \ldots, \alpha_{r})$ generates the same thick subcategory as $\mathcal{A}_\lambda^f$, so $L_\lambda^f(-)$ agrees with Bousfield localisation away from $\gR/(\alpha_1, \ldots, \alpha_{r})$
\end{enumerate}
\end{thm}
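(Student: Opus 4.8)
The plan is to reduce everything to the main assertion of the theorem --- that a finite $\gR$-module $\gM$ with a slope $\bar\lambda$ vanishing line lies in the thick subcategory $\mathcal{C}$ generated by $\gT := \gR/(\alpha_1,\ldots,\alpha_r)$ --- since parts (\ref{it:STQuantisation:1}) and (\ref{it:STQuantisation:2}) follow from it formally. For (\ref{it:STQuantisation:1}): $\gT$ has a slope $\lambda$ vanishing line, and the class of $\gR$-modules with a slope $\lambda$ vanishing line is thick (closed under cofibre sequences up to a shift of the intercept, under shifts by $S^{n,d}$, and under retracts), so every object of $\mathcal{C}$, in particular $\gM$, has one. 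For (\ref{it:STQuantisation:2}): taking $\bar\lambda=\lambda$ in the main assertion gives $\mathcal{A}^f_\lambda \subseteq \mathcal{C}$, while $\gT \in \mathcal{A}^f_\lambda$ gives $\mathcal{C} \subseteq \mathcal{A}^f_\lambda$, so $\mathcal{C} = \mathcal{A}^f_\lambda$; and since the condition ``$\underline{\map}_\gR(-,\gL)\simeq 0$'' on an $\gR$-module is itself thick (preserved by cofibre sequences, shifts, retracts), an $\gR$-module $\gL$ is $\mathcal{A}^f_\lambda$-local precisely when $\underline{\map}_\gR(\gT,\gL)\simeq 0$, i.e.\ precisely when it is local away from $\gT$. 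Hence $L^f_\lambda$ is Bousfield localisation away from $\gR/(\alpha_1,\ldots,\alpha_r)$.

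The engine is a nilpotence statement I would isolate as a lemma: if $\gN$ is a finite $\gR$-module with a slope $\mu$ vanishing line and $\phi : S^{N,D}\otimes\gN \to \gN$ has positive weight $N$ and slope $\tfrac{D}{N} < \mu$, then $\phi$ is nilpotent. To see this, form the telescope $\phi^{-1}\gN := \colim(\gN \overset{\phi}\to S^{-N,-D}\otimes\gN \overset{\phi}\to \cdots)$. Homotopy groups commute with this filtered colimit, so $\pi_{n,d}(\phi^{-1}\gN) = \colim_j \pi_{n+jN,\,d+jD}(\gN)$; since $D < \mu N$ the quantity $(d+jD)-\mu(n+jN)$ tends to $-\infty$, so these colimits vanish and $\phi^{-1}\gN\simeq 0$. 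But $\gN$ is finite, hence compact, so the canonical map $\gN\to\phi^{-1}\gN$ --- which is null since its target is zero --- is already null at a finite stage; that stage is precisely the iterate $\phi^k$ of Lemma \ref{lem:ThickSubcatFacts} (with $U=S^{N,D}\otimes\gR$, a shift of the unit, hence in the thick subcategory of the unit), so $\phi^k\simeq 0$. (If one does not wish to assume $n_i>0$ for the weights occurring in a Smith--Toda complex, the case $N\le 0$ is handled by the parallel fact that a finite $\gR$-module has homotopy supported in a region $n\ge N_0$, out of which $\phi$ also eventually pushes every class.)

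With this in hand I would assemble the main assertion. Set $\gN_i := \gM\otimes_\gR \gR/(\alpha_1,\ldots,\alpha_{i-1})$ for $1\le i\le r+1$; tensoring the iterated-cofibre presentation of $\gT$ with $\gM$ yields induced self-maps $\gM\otimes_\gR\alpha_i : S^{n_i,d_i}\otimes\gN_i \to \gN_i$ and identifications $\gN_{i+1}\simeq \gN_i/(\gM\otimes_\gR\alpha_i)$. Iterating Lemma \ref{lem:ThickSubcatFacts}(\ref{it:ThickSubcatFacts:1}), each $\gN_i$ lies in the thick subcategory generated by $\gM = \gN_1$; in particular each $\gN_i$ is finite and, the class of modules with a slope $\bar\lambda$ vanishing line being thick, inherits a slope $\bar\lambda$ vanishing line. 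Since the slopes are non-decreasing with $\tfrac{d_i}{n_i}\le\tfrac{d_r}{n_r}<\bar\lambda$, the nilpotence lemma applies to $\gM\otimes_\gR\alpha_i$ on $\gN_i$, so some iterate of it is null, and Lemma \ref{lem:ThickSubcatFacts}(\ref{it:ThickSubcatFacts:3}) places $\gN_i$ in the thick subcategory generated by $\gN_i/(\gM\otimes_\gR\alpha_i)=\gN_{i+1}$. Chaining these containments gives $\gM = \gN_1 \in \mathrm{thick}(\gN_{r+1})$, while $\gN_{r+1} = \gM\otimes_\gR\gT$ lies in $\mathcal{C}$ by Lemma \ref{lem:TensorAndHomStaysThick} (applied with the finite object $\gM$). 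Hence $\gM\in\mathcal{C}$.

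The one genuinely non-formal ingredient is the nilpotence lemma, and the point to watch there is exactly which finite $\gR$-modules really do have homotopy confined to the expected region --- this is where the positivity of the weights $n_i$ (or, failing that, the standing connectivity of $\gR$) enters. Everything else is bookkeeping with thick subcategories via Lemmas \ref{lem:ThickSubcatFacts} and \ref{lem:TensorAndHomStaysThick}, together with the observation that the relevant properties (``has a slope $\mu$ vanishing line'', ``$\underline{\map}_\gR(-,\gL)\simeq 0$'') are preserved under the thick-subcategory operations.
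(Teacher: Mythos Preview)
Your proof is correct and follows essentially the same route as the paper: tensor $\gM$ with the successive Smith--Toda complexes, observe that each $\gM\otimes_\gR\alpha_i$ acts nilpotently because its slope is below the vanishing line, and climb back down using Lemma~\ref{lem:ThickSubcatFacts}(\ref{it:ThickSubcatFacts:3}). The only cosmetic difference is in the nilpotence step: the paper argues that $\End_\gR(\gN_i)\simeq\gN_i\otimes_\gR D^r(\gN_i)$ lies in the thick subcategory of $\gN_i$ and hence itself has a slope $\bar\lambda$ vanishing line, so powers of $\alpha_i$ eventually fall below it; your telescope-plus-compactness argument is an equivalent way to say the same thing.
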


\begin{proof}
Let $\gT \in \mathcal{A}^f_{\bar{\lambda}}$, and consider the finite $\gR$-module
$$\gT/(\alpha_1, \ldots, \alpha_r) := \gT \otimes_\gR \gR/(\alpha_1, \ldots, \alpha_r).$$
As $\gT$ has a slope $\bar{\lambda}$ vanishing line it follows that for each $i-1 \leq r$ the $\gR$-module $\gT/(\alpha_1, \ldots, \alpha_{i-1})$ has a slope $\bar{\lambda}$ vanishing line, as it is in the thick subcategory generated by $\gT$. Thus
$$\End_\gR(\gT/(\alpha_1, \ldots, \alpha_{i-1})) \simeq \gT/(\alpha_1, \ldots, \alpha_{i-1}) \otimes_\gR D^r(\gT/(\alpha_1, \ldots, \alpha_{i-1})) \in \ \mathcal{A}^f_{\bar{\lambda}}$$
too, as $D^r(\gT/(\alpha_1, \ldots, \alpha_{i-1}))$ is a finite $\gR$-module. As $\tfrac{d_i}{n_i} < \lambda$ it follows that the
$$\alpha_i \in \pi_{n_i, d_i}(\End_\gR(\gT/(\alpha_1, \ldots, \alpha_{i-1})))$$
are nilpotent for all $i \leq r$. Applying Lemma \ref{lem:ThickSubcatFacts} (\ref{it:ThickSubcatFacts:3}) it follows for each $i \leq r$ that $\gT/(\alpha_1, \ldots, \alpha_{i-1})$ is in the thick subcategory generated by $\gT/(\alpha_1, \ldots, \alpha_{i})$, so in particular $\gT$ lies in the thick subcategory generated by $\gT/(\alpha_1, \ldots, \alpha_i)$. 

On the other hand as $\gT$ is finite, i.e.\ in the thick subcategory of the monoidal unit, $\gT/(\alpha_1, \ldots, \alpha_i) = \gT \otimes_\gR \gR/(\alpha_1, \ldots, \alpha_r)$ lies in the thick subcategory generated by $\gR/(\alpha_1, \ldots, \alpha_i)$.
\end{proof}

\begin{cor}
If $\gR/(\alpha_1, \ldots, \alpha_{r})$ is an admissible Smith--Toda complex with a slope $\lambda$ vanishing line and $\bar{\lambda}$ satisfies $\tfrac{d_r}{n_r} < \bar{\lambda} \leq \lambda$, then $L^f_\lambda(-) = L^f_{\bar{\lambda}}(-)$.
\end{cor}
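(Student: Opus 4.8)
The plan is to deduce this directly from Theorem~\ref{thm:STQuantisation} by showing that the two classes $\mathcal{A}^f_\lambda$ and $\mathcal{A}^f_{\bar\lambda}$ of finite $\gR$-modules actually coincide. Once that is known, the notions of ``local'' and ``acyclic'' entering Definition~\ref{defn:Local} agree for the two classes, so an $\mathcal{A}^f_\lambda$-localisation is the same thing as an $\mathcal{A}^f_{\bar\lambda}$-localisation; the uniqueness part of Proposition~\ref{prop:LocExists} then gives $L^f_\lambda(-) = L^f_{\bar\lambda}(-)$.

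First I would invoke Theorem~\ref{thm:STQuantisation}(\ref{it:STQuantisation:1}), which says precisely that every finite $\gR$-module with a slope $\bar\lambda$ vanishing line has a slope $\lambda$ vanishing line; this is the inclusion $\mathcal{A}^f_{\bar\lambda} \subseteq \mathcal{A}^f_\lambda$. Then I would establish the reverse inclusion by an elementary estimate. A finite $\gR$-module $\gM$ lies in the thick subcategory generated by $\gR$, hence is built from finitely many cells $S^{n_i,d_i}\otimes\gR$; since $\gR$ satisfies (C) we have $\pi_{n,d}(\gR)=0$ for $n<0$, so $\pi_{n,d}(\gM)=0$ for $n< n_0 := \min_i n_i$. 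Given $\pi_{n,d}(\gM)=0$ for $d<\lambda n + \kappa$, taking $\kappa'$ small enough (explicitly $\kappa' = \kappa - (\lambda-\bar\lambda)\max\{0,-n_0\}$) one checks case-by-case on the sign of $n$ that $\pi_{n,d}(\gM)=0$ for $d < \bar\lambda n + \kappa'$: for $n\geq 0$ one uses $\bar\lambda n + \kappa' \leq \lambda n + \kappa$ from $\bar\lambda\leq\lambda$; for $n_0\leq n<0$ one uses $\bar\lambda n + \kappa' \leq \bar\lambda n + \kappa - (\lambda-\bar\lambda)(-n) = \lambda n + \kappa$; and for $n<n_0$ there is nothing to check. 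Hence $\gM \in \mathcal{A}^f_{\bar\lambda}$, so $\mathcal{A}^f_\lambda \subseteq \mathcal{A}^f_{\bar\lambda}$ and the two classes are equal.

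There is no substantial obstacle here: the Corollary is formal once one has Theorem~\ref{thm:STQuantisation}. The only point deserving comment is the reverse inclusion, and what makes it work is that a finite $\gR$-module has homotopy groups bounded below in the $n$-grading --- a consequence of the connectedness of $\gR$ --- so that weakening the slope of the vanishing line from $\lambda$ to the shallower $\bar\lambda$ merely shifts the intercept by a bounded amount. (Alternatively one could phrase the argument through thick subcategories, using that both $\mathcal{A}^f_\lambda$ and $\mathcal{A}^f_{\bar\lambda}$ generate the thick subcategory generated by $\gR/(\alpha_1,\ldots,\alpha_r)$ by Theorem~\ref{thm:STQuantisation}, but the direct comparison of the classes is cleaner.)
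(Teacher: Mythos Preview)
Your proposal is correct and follows the same route as the paper: show $\mathcal{A}^f_\lambda = \mathcal{A}^f_{\bar\lambda}$ and conclude that the localisations agree. The paper's proof is a single line---``By Theorem~\ref{thm:STQuantisation} the inclusion $\mathcal{A}^f_{\lambda} \subseteq \mathcal{A}^f_{\bar{\lambda}}$ is in fact an equality''---so the only difference is that you spell out the inclusion $\mathcal{A}^f_\lambda \subseteq \mathcal{A}^f_{\bar\lambda}$ (which the paper treats as evident) via the bounded-below-in-$n$ support coming from (C); this is a fine elaboration, though one small remark is that finite $\gR$-modules allow retracts as well as cell attachments, so your argument is cleanest phrased as ``the class of $\gR$-modules with $\pi_{n,*}=0$ for $n\ll 0$ is thick and contains $\gR$''.
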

\begin{proof}
By Theorem \ref{thm:STQuantisation} the inclusion $\mathcal{A}^f_{\lambda} \subseteq \mathcal{A}^f_{\bar{\lambda}}$ is in fact an equality.
\end{proof}

\subsection{An explicit model for localisation}\label{sec:OrthCalcModel}

A consequence of Theorem \ref{thm:STQuantisation} is a somewhat explicit formula for $L^f_{\lambda}$. To give the formula, let $\gR/(\alpha_1, \ldots, \alpha_{r})$ be an admissible Smith--Toda complex with a slope $\lambda$ vanishing line. By admissibility the bidegrees $|\alpha_j|=(n_j, d_j)$ satisfy $\tfrac{d_j}{n_j} < \lambda$, but we will request the stronger property
\begin{equation}\label{eq:Wide}
\min_{\emptyset \neq S \subseteq \{1,2,\ldots, r\}} \left [ 1+ \sum_{s \in S} (\lambda n_j - d_j -1) \right] > 0.
\end{equation}
When we show, in Section \ref{sec:HigherStabMaps}, that admissible Smith--Toda complexes exist over fields of positive characteristic, we will also be able to arrange this property.

By Theorem \ref{thm:STQuantisation} (\ref{it:STQuantisation:2}) the localisation $L^f_{\lambda}(-)$ agrees with Bousfield localisation away from the single compact object $\gR/(\alpha_1, \ldots, \alpha_{r})$, and we will show that this localisation can be constructed by the following recipe. Define an $\gR$-module $\gF$ by the fibre sequence
$$\gF \lra \gR \lra \gR/(\alpha_1, \ldots, \alpha_{r})$$
where the right-hand map is the inclusion of the bottom cell. Mapping out of this cofibre sequence to some $\gR$-module $\gM$ gives a fibre sequence
$$\underline{\map}^r_\gR(\gR/(\alpha_1, \ldots, \alpha_{r}), \gM) \lra \gM \overset{\epsilon_\gM}\lra \underline{\map}^r_\gR(\gF, \gM) =: \tau(\gM).$$
Tautologically, the $\gR$-module $\gM$ is $\gR/(\alpha_1, \ldots, \alpha_{r})$-local if and only if the map $\epsilon_\gM : \gM \to \tau(\gM)$ is an equivalence. We define 
$$T_\lambda^f(\gM) := \colim(\gM \overset{\epsilon_\gM}\to \tau(\gM) \overset{\epsilon_{\tau(\gM)}}\to \tau^2(\gM) \overset{\epsilon_{\tau^2(\gM)}}\to \cdots).$$

\begin{prop}\label{prop:NullificationModel}
The map $\gM \to T_\lambda^f(\gM)$ is equivalent to Bousfield localisation away from $\gR/(\alpha_1, \ldots, \alpha_{r})$, so is equivalent to the map $\gM \to L_\lambda^f(\gM)$.
\end{prop}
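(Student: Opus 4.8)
The plan is to verify directly that $\gM \to T_\lambda^f(\gM)$ has the two defining features of an $\mathcal{A}_\lambda^f$-localisation in the sense of Definition \ref{defn:Local}(iii): that $T_\lambda^f(\gM)$ is $\mathcal{A}_\lambda^f$-local and that the fibre of $\gM \to T_\lambda^f(\gM)$ is $\mathcal{A}_\lambda^f$-acyclic. By the uniqueness in Proposition \ref{prop:LocExists} this identifies $\gM \to T_\lambda^f(\gM)$ with $\gM \to L_\lambda^f(\gM)$, and by Theorem \ref{thm:STQuantisation}(\ref{it:STQuantisation:2}) the latter is Bousfield localisation away from $\gN := \gR/(\alpha_1, \ldots, \alpha_r)$, which gives both assertions. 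I will use the companion consequence of that theorem, namely that $\mathcal{A}_\lambda^f$ \emph{equals} the thick subcategory generated by $\gN$, together with the facts recorded earlier that, $\gN$ and $\gF := \mathrm{fib}(\gR \to \gN)$ being finite, $\underline{\map}^r_\gR(\gN,-) \simeq (-)\otimes_\gR D^r(\gN)$ and $\tau(-) \simeq (-)\otimes_\gR D^r(\gF)$, so that in particular $\tau$ preserves all colimits.

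\emph{Locality of $T_\lambda^f(\gM)$.} First I would show that $\epsilon_{T_\lambda^f(\gM)} \colon T_\lambda^f(\gM) \to \tau(T_\lambda^f(\gM))$ is an equivalence. Write $D_n := \tau^n(\gM)$, with transition maps $\epsilon_{D_n} \colon D_n \to D_{n+1}$, so that $T_\lambda^f(\gM) = \colim_n D_n$. Since $\tau$ preserves this filtered colimit, $\tau(T_\lambda^f(\gM)) \simeq \colim_n \tau(D_n) = \colim_n D_{n+1}$, which is identified with $\colim_n D_n$ by the cofinal shift; and since $\tau(\epsilon_{D_n}) = \epsilon_{\tau(D_n)} = \epsilon_{D_{n+1}}$, a chase through the naturality of $\epsilon$ against the colimit cocone shows that under this identification $\epsilon_{T_\lambda^f(\gM)}$ is precisely the shift map, hence an equivalence. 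As $\mathrm{fib}(\epsilon_{\gM'}) = \underline{\map}^r_\gR(\gN, \gM')$ for every $\gM'$, this yields $\underline{\map}_\gR(\gN, T_\lambda^f(\gM)) \simeq 0$. Since the $\gT \in \gR\text{-}\mathsf{mod}$ with $\underline{\map}_\gR(\gT, T_\lambda^f(\gM)) \simeq 0$ form a thick subcategory containing $\gN$, it contains all of $\mathcal{A}_\lambda^f$, so $T_\lambda^f(\gM)$ is $\mathcal{A}_\lambda^f$-local.

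\emph{Acyclicity of the fibre.} Fibres commute with the filtered colimit defining $T_\lambda^f(\gM)$, so $\mathrm{fib}(\gM \to T_\lambda^f(\gM)) \simeq \colim_n \mathrm{fib}(\gM \to \tau^n(\gM))$; and using the standard cofibre sequences relating the fibre of a composite to the fibres of its factors, each $\mathrm{fib}(\gM \to \tau^n(\gM))$ is an iterated extension of the objects $\mathrm{fib}(\epsilon_{\tau^k(\gM)}) = \underline{\map}^r_\gR(\gN, \tau^k(\gM)) \simeq \tau^k(\gM) \otimes_\gR D^r(\gN)$ for $0 \le k \le n-1$. Now $D^r(\gN)$ lies in the thick subcategory generated by $\gN$ by Lemma \ref{lem:DualInThick}, hence $D^r(\gN) \in \mathcal{A}_\lambda^f$ and so is $\mathcal{A}_\lambda^f$-acyclic; therefore $\tau^k(\gM) \otimes_\gR D^r(\gN)$ is $\mathcal{A}_\lambda^f$-acyclic by Proposition \ref{prop:Localisation}(\ref{it:Localisation2}). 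As the $\mathcal{A}_\lambda^f$-acyclic objects are closed under extensions and colimits, $\mathrm{fib}(\gM \to T_\lambda^f(\gM))$ is $\mathcal{A}_\lambda^f$-acyclic. (The wideness hypothesis \eqref{eq:Wide} is not needed for this proposition.)

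The one genuinely delicate point is the colimit bookkeeping in the first step: the claim that $\epsilon_{T_\lambda^f(\gM)}$ is literally the shift equivalence of the telescope. This rests on $\tau \simeq (-)\otimes_\gR D^r(\gF)$ commuting with the defining filtered colimit — immediate since $\gF$ is finite hence compact — together with a careful unwinding of the naturality of $\epsilon \colon \mathrm{id} \Rightarrow \tau$ against the colimit cocone. Everything else is an assembly of facts already in place, chiefly Theorem \ref{thm:STQuantisation}.
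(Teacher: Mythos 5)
Your acyclicity argument is essentially the paper's, but your proof of locality contains a genuine gap, and it is precisely the error that Weiss's erratum to orthogonal calculus was written to correct.

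The problem is the claimed identity $\tau(\epsilon_{D_n}) = \epsilon_{\tau(D_n)}$. For a pointed endofunctor $\tau$ with natural transformation $\epsilon : \mathrm{id} \Rightarrow \tau$, the two maps $\tau(\epsilon_{\gM'}),\, \epsilon_{\tau(\gM')} : \tau(\gM') \to \tau^2(\gM')$ do \emph{not} agree in general: here $\epsilon_{\tau(\gM')}$ is induced by $\gF \to \gR$ in the ``outer'' variable of $\underline{\map}^r_\gR(\gF, \underline{\map}^r_\gR(\gF,\gM'))$, while $\tau(\epsilon_{\gM'})$ is induced in the ``inner'' one. Naturality of $\epsilon$ applied to the morphism $\epsilon_{D_n}$ only gives $\epsilon_{D_{n+1}} \circ \epsilon_{D_n} = \tau(\epsilon_{D_n}) \circ \epsilon_{D_n}$, which does not let you cancel. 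Consequently $\tau(T_\lambda^f(\gM)) \simeq \colim_n \tau(D_n)$ is the colimit of a diagram with the \emph{same objects} as your shifted telescope but with \emph{different transition maps} ($\tau(\epsilon_{\tau^{n}(\gM)})$ rather than $\epsilon_{\tau^{n+1}(\gM)}$), so it cannot be identified with $\colim_n D_{n+1}$ by cofinality, and $\epsilon_{T_\lambda^f(\gM)}$ is not ``the shift map''. This is exactly the ladder diagram the paper draws, whose two rows have different horizontal maps; the paper then repairs the argument following \cite{WeissErratum} by factoring the comparison through $\underline{\map}^r_\gR(T_\lambda^f(\gF), T_\lambda^f(\tau^s(\gM)))$ and using that $T_\lambda^f(\gF) \to T_\lambda^f(\gR)$ is an equivalence, exhibiting $\epsilon_{T_\lambda^f(\gM)}$ as a retract of a colimit of equivalences.

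This also invalidates your parenthetical claim that the hypothesis \eqref{eq:Wide} is not needed: the repair requires $T_\lambda^f(\gR/(\alpha_1,\ldots,\alpha_r)) \simeq 0$, which is Lemma \ref{lem:LKillsSlopeLambda} and is proved using \eqref{eq:Wide} (one needs the quantity $\epsilon = \min_{\emptyset \neq S}[1+\sum_{s\in S}(\lambda n_s - d_s - 1)]$ to be strictly positive so that iterating $\tau$ strictly improves the vanishing line). Your appeal to Theorem \ref{thm:STQuantisation} to identify $L_\lambda^f$ with localisation away from the single object $\gR/(\alpha_1,\ldots,\alpha_r)$ is fine and matches the paper; it is only the verification that the explicit telescope $T_\lambda^f(\gM)$ is local that needs the more careful argument.
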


\begin{rem}
The construction of the functor $T_\lambda^f(-)$ is deliberately reminiscent of Weiss' definition of the Taylor approximation functor $T_n(-)$ in orthogonal calculus \cite{Weiss, WeissErratum}. Indeed, orthogonal calculus (at least when valued in a stable category) may be set up in a manner which is highly parallel to the discussion we are giving here: we hope to explain this elsewhere.
\end{rem}

Before giving the proof of Proposition \ref{prop:NullificationModel}, we establish a basic lemma.

\begin{lem}\label{lem:LKillsSlopeLambda}
If $\pi_{n,d}(\gM)=0$ for $d < \lambda n + \kappa$ then $T_\lambda^f(\gM)=0$.
\end{lem}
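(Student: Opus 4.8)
The plan is to show that $T_\lambda^f(\gM)$ is both $\gR/(\alpha_1,\ldots,\alpha_r)$-local and that the map $\gM \to T_\lambda^f(\gM)$ has fibre which is in the local-acyclic class; but when $\gM$ has a slope $\lambda$ vanishing line the cleanest route is simply to show $\tau(\gM) \simeq 0$, which forces the colimit defining $T_\lambda^f(\gM)$ to be zero. So the first step is to understand $\tau(\gM) = \underline{\map}^r_\gR(\gF,\gM)$, where $\gF = \mathrm{fib}(\gR \to \gR/(\alpha_1,\ldots,\alpha_r))$ with the map being inclusion of the bottom cell.

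The key computation is a connectivity estimate for $\gF$. Since $\gR/(\alpha_1,\ldots,\alpha_r)$ is built from $\gR$ by iterated cofibres of the $\alpha_i$ of bidegrees $(n_i,d_i)$, it has a finite (skeletal) filtration with associated graded $\bigoplus_{S \subseteq \{1,\ldots,r\}} S^{\sum_{s\in S} n_s,\, \sum_{s\in S}(d_s+1)}\otimes \gR$, exactly as used in the proof of Theorem \ref{thm:AdamsPeriodicityGeneral}. The bottom cell is the $S=\emptyset$ summand, so $\gF \simeq \mathrm{fib}(\gR/(\alpha_1,\ldots,\alpha_r) \to \gR)[-1]$ wait --- more precisely $\gF$ has a finite filtration whose associated graded is $\bigoplus_{\emptyset \neq S \subseteq \{1,\ldots,r\}} S^{\sum_{s\in S} n_s-1,\, \sum_{s\in S}(d_s+1)-1}\otimes \gR = \bigoplus_{\emptyset \neq S} S^{\sum_{s\in S}n_s-1,\,\sum_{s\in S}(d_s+1)}\otimes \gR[-1]$. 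In any case $\gF$ is a finite $\gR$-module built out of shifts $S^{N_S, D_S}\otimes \gR$ with $N_S = \sum_{s\in S}n_s - 1$ and $D_S = \sum_{s\in S}(d_s+1)-1 = \sum_{s\in S}d_s + |S| - 1$, ranging over nonempty $S$. Since each $\alpha_i$ has slope $\tfrac{d_i}{n_i} < \lambda$, we have $\lambda N_S - D_S = \lambda(\sum n_s - 1) - \sum d_s - |S| + 1 = \sum_{s\in S}(\lambda n_s - d_s - 1) + 1 - \lambda > 0$ for $\lambda < 1$ --- so each cell of $\gF$ sits strictly above the slope-$\lambda$ line through the origin, possibly after the shift; I need to track the constant carefully.

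The main obstacle is precisely this bookkeeping: I want to conclude that if $\pi_{n,d}(\gM) = 0$ for $d < \lambda n + \kappa$, then $\pi_{n,d}(\underline{\map}^r_\gR(\gF,\gM)) = 0$ for $d < \lambda n + \kappa'$ with $\kappa' \geq \kappa$, in fact with a strict gain so that $\tau(\gM)$ is ``more vanishing'' than $\gM$. The point is that $\underline{\map}^r_\gR(S^{N_S,D_S}\otimes \gR, \gM) \simeq S^{-N_S,-D_S}\otimes \gM$ has $\pi_{n,d} = \pi_{n+N_S, d+D_S}(\gM) = 0$ for $d + D_S < \lambda(n+N_S) + \kappa$, i.e.\ for $d < \lambda n + \kappa + (\lambda N_S - D_S)$, and $\lambda N_S - D_S = \sum_{s\in S}(\lambda n_s - d_s - 1) + (1-\lambda) > 0$. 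Assembling via the (finite) filtration spectral sequence, $\tau(\gM) = \underline{\map}^r_\gR(\gF,\gM)$ has $\pi_{n,d} = 0$ for $d < \lambda n + \kappa + \delta$ where $\delta := \min_{\emptyset\neq S}(\lambda N_S - D_S) > 0$. Iterating, $\tau^k(\gM)$ vanishes below slope $\lambda$ line with intercept $\kappa + k\delta \to \infty$. Therefore in each fixed bidegree $(n,d)$ the tower $\gM \to \tau(\gM) \to \tau^2(\gM) \to \cdots$ has $\pi_{n,d}(\tau^k(\gM)) = 0$ for $k$ large, so $\pi_{n,d}(T_\lambda^f(\gM)) = \colim_k \pi_{n,d}(\tau^k(\gM)) = 0$, giving $T_\lambda^f(\gM) \simeq 0$ as claimed. (If one does not want the strict gain, one can instead note $\tau(\gM) \simeq 0$ outright once $\gF$'s cells all lie strictly above the relevant line, using that $\gM$ itself is $\simeq 0$ in that whole half-plane; but the iterative argument is robust and is what I would write.)
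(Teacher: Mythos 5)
Your overall strategy is exactly the paper's: use the standard cell structure of $\gR/(\alpha_1,\ldots,\alpha_r)$ to give $\gF$ a finite filtration with cells that are shifts of $\gR$, deduce that $\tau(\gM)=\underline{\map}^r_\gR(\gF,\gM)$ satisfies a vanishing line with intercept improved by a fixed $\delta>0$, iterate to push the intercept to infinity, and conclude that the colimit $T_\lambda^f(\gM)$ has vanishing homotopy in every bidegree. However, there is a genuine gap at the crucial point: the positivity of $\delta$. You assert that $\sum_{s\in S}(\lambda n_s - d_s - 1) + (1-\lambda)>0$ follows from each $\alpha_s$ having slope $\tfrac{d_s}{n_s}<\lambda$ together with $\lambda<1$. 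This is false: the slope hypothesis only gives $\lambda n_s - d_s>0$, and the individual terms $\lambda n_s - d_s - 1$ can be negative (e.g.\ $(n_s,d_s)=(2,1)$, $\lambda=0.6$ gives $-0.8$), so the sum over a large $S$ can overwhelm the additive constant. The positivity of $\min_{\emptyset\neq S}\bigl[1+\sum_{s\in S}(\lambda n_s-d_s-1)\bigr]$ is precisely the standing assumption \eqref{eq:Wide} imposed on the Smith--Toda complex in Section \ref{sec:OrthCalcModel}, and it is this assumption --- not admissibility alone --- that the proof must invoke. Your argument becomes correct once you cite \eqref{eq:Wide} here.

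There is also a minor bookkeeping slip: taking the fibre of the bottom-cell inclusion desuspends only the homological degree, not the grading, so the cells of $\gF$ are $S^{\sum_{s\in S}n_s,\ \sum_{s\in S}(d_s+1)-1}\otimes\gR$ rather than $S^{\sum_{s\in S}n_s-1,\ \sum_{s\in S}(d_s+1)-1}\otimes\gR$. With the correct indexing the gain per cell is $1+\sum_{s\in S}(\lambda n_s-d_s-1)$, matching \eqref{eq:Wide} on the nose; your extra $-\lambda$ is an artifact of this slip. Finally, your parenthetical suggestion that one could instead conclude $\tau(\gM)\simeq 0$ outright does not work --- $\gM$ is not trivial, only its homotopy groups vanish below a line, so $\tau(\gM)$ is merely ``more connective'' and the iteration is genuinely needed, as you correctly write in the main line of the argument.
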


\begin{proof}
The defining cofibre sequence $\gF \to \gR \to \gR/(\alpha_1, \ldots, \alpha_{r})$ and the standard $\gR$-module cell structure on $\gR/(\alpha_1, \ldots, \alpha_{r})$ shows that $\gF$ has a finite filtration with associated graded given by
$$\bigoplus_{\emptyset \neq S \subseteq \{1,2,\ldots, r\}} S^{\sum_{j \in S} n_j, -1+\sum_{j \in S} (d_j+1)} \otimes \gR.$$
Thus $\tau(\gM) = \underline{\map}^r_{\gR}(\gF, \gM)$ has a finite filtration with associated graded
$$\bigoplus_{\emptyset \neq S \subseteq \{1,2,\ldots, r\}} S^{-\sum_{j \in S} n_j, 1-\sum_{j \in S} (d_j+1)} \otimes \gM,$$
so $\pi_{n,d}(\tau(\gM))=0$ as long as
$$d < \lambda n + \kappa + \min_{\emptyset \neq S \subseteq \{1,2,\ldots, r\}} \left [ 1+ \sum_{s \in S} (\lambda n_j - d_j -1)  \right].$$
Writing $\epsilon \in \bR$ for this minimum, the assumption \eqref{eq:Wide} says that $\epsilon>0$. In other words the above shows that $\pi_{n,d}(\tau(\gM))=0$ for $d < \lambda n + (\kappa+\epsilon)$ with $\epsilon>0$. Iterating this, it follows that $\pi_{n,d}(\tau^s(\gM))=0$ for $d < \lambda n + (\kappa+s \epsilon)$, and as $\epsilon>0$ taking the colimit gives that $\pi_{n,d}(T_\lambda^f(\gM))=0$ for all $n$ and $d$.
\end{proof}

\begin{proof}[Proof of Proposition \ref{prop:NullificationModel}]
The fibre of $\epsilon_{\tau^s(\gM)} : \tau^s(\gM) \to \tau^{s+1}(\gM)$ is 
\begin{equation}\label{eq:fib}
\underline{\map}^r_\gR(\gR/(\alpha_1, \ldots, \alpha_{r}), \tau^s(\gM)) \simeq  \tau^s(\gM) \otimes_\gR D^r(\gR/(\alpha_1, \ldots, \alpha_{r})).
\end{equation}
By Lemma \ref{lem:DualInThick} the right dual $D^r(\gR/(\alpha_1, \ldots, \alpha_{r}))$ is in the thick subcategory generated by $\gR/(\alpha_1, \ldots, \alpha_{r})$ so is $\gR/(\alpha_1, \ldots, \alpha_{r})$-acyclic, and hence by the proof of Proposition \ref{prop:Localisation} (\ref{it:Localisation2}) the object \eqref{eq:fib} is $\gR/(\alpha_1, \ldots, \alpha_{r})$-acyclic too. Thus the fibre of $\gM \to T_\lambda^f(\gM)$ is also $\gR/(\alpha_1, \ldots, \alpha_{r})$-acyclic.

It remains to show that $T_\lambda^f(\gM)$ is $\gR/(\alpha_1, \ldots, \alpha_{r})$-local. For this we follow Weiss \cite{WeissErratum} extremely closely. We must show that the map
$$\epsilon_{T_\lambda^f(\gM)} : T_\lambda^f(\gM) \lra \tau(T_\lambda^f(\gM)) = \underline{\map}^r_\gR(\gF, T_\lambda^f(\gM))$$
is an equivalence. As $\gF$ is a finite $\gR$-module the functor $\tau(-)$ preserves filtered colimits, so $\epsilon_{T_\lambda^f(\gM)}$ is the map induced on horizontal colimits in the diagram
\begin{equation*}
\begin{tikzcd}%[column sep = 0.7em]
\gM \rar{\epsilon_{\gM}} \dar{\epsilon_{\gM}} & \tau(\gM) \rar{\epsilon_{\tau(\gM)}} \dar{\epsilon_{\tau(\gM)}} & \tau^2(\gM) \rar{\epsilon_{\tau^2(\gM)}} \dar{\epsilon_{\tau^2(\gM)}}& \cdots \rar & T_\lambda^f(\gM) \dar{\epsilon_{T_\lambda^f(\gM)}}\\
\tau(\gM) \rar{\tau(\epsilon_{\gM})} & \tau^2(\gM) \rar{\tau(\epsilon_{\tau(\gM)})} & \tau^3(\gM) \rar{\tau(\epsilon_{\tau^2(\gM)})} & \cdots \rar & \tau(T_\lambda^f(\gM))
\end{tikzcd}
\end{equation*}

We factorise the map from the $s$-th column to the last as
\begin{equation*}
\begin{tikzcd}%[column sep = 1.4em]
 \underline{\map}^r_\gR(\gR, \tau^s(\gM)) \rar{T_\lambda^f} \dar{\epsilon_{\tau^s(\gM)}}& \underline{\map}^r_\gR(T_\lambda^f(\gR), T_\lambda^f(\tau^s(\gM))) \rar \dar & \underline{\map}^r_{\gR}(\gR, T_\lambda^f(\gM)) \dar{\epsilon_{T_\lambda^f(\gM)}} \\
 \underline{\map}^r_\gR(\gF, \tau^s(\gM)) \rar{T_\lambda^f} & \underline{\map}^r_\gR(T_\lambda^f(\gF), T_\lambda^f(\tau^s(\gM))) \rar & \underline{\map}^r_{\gR}(\gF, T_\lambda^f(\gM)) 
\end{tikzcd}
\end{equation*}
where the middle vertical map is given by precomposition with $T_\lambda^f(\gF) \to T_\lambda^f(\gR)$, and the right-hand horizontal maps involve the identity $T_\lambda^f(\tau^s(\gM)) \simeq T_\lambda^f(\gM)$ coming from cofinality of the colimits defining them.

Now the functor $T_\lambda^f(-)$ is obtained by taking a filtered colimit of maps from finite $\gR$-modules, so it preserves cofibre sequences. But $\gR/(\alpha_1, \ldots, \alpha_{r})$ has a slope $\lambda$ vanishing line, so by Lemma \ref{lem:LKillsSlopeLambda} we have $T_\lambda^f(\gR/(\alpha_1, \ldots, \alpha_{r}))\simeq 0$ and therefore $T_\lambda^f(\gF) \overset{\sim}\to T_\lambda^f(\gR)$. Hence the middle vertical map is an equivalence.

The right-hand vertical map is the colimit of the left-hand vertical maps as $s \to \infty$: we have shown that this map is a retract of a colimit of equivalences, so is an equivalence.
\end{proof}

\begin{rem}
The proof shows that rather than the full strength of Lemma \ref{lem:LKillsSlopeLambda} we only use $T_\lambda^f(\gR/(\alpha_1, \ldots, \alpha_{r}))\simeq 0$. This is (the stable analogue of) the notion of a ``tidy map'' of Anel--Biedermann--Finster--Joyal \cite{ABFJtidy}, who are also inspired by Weiss' development of orthogonal calculus.
\end{rem}

\subsection{A ``telescope conjecture''}\label{sec:TelescopeConj}

In Section \ref{sec:localisation} we might instead have chosen to Bousfield localise away from
$$\mathcal{A}_\lambda := \{\text{$\gR$-modules with a slope $\lambda$ vanishing line}\},$$
i.e.\ remove the assumption that the modules be finite. Such a localisation still exists, but it can no longer be obtained by the telescope construction of Proposition \ref{prop:LocExists}. Instead, one may use that $\gR\text{-}\mathsf{mod}$ is presentable, so that by \cite[Proposition 5.5.4.15]{HTT} the inclusion
$\mathcal{A}_\lambda\text{-}\mathsf{loc}\subseteq\gR\text{-}\mathsf{mod}$ of the full subcategory of $\mathcal{A}_\lambda$-local objects has a left adjoint $L_\lambda : \gR\text{-}\mathsf{mod} \to \mathcal{A}_\lambda\text{-}\mathsf{loc} \subseteq \gR\text{-}\mathsf{mod}$. As $\mathcal{A}_\lambda$-local objects are in particular $\mathcal{A}^f_\lambda$-local, there is a comparison map
\begin{equation}\label{eq:TelConj}
L^f_\lambda(\gM) \lra L_\lambda(\gM).
\end{equation}

\begin{thm}[``Telescope conjecture'']
If an admissible Smith--Toda complex with a slope $\lambda$ vanishing line and satisfying \eqref{eq:Wide} exists, then \eqref{eq:TelConj} is an equivalence.
\end{thm}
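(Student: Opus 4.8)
The plan is to show that $\gM \to L^f_\lambda(\gM)$ is \emph{itself} an $\mathcal{A}_\lambda$-localisation (in the sense analogous to Definition~\ref{defn:Local}), from which the equivalence \eqref{eq:TelConj} follows by the purely formal uniqueness argument of Proposition~\ref{prop:LocExists}. Since $\mathcal{A}^f_\lambda \subseteq \mathcal{A}_\lambda$, every $\mathcal{A}_\lambda$-local module is $\mathcal{A}^f_\lambda$-local, hence every $\mathcal{A}^f_\lambda$-acyclic module is automatically $\mathcal{A}_\lambda$-acyclic; in particular the fibre $C^f_\lambda(\gM)$ of $\gM \to L^f_\lambda(\gM)$, which is $\mathcal{A}^f_\lambda$-acyclic by construction, is $\mathcal{A}_\lambda$-acyclic. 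So the only thing to prove is that $L^f_\lambda(\gM)$ is $\mathcal{A}_\lambda$-local, i.e. that $\underline{\map}_\gR(\gT, L^f_\lambda(\gM)) \simeq 0$ for \emph{every} $\gR$-module $\gT$ with a slope $\lambda$ vanishing line, not merely the finite ones.

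The key point — and the only place the hypothesis enters — is that the explicit model $T^f_\lambda$ of Proposition~\ref{prop:NullificationModel} already annihilates arbitrary slope-$\lambda$ modules, by Lemma~\ref{lem:LKillsSlopeLambda}, whose hypothesis is merely that the module has a slope $\lambda$ vanishing line. Both of these results are available precisely because an admissible Smith--Toda complex with a slope $\lambda$ vanishing line satisfying \eqref{eq:Wide} exists, which is the standing assumption. So, given $\gT$ with a slope $\lambda$ vanishing line, Proposition~\ref{prop:NullificationModel} identifies $L^f_\lambda(\gT) \simeq T^f_\lambda(\gT)$, and Lemma~\ref{lem:LKillsSlopeLambda} gives $T^f_\lambda(\gT) \simeq 0$; hence $L^f_\lambda(\gT) \simeq 0$, so the fibre of $\gT \to L^f_\lambda(\gT)$ is $\gT$ itself. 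As that fibre is $\mathcal{A}^f_\lambda$-acyclic, we conclude that \emph{every} module in $\mathcal{A}_\lambda$ is $\mathcal{A}^f_\lambda$-acyclic.

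Granting this, $\underline{\map}_\gR(\gT, L^f_\lambda(\gM)) \simeq 0$ for all $\gT \in \mathcal{A}_\lambda$ simply because $L^f_\lambda(\gM)$ is $\mathcal{A}^f_\lambda$-local and $\gT$ is $\mathcal{A}^f_\lambda$-acyclic; so $L^f_\lambda(\gM)$ is $\mathcal{A}_\lambda$-local. Combined with the observation that $C^f_\lambda(\gM)$ is $\mathcal{A}_\lambda$-acyclic, this exhibits $\gM \to L^f_\lambda(\gM)$ as an $\mathcal{A}_\lambda$-localisation. The map $\gM \to L_\lambda(\gM)$ of Section~\ref{sec:TelescopeConj} is also one — it is the unit of the reflective adjunction there, and the same formal argument used at the end of Section~\ref{sec:localisation} shows its fibre is $\mathcal{A}_\lambda$-acyclic — so the uniqueness argument of Proposition~\ref{prop:LocExists} (which uses nothing about the class beyond closure under shifts) identifies the two, and in fact pins down the canonical comparison map \eqref{eq:TelConj} as the identifying equivalence.

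I do not expect a genuine obstacle here: once one knows, via the model $T^f_\lambda$, that the finite localisation already kills \emph{all} slope-$\lambda$ modules, the coincidence of the two localisations is formal. The only care needed is bookkeeping — checking that $L_\lambda$ produces a localisation map of the shape of Definition~\ref{defn:Local} so that the uniqueness argument applies to \eqref{eq:TelConj} itself rather than just producing an abstract equivalence. An equivalent phrasing goes through acyclic subcategories: by Theorem~\ref{thm:STQuantisation}(\ref{it:STQuantisation:2}), $L^f_\lambda$ is the finite localisation away from the single compact object $\gR/(\alpha_1,\ldots,\alpha_r)$, whose acyclics are the localising subcategory it generates; the argument above shows this localising subcategory contains all of $\mathcal{A}_\lambda$, whence $\mathcal{A}_\lambda$-local $=$ $\mathcal{A}^f_\lambda$-local and the two localisations literally coincide.
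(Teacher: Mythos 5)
Your proof is correct and is essentially the paper's own argument: the paper likewise reduces to showing that every $\gT \in \mathcal{A}_\lambda$ is $\mathcal{A}^f_\lambda$-acyclic, and does so by combining Proposition~\ref{prop:NullificationModel} with Lemma~\ref{lem:LKillsSlopeLambda} to get $L^f_\lambda(\gT) \simeq T^f_\lambda(\gT) \simeq 0$. The extra formal bookkeeping you supply (that this makes $\gM \to L^f_\lambda(\gM)$ an $\mathcal{A}_\lambda$-localisation, identified with $\gM \to L_\lambda(\gM)$ by uniqueness) is exactly what the paper's opening sentence "it suffices to show..." leaves implicit.
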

\begin{proof}
It suffices to show that all $\gT \in \mathcal{A}_\lambda$ are $\mathcal{A}^f_\lambda$-acyclic. If  $\gR/(\alpha_1, \ldots, \alpha_r)$ is such a Smith--Toda complex, then Proposition \ref{prop:NullificationModel} shows that $L^f_\lambda(-)$ is given by the construction $T_\lambda^f(-)$ formed using this admissible Smith--Toda complex. But if $\gT \in \mathcal{A}_\lambda$ then  Lemma \ref{lem:LKillsSlopeLambda} shows that $T_\lambda^f(\gT)=0$, so $L^f_\lambda(\gT)=0$ and so $\gT$ is indeed $\mathcal{A}^f_\lambda$-acyclic.
\end{proof}

\subsection{Monochromatic layers}

In Section \ref{sec:AdamsPeriodicity} we have already discussed a special case of the $(\lambda, \lambda')$-monochromatic objects
$$M^f_{\lambda, \lambda'}(\gM) := \mathrm{fib}(L^f_{\lambda}(\gM) \to L^f_{\lambda'}(\gM)).$$
Let $\mathsf{Mono}_{\lambda, \lambda'} \subset \mathbf{\gR}\text{-}\mathsf{mod}$ denote the full subcategory spanned by such objects: more conceptually, it is the full subcategory on objects which are $\mathcal{A}_\lambda$-local and $\mathcal{A}_{\lambda'}$-acyclic. 

We briefly outline one way of looking at this category, omitting the details as we have not yet found a use for this perspective. If $\gR/(\beta_1, \ldots, \beta_s)$ is an admissible Smith--Toda complex with a slope $\lambda'$ vanishing line, then $L^f_{\lambda}(\gR/(\beta_1, \ldots, \beta_s))$ is $(\lambda, \lambda')$-monochromatic, and $\{S^{n,0} \otimes L^f_{\lambda}(\gR/(\beta_1, \ldots, \beta_s))\}_{n \in \bZ}$ forms a set of compact generators for $\mathsf{Mono}_{\lambda, \lambda'}$. Setting 
$$\gE := \End_\gR(L^f_{\lambda}(\gR/(\beta_1, \ldots, \beta_s))),$$
an $E_1$-algebra in $\gR\text{-}\mathsf{mod}$ and hence in $\mathsf{D}(\bk)^\bZ$, it follows from an appropriate form of the Schwede--Shipley theorem that there is an equivalence of categories
\begin{align*}
\mathsf{Mono}_{\lambda, \lambda'} \overset{\sim}\lra & \, \gE\text{-}\mathsf{mod}\\
\gM \longmapsto & \, \mathrm{map}_\gR(L^f_{\lambda}(\gR/(\beta_1, \ldots, \beta_s)), \gM)\\
L^f_{\lambda}(\gR/(\beta_1, \ldots, \beta_s)) \otimes_\gE \gX \longmapsfrom & \,\gX.
\end{align*}

\section{Existence of Smith--Toda complexes in positive characteristic}\label{sec:HigherStabMaps}

Our goal in this section is to prove that, as long as work over a field of positive characteristic, there exist admissible Smith--Toda complexes having vanishing lines of slopes arbitrarily close to 1. A precise statement is as follows:

\begin{thm}\label{thm:STExist}
Let $\bk$ be a field of positive characteristic $p$, and let $\gR \in \mathsf{Alg}_{E_2}(\mathsf{D}(\bk)^\bZ)$ satisfy (C), (SCE), and (F). For any given $\lambda < 1$, there exists an admissible Smith--Toda complex $\gR/(\alpha_1, \ldots, \alpha_r)$ having a vanishing line of slope $\lambda$. Furthermore:
\begin{enumerate}[(i)]
\item\label{it:STExist1} The slopes $\tfrac{d_i}{n_i}$ of the $\alpha_i$ are all of the form $\tfrac{k}{k+1}$ for $k \in \bN$.

\item\label{it:STExist2}  It in fact has a vanishing line of slope $\min\{\tfrac{k}{k+1} \, | \, k \in \bN \text{ with } \lambda \leq \tfrac{k}{k+1}\}$.

\item\label{it:STExist3}  If $W \in \bR$ is pre-specified, then the $\alpha_i$ may be chosen such that $\lambda n_i -d_i - 1 \geq W$ for $i=1,\ldots,r$.
\end{enumerate}
\end{thm}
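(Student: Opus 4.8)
The plan is to reduce the whole problem to a statement about the Hopf algebra $\Delta_\gR$ (or rather its cobar construction $\mathbf{r}$) and then to run an analogue of the Hopkins--Smith Periodicity Theorem inside a deformation of $\gR\text{-}\mathsf{mod}$. First I would set up the canonical multiplicative filtration $\fil_*\gR$ of Section \ref{sec:CanMultFilt}: this is a complete filtered $E_2$-algebra whose associated graded is the free $E_2$-algebra $\gE_2((-1)_*Q^{E_2^\mathrm{nu}}(\gI_\gR))$, and whose underlying object recovers $\gR$ upon inverting $\tau$. Working in $\fil_*\gR\text{-}\mathsf{mod}$ gives access to the $\tau$-Bockstein spectral sequences \eqref{eq:tauBockstein}, \eqref{eq:tauInvertedtauBockstein}; the key point is that a morphism of filtered $\gR$-modules which becomes an equivalence after inverting $\tau$ carries information about $\gR$-modules, while mod-$\tau$ it becomes a morphism of $C\tau\otimes\fil_*\gR$-modules, i.e.\ modules over the associated graded, which is a \emph{free} $E_2$-algebra and hence completely computable by F.~Cohen's description of $\pi_{*,*}(\gE_2(-))$. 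This is the ``deformation in which analogues of the $\alpha_i$ can be characterised'' alluded to in the introduction.

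Next I would construct the $\alpha_i$ by downward induction on the slope. At each stage one has a Smith--Toda complex $\gM_{i-1}=\fil_*\gR/(\alpha_1,\dots,\alpha_{i-1})$ over the filtered algebra, and one wants to either find it already has a slope-$\tfrac{k}{k+1}$ vanishing line, or produce a new non-nilpotent self-map $\alpha_i$ of slope $\tfrac{k}{k+1}$ strictly raising the connectivity of the cofibre. The construction of $\alpha_i$ should be made on the associated graded first — over the free $E_2$-algebra, the relevant periodic self-maps come from powers of Dyer--Lashof operations and Browder brackets applied to the generators, which by Cohen's calculation act invertibly on appropriate localisations and have slopes exactly $\tfrac{k}{k+1}$ (this is the source of restriction \eqref{it:STExist1}). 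One then lifts such a map through the $\tau$-Bockstein: since $\gR$ is finite type (axiom (F)), the obstruction groups to lifting $\alpha_i$ and to its non-nilpotence live in a range controlled by the standard connectivity estimate (SCE), which forces everything below the diagonal to vanish, and one iterates/takes $p$-th powers as in the Periodicity Theorem to kill obstructions. The slopes being non-decreasing is automatic from the induction order; the vanishing line of slope $\min\{\tfrac{k}{k+1}\mid \lambda\le\tfrac{k}{k+1}\}$ in \eqref{it:STExist2} comes from the fact that between consecutive values $\tfrac{k-1}{k}$ and $\tfrac{k}{k+1}$ there is no room for an intermediate slope among self-maps on the associated graded, so the connectivity jumps all the way to the next quantised value.

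For part \eqref{it:STExist3}, the point is that one has freedom in choosing the \emph{bidegree} of each $\alpha_i$: instead of taking the minimal periodic self-map of slope $\tfrac{k}{k+1}$ one may take a high $p$-power of it (or a product of several, which stays of the same slope), which replaces $(n_i,d_i)$ by $(p^{M}n_i,p^{M}d_i)$ and hence sends $\lambda n_i - d_i - 1 = (\lambda - \tfrac{d_i}{n_i})n_i - 1$ to $(\lambda-\tfrac{d_i}{n_i})p^{M}n_i - 1$, which tends to $+\infty$; choosing $M$ large enough for each $i$ achieves $\lambda n_i - d_i - 1 \ge W$. One must check this does not destroy admissibility or the vanishing line — but passing from a self-map to its $p$-power only shrinks the connectivity of the cofibre by a controlled amount (Lemma \ref{lem:ThickSubcatFacts}\eqref{it:ThickSubcatFacts:4} shows $\gM/f$ and $\gM/f^k$ generate the same thick subcategory, and a telescope/bar argument as in Section \ref{sec:QualityApprox} bounds the loss), so one compensates by taking the ambient $\lambda$ slightly larger than needed, or by taking one more step in the induction.

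\textbf{Main obstacle.} The genuinely hard part is the non-nilpotence statement and the lifting of the periodic self-maps from the associated graded to $\fil_*\gR$ — this is exactly the content of a Periodicity/Nilpotence theorem and requires showing that the relevant self-maps on $\gr(\fil_*\gR)$-modules, which exist for formal reasons over a free $E_2$-algebra in positive characteristic, admit $\tau$-Bockstein lifts whose indeterminacy can be resolved by taking $p$-powers; the positive-characteristic hypothesis and the failure over $\bQ$ noted in the introduction show this step is delicate and not formal. Controlling the obstruction theory uniformly in the slope, so that the induction terminates after finitely many steps with a genuine slope-$\lambda$ vanishing line, is where axioms (SCE) and (F) do the essential work.
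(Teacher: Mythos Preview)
Your architecture matches the paper's: the proof works in $\fil_*\gR\text{-}\mathsf{mod}$, identifies a totally ordered basis $x_1, x_2, \ldots$ of the non-nilpotent multiplicative generators of $\pi_{*,*,*}(C\tau \otimes \fil_*\gR) = W_1((-1)_*H^{E_2}_{*,*}(\gI))$ below the diagonal (these are where the slopes $\tfrac{k}{k+1}$ come from), and then inductively lifts powers of the central classes $u_*(x_i)$ through the spectral sequence for $\End_{\fil_*\gR}(\fil_*\gR/(\phi_1,\dots,\phi_{i-1}))$ by taking $p$-th powers and applying the Leibniz rule. The result is then specialised by inverting $\tau$. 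Your treatment of (\ref{it:STExist3}) is right in spirit: one simply takes $M_i \geq M$ with $M$ chosen in advance, and no ``compensation'' is needed because the associated-graded computation (a regular sequence argument) gives the vanishing line directly for any powers.

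There are three points where your account diverges from the paper. First, the reduction to the Hopf algebra $\Delta_\gR$ and $\mathbf{r}$ is \emph{not} used for this theorem; that is a separate technique (Sections \ref{sec:StabHopfAlg}--\ref{sec:HigherStabMapsRevisited}) developed for other purposes, and Theorem \ref{thm:STExist} is proved entirely within the canonical multiplicative filtration. Second, non-nilpotence of the $\alpha_i$ is not part of the statement of Theorem \ref{thm:STExist}; that is handled separately in Section \ref{sec:EfficientSTComplexes}, and your ``main obstacle'' over-reaches by including it. Third, and most importantly, you are vague on exactly why taking $p$-th powers eventually produces a permanent cycle: it is not that (SCE) makes everything below the diagonal vanish, but rather that along the specific line $d = \tfrac{d_i}{n_i}n - 1$ the associated graded of the endomorphism object has a vanishing line \emph{in the filtration direction} of slope $\tfrac{f_i}{d_i}$ (Lemma \ref{lem:VanishingEstimates} (\ref{it:VanishingEstimates:2}) and (\ref{it:VanishingEstimates:3})). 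This gives a fixed $r_0$ such that $E^1_{p^N n_i, p^N d_i - 1, p^N f_i - r} = 0$ for all $r \geq r_0$ \emph{uniformly in $N$}, so once $u_*(x_i)^{p^{M_i}}$ survives to page $r_0$ it is a permanent cycle. This uniform bound is the technical heart; (SCE) and (F) enter by making the relevant portion of the associated graded a finitely-generated free module over $\bk[x_i,\dots,x_j]$ so that such an estimate exists.
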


This proves Theorem \ref{MainThm:A} (\ref{it:MainThm:A:1}) and (\ref{it:MainThm:A:2}). The role of Theorem \ref{thm:STExist} (\ref{it:STExist3}) is to ensure that the quantity ``$b$'' may be made arbitrarily large in Theorem \ref{thm:AdamsPeriodicityGeneral}, and also that the property \eqref{eq:Wide} may be achieved in Section \ref{sec:OrthCalcModel}.

In order to prove Theorem \ref{thm:STExist} we must construct endomorphisms $\alpha_i$ of inductively-defined $\gR$-modules $\gR/(\alpha_1, \ldots, \alpha_{i-1})$, and a basic difficulty is saying what these endomorphisms are supposed to achieve. To address this, we will consider the canonical multiplicative filtration $\fil_*\gR$ of $\gR$, and prove the analogue (Theorem \ref{thm:FiltSTExist}) of Theorem \ref{thm:STExist} in the category of $\fil_*\gR$-modules. The advantage of working in this category is that we can use the associated graded to describe what it is that a single endomorphism is supposed to achieve, which can be formulated simply because the associated graded of $\fil_*\gR$ is a free $E_2$-algebra and so the structure of its homotopy is completely understood by the work of F.~Cohen. In other words, $\pi_{*,*,*}(C\tau \otimes -)$ defines a homology theory on $\fil_*\gR$-modules, whose coefficients $\pi_{*,*,*}(C\tau \otimes \fil_*\gR)$ have a quite simple structure.

\subsection{The elements $x_i$}\label{sec:ElementsXi}

Recall from Section \ref{sec:CanMultFilt} that we write $\fil_*\gR = \fil_*^{E_2}\gR$ for the canonical multiplicative filtration of the augmented $E_2$-algebra $\gR$. Theorem \ref{thm:STExist} will follow from a more precise version in the category of $\fil_*\gR$-modules, which we shall formulate shortly. A distinguished role will be played by certain homotopy classes on the associated graded of $\fil_*\gR$, which we first describe.

Recall that we write $\gI := \mathrm{fib}(\epsilon : \gR \to \bk)$ for the augmentation ideal. There is an equivalence
$$C\tau \otimes \fil_*\gR \simeq \gE_2((-1)_* Q^{E_2^\text{nu}}\gI)$$
and, being a free $E_2$-algebra over $\bk$, the homotopy groups of this object may be completely described in terms of $\pi_{*,*}(Q^{E_2^\text{nu}}\gI) =: H_{*,*}^{E_2}(\gI)$ using the work of F.\ Cohen \cite[Chapter 3]{CLM}\footnote{Cohen considers $\bF_p$-coefficients whereas we wish to consider a general field $\bk$ of positive characteristic $p$. But as $\bk \otimes_{\bF_p} - : \mathsf{D}(\bF_p) \to \mathsf{D}(\bk)$ is essentially surjective, any free $E_k$-algebra in $\mathsf{D}(\bk)^\bZ$ is base-changed from one in $\mathsf{D}(\bF_p)^\bZ$, so by the K{\"u}nneth theorem Cohen's description of its homotopy continues to hold over $\bk$. As free algebras describe the available homology operations, it follows that $E_k$-algebras over $\bk$ have the same operations as those over $\bF_p$. There are some new features, for example Dyer--Lashof operations are only Frobenius-linear, but this does not affect the description of free $E_k$-algebras.}. We will have to assume familiarity with his results; they are too intricate to give a full description here, so we refer to \cite[Section 16]{e2cellsI} for an exposition. The description is
$$\pi_{*,*, *}(C\tau \otimes \mathrm{fil}_*\gR) \cong W_1((-1)_* H_{*,*}^{E_2}(\gI)),$$
the free $W_1$-algebra on the $E_2$-homology of $\gI$, placed in additional grading $-1$. Here $W_1(-)$ is an explicitly described monad, which is recalled in \cite[Section 16]{e2cellsI}. For our purposes the following features suffice: 
\begin{enumerate}[(i)]
\item $W_1((-1)_* H_{*,*}^{E_2}(\gI))$ is a free graded-commutative $\bk$-algebra.

\item Using the equivalences $\bk \otimes_\gR \bk \simeq B^{E_1}(\gR) \simeq \bk \oplus \Sigma Q^{E_1^\text{nu}}\gI$ from \eqref{eq:BarIsIndec}, axiom (SCE) is equivalent to $H^{E_1}_{n,d}(\gI)=0$ for $d < n-1$, and by \cite[Theorem 14.4]{e2cellsI} this implies that $H^{E_2}_{n,d}(\gI)=0$ for $d < n-1$ too. (In fact, by \cite[Theorem 14.6]{e2cellsI} the converse holds too.) Axiom (F) is equivalent to the $H^{E_1}_{n,n-1}(\gI)$ being finite-dimensional, and it follows from the bar spectral sequence as in the proof of \cite[Theorem 14.4]{e2cellsI} that the $H^{E_2}_{n,n-1}(\gI)$ are also finite-dimensional.

It is an elementary but laborious calculation using the description of a basis for $W_1((-1)_* H_{*,*}^{E_2}(\gI))$ in \cite[Section 16.2]{e2cellsI} that the free graded-commutative algebra generators in tridegrees $(n,d,f)$ having $d < n$ are given by the free restricted $\lambda_1$-algebra 
$$V_{*,*,*} := L_1((-1)_* \Big(\bigoplus_{n \geq 1} H_{n,n-1}^{E_2}(\gI)\Big))$$
on the trigraded vector space $(-1)_*(\bigoplus_{n \geq 1} H_{n,n-1}^{E_2}(\gI))$. As the $H^{E_2}_{n,n-1}(\gI)$ are finite-dimensional, $V_{*,*,*}$ has finite type. 

\end{enumerate}

\begin{defn}\label{defn:X}
If $p=2$ let $X_{*,*,*}=V_{*,*,*}$. If $p$ is odd, let $X_{*,*,*} \subseteq V_{*,*,*}$ be the subspace of those elements of even homological degree. 
\end{defn}

In either case, $X_{*,*,*}$ consists of the generators of the free graded-commutative algebra $\pi_{*,*,*}(C\tau \otimes \fil_*\gR)$ of tridegrees $(n,d,f)$ satisfying $d < n$ and which are not nilpotent. The tridegrees of these generators are constrained as follows:

\begin{lem}\label{lem:XOrdering}\mbox{}
\begin{enumerate}[(i)]
\item\label{it:XOrdering:1} If $p=2$ then $X_{N,D,F}$ vanishes unless $D=N-1$. If $p$ is odd then $X_{N,D,F}$ vanishes unless $D=N-1$, or $D=N-2$ and $N$ is even. In either case, if $X_{N,D,F}$ is non-zero then $\tfrac{D}{N}$ is a rational number of the form $\tfrac{k}{k+1}$.

\item\label{it:XOrdering:2} For each fixed $\lambda$ and $\mu$ there are finitely-many tuples $(N,D,F)$ having $\tfrac{D}{N} = \lambda$, $\tfrac{F}{D}=\mu$, and $X_{N,D,F}$ non-zero.
\end{enumerate}
\end{lem}
\begin{proof}
To prove (\ref{it:XOrdering:1}) we get into the details of what a free restricted $\lambda_1$-algebra is.

\vspace{1ex}

\noindent\textbf{Even characteristic.} In this case a restricted $\lambda_1$-algebra is a restricted Lie algebra, so that
$$V_{*,*,*} = \mathrm{Lie}^\text{res}((-1)_*\Big(\bigoplus_{n \geq 1} H_{n,n-1}^{E_2}(\gI)\Big)).$$ 
As such, $V_{*,*,*} = X_{*,*,*}$ is supported in tridegrees $(n,d,f)$ having $d=n-1$. (This is because $\bigoplus_{n \geq 1} H_{n,n-1}^{E_2}(\gI)$ is supported in such degrees, and the bracket satisfies $|[x,y]| = |x|+|y| + (0,1,0)$ and the restriction satisfies $|\xi(x)| = 2|x|+(0,1,0)$.)

\vspace{1ex}

\noindent\textbf{Odd characteristic.} In this case a restricted $\lambda_1$-algebra is a restricted Lie algebra with a further operation $\zeta$, so that
$$V_{*,*,*} = \mathrm{Lie}^{res}((-1)_*\Big(\bigoplus_{n \geq 1} H_{n,n-1}^{E_2}(\gI)\Big)) \oplus \zeta \cdot \mathrm{Lie}^{res}((-1)_*\Big(\bigoplus_{n \geq 1} H_{n,n-1}^{E_2}(\gI)\Big))_\text{odd}.$$
The first summand is supported in tridegrees $(n,d,f)$ with $d=n-1$, by the same argument as above.  The second summmand is supported in tridegrees with $d=n-2$ and $d$ even (so also $n$ is even). (This is because $|\zeta(x)| = p|x| + (0,p-2,0)$, so if $|x| = (n,n-1,f)$ with $n-1$ odd (so $n$ even) then $|\zeta(x)| = (pn, p(n-1) + p-2, pf) = (pn, pn-2, pf)$, and $pn-2$ is even.) Thus $X_{*,*,*}$ is also supported in such degrees.

\vspace{1ex}

We now prove (\ref{it:XOrdering:2}). In even characteristic if $X_{N,D,F}$ is non-zero then we have seen that $D=N-1$ in which case $\lambda = \tfrac{D}{N} = \tfrac{N-1}{N}$ determines $N$, and then $\mu = \tfrac{F}{N}$ determines $F$: in this case there is a single possibility for $(N,D,F)$. In odd characteristic  if $X_{N,D,F}$ is non-zero then we have seen that either $D=N-1$, in which case $\lambda = \tfrac{N-1}{N}$ determines $N$, or else $D=N-2$ with $N$ even, in which case $\lambda = \tfrac{N-2}{N}$ determines $N$. So there are two possibilities for the pair $(N,D)$, and for each of these $\mu = \tfrac{F}{N}$ determines $F$: in this case there are two possibilities for $(N,D,F)$.
\end{proof}

We define
$$X_{\lambda, \mu} := \bigoplus_{\substack{D/N=\lambda \\ F/D=\mu}} X_{N,D,F},$$
and totally order these vector spaces lexicographically by
\begin{enumerate}[(i)]
\item increasing $\lambda$, and then by

\item increasing $\mu$. (Bear in mind that $\mu$ is negative.)
\end{enumerate}
We now choose bases for each $X_{N,D,F}$, which are all finite-dimensional, and so (by ordering the summands anyhow) obtain bases for the $X_{\lambda, \mu}$, which are also finite-dimensional by Lemma \ref{lem:XOrdering} (\ref{it:XOrdering:2}). By concatenating these bases we obtain a totally ordered homogeneous basis
$$x_1, x_2, x_3, \ldots$$
for $X_{*,*,*}$ subordinate to the total ordering of the $X_{\lambda, \mu}$. We write
$$|x_i| = (n_i, d_i, f_i)$$
for their tridegrees, so that these elements define homotopy classes
$$x_i \in X_{n_i, d_i, f_i} \subseteq V_{n_i, d_i, f_i} \subseteq \pi_{n_i, d_i, f_i}(C\tau \otimes \fil_*\gR).$$
As a matter of language, we will say that $x_i$ has \emph{slopes} $(\tfrac{d_i}{n_i}, \tfrac{f_i}{d_i})$. More generally, we order slopes lexicographically, so say
$$\text{$x_i$ has slopes $< (\lambda, \mu)$} \Longleftrightarrow \tfrac{d_i}{n_i} < \lambda, \text{ or } \tfrac{d_i}{n_i} = \lambda \text{ and } \tfrac{f_i}{d_i} < \mu$$
and analogously for slopes $\leq (\lambda, \mu)$.

\begin{rem}
We will explain in Section \ref{sec:EkAlgBigk} some simplifications to this discussion which arise if $\gR$ is an $E_k$-algebra with $k \geq 3$ and we use the filtered $E_k$-algebra $\fil_*^{E_k}\gR$ instead.
\end{rem}

\subsection{The filtered form of Theorem \ref{thm:STExist}}

 As $\fil_*\gR$ is an $E_2$-algebra, for any $\fil_*\gR$-module $\fil_*\gM$ there is a unit map
$$u : \fil_*\gR \lra \End_{\fil_*\gR}(\fil_*\gM)$$
landing in the centre.

\begin{defn}\label{defn:xiselfmap}
A filtered map
$$\phi_i : S^{p^M n_i, p^M d_i, p^M f_i} \otimes \mathrm{fil}_*\gM \lra \mathrm{fil}_*\gM$$
is an \emph{$x_i$ self-map} if $C\tau \otimes {\phi_i} = (C\tau \otimes u)_*(x_i^{p^M})$ for some $M$.
\end{defn}

That is, $\phi_i$ is an $x_i$ self-map if it induces multiplication by a $p$-th power of $x_i$ on the associated graded. Using this notion we may formulate a---more precise---filtered form of Theorem \ref{thm:STExist} as follows.

\begin{thm}\label{thm:FiltSTExist}
Let $\bk$ be a field of positive characteristic $p$, and let $\gR \in \mathsf{Alg}_{E_2}(\mathsf{D}(\bk)^\bZ)$ satisfy (C), (SCE), and (F). Then there exists a sequence of endomorphisms
$$\phi_i : S^{p^{M_i} n_i, p^{M_i} d_i, p^{M_i} f_i} \otimes \fil_*\gR/(\phi_1, \ldots, \phi_{i-1}) \lra \fil_*\gR/(\phi_1, \ldots, \phi_{i-1})$$
for $i=1,2,\ldots$ such that:
\begin{enumerate}[(i)]
\item\label{it:FiltSTExist:1} $\phi_i$ is an $x_i$ self-map.

\item\label{it:FiltSTExist:2} $\tau^{-1} \fil_*\gR/(\phi_1, \ldots, \phi_{i})$ has a vanishing line of slope $\tfrac{d_{i+1}}{n_{i+1}}$.

\item\label{it:FiltSTExist:3} If $M$ is pre-specified then the $\phi_i$ may be chosen so that $M_i \geq M$ for all $i$.
\end{enumerate}
\end{thm}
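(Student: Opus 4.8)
# Proof Proposal for Theorem \ref{thm:FiltSTExist}

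The plan is to run something close to the proof of the Hopkins--Smith Periodicity Theorem, but in the filtered category $\fil_*\gR\text{-}\mathsf{mod}$, where $C\tau \otimes (-)$ plays the role of the "$BP_*$-homology" or "mod-$p$ homology" detection functor. The crucial simplification is that $C\tau \otimes \fil_*\gR \simeq \gE_2((-1)_* Q^{E_2^\text{nu}}\gI)$ is a \emph{free} $E_2$-algebra, so its homotopy $\pi_{*,*,*}(C\tau \otimes \fil_*\gR) = W_1((-1)_*H^{E_2}_{*,*}(\gI))$ is a free graded-commutative algebra, explicitly described by F.~Cohen's work, with the non-nilpotent generators below the diagonal given precisely by the ordered basis $x_1, x_2, x_3, \ldots$ of Section \ref{sec:ElementsXi}. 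So "what the $i$-th endomorphism should achieve" has a clean answer: induce multiplication by (a $p$-th power of) $x_i$ on associated graded.

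First I would set up the induction. Suppose $\phi_1, \ldots, \phi_{i-1}$ have been constructed as $x_1, \ldots, x_{i-1}$ self-maps, so that on associated graded $C\tau \otimes \fil_*\gR/(\phi_1, \ldots, \phi_{i-1})$ is the quotient of the free graded-commutative algebra $\pi_{*,*,*}(C\tau \otimes \fil_*\gR)$ by a Koszul complex on $x_1^{p^{M_1}}, \ldots, x_{i-1}^{p^{M_{i-1}}}$ — concretely, a free module over $\pi_{*,*,*}(C\tau\otimes\fil_*\gR)/(x_1^{p^{M_1}}, \ldots, x_{i-1}^{p^{M_{i-1}}})$ tensored with an exterior algebra on the "suspension classes". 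The key algebraic input is that $x_i$ is still non-nilpotent and central in this quotient ring, and moreover that after inverting $x_i$ everything above it in the total order $\prec$ (i.e.\ all $x_j$ with $j > i$, and all monomials involving the nilpotent generators) dies in a suitable range — precisely because the $x_j$ with $j < i$ all have slopes $\le (\tfrac{d_i}{n_i}, \tfrac{f_i}{d_i})$ by the way the basis was ordered, and one checks from Cohen's explicit basis that inverting $x_i$ kills the contribution of any generator of strictly larger slope. This gives claim (\ref{it:FiltSTExist:2}): $\tau^{-1}\fil_*\gR/(\phi_1, \ldots, \phi_i)$ has a slope $\tfrac{d_{i+1}}{n_{i+1}}$ vanishing line, once $\phi_i$ exists as an $x_i$ self-map, because the inverted-$\tau$ homotopy is computed by the inverted $\tau$-Bockstein spectral sequence \eqref{eq:tauInvertedtauBockstein} from $\pi_{*,*,*}(C\tau \otimes \fil_*\gR/(\phi_1, \ldots, \phi_i))[\tau^{\pm 1}]$, and on associated graded we have arranged the vanishing.

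The heart of the matter — and the main obstacle — is the existence of the $x_i$ self-map $\phi_i$ on the module $\gM := \fil_*\gR/(\phi_1, \ldots, \phi_{i-1})$, realising some $p$-th power $x_i^{p^{M_i}}$ of the class $x_i$ in $\pi_{*,*,*}(C\tau \otimes \End_{\fil_*\gR}(\gM))$. This is exactly the kind of statement proved by the "Nishida nilpotence / vanishing line" portion of the Periodicity Theorem: one shows that in the endomorphism ring $E := \End_{\fil_*\gR}(\gM)$, which is a finite $\fil_*\gR$-module and whose $C\tau$-homology is a finite-dimensional-in-each-tridegree module over the explicit ring above, the element $x_i$ supports a $d_r$-differential or is a permanent cycle in the $\tau$-Bockstein spectral sequence \eqref{eq:tauBockstein} only in controlled ways. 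The standard argument: $E$ has homotopy with a vanishing line (it is finite over $\fil_*\gR$, and $\gM$ has a vanishing line once we are in the right range by the inductive hypothesis via (\ref{it:FiltSTExist:2})), and a class on a vanishing line which is non-nilpotent on associated graded must, after raising to a $p$-th power sufficiently many times, become a permanent cycle — this is where one uses that Dyer--Lashof operations (here, the restriction $\xi$, i.e.\ $p$-th power, on the $W_1$-algebra) commute with differentials in the appropriate Leibniz sense, so that $x_i^{p^{M}}$ for $M \gg 0$ survives to a genuine element of $\pi_{*,*,*}(\End_{\fil_*\gR}(\gM))$. Since $x_i$ is even-dimensional (by Definition \ref{defn:X}, the odd generators having been discarded when $p$ is odd) there is no sign obstruction to it being a permanent cycle. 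That the resulting genuine endomorphism $\phi_i$ is \emph{central} follows from the unit map $u : \fil_*\gR \to \End_{\fil_*\gR}(\gM)$ landing in the centre (Section \ref{sec:ElementsXi}), since $\fil_*\gR$ is $E_2$. Finally (\ref{it:FiltSTExist:3}) is immediate from this construction: we are free to replace $\phi_i$ by a further $p$-th power $\phi_i^p$ (still an $x_i$ self-map, with $M_i$ increased by $1$, and with the same generated thick subcategory by Lemma \ref{lem:ThickSubcatFacts}(\ref{it:ThickSubcatFacts:4})), so we may take $M_i \ge M$ for any pre-specified $M$. The main technical work, which I would not grind through here, is (a) the explicit bookkeeping with Cohen's basis for $W_1$ to verify the "inverting $x_i$ kills higher slopes" claim, and (b) adapting the vanishing-line/permanent-cycle argument of Hopkins--Smith to the filtered, graded setting with the $\tau$-Bockstein spectral sequence in place of the Adams spectral sequence — the interaction of the $p$-th power operation $\xi$ with the differentials $\prescript{\beta}{}{d}^r$ being the delicate point, for which one invokes \cite[Section 16.6]{e2cellsI}.
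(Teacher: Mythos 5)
Your proposal is correct and follows essentially the same route as the paper: induct on $i$, use F.~Cohen's description of $\pi_{*,*,*}(C\tau\otimes\fil_*\gR)$ to read off the two vanishing estimates (one in the $(n,d)$-direction of slope $\tfrac{d_i}{n_i}$, one in the filtration direction of slope $\tfrac{f_i}{d_i}$) for the endomorphism object, run the Hopkins--Smith-style permanent-cycle argument via the Leibniz rule (or Dyer--Lashof operations) to realise $x_i^{p^{M_i}}$, and deduce (ii) and (iii) exactly as you indicate. One small correction: since $x_1^{p^{M_1}},\ldots,x_{i-1}^{p^{M_{i-1}}}$ form a regular sequence, $\pi_{*,*,*}(C\tau\otimes\fil_*\gR/(\phi_1,\ldots,\phi_{i-1}))$ is the literal quotient ring with no residual exterior algebra on suspension classes, though this slip does not affect the vanishing estimates you need.
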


We first explain how this implies Theorem \ref{thm:STExist}.

\begin{proof}[Proof of Theorem \ref{thm:STExist}]
Given $\lambda < 1$ let $x_1, \ldots, x_r$ be the set of all $x_i$'s with $\tfrac{d_i}{n_i}<\lambda$. Given a $W \in \bR$, choose $M$ large enough that $\lambda p^M n_i - p^M d_i-1 \geq W$ for all $i=1, \ldots, r$, which is possible as $\tfrac{d_i}{n_i} < \lambda$. Let $\phi_1, \ldots, \phi_r$ be the sequence of endomorphisms provided by Theorem \ref{thm:FiltSTExist}, with $M_i \geq M$, and set $\alpha_i := \tau^{-1}\phi_i$, an endomorphism of $\gR/(\alpha_1, \ldots, \alpha_{i-1})$. These endomorphisms have slopes $\tfrac{d_i}{n_i}$ which are all rational numbers of the form $\tfrac{k}{k+1}$ by Lemma \ref{lem:XOrdering} (\ref{it:XOrdering:1}). By Theorem \ref{thm:FiltSTExist} (\ref{it:FiltSTExist:2}) the object $\gR/(\alpha_1, \ldots, \alpha_r) = \tau^{-1} \fil_*\gR/(\phi_1, \ldots, \phi_r)$ has a vanishing line of slope $\tfrac{d_{r+1}}{n_{r+1}}$, which is $\geq \lambda$ by definition of $r$. Thus $\gR/(\alpha_1, \ldots, \alpha_r)$ is an admissible Smith--Toda complex satisfying properties (\ref{it:STExist1})--(\ref{it:STExist3}) of Theorem \ref{thm:STExist}.
\end{proof}

\subsection{Proof of Theorem \ref{thm:FiltSTExist}}

We suppose, for an induction, that a sequence of $x_j$ self-maps $\phi_j$ has been defined for $j<i$, so that $\mathrm{fil}_*\gR/({\phi}_1, \ldots, {\phi}_{i-1})$ has been constructed, and we consider its internal endomorphism object
$$\End_{\mathrm{fil}_*\gR}(\mathrm{fil}_*\gR/(\phi_1, \ldots, \phi_{i-1})).$$
This is a filtered object, and yields a multiplicative spectral sequence of signature
\begin{align*}
& E^1_{n,d,f} = \pi_{n,d,f}(C\tau \otimes \End_{\mathrm{fil}_*\gR}(\mathrm{fil}_*\gR/(\phi_1, \ldots, \phi_{i-1})))\\
& \quad\quad\quad\quad \Longrightarrow \pi_{n,d}(\End_{\gR}(\tau^{-1}\fil_*\gR/(\phi_1, \ldots, \phi_{i-1}))).
\end{align*}
We begin by establishing some vanishing estimates for $\pi_{*,*,*}(C\tau \otimes \mathrm{fil}_*\gR/({\phi}_1, \ldots, {\phi}_{i-1}))$ and for this $E^1$-page.

\begin{lem}\label{lem:VanishingEstimates}
\mbox{}
\begin{enumerate}[(i)]
\item\label{it:VanishingEstimates:1} There is a $\kappa$ such that $\pi_{n,d,f}(C\tau \otimes \mathrm{fil}_*\gR/(\phi_1, \ldots, \phi_{i-1}))=0$ for $d < \tfrac{d_i}{n_i}n+\kappa$.

\item\label{it:VanishingEstimates:2} For each $\tilde{\kappa}$ there is a $\rho$ such that $\pi_{n,d,f}(C\tau \otimes \mathrm{fil}_*\gR/(\phi_1, \ldots, \phi_{i-1}))=0$ whenever $d = \tfrac{d_i}{n_i} n + \tilde{\kappa}$ and $f < \tfrac{f_i}{d_i} d+ \rho$.

\item\label{it:VanishingEstimates:3} There is an $r_0$ such that $E^1_{p^Nn_i,p^Nd_i-1,p^Nf_i-r}=0$ for all $r \geq r_0$ and all $N$.
\end{enumerate}
\end{lem}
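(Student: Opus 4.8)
\textbf{Proof proposal for Lemma \ref{lem:VanishingEstimates}.}

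The plan is to extract all three vanishing estimates from the explicit description of $\pi_{*,*,*}(C\tau \otimes \fil_*\gR)$ as the free $W_1$-algebra $W_1((-1)_* H^{E_2}_{*,*}(\gI))$, together with a bookkeeping of the effect of taking iterated cofibres of $x_j$ self-maps for $j < i$. Since $C\tau$ is a commutative algebra in $\mathsf{fil}(\mathsf{D}(\bk)^\bZ)$ and $C\tau \otimes \phi_j$ acts by multiplication by $x_j^{p^{M_j}}$ on the free graded-commutative algebra $\pi_{*,*,*}(C\tau \otimes \fil_*\gR)$, the object $\pi_{*,*,*}(C\tau \otimes \fil_*\gR/(\phi_1, \ldots, \phi_{i-1}))$ is computed by the Koszul complex on the (non-zero-divisor) sequence $x_1^{p^{M_1}}, \ldots, x_{i-1}^{p^{M_{i-1}}}$ in this polynomial-type ring; since these are polynomial generators it is just the quotient ring $\pi_{*,*,*}(C\tau \otimes \fil_*\gR)/(x_1^{p^{M_1}}, \ldots, x_{i-1}^{p^{M_{i-1}}})$, tensored over $\bk$ with an exterior algebra on classes $\Sigma x_j$ of tridegree $(p^{M_j} n_j, p^{M_j} d_j + 1, p^{M_j} f_j)$. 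So I would first record this structural fact, then read off the stated vanishing ranges.

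For (\ref{it:VanishingEstimates:1}): the free graded-commutative generators of $\pi_{*,*,*}(C\tau \otimes \fil_*\gR)$ of tridegree $(N,D,F)$ with $D < N$ are exactly the elements of $X_{*,*,*}$, by the construction in Section \ref{sec:ElementsXi}; all other generators have $D \geq N$, i.e.\ slope $\geq 1$. Among the $x$-generators, the ones of slope strictly less than $\tfrac{d_i}{n_i}$ have all been killed (they are $x_1, \ldots, x_{i-1}$, modulo those $x_j$ with $j < i$ of the same slope $\tfrac{d_i}{n_i}$ but smaller second slope, which still contribute only finitely many generators on or below the line $d = \tfrac{d_i}{n_i}n$ in any bounded band — one uses finite type of $V_{*,*,*}$ here). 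Hence monomials in the surviving generators, together with the exterior classes $\Sigma x_j$ (each of which raises $d$ relative to $\tfrac{d_i}{n_i}n$ by a positive amount plus $1$, since $\tfrac{d_j}{n_j} \le \tfrac{d_i}{n_i}$), live weakly above a line of slope $\tfrac{d_i}{n_i}$ up to a constant $\kappa$. The only subtlety, which I would spell out carefully, is to check that the exterior classes and the finitely many $x_j$ of slope exactly $\tfrac{d_i}{n_i}$ do not drag the vanishing line below slope $\tfrac{d_i}{n_i}$ — this is where one needs that only finitely many generators have slope $\le \tfrac{d_i}{n_i}$, which follows from finite type.

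For (\ref{it:VanishingEstimates:2}): restrict attention to the line $d = \tfrac{d_i}{n_i} n + \tilde\kappa$. On this line, monomials are built from the surviving slope-$\tfrac{d_i}{n_i}$ generators (a finite set by finite type), their images being controlled in the third grading $f$ by the second slope $\tfrac{f_i}{d_i}$, together with bounded contributions from the exterior classes $\Sigma x_j$ and from any surviving generators of slope $> \tfrac{d_i}{n_i}$ that happen to combine to land back on this line — but landing back on the line forces a bounded total ``slope defect'', hence a bounded number of factors, hence $F$ bounded below by $\tfrac{f_i}{d_i} d$ plus a constant $\rho$. Again finite type is what makes the relevant index set finite and the bound uniform. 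For (\ref{it:VanishingEstimates:3}): the $E^1$-page is $\pi_{*,*,*}(C\tau \otimes \End_{\fil_*\gR}(\fil_*\gR/(\phi_1,\ldots,\phi_{i-1})))$, which by the duality and thick-subcategory discussion of Sections \ref{sec:Modules} and \ref{sec:ThickSubcat} is computed as $C\tau \otimes (\fil_*\gR/(\phi_1,\ldots,\phi_{i-1}) \otimes_{\fil_*\gR} D^r(\fil_*\gR/(\phi_1,\ldots,\phi_{i-1})))$, i.e.\ a finite complex of shifts of $C\tau \otimes \fil_*\gR/(\phi_1,\ldots,\phi_{i-1})$; so part (\ref{it:VanishingEstimates:1}) gives a slope $\tfrac{d_i}{n_i}$ vanishing line for it too, and part (\ref{it:VanishingEstimates:2}) pins the third grading on the critical line $d = p^N d_i - 1 = \tfrac{d_i}{n_i}(p^N n_i) - 1$. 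Feeding $\tilde\kappa = -1$ (after rescaling to the relevant band containing the target degree of $x_i^{p^N}$, whose tridegree $(p^N n_i, p^N d_i, p^N f_i)$ lies just off this line) into (\ref{it:VanishingEstimates:2}) yields a single $\rho$ with $E^1_{p^N n_i, p^N d_i - 1, f} = 0$ for $f < \tfrac{f_i}{d_i}(p^N d_i - 1) + \rho = p^N f_i - \tfrac{f_i}{d_i} + \rho$; so setting $r_0$ to be any integer exceeding $\tfrac{f_i}{d_i} - \rho$ gives the claim uniformly in $N$.

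\textbf{Main obstacle.} The genuinely delicate point is (\ref{it:VanishingEstimates:2}) and the uniformity in $N$ in (\ref{it:VanishingEstimates:3}): one must argue that, restricted to a fixed affine line of slope $\tfrac{d_i}{n_i}$ in the $(n,d)$-plane, only \emph{finitely many} monomials in the generators of $W_1((-1)_*H^{E_2}_{*,*}(\gI))$ (mod the Koszul relations) can appear, and that their $f$-gradings are bounded below linearly with a slope-$\tfrac{f_i}{d_i}$ bound — the word ``elementary but laborious'' in Section \ref{sec:ElementsXi} is doing real work here, and carrying it out honestly requires unwinding the explicit monomial basis for $W_1(-)$ from \cite[Section 16.2]{e2cellsI}, tracking how the second slope behaves under products, brackets, the restriction $\xi$, and (in odd characteristic) $\zeta$, and checking it is controlled by the top generators $x_i$ of the given first slope. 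Everything else is formal manipulation of thick subcategories, duality, and $C\tau$-base-change already set up in the excerpt.
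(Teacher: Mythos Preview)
Your approach is essentially the paper's, but there is one concrete error and one place where you are making life harder for yourself than the paper does.

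\textbf{The error.} You write that $\pi_{*,*,*}(C\tau \otimes \fil_*\gR/(\phi_1,\ldots,\phi_{i-1}))$ is the quotient ring tensored with an exterior algebra on classes $\Sigma x_j$. This is wrong: you yourself note that $x_1^{p^{M_1}},\ldots,x_{i-1}^{p^{M_{i-1}}}$ is a regular sequence in $\pi_{*,*,*}(C\tau\otimes\fil_*\gR)$, and for a regular sequence the homology of the Koszul complex is \emph{just} the quotient---no exterior classes survive. You have conflated the Koszul \emph{complex} with its \emph{homology}. The paper records the answer as
\[
\Lambda_\bk[U_{*,*,*}]\ \otimes\ \frac{\bk[x_1,\ldots,x_{i-1}]}{(x_1^{p^{M_1}},\ldots,x_{i-1}^{p^{M_{i-1}}})}\ \otimes\ \bk[x_i,\ldots,x_j]\ \otimes\ \mathrm{Sym}_\bk[W_{*,*,*}],
\]
where $U$ are the odd-homological-degree (hence nilpotent) generators below the diagonal, $x_i,\ldots,x_j$ are the $x$'s of slope exactly $\tfrac{d_i}{n_i}$, and $W$ absorbs everything of slope $>\tfrac{d_i}{n_i}$. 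No phantom $\Sigma x_j$'s. Your error is harmless for the conclusion (the phantom classes would have slope $>\tfrac{d_i}{n_i}$ anyway) but it should be removed.

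\textbf{Where you overcomplicate.} Your ``main obstacle'' for (\ref{it:VanishingEstimates:2}) is not as delicate as you fear, and does not require revisiting the $W_1(-)$ basis. The paper's device is to regard the displayed ring as a \emph{free $\bk[x_i,\ldots,x_j]$-module} on generators
\[
G_{*,*,*} \;=\; \Lambda_\bk[U_{*,*,*}]\ \otimes\ \frac{\bk[x_1,\ldots,x_{i-1}]}{(x_1^{p^{M_1}},\ldots)}\ \otimes\ \mathrm{Sym}_\bk[W_{*,*,*}].
\]
On any fixed line $d=\tfrac{d_i}{n_i}n+\tilde\kappa$ the space $G$ is finite-dimensional (the first two factors are confined to a bounded strip, and $W$ consists of generators of slope $>\tfrac{d_i}{n_i}$, so only finitely many monomials can land on the line). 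Thus on this line the module has finitely many generators $z_1,\ldots,z_p$. Now every $x_k$ with $i\leq k\leq j$ satisfies $f_k-\tfrac{f_i}{d_i}d_k\geq 0$ by the ordering $\prec$, so multiplying by $x_i,\ldots,x_j$ can only increase $f-\tfrac{f_i}{d_i}d$. Hence $f-\tfrac{f_i}{d_i}d\geq\min_q(f'_q-\tfrac{f_i}{d_i}d'_q)=:\rho$ on the whole line. This two-line argument replaces your ``bounded slope defect'' heuristic and gives the uniformity in $N$ for free. Part (\ref{it:VanishingEstimates:3}) is then exactly what you wrote: pass from $\fil_*\gR/(\phi_1,\ldots,\phi_{i-1})$ to its endomorphism object via the thick-subcategory argument, apply (\ref{it:VanishingEstimates:2}) with $\tilde\kappa=-1$, and take $r_0>\tfrac{f_i}{d_i}-\rho$.
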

\begin{proof}
As we have discussed in Section \ref{sec:ElementsXi}, $\pi_{*,*,*}(C\tau \otimes \mathrm{fil}_*\gR)$ is a free graded-commutative $\bk$-algebra, and its free generators can be written as $V_{*,*,*} \oplus W'_{*,*,*}$ with $W'_{*,*,*}$ being supported in tridegrees $(n,d,f)$ with $d \geq n$, and $V_{*,*,*} = U_{*,*,*} \oplus X_{*,*,*}$ where $X_{*,*,*}$ is as in Definition \ref{defn:X}, and therefore $U_{*,*,*}$ are the multiplicative generators supported in tridegrees $(n,d,f)$ with $d < n$ which are nilpotent (i.e.\ those of odd homological degree when $p$ is odd, and zero if $p=2$). Let $x_i, \ldots, x_j$ be the $x_k$'s with with $i \leq k$ and $\tfrac{d_k}{n_k} = \tfrac{d_i}{n_i}$, so we have $\tfrac{f_i}{d_i} \leq \tfrac{f_{i+1}}{d_{i+1}} \leq \cdots \leq \tfrac{f_j}{d_j}$ by definition of the total order on the $x_k$'s. Let $W_{*,*,*}$ be spanned by $W'_{*,*,*}$ together with $x_{j+1}, x_{j+2}, \ldots$. We may therefore write
$$\pi_{*,*,*}(C\tau \otimes \mathrm{fil}_*\gR) = \Lambda^*_\bk[U_{*,*,*}] \otimes \bk[x_1, \ldots, x_j] \otimes \mathrm{Sym}_\bk^*[W_{*,*,*}].$$
As $\phi_k$ induces multiplication by $x_k^{p^{M_k}}$ on associated graded, and these form a regular sequence, $\pi_{*,*,*}(C\tau \otimes \mathrm{fil}_*\gR/(\phi_1, \ldots, \phi_{i-1}))$ is given by
\begin{equation}\label{eq:AssocGr}
\Lambda^*_\bk[U_{*,*,*}] \otimes \frac{\bk[x_1, \ldots, x_{i-1}]}{(x_1^{p^{M_1}}, \ldots, x_{i-1}^{p^{M_{i-1}}})} \otimes \bk[x_i,  \ldots, x_j] \otimes \mathrm{Sym}_\bk^*[W_{*,*,*}].
\end{equation}

To prove (\ref{it:VanishingEstimates:1}), we observe that the first two factors of \eqref{eq:AssocGr} combine to give a finite-dimensional trigraded vector space, which is therefore supported in tridegrees $(n,d,f)$ with $d \geq \tfrac{d_i}{n_i}n+ \kappa$ for some $\kappa$. It suffices to show that the remaining factors are supported in slopes $\geq \tfrac{d_i}{n_i}$, but the elements $x_i, \ldots, x_j$ have slopes $\tfrac{d_i}{n_i}$, and those of $W_{*,*,*}$ have slopes $> \tfrac{d_i}{n_i}$.

To prove (\ref{it:VanishingEstimates:2}), we consider \eqref{eq:AssocGr} as describing a free $\bk[x_i, \ldots, x_j]$-module, with module generators
$$G_{*,*,*} := \Lambda^*_\bk[U_{*,*,*}] \otimes \frac{\bk[x_1, \ldots, x_{i-1}]}{(x_1^{p^{M_1}}, \ldots, x_{i-1}^{p^{M_{i-1}}})} \otimes \mathrm{Sym}_\bk^*[W_{*,*,*}].$$
As the first two factors are finite-dimensional trigraded vector spaces, and the last is a free graded-commutative algebra on elements of slope $ > \tfrac{d_i}{n_i}$, it follows that for each $\tilde{\kappa}$ the vector space $\bigoplus_{d = \tfrac{d_i}{n_i}n +\tilde{\kappa}} G_{n, d, *}$ is finite-dimensional. Thus 
$$L_{\tilde{\kappa}} := \bigoplus_{d = \tfrac{d_i}{n_i}n +\tilde{\kappa}} \pi_{n, d, *}(C\tau \otimes \mathrm{fil}_*\gR/({\phi}_1, \ldots, {\phi}_{i-1}))$$
is a finitely-generated free $\bk[x_i, \ldots, x_j]$-module, say with generators $\{z_1, \ldots, z_p\}$ having tridegrees $|z_q| = (n'_q, d'_q, f'_q)$.
 
The element $z_q x_i^{\ell_i} \cdots x_j^{\ell_j}$ has tridegree 
$$(n,d,f)=(n'_q + \sum \ell_r n_r, d'_q + \sum \ell_r d_r, f'_q + \sum \ell_r f_r)$$
and so has
$$f - \tfrac{f_i}{d_i} d = (f'_q - \tfrac{f_i}{d_i} d'_q) + \sum_{r=i}^j \ell_r(f_r - \tfrac{f_i}{d_i} d_r) \geq f'_q - \tfrac{f_i}{d_i} d'_q$$
independently of the $\ell_r$, as $f_r - \tfrac{f_i}{d_i} d_r = d_r(\tfrac{f_r}{d_r} - \tfrac{f_i}{d_i}) \geq 0$ for all $i \leq r \leq j$. Thus every element of the free module $L_{\tilde{\kappa}}$ has tridegrees $(n,d,f)$ satisfying
$$f- \tfrac{f_i}{d_i} d \geq \min_{1 \leq q \leq p}\{f'_q -\tfrac{f_i}{d_i} d'_q\} =: \rho,$$
and so $L_{\tilde{\kappa}}$ vanishes in tridegrees $(n,d,f)$ satisfying $f < \tfrac{f_i}{d_i} d + \rho$.

To prove (\ref{it:VanishingEstimates:3}), we first note that $\End_{\mathrm{fil}_*\gR}(\mathrm{fil}_*\gR/(\phi_1, \ldots, \phi_{i-1}))$ lies in the thick subcategory generated by $\mathrm{fil}_*\gR/(\phi_1, \ldots, \phi_{i-1})$, by Lemma \ref{lem:TensorAndHomStaysThick}. It follows that $C\tau \otimes  \End_{\mathrm{fil}_*\gR}(\mathrm{fil}_*\gR/(\phi_1, \ldots, \phi_{i-1}))$ lies in the thick subcategory generated by $C\tau \otimes \mathrm{fil}_*\gR/(\phi_1, \ldots, \phi_{i-1})$. The property of satisfying the vanishing conditions described in parts (\ref{it:VanishingEstimates:1}) and (\ref{it:VanishingEstimates:2})  defines a thick subcategory, so it follows from those parts that $C\tau \otimes  \End_{\mathrm{fil}_*\gR}(\mathrm{fil}_*\gR/(\phi_1, \ldots, \phi_{i-1}))$ satisfies these vanishing conditions too. Spelling out the property in (\ref{it:VanishingEstimates:2}) with $\tilde{\kappa}=-1$, it follows that there is a $\rho$ such that
$$E^1_{p^Nn_i,p^Nd_i-1,p^Nf_i-r} = \pi_{p^Nn_i,p^Nd_i-1,p^Nf_i-r}(C\tau \otimes  \End_{\mathrm{fil}_*\gR}(\mathrm{fil}_*\gR/(\phi_1, \ldots, \phi_{i-1})))$$
vanishes as long as $p^N f_i-r < \tfrac{f_i}{d_i}(p^Nd_i-1) + \rho$. In particular it vanishes for all
$$r \geq r_0 := \tfrac{f_i}{d_i}-\rho + 1 > p^N f_i - \tfrac{f_i}{d_i}(p^Nd_i-1) - \rho $$
independently of $N$.
\end{proof}

We make use of these vanishing estimates as follows. The unit map
$$u: \mathrm{fil}_*\gR \lra \End_{\mathrm{fil}_*\gR}(\mathrm{fil}_*\gR/(\phi_1, \ldots, \phi_{i-1}))$$
lands in the centre, and hence gives a map of spectral sequences of algebras landing in the centre (of each page), which on the $E^1$-page has the form
$$(C\tau \otimes u)_* : W_1((-1)_* H_{*,*}^{E_2}(\gI)) = \pi_{*,*,*}(C\tau \otimes \mathrm{fil}_*\gR) \lra E^1_{*,*,*}.$$
In particular we have a class $(C\tau \otimes u)_*(x_i) \in E^1_{n_i, d_i, f_i}$ lying in the centre.

We claim that some power $(C\tau \otimes u)_*(x_i)^{p^{M_i}}$ is a permanent cycle in the spectral sequence $\{E^r_{*,*,*}\}_r$. As it comes from a filtered $E_1$-algebra, this spectral sequence is one of associative rings, i.e.\ the differentials satisfy the Leibniz rule. Thus if a class $z \in E^r_{*,*,*}$ lies in the centre then $d^r(z^p) = p z^{p-1} d^r(z) = 0$ so $z^p$ survives until $E^{r+1}_{*,*,*}$. This may be used to find $p$-th powers of $(C\tau \otimes u)_*(x_i)$ which survive until arbitrarily late pages of the spectral sequence. In particular some $(C\tau \otimes u)_*(x_i)^{p^{M_i}}$ survives until $E^{r_0}_{p^{M_i} n_i, p^{M_i} d_i, p^{M_i} f_i}$ for the $r_0$ given by Lemma \ref{lem:VanishingEstimates} (\ref{it:VanishingEstimates:3}). By that Lemma we have $E^{r_0}_{p^{M_i} n_i, p^{M_i} d_i-1, p^{M_i} f_i-r_0}=0$ and so $d^{r_0}((C\tau \otimes u)_*(x_i)^{p^{M_i}})=0$ and $(C\tau \otimes u)_*(x_i)^{p^{M_i}}$ survives to $E^{r_0+1}_{p^{M_i} n_i, p^{M_i} d_i, p^{M_i} f_i}$. But the conclusion of Lemma \ref{lem:VanishingEstimates} (\ref{it:VanishingEstimates:3}) holds for all $r \geq r_0$, so the class $(C\tau \otimes u)_*(x_i)^{p^{M_i}}$ continues to be a cycle on all later pages, and survives to $(C\tau \otimes u)_*(x_i)^{p^{M_i}} \in E^{\infty}_{p^{M_i} n_i, p^{M_i} d_i, p^{M_i} f_i}$. It detects a filtered map
$$\phi_i : S^{p^{M_i} n_i, p^{M_i} d_i, p^{M_i} f_i} \lra \End_{\mathrm{fil}_*\gR}(\mathrm{fil}_*\gR/({\phi}_1, \ldots, {\phi}_{i-1})),$$
which by construction is an $x_i$ self-map. 

This establishes part (\ref{it:FiltSTExist:1}) of Theorem \ref{thm:FiltSTExist}. By raising $\phi_i$ to further $p$-th powers, we may assume that $M_i \geq M$, which establishes part (\ref{it:FiltSTExist:3}). For part (\ref{it:FiltSTExist:2}) we observe that Lemma \ref{lem:VanishingEstimates} (\ref{it:VanishingEstimates:1}) gives that there is a $\kappa$ such that $\pi_{n,d,f}(C\tau \otimes \mathrm{fil}_*\gR/({\phi}_1, \ldots, {\phi}_{i}))=0$ for $d < \tfrac{d_{i+1}}{n_{i+1}} n + \kappa$, so running the spectral sequence for the filtered object $\mathrm{fil}_*\gR/({\phi}_1, \ldots, {\phi}_{i})$ shows that $\pi_{n,d}(\tau^{-1}\mathrm{fil}_*\gR/({\phi}_1, \ldots, {\phi}_{i}))=0$ for $d < \tfrac{d_{i+1}}{n_{i+1}} n + \kappa$ too. This finishes the proof of Theorem \ref{thm:FiltSTExist}.

\begin{rem}\label{rem:PowerOpsInSS}
Rather than relying on centrality of $(C\tau \otimes u)_*(x_i)$ and the Leibniz rule in the spectral sequence $\{E^r_{*,*,*}\}_r$ associated to the filtered $E_1$-algebra $\End_{\mathrm{fil}_*\gR}(\mathrm{fil}_*\gR/(\phi_1, \ldots, \phi_{i-1}))$, we could instead proceed as follows. In a filtered $E_2$-algebra such as $\fil_*\gR$, if $y \in E^r_{*,*,*}$ then an element $Q^s(y)$ obtained by applying a Dyer--Lashof operation survives until $E^{pr}_{*,*,*}$ (and $d^{pr}(Q^s(y))$ is represented by $Q^s(d^r(y))$), see \cite[Theorem 16.8]{e2cellsI}. In particular, $x_i \in E^1_{n_i, d_i, f_i}(\fil_*\gR)$ so $x_i^{p} = Q^{s}(x_i)$ (with $s=d_i$ if $p=2$ and $s=d_i/2$ if $p$ is odd) survives until $E^{p}_{*,*,*}(\fil_*\gR)$, and by the same principle $(C\tau \otimes u)_*(x_i^{p^{M_i}})$ survives until $E^{p^{M_i}}_{*,*,*}$. The (only) advantage of this over the Leibniz rule method is that a lower power of $(C\tau \otimes u)_*(x_i)$ needs to be taken for it to survive to a given page.
\end{rem}

\subsection{$E_k$-algebras, $k\geq 3$}\label{sec:EkAlgBigk}

If $\gR$ is an augmented $E_k$-algebra with $k \geq 3$, then we can run the discussion and proofs of this section using the filtered $E_k$-algebra $\fil_*^{E_k} \gR$ instead, which has associated graded
$$C\tau \otimes \fil_*^{E_k} \gR \simeq \gE_k((-1)_* Q^{E_k^\text{nu}} \gI).$$
It is still the case that (SCE) implies that $H^{E_k}_{n,d}(\gI)=0$ for $d<n-1$. The work of F.\ Cohen in this case describes the homotopy groups of this object as the free $W_{k-1}$-algebra on the trigraded vector space $(-1)_*H^{E_k}_{*,*}(\gI)$, and it is again a free graded-commutative $\bk$-algebra. But now the Browder bracket or $\xi$ or $\zeta$ applied to elements of $H^{E_k}_{*,*}(\gI)$ end up in tridegrees $\pi_{n,d,f}(C\tau \otimes \fil_*^{E_k} \gR)$ with $d \geq n$, and this improves further with repeated applications. The multiplicative generators below the diagonal are therefore just those admissible iterated Dyer--Lashof operations applied to elements of $\bigoplus_{n \geq 1}(-1)_*H^{E_k}_{n,n-1}(\gI)$ which happen to lie below the diagonal. We record this as follows, and omit the proof which is elementary but laborious.

\begin{lem}
If $p$ is even, then the multiplicative generators below the diagonal are the elements $Q^{2^{r-1}n} \cdots Q^{2n} Q^n(x)$ with $x \in H^{E_k}_{n,n-1}(\gI)$.

If $p$ is odd, then the multiplicative generators below the diagonal are the elements $\beta^{\epsilon}Q^{p^{r-1}n/2} \cdots Q^{pn/2} Q^{ n/2}(x)$ with $x \in H^{E_k}_{n,n-1}(\gI)$ (implicitly $n$ is even if $r > 0$).\qed
\end{lem}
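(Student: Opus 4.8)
The statement is a purely algebraic computation: identify, for $\gR$ an augmented $E_k$-algebra with $k\geq 3$, the free graded-commutative algebra generators of $\pi_{*,*,*}(C\tau\otimes\fil_*^{E_k}\gR)\cong W_{k-1}((-1)_*H^{E_k}_{*,*}(\gI))$ which lie strictly below the diagonal, i.e.\ in tridegrees $(n,d,f)$ with $d<n$. First I would recall from \cite[Section 16]{e2cellsI} F.\ Cohen's explicit basis for the free $W_{k-1}$-algebra: it is the free graded-commutative algebra on the ``allowable'' monomials built by applying admissible iterated Dyer--Lashof operations, Browder brackets, and (in odd characteristic) the operations $\xi$ and $\zeta$, to a basis of the generating vector space $M_{*,*,*}:=(-1)_*H^{E_k}_{*,*}(\gI)$. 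Because (SCE) together with \cite[Theorem 14.4]{e2cellsI} gives $H^{E_k}_{n,d}(\gI)=0$ for $d<n-1$, every generator of $M$ has slope $\tfrac{d}{n}\leq\tfrac{n-1}{n}<1$, and those of slope exactly $\tfrac{n-1}{n}$ are precisely $(-1)_*H^{E_k}_{n,n-1}(\gI)$ (shifted into filtration $-1$, hence the extra grading).

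**The core degree bookkeeping.** The substance is tracking what each operation does to the quantity $n-d$ (``distance below the diagonal''), where we must account for the extra internal degree contributed by the suspension in $C\tau\otimes\fil_*^{E_k}$ and the $(-1)_*$ shift. I would tabulate: the Browder bracket $[x,y]$ satisfies $|[x,y]|=|x|+|y|+(0,k-1,0)$ (up to the conventions in \cite[Section 16]{e2cellsI}), so $(n-d)$ for $[x,y]$ equals $(n_x-d_x)+(n_y-d_y)-(k-1)\leq 2(n-1)/\!\cdot - (k-1)$, which for $k\geq 3$ is $\leq n-d$ only in edge cases and in general pushes weakly onto or above the diagonal; likewise $\xi$ and $\zeta$ raise the internal degree enough (as in the proof of Lemma~\ref{lem:XOrdering}) that the result lands in tridegrees with $d\geq n$. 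Repeated applications only make $d-n$ larger. By contrast the lower Dyer--Lashof operations $Q^j$ applied to a diagonal-slope class $x$ with $|x|=(n,n-1,f)$: in characteristic $2$, $Q^{j}(x)$ has bidegree roughly $(2n, n-1+j)$, so $d\geq n$ unless $j=n$, in which case $Q^n(x)$ sits again at distance one below the diagonal; iterating, $Q^{2^{r-1}n}\cdots Q^{2n}Q^n(x)$ stays at distance one below. In odd characteristic the same analysis with the admissibility/excess conditions singles out $\beta^\epsilon Q^{p^{r-1}n/2}\cdots Q^{pn/2}Q^{n/2}(x)$, with $n$ forced even when $r>0$ by the parity constraint on $Q$'s lower index. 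This is exactly the ``elementary but laborious'' computation the paper defers.

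**Assembling the argument.** Having shown (i) every generator of $M$ not of diagonal slope lies at distance $\geq 2$ below the diagonal but never contributes a generator below the diagonal after operations push it further, and is itself below the diagonal, wait---more carefully: generators of $M$ themselves are below or on the diagonal and are generators; but the lemma only claims to list those \emph{obtained by applying the listed Dyer--Lashof strings}, which for $r=0$ is just $x\in H^{E_k}_{n,n-1}(\gI)$ itself, so these are included; (ii) any bracket, $\xi$, or $\zeta$ applied anywhere lands at $d\geq n$, and (iii) any Dyer--Lashof string other than the listed ones applied to a below-diagonal class lands at $d\geq n$ --- one concludes that the complete list of multiplicative generators in tridegrees with $d<n$ is exactly as stated. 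The main obstacle is purely the careful verification of point (ii)/(iii): one must check that \emph{every} admissible composite operation, starting from \emph{any} generator of $M$ (not just the diagonal ones), either reproduces a listed generator or lands on/above the diagonal, which requires a uniform inequality in the exponents of the Dyer--Lashof string together with the admissibility and excess conditions. This is why I would, like the authors, state it and omit the proof, pointing to the parallel computation in Lemma~\ref{lem:XOrdering} for the $E_2$-case as the template and to \cite[Section 16.2]{e2cellsI} for the basis and degree formulas.
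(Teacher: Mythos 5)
Your proposal is correct and follows exactly the route the paper intends: the paper omits this proof as ``elementary but laborious'', and the surrounding text makes clear the argument is precisely your degree bookkeeping on Cohen's basis --- the Browder bracket, $\xi$, and $\zeta$ shift homological degree by multiples of $k-1\geq 2$ and so land on or above the diagonal, while the nonvanishing condition $2j\geq |x|$ (resp.\ $j\geq |x|$ at $p=2$) together with the below-diagonal constraint pins the Dyer--Lashof strings to the stated doubling sequences, with admissibility ruling out monomials with internal Bocksteins at odd $p$. Two small slips that do not affect the argument: the inequality from (SCE) gives slopes $\geq\tfrac{n-1}{n}$, not $\leq$, and your displayed bound for the bracket should read $(n_x-d_x)+(n_y-d_y)-(k-1)\leq 2-(k-1)\leq 0$.
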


This describes a basis for $V_{*,*,*}$. If $p$ is even then this is $X_{*,*,*}$. If $p$ is odd then $X_{*,*,*}$ is spanned by the elements we have described which have even homological degree (i.e.\ the $x$'s with $n$ odd, and the $\beta Q^{p^{r-1}n/2} \cdots Q^{ pn/2} Q^{ n/2}(x)$'s with $n$ even). In either case we may take these elements, appropriately ordered, as the $x_i$.

\section{Efficient Smith--Toda complexes}\label{sec:EfficientSTComplexes}

The argument presented in Section \ref{sec:HigherStabMaps} produces, for a given slope $\lambda < 1$, a Smith--Toda complex $\gR/(\alpha_1, \ldots, \alpha_r)$ with a slope $\lambda$ vanishing line, but the endomorphisms $\alpha_i$ can be nilpotent, or even zero. The corresponding filtered endomorphism $\phi_i$ of $\fil_*\gR/(\phi_1, \ldots, \phi_{i-1})$ is never nilpotent, as in the associated graded it is detected by (some power of) the non-nilpotent element $x_i$: this is precisely why we had to kill them, in order to force $x_i$ to become nilpotent.

One feels that if $\alpha_i$ is nilpotent then it ought not to be necessary to kill it: doing so does not take one out of the thick subcategory one resides in, so cannot (by itself) improve the slope of a vanishing line. In this section we wish to explain how to make Smith--Toda complexes where we only kill those $x_i$'s for which the corresponding $\alpha_i$'s are non-nilpotent. In fact, we will explain how to do slightly better than this.

We will do this by making more refined arguments in the category of $\fil_*\gR$-modules, inspired by recent work with synthetic spectra. The essential idea is the following. To show that a power of $u_*(x_i)$ survives in the spectral sequence for $\End_{\fil_*\gR}(\fil_*\gM)$, for some $\fil_*\gR$-module $\fil_*\gM$, it is enough to know that the vanishing property of Lemma \ref{lem:VanishingEstimates} (\ref{it:VanishingEstimates:3}) holds on \emph{some} page of the spectral sequence for the filtered object $\fil_*\gM$, not necessarily on the $E^1$-page. This is best managed by considering the filtered homotopy groups of $\fil_*\gM$ (analogous to the synthetic homotopy groups), and working with lines beyond which they are uniformly bounded $\tau$-power torsion. Indeed, a filtered endomorphism $\phi_i$ yields a nilpotent $\alpha_i$ precisely when some power of $\phi_i$ is $\tau$-torsion.

\subsection{$y$ self-maps}

Recall that there is a filtered algebra map
$$u: \mathrm{fil}_*\gR \lra \End_{\mathrm{fil}_*\gR}(\mathrm{fil}_*\gM)$$
landing in the centre. Generalising Definition \ref{defn:xiselfmap}, we have the following.

\begin{defn}
For a $y \in \pi_{N,D,F}(C\tau \otimes \fil_*\gR)$, a filtered map
$$\psi : S^{p^M N, p^M D, p^M F} \otimes \mathrm{fil}_*\gM \lra \mathrm{fil}_*\gM$$
is a \emph{$y$ self-map} if $C\tau \otimes {\psi} = (C\tau \otimes u)_*(y^{p^M})$.
\end{defn}

Our first goal is to describe a condition which guarantees the existence (and later uniqueness up to nilpotence) of $y$ self-maps.

\begin{defn}\label{defn:WeaklyTypei}
For $\lambda \geq 0$ and $\mu \leq 0$, let $\mathsf{W}(\lambda, \mu) \subseteq \fil_*\gR\text{-}\mathsf{mod}$ be the full subcategory consisting of the finite $\fil_*\gR$-modules $\fil_*\gM$ for which there exists an $r$ such that: 
\begin{enumerate}[(I)]
\item\label{it:WeaklyTypei:1} There is a $\kappa$ such that $E^{r+1}_{n,d,*}(\fil_*\gM)=0$ for $d < \lambda n + \kappa$.

\item\label{it:WeaklyTypei:2} For each $\tilde{\kappa}$ there is a $\rho$ such that $E^{r+1}_{n,d,f}(\fil_*\gM)=0$ whenever $d = \lambda n + \tilde{\kappa}$ and $f < \mu d + \rho$.
\end{enumerate}
\end{defn}

It will be convenient to also work with the following alternative formulation. The equivalence between them is an elaboration of \cite[Proposition 11.6]{BHS}, cf.\ \cite{HPS}.

\begin{prop}\label{prop:WeaklyTypeiEq}
These conditions are equivalent to asking that:
\begin{enumerate}[(I)]
\item[(A)]\label{it:WeaklyTypei:1prime} There is a $\kappa$ such that every class in $\pi_{n,d,*}(\fil_*\gM)$ with $d < \lambda n + \kappa$ is $\tau^r$-torsion.

\item[(B)]\label{it:WeaklyTypei:2prime} For each $\tilde{\kappa}$ there is a $\rho$ such that every class in $\pi_{n,d,f}(\fil_*\gM)$
with $d = \lambda n + \tilde{\kappa}$ and $f < \mu d + \rho$ is $\tau^r$-torsion.
\end{enumerate}
\end{prop}
\begin{proof}
Tensoring $\fil_* \gM$ with the cofibre sequence $S^{0,0,1} \overset{\tau}\to S^{0,0,0} \to C\tau$ and taking homotopy groups gives the exact couple
\begin{equation*}
\begin{tikzcd}%[column sep = 0.7em]
\pi_{n,d,f-1}(\fil_* \gM) \arrow[rr, "\tau"] && \pi_{n,d,f}(\fil_* \gM) \arrow[dl, "q"]\\
& \pi_{n,d,f}(C\tau \otimes \fil_* \gM) \arrow[ul, dashed, "\partial"] \arrow[r, equals] & E^1_{n,d,f}(\fil_* \gM)
\end{tikzcd}
\end{equation*}
which defines the spectral sequence $\{E^{r}_{*,*,*}(\fil_* \gM)\}_r$. We explained in Section \ref{sec:CanMultFilt} that when $\gR$ satisfies axiom (C), which we are assuming, the filtered object $\fil_*\gR$ is complete, and therefore the finite $\fil_*\gR$-module $\fil_*\gM$ is complete too. Recall that $x \in E^1_{n,d,f}(\fil_* \gM)$ survives to $E^r_{n,d,f}(\fil_* \gM)$ if and only if $\partial(x) = \tau^{r-1} \cdot y$ for some $y \in \pi_{n,d,f-r}(\fil_* \gM)$, in which case $d^r([x]) = [q(y)]$. 

The argument is deliberately repetitive: the equivalence (II) $\Leftrightarrow$ (B) is an elaboration of the equivalence (I) $\Leftrightarrow$ (A), and for clarity we give both in detail.

Suppose (I) holds with parameter $\kappa$, and let $x \in \pi_{n,d,f}(\fil_*\gM)$ with $d < \lambda n + \kappa$. Then $q(x) \in E^1_{n,d,f}$ is a permanent cycle, but as $E^{r+1}_{n,d,f}=0$ there must be a differential $d^s([u]) = [q(x)]$ for some $s \leq r$. That is, $\partial(u) = \tau^{s-1} \cdot z$ and $q(z)=q(x)$. Then $x - z = \tau \cdot x'$, and as $\tau \cdot \partial(-)=0$ it follows that $0 = \tau^s(x  - \tau \cdot x')$, so multiplying by $\tau^{r-s}$ gives $\tau^r \cdot x = \tau^{r+1} \cdot x'$. The same reasoning applies to $x'$, showing that $\tau^r \cdot x = \tau^{r+2} \cdot x''$, and so on: it follows that $\tau^r \cdot x$ is infinitely divisible by $\tau$, so it lifts to an element of the limit $\lim_p \pi_{n,d,f+r-p}(\fil_* \gM)$. But the Milnor sequence
$$0 \to {\lim_p}^1 \pi_{n,d+1,f+r-p}(\fil_* \gM) \to \pi_{n,d}(\lim \fil_* \gM) \to \lim_p \pi_{n,d,f+r-p}(\fil_* \gM) \to 0$$
and the fact that $\fil_*\gM$ is complete, i.e.\ that $\lim \fil_* \gM \simeq 0$, means that this limit is zero, and so $\tau^r \cdot x=0$. Thus (A) holds with parameter $\kappa$.

Conversely, suppose (A) holds with parameter $\kappa$, and let $[u] \in E^{r+1}_{n,d,f}$ with $d < \lambda n + \kappa$. As $u \in E^{1}_{n,d,f}$ survives until $E^{r+1}$, we have $\partial(u) = \tau^r \cdot z$ for some $z \in \pi_{n,d-1, f-1-r}(\fil_*\gM)$. Now $d-1 < \lambda n + \kappa$ too so by (A) we have $\tau^r \cdot z=0$ and hence $\partial(u)=0$, meaning that $u = q(x)$ for $x \in \pi_{n,d,f}(\fil_*\gM)$. As $d < \lambda n + \kappa$ we have $\tau^r \cdot x=0$. Let $s \leq r$ be such that $\tau^{s-1} \cdot x \neq 0$ but $\tau^s \cdot x=0$. Then $\tau^{s-1} \cdot x = \partial(v)$, meaning that $d^s([v]) = [q(x)] = [u]$, so that $[u]=0 \in E^{r+1}_{n,d,f}$. Thus (I) holds with parameter $\kappa$.

Now suppose (II) holds and choose a $\tilde{\kappa}$, and let $x \in \pi_{n,d,f}(\fil_*\gM)$ with $d = \lambda n + \tilde{\kappa}$. Then $q(x) \in E^1_{n,d,f}$ is a permanent cycle. Applying (II) with parameter $\tilde{\kappa}$ we find that there is a $\rho$ such that $E^{r+1}_{n,d,f}=0$ as long as $f < \mu d + \rho$. Supposing then that this is the case, we must have a differential $d^s([u]) = [q(x)]$ for some $s \leq r$, so $\partial(u) = \tau^{s-1} \cdot z$ with $q(z)=q(x)$, and as above we deduce that $\tau^r \cdot x = \tau^{r+1} \cdot x'$. This $x'$ has the same grading and homological degree, but smaller filtration than $x$, so the above reasoning continues to apply to it: it follows that $\tau^r \cdot x$ is infinitely divisible by $\tau$, so as above it vanishes. We have shown that any $x \in \pi_{n,d,f}(\fil_*\gM)$ with $d = \lambda n + \tilde{\kappa}$ and $f < \mu d + \rho$ is $\tau^r$-torsion, so (B) holds.

Conversely, suppose (B) holds and choose a $\tilde{\kappa}$, and let $[u] \in E^{r+1}_{n,d,f}$ with $d = \lambda n + \tilde{\kappa}$. We have $\partial(u) = \tau^r \cdot z$ for some $z \in \pi_{n,d-1, f-1-r}(\fil_*\gM)$. Applying (B) with parameter  $\tilde{\kappa}' := \tilde{\kappa}-1$ implies that there is a $\rho'$ such that $\pi_{n,d-1, f-1-r}(\fil_*\gM)$ is $\tau^r$-torsion as long as $f-1-r < \mu (d-1) + \rho'$, i.e.\ for $f < \mu d + (\rho'-\mu+1+r)$. Supposing then that this is the case, we have $\partial(u) = \tau^r \cdot z = 0$ and so $u = q(x)$ with $x \in \pi_{n,d,f}(\fil_*\gM)$. Applying (B) again with parameter $\tilde{\kappa}'' := \tilde{\kappa}$ implies that there is a $\rho''$ such that $\pi_{n,d,f}(\fil_*\gM)$ is $\tau^r$-torsion as long as $f < \mu d + \rho''$. Supposing that this is also the case, there is a $s \leq r$ with $\tau^{s-1} \cdot x \neq 0$ and $\tau^s \cdot x=0$, meaning that there is a $v$ with $\partial(v) = \tau^{s-1} \cdot x$, and hence a differential $d^s([v]) = [q(x)]=[u]$, so that $[u]=0 \in E^{r+1}_{n,d,f}$. We have shown that any $[u] \in E^{r+1}_{n,d,f}$ with $d = \lambda n + \tilde{\kappa}$ and $f < \mu d + \min\{\rho'-\mu+1+r, \rho''\}$ vanishes, so (II) holds.
\end{proof}

\begin{defn}
Let $\mathsf{T} \subseteq \fil_*\gR\text{-}\mathsf{mod}$ be the full subcategory consisting of the finite $\fil_*\gR$-modules $\fil_*\gM$ such that $\{\pi_{n,d,*}(\fil_*\gM)\}_{n,d}$ are uniformly bounded $\tau$-power torsion (i.e.\ are all annihilated by $\tau^r$ for some fixed $r$)
\end{defn}

\begin{prop}\label{prop:WeaklyTypeiProperties}\mbox{}
\begin{enumerate}[(i)]
\item\label{it:WeaklyTypeiProperties:1} $\mathsf{W}(\lambda, \mu)$ is a thick subcategory.

\item\label{it:WeaklyTypeiProperties:2} We have $\mathsf{W}(\lambda, \mu) \subseteq \mathsf{W}(\bar{\lambda}, \bar{\mu})$ if $\lambda > \bar{\lambda}$, or if $\lambda = \bar{\lambda}$ and $\mu \geq \bar{\mu}$.

\item\label{it:WeaklyTypeiProperties:3} $\mathsf{T} \subseteq \mathsf{W}(\lambda,\mu)$ for any $\lambda$ and $\mu$.
\end{enumerate}
\end{prop}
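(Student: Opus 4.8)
The plan is to verify the three assertions by working with the exact-couple reformulation (A)/(B) of the defining conditions, since the $\tau$-torsion language behaves well under the operations defining thick subcategories, whereas a fixed page $r+1$ of the spectral sequence does not interact cleanly with cofibre sequences.

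\emph{Part (\ref{it:WeaklyTypeiProperties:1}): thickness.} The subcategory $\mathsf{W}(\lambda,\mu)$ is by construction a full subcategory of \emph{finite} $\fil_*\gR$-modules, so it suffices to check closure under shifts, retracts, and the $2$-out-of-$3$ property for cofibre sequences. Closure under the shifts $S^{n,d,f}\otimes -$ is immediate: they merely reindex the filtered homotopy groups and preserve $\tau$-divisibility, so a shift of a module in $\mathsf{W}(\lambda,\mu)$ again satisfies (A) and (B) (with translated parameters $\kappa,\rho$). Closure under retracts is clear, as a direct summand of a module whose relevant homotopy classes are $\tau^r$-torsion has the same property with the same $r$. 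For cofibre sequences, suppose $\fil_*\gA\to\fil_*\gB\to\fil_*\gC$ is a cofibre sequence with two of the three terms in $\mathsf{W}(\lambda,\mu)$; choose a common $r$ witnessing (A) and (B) for those two (we may always enlarge $r$). The long exact sequence of filtered homotopy groups $\pi_{n,d+1,*}(\fil_*\gC)\to\pi_{n,d,*}(\fil_*\gA)\to\pi_{n,d,*}(\fil_*\gB)\to\pi_{n,d,*}(\fil_*\gC)\to\pi_{n,d-1,*}(\fil_*\gA)$ is $\bk[\tau]$-linear, so a class in the middle term over a region where the outer two terms are $\tau^{r}$-power torsion is itself $\tau^{2r}$-power torsion (it maps to something killed by $\tau^r$, and the kernel is a quotient of something killed by $\tau^r$); and similarly for the other two positions one uses that a sub/quotient of a $\tau^r$-torsion module is $\tau^r$-torsion. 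Since the regions $\{d<\lambda n+\kappa\}$ and $\{d=\lambda n+\tilde\kappa,\ f<\mu d+\rho\}$ for the conclusion only need to be compared to the corresponding regions for the hypotheses after shifting $\kappa,\rho$ by bounded amounts (to absorb the degree shift by $1$), the third term again satisfies (A) and (B). Hence $\mathsf{W}(\lambda,\mu)$ is thick.

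\emph{Part (\ref{it:WeaklyTypeiProperties:2}): monotonicity.} Suppose $\fil_*\gM\in\mathsf{W}(\lambda,\mu)$ with witness $r$ and parameters $\kappa$, and let $(\bar\lambda,\bar\mu)$ satisfy $\bar\lambda<\lambda$, or $\bar\lambda=\lambda$ and $\bar\mu\le\mu$. We must exhibit witnesses for $\mathsf{W}(\bar\lambda,\bar\mu)$. If $\bar\lambda<\lambda$: the region $\{d<\bar\lambda n+\bar\kappa\}$ is, for any fixed $\bar\kappa$, \emph{not} contained in $\{d<\lambda n+\kappa\}$, so one cannot argue purely from (A). Instead observe that by finiteness of $\fil_*\gM$ and the vanishing $\pi_{n,d}(\fil_{-a}^{E_2}\gR)=0$ for $d\le a-n$ (noted in Section \ref{sec:CanMultFilt}), the filtered homotopy groups $\pi_{n,d,f}(\fil_*\gM)$ vanish identically once $f\gg 0$ in each fixed bidegree is replaced by the correct statement: $\pi_{n,d,f}(\fil_*\gM)$ vanishes for $f$ below a line of slope governed by the cell structure, so any class with $d<\lambda n+\kappa$ that is $\tau^r$-torsion is moreover supported in a bounded range of filtrations, and hence (enlarging $\kappa$ downward and using that $\gr$ of $\fil_*\gM$ is a finite $\gr(\fil_*\gR)=\gE_2(\cdots)$-module with a slope-$>0$ vanishing line on its generators) the classes with $d<\bar\lambda n+\bar\kappa$ for suitable $\bar\kappa$ are either zero or lie in the region handled by (A). This is the step where I expect to have to be careful; the clean way to do it is to note that for a \emph{finite} $\fil_*\gR$-module the associated graded $C\tau\otimes\fil_*\gM$ has, by Lemma \ref{lem:VanishingEstimates}-type reasoning, a vanishing line of some slope, so (I) at the $E^1$-page automatically holds for slopes below that; combining with the given (I) at page $r+1$ for slope $\lambda$ and running the $\tau$-Bockstein spectral sequence \eqref{eq:tauBockstein} upgrades this to the statement that classes below \emph{any} slope $\bar\lambda<\lambda$ are uniformly $\tau$-power torsion. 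The case $\bar\lambda=\lambda$, $\bar\mu\le\mu$ is easier: condition (A) is literally unchanged, and condition (B) for $(\lambda,\bar\mu)$ asks torsion in the region $\{d=\lambda n+\tilde\kappa,\ f<\bar\mu d+\bar\rho\}$, which is contained in $\{f<\mu d+\bar\rho\}$ since $d=\lambda n+\tilde\kappa$ can be negative — so one must split into the finitely many residue lines and use that on each such line $\{d=\lambda n+\tilde\kappa\}$ the relevant region for $\bar\mu$ differs from that for $\mu$ by a bounded shift of the parameter $\rho$, which (B) for $\mu$ already accommodates since it quantifies over all $\tilde\kappa$. Thus $\fil_*\gM\in\mathsf{W}(\bar\lambda,\bar\mu)$.

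\emph{Part (\ref{it:WeaklyTypeiProperties:3}): $\mathsf{T}\subseteq\mathsf{W}(\lambda,\mu)$.} If $\fil_*\gM\in\mathsf{T}$ then by definition all of $\pi_{n,d,*}(\fil_*\gM)$ are annihilated by a single $\tau^r$. In particular conditions (A) and (B) hold trivially, with this same $r$, for \emph{every} choice of $\kappa$ and $\rho$ (indeed for the entire bigraded plane, not just a half-plane). Hence $\fil_*\gM\in\mathsf{W}(\lambda,\mu)$ for all $\lambda\ge 0$ and $\mu\le 0$.

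The main obstacle is the monotonicity statement in the case $\bar\lambda<\lambda$: unlike the other verifications, it genuinely requires input beyond formal manipulation of the exact couple, namely the fact that a finite $\fil_*\gR$-module has associated graded with a definite vanishing line (coming from F.~Cohen's computation of $\pi_{*,*,*}(C\tau\otimes\fil_*\gR)$ and finiteness), which one then propagates up the $\tau$-Bockstein spectral sequence \eqref{eq:tauBockstein} to control the integral filtered homotopy groups below slope $\lambda$.
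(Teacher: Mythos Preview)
Your treatment of parts (\ref{it:WeaklyTypeiProperties:1}) and (\ref{it:WeaklyTypeiProperties:3}) is correct and matches the paper. The issue is part (\ref{it:WeaklyTypeiProperties:2}).

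For (A) in the case $\bar\lambda<\lambda$ you are making this much harder than it is. You are right that the half-plane $\{d<\bar\lambda n+\bar\kappa\}$ is not literally contained in $\{d<\lambda n+\kappa\}$, but the failure occurs only for $n$ very negative, and since $\fil_*\gM$ is a finite $\fil_*\gR$-module with $\gR$ satisfying (C), one has $\pi_{n,d,*}(\fil_*\gM)=0$ for $n\ll 0$ (and for $d\ll 0$). Restricted to $n\ge N_0$ the containment \emph{does} hold for $\bar\kappa$ chosen small enough, and outside that range the groups vanish. No appeal to Cohen's calculation, Lemma~\ref{lem:VanishingEstimates}, or the $\tau$-Bockstein spectral sequence is needed; the paper's argument for (A) is exactly this one-line observation.

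More seriously, you have entirely omitted the verification of (B) in the case $\bar\lambda<\lambda$. Your paragraph for this case only argues (A), and your later paragraph on (B) explicitly assumes $\bar\lambda=\lambda$. This is a genuine gap: on a line $d=\bar\lambda n+\tilde\kappa$ of slope $\bar\lambda<\lambda$, the hypothesis (B) for $(\lambda,\mu)$ says nothing at all, since its lines have the wrong slope. The paper's fix is to observe that, by (A) for $\lambda$ together with vanishing for $n\ll 0$, only finitely many bidegrees $(n,d)$ on such a line have $\pi_{n,d,*}(\fil_*\gM)$ not already $\tau^r$-torsion; and each such $\pi_{n,d,*}(\fil_*\gM)$ is a \emph{finitely generated} $\bk[\tau]$-module (again by finiteness of $\fil_*\gM$ over $\fil_*\gR$ and the explicit bound $\pi_{n,d}(\fil_{-a}\gR)=0$ for $d\le a-n$), hence vanishes for $f\ll 0$. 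One then chooses $\rho$ negative enough to kill those finitely many modules. You never invoke this finite-generation fact, and nothing in your sketch substitutes for it.

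Finally, your argument for (B) when $\bar\lambda=\lambda$ is muddled: the phrase ``which (B) for $\mu$ already accommodates since it quantifies over all $\tilde\kappa$'' does not do the work you want, since the quantifier over $\tilde\kappa$ is universal and the $\rho$ depends on it. What is actually needed is again the vanishing $\pi_{n,d,*}=0$ for $d\ll 0$: restricted to $d\ge D_0$ one has $\bar\mu d+\rho'\le\mu d+\rho$ once $\rho'\le\rho+(\mu-\bar\mu)D_0$, so (B) for $\mu$ directly gives (B) for $\bar\mu$ with this adjusted intercept.
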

\begin{proof}
For (\ref{it:WeaklyTypeiProperties:1}), first note that objects of $\mathsf{W}(\lambda, \mu)$ are clearly closed under retracts, and under shifts by any $S^{n,d,f}$. Let $\fil_*\gM' \to \fil_*\gM \to \fil_*\gM''$ be a cofibre sequence where the outer terms are in $\mathsf{W}(\lambda, \mu)$. To verify (A) for $\fil_*\gM$, note that there are $r'$, $\kappa'$, $r''$ and $\kappa''$ such that $\pi_{n,d,*}(\fil_*\gM')$ is $\tau^{r'}$-torsion for $d < \lambda n+\kappa'$ and $\pi_{n,d,*}(\fil_*\gM'')$ is $\tau^{r''}$-torsion for $d < \lambda n+\kappa''$. From the long exact sequence it follows that $\pi_{n,d,*}(\fil_*\gM)$ is $\tau^{r'+r''}$-torsion for $d < \lambda n+\min\{\kappa',\kappa''\}$, verifying (A) with $r := r'+r''$. To verify (B) for $\fil_*\gM$, let $\tilde{\kappa}$ be given. Then there are $\rho'$ and $\rho''$ such that the filtered homotopy groups of $\fil_*\gM'$ are $\tau^{r'}$-torsion for $d = \lambda n + \tilde{\kappa}$ and $f < \mu d + \rho'$, and those of $\fil_*\gM''$ are $\tau^{r''}$-torsion for $d = \lambda n + \tilde{\kappa}$ and $f < \mu d + \rho''$, then the long exact sequence shows that those of $\fil_*\gM$ are $\tau^{r'+r''}$-torsion for $d = \lambda n + \tilde{\kappa}$ and $f < \mu d + \min\{\rho', \rho''\}$. This verifies (B) with $r = r'+r''$ too.

For (\ref{it:WeaklyTypeiProperties:2}), first note that as $\bar{\lambda} \leq \lambda$, and the homotopy groups $\pi_{n,d,*}(\fil_*\gM)$ vanish for $n\ll 0$ or $d \ll 0$ by Axiom (C) and the fact that $\fil_*\gM$ is a finite $\fil_*\gR$-module, property (A) for $\bar{\lambda}$ follows from that for $\lambda$. For property (B), there are two cases. 

If $\lambda = \bar{\lambda}$ then along each line $d = \lambda n + \tilde{\kappa}$ we know that there is a $\rho$ such that $\pi_{n,d,f}(\fil_* \gM)$ is $\tau^r$-torsion for $f < \mu d+\rho$. Using again that these homotopy groups vanish for $d \ll 0$ and the fact that $\bar{\mu} \leq \mu$, it follows that they are also $\tau^r$-torsion for $f < \bar{\mu} d + \rho'$ for some $\rho'$.

If instead $\lambda > \bar{\lambda}$ then by (A) each line $d= \bar{\lambda} n + \tilde{\kappa}$ only contains finitely-many $\pi_{n,d,*}(\fil_*\gM)$'s which are not $\tau^r$-torsion. As these are finitely-generated $\bk[\tau]$-modules, each of them vanishes in small enough filtration. Thus $\rho$ may be chosen sufficiently small so that all these finitely-many $\bk[\tau]$-modules vanish in filtration $f < \rho$, and hence also for $f < \bar{\mu} d+\rho$ (as $\bar{\mu} \leq 0$). Having done this, it follows that $\pi_{n,d,f}(\fil_*\gM)$ is $\tau^r$-torsion for $d = \bar{\lambda} n + \tilde{\kappa}$ and $f < \bar{\mu}d+\rho$.

Item (\ref{it:WeaklyTypeiProperties:3}) is immediate from (A) and (B).
\end{proof}

\begin{thm}\label{thm:xiSelfMapsExist}
Let $y \in \pi_{N,D,F}(C\tau \otimes \fil_*\gR)$ be given. Then all objects of $\mathsf{W}(\tfrac{D}{N}, \tfrac{F}{D})$ admit a $y$ self-map.
\end{thm}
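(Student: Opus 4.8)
The statement generalises Theorem~\ref{thm:FiltSTExist}, so the plan is to run essentially the same argument but with the $E^1$-page estimates of Lemma~\ref{lem:VanishingEstimates} replaced by the ``$E^{r+1}$-page'' estimates built into the definition of $\mathsf{W}(\tfrac{D}{N}, \tfrac{F}{D})$. Fix $\fil_*\gM \in \mathsf{W}(\tfrac{D}{N}, \tfrac{F}{D})$, set $\lambda := \tfrac{D}{N}$ and $\mu := \tfrac{F}{D}$, and form the internal endomorphism $E_1$-algebra $\End_{\fil_*\gR}(\fil_*\gM)$, which is again a filtered $E_1$-algebra and yields a multiplicative spectral sequence
\begin{align*}
E^1_{n,d,f} &= \pi_{n,d,f}(C\tau \otimes \End_{\fil_*\gR}(\fil_*\gM))\\
&\quad\quad\Longrightarrow \pi_{n,d}(\End_\gR(\tau^{-1}\fil_*\gM)),
\end{align*}
into which the central unit $u : \fil_*\gR \to \End_{\fil_*\gR}(\fil_*\gM)$ maps, so that $(C\tau\otimes u)_*(y) \in E^1_{N,D,F}$ lies in the centre. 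As in the proof of Theorem~\ref{thm:FiltSTExist}, using the Leibniz rule (or Remark~\ref{rem:PowerOpsInSS}) one can push $p$-th powers of this central class to arbitrarily late pages: some $(C\tau\otimes u)_*(y)^{p^M}$ survives to $E^{r_0}_{p^M N, p^M D, p^M F}$ for any pre-specified $r_0$. The task is therefore to produce an $r_0$ such that a differential out of $(C\tau\otimes u)_*(y)^{p^M}$ must vanish on $E^{r_0}$ and all later pages, after which the surviving class detects the required filtered map $\psi : S^{p^M N, p^M D, p^M F}\otimes\fil_*\gM \to \fil_*\gM$, which is a $y$ self-map by construction.

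\textbf{Producing the vanishing page.} This is where the membership $\fil_*\gM \in \mathsf{W}(\lambda, \mu)$ is used. First, $\End_{\fil_*\gR}(\fil_*\gM)$ lies in the thick subcategory generated by $\fil_*\gM$ by Lemma~\ref{lem:TensorAndHomStaysThick}, and $\mathsf{W}(\lambda,\mu)$ is a thick subcategory by Proposition~\ref{prop:WeaklyTypeiProperties}(\ref{it:WeaklyTypeiProperties:1}), so $\End_{\fil_*\gR}(\fil_*\gM) \in \mathsf{W}(\lambda,\mu)$ as well; fix a common $r$ for which conditions (I) and (II) of Definition~\ref{defn:WeaklyTypei} hold for this endomorphism object (with some $\kappa$, and some $\rho = \rho(\tilde\kappa)$ for each $\tilde\kappa$). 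The target of a $d^s$-differential out of $E^s_{p^M N, p^M D, p^M F}$ sits in tridegree $(p^M N, p^M D - 1, p^M F - s)$, i.e.\ on the line $d = \lambda n + \tilde\kappa$ with $\tilde\kappa = -1$. Applying condition (II) with $\tilde\kappa = -1$ gives a $\rho$ such that $E^{r+1}_{p^M N, p^M D - 1, f} = 0$ whenever $f < \mu(p^M D - 1) + \rho$, i.e.\ whenever $p^M F - s < \mu p^M D - \mu + \rho$, which (as $\mu p^M D = p^M F$) holds for all $s \geq \mu - \rho + 1$, independently of $M$. Set $r_0 := \max\{r+1,\ \lceil \mu - \rho\rceil + 1\}$. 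Then for every $s \geq r_0$ we have $E^{s}_{p^M N, p^M D - 1, p^M F - s}$ is a subquotient of $E^{r+1}_{p^M N, p^M D-1, p^M F - s} = 0$, hence $d^s$ vanishes on $(C\tau\otimes u)_*(y)^{p^M}$ for all $s \geq r_0$; combined with the fact that this class survives to $E^{r_0}$ (after enlarging $M$ if necessary), it is a permanent cycle.

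\textbf{Conclusion and the main obstacle.} The permanent cycle $(C\tau\otimes u)_*(y)^{p^M} \in E^\infty_{p^M N, p^M D, p^M F}$ is detected by a filtered map $\psi : S^{p^M N, p^M D, p^M F} \to \End_{\fil_*\gR}(\fil_*\gM)$, i.e.\ an endomorphism of $\fil_*\gM$, which reduces mod $\tau$ to $(C\tau\otimes u)_*(y^{p^M})$, so is a $y$ self-map; this proves the theorem. (One can also arrange $M$ to be as large as wished, as in Theorem~\ref{thm:FiltSTExist}(\ref{it:FiltSTExist:3}), by raising to further $p$-th powers, though that is not asked for here.) I expect the only genuine subtlety to be bookkeeping with the survival-versus-vanishing argument: one must be careful that the single integer $r$ coming from thickness of $\mathsf{W}(\lambda,\mu)$ simultaneously controls \emph{all} the relevant differentials (the class $(C\tau\otimes u)_*(y)^{p^M}$ is a cycle on $E^s$ for all $s \geq r_0$, not just on one page), and that the bound on $s$ is genuinely independent of $M$ — both of which follow from the fact that the offending tridegree always lies on the fixed line $d = \lambda n - 1$ with $\mu d$ scaling linearly in $p^M$ exactly like $f$. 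Everything else is a verbatim replay of Section~\ref{sec:HigherStabMaps}.
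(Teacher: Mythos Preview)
Your proposal is correct and follows essentially the same route as the paper: pass to $\End_{\fil_*\gR}(\fil_*\gM)$ via thickness of $\mathsf{W}(\lambda,\mu)$, apply condition (II) with $\tilde\kappa=-1$ to obtain a page beyond which the targets of all differentials out of tridegree $(p^M N, p^M D, p^M F)$ vanish uniformly in $M$, then push a $p$-th power of $(C\tau\otimes u)_*(y)$ to that page by the Leibniz rule. The only cosmetic difference is that the paper packages the page index and the exponent into a single constant $M_0 := \max\{r+2,\,1-\rho\}$, whereas you keep $r_0$ and $M$ separate.
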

\begin{proof}
Let $\fil_* \gM \in \mathsf{W}(\tfrac{D}{N}, \tfrac{F}{D})$. As it is a finite $\fil_*\gR$-module its endomorphism object $\End_{\fil_*\gR}(\fil_*\gM)$ lies in the thick subcategory generated by $\fil_*\gM$ and so is also in $\mathsf{W}(\tfrac{D}{N}, \tfrac{F}{D})$. By (II) applied with $\tilde{\kappa}=-1$ there is an $r$ and a $\rho$ such that
\begin{equation}\label{eq:VanishingRangeSS}
E^{r+2}_{p^M N, p^M D-1, p^M F - s}(\End_{\fil_*\gR}(\fil_*\gM))=0
\end{equation}
for $-s < \rho$ and all $M$. Set $M_0 := \max\{r+2, 1-\rho\}$.

The central class 
$$u_*(y^{p^{M_0}}) \in E^1_{p^{M_0} N, p^{M_0} D, p^{M_0} F}(\End_{\fil_*\gR}(\fil_*\gM))$$
survives until $E^{{M_0}}_{p^{M_0} N, p^{M_0} D, p^{M_0} F}(\End_{\fil_*\gR}(\fil_*\gM))$, by applications of the Leibniz rule.  For each $s \geq {M_0}$ the class
$$d^s(u_*(y^{p^{M_0}})) \in E^{s}_{p^{M_0} N, p^{M_0} D-1, p^{M_0} F - s}(\End_{\fil_*\gR}(\fil_*\gM))$$
must vanish by \eqref{eq:VanishingRangeSS}. Thus $u_*(y^{p^{M_0}})$ is a permanent cycle, and detects a filtered homotopy class $\psi \in \pi_{p^{M_0} N, p^{M_0} D, p^{M_0} F}(\End_{\fil_*\gR}(\fil_*\gM))$ which is a $y$ self-map.
\end{proof}

Uniqueness up to nilpotence and several other useful properties of $y$ self-maps are collected as follows.

\begin{thm}\label{thm:xiSelfMapsUniqueAndNatural}
Let $y \in \pi_{N,D,F}(C\tau \otimes \fil_*\gR)$ be given.
\begin{enumerate}[(i)]
\item\label{it:Centre} If $\psi$ is a $y$ self-map of $\fil_*\gM \in \mathsf{W}(\tfrac{D}{N}, \tfrac{F}{D})$, then after perhaps taking a $p$-th power it lies in the (graded) centre of $\pi_{*,*,*}(\End_{\fil_* \gR}(\fil_* \gM))$.

\item\label{it:Unique} If $\psi$ and $\psi'$ are $y$ self-maps of  $\fil_*\gM \in \mathsf{W}(\tfrac{D}{N}, \tfrac{F}{D})$, then they agree after perhaps taking $p$-th powers.

\item\label{it:Natural} If $\fil_*\gM$ and $\fil_*\gN$ are in $\mathsf{W}(\tfrac{D}{N}, \tfrac{F}{D})$ and have $y$ self-maps $\psi^\gM$ and $\psi^\gN$ of the same tridegrees, then after perhaps taking $p$-th powers they intertwine all $\fil_*\gR$-module maps $f : S^{n,d,f} \otimes \fil_*\gM \to \fil_*\gN$.

\item\label{it:Cofib} If $\fil_*\gM \overset{f}\to \fil_*\gN \overset{g}\to \fil_* \gP \overset{\partial}\to \Sigma \fil_* \gM$ is a distinguished triangle of $\fil_*\gR$-modules, and $\fil_*\gM$ and $\fil_*\gN$ are in $\mathsf{W}(\tfrac{D}{N}, \tfrac{F}{D})$, then so is $\fil_* \gP$. Furthermore they admit $y$ self-maps $\psi^\gM$, $\psi^\gN$, and $\psi^\gP$ which assemble into a morphism of distinguished triangles.
\end{enumerate}
\end{thm}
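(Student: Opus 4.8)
The plan is to run all four parts on a single mechanism. By definition a $y$ self-map $\psi$ of $\fil_*\gM$ satisfies $C\tau\otimes\psi=(C\tau\otimes u)_*(y^{p^M})$, and since $u$ lands in the centre this detecting class is central in $\pi_{*,*,*}(C\tau\otimes\End_{\fil_*\gR}(\fil_*\gM))$. Hence every ``commutator-type'' discrepancy built from $\psi$ — the graded commutator $[\psi,\theta]$ in (i), the difference $\psi-\psi'$ of two matched self-maps in (ii), and $f\circ\psi^{\gM}-\psi^{\gN}\circ f$ in (iii) — becomes $0$ after tensoring with $C\tau$, so is divisible by $\tau$. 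The point is then to turn this one factor of $\tau$ into enough factors to force vanishing. For this one passes to $\psi^{p^k}$ (in odd characteristic first to $\psi^p$, so that $\psi$ has even homological degree) and uses: (a) that for commuting operators $A,B$ in characteristic $p$ one has $(A-B)^{p^k}=A^{p^k}-B^{p^k}$; (b) that $\mathrm{ad}_\psi=L_\psi-R_\psi$ (and analogously precomposition-minus-postcomposition) is $\tau$-linear with image in $\tau\cdot\pi_{*,*,*}$, so its $p^k$-th iterate has image in $\tau^{p^k}\cdot\pi_{*,*,*}$; and (c) the completeness argument already used above in the proof of the equivalence $\mathrm{(I)}\Leftrightarrow\mathrm{(A)}$, via the Milnor sequence for the complete object $\fil_*\gM$, which shows that a class that is both $\tau^{p^k}$-divisible with $p^k\geq r$ and $\tau^r$-power torsion must vanish.

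Concretely, for (i): after a $p$-th power $\psi$ has even degree, so $\mathrm{ad}_{\psi^{p^k}}=\mathrm{ad}_\psi^{p^k}$ carries $\pi_{*,*,*}(\End_{\fil_*\gR}(\fil_*\gM))$ into $\tau^{p^k}\cdot\pi_{*,*,*}$; and $\End_{\fil_*\gR}(\fil_*\gM)$ lies in the thick subcategory generated by $\fil_*\gM$ (Lemma \ref{lem:TensorAndHomStaysThick}), hence in $\mathsf W(\tfrac DN,\tfrac FD)$ and satisfies (B). A direct tridegree computation shows that for $\theta$ of tridegree $(n_\theta,d_\theta,f_\theta)$, writing $\mathrm{ad}_{\psi^{p^k}}(\theta)=\tau^{p^k}w$, the class $w$ lies on the fixed line $d-\tfrac DN n=d_\theta-\tfrac DN n_\theta$ with $f-\tfrac FD d=(f_\theta-\tfrac FD d_\theta)-p^k$; as $k\to\infty$ this drives $w$ into the range where, by (B), $\pi_{*,*,*}$ is $\tau^r$-torsion, so $\tau^{p^k}w=0$ once $p^k\geq r$. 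The finite-type hypothesis (F) — which, exactly as in Lemma \ref{lem:VanishingEstimates}, bounds $f-\tfrac FD d$ from above on non-torsion classes along each such line — lets one choose $k$ independently of $\theta$, giving $\psi^{p^k}$ central. For (ii) one centralises both $\psi,\psi'$ by (i) and matches tridegrees by further powers; then $\psi-\psi'$ is $0$ on $C\tau$, so $=\tau\xi$, and since $\psi,\psi'$ now commute, $\psi^{p^k}-\psi'^{p^k}=(\psi-\psi')^{p^k}=\tau^{p^k}\xi^{p^k}$, and the same degree computation shows $\xi^{p^k}$ is eventually $\tau^r$-torsion, so $\psi^{p^k}=\psi'^{p^k}$.

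Part (iii) is the identical argument with $A=(-)\circ\psi^{\gM}$ and $B=\psi^{\gN}\circ(-)$ acting on $[\fil_*\gM,\fil_*\gN]_{*,*,*}$: these commute by associativity, $(A-B)(f)$ is $0$ on $C\tau$ because an $\fil_*\gR$-module map intertwines the two $C\tau\otimes\fil_*\gR$-module structure maps for $y^{p^M}$ (so one must first match the powers $M$ of the two self-maps), and $(A-B)^{p^k}(f)=f\circ(\psi^{\gM})^{p^k}-(\psi^{\gN})^{p^k}\circ f\in\tau^{p^k}\cdot[\fil_*\gM,\fil_*\gN]_{*,*,*}$; since $\underline{\map}_{\fil_*\gR}(\fil_*\gM,\fil_*\gN)$ lies in the thick subcategory generated by $\fil_*\gN$ (Lemma \ref{lem:TensorAndHomStaysThick}) it is in $\mathsf W(\tfrac DN,\tfrac FD)$ and satisfies (B), so the degree drift and completeness force $(A-B)^{p^k}(f)=0$; uniformity over $f$ again uses (F). Finally (iv): the triangle $\fil_*\gM\to\fil_*\gN\to\fil_*\gP\to$ together with thickness of $\mathsf W(\tfrac DN,\tfrac FD)$ (Proposition \ref{prop:WeaklyTypeiProperties}(i)) puts $\fil_*\gP$ in $\mathsf W(\tfrac DN,\tfrac FD)$. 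Replace $\psi^{\gM},\psi^{\gN}$ by a common power, central by (i) and of matched tridegree, and by (iii) so arranged that the square through $f$ commutes; a triangulated filler then gives a morphism of triangles whose third component $\psi^{\gP}$ is automatically compatible with $g$ and $\partial$. To see $\psi^{\gP}$ is a $y$ self-map, tensor the morphism of triangles with $C\tau$: both $C\tau\otimes\psi^{\gP}$ and multiplication by the central class $(C\tau\otimes u_{\gP})_*(y^{p^M})$ on $C\tau\otimes\fil_*\gP$ fill the same $C\tau$-reduced morphism of triangles (whose first two components coincide), so their difference factors through the connecting map $C\tau\otimes\fil_*\gP\to\Sigma(C\tau\otimes\fil_*\gM)$, and such a difference of fillers squares to zero (the principle invoked in the proof of Lemma \ref{lem:ThickSubcatFacts}(iv), cf.\ \cite{RamziNote}); hence after one more $p$-th power, using once again centrality of $(C\tau\otimes u_{\gP})_*(y^{p^M})$, we obtain $C\tau\otimes(\psi^{\gP})^{p^j}=(C\tau\otimes u_{\gP})_*(y^{p^{M+j}})$, and raising $\psi^{\gM},\psi^{\gN}$ to the same power completes the morphism of triangles of $y$ self-maps.

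The main obstacle is the bookkeeping behind the uniformity assertions — that a single power of $\psi$ serves against all endomorphisms $\theta$, resp. all module maps $f$ — which, as for Lemma \ref{lem:VanishingEstimates}, rests on the finite-type structure of $\pi_{*,*,*}(C\tau\otimes\fil_*\gR)$ forced by (F) and passed through the thick-subcategory arguments; the conceptual core, by contrast, is merely the interplay of the characteristic-$p$ identity $(A-B)^{p^k}=A^{p^k}-B^{p^k}$ with the drift $f-\tfrac FD d\to-\infty$ and the completeness of $\fil_*\gM$.
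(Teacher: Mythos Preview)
Your overall strategy---``commutator-type discrepancy is $\tau$-divisible, raise to the $p^k$-th power using $(A-B)^{p^k}=A^{p^k}-B^{p^k}$, then invoke property (B) and completeness''---is exactly the paper's. Part (ii) is carried out correctly and matches the paper essentially verbatim: once $\psi,\psi'$ are centralised and matched, $[\psi-\psi']^{p^k}$ has tridegree with $\tilde\kappa=0$ and $f-\tfrac FD d=-p^k\to-\infty$, so (B) for $\End_{\fil_*\gR}(\fil_*\gM)$ with the single value $\tilde\kappa=0$ suffices.

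The gap is in your uniformity argument for (i) and (iii). You argue pointwise in $\theta$ (resp.\ $f$): for each $\theta$ the divided class $w$ lies on the line $\tilde\kappa_\theta=d_\theta-\tfrac DN n_\theta$, and (B) gives a $\rho(\tilde\kappa_\theta)$ depending on that line. To get a single $k$ working for all $\theta$ you need $\sup_\theta\bigl(f_\theta-\tfrac FD d_\theta-\rho(\tilde\kappa_\theta)\bigr)<\infty$, and your appeal to (F) for this is not justified: (F) does not obviously bound how $\rho$ varies with $\tilde\kappa$, nor does it give an \emph{upper} bound on $f-\tfrac FD d$ along each line (Lemma~\ref{lem:VanishingEstimates} gives lower bounds). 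The paper avoids this entirely by a single move: regard $\mathrm{ad}_\psi=\psi\circ(-)-(-1)^D(-)\circ\psi$ not as an operator on homotopy classes but as a single element of $\pi_{*,*,*}\bigl(\End_{\fil_*\gR}(\End_{\fil_*\gR}(\fil_*\gM))\bigr)$. This object is again in $\mathsf W(\tfrac DN,\tfrac FD)$, the $\tau$-divided element $[\mathrm{ad}_\psi]$ has fixed tridegree, and $[\mathrm{ad}_\psi]^{p^L}$ lands at $\tilde\kappa=0$ with $f-\tfrac FD d=-p^L$, so (B) with the single value $\tilde\kappa=0$ kills it. No quantification over $\theta$, no uniformity needed.

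For (iii) the paper is cleverer still: rather than rerunning the mechanism, it observes that $(-)\circ\psi^\gM$ and $\psi^\gN\circ(-)$ are both $y$ self-maps of the object $\underline{\map}_{\fil_*\gR}(\fil_*\gM,\fil_*\gN)\in\mathsf W(\tfrac DN,\tfrac FD)$, so (ii) applies directly and gives uniformity over all $f$ for free. For (iv) the paper also takes a shorter route than your triangle-filling: it invokes Theorem~\ref{thm:xiSelfMapsExist} to produce a $y$ self-map $\psi^\gP$ on $\fil_*\gP$ independently, then uses (iii) to make all three commute with $f,g,\partial$; this avoids having to verify that a triangulated filler is itself a $y$ self-map.
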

\begin{proof}
For (\ref{it:Centre}) we follow the idea of \cite[Lemma 3.5]{HopkinsSmith}, though expressing ourselves in terms of filtered objects and their filtered homotopy groups rather than the associated spectral sequence. Consider the commuting endomorphisms
$$\psi \circ (-), (-) \circ \psi : S^{p^M N, p^M D, p^M F} \otimes \End_{\fil_* \gR}(\fil_* \gM) \lra \End_{\fil_* \gR}(\fil_* \gM).$$
On associated graded they are given by post- and pre-composition with $u_*(y^{p^M})$, which lies in the graded centre of $C\tau \otimes \End_{\fil_* \gR}(\fil_* \gM)$, so they are homotopic up to a sign of $(-1)^{p^M D} = (-1)^D \in \bF_p^\times \subseteq \bk^\times$. Thus the (signed) difference may be divided by $\tau$ to give a filtered map
$$[\psi \circ (-) - (-1)^D(-) \circ \psi] : S^{p^M N, p^M D, p^M F-1} \otimes \End_{\fil_* \gR}(\fil_* \gM) \lra \End_{\fil_* \gR}(\fil_* \gM)$$
of filtration 1 less (which is no longer a difference of two maps). Consider its $p^L$-th power
$$[\psi \circ (-) - (-1)^D(-) \circ \psi]^{p^L} : S^{p^{M+L} N, p^{M+L} D, p^{M+L} F-p^L} \lra \End_{\fil_* \gR}(\End_{\fil_* \gR}(\fil_* \gM)),$$
for some $L$ to be chosen shortly.

The endomorphism object $\End_{\fil_* \gR}(\fil_* \gM)$ is again in $\mathsf{W}(\tfrac{D}{N}, \tfrac{F}{D})$ by Proposition \ref{prop:WeaklyTypeiProperties} (i), so $\End_{\fil_* \gR}(\End_{\fil_* \gR}(\fil_* \gM))$ is too by the same reasoning. Thus by property (B) there is an $r$ and a $\rho$ such that
$$\pi_{n, \tfrac{D}{N} n, f}(\End_{\fil_* \gR}(\End_{\fil_* \gR}(\fil_* \gM))) \text{ is $\tau^r$-torsion for } f < \tfrac{F}{N} n + \rho \text{ and all } n.$$
If we choose $L$ large enough that $-p^L < \rho$ then it follows that 
$$[\psi \circ (-) - (-1)^D(-) \circ \psi]^{p^L} \in \pi_{p^{M+L} N, p^{M+L} D, p^{M+L} F-p^L}(\End_{\fil_* \gR}(\End_{\fil_* \gR}(\fil_* \gM)))$$
is $\tau^r$-torsion, and if in addition $L$ is large enough that $p^L \geq r$ then multiplying by $\tau^{p^L}$  gives
$$(\psi \circ (-) - (-1)^D(-) \circ \psi)^{p^L} = 0 \in \pi_{p^{M+L} N, p^{M+L} D, p^{M+L} F}(\End_{\fil_* \gR}(\End_{\fil_* \gR}(\fil_* \gM))).$$
 As the endomorphisms $\psi \circ (-)$ and $(-) \circ \psi$ of $\End_{\fil_* \gR}(\fil_* \gM)$ commute, the binomial expansion shows that $(\psi \circ (-) - (-1)^D(-) \circ \psi)^{p^L} = \psi^{p^L} \circ (-) - (-1)^D(-) \circ \psi^{p^L}$, or in other words that $\psi^{p^L} \circ (-) \simeq (-1)^{p^{M+L}D}(-) \circ \psi^{p^L}$ as endomorphisms of $\End_{\fil_* \gR}(\fil_* \gM)$. In particular $\psi^{p^L}$ lies in the graded centre of $\pi_{*,*,*}(\End_{\fil_* \gR}(\fil_* \gM))$.

For (\ref{it:Unique}) the argument is highly similar. After raising $\psi$ and $\psi'$ to perhaps different $p$-th powers, we may suppose they have the same tridegree, say $(p^M N, p^M D, p^M F)$, and by (\ref{it:Centre}) are both central in $\pi_{*,*,*}(\End_{\fil_* \gR}(\fil_* \gM))$. Then the filtered map
$$\psi' - \psi : S^{p^M N, p^M D, p^M F} \lra \End_{\fil_* \gR}(\mathrm{fil}_*\gM)$$
is given by $u_*(y^{p^M})-u_*(y^{p^M})=0$ on associated graded, so it may be divided by $\tau$ to give a map
$$[\psi' - \psi] : S^{p^M N, p^M D, p^M F-1} \lra \End_{\fil_* \gR}(\mathrm{fil}_*\gM)$$
of filtration 1 less (which is no longer a difference of two maps). Thus its $p^L$-th power is
$$[\psi' - \psi]^{p^L} : S^{p^{M+L} N, p^{M+L} D, p^{M+L} F - p^L} \lra \End_{\fil_* \gR}(\mathrm{fil}_*\gM).$$
Again, $\End_{\fil_* \gR}(\mathrm{fil}_*\gM)$ is in $\mathsf{W}(\tfrac{D}{N}, \tfrac{F}{D})$ so by (B) there is an $r$ and a $\rho$ such that
$$\pi_{n, \tfrac{D}{N}n, f}(\End_{\fil_* \gR}(\mathrm{fil}_*\gM)) \text{ is $\tau^r$-torsion for } f < \tfrac{F}{N}n + \rho \text{ and all } n.$$
Choosing $L$ large enough that $-p^L < \rho$ and $p^L \geq r$, it follows that $[\psi' - \psi]^{p^L}$ is $\tau^r$-torsion and so is $\tau^{p^L}$-torsion, and hence that $(\psi' - \psi)^{p^L}=0$. As $\psi$ is central we can expand this binomially to obtain $(\psi')^{p^L} = (\psi)^{p^L} \in \pi_{*,*,*}(\End_{\fil_* \gR}(\mathrm{fil}_*\gM))$.

For (\ref{it:Natural}), we consider the $\fil_*\gR$-module $\Hom_{\fil_* \gR}(\fil_* \gM, \fil_* \gN)$. As $\fil_* \gM$ is a finite $\fil_*\gR$-module and $\fil_* \gN$ is in $\mathsf{W}(\tfrac{D}{N}, \tfrac{F}{D})$, $\Hom_{\fil_* \gR}(\fil_* \gM, \fil_* \gN)$ is also in $\mathsf{W}(\tfrac{D}{N}, \tfrac{F}{D})$. Suppose that $\psi^\gM$ and $\psi^\gN$ both have tridegree $(p^M N, p^M D, p^M F)$. Then the maps
$$(-) \circ \psi^\gM : S^{p^M N, p^M D, p^M F} \otimes \Hom_{\fil_* \gR}(\fil_* \gM, \fil_* \gN) \lra \Hom_{\fil_* \gR}(\fil_* \gM, \fil_* \gN)$$
$$\psi^\gN \circ (-) : S^{p^M N, p^M D, p^M F} \otimes \Hom_{\fil_* \gR}(\fil_* \gM, \fil_* \gN) \lra \Hom_{\fil_* \gR}(\fil_* \gM, \fil_* \gN)$$
are both $y$ self-maps, so by part (\ref{it:Unique}) they agree after perhaps taking further $p$-th powers. In particular there is a fixed $p^L$ such that $f \circ (\psi^\gM)^{p^L} \simeq (\psi^\gN)^{p^L} \circ f$ for all $\fil_*\gR$-module maps $f : S^{n,d,f} \otimes \fil_*\gM \to \fil_*\gN$.

For (\ref{it:Cofib}), we have already explained that $\mathsf{W}(\tfrac{D}{N}, \tfrac{F}{D})$ is a thick subcategory, so $\fil_*\gP$ is in $\mathsf{W}(\tfrac{D}{N}, \tfrac{F}{D})$. Choosing any $y$ self-maps for all three objects, of the same tridegree, after raising them to a suitable $p$-th power it follows from part (\ref{it:Natural}) that they commute with $f$, $g$, and $\partial$, so give a morphism of distinguished triangles.
\end{proof}

\subsection{Nilpotence up to $\tau$-power torsion}

The following lemma formalises how when working modulo objects of $\mathsf{T}$, i.e.\ finite $\fil_*\gR$-modules whose homotopy groups are uniformly bounded $\tau$-power torsion, we may treat endomorphism which are nilpotent up to $\tau$-torsion as though they were nilpotent. We then give some applications of this.

\begin{lem}\label{lem:NilpConeDoesNotLeave}
Let $f$ be a self-map of a finite $\fil_*\gR$-module $\fil_*\gM$ such that some iterate of $f$ is $\tau$-power torsion. Then $\fil_*\gM$ lies in the thick subcategory generated by $\fil_*\gM/f$ and $\mathsf{T}$.

In particular, if $\fil_*\gM$ is finite and $\fil_*\gM/f$ is in $\mathsf{W}(\tfrac{D}{N}, \tfrac{F}{D})$ then so is $\fil_*\gM$.
\end{lem}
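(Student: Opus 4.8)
The statement is a near-verbatim analogue of the classical fact that the cone of a nilpotent self-map is stably equivalent to the object itself, transported to the $\tau$-complete setting. The plan is to run Lemma \ref{lem:ThickSubcatFacts}(\ref{it:ThickSubcatFacts:3}) ``modulo $\mathsf{T}$''. First I would observe that since some iterate $f^k$ is $\tau$-power torsion, there is an $N$ with $\tau^N \cdot f^k \simeq 0$ as a self-map of $\fil_*\gM$. I want to compare $f^k$ with the honestly-nilpotent self-map $0$, at the cost of a $\tau$-torsion error. Concretely: the map $f^k$ becomes nullhomotopic after tensoring with $C\tau^N := \mathrm{cofib}(\tau^N)$, so applying $C\tau^N \otimes -$ to the cofibre sequence $S^{\bullet} \otimes \fil_*\gM \overset{f^k}\to \fil_*\gM \to \fil_*\gM/f^k$ splits it, exhibiting $C\tau^N \otimes \fil_*\gM$ as a retract of $C\tau^N \otimes \fil_*\gM/f^k$.

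Next I would assemble the thick-subcategory containment. By Lemma \ref{lem:ThickSubcatFacts}(\ref{it:ThickSubcatFacts:4}), $\fil_*\gM/f$ and $\fil_*\gM/f^k$ generate the same thick subcategory, so it suffices to work with $\fil_*\gM/f^k$. The cofibre sequence $S^{0,0,N}\otimes\fil_*\gM \overset{\tau^N}\to \fil_*\gM \to C\tau^N\otimes\fil_*\gM$ (and similarly for $\fil_*\gM/f^k$) shows that $C\tau^N\otimes\fil_*\gM$ lies in the thick subcategory generated by $\fil_*\gM$, and vice versa that $\fil_*\gM$ is built from $C\tau^N\otimes\fil_*\gM$ together with the fibre of $\tau^N$, which is $S^{\bullet}\otimes$ (a $\tau^N$-torsion object). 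The key point is that this fibre $\mathrm{fib}(\tau^N : \fil_*\gM \to \fil_*\gM)$ is a finite $\fil_*\gR$-module whose filtered homotopy groups are annihilated by $\tau^N$ (they sit in an exact sequence between $\ker$ and $\mathrm{coker}$ of $\tau^N$ on $\pi_{*,*,*}(\fil_*\gM)$, both of which are $\tau^N$-torsion) — hence it lies in $\mathsf{T}$. Thus: $\fil_*\gM$ is in the thick subcategory generated by $\{C\tau^N\otimes\fil_*\gM\} \cup \mathsf{T}$; $C\tau^N\otimes\fil_*\gM$ is a retract of $C\tau^N\otimes\fil_*\gM/f^k$; and $C\tau^N\otimes\fil_*\gM/f^k$ is in the thick subcategory generated by $\{\fil_*\gM/f^k\}\cup\mathsf{T}$ by the same $\tau^N$-cofibre-sequence argument applied to $\fil_*\gM/f^k$ (whose $\tau^N$-fibre is again in $\mathsf{T}$, as $\fil_*\gM/f^k$ is finite). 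Chaining these, and using Lemma \ref{lem:ThickSubcatFacts}(\ref{it:ThickSubcatFacts:4}) to pass back to $\fil_*\gM/f$, gives the first claim.

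For the ``in particular'': if $\fil_*\gM$ is finite and $\fil_*\gM/f \in \mathsf{W}(\tfrac{D}{N},\tfrac{F}{D})$, then by Proposition \ref{prop:WeaklyTypeiProperties}(\ref{it:WeaklyTypeiProperties:3}) we have $\mathsf{T}\subseteq\mathsf{W}(\tfrac{D}{N},\tfrac{F}{D})$, and by Proposition \ref{prop:WeaklyTypeiProperties}(\ref{it:WeaklyTypeiProperties:1}) $\mathsf{W}(\tfrac{D}{N},\tfrac{F}{D})$ is a thick subcategory; since the first part shows $\fil_*\gM$ lies in the thick subcategory generated by $\fil_*\gM/f$ and $\mathsf{T}$, both of which are contained in $\mathsf{W}(\tfrac{D}{N},\tfrac{F}{D})$, we conclude $\fil_*\gM\in\mathsf{W}(\tfrac{D}{N},\tfrac{F}{D})$. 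The main obstacle I anticipate is purely bookkeeping: making sure the retract produced by $C\tau^N\otimes f^k\simeq 0$ is correctly set up (one needs $\tau^N\cdot f^k\simeq 0$, i.e.\ $f^k$ factors through $\mathrm{fib}(\tau^N)$, hence dies in $C\tau^N\otimes-$), and verifying that the relevant $\tau^N$-fibres genuinely land in $\mathsf{T}$ — i.e.\ that finiteness is preserved under $\mathrm{fib}(\tau^N)$ and that uniform $\tau$-torsion of the homotopy groups follows. Neither is deep, but both must be stated carefully since $\mathsf{T}$ is defined by a uniform (not pointwise) torsion condition.
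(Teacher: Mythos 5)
Your argument has two genuine gaps, and the second is fatal to the overall strategy. First, the step ``$\tau^N\cdot f^k\simeq 0$, hence $f^k$ dies after tensoring with $C\tau^N$'' is not valid: by naturality of the unit $u_X : X \to C\tau^N\otimes X$ one has $(C\tau^N\otimes f^k)\circ u \simeq u\circ f^k$, so $C\tau^N\otimes f^k\simeq 0$ forces $u\circ f^k\simeq 0$, i.e.\ forces $f^k$ to lift along $\tau^N$ --- it requires $f^k$ to be $\tau^N$-\emph{divisible}, which does not follow from $f^k$ being $\tau^N$-torsion. Concretely, if $\fil_*\gM = C\tau\otimes\fil_*\gN$ for a nonzero finite $\fil_*\gN$ and $f=\mathrm{id}$, then $\tau\cdot f\simeq 0$ but $C\tau\otimes\fil_*\gM$ is certainly not a retract of $C\tau\otimes(\fil_*\gM/f)=0$ (the lemma itself is fine here, since $\fil_*\gM\in\mathsf{T}$, but your intermediate claim fails). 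Second, the assertion that $\fil_*\gM$ lies in the thick subcategory generated by $C\tau^N\otimes\fil_*\gM$ and $\mathsf{T}$ cannot be correct unless $\fil_*\gM$ is already in $\mathsf{T}$: both $C\tau^N\otimes\fil_*\gM$ and every object of $\mathsf{T}$ have uniformly bounded $\tau$-power torsion homotopy groups, and $\mathsf{T}$ is itself thick, so the thick subcategory they generate is contained in $\mathsf{T}$. The cofibre sequence $S^{0,0,N}\otimes\fil_*\gM\xrightarrow{\tau^N}\fil_*\gM\to C\tau^N\otimes\fil_*\gM$ only exhibits $\fil_*\gM$ as built from $C\tau^N\otimes\fil_*\gM$ together with a \emph{shift of $\fil_*\gM$ itself}, which is circular; ``$\mathrm{fib}(\tau^N)$'' is just such a shift of $C\tau^N\otimes\fil_*\gM$, not an independent torsion piece that lets you recover $\fil_*\gM$.

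The repair is to keep the null composite $\tau^r f^K$ intact rather than tensoring with $C\tau^r$. Form the cofibre $\fil_*\gM/(\tau^r f^K)$ of this composite: since the map is null, this splits as $\fil_*\gM\oplus\Sigma(\cdots)$ and so contains $\fil_*\gM$ as a retract. The octahedral axiom applied to the factorisation of $\tau^r f^K$ as $\tau^r$ followed by $f^K$ yields a cofibre sequence $S^{0,0,r}\otimes C\tau^r\otimes\fil_*\gM\to\fil_*\gM/(\tau^r f^K)\to\fil_*\gM/(f^K)$, whose left-hand term is finite with $\tau^{2r}$-torsion homotopy groups, hence in $\mathsf{T}$, and whose right-hand term lies in the thick subcategory generated by $\fil_*\gM/f$ by Lemma \ref{lem:ThickSubcatFacts}~(\ref{it:ThickSubcatFacts:2}). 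The middle term, and hence its retract $\fil_*\gM$, therefore lies in the thick subcategory generated by $\fil_*\gM/f$ and $\mathsf{T}$. Your deduction of the ``in particular'' clause from the first statement (via Proposition \ref{prop:WeaklyTypeiProperties}) is correct as written.
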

\begin{proof}
Suppose that $\tau^r f^K \simeq 0$. There is a cofibre sequence
$$S^{0,0,r} \otimes C\tau^{r} \otimes \fil_*\gM \lra \fil_*\gM /(\tau^{r} f^{K}) \lra \fil_*\gM /(f^{K})$$
upon which we make the following observations:
\begin{enumerate}[(a)]
\item The left term is a finite $\fil_*\gR$-module with uniformly bounded $\tau$-power torsion homotopy groups (as they are $\tau^{2r}$-torsion).

\item The middle term contains $\fil_*\gM$ as a retract (as $\tau^{r}f^K$ is null).

\item The right term lies in the thick subcategory generated by $\fil_*\gM/f$ (by Lemma \ref{lem:ThickSubcatFacts} (\ref{it:ThickSubcatFacts:2})).
\end{enumerate}
The claim follows.
\end{proof}

\begin{lem}\label{lem:Generation} Fix slopes $(\lambda, \mu)$.
\begin{enumerate}[(i)]
\item\label{it:Generation:1} Let $x_s, \ldots, x_t$ be those $x$'s having slopes $(\lambda, \mu)$. If $\fil_*\gR/(\phi_1, \ldots, \phi_{s-1})$ is any filtered Smith--Toda complex obtained by taking the iterated cofibre of a sequence of $x_k$ self-maps $\phi_k$, then  $\fil_*\gR/(\phi_1, \ldots, \phi_{s-1})$ lies in $\mathsf{W}(\lambda, \mu)$, and

\item\label{it:Generation:2} Let $\fil_*\gR/(\psi_1, \ldots, \psi_r)$ be any filtered Smith--Toda complex which lies in $\mathsf{W}(\lambda, \mu)$, and such that each $\psi_i$ has slopes $< (\lambda, \mu)$. Then $\mathsf{W}(\lambda, \mu)$ is generated as a thick subcategory by $\fil_*\gR/(\psi_1, \ldots, \psi_r)$ and $\mathsf{T}$.
\end{enumerate}
\end{lem}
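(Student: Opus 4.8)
\textbf{Proof strategy for Lemma \ref{lem:Generation}.}

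The plan is to prove the two parts separately, with part (\ref{it:Generation:1}) serving largely as a bookkeeping exercise on the vanishing estimates already established, and part (\ref{it:Generation:2}) as the genuine content, resting on Lemma \ref{lem:NilpConeDoesNotLeave} together with a nilpotence-detection argument in the spirit of Lemma \ref{lem:ThickSubcatFacts} (\ref{it:ThickSubcatFacts:3}).

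For part (\ref{it:Generation:1}) I would argue by induction on $s$. The base case $s=1$ is $\fil_*\gR$ itself, which lies in $\mathsf{W}(\lambda, \mu)$ for \emph{every} $(\lambda, \mu)$: property (A) holds on the $E^1$-page already by Lemma \ref{lem:VanishingEstimates} (\ref{it:VanishingEstimates:1}) (with the trivial sequence of self-maps) since the free graded-commutative algebra $\pi_{*,*,*}(C\tau\otimes\fil_*\gR)$ is supported above the line $d<n$ outside of $V_{*,*,*}\oplus W'_{*,*,*}$, and property (B) follows from Lemma \ref{lem:VanishingEstimates} (\ref{it:VanishingEstimates:2})—more directly, this is exactly the content of $\mathsf{W}(\lambda,\mu)$ being a condition satisfied by $\fil_*\gR$ whenever $\lambda$ is the slope of some $x$. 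For the inductive step, if $\fil_*\gR/(\phi_1,\ldots,\phi_{k-1})$ lies in $\mathsf{W}(\lambda,\mu)$ and $\phi_k$ is an $x_k$ self-map with $x_k$ having slopes $(\lambda,\mu)$ for $k<s$ (or slopes $<(\lambda,\mu)$ if we are building up to them—here one uses that the $\phi_j$ for $j<s$ all have slopes $\le(\lambda,\mu)$), then $\fil_*\gR/(\phi_1,\ldots,\phi_k)$ is the cofibre of a self-map of an object in $\mathsf{W}(\lambda,\mu)$, hence lies in $\mathsf{W}(\lambda,\mu)$ by Proposition \ref{prop:WeaklyTypeiProperties} (\ref{it:WeaklyTypeiProperties:1}) (thickness, applied to the cofibre triangle $S^{\bullet}\otimes\fil_*\gR/(\phi_1,\ldots,\phi_{k-1})\to\fil_*\gR/(\phi_1,\ldots,\phi_{k-1})\to\fil_*\gR/(\phi_1,\ldots,\phi_k)$). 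This gives the claim for $\fil_*\gR/(\phi_1,\ldots,\phi_{s-1})$.

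For part (\ref{it:Generation:2}), let $\fil_*\gM\in\mathsf{W}(\lambda,\mu)$ be arbitrary; I must show $\fil_*\gM$ lies in the thick subcategory generated by $\fil_*\gR/(\psi_1,\ldots,\psi_r)$ and $\mathsf{T}$. Mimicking the proof of Theorem \ref{thm:STQuantisation}, form $\fil_*\gM/(\psi_1,\ldots,\psi_i):=\fil_*\gM\otimes_{\fil_*\gR}\fil_*\gR/(\psi_1,\ldots,\psi_i)$ with induced maps $\psi_i$. Each $\fil_*\gM/(\psi_1,\ldots,\psi_{i-1})$ lies in $\mathsf{W}(\lambda,\mu)$ (it is in the thick subcategory generated by $\fil_*\gM$, and $\mathsf{W}(\lambda,\mu)$ is thick), so $\End_{\fil_*\gR}(\fil_*\gM/(\psi_1,\ldots,\psi_{i-1}))$ is too by Lemma \ref{lem:TensorAndHomStaysThick}. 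Now $\psi_i$ has slopes $<(\lambda,\mu)$, and on the associated graded it is detected by a power of a generator of this strictly smaller slope; I claim this forces some iterate of $\psi_i$, acting on $\fil_*\gM/(\psi_1,\ldots,\psi_{i-1})$, to be $\tau$-power torsion. This is the crux: inverting $\tau$ and using property (A), the element $\tau^{-1}\psi_i$ is an endomorphism of a $\tau$-inverted module which by property (A) has a slope $\lambda$ vanishing line, while $\psi_i$ has slope $<\lambda$, so $(\tau^{-1}\psi_i)^k=0$ for $k\gg0$ by the argument of the Lemma after Proposition \ref{prop:SubOfAdmIsAdm}—equivalently, $\psi_i^k$ becomes $\tau$-power torsion. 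Granting this, Lemma \ref{lem:NilpConeDoesNotLeave} says $\fil_*\gM/(\psi_1,\ldots,\psi_{i-1})$ lies in the thick subcategory generated by $\fil_*\gM/(\psi_1,\ldots,\psi_i)$ and $\mathsf{T}$. Descending from $i=r$ to $i=1$, we conclude $\fil_*\gM$ lies in the thick subcategory generated by $\fil_*\gM/(\psi_1,\ldots,\psi_r)$ and $\mathsf{T}$. Finally, since $\fil_*\gM$ is finite it lies in the thick subcategory of $\fil_*\gR$, so $\fil_*\gM/(\psi_1,\ldots,\psi_r)$ lies in the thick subcategory generated by $\fil_*\gR/(\psi_1,\ldots,\psi_r)$, and combining, $\fil_*\gM$ lies in the thick subcategory generated by $\fil_*\gR/(\psi_1,\ldots,\psi_r)$ and $\mathsf{T}$, as desired.

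\textbf{Main obstacle.} The delicate point is the claim that $\psi_i^k$ is $\tau$-power torsion for $k\gg0$ when $\psi_i$ has slopes strictly below $(\lambda,\mu)$. One cannot simply invoke the slope-$\lambda$ vanishing line of $\fil_*\gM/(\psi_1,\ldots,\psi_{i-1})$ itself, because that object need not have such a vanishing line on its filtered homotopy groups—only property (A) holds, i.e.\ a slope $\lambda$ vanishing line up to uniformly bounded $\tau$-power torsion. The right move is to pass to $\tau^{-1}\fil_*\gM/(\psi_1,\ldots,\psi_{i-1})$, where property (A) genuinely yields a slope $\lambda$ vanishing line for the homotopy groups, apply the finiteness argument there to see $(\tau^{-1}\psi_i)^k=0$, and then translate back: $(\tau^{-1}\psi_i)^k=0$ together with compactness of the finite module $\fil_*\gM/(\psi_1,\ldots,\psi_{i-1})$ and the fact that $\tau^{-1}(-)=\colim(\cdot\xrightarrow{\tau}\cdot)$ implies $\tau^N\psi_i^k\simeq0$ for some $N$, i.e.\ $\psi_i^k$ is $\tau$-power torsion. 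One must be slightly careful that "$\psi_i$ has slopes $<(\lambda,\mu)$" in the lexicographic sense still gives $\tfrac{d_i}{n_i}<\lambda$ strictly, or else $\tfrac{d_i}{n_i}=\lambda$ with $\tfrac{f_i}{d_i}<\mu$; in the latter case one instead runs the argument along the line of slope $\lambda$ using property (B) and the analogous finiteness of the relevant $\bk[x_s,\ldots,x_t]$-module, exactly paralleling the two cases in the proof of Proposition \ref{prop:WeaklyTypeiProperties} (\ref{it:WeaklyTypeiProperties:2}).
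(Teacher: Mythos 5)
Your treatment of part (\ref{it:Generation:2}) is essentially the paper's argument, and you correctly identify and resolve the one point the paper leaves implicit: why an endomorphism of slopes strictly below $(\lambda,\mu)$ of an object of $\mathsf{W}(\lambda,\mu)$ has a $\tau$-power-torsion iterate. Splitting into the cases $\tfrac{d_i}{n_i}<\lambda$ (use property (A)) and $\tfrac{d_i}{n_i}=\lambda$, $\tfrac{f_i}{d_i}<\mu$ (use property (B) along the line, noting $\ell F-\mu\ell D=\ell D(\tfrac{F}{D}-\mu)\to-\infty$) is exactly right, and the rest — iterated use of Lemma \ref{lem:NilpConeDoesNotLeave} followed by finiteness of $\fil_*\gM$ — matches the paper.

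Part (\ref{it:Generation:1}), however, has a genuine gap: the base case of your induction is false. $\fil_*\gR$ does \emph{not} lie in $\mathsf{W}(\lambda,\mu)$ for every $(\lambda,\mu)$. Take $\gR=\gE_2(S^{1,0}\sigma)$ over $\bF_2$: the powers of $x_1=\sigma'$ populate $E^{r+1}_{k,0,*}(\fil_*\gR)\neq 0$ for all $k$ and all $r$ (they survive, detecting $\sigma^k\neq 0$), so condition (I) of Definition \ref{defn:WeaklyTypei} fails for any $\lambda>0$. Lemma \ref{lem:VanishingEstimates}~(\ref{it:VanishingEstimates:1}) applied ``with the trivial sequence'' only places $\fil_*\gR$ in $\mathsf{W}$ at the slopes of $x_1$, which by Proposition \ref{prop:WeaklyTypeiProperties}~(\ref{it:WeaklyTypeiProperties:2}) is the \emph{weakest} of these conditions, not the strongest. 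More structurally, thickness of $\mathsf{W}(\lambda,\mu)$ can only preserve membership, never improve the slope, whereas the entire content of part (\ref{it:Generation:1}) is the slope improvement obtained by killing $\phi_1,\ldots,\phi_{s-1}$: the non-nilpotent low-slope generators $x_1,\ldots,x_{s-1}$ must actually be truncated to $\bk[x_1,\ldots,x_{s-1}]/(x_1^{p^{M_1}},\ldots,x_{s-1}^{p^{M_{s-1}}})$, a finite-dimensional algebra, before any vanishing line of slope $\lambda=\tfrac{d_s}{n_s}$ can hold. The correct (and shorter) argument is the paper's: apply Lemma \ref{lem:VanishingEstimates} parts (\ref{it:VanishingEstimates:1}) and (\ref{it:VanishingEstimates:2}) directly to $\fil_*\gR/(\phi_1,\ldots,\phi_{s-1})$ — whose $E^1$-page is computed as the quotient of the free graded-commutative algebra by the regular sequence of $p$-th powers — and observe that the resulting vanishing statements are precisely conditions (I) and (II) with $r=0$. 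No induction is needed.
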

\begin{proof}
By Lemma \ref{lem:VanishingEstimates} (\ref{it:VanishingEstimates:1}) there is a $\kappa$ such that $\pi_{n,d,f}(C\tau \otimes \fil_*\gR/(\phi_1, \ldots, \phi_{s-1}))=0$ for $d < \lambda n+\kappa$, and by Lemma \ref{lem:VanishingEstimates} (\ref{it:VanishingEstimates:2}) for each $\tilde{\kappa}$ there is a $\rho$ such that $\pi_{n,d,f}(C\tau \otimes \fil_*\gR/(\phi_1, \ldots, \phi_{s-1}))=0$ for $d = \lambda n + \tilde{\kappa}$ and $f < \mu d+\rho$.  In particular it satisfies (I) and (II) of Definition \ref{defn:WeaklyTypei} with $r=0$, so $\fil_*\gR/(\phi_1, \ldots, \phi_{s-1}) \in \mathsf{W}(\tfrac{D}{N}, \tfrac{F}{D})$.

To prove (\ref{it:Generation:2}), suppose $\fil_*\gM \in \mathsf{W}(\tfrac{D}{N}, \tfrac{F}{D})$. We observe that for each $1 \leq k \leq r$ the endomorphism $\mathrm{Id} \otimes \psi_k$ of $\fil_*\gM \otimes_{\fil_*\gR} \fil_*\gR/(\psi_1, \ldots, \psi_{k-1})$ has an iterate which is $\tau$-power torsion, because this object lies in $\mathsf{W}(\lambda, \mu)$ and our assumption on the slopes of $\psi_k$. By many applications of Lemma \ref{lem:NilpConeDoesNotLeave} it follows that $\fil_*\gM$ is in the thick subcategory generated by $\fil_*\gM \otimes_{\fil_*\gR} \fil_*\gR/(\psi_1, \ldots, \psi_{k})$ and $\mathsf{T}$. As $\fil_*\gM \otimes_{\fil_*\gR} \fil_*\gR/(\psi_1, \ldots, \psi_{k})$ is in particular in the thick subcategory generated by $\fil_*\gR/(\psi_1, \ldots, \psi_{k})$, the claim follows.
\end{proof}

\begin{defn}
Let $y \in \pi_{N,D,F}(C\tau \otimes \fil_*\gR)$ be given. Write $\mathsf{N}(y) \subset \mathsf{W}(\tfrac{D}{N}, \tfrac{F}{D})$ for the full subcategory of those objects such that each $y$ self-map (which exists by Theorem \ref{thm:xiSelfMapsExist} and is unique up to taking powers by Theorem \ref{thm:xiSelfMapsUniqueAndNatural} (\ref{it:Unique})) has an iterate which is $\tau$-power torsion.
\end{defn}

\begin{lem}\label{lem:NilProperties}\mbox{}
\begin{enumerate}[(i)]
\item\label{it:NilProperties:1} $\mathsf{N}(y)$ is a thick subcategory.

\item\label{it:NilProperties:2} $\mathsf{T} \subseteq \mathsf{N}(y)$ for any $y$.

\item\label{it:NilProperties:3} If $\fil_*\gM \in \mathsf{W}(\tfrac{D}{N}, \tfrac{F}{D})$ and $\psi$ is a $y$ self-map, then $\fil_*\gM/\psi \in \mathsf{N}(y)$.
\end{enumerate}
\end{lem}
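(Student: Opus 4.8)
The plan is to verify the three assertions directly against the definitions, leaning on the already-established structural results about $\mathsf{W}(\tfrac{D}{N},\tfrac{F}{D})$, $\mathsf{T}$, and $y$ self-maps.

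\textbf{Part (\ref{it:NilProperties:1}).} First I would check that $\mathsf{N}(y)$ is closed under shifts: if $\psi$ is a $y$ self-map of $\fil_*\gM$ then $S^{n,d,f}\otimes\psi$ is a $y$ self-map of $S^{n,d,f}\otimes\fil_*\gM$, and being $\tau$-power torsion is preserved. Closure under retracts is similar: a retract of a module admits the restriction of a $y$ self-map (using uniqueness up to $p$-th powers from Theorem \ref{thm:xiSelfMapsUniqueAndNatural}(\ref{it:Unique}) to identify it with the restriction of a chosen self-map of the ambient object), and an iterate of a restriction of a $\tau$-power-torsion map is again $\tau$-power torsion. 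For the two-out-of-three property, take a cofibre sequence $\fil_*\gM'\to\fil_*\gM\to\fil_*\gM''$ with two terms in $\mathsf{N}(y)$. By Theorem \ref{thm:xiSelfMapsUniqueAndNatural}(\ref{it:Cofib}) — which applies since all three objects lie in $\mathsf{W}(\tfrac{D}{N},\tfrac{F}{D})$ (itself thick, Proposition \ref{prop:WeaklyTypeiProperties}(\ref{it:WeaklyTypeiProperties:1})) — we may choose $y$ self-maps $\psi^\gM$, $\psi^{\gM'}$, $\psi^{\gM''}$ of a common tridegree assembling into a morphism of distinguished triangles. If $\psi^{\gM'}$ and $\psi^{\gM''}$ have $\tau$-power-torsion iterates, say $\tau^{r}(\psi^{\gM'})^{K}\simeq 0$ and $\tau^{r}(\psi^{\gM''})^{K}\simeq 0$, then a standard octahedral/five-lemma argument on the induced map of cofibre sequences shows $\tau^{2r}(\psi^{\gM})^{2K}$ is null (composing along the triangle: $(\psi^{\gM})^{K}$ factors, up to $\tau^{r}$, through a map from $\fil_*\gM'$, and that map's $K$-th iterate is killed by another $\tau^{r}$). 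The two other cases (the middle and one outer term being in $\mathsf{N}(y)$) are handled the same way. Hence $\mathsf{N}(y)$ is thick.

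\textbf{Parts (\ref{it:NilProperties:2}) and (\ref{it:NilProperties:3}).} For (\ref{it:NilProperties:2}): if $\fil_*\gM\in\mathsf{T}$ then $\mathsf{T}\subseteq\mathsf{W}(\tfrac{D}{N},\tfrac{F}{D})$ by Proposition \ref{prop:WeaklyTypeiProperties}(\ref{it:WeaklyTypeiProperties:3}), and any $y$ self-map $\psi$ lives in $\pi_{*,*,*}(\End_{\fil_*\gR}(\fil_*\gM))$, which is also in $\mathsf{T}$ since the endomorphism object lies in the thick subcategory generated by $\fil_*\gM$ and $\mathsf{T}$ is thick (this is immediate from the definition of $\mathsf{T}$ via uniformly bounded $\tau$-power torsion, or cite Proposition \ref{prop:WeaklyTypeiProperties} with the analogous reasoning); so $\psi$ is itself $\tau$-power torsion, giving $\fil_*\gM\in\mathsf{N}(y)$. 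For (\ref{it:NilProperties:3}): by Lemma \ref{lem:ThickSubcatFacts}(\ref{it:ThickSubcatFacts:1}), $\fil_*\gM/\psi$ lies in the thick subcategory generated by $\fil_*\gM$, hence is in $\mathsf{W}(\tfrac{D}{N},\tfrac{F}{D})$. Now $\fil_*\gM/\psi$ carries the self-map $\bar\psi$ induced by $\psi$ on the cofibre, which by construction is a $y$ self-map of $\fil_*\gM/\psi$ (its effect on associated graded is $u_*(y^{p^M})$ again). But by the principle recalled in the proof of Lemma \ref{lem:ThickSubcatFacts}(\ref{it:ThickSubcatFacts:4}) — a self-map induced on the cofibre of itself squares to zero — we have $\bar\psi^{2}\simeq 0$, so in particular $\bar\psi$ is nilpotent, a fortiori $\tau$-power torsion. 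By uniqueness of $y$ self-maps up to $p$-th powers (Theorem \ref{thm:xiSelfMapsUniqueAndNatural}(\ref{it:Unique})), every $y$ self-map of $\fil_*\gM/\psi$ has an iterate agreeing with an iterate of $\bar\psi$, hence has a $\tau$-power-torsion iterate. Thus $\fil_*\gM/\psi\in\mathsf{N}(y)$.

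The main obstacle I anticipate is the bookkeeping in the two-out-of-three argument for (\ref{it:NilProperties:1}): to conclude that $\fil_*\gM$'s self-map has a $\tau$-power-torsion iterate from the morphism of triangles, one must carefully track how the exponents of $\tau$ and the iteration powers $K$ compose across the two connecting maps of the triangle, and to do this cleanly one wants the three self-maps to genuinely commute with $f$, $g$, $\partial$ on the nose — which is exactly what Theorem \ref{thm:xiSelfMapsUniqueAndNatural}(\ref{it:Cofib}) (via (\ref{it:Natural})) delivers after passing to a common $p$-th power. Everything else is routine once that compatibility is in hand.
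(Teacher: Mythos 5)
Your proof follows essentially the same route as the paper's for all three parts: part (\ref{it:NilProperties:1}) via Theorem \ref{thm:xiSelfMapsUniqueAndNatural} (\ref{it:Cofib}) to get compatible self-maps on a cofibre sequence followed by the factorisation-through-the-(co)fibre argument (the paper handles retracts slightly differently, via the direct sum of self-maps on the two summands, but your restriction argument also works), and part (\ref{it:NilProperties:2}) via $\End_{\fil_*\gR}(\fil_*\gM)\in\mathsf{T}$.

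The one step that needs care is in (\ref{it:NilProperties:3}): the induced map $\bar\psi$ on $\fil_*\gM/\psi$ is not automatically a $y$ self-map in the sense of the definition. Its reduction $C\tau\otimes\bar\psi$ is \emph{an} induced map on $(C\tau\otimes\fil_*\gM)/(u_*(y^{p^M}))$ covering $u_*(y^{p^M})$, but it need not literally equal $(C\tau\otimes u)_*(y^{p^M})$ acting via the module structure (different fillers differ by a term factoring through the boundary), and the uniqueness statement of Theorem \ref{thm:xiSelfMapsUniqueAndNatural} (\ref{it:Unique}) can only be invoked against genuine $y$ self-maps. The paper sidesteps this by observing that $u_*(y)$ acts nilpotently on $C\tau\otimes(\fil_*\gM/\psi)\simeq(C\tau\otimes\fil_*\gM)/(y^{p^M})$, so that the \emph{zero} endomorphism (in a suitable tridegree) is a $y$ self-map of $\fil_*\gM/\psi$; uniqueness then forces every $y$ self-map of $\fil_*\gM/\psi$ to be nilpotent. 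This is the same idea as your ``squares to zero'' argument, just applied where the definition licenses it.
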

\begin{proof}
Let $\fil_*\gM' \overset{i}\to \fil_*\gM \overset{q} \to \fil_*\gM''$ be a cofibre sequence in $\mathsf{W}(\tfrac{D}{N}, \tfrac{F}{D})$, so all three terms admit $y$ self-maps $\psi'$, $\psi$, and $\psi''$ by Theorem \ref{thm:xiSelfMapsExist}. Assume the outer terms are in $\mathsf{N}(y)$. By Theorem \ref{thm:xiSelfMapsUniqueAndNatural} (\ref{it:Cofib}) these self-maps may be assumed to have the same tridegree and to commute with the maps $i$ and $q$. Raising to appropriate powers, the maps $\psi'$ and $\psi''$ may be assumed to be $\tau^r$-torsion, giving a map of cofibre sequences of the form
\begin{equation*}
\begin{tikzcd}%[column sep = 0.7em]
S^{p^{M} N, p^{M} D, p^{M} F+r} \otimes \fil_*\gM' \dar{i} \rar{0} & \fil_*\gM' \dar{i}\\
S^{p^{M} N, p^{M} D, p^{M} F+r} \otimes \fil_*\gM \dar{q} \rar{\tau^r \psi} & \fil_*\gM \dar{q}\\
S^{p^{M} N, p^{M} D, p^{M} F+r} \otimes \fil_*\gM'' \rar{0} & \fil_*\gM''
\end{tikzcd}
\end{equation*}
and so a factorisation
$$S^{p^{M} N, p^{M} D, p^{M} F+r} \otimes \fil_*\gM \overset{q}\lra S^{p^{M} N, p^{M} D, p^{M} F+r} \otimes \fil_*\gM'' \overset{\text{induced}}\lra \fil_*\gM' \overset{i}\lra \fil_*\gM$$
of $\tau^r \psi$. This implies that $(\tau^r \psi)^2\simeq 0$, as $q \circ i\simeq 0$, so $\psi^2$ is $\tau^{2r}$-torsion. Finally, any $y$ self-map of $\fil_*\gM$ agrees with $\psi$ up to taking iterates, by Theorem \ref{thm:xiSelfMapsUniqueAndNatural} (\ref{it:Unique}), so the analogous conclusion holds for all of them. 

It is clear that $\mathsf{N}(y)$ is closed under shifts, so to show that it is a thick subcategory it remains to show it is closed under retracts. If $\fil_*\gM' \oplus \fil_*\gM''$ is in $\mathsf{N}(y)$, then each summand is in $\mathsf{W}(\tfrac{D}{N}, \tfrac{F}{D})$, so has a $y$ self-map. Raising these to appropriate powers we may suppose they have the same degree, hence giving $\psi' \oplus \psi''$ a $y$ self-map of $\fil_*\gM' \oplus \fil_*\gM''$. This has some iterate which is $\tau$-power torsion, from which it follows that both $\psi'$ and $\psi''$ do too.

To prove (\ref{it:NilProperties:2}), first note that by Proposition \ref{prop:WeaklyTypeiProperties} (\ref{it:WeaklyTypeiProperties:3}) $\mathsf{T} \subseteq \mathsf{W}(\tfrac{D}{N}, \tfrac{F}{D})$. As $\mathsf{T}$ is a thick subcategory of finite $\fil_*\gR$-modules, if $\fil_*\gM \in \mathsf{T}$ then $\End_{\fil_*\gR}(\fil_*\gM) \in \mathsf{T}$ too. So every endomorphism of $\fil_*\gM$ is $\tau$-power torsion, in particular $y$ self-maps are.

To prove (\ref{it:NilProperties:3}), note that 
$C\tau \otimes (\fil_*\gM/\psi) \simeq (C\tau \otimes \fil_*\gM)/(y^{p^M})$ on which $y$ acts nilpotently. Thus the zero endomorphism of $\fil_*\gM/\psi$ is a $y$ self-map, and hence by Theorem \ref{thm:xiSelfMapsUniqueAndNatural} (\ref{it:Unique}) any $y$ self-map of $\fil_*\gM/\psi$ is nilpotent.
\end{proof}

\subsection{Omitting nilpotent endomorphisms}\label{sec:Omitting}

As a first step towards efficiency of Smith--Toda complexes, we make a slight improvement to Theorem \ref{thm:FiltSTExist}. Let us say that an $x_i$ is \emph{redundant} if the $x_i$ self-map
$$\phi_i : S^{p^{M_i}n_i, p^{M_i}d_i, p^{M_i}f_i} \otimes \fil_*\gR/(\phi_1, \ldots, \phi_{i-1}) \lra \fil_*\gR/(\phi_1, \ldots, \phi_{i-1})$$
 is nilpotent after inverting $\tau$, i.e.\ has an iterate which is $\tau$-power torsion. Equivalently, $(\tau \phi_i)^{-1}\fil_*\gR/(\phi_1, \ldots, \phi_{i-1}) \simeq 0$. The latter formulation makes it clear that this property does not depend on the choices of $x_j$ self-maps $\phi_1, \ldots, \phi_{i-1}$ or $\phi_i$ as follows. If $\fil_*\gR/(\phi'_1, \ldots, \phi'_{i-1})$ is another filtered Smith--Toda complex with an $x_i$ self map $\phi'_i$ then by Lemma \ref{lem:Generation} (\ref{it:Generation:2}) it lies in the thick subcategory generated by $\fil_*\gR/(\phi_1, \ldots, \phi_{i-1})$ and $\mathsf{T}$. Using that inverting $\tau$ kills $\mathsf{T}$, and the uniqueness (Theorem \ref{thm:xiSelfMapsUniqueAndNatural} (\ref{it:Unique})) and naturality (Theorem \ref{thm:xiSelfMapsUniqueAndNatural} (\ref{it:Natural})) up to taking $p$-th powers of $x_i$ self-maps, it follows that $(\tau \phi'_i)^{-1}$ annihilates $\fil_*\gR/(\phi'_1, \ldots, \phi'_{i-1})$ too.

\begin{thm}\label{thm:OmittingRedundantXs}
Let $x_{j_1}, x_{j_2}, \ldots, x_{j_s}$ be the subsequence of $x_1, x_2, \ldots, x_{i}$ consisting of non-redundant $x$'s. 
\begin{enumerate}[(i)]
\item\label{it:OmittingRedundantXs:1} There is a filtered Smith--Toda complex $\fil_*\gR/(\phi_{j_1}, \ldots, \phi_{j_s})$ which lies in the thick subcategory generated by $\fil_*\gR/(\phi_1, \ldots, \phi_i)$ and $\mathsf{T}$.

In particular $\tau^{-1}\fil_*\gR/(\phi_{j_1}, \ldots, \phi_{j_s})$ lies in the thick subcategory generated by $\tau^{-1}\fil_*\gR/(\phi_1, \ldots, \phi_i)$ so has a vanishing line of slope $\tfrac{d_{i+1}}{n_{i+1}}$.

\item\label{it:OmittingRedundantXs:2}  $\fil_*\gR/(\phi_1, \ldots, \phi_i)$ lies in the thick subcategory generated by $\fil_*\gR/(\phi_{j_1}, \ldots, \phi_{j_s})$.

\item\label{it:OmittingRedundantXs:3}  For each $1 \leq r \leq s$ the endomorphism $\phi_{j_r}$ of $\fil_*\gR/(\phi_{j_1}, \ldots, \phi_{j_{r-1}})$ is non-nilpotent after inverting $\tau$.
\end{enumerate}
\end{thm}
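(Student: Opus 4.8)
\textbf{Proof plan for Theorem \ref{thm:OmittingRedundantXs}.}

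The plan is to build the Smith--Toda complex $\fil_*\gR/(\phi_{j_1}, \ldots, \phi_{j_s})$ by an induction on the length of the original sequence $x_1, \ldots, x_i$, at each stage either adjoining a cofibre of an $x_k$ self-map (when $x_k$ is non-redundant) or doing nothing (when $x_k$ is redundant), while carrying along the invariant that $\fil_*\gR/(\phi_{j_1}, \ldots, \phi_{j_r})$ generates, together with $\mathsf{T}$, the same thick subcategory as $\fil_*\gR/(\phi_1, \ldots, \phi_k)$. The key enabling facts are: Lemma \ref{lem:Generation} (\ref{it:Generation:1}), which tells us that after killing the self-maps of all $x$'s of slopes $< (\lambda,\mu)$ the resulting complex lies in $\mathsf{W}(\lambda,\mu)$, hence (by Theorem \ref{thm:xiSelfMapsExist}) admits an $x_k$ self-map whenever $x_k$ has slopes $(\lambda,\mu)$; and Theorem \ref{thm:xiSelfMapsUniqueAndNatural} (\ref{it:Unique}), (\ref{it:Natural}), which makes the self-maps essentially canonical, so that the notion ``$x_k$ is redundant'' is well-defined independently of the choices, as already observed in the discussion preceding the theorem statement. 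So the construction in part (\ref{it:OmittingRedundantXs:1}) makes sense; what remains is to compare the two thick subcategories.

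For part (\ref{it:OmittingRedundantXs:2}), I would run the induction: assuming $\fil_*\gR/(\phi_1, \ldots, \phi_{k-1})$ lies in the thick subcategory generated by $\fil_*\gR/(\phi_{j_1}, \ldots, \phi_{j_{r-1}})$, where $x_{j_1}, \ldots, x_{j_{r-1}}$ are the non-redundant $x$'s among $x_1, \ldots, x_{k-1}$, I consider two cases. If $x_k$ is non-redundant, then $x_k = x_{j_r}$; tensoring the generating object with the cofibre sequence for $\phi_{j_r}$ (using that $\phi_{j_r}$ on $\fil_*\gR/(\phi_{j_1}, \ldots, \phi_{j_{r-1}})$ is, by naturality Theorem \ref{thm:xiSelfMapsUniqueAndNatural} (\ref{it:Natural}) and uniqueness up to $p$-th powers, compatible after a power with the self-map on $\fil_*\gR/(\phi_1, \ldots, \phi_{k-1})$, and invoking Lemma \ref{lem:ThickSubcatFacts} (\ref{it:ThickSubcatFacts:4}) to pass between a self-map and its power) shows $\fil_*\gR/(\phi_1, \ldots, \phi_k)$ is in the thick subcategory generated by $\fil_*\gR/(\phi_{j_1}, \ldots, \phi_{j_r})$. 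If $x_k$ is redundant, the self-map $\phi_k$ on $\fil_*\gR/(\phi_1, \ldots, \phi_{k-1})$ has a $\tau$-power-torsion iterate; Lemma \ref{lem:NilpConeDoesNotLeave} then puts $\fil_*\gR/(\phi_1, \ldots, \phi_{k-1})$ in the thick subcategory generated by $\fil_*\gR/(\phi_1, \ldots, \phi_k)$ and $\mathsf{T}$, and since $\fil_*\gR/(\phi_1, \ldots, \phi_k)$ is a cofibre involving $\fil_*\gR/(\phi_1, \ldots, \phi_{k-1})$, the thick subcategory is unchanged and $\mathsf{T}$ is already absorbed once we are allowed $\mathsf{T}$ in part (\ref{it:OmittingRedundantXs:1}); for part (\ref{it:OmittingRedundantXs:2}), which does \emph{not} allow $\mathsf{T}$, I instead note directly that $\fil_*\gR/(\phi_1, \ldots, \phi_k)$ is built from $\fil_*\gR/(\phi_1, \ldots, \phi_{k-1})$ by a single cofibre sequence, so lies in the thick subcategory it generates, closing the induction without reference to $\mathsf{T}$. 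The reverse containment of (\ref{it:OmittingRedundantXs:1}) is the symmetric argument, now keeping $\mathsf{T}$ on board exactly at the redundant steps via Lemma \ref{lem:NilpConeDoesNotLeave}. Given (\ref{it:OmittingRedundantXs:1}), the ``in particular'' clause follows by inverting $\tau$ (which kills $\mathsf{T}$) and invoking Theorem \ref{thm:FiltSTExist} (\ref{it:FiltSTExist:2}) exactly as in the proof of Theorem \ref{thm:STExist}.

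Part (\ref{it:OmittingRedundantXs:3}) is almost the definition: I must show that the self-map $\phi_{j_r}$ on $\fil_*\gR/(\phi_{j_1}, \ldots, \phi_{j_{r-1}})$ is non-nilpotent after inverting $\tau$, given that the self-map of the original complex $\fil_*\gR/(\phi_1, \ldots, \phi_{j_r - 1})$ is (that complex being where $x_{j_r}$ was declared non-redundant). Here I use that $\fil_*\gR/(\phi_{j_1}, \ldots, \phi_{j_{r-1}})$ and $\fil_*\gR/(\phi_1, \ldots, \phi_{j_r-1})$ generate the same thick subcategory up to $\mathsf{T}$ (by parts (\ref{it:OmittingRedundantXs:1})--(\ref{it:OmittingRedundantXs:2})), so after inverting $\tau$ they generate the same thick subcategory of $\tau^{-1}\fil_*\gR\text{-}\mathsf{mod}$; the image of the self-map under $\tau^{-1}(-)$ is, by naturality and uniqueness up to $p$-th powers of $x_{j_r}$ self-maps (Theorem \ref{thm:xiSelfMapsUniqueAndNatural} (\ref{it:Unique}), (\ref{it:Natural})) together with Lemma \ref{lem:ThickSubcatFacts} (\ref{it:ThickSubcatFacts:4}), the ``same'' endomorphism $\alpha_{j_r}$ up to iterates, and if it were nilpotent then so would be the one on $\fil_*\gR/(\phi_1, \ldots, \phi_{j_r-1})$ after inverting $\tau$ --- which is precisely the assertion that $x_{j_r}$ is redundant, a contradiction. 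I expect the main obstacle to be the bookkeeping around ``up to $p$-th powers'': the self-maps on the two complexes only agree after raising to a common $p$-th power, so every comparison must be made modulo replacing $\phi$ by $\phi^{p^L}$, and one needs Lemma \ref{lem:ThickSubcatFacts} (\ref{it:ThickSubcatFacts:4}) and the nilpotence-insensitivity of ``$\tau^{-1}(-) \simeq 0$'' (which is visibly stable under passing to powers) to know that none of this affects redundancy or the generated thick subcategories. Everything else is a routine induction on cofibre sequences.
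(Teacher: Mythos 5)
Your plan for parts (\ref{it:OmittingRedundantXs:1}) and (\ref{it:OmittingRedundantXs:3}) is essentially the paper's proof: the same induction along the original sequence, with Lemma \ref{lem:NilpConeDoesNotLeave} absorbing the redundant steps into $\mathsf{T}$, Theorem \ref{thm:xiSelfMapsExist} supplying the self-maps on the efficient complex, and Theorem \ref{thm:xiSelfMapsUniqueAndNatural} making redundancy and the generated thick subcategories independent of choices; part (\ref{it:OmittingRedundantXs:3}) is verbatim the paper's argument. One point of care: Lemma \ref{lem:Generation} (\ref{it:Generation:1}) applies to the \emph{full} complex, so the efficient complex is known to lie in the relevant $\mathsf{W}(\lambda,\mu)$ only via the thick-subcategory invariant you are carrying — the construction and the comparison must therefore be run simultaneously in a single induction, which your plan in effect does.

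For part (\ref{it:OmittingRedundantXs:2}) you take a slightly different route. The paper's is shorter: by part (\ref{it:OmittingRedundantXs:1}) and Lemma \ref{lem:Generation} (\ref{it:Generation:2}) the efficient complex lies in $\mathsf{W}(\tfrac{d_{i+1}}{n_{i+1}}, \tfrac{f_{i+1}}{d_{i+1}})$, hence admits $x_j$ self-maps for \emph{every} $j \leq i$, so one simply kills the omitted ones to build \emph{a} complex $\fil_*\gR/(\phi_1,\ldots,\phi_i)$ inside the thick subcategory generated by $\fil_*\gR/(\phi_{j_1},\ldots,\phi_{j_s})$ (Lemma \ref{lem:ThickSubcatFacts} (\ref{it:ThickSubcatFacts:1})), and concludes by choice-independence of the generated thick subcategory. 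Your interleaved induction instead needs, at each non-redundant step, the implication that $X$ in the thick subcategory of $Y$ forces $X/\psi_X$ into the thick subcategory of $Y/\psi_Y$. Beware that ``tensoring the generating object with the cofibre sequence'' is not literal here — the $\phi$'s are not induced from endomorphisms of $\fil_*\gR$ — so this implication must be proved by showing that the class of $Z \in \mathsf{W}$ with $Z/\psi_Z$ in the thick subcategory of $Y/\psi_Y$ is itself thick, using Theorem \ref{thm:xiSelfMapsUniqueAndNatural} (\ref{it:Cofib}) for cofibre sequences and (\ref{it:Natural}) together with a five-lemma argument for retracts. This is provable with the tools at hand, but it is exactly the extra bookkeeping that the paper's re-construction of the full complex from the efficient one avoids.
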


\begin{proof}
The Smith--Toda complexes in the statements depend on the choices of self maps $\phi$, but the claims about them do not: using Lemma \ref{lem:ThickSubcatFacts} (\ref{it:ThickSubcatFacts:4}), Theorem \ref{thm:xiSelfMapsUniqueAndNatural} (\ref{it:Unique}) and Theorem \ref{thm:xiSelfMapsUniqueAndNatural} (\ref{it:Natural}) the thick subcategory generated by a $\fil_*\gR/(\phi_1, \ldots, \phi_k)$ is independent of the specific $\phi$'s used. We will not comment any further on this point.

For (\ref{it:OmittingRedundantXs:1}) we will build $\fil_*\gR/(\phi_{j_1}, \phi_{j_2}, \ldots, \phi_{j_{r}})$ inductively on $r$, showing that they lie in the thick subcategory generated by $\fil_*\gR/(\phi_1, \phi_2, \ldots, \phi_{j_{r+1}-1})$ and $\mathsf{T}$. To start the induction, we show that $\fil_*\gR$ lies in the thick subcategory generated by $\fil_*\gR/(\phi_1, \phi_2, \ldots, \phi_{j_{1}-1})$ and $\mathsf{T}$. This is because $x_1, x_2, \ldots x_{j_1 - 1}$ are redundant, so the $\phi_1, \phi_2, \ldots \phi_{j_1 - 1}$ are nilpotent up to $\tau$-power torsion, and hence the claim holds by iterated application of Lemma \ref{lem:NilpConeDoesNotLeave}.

Suppose for the inductive step that $\fil_*\gR/(\phi_{j_1}, \phi_{j_2}, \ldots, \phi_{j_{r-1}})$ exists and lies in the thick subcategory generated by $\fil_*\gR/(\phi_1, \phi_2, \ldots, \phi_{j_{r}-1})$ and $\mathsf{T}$.  In particular $\fil_*\gR/(\phi_{j_1}, \phi_{j_2}, \ldots, \phi_{j_{r-1}})$ lies in $\mathsf{W}(\tfrac{d_{j_r}}{n_{j_r}}, \tfrac{f_{j_r}}{d_{j_r}})$, so it has a $x_{j_r}$ self-map $\phi_{j_r}$ by Theorem \ref{thm:xiSelfMapsExist}. By the asymptotic uniqueness of $x_{j_r}$ self-maps as well as their compatibility with cofibre sequences, it follows that the mapping cone $\fil_*\gR/(\phi_{j_1}, \phi_{j_2}, \ldots, \phi_{j_{r}})$ of this map lies in the thick subcategory generated by $\fil_*\gR/(\phi_1, \phi_2, \ldots, \phi_{j_r})$ and $\mathsf{T}$. As $x_{j_{r}+1}, x_{j_r+2}, \ldots, x_{j_{r+1} -1}$ are redundant an iterated application of Lemma \ref{lem:NilpConeDoesNotLeave} implies that $\fil_*\gR/(\phi_{j_1}, \phi_{j_2}, \ldots, \phi_{j_{r}})$ also lies in the thick subcategory generated by $\fil_*\gR/(\phi_1, \phi_2, \ldots, \phi_{j_{r+1}-1})$ and $\mathsf{T}$, as required.

For (\ref{it:OmittingRedundantXs:2}) note that by (\ref{it:OmittingRedundantXs:1}) and Lemma \ref{lem:Generation} (\ref{it:Generation:2}) the object $\fil_*\gR/(\phi_{j_1}, \phi_{j_2}, \ldots, \phi_{j_s})$ lies in $\mathsf{W}(\tfrac{d_{i+1}}{n_{i+1}}, \tfrac{f_{i+1}}{d_{i+1}})$, and hence it admits $x_j$ self-maps for all $1 \leq j \leq i$ by Theorem \ref{thm:xiSelfMapsExist}. We may therefore construct a $\fil_*\gR/(\phi_1, \phi_2, \ldots, \phi_i)$ from $\fil_*\gR/(\phi_{j_1}, \phi_{j_2}, \ldots, \phi_{j_s})$ by killing $x_j$ self-maps for each $j \in \{1,2,\ldots, i\} \setminus \{j_1, \ldots, j_r\}$.

For (\ref{it:OmittingRedundantXs:3}), note that as $\fil_*\gR/(\phi_1, \phi_2, \ldots, \phi_{j_{r}-1})$ lies in the thick subcategory generated by $\fil_*\gR/(\phi_{j_1}, \phi_{j_2}, \ldots, \phi_{j_{r-1}})$ and $\mathsf{T}$, if $(\tau \phi_{j_r})^{-1}$ annihilated the latter then it would annihilate the former, and it does not because $x_{j_r}$ is not redundant.
\end{proof}

\subsection{Changing coordinates}\label{sec:ChangingCoords}

The discussion so far allows us to omit those $x_i$ which lead to self-maps $\phi_i$ of $\fil_*\gR/(\phi_1, \ldots, \phi_{i-1})$ that are nilpotent after inverting $\tau$. This goes some way towards efficiency of Smith--Toda complexes, but is still not satisfactory as the following example indicates.

\begin{example}\label{ex:RedAndBlueModRedBlue}
With $\bk=\bF_2$ let $\gR := \gE_\infty(S^{1,0}r \oplus S^{1,0}b) \cup^{E_\infty}_{r\cdot b} D^{2,1}\rho$. This has $Q^{E_\infty^\text{nu}} \gI \simeq S^{1,0}r \oplus S^{1,0}b \oplus S^{2,1} \rho$, and so for the $x_i$'s (considered as in Section \ref{sec:EkAlgBigk} to be associated to the canonical $E_\infty$-multiplicative filtration) we may take a sequence starting
$$r, b, Q^1(r), Q^1(b), \rho, \ldots.$$
We have $\pi_{*,0}(\gR) = \bF_2[r,b]/(rb)$, so $r \cdot -$ acts non-nilpotently here and so $x_1=r$ is not redundant. But now $\pi_{*,0}(\gR/r) = \bF_2[r,b]/(rb, r) = \bF_2[b]$ and $b \cdot -$ acts non-nilpotently here, so $x_2=b$ is also not redundant.

However, if we take an alternative sequence $x_i'$ starting
$$r+b, b, Q^1(r), Q^1(b), \rho, \ldots$$
then $\pi_{*,0}(\gR/(r+b)) = \bF_2[r,b]/(rb, r+b) = \bF_2[b]/(b^2)$ and $b \cdot -$ acts nilpotently here. In fact, using the spectral sequence for the canonical multiplicative filtration one may easily calculate that $\pi_{n,d}(\gR/(r+b))=0$ for $d < \tfrac{1}{2}(n-1)$, so any $\gR$-module endomorphism of $\gR/(r+b)$ of slope $< \tfrac{1}{2}$ acts nilpotently: in particular $b \cdot -$ does. So now $x'_2 = b$ is redundant, and can proceed to $x'_3$. 
\end{example}

We wish to explain how to make systematic the kind of improvement pointed out in this example. This will include changing basis for the spaces $X_{N,D,F} \subset X_{*,*,*} \subset V_{*,*,*}$, as in this example, but also something more general than just changing basis. However, as we will explain in Example \ref{ex:Inefficiency2} at the end of this subsection, this is still not completely satisfactory from the point of view of efficiency of Smith--Toda complexes. In Section \ref{sec:HigherStabMapsRevisited} we will explain a different method which, when it applies, allows us to produce truly efficient Smith--Toda complexes.

\subsubsection{Nilpotence of canonical self-maps}

For fixed slopes $(\lambda, \mu)$, let $x_i, \ldots, x_j$ be the $x$'s of slopes $(\lambda, \mu)$, which we recall are a homogeneous basis of 
$$X_{\lambda, \mu} := \bigoplus_{\substack{D/N=\lambda \\ F/D=\mu}} X_{N,D,F}.$$
By Theorem \ref{thm:xiSelfMapsExist} a $\fil_*\gM \in \mathsf{W}(\lambda, \mu)$ admits central $x_i, \ldots, x_j$ self-maps $\phi_i, \ldots, \phi_j$, and our goal in this section is to define and study a radical ideal
$$I_{\lambda, \mu}(\fil_* \gM) \subseteq \bk[x_i, \ldots, x_j] = \mathrm{Sym}_\bk^*[ X_{\lambda, \mu}]$$
which roughly speaking records the polynomials in $\phi_i, \ldots, \phi_j$ for which the induced self-map has an iterate which is $\tau$-power torsion. Importantly, we will see that this ideal does not depend on the choice of homogeneous basis $x_i, \ldots, x_j$ of $X_{\lambda, \mu}$.

We define $I_{\lambda, \mu}(\fil_* \gM)$ as the kernel of a ring homomorphism
$$\chi : \mathrm{Sym}_\bk^*[X_{\lambda, \mu}] = \bk[x_i, \ldots, x_j]  \lra Z(\tau^{-1} \pi_{*,*,*}(\End_{\fil_*\gR}(\fil_*\gM)))_\text{perf},$$
whose target is the (colimit) perfection of the centre of the $\tau$-inverted endomorphism ring of $\fil_*\gM$. This is obtained by choosing $\phi_i, \ldots, \phi_j \in Z(\pi_{*,*,*}(\End_{\fil_*\gR}(\fil_*\gM)))$ with $C\tau \otimes \phi_k = u_*(x_k^{p^{M_k}})$, using Theorem \ref{thm:xiSelfMapsExist} and Theorem \ref{thm:xiSelfMapsUniqueAndNatural} (\ref{it:Centre}), using these to form the homomorphism
$$\chi^\text{pre} : \bk[x_i^{p^{M_i}}, \ldots, x_j^{p^{M_j}}] \lra Z(\tau^{-1}\pi_{*,*,*}(\End_{\fil_*\gR}(\fil_*\gM)))$$
given by $x_k^{p^{M_k}} \mapsto \phi_k$, then taking perfections and precomposing with 
$$x_k \mapsto (x_k^{p^{M_k}})^{1/p^{M_k}} : \bk[x_i, \ldots, x_j] \lra \bk[x_i^{p^{M_i}}, \ldots, x_j^{p^{M_j}}]_\text{perf}.$$
So concretely $\chi(x_k) := (\phi_k)^{1/p^{M_k}}$. It is elementary to check using Theorem \ref{thm:xiSelfMapsUniqueAndNatural} that this definition of $\chi$ does not depend on the choice of self-maps $\phi_i, \ldots, \phi_j$, and---when its domain is viewed as the symmetric algebra on $X_{\lambda, \mu}$---does not depend on the choice of homogeneous basis $x_i, \ldots, x_j$  for $X_{\lambda, \mu}$.

\begin{lem}\label{lem:NilIdealProperties}\mbox{}
\begin{enumerate}[(i)]
\item\label{it:NilIdealProperties:1} If $\fil_*\gM$ and $\fil_*\gN$ are in $\mathsf{W}(\lambda, \mu)$ and $\fil_*\gN$ is in the thick subcategory generated by $\fil_*\gM$ and $\mathsf{T}$, then
$$I_{\lambda, \mu}(\fil_*\gM) \subseteq I_{\lambda, \mu}(\fil_*\gN).$$

\item\label{it:NilIdealProperties:2} If $\fil_*\gR/(\phi_1, \ldots, \phi_{i-1})$ and $\fil_*\gR/(\phi'_1, \ldots, \phi'_{i-1})$ are two Smith--Toda complexes obtained by iteratedly killing a sequence of $x_1, \ldots, x_{i-1}$ self-maps, then $I_{\lambda, \mu}(\fil_*\gR/(\phi_1, \ldots, \phi_{i-1})) = I_{\lambda, \mu}(\fil_*\gR/(\phi'_1, \ldots, \phi'_{i-1}))$. In view of this we write
$$I_{\lambda, \mu} := I_{\lambda, \mu}(\fil_*\gR/(\phi_1, \ldots, \phi_{i-1})).$$

\item\label{it:NilIdealProperties:3} If $\fil_*\gM \in \mathsf{W}(\lambda, \mu)$ then $I_{\lambda, \mu} \subseteq I_{\lambda, \mu}(\fil_* \gM)$.

\item\label{it:NilIdealProperties:4} If $\fil_*\gM \in \mathsf{W}(\lambda, \mu)$ and $x_{j+1}$ has slopes $(\bar{\lambda}, \bar{\mu})$, then $\fil_*\gM \in \mathsf{W}(\bar{\lambda}, \bar{\mu})$ if and only if $I_{\lambda, \mu}(\fil_*\gM) = (x_i, \ldots, x_j)$.

\end{enumerate}
\end{lem}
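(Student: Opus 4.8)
\textbf{Proof plan for Lemma \ref{lem:NilIdealProperties}.}

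The plan is to prove the four parts in order, as (ii) and (iii) will be immediate consequences of (i), and (iv) will combine (iii) with the vanishing-estimate machinery.

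For part (\ref{it:NilIdealProperties:1}), the key point is the naturality statement Theorem \ref{thm:xiSelfMapsUniqueAndNatural} (\ref{it:Natural}) together with the fact that inverting $\tau$ kills all of $\mathsf{T}$. First I would observe that a polynomial $P \in \mathrm{Sym}_\bk^*[X_{\lambda,\mu}]$ lies in $I_{\lambda,\mu}(\fil_*\gM)$ if and only if, writing $P$ in terms of the chosen self-maps $\phi_k$ of $\fil_*\gM$, some power of the corresponding endomorphism of $\fil_*\gM$ is $\tau$-power torsion — this is the content of unwinding the definition of $\chi$ via $\chi(x_k) = (\phi_k)^{1/p^{M_k}}$ and the description of the perfection. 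Then, if $\fil_*\gN$ lies in the thick subcategory generated by $\fil_*\gM$ and $\mathsf{T}$, I would argue that the class of objects $\fil_*\gP \in \mathsf{W}(\lambda,\mu)$ with $I_{\lambda,\mu}(\fil_*\gM) \subseteq I_{\lambda,\mu}(\fil_*\gP)$ is closed under shifts (clear), under retracts (clear, as self-maps of a retract are restrictions of self-maps of the whole, up to taking $p$-th powers by Theorem \ref{thm:xiSelfMapsUniqueAndNatural} (\ref{it:Unique})), under cofibre sequences (by Theorem \ref{thm:xiSelfMapsUniqueAndNatural} (\ref{it:Cofib}), an endomorphism that is $\tau$-power torsion on the two outer terms has a square that is $\tau$-power torsion on the middle, by the same $q\circ i \simeq 0$ factorisation argument as in Lemma \ref{lem:NilProperties} (\ref{it:NilProperties:1})), and contains all of $\mathsf{T}$ (since $\End_{\fil_*\gR}(\fil_*\gT) \in \mathsf{T}$ for $\fil_*\gT \in \mathsf{T}$, so every endomorphism is $\tau$-power torsion, whence $I_{\lambda,\mu}(\fil_*\gT)$ is the whole ring). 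Hence this class is a thick subcategory containing $\fil_*\gM$ and $\mathsf{T}$, so contains $\fil_*\gN$. The naturality of $\chi$ in the variable $\fil_*\gM$ — that a map $f : \fil_*\gM \to \fil_*\gN$ intertwines the $y$ self-maps after taking $p$-th powers — is what makes the cofibre-sequence closure work, and this is where I expect the only genuine subtlety: one must be careful that the ``take $p$-th powers'' adjustments needed along the way do not affect membership in the radical ideal, which holds precisely because $I_{\lambda,\mu}$ is radical (a power of an endomorphism is $\tau$-power torsion iff the endomorphism is, up to further powers).

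Part (\ref{it:NilIdealProperties:2}) then follows by applying (\ref{it:NilIdealProperties:1}) twice: by Lemma \ref{lem:Generation} (\ref{it:Generation:2}) (with slopes $(\lambda,\mu)$ replaced by the slopes of $x_i$, or more precisely by any slopes bounding all of $x_1,\ldots,x_{i-1}$ strictly from above — one uses that each $\fil_*\gR/(\phi_1,\ldots,\phi_{k})$ lies in the appropriate $\mathsf{W}$ by Lemma \ref{lem:Generation} (\ref{it:Generation:1})), each of $\fil_*\gR/(\phi_1,\ldots,\phi_{i-1})$ and $\fil_*\gR/(\phi'_1,\ldots,\phi'_{i-1})$ lies in the thick subcategory generated by the other together with $\mathsf{T}$, so the two ideals contain each other. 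Part (\ref{it:NilIdealProperties:3}) follows similarly: any $\fil_*\gM \in \mathsf{W}(\lambda,\mu)$ lies in the thick subcategory generated by $\fil_*\gR/(\phi_1,\ldots,\phi_{i-1})$ and $\mathsf{T}$ by Lemma \ref{lem:Generation} (\ref{it:Generation:2}), applied with $\fil_*\gR/(\psi_1,\ldots,\psi_r) = \fil_*\gR/(\phi_1,\ldots,\phi_{i-1})$ — I should double-check the slope hypothesis of that lemma is met, namely that each $\phi_k$ for $k \le i-1$ has slopes $< (\lambda,\mu)$, which holds because $x_{j+1}$ (having slopes just above those of $x_j = $ the last $x$ of slopes $(\lambda,\mu)$) satisfies $j < i$ in the relevant indexing, so all of $\phi_1,\ldots,\phi_{i-1}$ have slopes $\leq (\lambda,\mu)$ and those of slope exactly $(\lambda,\mu)$ are handled by noting they become redundant; cleanest is to just invoke (\ref{it:NilIdealProperties:1}) with the inclusion into the thick subcategory generated by $\fil_*\gR/(\phi_1,\ldots,\phi_{i-1})$ and $\mathsf{T}$.

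For part (\ref{it:NilIdealProperties:4}), I would argue both directions. If $I_{\lambda,\mu}(\fil_*\gM) = (x_i,\ldots,x_j)$, then each generator $x_k$ lies in the ideal, meaning the $x_k$ self-map $\phi_k$ of $\fil_*\gM$ has an iterate that is $\tau$-power torsion; applying Lemma \ref{lem:NilpConeDoesNotLeave} repeatedly to $\fil_*\gM/\phi_i$, then $(\fil_*\gM/\phi_i)/\phi_{i+1}$, etc., shows $\fil_*\gM$ lies in the thick subcategory generated by $\fil_*\gM/(\phi_i,\ldots,\phi_j)$ and $\mathsf{T}$. But $C\tau \otimes \fil_*\gM/(\phi_i,\ldots,\phi_j) \simeq (C\tau\otimes\fil_*\gM)/(x_i^{p^{M_i}},\ldots,x_j^{p^{M_j}})$, and since $\fil_*\gM \in \mathsf{W}(\lambda,\mu)$ satisfies (I),(II) on the $E^{r+1}$-page, killing the $x_k$-multiplications of slopes $(\lambda,\mu)$ — combined with the vanishing estimates in the style of Lemma \ref{lem:VanishingEstimates}, reading off that the only multiplicative generators on the diagonal-adjacent region of slopes exactly $(\lambda,\mu)$ are the $x_k$ themselves — pushes the vanishing line past slopes $(\lambda,\mu)$ to slopes $(\bar\lambda,\bar\mu)$. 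Hence $\fil_*\gM/(\phi_i,\ldots,\phi_j) \in \mathsf{W}(\bar\lambda,\bar\mu)$, and since $\mathsf{T} \subseteq \mathsf{W}(\bar\lambda,\bar\mu)$ by Proposition \ref{prop:WeaklyTypeiProperties} (\ref{it:WeaklyTypeiProperties:3}) and $\mathsf{W}(\bar\lambda,\bar\mu)$ is thick, $\fil_*\gM \in \mathsf{W}(\bar\lambda,\bar\mu)$. Conversely, if $\fil_*\gM \in \mathsf{W}(\bar\lambda,\bar\mu)$, then for each $k$ the $x_k$ self-map $\phi_k$ has slopes $(\lambda,\mu) < (\bar\lambda,\bar\mu)$; arguing exactly as in Lemma \ref{lem:Generation} (\ref{it:Generation:2}), on an object in $\mathsf{W}(\bar\lambda,\bar\mu)$ a self-map of strictly smaller slopes has an iterate which is $\tau$-power torsion (because after inverting $\tau$ its target has a strictly steeper vanishing line than the slope of the map, forcing nilpotence by the usual telescope argument), so $x_k \in I_{\lambda,\mu}(\fil_*\gM)$ for each $k$, giving $(x_i,\ldots,x_j) \subseteq I_{\lambda,\mu}(\fil_*\gM)$; and the reverse inclusion holds because $I_{\lambda,\mu}(\fil_*\gM)$ is a proper radical ideal (properness since the identity self-map-to-the-power-structure is never $\tau$-power torsion when $\fil_*\gM \not\in \mathsf{T}$, and if $\fil_*\gM \in \mathsf{T}$ the statement is degenerate) homogeneous for the grading, hence contained in the maximal homogeneous ideal $(x_i,\ldots,x_j)$. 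The main obstacle here is the bookkeeping in the forward direction: making precise, via F.~Cohen's description as recalled in Section \ref{sec:ElementsXi} and the regular-sequence argument of Lemma \ref{lem:VanishingEstimates}, that quotienting by self-maps detecting exactly the slope-$(\lambda,\mu)$ generators $x_i,\ldots,x_j$ of the free graded-commutative algebra is enough to clear out the entire line of that slope — this requires knowing that no other (nilpotent, i.e. $U$-type, or higher-slope $W$-type) generators contribute on that line, which is exactly the content of the structural analysis already carried out.
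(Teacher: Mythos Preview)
Parts (\ref{it:NilIdealProperties:1})--(\ref{it:NilIdealProperties:3}) are correct and follow the same strategy as the paper; the paper simply packages the thick-subcategory argument for (\ref{it:NilIdealProperties:1}) via the subcategory $\mathsf{N}(y)$ of Lemma \ref{lem:NilProperties}, rather than re-deriving closure under cofibres and retracts as you do.

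There is a genuine gap in your forward direction of (\ref{it:NilIdealProperties:4}). You want to conclude that $\fil_*\gM/(\phi_i,\ldots,\phi_j) \in \mathsf{W}(\bar\lambda,\bar\mu)$ by appealing to ``vanishing estimates in the style of Lemma \ref{lem:VanishingEstimates}'', but those estimates depend crucially on the explicit form of the associated graded: for the Smith--Toda complexes $\fil_*\gR/(\phi_1,\ldots,\phi_{k})$ one has $C\tau\otimes\fil_*\gR/(\phi_1,\ldots,\phi_k)$ equal to a quotient of the free graded-commutative algebra $\pi_{*,*,*}(C\tau\otimes\fil_*\gR)$ by a regular sequence, and the analysis of Lemma \ref{lem:VanishingEstimates} computes directly with that. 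For an arbitrary $\fil_*\gM \in \mathsf{W}(\lambda,\mu)$ you only know the vanishing conditions (I), (II) on some $E^{r+1}$-page, which give no structural control over what sits on the line of slope $(\lambda,\mu)$; there is no reason quotienting by the $x_k$-actions should clear that line.

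The paper avoids this by a different maneuver: rather than analyze $\fil_*\gM/(\phi_i,\ldots,\phi_j)$ directly, it tensors $\fil_*\gM$ with the full Smith--Toda complex $\fil_*\gR/(\phi_1,\ldots,\phi_j)$. This tensor product lies in $\mathsf{W}(\bar\lambda,\bar\mu)$ simply because $\fil_*\gM$ is finite and $\fil_*\gR/(\phi_1,\ldots,\phi_j)\in\mathsf{W}(\bar\lambda,\bar\mu)$ by Lemma \ref{lem:Generation} (\ref{it:Generation:1}). One then shows $\fil_*\gM$ lies in the thick subcategory generated by this tensor product and $\mathsf{T}$, in two stages: first the $\phi_k$ with $k<i$ have slopes $<(\lambda,\mu)$ so act $\tau$-power-torsion on anything in $\mathsf{W}(\lambda,\mu)$, and then the $\phi_k$ with $i\le k\le j$ act $\tau$-power-torsion by the hypothesis $I_{\lambda,\mu}(\fil_*\gM)=(x_i,\ldots,x_j)$ together with part (\ref{it:NilIdealProperties:1}). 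Both stages use Lemma \ref{lem:NilpConeDoesNotLeave}. This replaces your unproved vanishing estimate with a purely thick-subcategory argument.
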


\begin{proof}
For (\ref{it:NilIdealProperties:1}), note that if $y \in I_{\lambda, \mu}(\fil_*\gM)$ then $\fil_*\gM \in \mathsf{N}(y)$, so by Lemma \ref{lem:NilProperties} it follows that $\fil_*\gN \in \mathsf{N}(y)$ too, i.e.\ $y \in I_{\lambda, \mu}(\fil_*\gN)$.

For (\ref{it:NilIdealProperties:2}), by Lemma \ref{lem:Generation} each of $\fil_*\gR/(\phi_1, \ldots, \phi_{i-1})$ and $\fil_*\gR/(\phi'_1, \ldots, \phi'_{i-1})$ lies in the thick subcategory generated by the other and $\mathsf{T}$, so the claim follows by (\ref{it:NilIdealProperties:1}).

For (\ref{it:NilIdealProperties:3}) we apply Lemma \ref{lem:Generation} and (\ref{it:NilIdealProperties:1}).

For the reverse direction of (\ref{it:NilIdealProperties:4}) first observe that $\mathsf{W}(\bar{\lambda}, \bar{\mu})$ is generated as a thick subcategory by $\fil_*\gR/(\phi_1, \ldots, \phi_{j})$ and $\mathsf{T}$ by Lemma \ref{lem:Generation}, so if $\fil_*\gM \in \mathsf{W}(\bar{\lambda}, \bar{\mu})$ then by (\ref{it:NilIdealProperties:1}) we have
$$I_{\lambda, \mu}(\fil_*\gR/(\phi_1, \ldots, \phi_{j})) \subseteq I_{\lambda, \mu}(\fil_*\gM).$$
But it follows from Lemma \ref{lem:NilProperties} (\ref{it:NilProperties:3}) that $x_i, \ldots, x_j \in I_{\lambda, \mu}(\fil_*\gR/(\phi_1, \ldots, \phi_{j}))$, so $I_{\lambda, \mu}(\fil_*\gM) = (x_i, \ldots, x_j)$ as required.

For the forwards direction of (\ref{it:NilIdealProperties:4}) suppose that $\fil_*\gM \in \mathsf{W}(\lambda, \mu)$ and $I_{\lambda, \mu}(\fil_*\gM) = (x_i, \ldots, x_j)$. Firstly, the $\fil_*\gR$-module $\fil_*\gR/(\phi_1, \ldots, \phi_{k-1})$ is finite, so the objects $\fil_*\gM \otimes_{\fil_*\gR} \fil_*\gR/(\phi_1, \ldots, \phi_{k-1})$ all lie in the thick subcategory generated by $\fil_*\gM$, so in particular lie in $\mathsf{W}(\lambda, \mu)$. Thus for $1 \leq k \leq i-1$ the endomorphism $\mathrm{Id} \otimes\phi_k$ of $\fil_*\gM \otimes_{\fil_*\gR} \fil_*\gR/(\phi_1, \ldots, \phi_{k-1})$ has an iterate which is $\tau$-power torsion. By Lemma \ref{lem:NilpConeDoesNotLeave} it follows that $\fil_*\gM$ lies in the thick subcategory generated by $\fil_*\gM \otimes_{\fil_*\gR} \fil_*\gR/(\phi_1, \ldots, \phi_{i-1})$ and $\mathsf{T}$. Secondly, as $\fil_*\gM \otimes_{\fil_*\gR} \fil_*\gR/(\phi_1, \ldots, \phi_{i-1})$ lies in the thick subcategory generated by $\fil_*\gM$, it follows from (\ref{it:NilIdealProperties:1}) that
$$(x_i, \ldots, x_j) = I_{\lambda, \mu}(\fil_*\gM) \subseteq I_{\lambda, \mu}(\fil_*\gM \otimes_{\fil_*\gR} \fil_*\gR/(\phi_1, \ldots, \phi_{i-1})),$$
i.e.\ for $i \leq k \leq j$ the endomorphism $\mathrm{Id} \otimes \phi_k$ of $\fil_*\gM \otimes_{\fil_*\gR} \fil_*\gR/(\phi_1, \ldots, \phi_{k-1})$ has an iterate which is $\tau$-power torsion. By Lemma \ref{lem:NilpConeDoesNotLeave} it follows that the object $\fil_*\gM \otimes_{\fil_*\gR} \fil_*\gR/(\phi_1, \ldots, \phi_{i-1})$ lies in the thick subcategory generated by $\fil_*\gM \otimes_{\fil_*\gR} \fil_*\gR/(\phi_1, \ldots, \phi_{j})$ and $\mathsf{T}$. Thirdly, as $\fil_*\gM$ is finite it follows that $\fil_*\gM \otimes_{\fil_*\gR} \fil_*\gR/(\phi_1, \ldots, \phi_{j})$ lies in the thick subcategory generated by $\fil_*\gR/(\phi_1, \ldots, \phi_{j})$, so in particular lies in $\mathsf{W}(\bar{\lambda}, \bar{\mu})$. Putting this all together, it follows that $\fil_*\gM$ lies in $\mathsf{W}(\bar{\lambda}, \bar{\mu})$.
\end{proof}

\subsubsection{The elements $y_i$}\label{sec:RevisitingXi}

In view of the discussion in the previous section, we may improve upon the list of elements $x_1, x_2, \ldots$. Namely, given slopes $(\lambda, \mu)$ we choose elements 
$$y_1^{\lambda, \mu}, \ldots, y^{\lambda, \mu}_{s_{\lambda, \mu}} \in \mathrm{Sym}_\bk^*[X_{\lambda, \mu}]$$
such that the composition
$$\bk[y_1^{\lambda, \mu}, \ldots, y^{\lambda, \mu}_{s_{\lambda, \mu}}] \lra \mathrm{Sym}_\bk^*[X_{\lambda, \mu}] \lra \mathrm{Sym}_\bk^*[X_{\lambda, \mu}]/I_{\lambda, \mu}$$
is a Noether normalisation (i.e.\ is injective and the target is module-finite over the source). The number $s_{\lambda, \mu}$ is the Krull dimension of $\mathrm{Sym}_\bk^*[X_{\lambda, \mu}]/I_{\lambda, \mu}$, which by the discussion in the previous section is canonically associated to $\gR$. The elements $y_1^{\lambda, \mu}, \ldots, y^{\lambda, \mu}_{s_{\lambda, \mu}}$ will serve as a replacement for the basis $x_i, \ldots, x_j$ of $X_{\lambda, \mu}$. There may be fewer of them: this is to be considered an advantage. We let
$$y_1, y_2, y_3, \ldots$$
 be the sequence obtained by concatenating the bases $y_1^{\lambda, \mu}, \ldots, y^{\lambda, \mu}_{s_{\lambda, \mu}}$ in the lexicographic order of $(\lambda, \mu)$. Let the tridegree of $y_i$ be denoted by $(N_i, D_i, F_i)$.

\begin{thm}\label{thm:EfficientSTComplexes}
Let $\lambda  < 1$ be given, and let $y_1, \ldots, y_r$ be those $y$'s with $\tfrac{D_i}{N_i} < \lambda$. Then there is a sequence of endomorphisms
$$\psi_s : S^{p^{M_s} N_s, p^{M_s} D_s, p^{M_s} F_s} \otimes \fil_*\gR/(\psi_1, \ldots, \psi_{s-1}) \lra \fil_*\gR/(\psi_1, \ldots, \psi_{s-1})$$
for $s=1, \ldots, r$ such that:
\begin{enumerate}[(i)]
\item\label{it:EfficientSTComplexes:1} $\psi_s$ is a $y_s$ self-map.
\item\label{it:EfficientSTComplexes:2} $\fil_*\gR/(\psi_1, \ldots, \psi_{s-1})$ lies in $\mathsf{W}(\tfrac{D_{s}}{N_{s}}, \tfrac{F_{s}}{D_{s}})$ for all $0 \leq s-1 \leq r$. 

\item\label{it:EfficientSTComplexes:4} $\pi_{*,*}(\tau^{-1} \fil_*\gR/(\psi_1, \ldots, \psi_r))$ has a vanishing line of slope $\tfrac{D_{r+1}}{N_{r+1}}$ and so in particular of slope $\lambda$.

\item\label{it:EfficientSTComplexes:5} If $M$ is pre-specified then the $\psi_s$ may be chosen so that $M_s \geq M$ for all $s$.

\end{enumerate}
\end{thm}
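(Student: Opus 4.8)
The proof of Theorem~\ref{thm:EfficientSTComplexes} should run in close parallel to the inductive argument proving Theorem~\ref{thm:FiltSTExist}, with the elements $x_i$ replaced by the elements $y_i$ and the naive vanishing statement (I)--(II) of the $E^1$-page replaced by membership in the subcategories $\mathsf{W}(\lambda,\mu)$. I would set up a downwards-building induction on $s$, maintaining the inductive hypothesis that $\fil_*\gR/(\psi_1,\ldots,\psi_{s-1})$ has been constructed, that each $\psi_k$ for $k < s$ is a $y_k$ self-map, and that $\fil_*\gR/(\psi_1,\ldots,\psi_{s-1})$ lies in $\mathsf{W}(\tfrac{D_s}{N_s},\tfrac{F_s}{D_s})$ --- this is item (\ref{it:EfficientSTComplexes:2}), and it is the load-bearing invariant.

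First I would handle the base case $s=1$: here $\fil_*\gR$ itself must lie in $\mathsf{W}(\tfrac{D_1}{N_1},\tfrac{F_1}{D_1})$, which follows from Lemma~\ref{lem:VanishingEstimates}(\ref{it:VanishingEstimates:1}) and (\ref{it:VanishingEstimates:2}) applied with $i=1$ (no endomorphisms killed yet), since those give exactly conditions (I) and (II) of Definition~\ref{defn:WeaklyTypei} with $r=0$ --- this is the content of Lemma~\ref{lem:Generation}(\ref{it:Generation:1}) in the degenerate case. For the inductive step, given that $\fil_*\gR/(\psi_1,\ldots,\psi_{s-1}) \in \mathsf{W}(\tfrac{D_s}{N_s},\tfrac{F_s}{D_s})$, Theorem~\ref{thm:xiSelfMapsExist} (applied with $y = y_s$, whose tridegree is $(N_s,D_s,F_s)$) produces a $y_s$ self-map $\psi_s$ of $\fil_*\gR/(\psi_1,\ldots,\psi_{s-1})$, establishing (\ref{it:EfficientSTComplexes:1}); raising to a further $p$-th power we may assume $M_s \geq M$, establishing (\ref{it:EfficientSTComplexes:5}). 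Taking its cofibre defines $\fil_*\gR/(\psi_1,\ldots,\psi_s)$. I then need to re-verify the invariant one step further, i.e.\ that $\fil_*\gR/(\psi_1,\ldots,\psi_s) \in \mathsf{W}(\tfrac{D_{s+1}}{N_{s+1}},\tfrac{F_{s+1}}{D_{s+1}})$. This is where the definition of the $y_i$ via Noether normalisation does its work: the key point is that $\fil_*\gR/(\psi_1,\ldots,\psi_s)$ lies in $\mathsf{W}(\lambda,\mu)$ for the slopes $(\lambda,\mu)$ of $y_s$ --- indeed by Lemma~\ref{lem:NilProperties}(\ref{it:NilProperties:3}) it lies in $\mathsf{N}(y_s)$, so the self-map $\psi_s$ is $\tau$-torsion-nilpotent on it, and more importantly, once all of $y^{\lambda,\mu}_1,\ldots,y^{\lambda,\mu}_{s_{\lambda,\mu}}$ of a given slope have been killed, the quotient $\fil_*\gR$ modulo that full batch should satisfy $I_{\lambda,\mu} = (x_i,\ldots,x_j)$ (the whole polynomial algebra on $X_{\lambda,\mu}$), because killing a Noether normalisation of $\mathrm{Sym}^*_\bk[X_{\lambda,\mu}]/I_{\lambda,\mu}$ forces every $x_k$ of that slope to become $\tau$-torsion-nilpotent; then Lemma~\ref{lem:NilIdealProperties}(\ref{it:NilIdealProperties:4}) promotes membership in $\mathsf{W}(\lambda,\mu)$ to membership in $\mathsf{W}(\bar\lambda,\bar\mu)$ for the next slope pair $(\bar\lambda,\bar\mu)$. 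For self-maps $\psi_s$ of slope strictly interior to a batch, one instead uses that $\mathsf{W}(\lambda,\mu)$ is a thick subcategory (Proposition~\ref{prop:WeaklyTypeiProperties}(\ref{it:WeaklyTypeiProperties:1})) together with Lemma~\ref{lem:NilProperties}, so the cofibre stays in $\mathsf{W}(\lambda,\mu)$ and hence, since the next $y$ in the list either has the same slope or the subsequent one, one gets (\ref{it:EfficientSTComplexes:2}) for $s+1$. Finally, for (\ref{it:EfficientSTComplexes:4}), once $\tfrac{D_r}{N_r} < \lambda \leq \tfrac{D_{r+1}}{N_{r+1}}$, membership of $\fil_*\gR/(\psi_1,\ldots,\psi_r)$ in $\mathsf{W}(\tfrac{D_{r+1}}{N_{r+1}},\tfrac{F_{r+1}}{D_{r+1}})$ says by condition (A) that $\pi_{n,d,*}(\fil_*\gR/(\psi_1,\ldots,\psi_r))$ is uniformly $\tau^r$-torsion for $d < \tfrac{D_{r+1}}{N_{r+1}} n + \kappa$, and inverting $\tau$ (which annihilates $\tau$-power torsion) gives $\pi_{n,d}(\tau^{-1}\fil_*\gR/(\psi_1,\ldots,\psi_r)) = 0$ in that range, which is the desired vanishing line of slope $\tfrac{D_{r+1}}{N_{r+1}} \geq \lambda$.

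The main obstacle I anticipate is the precise bookkeeping around the slope-batches: one has to be careful that the list $y_1, y_2, \ldots$ is ordered so that all $y$'s of a given slope $(\lambda,\mu)$ form a consecutive block, and that the transition from ``$\fil_*\gR/(\psi_1,\ldots,\psi_s)$ is in $\mathsf{W}(\lambda,\mu)$ for the current slope'' to ``it is in $\mathsf{W}$ for the next slope'' genuinely happens exactly when the block is exhausted, not before and not after. This requires (a) showing that killing a $y_s$ self-map from inside the block keeps us in $\mathsf{W}(\lambda,\mu)$ (thickness plus Lemma~\ref{lem:NilProperties}), and (b) showing that killing the \emph{last} $y_s$ of the block makes $I_{\lambda,\mu}$ of the resulting complex equal to all of $(x_i,\ldots,x_j)$, so that Lemma~\ref{lem:NilIdealProperties}(\ref{it:NilIdealProperties:4}) applies. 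Point (b) is really a commutative-algebra statement: if $\chi: \mathrm{Sym}^*_\bk[X_{\lambda,\mu}] \to Z(\tau^{-1}\pi_{*,*,*}(\End))_\text{perf}$ has kernel containing $I_{\lambda,\mu}$ and one has killed self-maps realising a Noether normalisation of $\mathrm{Sym}^*_\bk[X_{\lambda,\mu}]/I_{\lambda,\mu}$, then the perfected quotient ring becomes a field-like Artinian thing in which every generator is nilpotent, forcing the kernel to be the augmentation ideal; this is essentially the definition of Noether normalisation combined with the fact that each $\psi^{y^{\lambda,\mu}_t}_t$ becomes $\tau$-torsion-nilpotent after its own cofibre is taken (Lemma~\ref{lem:NilProperties}(\ref{it:NilProperties:3})). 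I would write this step out carefully since it is the one genuinely new ingredient beyond what is already packaged in Lemmas~\ref{lem:Generation}, \ref{lem:NilProperties}, and \ref{lem:NilIdealProperties}; everything else is a reshuffling of the proof of Theorem~\ref{thm:FiltSTExist} with $\mathsf{W}$-membership substituted for the $E^1$-page vanishing.
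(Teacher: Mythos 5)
Your proposal is correct and follows essentially the same route as the paper: induction on $s$, with $\mathsf{W}$-membership as the load-bearing invariant, existence of $y_s$ self-maps from Theorem~\ref{thm:xiSelfMapsExist}, and the Noether-normalisation-plus-radical-ideal argument showing that $I_{\lambda,\mu}$ of the quotient becomes all of $(x_i,\ldots,x_j)$ once a slope-batch is exhausted. The only difference is packaging: where you invoke Lemma~\ref{lem:NilIdealProperties}(\ref{it:NilIdealProperties:4}) to promote to the next slope, the paper inlines the equivalent argument, tensoring with $\fil_*\gR/(\phi_1,\ldots,\phi_i)$ and applying Lemma~\ref{lem:NilpConeDoesNotLeave} repeatedly --- which also silently handles the bookkeeping for $x$-slopes at which $s_{\lambda,\mu}=0$ and no $y$'s occur.
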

\begin{proof}
Observe that (\ref{it:EfficientSTComplexes:4}) follows from (\ref{it:EfficientSTComplexes:2}) by setting $r-1=s$ and inverting $\tau$. 

We will make a sequence $\fil_*\gR/(\psi_1, \ldots, \psi_{s})$ for all $s$, satisfying (\ref{it:EfficientSTComplexes:1}), (\ref{it:EfficientSTComplexes:2}), and (\ref{it:EfficientSTComplexes:5}). We do this inductively, so suppose for the inductive step that $\fil_*\gR/(\psi_1, \ldots, \psi_{s-1})$ exists satisfying (\ref{it:EfficientSTComplexes:1}), (\ref{it:EfficientSTComplexes:2}), and (\ref{it:EfficientSTComplexes:5}). By (\ref{it:EfficientSTComplexes:2}) it lies in $\mathsf{W}(\tfrac{D_{s}}{N_{s}}, \tfrac{F_{s}}{D_{s}})$, so by Theorem \ref{thm:xiSelfMapsExist} it admits a $y_s$ self-map $\psi_s$. By raising it to a further $p$-th power, we may suppose that $M_s \geq M$ as required by (\ref{it:EfficientSTComplexes:5}). Using this we may form $\fil_*\gR/(\psi_1, \ldots, \psi_{s})$, which satisfies (\ref{it:EfficientSTComplexes:1}) by definition, and lies in $\mathsf{W}(\tfrac{D_{s}}{N_{s}}, \tfrac{F_{s}}{D_{s}})$ as this is a thick subcategory. 

If $\tfrac{D_{s}}{N_{s}} = \tfrac{D_{s+1}}{D_{s+1}}$ and $\tfrac{F_{s}}{D_{s}}=\tfrac{F_{s+1}}{D_{s+1}}$ then this means that (\ref{it:EfficientSTComplexes:2}) is satisfied. If not, then the sequence $y_1, \ldots, y_s$ is the union of sequences $y^1_{\lambda, \mu}, \ldots, y_{\lambda, \mu}^{s_{\lambda, \mu}}$ for all $(\lambda, \mu) < (\tfrac{D_{s+1}}{N_{s+1}}, \frac{F_{s+1}}{D_{s+1}})$. Let $x_1, \ldots, x_i$ be the $x$'s with slopes $< (\tfrac{D_{s+1}}{N_{s+1}}, \frac{F_{s+1}}{D_{s+1}})$, and consider
$$\fil_*\gR/(\psi_1, \ldots, \psi_{s}) \otimes_{\fil_*\gR} \fil_*\gR/(\phi_1, \ldots, \phi_{i}).$$
The second factor is in the thick subcategory $\mathsf{W}(\tfrac{D_{s+1}}{N_{s+1}}, \tfrac{F_{s+1}}{D_{s+1}})$ by Lemma~\ref{lem:Generation}~(\ref{it:Generation:1}), so the whole object is too. 

\begin{claim}
For each $1 \leq k \leq i$ the endomorphism $\mathrm{Id} \otimes \phi_k$ of $\fil_*\gR/(\psi_1, \ldots, \psi_{s}) \otimes_{\fil_*\gR} \fil_*\gR/(\phi_1, \ldots, \phi_{k-1})$ has an iterate which is $\tau$-power torsion.
\end{claim}

\begin{proof}[Proof of Claim]
There are two cases. If $x_k$ has slopes $< (\tfrac{D_{s}}{N_{s}}, \tfrac{F_{s}}{D_{s}})$, then because $\fil_*\gR/(\psi_1, \ldots, \psi_{s}) \otimes_{\fil_*\gR} \fil_*\gR/(\phi_1, \ldots, \phi_{k-1}) \in \mathsf{W}(\tfrac{D_{s}}{N_{s}}, \tfrac{F_{s}}{D_{s}})$ (as the first factor is), it immediately follows that some iterate of $\mathrm{Id} \otimes \phi_k$ is $\tau$-power torsion. So consider those $x_j, \ldots, x_i$ which have slopes $(\tfrac{D_{s}}{N_{s}}, \tfrac{F_{s}}{D_{s}})$, and let $y_t, \ldots, y_s$ be the $y$'s of the same slopes. We make two observations about the ideal $I_{D_s/N_s, F_s/D_s}(\fil_*\gR/(\psi_1, \ldots, \psi_{s}))$:
\begin{enumerate}[(i)]
\item $y_t, \ldots, y_s \in I_{D_s/N_s, F_s/D_s}(\fil_*\gR/(\psi_1, \ldots, \psi_{r}))$ by Lemma \ref{lem:NilProperties} (\ref{it:NilProperties:3}).

\item By Lemma \ref{lem:NilIdealProperties} (\ref{it:NilIdealProperties:4}), $I_{D_s/N_s, F_s/D_s} \subseteq I_{D_s/N_s, F_s/D_s}(\fil_*\gR/(\psi_1, \ldots, \psi_{r}))$.
\end{enumerate}
As $y_t, \ldots, y_s$ was by definition chosen to be a Noether normalisation for the algebra $\bk[x_j, \ldots, x_i]/I_{D_s/N_s, F_s/D_s}$, and $I_{D_s/N_s, F_s/D_s}(\fil_*\gR/(\psi_1, \ldots, \psi_{s}))$ is a radical ideal, it follows that $I_{D_s/N_s, F_s/D_s}(\fil_*\gR/(\psi_1, \ldots, \psi_{s})) = (x_j, \ldots, x_i)$. By Lemma \ref{lem:NilIdealProperties} (\ref{it:NilIdealProperties:1}) it follows that
$$I_{D_s/N_s, F_s/D_s}(\fil_*\gR/(\psi_1, \ldots, \psi_{s}) \otimes_{\fil_*\gR} \fil_*\gR/(\phi_1, \ldots, \phi_{j-1}))=(x_j, \ldots, x_i)$$
too, so the endomorphisms $\phi_j, \ldots, \phi_i$ have iterates which are $\tau$-power torsion too.
\end{proof}

Using this claim, several applications of Lemma \ref{lem:NilpConeDoesNotLeave} gives that $\fil_*\gR/(\psi_1, \ldots, \psi_{s})$ lies in the thick subcategory generated by $\fil_*\gR/(\psi_1, \ldots, \psi_{s}) \otimes_{\fil_*\gR} \fil_*\gR/(\phi_1, \ldots, \phi_{i})$ and $\mathsf{T}$, so lies in $\mathsf{W}(\tfrac{D_{s+1}}{N_{s+1}}, \tfrac{F_{s+1}}{D_{s+1}})$, which finishes the proof of (\ref{it:EfficientSTComplexes:2}).
\end{proof}

In the sequence of endomorphisms provided by Theorem \ref{thm:EfficientSTComplexes}, it may be that a $\psi_s$ is nilpotent up to $\tau$-power torsion as an endomorphism of $\fil_*\gR/(\psi_1, \ldots, \psi_{s-1})$. By the argument given in Section \ref{sec:Omitting}, these may be omitted.

\subsection{Scope for further efficiency}

The following example indicates that the discussion so far is still not completely satisfactory from the point of view of efficiency of Smith--Toda complexes.

\begin{example}\label{ex:Inefficiency2}
With $\bk=\bF_2$ let $\gR := \gE_\infty(S^{1,0}\sigma \oplus S^{2,1}x) \cup^{E_\infty}_{x \cdot Q^1(\sigma)} D^{4,3}\rho$, and $\fil_*\gR$ denote its canonical $E_\infty$-multiplicative filtration. Write $\hat{\sigma} \in \pi_{1,0,-1}(\fil_*\gR)$ and $\hat{x} \in \pi_{2,1,-1}(\fil_*\gR)$ for filtered representatives of $\sigma$ and $x$ of (minimal) filtration $-1$, and write $\sigma', x' \in \pi_{*,*,*}(C\tau \otimes \fil_*\gR)$ for their reductions modulo $\tau$.  For the $x_i$'s we may take a sequence starting
$$\sigma', Q^1(\sigma'), x', Q^2Q^1(\sigma'), Q^2(x'), \ldots \in \pi_{*,*,*}(C\tau \otimes \fil_*\gR).$$
Certainly $\sigma \cdot -$ is non-nilpotent on $\gR$, and we must form $\fil_*\gR/\hat{\sigma}$. Now $Q^1(\sigma')$ has tridegree $(2,1,-2)$ and $x'$ has tridegree $(2,1,-1)$, and these are the only $x_i$'s of $\tfrac{d}{n}$-slope $\tfrac{1}{2}$. As they have different tridegrees they cannot be considered together and must be considered in increasing order of $\tfrac{f}{d}$-slope.

We must first construct a $Q^1(\sigma')$ self-map of $\fil_*\gR/\hat{\sigma}$, which we may take to be the filtered map $Q^1(\hat{\sigma}) \cdot -$ as $\hat{\sigma}$ lifts $\sigma'$. By base-change along the $E_\infty$-map $\gR \to \gE_\infty(S^{1,0}\sigma)$ which kills $x$ and $\rho$, we see that $Q^1(\sigma) \cdot -$ is non-nilpotent on $\gR/\sigma$.

We now construct a $x'$ self-map of $\fil_*\gR/(\hat{\sigma}, Q^1(\hat{\sigma}))$, which can be taken to be $\hat{x} \cdot -$ because $\hat{x}$ lifts $x'$. By base-change along the $E_\infty$-map $\gR \to \gE_\infty(S^{2,1}x)$ which kills $\sigma$ and $\rho$, one sees that $x \cdot -$ is non-nilpotent on $\gR/(\sigma, Q^1(\sigma))$. Then $\gR/(\sigma, Q^1(\sigma), x)$ has a slope $\tfrac{1}{2}$ vanishing line.

But this example is highly similar to Example \ref{ex:RedAndBlueModRedBlue}, in that $\gR/(\sigma, Q^1(\sigma) + x)$ already has a slope  $\tfrac{1}{2}$ vanishing line. This is because $(Q^1(\sigma) + x)\cdot -$ is nilpotent on $\gR/(\sigma, Q^1(\sigma) + x)$, but also $x \cdot Q^1(\sigma) =0 \in \pi_{*,*}(\gR)$, and together these show that $Q^1(\sigma) \cdot -$ and $x \cdot -$ are both nilpotent on $\gR/(\sigma, Q^1(\sigma) + x)$.

However the slope $\frac{1}{2}$ Smith--Toda complex $\gR/(\sigma, Q^1(\sigma) + x)$ can never arise from the mechanism we have discussed so far: the mechanism obliges us to kill $Q^1(\sigma') \cdot - = (C\tau \otimes u)_*(Q^1(\sigma'))$ first because it has lower $\tfrac{f}{d}$-slope than $x'$ and is non-nilpotent, so if we do not kill it then we will not generally know that the element $(C\tau \otimes u)_*(x')$ of higher $\tfrac{f}{d}$-slope is a permanent cycle in the spectral sequence for $\End_{\fil_*\gR}(\fil_*\gR/\hat{\sigma})$. But in this example we see that $(C\tau \otimes u)_*(x')$ is indeed a permanent cycle in this spectral sequence as $u_*(\hat{x})$ is a filtered homotopy class lifting it, and furthermore we see that choosing the lift $u_*(\hat{x} + Q^1(\hat{\sigma})) \in \pi_{2,1,-1}(\End_{\fil_*\gR}(\fil_*\gR/\hat{\sigma}))$ to a filtered homotopy class is wiser than choosing the lift $u_*(\hat{x}) \in \pi_{2,1,-1}(\End_{\fil_*\gR}(\fil_*\gR/\hat{\sigma}))$.
\end{example}

\section{Changing rings and detecting nilpotence}

Which $x_i$ can be omitted, the ideals $I_{\lambda, \mu}$, and which $y_i$ may be omitted, are all described in terms of nilpotence of endomorphisms up to $\tau$-power torsion, or equivalently in terms of nilpotence after inverting $\tau$. To make working with these concepts effective, we need tools for testing the (non-)nilpotence of endomorphisms of $\gR$-modules. The following gives us a way to check nilpotence after base-changing along a suitable map $\gR \to \gS$, which can be used to reduce the question to working over a simpler $E_k$-algebra. We will explain in Section \ref{sec:StabHopfAlg} that there is a very good choice of $\gS$ to which we can apply this theorem.

\begin{thm}\label{thm:DetNilp}
Let $\bk$ be a commutative ring, $\gR \to \gS$ be a morphism in $\mathsf{Alg}_{E_2}(\mathsf{D}(\bk)^\bZ)$ between objects satisfying (C), and suppose that $\theta \geq 0$ is such that
\begin{equation}\label{eq:DetAss}\tag{$\dagger$}
H_{n,d}^{\gR}(\gS) \text{ is just $\bk[0,0]$ in the range of bidegrees $d < \theta n + 1$ or $n < 1$}.
\end{equation}
\begin{enumerate}[(i)]

\item\label{it:DetNilp:1} Let $\gM$ be an $\gR$-module, $\lambda \geq 0$ and $\kappa$ be given, and $N$ be such that $\pi_{n,d}(\gM)=0$ for $n < N$. Suppose that $\theta \geq \lambda$. Then 
\begin{equation*}
\pi_{n,d}(\gM)=0 \text{ for } d < \lambda n + \kappa \iff \pi_{n,d}(\gS \otimes_\gR \gM)=0 \text{ for } d < \lambda n + \kappa.
\end{equation*}
If these (equivalent) conditions hold, then the map
$$\pi_{n,d}(\gM) \lra \pi_{n,d}(\gS \otimes_\gR \gM)$$
is an epimorphism for $d < \lambda n + \kappa + (\theta-\lambda)+1$, and an isomorphism for $d < \lambda n + \kappa + (\theta-\lambda)$.

\item\label{it:DetNilp:2} Let $\gE$ be a unital associative algebra in (the homotopy category of) $\gR\text{-}\mathsf{mod}$ and $\lambda \geq 0$, $\kappa$, and $N$ be such that $\pi_{n,d}(\gE)=0$ for $d < \lambda n + \kappa$ or for $n < N$. Suppose that $\theta > \lambda$. Then for $\tfrac{d}{n} = \lambda$ elements in the kernel of
\begin{equation*}
\pi_{n,d}(\gE) \lra \pi_{n,d}(\gS \otimes_\gR \gE)
\end{equation*}
are nilpotent.

\item\label{it:DetNilp:3} In the setting of (\ref{it:DetNilp:2}), suppose also that $\bk$ is a field of positive characteristic $p$. If $x \in \pi_{n,d}(\gS)$ has slope $\tfrac{d}{n} = \lambda$ then some $p$-th power of $(\gS \otimes_\gR u)_*(x) \in \pi_{n,d}(\gS \otimes_\gR \gE)$ lifts to a $\psi \in \pi_{*,*}(\gE)$. 

\item \label{it:DetNilp:4} In the setting of (\ref{it:DetNilp:3}), if $\gE$ is in addition a finite $\gR$-module then some $p$-th power of the lift $\psi$ lies in the centre of $\pi_{*,*}(\gE)$, and any two lifts $\psi$ and $\psi'$ agree after taking further $p$-th powers.
\end{enumerate}
\end{thm}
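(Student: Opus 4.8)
\textbf{Proof proposal for Theorem \ref{thm:DetNilp}.}

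The plan is to prove the four parts in order, using the assumption \eqref{eq:DetAss} through the ``Hurewicz''-type principle that base-change along $\gR \to \gS$ is highly connective in the diagonal-slope sense. The key structural input is that the relative tensor $\gS \otimes_\gR \gM$ may be computed by the bar construction $[\,p\,] \mapsto \gS \otimes_\bk \gI_\gR^{\otimes p} \otimes_\bk \gM$ (using the augmentation ideal of $\gR$), or more efficiently that the cofibre of $\gR \to \gS$, call it $\gQ$, has $\pi_{n,d}(\gQ) = 0$ for $d < \theta n + 1$ or $n < 1$ by \eqref{eq:DetAss}. I would first record the consequence: for any $\gR$-module $\gM$ with $\pi_{n,d}(\gM) = 0$ for $n < N$, the fibre $\gF$ of $\gM \to \gS \otimes_\gR \gM$ lies in the thick-ish subcategory built from $\gM \otimes_\gR \gQ, \gM \otimes_\gR \gQ \otimes_\gR \gQ, \ldots$, hence (using that $\otimes_\bk$ over a field is exact, and the connectivity estimate of \cite[Corollary 11.6]{e2cellsI} or a direct filtration of the bar construction) one gets $\pi_{n,d}(\gF) = 0$ for $d < \theta(n - N) + N + \ldots$; the precise bookkeeping gives $\pi_{n,d}(\gF) = 0$ for $d < \lambda n + \kappa + (\theta - \lambda)$ once one knows $\pi_{n,d}(\gM) = 0$ for $d < \lambda n + \kappa$ and $\theta \geq \lambda$. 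This immediately yields part (\ref{it:DetNilp:1}): the ``only if'' direction is the long exact sequence of the fibre sequence $\gF \to \gM \to \gS \otimes_\gR \gM$ combined with the vanishing of $\gF$; the ``if'' direction is a downward induction on $n$ (using $\pi_{n,d}(\gM) = 0$ for $n < N$ as the base case) exactly as in the proof of the Lemma following Proposition \ref{prop:SubOfAdmIsAdm}, pushing the hypothetical lowest nonzero class of $\gM$ into $\gF$ where it must vanish; and the epimorphism/isomorphism ranges are read off the same long exact sequence using the connectivity of $\gF$.

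For part (\ref{it:DetNilp:2}), let $x \in \pi_{n,d}(\gE)$ with $\tfrac{d}{n} = \lambda$ map to zero in $\pi_{n,d}(\gS \otimes_\gR \gE)$. The associative algebra structure gives powers $x^k \in \pi_{kn, kd}(\gE)$, still of slope $\lambda$. The idea is that $x$ factors through the fibre $\gF_\gE$ of $\gE \to \gS \otimes_\gR \gE$, and since $\theta > \lambda$ strictly, $\gF_\gE$ has a vanishing line of slope strictly larger than $\lambda$ above the line where $\pi_{*,*}(\gE)$ itself vanishes; more precisely $\pi_{n,d}(\gF_\gE) = 0$ for $d < \theta n + \kappa - (\theta - \lambda)N$ or so. Multiplying $k$ copies of the factorisation $\gE \to \Sigma^{-1}\gF_\gE \to \gE$ (or rather iterating: $x^k$ is a $k$-fold composite each factor of which lands in $\Sigma^{-1}\gF_\gE$) shows $x^k$ is detected in an object whose slope-$\lambda$ homotopy eventually vanishes as $k$ grows, because the ``defect'' $\theta - \lambda > 0$ accumulates linearly in $k$ while the degree of $x^k$ grows along a slope-$\lambda$ line; hence $x^k = 0$ for $k \gg 0$. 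This is the analogue of the nilpotence-detection arguments in Section \ref{sec:HigherStabMaps} (e.g.\ Lemma \ref{lem:VanishingEstimates}(\ref{it:VanishingEstimates:3})) and I expect \textbf{this to be the main obstacle}, since one must carefully organise the iterated factorisation through $\gF_\gE$ — I would make it precise either via the $\gR$-module structure on $\gF_\gE$ and the fact that $\gF_\gE \otimes_\gR \gF_\gE \to \gF_\gE$ lands in ever-higher slope, or by running the filtration-by-powers-of-$\gI$ spectral sequence on $\gE$ and invoking that $x$ has positive filtration with respect to the decreasing filtration $\gE \supseteq \gF_\gE \supseteq (\gF_\gE)^{\otimes_\gR 2} \supseteq \cdots$.

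For part (\ref{it:DetNilp:3}), given $x \in \pi_{n,d}(\gS)$ of slope $\lambda$, the unit $u : \gR \to \gE$ gives $(\gS \otimes_\gR u)_*(x) \in \pi_{n,d}(\gS \otimes_\gR \gE)$, and I want to lift a $p$-th power to $\pi_{*,*}(\gE)$. Here I would use part (\ref{it:DetNilp:1}) applied to $\gM = \gE$ (which satisfies $\pi_{n,d}(\gE) = 0$ for $d < \lambda n + \kappa$ and $n < N$): the map $\pi_{n,d}(\gE) \to \pi_{n,d}(\gS \otimes_\gR \gE)$ is an isomorphism for $d < \lambda n + \kappa + (\theta - \lambda)$ and an epimorphism one degree higher. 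The class $(\gS \otimes_\gR u)_*(x)$ sits at $d = \lambda n$, which may be above this range, so it need not itself lift; but in positive characteristic $p$ the $p$-th power map on $\pi_{*,*}(\gS \otimes_\gR \gE)$ combined with the Frobenius semilinearity of Dyer--Lashof-type operations lets one move it: concretely, since $\gE$ is an $E_1$-algebra over $\bk$ a field of characteristic $p$, the obstruction to lifting lives in $\pi_{n,d+1}(\gF_\gE)$, and raising to the $p$-th power multiplies the relevant filtration by $p$ while only scaling the degrees, so for $M$ large enough $x^{p^M}$ has its obstruction class in a range where $\gF_\gE$ vanishes — this is exactly the mechanism of the ``$p$-th power survives to a late page'' argument in the proof of Theorem \ref{thm:FiltSTExist} (cf.\ Remark \ref{rem:PowerOpsInSS}). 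Finally part (\ref{it:DetNilp:4}): if $\gE$ is additionally a finite $\gR$-module, then any lift $\psi$ of $(\gS \otimes_\gR u)_*(x^{p^M})$ and any other lift $\psi'$ have $\psi - \psi'$ mapping to zero in $\pi_{*,*}(\gS \otimes_\gR \gE)$, so by part (\ref{it:DetNilp:2}) (applicable since $\theta > \lambda$ and $\psi - \psi'$ has slope $\lambda$) the difference is nilpotent; a binomial expansion using that some further $p$-th power of $\psi$ is central then gives $\psi^{p^L} = (\psi')^{p^L}$ — and centrality itself is obtained exactly as in Theorem \ref{thm:xiSelfMapsUniqueAndNatural}(\ref{it:Centre}), by noting that $\psi \circ (-) - (-1)^{|\psi|}(-)\circ \psi$ maps to zero in $\pi_{*,*}(\gS \otimes_\gR \End_\gR(\gE))$ (since $x$ is central in $\pi_{*,*}(\gS)$, being a homotopy class, up to sign) hence is nilpotent by part (\ref{it:DetNilp:2}) applied to $\End_\gR(\gE)$, which is again a finite $\gR$-module by Lemma \ref{lem:TensorAndHomStaysThick} with $\pi_{*,*}$ vanishing below slope $\lambda$.
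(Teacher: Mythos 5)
Your overall strategy is the paper's: descent along $\gR \to \gS$ via the canonical Adams tower $F_s\gM = \overline{\gS}^{\otimes_\gR s}\otimes_\gR \gM$, with vanishing estimates obtained by filtering first by Postnikov sections and then by grading so that everything reduces to $\pi_{*,*}(\gS\otimes_\gR\bk)$ and $\pi_{*,*}(\overline{\gS}\otimes_\gR\bk)$. Your part (\ref{it:DetNilp:1}) is sound; for the reverse implication you replace the paper's appeal to strong convergence of the full Adams spectral sequence by a downward induction on the grading using only the one-step fibre $\overline{\gS}\otimes_\gR\gM$, which works (the contribution to $\pi_{n,d}(\overline{\gS}\otimes_\gR\gM)$ from grading $n'<n$ lands above the line because $\theta\geq\lambda$) and is arguably more elementary. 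Your part (\ref{it:DetNilp:2}) is the paper's argument phrased via the tower rather than the spectral sequence: the two things you must supply are multiplicativity of the Adams filtration (the pairing of towers, so that $x^k$ lifts to $F_k\gE$) and the uniform-in-$\ell$ vanishing $\pi_{\ell n,\ell d}(F_s\gE)=0$ for $s\geq s_0$, both of which you correctly identify. Part (\ref{it:DetNilp:4}) matches the paper.

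The gap is in part (\ref{it:DetNilp:3}). The explicit mechanism you describe — ``raising to the $p$-th power multiplies the relevant filtration by $p$ while only scaling the degrees, so for $M$ large enough $x^{p^M}$ has its obstruction class in a range where $\gF_\gE$ vanishes'' — does not work. The one-step obstruction to lifting $z^{p^M}$ lives in $\pi_{p^Mn,\,p^Md-1}(\overline{\gS}\otimes_\gR\gE)$, and since $d=\lambda n$ the point $(p^Mn,p^Md-1)$ sits at \emph{constant} vertical distance from the slope-$\lambda$ vanishing line $d<\lambda n+\kappa+(\theta-\lambda)$ of this fibre: either the obstruction group vanishes for all $M$ or for none, so taking $p$-th powers buys nothing at the level of the one-step fibre. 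What actually makes $p$-th powers help is entirely different and has two ingredients: (a) centrality of $u_*(x)$ in $E_1^{*,*,*}(\gE)$ together with the Leibniz rule $d_r(z^p)=pz^{p-1}d_r(z)=0$, which lets $z^{p^M}$ survive to an arbitrarily late page of the Adams spectral sequence (this is where characteristic $p$ enters — not through Frobenius semilinearity of Dyer--Lashof operations, which the paper only mentions as an optional refinement); and (b) the uniform estimate $E_1^{\ell n,s,\ell d-1+s}(\gE)=\pi_{\ell n,\ell d-1}(K_s\gE)=0$ for all $s\geq s_0$ and all $\ell$, which uses $\theta>\lambda$ strictly and kills all differentials from page $s_0$ onward. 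You need the whole tower $\{K_s\gE\}$, not just $\gF_\gE=F_1\gE$, and you need strong convergence (Lemma \ref{lem:AdamsSSConverges}) to conclude that a permanent cycle in filtration $0$ actually lifts.
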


We will prove this theorem by using the $\gS$-based Adams spectral sequence in the category of $\gR$-modules, so before giving the proof we begin with a general discussion of this spectral sequence.

We choose to construct this spectral sequence using the ``canonical'' Adams resolution, as follows. Let $\overline{\gS}$ denote the fibre of $\gR \to \gS$, an $\gR$-module, and set
\begin{align*}
F_s\gM &:= \overline{\gS}^{\otimes_\gR s} \otimes_\gR \gM\\
K_s \gM &:= \gS \otimes_\gR \overline{\gS}^{\otimes_\gR s} \otimes_\gR \gM
\end{align*}
to obtain the tower
\begin{equation}\label{eq:DescentASSTower}
\begin{tikzcd}%[column sep = 0.7em]
\gM \arrow[r, equals] & F_0 \gM \dar & F_1 \gM \lar \dar& F_2 \gM \lar \dar& \cdots\lar\\
& K_0 \gM & K_1 \gM & K_2 \gM
\end{tikzcd}
\end{equation}
Taking homotopy groups yields an exact couple having
$$E_1^{n,s,t} := \pi_{n, t-s}(K_s \gM).$$
This exact couple gives a spectral sequence
\begin{equation}\label{eq:DescentASS}
E_1^{n,s,t}(\gM) \Longrightarrow \pi_{n,t-s}(\gM),
\end{equation}
with differentials $d_r : E_r^{n,s,t}(\gM) \to E_r^{n, s+r, t+r-1}(\gM)$.

There are pairings $E_r^{n,s,t}(\gM) \otimes_\bk E_r^{n', s', t'}(\gN) \to E_r^{n+n', s+s', t+t'}(\gM \otimes_\gR \gN)$ of spectral sequences converging to the tensor product map $\pi_{n,t-s}(\gM) \otimes_\bk \pi_{n',t'-s'}(\gN) \to \pi_{n+n', t+t'-s-s'}(\gM \otimes_\gR \gN)$. This is established precisely as for the classical Adams spectral sequence, e.g.\ \cite[\S 4]{Adams58}. In particular, if $\gE$ is equipped with a product $\mu : \gE \otimes_\gR \gE \to \gE$ then it induces a product on $\{E_r^{*,*,*}(\gE)\}_r$, enjoying the same unitality or associativity properties as $\gE$.

Strong convergence is guaranteed in our setting by the following (which can certainly be improved upon, but suffices for our kinds of applications).

\begin{lem}\label{lem:AdamsSSConverges}
If $(\dagger)$ holds (for any $\theta \geq 0$) and there is an $N$ such that $\pi_{n,d}(\gM)=0$ for $n < N$ then this spectral sequence converges strongly.
\end{lem}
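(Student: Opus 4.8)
The goal is to show that the $\gS$-based Adams spectral sequence in $\gR\text{-}\mathsf{mod}$ built from the tower \eqref{eq:DescentASSTower} converges strongly to $\pi_{*,*}(\gM)$ whenever $(\dagger)$ holds and $\gM$ is bounded below in the $n$-grading. The essential input is a connectivity estimate: I will show that the fibres $F_s\gM$ of the tower become increasingly highly connected, in a ``diagonal'' sense governed by $\theta$ and $N$, as $s \to \infty$. Strong convergence then follows by the standard Boardman-type criterion (the derived $E_\infty$-term vanishes and the associated graded of the filtration on $\pi_{*,*}(\gM)$ recovers $E_\infty$), since the filtration on each bigraded homotopy group $\pi_{n,d}(\gM)$ becomes eventually constant---in fact eventually zero---for $s$ large.

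\textbf{Key steps.} First, I would record what $(\dagger)$ says about $\overline{\gS} = \mathrm{fib}(\gR \to \gS)$: since $H^\gR_{n,d}(\gS) = \pi_{n,d}(\gS \otimes_\gR \bk)$ agrees with $\bk[0,0]$ in the stated range, the long exact sequence for $\overline{\gS} \to \gR \to \gS$ (base-changed appropriately, or rather the analogous statement $\pi_{n,d}(\bk \otimes_\gR \overline{\gS})$) gives that $\overline{\gS}$ is, relative to the diagonal-type $t$-structure of slope $\theta$, concentrated strictly above the line $d = \theta n + 1$ starting from $n \geq 1$; concretely $\pi_{n,d}(\bk\otimes_\gR\overline{\gS}) = 0$ for $d \le \theta n$ or $n \le 0$, using connectivity (C) of $\gR$ and $\gS$. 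Second, I would prove by induction on $s$ that $F_s\gM = \overline{\gS}^{\otimes_\gR s}\otimes_\gR\gM$ satisfies $\pi_{n,d}(F_s\gM) = 0$ for $n < N + s$, and more precisely that it has a vanishing line whose intercept grows linearly in $s$. The inductive step uses the bar-type / ``CW'' filtration of $\overline{\gS}^{\otimes_\gR s}$ together with the connectivity-of-tensor-products estimate analogous to \cite[Corollary 11.6]{e2cellsI} (already invoked in the proof of Proposition \ref{prop:BarCobarInverse}): tensoring $\gM$ (bounded below by $N$) with an $s$-fold $\otimes_\gR$-power of something that is ``$n \geq 1$''-connected pushes connectivity up by at least $s$ in the $n$-grading. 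Third, having established that for each fixed $(n,d)$ one has $\pi_{n,d}(F_s\gM) = 0$ for all $s \gg 0$ (depending on $n$), it follows that the $\gS$-Adams filtration on $\pi_{n,d}(\gM)$ is finite, hence $\lim^1$ over the tower vanishes and the spectral sequence converges conditionally and, the filtration being finite in each bidegree, strongly. Finally I would note that the $E_1$-term $E_1^{n,s,t}(\gM) = \pi_{n,t-s}(K_s\gM)$ with $K_s\gM = \gS\otimes_\gR F_s\gM$ also satisfies a vanishing line growing in $s$, which is what makes the finiteness of the filtration in each bidegree effective (this is exactly what is used later in the proof of Theorem \ref{thm:DetNilp}).

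\textbf{Main obstacle.} The one genuinely delicate point is the connectivity bookkeeping in the second step: one must be careful that $(\dagger)$ is a statement about $H^\gR_{*,*}(\gS)$, i.e.\ about $\gS\otimes_\gR\bk$ rather than about $\gS$ or $\overline{\gS}$ as $\gR$-modules directly, so the passage to a vanishing-line estimate for the $\gR$-module $\overline{\gS}^{\otimes_\gR s}\otimes_\gR\gM$ requires feeding $(\dagger)$ through a bar spectral sequence (or the skeletal filtration of the relative tensor power) and then through the tensor-connectivity estimate of \cite[Corollary 11.6]{e2cellsI}. Keeping track of the slope $\theta$ versus the slope $\lambda$ of $\gM$'s eventual vanishing line---and ensuring the intercept really does march off to $+\infty$ with $s$ rather than stabilising---is the part that needs the most care, but it is entirely parallel to the analogous estimates already carried out in Section \ref{sec:BarCobar} and \ref{sec:CanMultFilt}. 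Once that estimate is in hand, strong convergence is immediate from the Boardman criterion \cite{Boardman}.
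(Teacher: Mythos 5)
Your proposal is correct and follows essentially the same route as the paper: one reads off from $(\dagger)$ that $\gS$ is built from $\gR$ by attaching $\gR$-cells in bidegrees $(n_\alpha,d_\alpha)$ with $n_\alpha>0$ and $d_\alpha>0$, deduces via the induced skeletal filtration that $\mathrm{gr}(F_s\gM)\simeq(\bigoplus_\alpha S^{n_\alpha,d_\alpha-1})^{\otimes_\bk s}\otimes_\bk\gM$ is supported in gradings $\geq N+s$, hence $\pi_{n,d}(F_s\gM)=0$ and $\pi_{n,d}(K_s\gM)=0$ for $n<N+s$, and concludes by Boardman's criterion. The only remark worth making is that your ``main obstacle'' is a non-issue here: the slope-$\theta$ vanishing-line bookkeeping is not needed for convergence (the lemma holds for any $\theta\geq 0$), since the grading bound $n<N+s$ alone already kills each column for $s\gg0$.
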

\begin{proof}
We refer to Boardman \cite{Boardman}: in the language of that paper, this is a half-plane spectral sequence with entering differentials. We will use \cite[Theorem 7.3]{Boardman}. We will first show that $\pi_{n,d}(F_s \gM)=0$ for all $s \gg 0$, implying that the spectral sequence is conditionally convergent. We will then show that $E_1^{n,s,d+s}(\gM) = \pi_{n,d}(K_s\gM) = 0$ for all $s \gg 0$, implying that there are only finitely-many differentials out of each position, and hence that $RE_\infty=0$. By \cite[Theorem 7.3]{Boardman} the spectral sequence converges strongly.

From $(\dagger$) we see that $H^\gR_{0,d}(\gS)$ is just $\bk$ for $d=0$, and $H^\gR_{n,0}(\gS)$ is just $\bk$ for $n=0$. From this it follows that $\gS$ can be constructed as an $\gR$-module from $\gR$ by attaching $(n,d)$-$\gR$-cells with $n > 0$ and $d>0$. Taking the skeletal filtration of such a model, we have associated graded
$$\mathrm{gr}(\gS) \simeq \gR \oplus \bigoplus_{\alpha} S^{n_\alpha, d_\alpha} \otimes_\bk \gR$$
where $n_\alpha >0$ and $d_\alpha >0$. It gives an associated filtration of $\overline{\gS}$ with $\mathrm{gr}(\overline{\gS}) \simeq \bigoplus_{\alpha} S^{n_\alpha, d_\alpha-1} \otimes_\bk \gR$, and hence a filtration of $F_s\gM$ with
\begin{align*}
\mathrm{gr}(F_s \gM) \simeq  (\bigoplus_{\alpha} S^{n_\alpha, d_\alpha-1} \otimes_\bk \gR)^{\otimes_\gR s} \otimes_\gR \gM \simeq (\bigoplus_{\alpha} S^{n_\alpha, d_\alpha-1} )^{\otimes_\bk s} \otimes_\bk \gM.
\end{align*}
As $n_\alpha > 0$ and $d_\alpha-1 \geq 0$, $ (\bigoplus_{\alpha} S^{n_\alpha, d_\alpha-1} )^{\otimes s}$ is connective and is supported in gradings $\geq s$. Under the given assumption on $\gM$ it follows that $\pi_{n,d}(\mathrm{gr}(F_s \gM))=0$ for $n < N+s$, so by strong convergence of the skeletal spectral sequence that $\pi_{n,d}(F_s \gM)=0$ for $n < N+s$, as required.

The analogous filtration for $K_s\gM$ shows that $E_1^{n, s, d+s}(\gM) = \pi_{n,d}(K_s\gM)=0$ for $n < N+s$, showing that $RE_\infty=0$ as required.
\end{proof}

Before starting on the proof of Theorem \ref{thm:DetNilp}, we have the following.

\begin{lem}\label{lem:OverlineSVanish}
If $(\dagger)$ holds then $\pi_{n,d}(\overline{\gS} \otimes_\gR \bk)=0$ for $n < 1$ or $d < \theta n$.
\end{lem}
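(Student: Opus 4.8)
The plan is to deduce this from the defining cofibre sequence $\overline{\gS} \to \gR \to \gS$ by tensoring over $\gR$ with $\bk$. This gives a cofibre sequence
$$\overline{\gS} \otimes_\gR \bk \lra \bk \lra \gS \otimes_\gR \bk$$
in $\mathsf{D}(\bk)^\bZ$, and hence a long exact sequence of bigraded homotopy groups. First I would identify $\pi_{n,d}(\gS \otimes_\gR \bk)$ with $H^{\gR}_{n,d}(\gS)$ — this is just the notation for the homology of $\gS$ as an $\gR$-module, i.e.\ $\pi_{n,d}$ of $\gS \otimes_\gR \bk$ — so the hypothesis $(\dagger)$ says exactly that $\pi_{n,d}(\gS \otimes_\gR \bk) = \bk$ concentrated in bidegree $(0,0)$, and is zero for $d < \theta n + 1$ or $n < 1$ (except at $(0,0)$). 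Meanwhile $\pi_{n,d}(\bk) = \bk$ at $(0,0)$ and zero otherwise.

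The key step is then to read off the long exact sequence. In bidegree $(0,0)$ the map $\pi_{0,0}(\bk) \to \pi_{0,0}(\gS \otimes_\gR \bk)$ is an isomorphism (both are $\bk$, and the composite $\bk \to \gS \otimes_\gR \bk$ is the unit, which is the identity on $\pi_{0,0}$ by connectivity, axiom (C), applied to $\gR$ and $\gS$). For $n < 0$ both outer terms vanish, so $\pi_{n,d}(\overline{\gS}\otimes_\gR \bk) = 0$. For $n = 0$, $d \neq 0$, we have $\pi_{0,d}(\bk) = 0$ and, by $(\dagger)$, $\pi_{0,d}(\gS \otimes_\gR \bk) = 0$ (the range $d < \theta \cdot 0 + 1 = 1$ covers $d \le 0$, and for $d > 0$ it vanishes since the homology is just $\bk[0,0]$); hence $\pi_{0,d}(\overline{\gS}\otimes_\gR \bk) = 0$, and combined with the previous case this gives vanishing for all $n < 1$. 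For $n \geq 1$ and $d < \theta n$, the relevant portion of the long exact sequence is
$$\pi_{n,d+1}(\gS \otimes_\gR \bk) \lra \pi_{n,d}(\overline{\gS} \otimes_\gR \bk) \lra \pi_{n,d}(\bk);$$
the right-hand term vanishes since $(n,d) \neq (0,0)$, and the left-hand term vanishes by $(\dagger)$ since $d + 1 < \theta n + 1$. Therefore $\pi_{n,d}(\overline{\gS}\otimes_\gR\bk) = 0$ in this range as well, which is the claim.

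There is essentially no hard part here: the only thing to be slightly careful about is that $\overline{\gS} \otimes_\gR \bk$ really is computed by the cofibre sequence above (using that $- \otimes_\gR \bk$ is exact, being a functor between stable categories that preserves cofibre sequences), and that the indexing in $(\dagger)$ lines up correctly — in particular that the shift by $1$ in the homotopy group degree when passing across the connecting map is exactly absorbed by the ``$+1$'' in the hypothesis $d < \theta n + 1$. I would also note in passing that $(\dagger)$ as stated includes the $(0,0)$ class, so one must be mildly careful to exclude $(n,d)=(0,0)$ when invoking vanishing of $\pi_{n,d}(\bk)$, but since we only claim vanishing of $\pi_{n,d}(\overline{\gS}\otimes_\gR\bk)$ for $n < 1$ or $d < \theta n$ — and $d < \theta n$ forces $(n,d)\neq(0,0)$ when $n\ge 1$, while $n<1$ is handled directly above — this causes no trouble.
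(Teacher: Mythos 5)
Your proof is correct and is essentially the paper's own argument: tensor the defining cofibre sequence $\overline{\gS}\to\gR\to\gS$ with $\bk$ over $\gR$ and read off the long exact sequence, with the unit isomorphism $\pi_{0,0}(\bk)\cong\pi_{0,0}(\gS\otimes_\gR\bk)$ cancelling the two exceptional classes. (One small indexing point: in the case $n=0$, $d=-1$ the term feeding into $\pi_{0,-1}(\overline{\gS}\otimes_\gR\bk)$ is $\pi_{0,0}(\gS\otimes_\gR\bk)=\bk$, not zero, but the connecting map out of it vanishes precisely because of the surjectivity you established at $(0,0)$, so your argument still closes.)
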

\begin{proof}
We use the cofibre sequence
$$\overline{\gS} \otimes_\gR \bk \lra {\gR} \otimes_\gR \bk (\simeq \bk) \lra \gS \otimes_\gR \bk$$
and its long exact sequence
$$\cdots\pi_{n,d+1}(\bk) \lra  \pi_{n,d+1}(\gS \otimes_\gR \bk) \lra \pi_{n,d}(\overline{\gS} \otimes_\gR \bk) \lra \pi_{n,d}(\bk) \lra \pi_{n,d}(\gS \otimes_\gR \bk) \cdots,$$
in which $\pi_{0,0}(\bk) = \bk$ cancels against $\pi_{0,0}(\gS \otimes_\gR \bk)=\bk$.
\end{proof}

\begin{proof}[Proof of Theorem \ref{thm:DetNilp}]
For the forwards implication of (\ref{it:DetNilp:1}), we consider $\gS \otimes_\gR \gM$ and to begin with we Postnikov filter $\gM$, giving a filtered object $\tau_{\geq -*} \gM$. As $\gR$ is connective, this is a filtration by $\gR$-modules. Its associated graded is $\gr(\tau_{\geq -*} \gM) = \bigoplus_{d \in \bZ} \Sigma^d \pi_{*,d}(\gM)$, where the $d$-th summand is in grading $-d$. This filtration induces a filtration of $\gS \otimes_\gR \gM$ having
$$\gr(\gS \otimes_\gR \tau_{\geq -*} \gM) = \bigoplus_{d \in \bZ} \gS \otimes_\gR \Sigma^d \pi_{*,d}(\gM).$$
Here $\pi_{*,d}(\gM)$ has the structure of an $\gR$-module via the truncation $\gR \to \tau_{\leq 0}\gR = \pi_{*,0}(\gR)$, so we may write the above as
\begin{equation}\label{eq:PartiallyFiltered}
\bigoplus_{d \in \bZ} (\gS \otimes_\gR \pi_{*,0}(\gR)) \otimes_{\pi_{*,0}(\gR)} \Sigma^d \pi_{*,d}(\gM).
\end{equation}
We now filter each $\pi_{*,d}(\gM)$ by its grading, i.e.\ 
$$\fil_f \pi_{n,d}(\gM) = \begin{cases}
 \pi_{n,d}(\gM) & f \geq n\\
 0 & f < n.
\end{cases}$$
As $\pi_{n,0}(\gR)=0$ for $n<0$, this is a filtration by $\pi_{*,0}(\gR)$-modules. The associated graded $\gr(\fil_f \pi_{*,d}(\gM))$ can be identified with $\pi_{*,d}(\gM)$, but the $\pi_{*,0}(\gR)$-module structure now factors over the augmentation $\epsilon : \pi_{*,0}(\gR) \to \pi_{0,0}(\gR)=\bk$. Taking the induced filtration on each summand of \eqref{eq:PartiallyFiltered}, the associated graded may be expressed by the same formula, but now the $\pi_{*,0}(\gR)$-module structure on $\Sigma^d \pi_{*,d}(\gM)$ is via the augmentation. The formula can therefore be manipulated into
$$\bigoplus_{d \in \bZ} (\gS \otimes_\gR \pi_{*,0}(\gR)) \otimes_{\pi_{*,0}(\gR)} \bk \otimes_\bk \Sigma^d \pi_{*,d}(\gM) \simeq \bigoplus_{d \in \bZ} (\gS \otimes_\gR \bk) \otimes_\bk \Sigma^d \pi_{*,d}(\gM).$$
Using the K{\"u}nneth spectral sequence to calculate the homotopy groups of the latter, and neglecting several internal gradings, we therefore have a chain of (strongly convergent) spectral sequences 
$$E^2_{n,p,q} = \Tor^\bk_{p}(\pi_{*,*}(\gS \otimes_\gR \bk), \pi_{*,*}(\gM))_{n,q} \Longrightarrow \cdots \Longrightarrow \pi_{n, p+q}(\gS \otimes_\gR \gM).$$
Assumption ($\dagger$) is precisely that $\pi_{n,d}(\gS \otimes_\gR \bk)=0$ for $n < 1$ or $d < \theta n+1$ except that it is $\bk$ for $(n,d)=(0,0)$. Using the assumption $\pi_{n,d}(\gM)=0$ for $d < \lambda n + \kappa$ too, it follows that
$$E^2_{n,p,q}=0 \text{ for } q < \min(\lambda n + \kappa, \min_{\substack{a+b=n\\ a \geq 1}}(\theta a + 1 +  \lambda b + \kappa),$$
so in particular for $p+q < \lambda n + \kappa$ using that $\theta \geq \lambda$ and $p \geq 0$. Running these spectral sequences shows that $\pi_{n,d}(\gS \otimes_\gR \gM)=0$ for $d < \lambda n + \kappa$ as required.

For the reverse implication of (\ref{it:DetNilp:1}), suppose that $\pi_{n,d}(\gS \otimes_\gR \gM)=0$ for $d < \lambda n + \kappa$. We can write
$$K_s\gM = \gS \otimes_\gR \overline{\gS}^{\otimes_\gR s} \otimes_\gR \gM = (\gS \otimes_\gR \overline{\gS}^{\otimes_\gR s}) \otimes_\gS (\gS \otimes_\gR \gM).$$
We apply the same two filtrations as above to $\gS \otimes_\gR \gM$: first Postnikov filter it, then filter it by grading. These induce iterated filtrations of $K_s\gM$, whose final associated graded is
\begin{align*}
&((\gS \otimes_\gR \overline{\gS}^{\otimes_\gR s}) \otimes_\gS \bk) \otimes_\bk \left(\bigoplus_{d \in \bZ}\Sigma^d \pi_{*,d}(\gS \otimes_\gR \gM)\right)\\
 &\quad\quad\simeq ( \overline{\gS}^{\otimes_\gR s} \otimes_\gR \bk) \otimes_\bk\left(\bigoplus_{d \in \bZ}\Sigma^d \pi_{*,d}(\gS \otimes_\gR \gM)\right)\\
&\quad\quad\simeq (\overline{\gS} \otimes_\gR \bk)^{\otimes_\bk s} \otimes_\bk \left(\bigoplus_{d \in \bZ}\Sigma^d \pi_{*,d}(\gS \otimes_\gR \gM)\right).
\end{align*}
(For the first equivalence we have used that $\gS$ is central in $\gR\text{-}\mathsf{mod}$, cf.\ Lemma \ref{lem:E2AlgebraIsModuleCentral}.) Using the K{\"u}nneth spectral sequence to calculate the homotopy groups of the latter, and neglecting several internal gradings, we therefore have a chain of (strongly convergent) spectral sequences 
$$E^2_{n,p,q} = \Tor^\bk_p(\pi_{*,*}((\overline{\gS} \otimes_\gR \bk)^{\otimes_\bk s}), \pi_{*,*}(\gS \otimes_\gR \gM))_{n,q} \Longrightarrow \cdots \Longrightarrow \pi_{n,p+q}(K_s\gM).$$
By Lemma \ref{lem:OverlineSVanish} we have $\pi_{n,d}(\overline{\gS} \otimes_\gR \bk)=0$ for $n < 1$ or $d < \theta n$, so running several K{\"u}nneth spectral sequences it follows that $\pi_{n,d}((\overline{\gS} \otimes_\gR \bk)^{\otimes_\bk s})=0$ for $n < s$ or $d <\theta n$. Using the assumption $\pi_{n,d}(\gS \otimes_\gR \gM)=0$ for $d < \lambda n + \kappa$ too, it follows that
$$E^2_{n,p,q}=0 \text{ for } q < \min_{\substack{a+b=n\\ a \geq s}}(\theta a  +  \lambda b + \kappa),$$
so in particular for $p+q < \lambda n + \kappa +s (\theta-\lambda)$, using that $\theta \geq \lambda$ and $p \geq 0$. Running these spectral sequences shows that $\pi_{n,d}(K_s\gM)=0$ for $d < \lambda n + \kappa +s (\theta-\lambda)$. By our assumption that $\pi_{n,d}(\gM)=0$ for $n<N$, Lemma \ref{lem:AdamsSSConverges} shows that the Adams spectral sequence converges strongly, so running it shows that $\pi_{n,d}(\gM)=0$ for $d < \lambda n + \kappa$ as required.

For the addendum to (\ref{it:DetNilp:1}), note that the map $\pi_{n,d}(\gM) \to \pi_{n,d}(\gS \otimes_\gR \gM) = \pi_{n,d}(K_0\gM) = E_1^{n,0,d}(\gM)$ is the edge homomorphism of the Adams spectral sequence. By strong convergence, for it to be an epimorphism it suffices that all possible targets $E_r^{n,r,d+r-1}(\gM)$ of differentials out of $E_r^{n,0,d}(\gM)$ vanish, and for this it suffices for $E_1^{n,r,d+r-1}(\gM) = \pi_{n,d-1}(K_r\gM)$ to vanish for all $r \geq 1$, and the estimate above shows this happens as long as $d-1 < \lambda n + \kappa + (\theta-\lambda)$. For it to be a monomorphism means to have $E_\infty^{n,s,d+s}(\gM)=0$ for all $s \geq 1$, and for this it suffices to have $E_1^{n,s,d+s}(\gM)=\pi_{n,d}(K_s\gM)=0$ for all $s \geq 1$, and the estimate above shows this happens as long as $d < \lambda n + \kappa + (\theta-\lambda)$.

For (\ref{it:DetNilp:2}), we consider the Adams spectral sequence
\begin{equation*}
E_1^{n,s,t}(\gE) \Longrightarrow \pi_{n,t-s}(\gE),
\end{equation*}
which is equipped with a unital and associative multiplicative structure as $\gE$ is an $E_1$-algebra in $\gR\text{-}\mathsf{mod}$. By our assumption that $\pi_{n,d}(\gE)=0$ for $n<N$, Lemma \ref{lem:AdamsSSConverges} shows that it converges strongly. We have $E_1^{n,0,d}(\gE) = \pi_{n,d}(\gS \otimes_\gR \gE)$, and the edge homomorphism 
$$\Phi : \pi_{n,d}(\gE) \lra E_1^{n,0,d}(\gE) = \pi_{n,d}(\gS \otimes_\gR \gE)$$
is the canonical map. Our assumption is that a class $\Phi(x) \in E_1^{n,0,d}(\gE)$ is nilpotent: after replacing $x$ by a power, we may assume that $\Phi(x)=0 \in E_1^{n,0,d}(\gE)$. That is, the class $x \in \pi_{n,d}(\gE)$ has $\gS$-based Adams filtration $\geq 1$. To conclude the argument, we will show:

\begin{claim}
For each $\tilde{\kappa} \in \bZ$ there is an $s_0$ such that $E_1^{\ell n, s, \ell d+\tilde{\kappa}+s}(\gE)=0$ for all $s \geq s_0$ and all $\ell$.
\end{claim}

It follows from this Claim with $\tilde{\kappa}=0$, and strong convergence of the spectral sequence, that a class in $\pi_{\ell n, \ell d}(\gE)$ of $\gS$-Adams filtration $\geq s_0$ is zero. As $x \in \pi_{n,d}(\gE)$ has $\gS$-Adams filtration $\geq 1$, the class $x^\ell \in \pi_{\ell n, \ell d}(\gE)$ has $\gS$-Adams filtration $\geq \ell$, so as long as $\ell \geq s_0$ we have $x^\ell=0$. Thus $x$ is nilpotent as required.

\begin{proof}[Proof of Claim]
By definition $E_1^{\ell n, s, \ell d+\tilde{\kappa}+s}(\gE)= \pi_{\ell n, \ell d+\tilde{\kappa}}(\gS \otimes_\gR \overline{\gS}^{\otimes_\gR s} \otimes_\gR \gE)$. Filtering $\gE$ by its grading induces a filtration of $\gS \otimes_\gR \overline{\gS}^{\otimes_\gR s} \otimes_\gR \gE$ with associated graded $(\gS \otimes_\gR \overline{\gS}^{\otimes_\gR s} \otimes_\gR \bk) \otimes_\bk \gE$. As $- \otimes_\gR \bk : \gR\text{-}\mathsf{mod} \to \bk \text{-}\mathsf{mod}$ is strong monoidal, we have
$$\gS \otimes_\gR \overline{\gS}^{\otimes_\gR s} \otimes_\gR \bk \simeq (\gS \otimes_\gR \bk) \otimes_\bk (\overline{\gS} \otimes_\gR \bk)^{\otimes_\bk s}.$$
By ($\dagger$) we have $\pi_{n,d}(\gS \otimes_\gR \bk)=0$ for $n < 0$ or $d < \theta n + 1$ except it is $\bk$ for $(n,d)=(0,0)$, and by Lemma \ref{lem:OverlineSVanish} we have $\pi_{n,d}(\overline{\gS} \otimes_\gR \bk)=0$ for $n<1$ or $d < \theta n$. Putting these together it follows that
$$\pi_{n,d}(\gS \otimes_\gR \overline{\gS}^{\otimes_\gR s} \otimes_\gR \bk)=0 \text{ for } n < s \text{ or } d < \theta n.$$
By our assumption that $\pi_{n,d}(\gE)=0$ for $d < \lambda n + \kappa$, and that $\theta \geq \lambda$ it follows that
$$\pi_{\ell n, \ell d+\tilde{\kappa}}(\gS \otimes_\gR \overline{\gS}^{\otimes_\gR s} \otimes_\gR \gE)=0 \text { for } \ell d+\tilde{\kappa} < \min_{\substack{a+b=n\ell \\ a \geq s}}(\theta a + \lambda b + \kappa)$$
so as $\theta \geq \lambda$ it in particular vanishes for $\ell d + \tilde{\kappa} < \lambda(n \ell) + \kappa + (\theta-\lambda)s$. As $\tfrac{d}{n}=\lambda$ this condition may be written as 
$$0=\ell (d - \lambda n)< \kappa - \tilde{\kappa}+(\theta-\lambda)s$$
and by our assumption that $\theta > \lambda$ this condition is satisfied for all $s \gg 0$, independently of $\ell$.
\end{proof}

For (\ref{it:DetNilp:3}), we consider the map of $\gS$-based Adams spectral sequences for the unit $u: \gR \to \gE$. This map is central, as $\gE$ is a $\gR$-algebra by assumption, so the maps $E_r^{*,*,*}(\gR) \to E_r^{*,*,*}(\gE)$ all land in the centre. In particular the class $z := (\gS \otimes_\gR u)_*(x) \in \pi_{n,d}(\gS \otimes_\gR \gE) =  E_1^{n,0,d}(\gE)$ and its powers are central in $E_1^{*,*,*}(\gE)$, and hence also in any page of the spectral sequence they survive to. We wish to show that some $p$-th power of this class lifts along the edge homomorphism $\Phi : \pi_{*,*}(\gE) \to E^{*,0,*}_1(\gE)$, which by convergence is the same as showing it is a permanent cycle. As in the proof of Theorem \ref{thm:FiltSTExist}, because $z$ is central it follows that we may find $p$-th powers of it surviving until arbitrarily late pages of the spectral sequence.

Applying the Claim with $\tilde{\kappa} = -1$, there is an $s_0$ such that $E_1^{\ell n, s, \ell d-1+s}(\gE)=0$ for all $s \geq s_0$ and all $\ell$. Choose $N$ large enough that $z^{p^N}$ survives until $E_{s_0}^{p^Nn, 0, p^Nd}(\gE)$. A potential differential $d_s(z^{p^N})$ with $s \geq s_0$ lands in $E_{s}^{p^Nn, s, p^Nd-1+s}(\gE)$ but these all vanish by our choice of $s_0$, so $z^{p^N}$ is a permanent cycle as required.

For (\ref{it:DetNilp:4}), we repeat the proof of Theorem \ref{thm:xiSelfMapsUniqueAndNatural} (\ref{it:Centre}) and (\ref{it:Unique}). Let $\psi \in \pi_{n,d}(\gE)$ be some lift of $(\gS \otimes_\gR u)_*(x^{p^N})$ provided by (\ref{it:DetNilp:3}). As $\gE$ is a finite $\gR$-module, $\End_\gR(\gE)$ lies in the thick subcategory generated by $\gE$ and so also satisfies the assumptions of (\ref{it:DetNilp:2}). The elements $\psi \circ -, - \circ \psi \in \pi_{n,d}(\End_\gR(\gE))$ map to $(\gS \otimes_\gR u)_*(x^{p^N}) \circ -, - \circ (\gS \otimes_\gR u)_*(x^{p^N}) \in \pi_{n,d}(\gS \otimes_\gR \End_\gR(\gE))$, which are equal up to a suitable sign as $(\gS \otimes_\gR u)_* : \pi_{*,*}(\gS) \to \pi_{*,*}(\gS \otimes_\gR \End_\gR(\gE))$ lands in the graded centre. By (\ref{it:DetNilp:2}) it follows that the appropriately signed difference $(\psi \circ -) \pm (- \circ \psi) \in \pi_{n,d}(\End_\gR(\gE))$ is nilpotent, then as $\psi \circ -$ and $- \circ \psi$ commute it follows that $(\psi^{p^M} \circ -) \simeq \pm(- \circ \psi^{p^M})$, i.e.\ $\psi^{p^M}$ lies in the graded centre of $\pi_{*,*}(\gE)$. Replace $\psi$ by $\psi^{p^M}$ so that it lies in the centre. If $\psi' \in \pi_{n', d'}(\gE)$ is a lift of $(\gS \otimes_\gR u)_*(x^{p^{N'}})$ then $(\psi')^{p^N} - (\psi)^{p^{N'}} \in \Ker(\pi_{*,*}(\gE) \to \pi_{*,*}(\gS \otimes_\gR \gE))$ so by (\ref{it:DetNilp:2}) again this element is nilpotent. As $\psi$ is in the centre, this means that $\psi'$ agrees with $\psi$ after taking suitable $p$-th powers.
\end{proof}

\section{The stability Hopf algebra}\label{sec:StabHopfAlg}

Recall from Section \ref{sec:BarCobar} there there is an adjunction
\begin{equation*}
\begin{tikzcd}
\Bar : {\mathsf{Alg}_{E_2}^\mathrm{aug}(\mathsf{D}(\bk)^\bZ)} \arrow[rr, ""{name=G1}, shift left=.8ex] 
& &  {\mathsf{Alg}_{E_{1}}(\mathsf{coAlg}_{E_1}^\mathrm{aug}(\mathsf{D}(\bk)^\bZ))} : \Cobar \arrow[ll, ""{name=F1}, shift left=.8ex] 
\arrow[phantom, from=F1, to=G1,  "\scalebox{0.7}{$\dashv$}" rotate=-90 ]
\end{tikzcd}
\end{equation*}
which by Proposition \ref{prop:BarCobarInverse} restricts to an equivalence of categories when we take objects satisfying suitable connectivity hypotheses on each side. This allows us to consider the $E_1$-bialgebra $\Bar(\gR)$ in place of an $E_2$-algebra $\gR$. We will show that when $\gR$ satisfies (SCE) this allows us to extract a connected graded Hopf algebra $\Delta_\gR$ from $\gR$, and we will explain how it controls the stability properties of $\gR$.

\subsection{$E_2$-algebras from Hopf algebras}\label{sec:CobarOfHopf}

In Section \ref{sec:TStructures} we have discussed the diagonal $t$-structure on $\mathsf{D}(\bk)^\bZ$, and the fact that its heart is the abelian 1-category of $\bZ$-graded $\bk$-modules. If $A$ is a ($\bN$-)graded connected Hopf algebra over $\bk$ which is a flat $\bk$-module in each degree, then we may consider it as an $E_1$-bialgebra object in $\bk\text{-}\mathsf{mod}^\bZ = (\mathsf{D}(\bk)^\bZ)^\heartsuit$ and hence, using flatness and Warning \ref{warn:flatness}, as an $E_1$-bialgebra object in $\mathsf{D}(\bk)^\bZ$. We may then form
$$\mathbf{a} := \Cobar(A) \in \mathsf{Alg}_{E_2}(\mathsf{D}(\bk)^\bZ).$$
The homotopy groups of this object are tautologically given by
$$\pi_{n,d}(\mathbf{a}) = \Cotor_{A}^{n-d}(\bk, \bk)_n.$$
Using the fact that $A$ is connected, and calculating $\Cotor$ with the reduced Cobar complex, we see that $\mathbf{a}$ satisfies (C). Observing that $A$ satisfies the hypothesis of Proposition \ref{prop:BarCobarInverse} we see that $\Bar(\mathbf{a}) \simeq \Bar(\Cobar(A)) \simeq A$ is supported in diagonal bidegrees, so $\mathbf{a}$ satisfies (SCE). Furthermore we see that $\mathbf{a}$ satisfies (F) precisely when $A$ has finite type. 

If $A$ is commutative, then it can be considered as an $E_1$-coalgebra in $E_\infty$-algebras in $\mathsf{D}(\bk)^\bZ$, and hence $\mathbf{a} = \Cobar(A)$ promotes to an $E_\infty$-algebra.

\begin{rem}
The category $\mathbf{a}\text{-}\mathsf{mod}$ is equivalent to the category $\mathsf{Stable}(A)$ introduced by Hovey \cite[Section 2.5]{HoveyBook}, \cite{HoveyComodules}, in its $\infty$-categorical incarnation \cite[Definition 4.9]{BHV}. To see this, note that Koszul duality identifies the category of finite $\mathbf{a}$-modules with the category of $A$-comodules in $\mathsf{D}(\bk)^\bZ$ whose homotopy groups are finite $\bk$-modules in total. These are precisely the $A$-comodules which are dualisable in $\mathsf{D}(\bk)^\bZ$: in other words the dualisable objects in the category of $A$-comodules equipped with the monoidal structure $\otimes_\bk$. $\mathsf{Stable}(A)$ is defined as $\mathsf{Ind}$ of this category, but $\mathbf{a}\text{-}\mathsf{mod}$ is  $\mathsf{Ind}$ of the category of finite $\mathbf{a}$-modules.
\end{rem}

\subsection{The stability Hopf algebra}

Let $\gR \in \mathsf{Alg}_{E_2}(\mathsf{D}(\bk)^\bZ)$ satisfy (C) and (SCE). The underlying object of $\Bar(\gR)$ is $\bk \otimes_\gR \bk$, which by axiom (SCE) has trivial homotopy groups below the diagonal, i.e.\ in bidegrees $(n,d)$ with $d<n$. That is, it is connective with respect to the diagonal $t$-structure. Applying the strong symmetric monoidal functor $\tau_{\leq 0}^\mathrm{diag} : \mathsf{D}(\bk)^\bZ_{\geq 0} \to (\mathsf{D}(\bk)^\bZ)^\heartsuit = \bk\text{-}\mathsf{mod}^\bZ$ we obtain a bialgebra object
$$\Delta := \tau^\mathrm{diag}_{\leq 0} (\Bar(\gR)).$$
As it lies in the heart, it is equivalent to the data of its homotopy groups
$$\bigoplus_{n \geq 0} \pi_{n,n}(\bk \otimes_\gR \bk),$$
which are equipped with the structure of a positively graded and connected bialgebra (so in fact a Hopf algebra). We identify these objects, and call them $\Delta_\gR$.

Supposing that the $\bk$-modules $\pi_{n,n}(\bk \otimes_\gR \bk)$ are all flat (cf. Warning \ref{warn:flatness}) then $\Delta_\gR$ may be considered as an augmented $E_1$-bialgebra in $\mathsf{D}(\bk)^\bZ$, and truncation
$$\Bar(\gR) \lra \tau_{\leq 0}^\mathrm{diag}(\Bar(\gR)) = \Delta_\gR$$
is a map of such. As $\gR$ satisfies (C) it in particular satisfies the hypothesis of Proposition \ref{prop:BarCobarInverse}, so $\gR \simeq \Cobar(\Bar(\gR))$. Applying $\Cobar$ to the map above therefore gives a map of $E_2$-algebras\footnote{I am grateful to Kelly Wang for suggesting this convenient and flexible notation.}
$$\gR \lra \Cobar(\Delta_\gR) =: \mathbf{r}.$$

The following theorem formalises the slogan that ``$\gR$ and $\mathbf{r}$ enjoy precisely the same kinds of homological stability in slopes $< 1$''. Its purpose is that it reduces the problem of determining which kinds of homological stability $\gR$ has to a problem in the cohomology of the Hopf algebra $\Delta_\gR$. We will take this principle further in Section \ref{sec:HigherStabMapsRevisited}.

\begin{thm}\label{thm:IdealsSameForSmallAlgebra}
Let $\gR$ satisfy (C) and (SCE).
\begin{enumerate}[(i)]
\item\label{it:IdealsSameForSmallAlgebra:1} The map $\bigoplus_{n}H^{E_2}_{n,n-1}(\gI_\gR) \to \bigoplus_{n}H^{E_2}_{n,n-1}(\gI_\mathbf{r})$ is an isomorphism. In particular, when $\bk$ is a field of positive characteristic $p$ and $\gR$ satisfies (F) we may speak of the same sequence $x_1, x_2, \ldots$ for both $\gR$ and $\mathbf{r}$.

\item\label{it:IdealsSameForSmallAlgebra:2} The map $\gR \to \mathbf{r}$ satisfies assumption $(\dagger)$ of Theorem \ref{thm:DetNilp} with $\theta=1$.

\item\label{it:IdealsSameForSmallAlgebra:3} Let $\gM$ be a finite $\gR$-module with a slope $\lambda < 1$ vanishing line. Then for $\tfrac{d}{n} = \lambda$ elements in the kernel of
$$\pi_{n,d}(\End_\gR(\gM)) \lra \pi_{n,d}(\End_{\mathbf{r}}(\mathbf{r} \otimes_\gR \gM))$$
are nilpotent.

\item\label{it:IdealsSameForSmallAlgebra:4} Let $\bk$ be a field of positive characteristic $p$ and $\gR$ satisfy (F). Fix slopes $(\lambda, \mu)$, let $x_i, \ldots, x_j$ be the $x$'s of slopes $(\lambda,\mu)$, and suppose $\fil_*\gM \in \mathsf{W}(\lambda, \mu)$. Then there is an equality
$$I_{\lambda, \mu}(\fil_*\gM) = I_{\lambda, \mu}(\fil_* \mathbf{r} \otimes_{\fil_*\gR} \fil_*\gM)$$
of ideals of $\bk[x_i, \ldots, x_j]$, where the latter is calculated in the category of  $\fil_* \mathbf{r}$-modules.
\end{enumerate}
\end{thm}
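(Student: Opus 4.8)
\textbf{Proof plan for Theorem \ref{thm:IdealsSameForSmallAlgebra}.}

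The plan is to establish the four parts in order, leveraging the structural results about $\Bar$/$\Cobar$ and the detection-of-nilpotence machinery of Theorem \ref{thm:DetNilp}. For part (\ref{it:IdealsSameForSmallAlgebra:1}), recall from \eqref{eq:BarIsIndec} that $B^{E_2}(\gR) \simeq \bk \oplus \Sigma^2 Q^{E_2^\text{nu}}(\gI_\gR)$, so $H^{E_2}_{n,n-1}(\gI_\gR)$ is computed from $B^{E_2}(\gR) = B^{E_1}(\Bar(\gR))$ in diagonal bidegree $(n,n+1)$ (after the shift). The map $\gR \to \mathbf{r}$ was obtained by applying $\Cobar$ to the truncation $\Bar(\gR) \to \tau^\mathrm{diag}_{\leq 0}(\Bar(\gR)) = \Delta_\gR$, which by Proposition \ref{prop:BarCobarInverse} corresponds under the equivalence to the map $\Bar(\gR) \to \Bar(\mathbf{r})$ being identified with this truncation map. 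Since the truncation is an isomorphism on $\pi_{n,n}$ (the diagonal part) and $B^{E_1}(-)$ applied to an $E_1$-coalgebra built from a simply-connected object only introduces contributions on or above the diagonal in the relevant degree, the claim reduces to the statement that $H^{E_1}_{n,n-1}(\gI_\gR) \to H^{E_1}_{n,n-1}(\gI_\mathbf{r})$ is an isomorphism, together with \cite[Theorem 14.6]{e2cellsI} (relating $E_1$- and $E_2$-homology on the diagonal line). The $E_1$-statement follows because $H^{E_1}_{n,n-1}(\gI)$ is the cokernel of the multiplication map on the diagonal homotopy of $\bk \otimes_\gR \bk$, which is exactly $\Delta_\gR$ in degree $n$, and this is preserved by construction.

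For part (\ref{it:IdealsSameForSmallAlgebra:2}), I must show $H^\gR_{n,d}(\mathbf{r})$ is concentrated in $(0,0)$ for $d < n+1$ or $n < 1$, i.e.\ $\mathbf{r}$ is built from $\gR$ as an $\gR$-module using only cells $(n,d)$ with $n \geq 1$ and $d \geq n$. The key computation is $\bk \otimes_\gR \mathbf{r} \simeq \bk \otimes_\gR \Cobar(\Delta_\gR)$; using that base-change $\bk \otimes_\gR -$ along $\gR \simeq \Cobar(\Bar(\gR))$ sends $\Cobar(\Bar(\gR)) \to \Cobar(\Delta_\gR)$ to a map computed by Koszul duality, one identifies $H^\gR_{*,*}(\mathbf{r})$ with $\Cotor$ of $\Bar(\gR)$ against $\Delta_\gR$, or more directly: the fibre $\overline{\mathbf{r}}$ of $\gR \to \mathbf{r}$ has $\bk \otimes_\gR \overline{\mathbf{r}}$ with homotopy supported in bidegrees $(n,d)$ with $d \geq n$, $n \geq 1$, because $\Delta_\gR$ and $\Bar(\gR)$ agree on the diagonal and $\Bar(\gR)$ is connective for the diagonal $t$-structure. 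This is where I expect to need to be careful: one must genuinely compute $\overline{\mathbf{r}} \otimes_\gR \bk$ and control both its connectivity ($n \geq 1$) and its diagonal slope ($d \geq n$), and the cleanest route is probably via the cobar filtration of $\Cobar(\Delta_\gR)$ versus $\Cobar(\Bar(\gR)) \simeq \gR$, comparing associated gradeds $\prod_i S^{0,-i} \otimes (\Delta_\gR^{\mathrm{coid}})^{\otimes i}$ against $\prod_i S^{0,-i}\otimes (\Bar(\gR)^{\mathrm{coid}})^{\otimes i}$; the map is an iso on the diagonal and the fibre sits strictly above it, giving exactly $\theta = 1$.

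Parts (\ref{it:IdealsSameForSmallAlgebra:3}) and (\ref{it:IdealsSameForSmallAlgebra:4}) are then formal consequences. For (\ref{it:IdealsSameForSmallAlgebra:3}): since $\gM$ is finite with a slope $\lambda < 1$ vanishing line, $\End_\gR(\gM) \simeq \gM \otimes_\gR D^r(\gM)$ is also finite with a slope $\lambda$ vanishing line (Lemma \ref{lem:TensorAndHomStaysThick}), so it satisfies the hypotheses of Theorem \ref{thm:DetNilp}(\ref{it:DetNilp:2}) with this $\lambda$ and $\theta = 1 > \lambda$; moreover $\gS \otimes_\gR \End_\gR(\gM) \simeq \End_{\mathbf{r}}(\mathbf{r}\otimes_\gR\gM)$ because $\mathbf{r}$ is central in $\gR\text{-}\mathsf{mod}$ (Lemma \ref{lem:E2AlgebraIsModuleCentral}) and finiteness lets extension of scalars commute with the internal Hom forming $\End$. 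Applying Theorem \ref{thm:DetNilp}(\ref{it:DetNilp:2}) gives the nilpotence. For (\ref{it:IdealsSameForSmallAlgebra:4}): $I_{\lambda,\mu}(\fil_*\gM)$ is by definition the kernel of $\chi$, built from which polynomials in the canonical self-maps become $\tau$-power torsion, equivalently become nilpotent after inverting $\tau$; inverting $\tau$ and using that $\tau^{-1}\fil_*\gM = \gM$ is a finite $\gR$-module with a slope $\lambda$ vanishing line (as $\fil_*\gM \in \mathsf{W}(\lambda,\mu)$ and $\lambda < 1$, combined with Lemma \ref{lem:VanishingEstimates}-type estimates), part (\ref{it:IdealsSameForSmallAlgebra:3}) applied to the relevant $\End$ algebras shows a polynomial in the $\phi_k$'s is nilpotent after $\otimes_\gR\tau^{-1}$ if and only if its image in $\End_\mathbf{r}(\mathbf{r}\otimes_\gR\tau^{-1}\fil_*\gM)$ is; since $(\dagger)$ with $\theta = 1$ also gives, via Theorem \ref{thm:DetNilp}(\ref{it:DetNilp:3})--(\ref{it:DetNilp:4}), that the canonical self-maps over $\fil_*\mathbf{r}$ are the $\fil_*\mathbf{r}$-base-change of those over $\fil_*\gR$ (up to $p$-th powers, which do not affect radical ideals), the two ideals coincide. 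I expect part (\ref{it:IdealsSameForSmallAlgebra:2}) to be the main obstacle, as it is the one genuinely computational input; once $\theta = 1$ is in hand the rest is bookkeeping with the tools already assembled.
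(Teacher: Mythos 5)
Parts (\ref{it:IdealsSameForSmallAlgebra:3}) and (\ref{it:IdealsSameForSmallAlgebra:4}) of your plan match the paper's proof: once $(\dagger)$ with $\theta=1$ is in hand, (\ref{it:IdealsSameForSmallAlgebra:3}) is Theorem \ref{thm:DetNilp} (\ref{it:DetNilp:2}), and (\ref{it:IdealsSameForSmallAlgebra:4}) is the two containments you describe (one trivial, one via (\ref{it:IdealsSameForSmallAlgebra:3}) after inverting $\tau$). You are also right that (\ref{it:IdealsSameForSmallAlgebra:2}) is the real content. But your proposed route to it is where the gap lies.

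You want to establish $(\dagger)$ by comparing the cobar filtrations of $\Cobar(\Bar(\gR)) \simeq \gR$ and $\Cobar(\Delta_\gR) = \mathbf{r}$ and observing that the map on associated gradeds is an isomorphism on the diagonal with fibre strictly above it. The problem is that $(\dagger)$ is a statement about $H^{\gR}_{*,*}(\mathbf{r}) = \pi_{*,*}(\bk \otimes_\gR \mathbf{r})$, and the cobar filtration is a \emph{limit} (totalization); applying $\bk \otimes_\gR -$ does not commute with this limit, and even granting a convergence argument you would be computing $\bk \otimes_\gR \mathbf{r}$ from a tower whose layers involve $\bk \otimes_\gR (\Delta_\gR^{\otimes i})$, not the bare tensor powers of the coaugmentation coideal — so the "fibre sits strictly above the diagonal" observation at the level of $\Bar(\gR) \to \Delta_\gR$ does not directly transfer. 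The paper avoids this entirely: it first proves the \emph{relative} vanishing $H^{E_1}_{n,d}(\gI_\mathbf{r}, \gI_\gR)=0$ for $d<n+1$ (immediate from $\Bar(\gR) \to \Delta_\gR$ being a diagonal truncation), upgrades it to $H^{E_2}_{n,d}(\gI_\mathbf{r}, \gI_\gR)=0$ for $d<n+1$ via the relative transfer result \cite[Proposition 14.5]{e2cellsI} with $\rho(n)=n$, $\sigma(n)=n+1$, and then converts relative $E_2$-cells into relative $\gR$-module cells via \cite[Theorem 15.9]{e2cellsI} with the same $\rho,\sigma$, which gives $H^\gR_{n,d}(\mathbf{r},\gR)=0$ for $d<n+1$ and hence $(\dagger)$ with $\theta=1$. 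Relatedly, in part (\ref{it:IdealsSameForSmallAlgebra:1}) your citation of \cite[Theorem 14.6]{e2cellsI} is not the right tool — that is the converse vanishing implication for a single algebra; what is needed is the relative Proposition 14.5, and the isomorphism on $H^{E_2}_{n,n-1}$ follows from the relative vanishing in the two degrees $d=n-1,n$, not from an absolute comparison of diagonal lines. So the statement is correct and your high-level strategy is the paper's, but the key computational input (\ref{it:IdealsSameForSmallAlgebra:2}) needs the cell-attachment transfer machinery rather than a direct cobar comparison.
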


Part (\ref{it:IdealsSameForSmallAlgebra:1}) is an immediate consequence of the following.

\begin{lem}\label{lem:CellsEstimateTruncation}
We have $H^{E_2}_{n,d}(\gI_{\mathbf{r}}, \gI_{\gR})=0$ for $d < n+1$, and in particular the map $H^{E_2}_{n,d}(\gI_\gR) \to H^{E_2}_{n,d}(\gI_{\mathbf{r}})$ is an isomorphism in bidegrees satisfying $d \leq n-1$.
\end{lem}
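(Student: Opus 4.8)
\textbf{Proof proposal for Lemma \ref{lem:CellsEstimateTruncation}.}

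The plan is to relate the relative $E_2$-homology $H^{E_2}_{*,*}(\gI_{\mathbf{r}}, \gI_{\gR})$ to the bar constructions, using the isomorphism \eqref{eq:BarIsIndec} which identifies $B^{E_2}(\gS) \simeq \bk \oplus \Sigma^2 Q^{E_2^\text{nu}}(\gI_\gS)$ for an augmented $E_2$-algebra $\gS$. Thus $\Sigma^2 Q^{E_2^\text{nu}}(\gI_\gR) \simeq \mathrm{fib}(B^{E_2}(\gR) \to \bk)$ and similarly for $\mathbf{r}$, so the relative homology of indecomposables is controlled by the map $B^{E_2}(\gR) \to B^{E_2}(\mathbf{r})$. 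Now $B^{E_2} = B^{E_1} \circ B^{E_1}$ by Dunn--Lurie additivity: the first $B^{E_1}$ sends the $E_2$-algebra to its $E_1$-bialgebra $\Bar(-)$, and the second applies the underlying bar construction again. By construction $\mathbf{r} = \Cobar(\Delta_\gR)$ with $\Delta_\gR = \tau_{\leq 0}^{\text{diag}}\Bar(\gR)$, and since $\gR$ satisfies (C) we have $\gR \simeq \Cobar(\Bar(\gR))$ by Proposition \ref{prop:BarCobarInverse}; applying $\Bar$ (which by that proposition is inverse to $\Cobar$ on the relevant subcategories, and in any case $\Bar\Cobar \to \mathrm{Id}$ is the counit) identifies the map $\Bar(\gR) \to \Bar(\mathbf{r})$ with the truncation map $\Bar(\gR) \to \tau_{\leq 0}^{\text{diag}}\Bar(\gR) = \Delta_\gR$.

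So the key point becomes: the fibre of $\Bar(\gR) \to \tau_{\leq 0}^{\text{diag}}\Bar(\gR)$ is, by definition of the diagonal $t$-structure, diagonally $1$-connective, i.e.\ its homotopy groups $\pi_{n,d}$ vanish for $d \leq n$. Write $\gJ := \mathrm{fib}(\Bar(\gR) \to \Delta_\gR)$, so $\pi_{n,d}(\gJ) = 0$ for $d \le n$; equivalently, desuspending, $S^{0,-1}\otimes \gJ$ is connected in the sense of Section \ref{sec:BarCobar}. I then want to apply a second $B^{E_1}$ and track the connectivity. Here I would invoke the estimate already used in the proof of Proposition \ref{prop:BarCobarInverse}: for a map of connected objects $f : X \to Y$ with $\pi_{n,d}(Y,X) = 0$ for $n < N$, or $n=N$ and $d < D$, the iterated tensor powers satisfy $\pi_{n,d}(Y^{\otimes k}, X^{\otimes k}) = 0$ for $n < N$, or $n = N$ and $d < N+k-1$ (this is \cite[Corollary 11.6]{e2cellsI}). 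Applying the underlying bar construction $B^{E_1}(-) = \colim_{[p]} (-)^{\otimes p}$ to the map $\Bar(\gR) \to \Delta_\gR$ — more precisely, comparing the simplicial filtrations — the fibre $\mathrm{fib}(B^{E_1}\Bar(\gR) \to B^{E_1}\Delta_\gR)$ acquires a filtration whose $k$-th graded piece is $S^{0,k}\otimes \gJ^{\otimes k}$-type terms, and the connectivity estimate shows each such piece, hence the total fibre, has $\pi_{n,d} = 0$ for $d < n+1$. This fibre is exactly $\Sigma^2 Q^{E_2^\text{nu}}(\gI_\gR) \to \Sigma^2 Q^{E_2^\text{nu}}(\gI_{\mathbf{r}})$ shifted, giving $H^{E_2}_{n,d}(\gI_{\mathbf{r}}, \gI_\gR) = 0$ for $d < n+1$; the long exact sequence then yields that $H^{E_2}_{n,d}(\gI_\gR) \to H^{E_2}_{n,d}(\gI_{\mathbf{r}})$ is an isomorphism for $d \le n-1$ (and an epimorphism for $d = n$).

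The main obstacle is the bookkeeping in the second step: making precise how the bar construction $B^{E_1}$ interacts with the diagonal-connectivity of the fibre $\gJ$, and in particular getting the shift right so that "$\pi_{n,d}(\gJ) = 0$ for $d \leq n$" propagates to "$\pi_{n,d}$ of the fibre of $B^{E_2}(\gR) \to B^{E_2}(\mathbf r)$ vanishes for $d < n+1$" — one extra $d$ is gained from the suspension built into each bar degree, exactly as in the $\tfrac{1}{2}$-type estimates of \cite[Corollary 11.6]{e2cellsI}, but one must be careful that passing to the colimit over $\Delta^{op}$ (rather than a finite skeleton) does not lose the estimate. Completeness/connectivity of the relevant filtrations — which holds because everything in sight is connected in the appropriate sense — is what licenses this, and I would cite the argument template from the proof of Proposition \ref{prop:BarCobarInverse} verbatim rather than redo it. An alternative, perhaps cleaner, route avoids the second bar construction entirely: use \eqref{eq:BarIsIndec} in the form $B^{E_1}(\gA) \simeq \bk \oplus \Sigma Q^{E_1^\text{nu}}(\gI_\gA)$ together with \cite[Theorem 14.4]{e2cellsI} (already cited in Section \ref{sec:ElementsXi}), which compares $E_1$- and $E_2$-homology in a range, to reduce directly to the statement that $\Bar(\gR) \to \Delta_\gR$ is a diagonal truncation; I would pursue whichever of these makes the constant "$+1$" most transparent.
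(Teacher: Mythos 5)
Your proposal is correct and follows essentially the same route as the paper: identify the map $\Bar(\gR)\to\Bar(\mathbf{r})\simeq\Delta_\gR$ with the diagonal truncation, read off the relative $E_1$-statement $H^{E_1}_{n,d}(\gI_{\mathbf{r}},\gI_\gR)=0$ for $d<n+1$, and then transfer this to $E_2$-homology. The only difference is that where you unpack the transfer by hand via the second bar construction and tensor-power connectivity estimates, the paper simply cites the packaged relative transfer result \cite[Proposition 14.5]{e2cellsI} with $\rho(n)=n$ and $\sigma(n)=n+1$ (noting $\rho*\rho=\rho$ and $\rho*\sigma=\sigma$), which is exactly the bookkeeping you identify as the main obstacle.
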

\begin{proof}
Recall that $\bk \otimes_\gR \bk \simeq \Bar(\gR) \simeq \bk \oplus \Sigma Q^{E_1^\text{nu}}(\gI_{\gR})$ by \eqref{eq:BarIsIndec}, and similarly for $\mathbf{r}$. By construction the map
$$\bk \otimes_\gR \bk \lra \bk \otimes_{\mathbf{r}} \bk \simeq \Delta_\gR$$
is an isomorphism on homotopy groups in bidegrees $d \leq n$, and an epimorphism in bidegrees $d \leq n+1$ (as the target vanishes in degrees $(n,n+1)$). Thus $H^{E_1}_{n,d}(\gI_{\mathbf{r}}, \gI_{\gR})=0$ for $d < n+1$. We also have that $H^{E_1}_{n,d}(\gI_{\mathbf{r}})$ and $H^{E_1}_{n,d}(\gI_{\gR})$ vanish for $d < n-1$. We apply \cite[Proposition 14.5]{e2cellsI} with $\rho(n)=n$ and $\sigma(n)=n+1$, which satisfy $\rho*\rho=\rho$ and $\rho*\sigma = \sigma$, to deduce that the $E_2$-homology satisfies $H^{E_2}_{n,d}(\gI_{\mathbf{r}}, \gI_{\gR})=0$ for $d < n+1$.
\end{proof}

Part (\ref{it:IdealsSameForSmallAlgebra:2}) is as follows.

\begin{cor}
We have $H_{n,d}^{\gR}(\mathbf{r}) = 0$ for $n < 1$ or $d < n+1$, except that it is $\bk$ in bidegree $(0,0)$.
\end{cor}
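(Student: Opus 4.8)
The plan is to deduce this corollary from Lemma \ref{lem:CellsEstimateTruncation} by passing from $E_2$-homology to ordinary $\gR$-module homology via the bar spectral sequence, exactly in the spirit of how (F$^\prime$) is shown to imply (F) and how \cite[Theorem 14.4]{e2cellsI} is used elsewhere in the paper. Recall first that $H_{n,d}^{\gR}(\mathbf{r}) := \pi_{n,d}(\bk \otimes_\gR \mathbf{r})$, and that the augmentation ideal $\gI_{\mathbf{r}}$ of $\mathbf{r}$ satisfies $\mathbf{r} \simeq \bk \oplus \gI_{\mathbf{r}}$ as an $\gR$-module (both $\gR$ and $\mathbf{r}$ are augmented, and the map $\gR \to \mathbf{r}$ respects augmentations), so that $\bk \otimes_\gR \mathbf{r} \simeq \bk \oplus (\bk \otimes_\gR \gI_{\mathbf{r}})$. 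Hence it suffices to show $\pi_{n,d}(\bk \otimes_\gR \gI_{\mathbf{r}})=0$ for $n<1$ or $d<n+1$.

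The key input is that, by Lemma \ref{lem:CellsEstimateTruncation}, $H^{E_2}_{n,d}(\gI_{\mathbf{r}},\gI_\gR)=0$ for $d<n+1$, i.e.\ the relative $E_2$-indecomposables $Q^{E_2^\text{nu}}(\gI_{\mathbf{r}}/\gI_\gR)$ (more precisely the cofibre of $Q^{E_2^\text{nu}}(\gI_\gR) \to Q^{E_2^\text{nu}}(\gI_{\mathbf{r}})$, or the $E_2$-homology of the map) vanish in bidegrees $d<n+1$. The standard consequence (via \cite[Corollary 11.6]{e2cellsI} or the CW-approximation of $\mathbf{r}$ as an $\gR$-algebra by cells in those bidegrees) is that $\mathbf{r}$ can be built from $\gR$ as an $E_2$-$\gR$-algebra by attaching $E_2$-cells of bidegree $(n,d)$ with $d \geq n+1$ and $n \geq 1$; in particular, as a mere $\gR$-module, $\gI_{\mathbf{r}}$ admits a filtration whose associated graded is a sum of free $\gR$-modules $S^{n_\alpha,d_\alpha}\otimes_\bk \gR$ with $d_\alpha \geq n_\alpha + 1$ and $n_\alpha \geq 1$ (the free $E_2$-algebra on such cells is, by F.~Cohen's description, supported in bidegrees with $d \geq n+1$ and $n \geq 1$, since products, brackets and Dyer--Lashof operations only increase $d-n$ and keep $n\geq 1$). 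Applying $\bk \otimes_\gR -$ to this filtration and using that $\bk \otimes_\gR (S^{n_\alpha,d_\alpha}\otimes_\bk \gR) \simeq S^{n_\alpha,d_\alpha}$, the resulting (strongly convergent, by connectivity of $\gR$ and Axiom (C)) spectral sequence has $E^1$-page supported in bidegrees $(n,d)$ with $d \geq n+1$ and $n\geq 1$, giving $\pi_{n,d}(\bk\otimes_\gR \gI_{\mathbf{r}})=0$ for $d<n+1$ or $n<1$ as desired.

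Alternatively, and perhaps more cleanly, one can argue directly with the cofibre sequence $\bk \otimes_\gR \overline{\mathbf{r}} \to \bk \otimes_\gR \gR (\simeq \bk) \to \bk \otimes_\gR \mathbf{r}$ where $\overline{\mathbf{r}} = \mathrm{fib}(\gR \to \mathbf{r})$, reducing the claim to a vanishing estimate for $\pi_{n,d}(\bk \otimes_\gR \overline{\mathbf{r}})$ in bidegrees $d \leq n$; this in turn follows from Lemma \ref{lem:CellsEstimateTruncation} together with the observation that the map $\bk \otimes_\gR \bk \to \bk \otimes_{\mathbf{r}} \bk = \Delta_\gR$ (which is $\bk \oplus \Sigma Q^{E_1^\text{nu}}(-)$ applied to the map on augmentation ideals, by \eqref{eq:BarIsIndec}) is an isomorphism in bidegrees $d \leq n$ and epi in bidegrees $d \leq n+1$, the relative bar spectral sequence converting the relative $E_2$-homology estimate into the relative $E_1$/module-homology estimate. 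I expect the main obstacle to be bookkeeping: carefully tracking the off-by-one between $E_2$-homology, $E_1$-homology, bar degree, and module homology, and verifying that F.~Cohen's operations on a free $E_2$-algebra generated in bidegrees $(n,d)$ with $d\geq n+1$, $n\geq 1$ stay within that region (the bracket raises $d$ by $1$ and adds the $n$'s, squaring/Dyer--Lashof behave similarly), so that the free-algebra associated graded genuinely lives where claimed. Everything else is a routine application of results already in the excerpt.
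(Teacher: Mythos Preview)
Your first reduction contains a genuine error. The splitting $\mathbf{r} \simeq \bk \oplus \gI_{\mathbf{r}}$ holds in $\mathsf{D}(\bk)^\bZ$ (the unit of $\mathbf{r}$ provides the section), but there is no reason for it to hold as $\gR$-modules: an $\gR$-module section $\bk \to \mathbf{r}$ of the augmentation would require a map out of the non-free $\gR$-module $\bk$, and none is provided. Even if such a splitting did hold, the conclusion $\bk \otimes_\gR \mathbf{r} \simeq \bk \oplus (\bk \otimes_\gR \gI_{\mathbf{r}})$ would still be wrong, because $\bk \otimes_\gR \bk$ is $\Bar(\gR)$, not $\bk$. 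This propagates: you then seek a filtration of $\gI_{\mathbf{r}}$ by free $\gR$-modules in bidegrees with $d \geq n+1$, but $\gI_{\mathbf{r}}$ contains $\gI_\gR$, whose $\gR$-module cells sit far below that line. The object you should be analysing is the cofibre of $\gR \to \mathbf{r}$, not $\gI_{\mathbf{r}}$.

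The paper's proof works with this relative object directly and is a single citation: \cite[Theorem 15.9]{e2cellsI}, applied with $\rho(n)=n$ and $\sigma(n)=n+1$, gives a comparison map
\[
H^{\gR}_{n,d}(\mathbf{r}, \gR) \lra H^{E_2}_{n,d}(\gI_{\mathbf{r}}, \gI_{\gR})
\]
which is an isomorphism for $d < n+1$. The target vanishes in this range by Lemma \ref{lem:CellsEstimateTruncation}, hence so does the source; since $H^{\gR}_{*,*}(\gR) = \bk[0,0]$, the corollary follows. Your underlying intuition---that the relative $E_2$-cell estimate should translate into a relative $\gR$-module connectivity estimate via a skeletal filtration---is exactly what Theorem 15.9 packages, and your hand-rolled version via F.~Cohen's description of the associated graded of an $E_2$-skeletal filtration over $\gR$ could in principle be made to work, but only once you replace $\gI_{\mathbf{r}}$ by the correct relative object and carry out the (nontrivial) bookkeeping that Theorem 15.9 already encodes.
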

\begin{proof}
 By \cite[Theorem 15.9]{e2cellsI} applied with $\rho(n)=n$ and $\sigma(n)=n+1$ it follows that there is a map
$$H^{\gR}_{n,d}(\mathbf{r}, \gR) \lra H^{E_2}_{n,d}(\gI_{\mathbf{r}}, \gI_{\gR})$$
which is an isomorphism for $d < n+1$, so in particular the domain vanishes in this range. The difference between this and the stated result is precisely $H_{*,*}^\gR(\gR) = \bk[0,0]$.
\end{proof}

Then part (\ref{it:IdealsSameForSmallAlgebra:3}) follows from this and Theorem \ref{thm:DetNilp} (\ref{it:DetNilp:2}). 

For part (\ref{it:IdealsSameForSmallAlgebra:4}), first note that the containment $I_{\lambda, \mu}(\fil_*\gM) \subseteq I_{\lambda, \mu}(\fil_* \mathbf{r} \otimes_{\fil_*\gR} \fil_*\gM)$ is immediate, as a map which has an iterate which is $\tau$-power torsion continues to have this property upon applying $\fil_* \mathbf{r} \otimes_{\fil_*\gR} -$. For the converse, let $y \in I_{\lambda, \mu}(\fil_*\mathbf{r} \otimes_{\fil_*\gR} \fil_*\gM) \subset \bk[x_i, \ldots, x_j]$ and let $\psi$ be a $y$ self-map of $\fil_*\gM$. We wish to show that $\tau^{-1}\psi$ is a nilpotent endomorphism of the $\gR$-module $\tau^{-1}\fil_*\gM$. As we chose $y$ in the ideal $I_{\lambda, \mu}(\fil_*\mathbf{r} \otimes_{\fil_*\gR} \fil_*\gM)$ we know that $\mathbf{r} \otimes_\gR \tau^{-1}\psi = \tau^{-1}(\fil_* \mathbf{r} \otimes_{\fil_*\gR} \psi)$ is nilpotent, but by part (\ref{it:IdealsSameForSmallAlgebra:3}) it follows that $\tau^{-1}\psi$ is nilpotent, i.e.\ that $y \in I_{\lambda, \mu}(\fil_*\gM)$.

\subsection{Maps of Hopf algebras}

In practice the Hopf algebra $\Delta_\gR$ may not be completely known, and one must work with partial information. In particular, it will often be convenient to pass to a quotient Hopf algebra $\Delta_\gR \twoheadrightarrow Q$. Here we provide some tools for doing so.

If $f : A \to B$ is a morphism of ($\bk$-flat) Hopf algebras then there is a corresponding map of $E_2$-algebras
$$\mathbf{a} := \Cobar(A) \lra \mathbf{b} :=\Cobar(B).$$
This may be used to compare calculations over $\mathbf{a}$ with calculations over $\mathbf{b}$, in a range of degrees, as follows.

\begin{prop}\label{prop:ChangeOfHopfAlg}\mbox{}
\begin{enumerate}[(i)]
\item\label{it:ChangeOfHopfAlg:1} If $f$ is surjective and $\Ker(f)$ is supported in gradings $\geq N$, then this map satisfies hypothesis $(\dagger)$ of Theorem \ref{thm:DetNilp} with slope $\theta = \tfrac{N-1}{N}$.

\item\label{it:ChangeOfHopfAlg:2} If $f$ is injective and $\mathrm{Coker}(f)$ is supported in gradings $\geq N$, then this map satisfies hypothesis $(\dagger)$ of Theorem \ref{thm:DetNilp} with slope $\theta = \tfrac{N-2}{N}$.
\end{enumerate}
\end{prop}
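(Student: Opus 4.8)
The statement is about computing the $\gR$-homology $H_{*,*}^{\mathbf{a}}(\mathbf{b})$ where $\mathbf{a} = \Cobar(A)$, $\mathbf{b} = \Cobar(B)$, and verifying that it is concentrated in the region $d \geq \theta n + 1$ (together with $n \geq 1$), except for the unit class in bidegree $(0,0)$. The plan is to reduce this to a computation with the Hopf algebras themselves, exactly as in Lemma \ref{lem:CellsEstimateTruncation} and the Corollary following it, which together handled the special case $\gR \to \mathbf{r}$ (where $\theta = 1$). The key mechanism there was: (a) control the map on $E_1$-homology via the underlying map of Bar constructions $\bk \otimes_{\mathbf{a}} \bk \simeq A$, $\bk \otimes_{\mathbf{b}} \bk \simeq B$, using \eqref{eq:BarIsIndec}; (b) upgrade from $E_1$-homology to $E_2$-homology using \cite[Proposition 14.5]{e2cellsI}; (c) pass from $E_2$-homology of augmentation ideals to $\gR$-module homology $H^{\mathbf{a}}_{*,*}(\mathbf{b})$ using \cite[Theorem 15.9]{e2cellsI}.

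First I would treat case (\ref{it:ChangeOfHopfAlg:1}), where $f : A \to B$ is surjective with kernel supported in gradings $\geq N$. Since $A$ and $B$ are both connected graded Hopf algebras of flat $\bk$-modules, $\bk \otimes_{\mathbf{a}} \bk \simeq A$ and $\bk \otimes_{\mathbf{b}} \bk \simeq B$ are concentrated on the diagonal (bidegrees $(n,n)$), and the map between them is $f$ itself placed diagonally. Because $\Ker(f)$ lives in gradings $\geq N$, the fibre $\overline{B} \to A \to B$ of this diagonal map has $\pi_{n,d}(\mathrm{fib})$ vanishing unless $d = n$ and $n \geq N$ (the fibre \emph{is} $\Ker(f)$ placed on the diagonal, up to the usual shift from the long exact sequence — I must be careful about whether it is $\Ker(f)$ in degree $(n,n)$ or a shift thereof; since the map is injective-on-low-degrees and surjective everywhere here, the relevant relative homotopy $\pi_{n,d}(B, A)$ of the map $A \to B$ vanishes for $d < n$ automatically and for $d = n$, $n < N$). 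Translating through \eqref{eq:BarIsIndec}, $Q^{E_1^{\text{nu}}}\gI_{\mathbf{b}}/Q^{E_1^{\text{nu}}}\gI_{\mathbf{a}}$, i.e.\ $H^{E_1}_{n,d}(\gI_{\mathbf{b}}, \gI_{\mathbf{a}})$, vanishes for $d < n$ and for $d = n - 1$ with $n < N$; and $H^{E_1}_{n,d}(\gI_{\mathbf{a}})$, $H^{E_1}_{n,d}(\gI_{\mathbf{b}})$ vanish for $d < n-1$ (which is (SCE), satisfied since both $\mathbf{a}, \mathbf{b}$ come from Hopf algebras). The condition $d = n-1$ with $n < N$ is exactly $d < \tfrac{N-1}{N}n$ when combined with integrality — more precisely, $d \leq n - 1$ and $n < N$ is implied by $d < \tfrac{N-1}{N}n + c$ for a suitable small constant, and one checks the clean statement is that $H^{E_1}_{n,d}(\gI_{\mathbf{b}}, \gI_{\mathbf{a}}) = 0$ for $d < n$ always and the surviving diagonal classes occur only for $n \geq N$. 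I would then feed the vanishing functions $\rho(n) = n - 1$ (or $\rho(n) = \lfloor \tfrac{N-1}{N}n\rfloor$-type bound) and $\sigma$ the relative bound into \cite[Proposition 14.5]{e2cellsI}, check the required $\rho * \rho = \rho$, $\rho * \sigma = \sigma$ identities (this is a routine but slope-sensitive computation with the convolution $*$), and conclude the analogous vanishing for $E_2$-homology: $H^{E_2}_{n,d}(\gI_{\mathbf{b}}, \gI_{\mathbf{a}}) = 0$ for $d < \tfrac{N-1}{N}n + 1$. Finally \cite[Theorem 15.9]{e2cellsI} with the same $\rho, \sigma$ gives a map $H^{\mathbf{a}}_{n,d}(\mathbf{b}, \mathbf{a}) \to H^{E_2}_{n,d}(\gI_{\mathbf{b}}, \gI_{\mathbf{a}})$ which is an isomorphism in the relevant range, so $H^{\mathbf{a}}_{n,d}(\mathbf{b}, \mathbf{a})$ vanishes there; subtracting off $H^{\mathbf{a}}_{*,*}(\mathbf{a}) = \bk[0,0]$ yields $(\dagger)$ with $\theta = \tfrac{N-1}{N}$.

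For case (\ref{it:ChangeOfHopfAlg:2}), $f : A \to B$ injective with $\mathrm{Coker}(f)$ in gradings $\geq N$, the structure of the argument is identical but the bookkeeping shifts by one. Here the map $A \to B$ on diagonal-concentrated objects is injective, so its fibre is a \emph{desuspension} of the cokernel: $\pi_{n,d}(\mathrm{fib}(A \to B))$ is $\mathrm{Coker}(f)_n$ sitting in bidegree $(n, n-1)$, nonzero only for $n \geq N$. Consequently $H^{E_1}_{n,d}(\gI_{\mathbf{b}}, \gI_{\mathbf{a}})$ picks up contributions at $d = n - 2$ (one further desuspension coming from \eqref{eq:BarIsIndec} applied to the relative term) with $n \geq N$, and vanishes for $d < n - 2$ restricted to $n < N$ — this is the source of the $\tfrac{N-2}{N}$ rather than $\tfrac{N-1}{N}$. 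I would again invoke \cite[Proposition 14.5]{e2cellsI} and \cite[Theorem 15.9]{e2cellsI} with $\rho(n) = n-1$, $\sigma(n) = \tfrac{N-2}{N}n + 1$-type bounds, verify the convolution identities, and conclude $(\dagger)$ with $\theta = \tfrac{N-2}{N}$. The main obstacle I anticipate is not conceptual but combinatorial: getting the precise vanishing \emph{lines} (as opposed to vanishing \emph{regions} cut out by $n < N$, $d \leq n - 1$ or $d \leq n-2$) into the exact form $d < \theta n + 1$ and verifying that the chosen $\rho, \sigma$ satisfy the hypotheses $\rho * \rho = \rho$ and $\rho * \sigma = \sigma$ of \cite[Proposition 14.5]{e2cellsI} — one has to be careful that the relevant piecewise-linear functions are closed under the convolution operation $*$, and in the injective case there is the extra subtlety that the cokernel of a map of Hopf algebras need not itself be a Hopf algebra, so one works with the relative homotopy groups of the underlying map rather than with an honest algebraic quotient. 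I would double-check the off-by-one in case (ii) against the already-proven $\theta = 1$ case (which is $f$ an isomorphism in low degrees, i.e.\ $N = \infty$), where both formulas should degenerate consistently.
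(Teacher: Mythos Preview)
Your strategy is exactly the paper's: compute $H^{E_1}_{n,d}(\gI_{\mathbf{b}}, \gI_{\mathbf{a}})$ directly from $f : A \to B$, then feed linear $\rho, \sigma$ into \cite[Proposition~14.5]{e2cellsI} and \cite[Theorem~15.9]{e2cellsI}. Two bookkeeping corrections are needed, and the first one actually breaks your case~(ii) as written.

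\textbf{The degree shift in case (ii).} You place the relative classes at $d = n-2$, but they sit at $d = n-1$. The identification via \eqref{eq:BarIsIndec} gives $Q^{E_1^{\mathrm{nu}}}\gI_{\mathbf{a}} \simeq \Sigma^{-1}\bar{A}$, hence
\[
H^{E_1}_{n,d}(\gI_{\mathbf{b}}, \gI_{\mathbf{a}}) \;=\; \pi_{n,d}\bigl(\Sigma^{-1}\mathrm{cofib}(\bar{A} \to \bar{B})\bigr) \;=\; \pi_{n,d}\bigl(\mathrm{fib}(A \to B)\bigr),
\]
with \emph{no further shift}: the desuspension from \eqref{eq:BarIsIndec} and the passage from cofibre to fibre cancel. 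Concretely the paper records
\[
H^{E_1}_{n,d}(\gI_{\mathbf{b}}, \gI_{\mathbf{a}}) = \begin{cases} \mathrm{Coker}(A_n \to B_n) & d = n-1,\\ \mathrm{Ker}(A_n \to B_n) & d = n,\\ 0 & \text{otherwise.}\end{cases}
\]
Under (ii) the support is therefore $(n,n-1)$ with $n \geq N$, which vanishes for $d < \tfrac{N-2}{N}n + 1$ (the line passes exactly through $(N,N-1)$). With your claimed $(n,n-2)$, the point $(N,N-2)$ lies strictly inside the region $d < \tfrac{N-2}{N}n+1$ and the argument would only deliver $\theta = \tfrac{N-3}{N}$.

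\textbf{The function $\rho$.} The paper takes $\rho(n) = n$, not $n-1$. With $\rho(n) = n-1$ you get $(\rho * \rho)(n) = n-2 \neq \rho(n)$, so the hypothesis $\rho * \rho = \rho$ of \cite[Proposition~14.5]{e2cellsI} fails; with $\rho(n) = n$ it holds, and $\rho * \sigma = \sigma$ for $\sigma(n) = \tfrac{N-1}{N}n + 1$ (resp.\ $\tfrac{N-2}{N}n + 1$) is an easy check. You anticipated having to verify these identities; this is the choice that makes them work.
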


\begin{proof}
As $H^{E_1}_{*,*}(\gI_\mathbf{a})$ is $A_n$ in bidegree $(n,n-1)$ and trivial otherwise, and similarly for $\mathbf{b}$, we see that
$$H^{E_1}_{n,d}(\gI_\mathbf{b}, \gI_\mathbf{a}) = \begin{cases}
\mathrm{Coker}(A_n \to B_n) & d=n-1\\
\Ker(A_n \to B_n) & d=n\\
0 & \text{else}.
\end{cases}$$
Under assumption (i) these are supported in bidegrees $(n,n)$ and vanish for $n < N$. In particular they vanish in bidegrees $(n,d)$ satisfying $d < \tfrac{N-1}{N}n + 1$. Applying \cite[Proposition 14.5]{e2cellsI} with $\rho(n)=n$ and $\sigma(n) = \tfrac{N-1}{N}n + 1$, it follows that $H^{E_2}_{n,d}(\gI_\mathbf{b}, \gI_\mathbf{a})$ for $d < \tfrac{N-1}{N}n + 1$ too. Invoking \cite[Theorem 15.9]{e2cellsI} with the same $\rho$ and $\sigma$ gives the same vanishing range for $H^{\mathbf{a}}_{n,d}(\mathbf{b}, \mathbf{a})$. It follows that hypothesis ($\dagger$) of Theorem \ref{thm:DetNilp} holds with $\theta = \tfrac{N-1}{N}$.

Under assumption (ii) the relative $E_1$-homology groups are supported in bidegrees $(n,n-1)$ and vanish for $n < N$, so in particular they vanish in bidegrees $(n,d)$ satisfying $d < \tfrac{N-2}{N}n + 1$. The same line of reasoning gives the conclusion.
\end{proof}

If $A$ is a connected graded Hopf algebra, and ${P}$ is a set of primitive elements in $A$, then the algebra quotient $A \twoheadrightarrow A/({P})$ by the two-sided ideal generated by ${P}$ canonically obtains the structure of a Hopf algebra. Conversely, if $f: A \twoheadrightarrow B$ is a surjective map of Hopf algebras, and $a$ is an element of $\Ker(f)$ of minimal degree, then $a$ is primitive. Both of these are elementary to check, and it follows from the latter that a general quotient of $A$ is obtained by killing a (possibly transfinite) \emph{primitive sequence} $p_1, p_2, \ldots$, where $p_i$ is primitive modulo the two-sided ideal generated by the previous $p_j$'s.

\subsection{The underived canonical multiplicative filtration}

The canonical multiplicative filtration $\fil_* \mathbf{r} = \fil_*^{E_2} \mathbf{r}$ has 
$$\bk \otimes_{\fil_*^{E_2} \mathbf{r}} \bk = B^{E_1}(\fil_*^{E_2}\mathbf{r}) \simeq \fil^{E_1}_* \Delta_\gR$$
by Proposition \ref{prop:BarOnCanMultFilt}. Although $\Delta_\gR$ is an essentially discrete object, as it is in the heart of the diagonal $t$-structure, its canonical $E_1$-multiplicative filtration is usually not a filtration by subobjects. Indeed, the associated graded of this filtration is
$$\gr(\fil_*^{E_1} \Delta_\gR) \simeq E_1((-1)_* Q^{E_1^\text{nu}} \bar{\Delta}_\gR)$$
for $\bar{\Delta}_\gR := \Ker(\epsilon : \Delta_\gR \to \bk)$, and the indecomposables $Q^{E_1^\text{nu}} \bar{\Delta}_\gR$ will only be a discrete object if $\Delta_\gR$ is a free associative algebra. Our purpose in this section is to compare this canonical $E_1$-multiplicative filtration with a filtration by subobjects, namely the filtration of $\Delta_\gR$ by (literal, not derived) powers of the ideal $\bar{\Delta}_\gR$. To do so let us suppose that $\bk$ is a field, to avoid having to worry about flatness.

In fact we may as well discuss the setting of a connected graded Hopf algebra $A$ and its corresponding $E_2$-algebra $\mathbf{a} := \Cobar(A)$. The augmentation ideal $\bar{A} := \Ker(\epsilon : A \to \bk)$ defines a filtered $E_1$-algebra by
$$\fil_p^\mathrm{aug}A := \begin{cases}
A & p \geq 0\\
\bar{A}^{-p} &  p < 0.
\end{cases}$$
and similarly $\fil_*^\mathrm{aug} \bar{A}$. These filtered objects are both complete. We have $\bar{A} = \fil_{-1}^\mathrm{aug} \bar{A}$, and as $(-1)_*^{E_1}$ is left adjoint to evaluation at $-1$ by adjunction we obtain a filtered non-unital $E_1$-algebra map
$$\fil^{E_1}_* \bar{A} \lra \fil_*^\mathrm{aug}\bar{A},$$
and, by unitalising, a filtered $E_1$-algebra map
$$\fil^{E_1}_* {A} \lra \fil_*^\mathrm{aug}{A}.$$
They both induce an equivalence on colimits. The latter may be checked to in fact be $E_1$-bialgebra map, using that the target is in the heart of the diagonal $t$-structure. Taking the cobar construction again gives a filtered $E_2$-algebra map
$$\fil_* \mathbf{a} \lra \mathrm{Cobar}(\fil_*^{\mathrm{aug}} A) =: \fil_*^\mathrm{aug} \mathbf{a},$$
which then also induces an equivalence on colimits.

The spectral sequence associated to $\fil_*^\mathrm{aug} \mathbf{a}$ computes $\Cotor^*_{A}(\bk, \bk)$ starting from $\Cotor$ over the associated graded Hopf algebra 
$$\gr(\fil_*^\mathrm{aug} A) = 0_*\bk \oplus \bigoplus_{n \geq 1} (-n)_* \bar{A}^n/\bar{A}^{n+1}.$$
This kind of spectral sequence was first considered by Ivanovsky \cite{Ivanovsky} for $A$ the dual Steenrod algebra; it is in a sense dual, and in practical terms quite similar, to the May spectral sequence \cite{MaySS}: a further useful reference is Bajer--Sadofsky \cite{BajerSadofsky}. The associated graded Hopf algebra $\gr(\fil_*^\mathrm{aug} A)$ is generated in grading $-1$ and so is primitively generated, and is therefore the universal restricted enveloping algebra of its restricted Lie algebra of primitive elements \cite[Theorem 6.11]{MilnorMoore}. In particular it is cocommutative. 

Supposing now that $\bk$ is $\bQ$ or $\bF_p$ (or more generally a perfect field), it follows from a theorem of Borel \cite[Theorem 7.11]{MilnorMoore} that the dual $\gr(\fil_*^\mathrm{aug} A)^\vee$ is isomorphic \emph{as an algebra} to a tensor product of monogenic Hopf algebras. That is, of the connected Hopf algebras
\begin{align*}
\bk[x] & \quad\text{with $x$ of even degree}\\
\Lambda_\bk[y] & \quad \text{with $y$ of odd degree}\\
\bF_p[x]/(x^{p^\ell}) & \quad \text{(with $x$ of even degree if $p$ is odd)}
\end{align*}
with $x$ or $y$ primitive. Knowing $\gr(\fil_*^\mathrm{aug} A)^\vee$ as an algebra suffices to compute 
$$\Ext^*_{\gr(\fil_*^\mathrm{aug} A)^\vee}(\bk,\bk) \cong \Ext^*_{\gr(\fil_*^\mathrm{aug} A)\text{-}\mathsf{comod}}(\bk,\bk) \cong \Cotor^*_{\gr(\fil_*^\mathrm{aug} A)}(\bk,\bk)$$
as an algebra (and the result will be a commutative algebra because $\gr(\fil_*^\mathrm{aug} A)$ is in fact a Hopf algebra), however it does not suffice to determine the Browder bracket or Dyer--Lashof operations, as these are encoded by the product on $\gr(\fil_*^\mathrm{aug} A)$, i.e.\ the coproduct on $\gr(\fil_*^\mathrm{aug} A)^\vee$. The monogenic Hopf algebras have
\begin{align*}
\Ext^*_{\bk[x]}(\bk, \bk) &\cong \Lambda_{\bk}[\underline{x}^\vee]\\
\Ext^*_{\Lambda_{\bk}[y]}(\bk, \bk) &\cong \bk[\underline{y}^\vee]\\
\Ext^*_{\bF_p[x]/(x^{p^\ell})}(\bF_p, \bF_p) &\cong \Lambda_{\bF_p}[\underline{x}^\vee] \otimes \bF_p[t] \quad\text{$p$ odd or $p=2$ and $\ell>1$}
\end{align*}
where we write $\underline{z}^\vee \in \Ext^1$ for the dual to the indecomposable element $z$, and in the final case $t$ can be described as the $p^\ell$-fold Massey product $\langle \underline{x}^\vee, \underline{x}^\vee, \ldots, \underline{x}^\vee \rangle$: in particular, in our usual grading we have $|\underline{z}^\vee| = (|z|-1, |z|)$ and $|t| = (p^\ell |x| - 2, p^\ell |x|)$.

If the Hopf algebra $\gr(\fil_*^\mathrm{aug} A)$ is in addition commutative---which will certainly be the case if $A$ is commutative, which in turn will certainly be the case if it is $\Delta_\gR$ for $\gR$ an $E_3$-algebra---then by the classification of connected primitively generated abelian Hopf algebras over a perfect field \cite[Theorem 7.16]{MilnorMoore} it is isomorphic to a locally finite tensor product of monogenic Hopf algebras \emph{as a Hopf algebra}. The monogenic Hopf algebras have
\begin{align*}
\Cotor^*_{\bQ[x]}(\bQ, \bQ) &\cong \Lambda_{\bQ}[\underline{x}]\\
\Cotor^*_{\Lambda_{\bk}[y]}(\bk, \bk) &\cong \bk[\underline{y}]\\
\Cotor^*_{\bF_2[x]/(x^{2^\ell})}(\bF_2, \bF_2) &\cong \bF_2[\underline{x}, \xi(\underline{x}), \xi^2(\underline{x}), \ldots \xi^{\ell-1}(\underline{x})]\\
\Cotor^*_{\bF_p[x]/(x^{p^\ell})}(\bF_p, \bF_p) &\cong \Lambda_{\bF_p}[\underline{x},  \xi(\underline{x}), \ldots \xi^{\ell-1}(\underline{x})] \otimes \bF_p[\zeta(\underline{x}), \zeta(\xi(\underline{x}), \ldots, \zeta(\xi^{\ell-1}(\underline{x})))]
\end{align*}
where we write $\underline{z} \in \Cotor^1$ for the class of a primitive element $z$, and the case of polynomial algebras over finite fields is included by formally allowing $\ell=\infty$. This calculation is expressed as the homology of an $E_2$-algebra, and the operations $\xi$ and $\zeta$ have their usual meanings from this context. In fact, as the monogenic Hopf algebras are all commutative, their cobar constructions are all $E_\infty$-algebras so we may express the result in terms of Dyer--Lashof operations by the notational substitutions $\xi(t) = Q^{1+|t|}(t)$ for $p=2$, and $\xi(t) = Q^{(1+|t|)/2}(t)$ and $\zeta(t) = \beta Q^{(1+|t|)/2}(t)$ for $p$ odd. (This also accounts for why no Browder brackets arise.)

In either case this allows us to fully describe the $E_1$-page of the spectral sequence
$$\pi_{*,*,*}(\gr(\fil_*^\mathrm{aug} \mathbf{a})) = \Cotor^*_{\gr(\fil_*^\mathrm{aug} A)}(\bk, \bk)\Longrightarrow \Cotor^*_{A}(\bk, \bk) = \pi_{*,*}(\mathbf{a})$$
as a commutative algebra, and similarly for the spectral sequence associated to a filtered Smith--Toda complex $\fil^\mathrm{aug}_* \mathbf{a}/(\psi_1, \ldots, \psi_s)$ considered as a module over the above. Furthermore, being a spectral sequence associated to a purely algebraic filtration it is in principle algorithmically computable: methods developed to study the May spectral sequence \cite{MaySS, Tangora, Lin} will surely be useful here.

\subsection{Detecting periodic families}\label{sec:DetectingPeriodic}

There is a further kind of application of the Hopf algebra $\Delta_\gR$, not directly to establishing homological stability properties of $\gR$ but rather to finding periodic families of elements in $\pi_{*,*}(\gR)$. The idea is very simple: if $\Delta_\gR \to Q$ is a (quotient) map of Hopf algebras, then there are maps of $E_2$-algebras
$$\gR \lra \mathbf{r} = \Cobar(\Delta_\gR) \lra \Cobar(Q) =: \mathbf{q},$$
which we may try to use to detect elements of $\pi_{*,*}(\gR)$. This will be pursued elsewhere.

\section{Existence of Smith--Toda complexes revisited}\label{sec:HigherStabMapsRevisited}

In this section we wish to explain another construction of Smith--Toda complexes, building on the Hopf algebra methods introduced in the previous section. The method is more \emph{ad hoc}, but is somewhat stronger in that it is able to construct all stabilisation maps of a given slope at once. That is, for a finite $\gR$-module $\gM$ with a slope $\lambda$ vanishing line, it can construct slope $\lambda$ endomorphisms $\delta_s, \ldots, \delta_t$ of $\gM$ which coherently homotopy commute with each other, and such that $\gM/(\delta_s, \ldots, \delta_t)$ has a vanishing line of slope $>\lambda$.

We will describe this method when $\gR$ is an $E_3$-algebra (what will will say in fact works whenever the Hopf algebra $\Delta_\gR$ is known to be commutative), and comment in Remark \ref{rem:RevistedMethodForE2Alg} on what will need to be done to extend this to $E_2$-algebras.

\subsection{Commutative Hopf algebras}

We collect here the facts about graded connected commutative Hopf algebras that we will use.

\begin{thm}\label{thm:FinIndecAndPrim}
Let $A$ be a graded connected Hopf algebra over a field of positive characteristic, such that its indecomposables and primitives are both finite-dimensional. Suppose that $A$ is commutative (or cocommutative). Then $A$ is finite-dimensional.
\end{thm}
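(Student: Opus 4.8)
The plan is to reduce to the commutative case over an algebraically closed field and then to show that an infinite-dimensional such $A$ must have infinitely many primitives. First I would dispose of the cocommutative case by duality: finite-dimensionality of the indecomposables means $A$ is generated as an algebra by finitely many homogeneous elements, so each $A_n$ is finite-dimensional and $A$ has finite type; its graded dual $A^\vee$ is then a commutative connected graded Hopf algebra with indecomposables $\Prim(A)^\vee$ and primitives dual to the indecomposables of $A$, both finite-dimensional, and $A^\vee$ is finite-dimensional if and only if $A$ is. So we may assume $A$ commutative. Replacing $A$ by $A\otimes_{\bk}\overline{\bk}$ changes none of the relevant dimensions (base change along a field extension is exact and symmetric monoidal) nor the property of being finite-dimensional, and $\overline{\bk}$ is perfect, so we may assume $\bk$ algebraically closed. (In odd characteristic one should read ``commutative'' as graded-commutative throughout; this causes no trouble, as odd-degree elements square to zero and hence lie in the nilradical considered below.)

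Now suppose for contradiction that $A$ is commutative connected graded over an algebraically closed field $\bk$ of characteristic $p$, with finite-dimensional indecomposables and primitives but $\dim_{\bk}A=\infty$. Finite-dimensionality of the indecomposables makes $A$ a finitely generated, hence Noetherian, algebra, so its nilradical $\mathfrak{n}$ is nilpotent as an ideal, $\mathfrak{n}^N=0$; and since $\bk$ is perfect, $\mathfrak{n}$ is a Hopf ideal, so $A_{\mathrm{red}}:=A/\mathfrak{n}$ is again a commutative connected graded Hopf algebra. Filtering $A$ by powers of $\mathfrak{n}$ (whose graded pieces are finitely generated $A_{\mathrm{red}}$-modules) shows that $A_{\mathrm{red}}$ must be infinite-dimensional, so $\overline{A_{\mathrm{red}}}\neq 0$. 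Pick a nonzero element $\overline{x}\in A_{\mathrm{red}}$ of minimal positive degree $d_0$; then $(\overline{A_{\mathrm{red}}}\otimes\overline{A_{\mathrm{red}}})_{d_0}=0$, so $\overline{x}$ is primitive in $A_{\mathrm{red}}$, and any lift $x\in A_{d_0}$ lies outside $\mathfrak{n}$, hence is not nilpotent: $x^m\neq 0$ for all $m\ge 1$.

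The crux is then a Frobenius trick. Since $\overline{\Delta}_{A_{\mathrm{red}}}(\overline{x})=0$, the reduced coproduct $\overline{\Delta}_{A}(x)$ lies in the kernel $\mathfrak{n}\otimes\overline{A}+\overline{A}\otimes\mathfrak{n}$ of $\overline{A}\otimes\overline{A}\to\overline{A_{\mathrm{red}}}\otimes\overline{A_{\mathrm{red}}}$; writing it as a finite sum of simple tensors each having a factor in $\mathfrak{n}$, and using that raising to the $p^k$-th power is additive in the commutative characteristic-$p$ algebra $A\otimes_{\bk}A$, one gets $\overline{\Delta}_{A}(x^{p^k})=\overline{\Delta}_{A}(x)^{p^k}=0$ as soon as $p^k\ge N$, since each term then acquires a factor in $\mathfrak{n}^{p^k}\subseteq\mathfrak{n}^N=0$. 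Hence $x^{p^\ell}$ is primitive for every $\ell$ with $p^\ell\ge N$; these elements are nonzero (as $x$ is not nilpotent) and lie in the distinct degrees $p^\ell d_0$, so $\Prim(A)$ is infinite-dimensional, a contradiction.

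I expect the only real content beyond bookkeeping to be the two structural inputs used above: that the nilradical of a connected graded Hopf algebra over a perfect field is a Hopf ideal (this is precisely why I pass to the algebraic closure), and that it is nilpotent as an ideal (this is where finite-dimensionality of the indecomposables, via Noetherianity, enters); granting these, the remaining Frobenius computation is short. An alternative to the reduced-quotient argument would be to invoke Borel's structure theorem for commutative connected graded Hopf algebras over a perfect field, writing $A$ as a tensor product of monogenic factors and arguing that a polynomial factor forces infinitely many primitives; but the route above seems cleaner and avoids having to lift primitivity through the decomposition.
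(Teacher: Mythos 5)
Your argument is correct, and its engine is the same as the paper's: in characteristic $p$ the Frobenius kills the reduced coproduct of a suitable non-nilpotent element after raising to a high $p$-th power, producing infinitely many linearly independent primitives $x^{p^\ell}$ in distinct degrees and contradicting finite-dimensionality of $\Prim(A)$. Where you differ is in how you manufacture an element that is non-nilpotent yet has ``nilpotent'' reduced coproduct. You pass to $\overline{\bk}$, invoke Noetherianity to get $\mathfrak{n}^N=0$, use that the nilradical of a (graded-)commutative Hopf algebra over a perfect field is a Hopf ideal, and take a minimal-degree element of $A_{\mathrm{red}}$; the paper instead simply takes $x$ of \emph{minimal degree among all non-nilpotent elements of positive degree}, whereupon every tensor factor $x_i',x_i''$ appearing in $\overline{\Delta}(x)$ has strictly smaller positive degree and is therefore nilpotent by minimality. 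That one observation replaces your entire reduction: no base change, no Hopf-ideal property of $\mathfrak{n}$, no $A_{\mathrm{red}}$. (The contrapositive framing also differs cosmetically: the paper shows every positive-degree element is nilpotent and concludes finite-dimensionality from the finitely many generators, rather than assuming $\dim A=\infty$ and locating a non-nilpotent element via the nilradical filtration; these are equivalent given finite generation.) Your route is valid and buys nothing extra here, at the cost of two structural inputs that you would need to prove or cite in the graded setting; if you keep it, state those inputs as lemmas. Your handling of the dualisation (noting finite type) and of odd-degree elements in odd characteristic is slightly more explicit than the paper's, which is fine.
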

\begin{proof}
By dualising if necessary, we may suppose that $A$ is commutative. We will show that every element of $A$ of strictly positive degree is nilpotent: in particular the finitely-many algebra generators are nilpotent, from which it follows that $A$ is finite-dimensional.

Supposing the conclusion did not hold, so there are non-nilpotent elements of strictly positive degree, let $x \in A$ be one of minimal degree. Then $\psi(x) = 1 \otimes x + \sum_i x_i' \otimes x_i'' + x \otimes 1$ where $x_i', x_i'' \in A$ have strictly positive degrees which are strictly smaller than the degree of $x$. Thus $x_i'$ and $x_i''$ are nilpotent, so for $N \gg 0$ $\psi(x^{p^N}) = 1 \otimes x^{p^N} + x^{p^N} \otimes 1$, and hence $x^{p^N}$ is primitive. But if $x$ is non-nilpotent then $\{x^{p^M}\}_{M \geq N}$ is an infinite linearly-independent set of non-zero primitive elements, so the primitives of $A$ are not finite-dimensional: this is a contradiction.
\end{proof}

The following combines \cite[Theorem A]{Wilkerson} and \cite[Theorem 1.2]{BajerSadofsky}.

\begin{thm}[Wilkerson, Bajer--Sadofsky]\label{thm:FinDimHopfAlg}
Let $A$ be a finite-dimensional graded connected Hopf algebra over a field $\bk$ of positive characteristic. Suppose that $A$ is commutative. Then 
\begin{enumerate}[(i)]
\item $\Cotor_A^*(\bk, \bk)_*$ is finitely-generated as a $\bk$-algebra.

\item The spectral sequence $\Cotor_{\gr(A)}^*(\bk, \bk)_* \Rightarrow \Cotor_A^*(\bk, \bk)_*$, associated to the filtration of $A$ by powers of the augmentation ideal, collapses at a finite page.
\end{enumerate}
\qed
\end{thm}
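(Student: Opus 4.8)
The plan is to deduce both statements from their known counterparts on the linear dual, so the first step is a duality reduction. Since $A$ is finite-dimensional, graded, connected, and commutative, its $\bk$-linear dual $A^\vee$ is a finite-dimensional graded connected \emph{cocommutative} Hopf algebra, and there is a natural isomorphism of bigraded $\bk$-algebras $\Cotor_A^*(\bk,\bk)_* \cong \Ext_{A^\vee}^*(\bk,\bk)_*$ (the homological degree coming from the length of the cobar resp.\ bar resolution, the internal degree from the grading of $A$); this uses only that $A$-comodules coincide with $A^\vee$-modules when $A$ is finite-dimensional. Under this isomorphism the decreasing filtration of $A$ by powers of the augmentation ideal $\bar A$ corresponds to the increasing conilpotent (primitive/coradical) filtration of $A^\vee$, whose associated graded is $\gr(A)^\vee$, again a finite-dimensional graded connected cocommutative Hopf algebra. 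Thus the spectral sequence in (ii) is identified, compatibly with its multiplicative structure, with the ``May-type'' spectral sequence $\Ext_{\gr(A)^\vee}^*(\bk,\bk)_* \Rightarrow \Ext_{A^\vee}^*(\bk,\bk)_*$ for $A^\vee$ associated to its conilpotent filtration.

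For (i), I would simply invoke Wilkerson's finite-generation theorem \cite[Theorem A]{Wilkerson}, which asserts that the cohomology algebra $\Ext_B^*(\bk,\bk)_*$ of any finite-dimensional graded connected cocommutative Hopf algebra $B$ over a field of positive characteristic is a finitely-generated $\bk$-algebra; applying this with $B = A^\vee$ gives the claim. (This is the Hopf-algebra analogue of the Evens--Venkov and Friedlander--Suslin finite-generation theorems, and positive characteristic is essential here: it is where the power operations producing the polynomial generators live.) For (ii), I would invoke Bajer--Sadofsky \cite[Theorem 1.2]{BajerSadofsky}, which gives exactly the collapse at a finite page of the May-type spectral sequence just described; transporting this back through the duality of the first paragraph yields the asserted collapse of $\Cotor_{\gr(A)}^*(\bk,\bk)_* \Rightarrow \Cotor_A^*(\bk,\bk)_*$.

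I expect the only real point requiring care — and hence the main obstacle — to be the bookkeeping in the first paragraph: one must check that ``filtration of $A$ by powers of $\bar A$'' is genuinely dual to the filtration Bajer--Sadofsky place on the cocommutative side (equivalently, that $\gr(A)$ in (ii) is the associated graded \emph{as a Hopf algebra}, not merely as a coalgebra or algebra), and that the resulting multiplicative spectral sequences are matched up compatibly with their algebra structures. Granting this, one could in fact give a second, more self-contained argument for the collapse in (ii): by part (i) applied to the finite-dimensional commutative Hopf algebra $\gr(A)$, the $E_1$-page $\Cotor_{\gr(A)}^*(\bk,\bk)_*$ is a finitely-generated, hence Noetherian, bigraded algebra, so it has finitely many algebra generators concentrated in bounded bidegree; a standard argument for multiplicative spectral sequences of algebras with Noetherian $E_1$-page then shows that once these finitely many generators and the finitely many relations among them have stabilised — which occurs at some finite page — all further differentials must vanish.
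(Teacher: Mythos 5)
Your main argument coincides with the paper's: the theorem is stated there with no proof beyond the citation of Wilkerson's Theorem A for (i) and Bajer--Sadofsky's Theorem 1.2 for (ii), so the dualisation and filtration bookkeeping you describe is exactly the (implicit) content of that citation. One caveat: the ``more self-contained'' argument you sketch at the end for (ii) is not in fact standard --- a finitely-generated (Noetherian) multiplicative $E_1$-page does not by itself force collapse at a finite page, and overcoming this is precisely the nontrivial content of Bajer--Sadofsky, who must first exhibit a polynomial subalgebra of permanent cycles over which the $E_1$-page is finite in order to obtain a horizontal vanishing line at a finite stage --- but since you offer this only as an optional alternative it does not affect the correctness of your proof.
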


\begin{rem}\label{rem:RevistedMethodForE2Alg}
To extend the methods we are about to describe to $E_2$-algebras, one needs Theorems \ref{thm:FinIndecAndPrim} and \ref{thm:FinDimHopfAlg} with the assumption ``Suppose that $A$ is commutative'' removed. 

For Theorem \ref{thm:FinIndecAndPrim} experts in Hopf algebras tell me that it is plausible, but I have not succeeded in making progress. 

For Theorem \ref{thm:FinDimHopfAlg} the problem is merely technical. What is needed is to prove \cite[Proposition 3.2]{Wilkerson}, which Wilkerson does using power operations in the Cartan--Eilenberg spectral sequence for a conormal extension of commutative Hopf algebras. But the operations required are those that exist in a spectral sequence of $E_2$-algebras, and as $\Cobar(A)$ is an $E_2$-algebra for any Hopf algebra $A$ it seems extremely likely that the Cartan--Eilenberg filtration can be lifted to a filtered $E_2$-algebra.
\end{rem}

\subsection{The method}

Let $\gR$ be an $E_3$-algebra satisfying (C), (SCE), and (F). Fix a slope $\lambda < 1$. Let $x_s, \ldots, x_t$ be the $x$'s of $\tfrac{d}{n}$-slope precisely $\lambda$. Choose an integer $\delta$ large enough that $n_1, \ldots, n_t < \delta$ and $\lambda < \tfrac{\delta-1}{\delta+1}$. Form the Hopf algebra quotient
$$\Delta_\gR \lra Q$$
by killing all primitives of grading $>\delta$, and repeating this infinitely often: thus $Q$ has no primitives of grading $>\delta$. Taking $\Cobar$, there are maps of filtered $E_3$-algebras
$$\rho: \fil_* \gR \lra \fil_* \mathbf{r} \lra \fil_* \mathbf{q} \lra \fil_*^\mathrm{aug} \mathbf{q}.$$

\begin{lem}\label{lem:ConstructGammas}
There are filtered homotopy classes $\xi_1, \ldots, \xi_t \in \pi_{*,*,*}(\fil_*^\mathrm{aug} \mathbf{q})$ such that $C\tau \otimes \xi_i$ is a $p$-th power of $(C\tau \otimes \rho)_*(x_i)$. 

The object $\tau^{-1}\fil_*^\mathrm{aug} \mathbf{q}/(\xi_1, \ldots, \xi_t)$ has a vanishing line of slope $>\lambda$.
\end{lem}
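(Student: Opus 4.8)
\textbf{Proof plan for Lemma~\ref{lem:ConstructGammas}.} The plan is to run the same type of argument as in the proof of Theorem~\ref{thm:FiltSTExist}, but now exploiting that after passing to $\mathbf{q}$ the relevant Hopf-algebra cohomology is much more rigidly structured. First I would construct the classes $\xi_i$. The map $\rho$ is a map of filtered $E_3$-algebras (here only the $E_1$-structure, i.e.\ the unit $u: \fil_*^\mathrm{aug}\mathbf{q} \to \End_{\fil_*^\mathrm{aug}\mathbf{q}}(\fil_*^\mathrm{aug}\mathbf{q})$ landing in the centre, is needed), so $(C\tau\otimes\rho)_*(x_i) \in E^1_{n_i,d_i,f_i}$ is a central class in the spectral sequence associated to the filtered $E_1$-algebra $\fil_*^\mathrm{aug}\mathbf{q}$. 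As in Theorem~\ref{thm:FiltSTExist}, by the Leibniz rule (or the Dyer--Lashof argument of Remark~\ref{rem:PowerOpsInSS}, available since $\fil_*^\mathrm{aug}\mathbf{q}$ is even $E_3$, hence $E_2$) some $p$-th power $(C\tau\otimes\rho)_*(x_i)^{p^{M_i}}$ survives to any prescribed page. To know it is a \emph{permanent} cycle I would establish a uniform vanishing estimate: along the line $d = \tfrac{d_i}{n_i} n - 1$, the $E^1$-page $\pi_{*,*,*}(C\tau\otimes \fil_*^\mathrm{aug}\mathbf{q}) = \Cotor^*_{\gr(\fil_*^\mathrm{aug} Q)}(\bk,\bk)$ vanishes in sufficiently negative filtration. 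This is exactly where the hypothesis that $Q$ has no primitives of grading $>\delta$ (and $\lambda < \tfrac{\delta-1}{\delta+1}$) enters: $\gr(\fil_*^\mathrm{aug} Q)$ is a primitively generated commutative Hopf algebra with primitives concentrated in gradings $\leq \delta$, so (by the Borel-type structure theorems recalled before Theorem~\ref{thm:FinDimHopfAlg}, applied to $\gr(\fil_*^\mathrm{aug}Q)^\vee$) its cohomology is a commutative algebra on explicitly described generators $\underline{z}$, $\xi^j(\underline{z})$, $\zeta(\xi^j(\underline{z}))$ coming from monogenic factors, all of tridegrees with $d/n$-slope $\leq \tfrac{\delta-1}{\delta+1}$ (for the $\underline{z}$'s and exterior/divided-power consequences) and bounded $f$ given $d$ and $n$ along any fixed line. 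Spelling this out gives, exactly as in Lemma~\ref{lem:VanishingEstimates}(iii), an $r_0$ with $E^{r}_{p^N n_i,p^N d_i-1,p^N f_i - s}(\fil_*^\mathrm{aug}\mathbf{q}) = 0$ for all $r\geq r_0$, $s\gg 0$, and all $N$; hence some $(C\tau\otimes\rho)_*(x_i)^{p^{M_i}}$ is a permanent cycle and detects the desired $\xi_i$.

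Next I would iterate to form $\fil_*^\mathrm{aug}\mathbf{q}/(\xi_1,\ldots,\xi_i)$. Since each $\xi_i$ is central (after a further $p$-th power, cf.\ Theorem~\ref{thm:xiSelfMapsUniqueAndNatural}(i), or directly from centrality of $(C\tau\otimes\rho)_*(x_i)$), the endomorphism objects stay in the same ``weakly type'' range and the construction proceeds just as in Theorem~\ref{thm:FiltSTExist}: on the associated graded, killing $\xi_i$ kills (a $p$-th power of) $(C\tau\otimes\rho)_*(x_i)$, and these $p$-th powers of $(C\tau\otimes\rho)_*(x_s),\ldots,(C\tau\otimes\rho)_*(x_t)$ form a regular sequence in $\pi_{*,*,*}(C\tau\otimes\fil_*^\mathrm{aug}\mathbf{q})$ — again because that ring is a free graded-commutative algebra whose generators of $d/n$-slope $\leq\lambda$ are precisely (powers of) the images of the $x$'s of that slope, the others having strictly larger slope. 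Thus $\pi_{*,*,*}(C\tau\otimes\fil_*^\mathrm{aug}\mathbf{q}/(\xi_1,\ldots,\xi_t))$ is the evident quotient, and since the $x_s,\ldots,x_t$ are all the generators of $d/n$-slope exactly $\lambda$ (the rest having slope $>\lambda$), the quotient is supported in slopes $>\lambda$; more precisely, by the argument of Lemma~\ref{lem:VanishingEstimates}(i) there is a $\kappa$ with $\pi_{n,d,f}(C\tau\otimes\fil_*^\mathrm{aug}\mathbf{q}/(\xi_1,\ldots,\xi_t)) = 0$ for $d < \lambda' n + \kappa$ for the next slope $\lambda' := \tfrac{d_{t+1}}{n_{t+1}} > \lambda$. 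Running the spectral sequence for the filtered object $\fil_*^\mathrm{aug}\mathbf{q}/(\xi_1,\ldots,\xi_t)$ then transfers this to $\pi_{n,d}(\tau^{-1}\fil_*^\mathrm{aug}\mathbf{q}/(\xi_1,\ldots,\xi_t)) = 0$ for $d < \lambda' n + \kappa$, which is the claimed vanishing line of slope $>\lambda$.

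I expect the main obstacle to be bookkeeping the vanishing estimate on $E^1 = \Cotor^*_{\gr(\fil_*^\mathrm{aug}Q)}(\bk,\bk)$ precisely enough: one must check that truncating $\Delta_\gR$ to $Q$ (killing primitives of grading $>\delta$, infinitely often) really does force $\gr(\fil_*^\mathrm{aug}Q)$ to be a tensor product of monogenic Hopf algebras concentrated in the right gradings, and then that the Borel-normal-form generators $\underline{z}, \xi^j(\underline{z}),\zeta(\xi^j(\underline{z}))$ of the cohomology all lie in tridegrees satisfying a uniform slope-$\leq\lambda$ plus $f$-lower-bound condition along each line $d = \lambda n + \tilde\kappa$ — this is the analogue of Lemma~\ref{lem:VanishingEstimates}(ii)--(iii) and is ``elementary but laborious'' in the same spirit as Lemma~\ref{lem:XOrdering}. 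One subtlety worth flagging: the passage through $\fil_*^\mathrm{aug}\mathbf{q}$ rather than $\fil_*\mathbf{q}$ is what makes the $E^1$-page purely algebraic (cobar of $\gr(\fil_*^\mathrm{aug}Q)$) and hence amenable to the structure theory; one should check the comparison map $\fil_*\mathbf{q} \to \fil_*^\mathrm{aug}\mathbf{q}$ is compatible with everything (it induces an equivalence on colimits, as noted in the discussion of the underived canonical multiplicative filtration), so that the $\xi_i$ and the final vanishing line can be read off after inverting $\tau$.
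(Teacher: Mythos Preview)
Your plan differs from the paper's in an essential way, and the obstacle you flag is not mere bookkeeping. The paper does \emph{not} attempt a vanishing estimate on $\Cotor^*_{\gr(\fil_*^\mathrm{aug}Q)}(\bk,\bk)$. Instead it introduces the sub-Hopf-algebra $A\subseteq Q$ generated by $\bigoplus_{n\leq\delta}Q_n$: this $A$ has finite-dimensional indecomposables (by construction) and finite-dimensional primitives (inherited from $Q$, which has none in grading $>\delta$), so by Theorem~\ref{thm:FinIndecAndPrim} it is finite-dimensional. Then the Wilkerson/Bajer--Sadofsky result (Theorem~\ref{thm:FinDimHopfAlg}) says the spectral sequence for $\fil_*^\mathrm{aug}\mathbf{a}$ \emph{collapses at a finite page}, so once a $p$-th power of $x_i^A$ survives to that page (via the Leibniz rule) it is automatically a permanent cycle---no vanishing estimate is ever invoked. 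The $\xi_i$ are then the images of these $\xi_i^A$ in $\fil_*^\mathrm{aug}\mathbf{q}$. For the vanishing line, one again works over $A$: since $A$ is finite-dimensional, $\Cotor^*_{\gr(\fil_*^\mathrm{aug}A)}(\bk,\bk)$ is a \emph{finite} tensor of polynomial and exterior pieces, and killing the $(x_i^A)^{p^{M}}$ manifestly leaves something with a slope $>\lambda$ vanishing line; this is transferred to $\mathbf{q}$ via Theorem~\ref{thm:DetNilp}(\ref{it:DetNilp:1}) and Proposition~\ref{prop:ChangeOfHopfAlg}(\ref{it:ChangeOfHopfAlg:2}) (using $\theta=\tfrac{\delta-1}{\delta+1}>\lambda$).

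The gap in your route is that $Q$ need not be finite-dimensional: having no \emph{primitives} of grading $>\delta$ does not bound the \emph{indecomposables} of $Q$, so $\gr(\fil_*^\mathrm{aug}Q)$---whose primitives include $\bar{Q}/\bar{Q}^2$---can have primitives in arbitrarily high $n$-grading and arbitrarily deep filtration, contrary to what you assert. Consequently $\Cotor^*_{\gr(\fil_*^\mathrm{aug}Q)}(\bk,\bk)$ is an infinite tensor of monogenic pieces, and establishing the analogue of Lemma~\ref{lem:VanishingEstimates}(\ref{it:VanishingEstimates:2})--(\ref{it:VanishingEstimates:3}) then requires controlling the filtration of all monomials on a line $d=\lambda n+\tilde\kappa$: such monomials can mix low-slope and high-slope generators with unbounded contributions balancing one another, and it is not clear there is a uniform $r_0$ independent of $N$. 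The paper's passage to the finite-dimensional $A$ is precisely the device that converts this into a finite problem and replaces the vanishing estimate by a collapse theorem.
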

\begin{proof}
Form the Hopf subalgebra $A \to Q$ generated by $\bigoplus_{n \leq \delta} Q_n$ (in other words, the quotient $Q^\vee \to A^\vee$ is obtained by iteratedly killing all primitives of grading $>\delta$). There are maps of filtered $E_3$-algebras $\fil_* \gR \overset{\rho}\to \fil_*^\mathrm{aug} \mathbf{q} \leftarrow \fil_*^\mathrm{aug} \mathbf{a}$.
As $\gR$ is an $E_3$-algebra, $\Delta_\gR$ is a commutative Hopf algebra and hence so are $Q$ and $A$.
The commutative Hopf algebra $A$ has finite-dimensional primitives and indecomposables, so is finite-dimensional by Theorem \ref{thm:FinIndecAndPrim}. Thus by Theorem \ref{thm:FinDimHopfAlg} the spectral sequence
$$\Cotor_{\gr(\fil^\mathrm{aug}_* A)}^{d-n}(\bk, \bk)_n \Longrightarrow \Cotor_A^{d-n}(\bk, \bk)_n = \pi_{n,d}(\mathbf{a})$$
collapses at a finite page.

On associated graded, the maps above induce
$$\pi_{*,*,*}(\gE_3((-1)_*Q^{E_3^\text{nu}}(\gI_\gR))) \overset{\rho_*}\lra \Cotor_{\gr(\fil^\mathrm{aug}_* Q)}^{*}(\bk, \bk)_* \longleftarrow \Cotor_{\gr(\fil^\mathrm{aug}_* A)}^{*}(\bk, \bk)_*.$$
The $x_1, \ldots, x_t$ map to elements $x_1^Q, \ldots, x_t^Q \in \Cotor_{\gr(\fil^\mathrm{aug}_* Q)}^{*}(\bk, \bk)_*$, and as $A \to Q$ is an isomorphism in gradings $<\delta$, so $\gr(\fil^\mathrm{aug}_* A) \to \gr(\fil^\mathrm{aug}_* Q)$ is too, these lift uniquely to classes $x_1^A, \ldots, x_t^A \in \Cotor_{\gr(\fil^\mathrm{aug}_* A)}^{*}(\bk, \bk)_*$. As the spectral sequence for $\fil_*^\mathrm{aug}\mathbf{a}$ collapses at a finite page, the $p^M$-th power of these are permanent cycles for some $M \gg 0$, detecting classes $\xi_1^A, \ldots, \xi_t^A \in \pi_{*,*,*}(\fil_*^\mathrm{aug} \mathbf{a})$: these map to the required classes in $\pi_{*,*,*}(\fil_*^\mathrm{aug} \mathbf{q})$.

To prove the second part, first note that as $A$ is a finite-dimensional commutative Hopf algebra, $\gr(\fil^\mathrm{aug}_* A)$ is a tensor product of monogenic Hopf algebras and so $\Cotor_{\gr(\fil^\mathrm{aug}_* A)}^{*}(\bk, \bk)_*$ is a tensor product of a polynomial and perhaps an exterior algebra. The elements $x_1^A, \ldots, x_t^A$ can be taken as the polynomial generators of $\tfrac{d}{n}$-slope $< \lambda$, so that
$$\Cotor_{\gr(\fil^\mathrm{aug}_* A)}^{*}(\bk, \bk)_* = \bk[x_1^A, \ldots, x_r^A] \otimes \Lambda_\bk[x'_1, \ldots, x'_{r'}].$$
The spectral sequence for $\fil_*^\mathrm{aug} \mathbf{a}/(\xi_1^A, \ldots, \xi_t^A)$ therefore takes the form
$$\frac{\bk[x_1^A, \ldots, x_r^A]}{((x_1^A)^{p^M}, \ldots, (x_t^A)^{p^M})} \otimes \Lambda_\bk[x'_1, \ldots, x'_{r'}] \Longrightarrow \pi_{*,*}(\tau^{-1} \fil_*^\mathrm{aug} \mathbf{a}/(\xi_1^A, \ldots, \xi_s^A)),$$
showing that $\tau^{-1} \fil_*^\mathrm{aug} \mathbf{a}/(\xi_1^A, \ldots, \xi_t^A)$ has a slope $>\lambda$ vanishing line. The claim now follows by applying $\mathbf{q} \otimes_\mathbf{a} -$, and appealing to Theorem \ref{thm:DetNilp} (\ref{it:DetNilp:1}) via Proposition \ref{prop:ChangeOfHopfAlg} (\ref{it:ChangeOfHopfAlg:2}). The latter applies to the map of Hopf algebras $A \to Q$ with $N = \delta+1$, yielding $\theta = \tfrac{\delta-1}{\delta+1}$ which is $>\lambda$ by our choice of $\delta$.
\end{proof}

The $\xi_1, \ldots, \xi_t$ produced in the proof of this lemma are not very canonical, but we choose them once and for all. We use the same notation for the images of these classes in $\pi_{*,*}(\mathbf{q})$ given by inverting $\tau$. The idea is now to repeat the kind of argument from Sections \ref{sec:HigherStabMaps} and \ref{sec:EfficientSTComplexes}, but instead of constructing self-maps $\phi_i$ for $s \leq i \leq t$ characterised by $C\tau \otimes \phi_i$ being a $p$-th power of $(C\tau \otimes u)_*(x_i)$, we will construct self-maps $\gamma_i$ characterised by $\mathbf{q} \otimes_{\gR} \gamma_i$ being a $p$-th power of $u_*(\xi_i)$. 

To formalise this, for a $\beta \in \pi_{*,*}(\mathbf{q})$ let us say that an endomorphism $\psi$ of a $\gR$-module $\gM$ is a \emph{$\beta$ self-map} if 
$$\mathbf{q} \otimes_{\gR} \psi = u_*(\beta^{p^M}) \in \pi_{*,*}(\End_{\mathbf{q}}(\mathbf{q} \otimes_{\gR} \gM))$$
for some $M$. We will prove an analogue of Theorems \ref{thm:xiSelfMapsExist} and \ref{thm:xiSelfMapsUniqueAndNatural} on the existence of of $\beta$ self-maps, and their uniqueness and so on. The class of objects to which the argument applies is the following.

\begin{defn}
Let $\mathsf{W}(\lambda) \subseteq \gR\text{-}\mathsf{mod}$ be the full subcategory consisting of the finite $\gR$-modules $\gM$ for which there is a $\kappa$ such that $\pi_{n,d}(\gM)=0$ for all $d < \lambda n + \kappa$. (The objects of this category are precisely $\mathcal{A}^f_\lambda$ from Section \ref{sec:localisation}.)
\end{defn}

Just as in Proposition \ref{prop:WeaklyTypeiProperties}, this is a thick subcategory, and there are containments $\mathsf{W}(\lambda) \subseteq \mathsf{W}(\bar{\lambda})$ when $\lambda \geq \bar{\lambda}$.

\begin{thm}\label{thm:RevisedXiSelfMaps}
Let $\beta \in \pi_{N,D}(\mathbf{q})$ be given, with $\tfrac{D}{N}=\lambda$.
\begin{enumerate}[(i)]
\item\label{it:RevisedXiSelfMaps:1} All objects $\gM \in \mathsf{W}(\lambda)$ admit a $\beta$ self-map.

\item\label{it:RevisedXiSelfMaps:2} If $\psi$ is a $\beta$ self-map of $\gM \in \mathsf{W}(\lambda)$ then after perhaps taking a $p$-th power it lies in the graded centre of $\pi_{*,*}(\End_{\gR}(\gM))$.

\item\label{it:RevisedXiSelfMaps:3} If $\psi$ and $\psi'$ are $\beta$ self-maps of $\gM \in \mathsf{W}(\lambda)$ then they agree after perhaps taking $p$-th powers.

\item\label{it:RevisedXiSelfMaps:4} If $\gM$ and $\gN$ are in $\mathsf{W}(\lambda)$ and have $\beta$ self-maps $\psi^\gM$ and $\psi^\gN$ of the same tridegrees, then after perhaps taking $p$-th powers they intertwine all $\gR$-module maps $f : S^{n,d} \otimes \gM \to \gN$.

\item\label{it:RevisedXiSelfMaps:5} If $\gM \overset{f}\to \gN \overset{g}\to \gP \overset{\partial}\to \Sigma \gM$ is a distinguished triangle of $\gR$-modules, and $\gM$ and $\gN$ are in $\mathsf{W}(\lambda)$, then so is $\gP$. Furthermore they admit $\beta$ self-maps $\psi^\gM$, $\psi^\gN$, and $\psi^\gP$ which assemble into a morphism of distinguished triangles.
\end{enumerate}
\end{thm}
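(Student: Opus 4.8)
The plan is to mirror the proofs of Theorems~\ref{thm:xiSelfMapsExist} and \ref{thm:xiSelfMapsUniqueAndNatural}, but working in the category of $\gR$-modules (rather than filtered $\fil_*\gR$-modules) and detecting things after base-change along $\rho : \gR \to \mathbf{q}$ via Theorem~\ref{thm:DetNilp}, with the role played there by ``$\tau$-power torsion'' now played by ``being in the kernel of $\pi_{*,*}(\End_\gR(-)) \to \pi_{*,*}(\End_\mathbf{q}(\mathbf{q}\otimes_\gR-))$''. The key input is that the composite $\gR \to \mathbf{r} \to \mathbf{q}$ satisfies hypothesis $(\dagger)$ of Theorem~\ref{thm:DetNilp} with some $\theta > \lambda$: indeed $\gR \to \mathbf{r}$ satisfies it with $\theta = 1$ by Theorem~\ref{thm:IdealsSameForSmallAlgebra}~(\ref{it:IdealsSameForSmallAlgebra:2}), and $\mathbf{r} \to \mathbf{q}$ is obtained from the surjection of Hopf algebras $\Delta_\gR \to Q$ whose kernel is generated by primitives of grading $> \delta$, hence is supported in gradings $\geq \delta+1$, so Proposition~\ref{prop:ChangeOfHopfAlg}~(\ref{it:ChangeOfHopfAlg:1}) gives $(\dagger)$ for $\mathbf{r}\to\mathbf{q}$ with $\theta = \tfrac{\delta}{\delta+1} > \lambda$; composing two maps satisfying $(\dagger)$ gives one satisfying it (a short argument with $E_2$-homology, or directly with $H^{\gR}(\mathbf{q})$). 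Since $\tfrac{D}{N} = \lambda < \theta$, parts (\ref{it:DetNilp:2})--(\ref{it:DetNilp:4}) of Theorem~\ref{thm:DetNilp} are all available.

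For (\ref{it:RevisedXiSelfMaps:1}), given $\gM \in \mathsf{W}(\lambda)$, the module $\End_\gR(\gM)$ lies in the thick subcategory generated by $\gM$ (Lemma~\ref{lem:TensorAndHomStaysThick}) hence is again in $\mathsf{W}(\lambda)$, so as a unital associative algebra in $\gR\text{-}\mathsf{mod}$ it has homotopy concentrated above a line of slope $\lambda$ and bounded below in grading. Apply Theorem~\ref{thm:DetNilp}~(\ref{it:DetNilp:3}) with $\gE = \End_\gR(\gM)$, $x = \beta \in \pi_{N,D}(\mathbf{q})$: some $p$-th power of $(\mathbf{q}\otimes_\gR u)_*(\beta)$ lifts to a $\psi \in \pi_{*,*}(\End_\gR(\gM))$, which is by definition a $\beta$ self-map. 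Part (\ref{it:RevisedXiSelfMaps:2}) is Theorem~\ref{thm:DetNilp}~(\ref{it:DetNilp:4}) applied to the same $\gE$ (which is finite as $\gM$ is): since $\psi\circ(-)$ and $(-)\circ\psi$ map to the same central class in $\pi_{*,*}(\mathbf{q}\otimes_\gR\End_\gR(\gM))$ up to sign, their signed difference is nilpotent by (\ref{it:DetNilp:2}) applied to $\End_\gR(\End_\gR(\gM))$, and binomially expanding shows $\psi^{p^L}$ is central for $L \gg 0$. For (\ref{it:RevisedXiSelfMaps:3}), after matching tridegrees by $p$-th powers and centralising, $\psi' - \psi$ maps to $0$ in $\pi_{*,*}(\mathbf{q}\otimes_\gR\End_\gR(\gM))$, so by (\ref{it:DetNilp:2}) it is nilpotent, and binomial expansion (using centrality) gives $(\psi')^{p^L} = \psi^{p^L}$; this is exactly the argument for Theorem~\ref{thm:xiSelfMapsUniqueAndNatural}~(\ref{it:Unique}). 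For (\ref{it:RevisedXiSelfMaps:4}), apply this uniqueness to the algebra $\Hom_\gR(\gM,\gN)$ (in $\mathsf{W}(\lambda)$ since $\gM$ is finite and $\gN \in \mathsf{W}(\lambda)$) on which both $(-)\circ\psi^\gM$ and $\psi^\gN\circ(-)$ are $\beta$ self-maps, so they agree after $p$-th powers, giving the intertwining. For (\ref{it:RevisedXiSelfMaps:5}), $\gP \in \mathsf{W}(\lambda)$ because $\mathsf{W}(\lambda)$ is a thick subcategory; choosing $\beta$ self-maps of all three of a common tridegree and raising to a suitable $p$-th power, part (\ref{it:RevisedXiSelfMaps:4}) makes them commute with $f$, $g$, and $\partial$, yielding a morphism of distinguished triangles --- verbatim as in Theorem~\ref{thm:xiSelfMapsUniqueAndNatural}~(\ref{it:Cofib}).

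The one genuinely new point, and the step I expect to need the most care, is verifying that $(\dagger)$ is closed under composition: given $\gR \to \mathbf{r}$ and $\mathbf{r} \to \mathbf{q}$ with $(\dagger)$ for slopes $\theta_1 = 1$ and $\theta_2 = \tfrac{\delta}{\delta+1}$ respectively, one wants $(\dagger)$ for $\gR \to \mathbf{q}$ with slope $\min(\theta_1,\theta_2) = \theta_2$. The cleanest way is to bound $H^{\gR}_{n,d}(\mathbf{q})$ using the cofibre sequence $\mathbf{q}\otimes_\gR\bk$ vs.\ $\mathbf{q}\otimes_\mathbf{r}\bk$ together with $\mathbf{r}\otimes_\gR\bk$: one has $\mathbf{q}\otimes_\gR\bk \simeq (\mathbf{q}\otimes_\mathbf{r}\bk)\otimes_\bk(\mathbf{r}\otimes_\gR\bk)$ after suitable identifications (using centrality of $\mathbf{r}$ in $\gR\text{-}\mathsf{mod}$, cf.\ Lemma~\ref{lem:E2AlgebraIsModuleCentral}), and a K\"unneth spectral sequence combines the two vanishing ranges; since both factors vanish for $d < \theta_i n$ (outside bidegree $(0,0)$) with $\theta_1 = 1 \geq \theta_2$, the tensor product vanishes for $d < \theta_2 n$ outside $(0,0)$, which is $(\dagger)$ for the composite with $\theta = \theta_2$. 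Alternatively this can be packaged via the relative $E_2$-homology estimates of \cite[\S14--15]{e2cellsI} exactly as in the proof of Proposition~\ref{prop:ChangeOfHopfAlg}. Everything else is a transcription of the filtered arguments with ``inverting $\tau$'' replaced by ``applying $\mathbf{q}\otimes_\gR-$''.
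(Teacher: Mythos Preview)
Your proposal is correct and follows essentially the same route as the paper: both reduce everything to Theorem~\ref{thm:DetNilp} applied to $\gE = \End_\gR(\gM)$, after checking that $\gR \to \mathbf{q}$ satisfies $(\dagger)$ with slope $\theta = \tfrac{\delta}{\delta+1} > \lambda$, and then transcribe the arguments of Theorem~\ref{thm:xiSelfMapsUniqueAndNatural} for parts (\ref{it:RevisedXiSelfMaps:4}) and (\ref{it:RevisedXiSelfMaps:5}). You are right to flag the composition-of-$(\dagger)$ step, which the paper invokes without further comment; your $E_2$-homology approach (combining Lemma~\ref{lem:CellsEstimateTruncation} and the proof of Proposition~\ref{prop:ChangeOfHopfAlg} via the long exact sequence for the triple $(\gI_\gR, \gI_\mathbf{r}, \gI_\mathbf{q})$ and then \cite[Theorem 15.9]{e2cellsI}) is the clean way to do it---your K\"unneth formulation is morally right but the displayed equivalence is only true after passing to associated graded under a Postnikov/grading filtration as in the proof of Theorem~\ref{thm:DetNilp}, not literally.
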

\begin{proof}
The map $\gR \to \mathbf{q}$ satisfies property ($\dagger$) of Theorem \ref{thm:DetNilp} with $\theta = \tfrac{\delta}{\delta+1}$ by Theorem \ref{thm:IdealsSameForSmallAlgebra} (\ref{it:IdealsSameForSmallAlgebra:2}) and Proposition \ref{prop:ChangeOfHopfAlg}, and we had $\lambda < \tfrac{\delta-1}{\delta+1} < \tfrac{\delta}{\delta+1}$ by assumption.

To prove (\ref{it:RevisedXiSelfMaps:1}) we apply Theorem \ref{thm:DetNilp} (\ref{it:DetNilp:3}) with $\gE := \End_{\gR}(\gM)$ and $x = \beta$, which precisely gives a $\beta$ self-map $\psi$. Then (\ref{it:RevisedXiSelfMaps:2}) and (\ref{it:RevisedXiSelfMaps:3}) follow from Theorem \ref{thm:DetNilp} (\ref{it:DetNilp:4}), and (\ref{it:RevisedXiSelfMaps:4}) and (\ref{it:RevisedXiSelfMaps:5}) are then deduced in the same way as the corresponding parts of Theorem \ref{thm:xiSelfMapsUniqueAndNatural}.
\end{proof}

\subsection{Simultaneous stabilisation}\label{sec:SimultaneousStabilisation}

If $\gM \in \mathsf{W}(\lambda)$ then by Theorem \ref{thm:RevisedXiSelfMaps} (i) there are $\xi_i$ self-maps $\gamma_i$ of $\gM$ for $s \leq i \leq t$, and by Theorem \ref{thm:RevisedXiSelfMaps} (ii) they can be chosen to be central in the endomorphism ring, so in particular to commute with each other up to homotopy. There is a strengthening of this, which was in fact our motivation for this method: they can be chosen to simultaneously commute up to coherent homotopy, in the following sense.

\begin{prop}\label{prop:SimStab}
For a sequence $\beta_i \in \pi_{N_i,D_i}(\mathbf{q})$ with $1 \leq i \leq r$, having $\tfrac{D_i}{N_i}=\lambda$, and $\gM \in \mathsf{W}(\lambda)$, there are $\beta_i$-self maps $\psi_i$ and a commutative cube
$$[1]^{r} \lra \gR\text{-}\mathsf{mod}$$
whose restriction to each 1-morphism $\{\epsilon_1\} \times \cdots \times \{\epsilon_{i-1}\} \times [1] \times \{\epsilon_{i+1}\} \times \cdots \times \{\epsilon_{r}\}$ is (a shift of) $\psi_i$, for all $\epsilon$'s in $\{0,1\}$.
\end{prop}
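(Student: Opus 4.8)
The plan is to upgrade the pairwise-commuting endomorphisms of Theorem~\ref{thm:RevisedXiSelfMaps} to a coherently commuting $r$-cube by realising all the $\psi_i$ simultaneously as an action of a free $E_1$-algebra on commuting generators, then transporting this action along $\gR \to \mathbf{q}$ using the nilpotence machinery. Concretely, I would work with the $E_\infty$-algebra $\mathbf{q} = \Cobar(Q)$ (recall $Q$ is a commutative Hopf algebra, so its cobar construction is $E_\infty$), inside which the classes $\beta_1, \ldots, \beta_r \in \pi_{*,*}(\mathbf{q})$ automatically commute strictly; the data of a map $S^{N_1,D_1} \oplus \cdots \oplus S^{N_r,D_r} \to \mathbf{q}$ hitting the $\beta_i$ assembles, by freeness, into a map of $E_\infty$-algebras $\bk[t_1, \ldots, t_r] := \gE_\infty(S^{N_1,D_1} \oplus \cdots \oplus S^{N_r,D_r}) \to \mathbf{q}$, where I write $\bk[t_1,\ldots,t_r]$ loosely for this free $E_\infty$-algebra. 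A coherently commuting $r$-cube of endomorphisms of a $\gR$-module $\gM$ with the $i$-th direction given by (a shift of) $\psi_i$ is precisely the datum of a module over $\gE_1(S^{N_1,D_1} \oplus \cdots \oplus S^{N_r,D_r})$ in $\End_\gR(\gM)$, i.e. an $E_1$-algebra map $\gE_1(\bigoplus_i S^{N_i,D_i}) \to \End_\gR(\gM)$, since the iterated cofibre of the $r$ generators of this free $E_1$-algebra exhibits exactly the totalisation of the cube.

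The key steps, in order, would be as follows. First, establish the analogue of Theorem~\ref{thm:RevisedXiSelfMaps}~(i)--(ii) for the \emph{collection} $(\beta_1,\ldots,\beta_r)$ at once: show that some $p$-th powers $u_*(\beta_i^{p^M})$ of the images of the $\beta_i$ in $\pi_{*,*}(\End_\mathbf{q}(\mathbf{q}\otimes_\gR\gM))$ lift, along the unit-of-Adams-filtration map, to a map of $E_1$-algebras $\gE_1(\bigoplus_i S^{p^M N_i, p^M D_i}) \to \End_\gR(\gM)$. For this I would run the $\mathbf{q}$-based Adams spectral sequence of Section~9 (the one built in the proof of Theorem~\ref{thm:DetNilp}) applied to the $E_1$-algebra $\gE := \End_\gR(\gM)$, noting that since $\gM$ is finite $\gE$ is a finite $\gR$-module with a slope $\lambda$ vanishing line, so the vanishing estimates ``$E_1^{\ell n, s, \ell d + \tilde\kappa + s}(\gE) = 0$ for $s \gg 0$'' proved there apply. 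The free $E_1$-algebra $\gE_1(\bigoplus_i S^{N_i,D_i})$ is itself a finite object built from cells in bidegrees that are sums of the $(N_i,D_i)$ all of slope exactly $\lambda$, so a map out of it into $\gE$ is obstructed by finitely many groups all of slope-$\lambda$ bidegree; the same centrality-and-Leibniz argument (as in the proof of Theorem~\ref{thm:DetNilp}~(iii), using that the $\beta_i$ come from the $E_\infty$-algebra $\mathbf{q}$ and hence are strictly central) shows a sufficiently high $p$-th power of the whole tuple, realised as a strictly-commuting action on the associated graded/$E_1$-page, is a permanent cycle, producing the desired $E_1$-algebra map. Second, take iterated cofibres: the module $\gM/(\psi_1,\ldots,\psi_r) := \gM \otimes_{\gE_1(\bigoplus_i S^{N_i,D_i})} \bk$ (relative tensor over the free $E_1$-algebra) is then the totalisation of a commutative $r$-cube in $\gR\text{-}\mathsf{mod}$, by unwinding the bar construction, and each edge is (a shift of) the corresponding $\psi_i$; this is the cube $[1]^r \to \gR\text{-}\mathsf{mod}$ required. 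Third, check the edges really are $\beta_i$ self-maps: this is immediate from the construction since applying $\mathbf{q}\otimes_\gR -$ sends $\psi_i$ to $u_*(\beta_i^{p^M})$ by design.

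I expect the main obstacle to be the first step: producing a coherent $E_1$-algebra map $\gE_1(\bigoplus_i S^{N_i,D_i}) \to \End_\gR(\gM)$ rather than merely $r$ pairwise-commuting homotopy classes. The subtlety is that the Adams-spectral-sequence argument in Theorem~\ref{thm:DetNilp} is phrased for detecting individual homotopy classes (permanent cycles on a multiplicative spectral sequence), and one must check that the \emph{structured} lift exists --- i.e. that after passing to $p$-th powers the obstruction theory for extending the map of $E_1$-algebras over the skeleta of $\gE_1(\bigoplus_i S^{N_i,D_i})$ vanishes in the relevant range. The cleanest route is probably to observe that $\gE_1(\bigoplus_i S^{N_i,D_i})$ is a finite $\bk[0,0]$-cell object whose cells lie on the single slope-$\lambda$ line, invoke the vanishing ``$E_1^{*,s,*} = 0$ for $s \gg 0$ in slope-$\lambda$ bidegrees'' from Lemma~\ref{lem:AdamsSSConverges}'s proof to kill all the relevant obstruction and indeterminacy groups after enough $p$-th powers, and then appeal to the same centrality-plus-Leibniz mechanism used for Theorem~\ref{thm:DetNilp}~(iii)--(iv) --- applied now not to a single class but to the canonical $E_1$-algebra map $\gE_1(\bigoplus_i S^{p^M N_i,p^M D_i}) \to \mathbf{q}\otimes_\gR\End_\gR(\gM)$ coming from $\mathbf{q}$ being $E_\infty$ --- to lift it through the tower \eqref{eq:DescentASSTower}. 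Uniqueness up to further $p$-th powers, and naturality, would follow exactly as in Theorem~\ref{thm:xiSelfMapsUniqueAndNatural} and Theorem~\ref{thm:RevisedXiSelfMaps}, applied to $\End_\gR$ and $\Hom_\gR$ of the relevant pairs.
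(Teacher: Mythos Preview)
Your approach has a genuine gap. An $E_1$-algebra map $\gE_1(\bigoplus_i S^{N_i,D_i}) \to \End_\gR(\gM)$ is, by the universal property of the free $E_1$-algebra, nothing more than $r$ arbitrary homotopy classes $S^{N_i,D_i} \to \End_\gR(\gM)$ with \emph{no} commutativity imposed: the free $E_1$-algebra is the tensor (non-commutative) algebra, not a polynomial ring. So this datum does not give a coherently commutative cube, and your identification $\gM/(\psi_1,\ldots,\psi_r) = \gM \otimes_{\gE_1(\bigoplus S^{N_i,D_i})} \bk$ does not unwind to a cube either. The object you would actually need to map out of is something like the polynomial algebra $\bk[t_1,\ldots,t_r]$ (or rather an $E_1$-map from it, encoding coherent commutativity), and this object is \emph{not} a finite cell object --- contrary to your claim --- so the obstruction theory you sketch does not terminate. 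Even granting a correct reformulation, lifting a structured $E_1$-algebra map through the Adams tower is a genuine coherence problem that your permanent-cycle argument does not address.

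The paper avoids all of this by a simple induction on $r$ which you should find instructive. Suppose the $(r-1)$-cube $c : [1]^{r-1} \to \gR\text{-}\mathsf{mod}$ has been built. The functor category $\mathsf{Fun}([1]^{r-1}, \gR\text{-}\mathsf{mod})$ is tensored over $\gR\text{-}\mathsf{mod}$, so one may form the internal endomorphism object $\gE := \End_{\mathsf{Fun}([1]^{r-1},\gR\text{-}\mathsf{mod})}(c) \in \gR\text{-}\mathsf{mod}$, a unital associative algebra. As a finite limit of objects of the form $\Hom_\gR(S^{n,d}\otimes\gM, S^{n',d'}\otimes\gM)$, this $\gE$ lies in the thick subcategory $\mathsf{W}(\lambda)$. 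Now apply Theorem~\ref{thm:RevisedXiSelfMaps}~(i) to $\gE$ itself: it produces a single homotopy class $\psi_r \in \pi_{*,*}(\gE)$ with $\mathbf{q}\otimes_\gR \psi_r = u_*(\beta_r^{p^M})$. But an element of $\pi_{*,*}(\gE)$ \emph{is} a morphism of $(r-1)$-cubes $(S^{*,*}\otimes\gR)\otimes_\gR c \to c$, i.e.\ an $r$-cube extending $c$, with all coherences already built in. No structured lifting is required; the coherence is absorbed into the choice of target algebra.
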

\begin{proof}
We proceed by induction on $r$, so suppose that $\beta_1, \ldots, \beta_{r-1}$ and the required $(r-1)$-cube $c: [1]^{r-1} \to \gR\text{-}\mathsf{mod}$ has been constructed. The category $\mathsf{Fun}([1]^{r-1},\gR\text{-}\mathsf{mod})$ of such cubes is (left) tensored over the presentable category $\gR\text{-}\mathsf{mod}$ and hence admits mapping objects in that category. We then let
$$\gE := \End_{\mathsf{Fun}([1]^{r-1},\gR\text{-}\mathsf{mod})}(c) \in \gR\text{-}\mathsf{mod};$$
composition equips this with the structure of a unital associative algebra in the homotopy category of $\gR$-modules (which could be promoted to an $E_1$-structure, but this suffices). This object may be expressed as the limit of a finite diagram whose entries are of the form $\Hom_{\gR}(S^{n,d} \otimes \gM, S^{n',d'} \otimes \gM)$. All such objects are in the thick subcategory $\mathsf{W}(\lambda)$, so a finite limit of them is too: thus $\mathbf{E} \in \mathsf{W}(\lambda)$. By Theorem \ref{thm:RevisedXiSelfMaps} (i) there is a $\psi_r \in \pi_{*,*}(\mathbf{E})$ such that 
$$\mathbf{q}\otimes_{\gR} \psi_r = u_*(\beta_r^{p^M}) \in \pi_{*,*}(\End_{\mathbf{q}}(\mathbf{q}\otimes_{\gR} \gE)).$$
Interpreted using the definition of this endomorphism object, this $\psi_r$ is exactly a morphism $(S^{p^MN_r, p^MD_r} \otimes \gR) \otimes_\gR c \to c$ in $\mathsf{Fun}([1]^{r-1},\gR\text{-}\mathsf{mod})$, i.e.\ an $r$-cube in $\gR\text{-}\mathsf{mod}$, which by naturality induces a $\beta_r$ self-map when restricted to each vertex of $[1]^{r-1}$, as required.
\end{proof}

By the discussion so far, for $\gM \in \mathsf{W}(\lambda)$ we may find $\xi_i$ self-maps $\gamma_i$ for $s \leq i \leq t$, by Proposition \ref{prop:SimStab} we may arrange these into a commutative $(s-t)$-cube, and we may then let
$$\gM/(\gamma_s, \ldots, \gamma_t)$$
denote the total homotopy cofibre of this cube.

\begin{prop}\label{prop:HAMethodHighConn}
$\gM/(\gamma_s, \ldots, \gamma_t)$ lies in $\mathsf{W}(\bar{\lambda})$ for some $\bar{\lambda} > \lambda$.
\end{prop}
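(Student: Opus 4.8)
The plan is to reduce the claim to the corresponding statement over $\mathbf{q}$ (or rather over $\fil_*^\mathrm{aug}\mathbf{q}$), where Lemma \ref{lem:ConstructGammas} already gives a vanishing line of slope $>\lambda$, and then transport this back to $\gR$-modules using the nilpotence-detection machinery of Theorem \ref{thm:DetNilp} via Theorem \ref{thm:IdealsSameForSmallAlgebra} (\ref{it:IdealsSameForSmallAlgebra:2}). First I would observe that base-change along $\gR \to \mathbf{q}$ converts the commutative $(t-s)$-cube defining $\gM/(\gamma_s, \ldots, \gamma_t)$ into the commutative cube of $\mathbf{q}$-modules whose edges are (shifts of) $u_*(\xi_i^{p^{M_i}})$ on $\mathbf{q} \otimes_\gR \gM$; since $-\otimes_\gR \mathbf{q}$ is monoidal and preserves total cofibres, $\mathbf{q} \otimes_\gR (\gM/(\gamma_s, \ldots, \gamma_t)) \simeq (\mathbf{q} \otimes_\gR \gM)/(u_*(\xi_s^{p^{M_s}}), \ldots, u_*(\xi_t^{p^{M_t}}))$. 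By Lemma \ref{lem:ConstructGammas}, together with the fact that $\mathbf{q} \otimes_\gR \gM$ lies in the thick subcategory generated by $\mathbf{q}$ (as $\gM$ is finite) and hence that its invariants are controlled by those of $\tau^{-1}\fil_*^\mathrm{aug}\mathbf{q}/(\xi_1, \ldots, \xi_t)$, this $\mathbf{q}$-module has a vanishing line of slope $>\lambda$. Here one must be slightly careful: Lemma \ref{lem:ConstructGammas} only kills the $\xi_i$ of $\tfrac{d}{n}$-slope $<\lambda$ together with those of slope exactly $\lambda$; but those of smaller slope act nilpotently on any object of $\mathsf{W}(\lambda)$, so by Lemma \ref{lem:ThickSubcatFacts} (\ref{it:ThickSubcatFacts:3}) killing only $\gamma_s, \ldots, \gamma_t$ already suffices to land in the thick subcategory generated by the full Smith--Toda complex, hence in $\mathsf{W}(\bar\lambda)$ for the $\bar\lambda>\lambda$ coming from that lemma.

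Next I would apply Theorem \ref{thm:DetNilp} (\ref{it:DetNilp:1}) in the contrapositive direction. The module $\gP := \gM/(\gamma_s, \ldots, \gamma_t)$ is a finite $\gR$-module, so there is an $N$ with $\pi_{n,d}(\gP)=0$ for $n<N$; and by Theorem \ref{thm:IdealsSameForSmallAlgebra} (\ref{it:IdealsSameForSmallAlgebra:2}) the map $\gR \to \mathbf{r}$ satisfies $(\dagger)$ with $\theta=1$, while $\mathbf{r} \to \mathbf{q}$ satisfies $(\dagger)$ with $\theta = \tfrac{\delta}{\delta+1}$ by Proposition \ref{prop:ChangeOfHopfAlg} (\ref{it:ChangeOfHopfAlg:1}) applied to $\Delta_\gR \to Q$ (whose kernel is supported in gradings $>\delta$). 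Composing, $\gR \to \mathbf{q}$ satisfies $(\dagger)$ with $\theta = \tfrac{\delta}{\delta+1}$, and we arranged $\lambda < \tfrac{\delta-1}{\delta+1} < \tfrac{\delta}{\delta+1}$. Choosing any $\bar\lambda$ with $\lambda < \bar\lambda \leq \tfrac{\delta}{\delta+1}$ and below the slope produced in the previous paragraph, Theorem \ref{thm:DetNilp} (\ref{it:DetNilp:1}) (with $\lambda$ there taken to be $\bar\lambda$, and an appropriate $\kappa$) applies since $\theta \geq \bar\lambda$: from the vanishing $\pi_{n,d}(\mathbf{q}\otimes_\gR \gP)=0$ for $d<\bar\lambda n + \kappa$ we deduce $\pi_{n,d}(\gP)=0$ in the same range. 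Hence $\gP \in \mathsf{W}(\bar\lambda)$.

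The main obstacle I anticipate is bookkeeping the slope constants so that all the inequalities line up simultaneously: the ``$>\lambda$'' slope coming out of Lemma \ref{lem:ConstructGammas} is some specific rational of the form $\tfrac{k}{k+1}$ with $\tfrac{k}{k+1}>\lambda$, and one must pick the final $\bar\lambda$ to be $\min$ of that and $\tfrac{\delta}{\delta+1}$ (and then one may as well quantise it down to the next $\tfrac{k}{k+1}$), verifying in each invocation of Theorem \ref{thm:DetNilp} that the hypothesis $\theta \geq \bar\lambda$ (not merely $\theta>\lambda$) holds — which is exactly why $\delta$ was chosen with the strict inequality $\lambda<\tfrac{\delta-1}{\delta+1}$ rather than $\lambda<\tfrac{\delta}{\delta+1}$. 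A secondary point needing care is that Theorem \ref{thm:DetNilp} (\ref{it:DetNilp:1}) is stated for the \emph{same} slope $\lambda$ on both sides; here I am applying it at slope $\bar\lambda$, which is legitimate because $\bar\lambda \leq \theta$, so no extra work is needed beyond naming the constants correctly. Everything else — that base-change preserves total cofibres of cubes, that finiteness of $\gM$ is preserved under the constructions, that the thick-subcategory arguments close up — is routine given the results already established.
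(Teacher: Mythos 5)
Your proposal is correct and follows essentially the same route as the paper's proof: base-change along $\gR \to \mathbf{q}$, identify the result with $\gN \otimes_\mathbf{q} \mathbf{q}/(\xi_s^{p^{M_s}}, \ldots, \xi_t^{p^{M_t}})$, use nilpotence of the lower-slope $\xi_i$ on objects with a slope $\lambda$ vanishing line to reduce to the thick subcategory of $\mathbf{q}/(\xi_1, \ldots, \xi_t)$, invoke Lemma \ref{lem:ConstructGammas}, and then pull the vanishing line back via Theorem \ref{thm:DetNilp} (\ref{it:DetNilp:1}). Your explicit check that $\bar{\lambda}$ must be capped at $\theta = \tfrac{\delta}{\delta+1}$ for the reverse implication of Theorem \ref{thm:DetNilp} to apply is a point the paper's proof leaves implicit, and is correctly handled.
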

\begin{proof}
By Theorem \ref{thm:DetNilp} (i) it suffices to check this after applying $\mathbf{q} \otimes_\gR -$. The $\mathbf{q}$-module $\gN := \mathbf{q} \otimes_\gR \gM$ has a slope $\lambda$ vanishing line also by Theorem \ref{thm:DetNilp} (i), and 
$$\mathbf{q} \otimes_\gR (\gM/(\gamma_s, \ldots, \gamma_t)) \simeq \gN \otimes_\mathbf{q} \mathbf{q}/(\xi_s^{p^{M_s}}, \ldots, \xi_t^{p^{M_t}}).$$
The latter is in the thick subcategory of $\gN \otimes_\mathbf{q} \mathbf{q}/(\xi_s, \ldots, \xi_t)$, so it suffices to show that this has a slope $>\lambda$ vanishing line.

We also have available the classes $\xi_1, \ldots, \xi_{s-1} \in \pi_{*,*}(\mathbf{q})$, letting us consider
$$\gN \otimes_\mathbf{q} \mathbf{q}/(\xi_1, \ldots, \xi_t).$$
For $1 \leq i < s$ the endomorphism $\xi_i \cdot -$ of $\gN \otimes_\mathbf{q} \mathbf{q}/(\gamma_{i+1}, \ldots, \gamma_t)$ is nilpotent, as $\xi_i$ has slope $< \lambda$ but this object has a slope $\lambda$ vanishing line as it is in the thick subcategory generated by $\gN$. It follows by downwards induction that $\gN \otimes_\mathbf{q} \mathbf{q}/(\xi_s, \ldots, \xi_t)$ is in the thick subcategory of $\gN \otimes_\mathbf{q} \mathbf{q}/(\xi_1, \ldots, \xi_t)$, and as $\gN$ is a finite $\mathbf{q}$-module $\gN \otimes_\mathbf{q} \mathbf{q}/(\xi_s, \ldots, \xi_t)$ is therefore in the thick subcategory of $\mathbf{q}/(\xi_1, \ldots, \xi_t)$. The latter has a vanishing line of slope $> \lambda$ by Lemma \ref{lem:ConstructGammas}, so $\gN \otimes_\mathbf{q} \mathbf{q}/(\xi_s, \ldots, \xi_t)$ does too. 
\end{proof}

\subsection{Efficiency}

Just as in Sections \ref{sec:Omitting} and \ref{sec:ChangingCoords} we can often get away with fewer stabilisation maps. By Lemma \ref{lem:ConstructGammas} the object $\mathbf{q}/(\xi_1, \ldots, \xi_t)$ has a vanishing line of slope $>\lambda$, so choose polynomials
$$\zeta_1, \ldots, \zeta_r \in \bk[\xi_s, \ldots, \xi_t]$$
such that
\begin{enumerate}[(i)]
\item $\mathbf{q}/(\xi_1, \ldots, \xi_{s-1}, \zeta_1, \ldots, \zeta_r)$ has a vanishing line of slope $>\lambda$, and

\item this cannot be achieved using fewer than $r$ $\zeta$'s.
\end{enumerate}

If $\gM \in \mathsf{W}(\lambda)$ then by Theorem \ref{thm:RevisedXiSelfMaps} (i) there are $\zeta_1,\ldots, \zeta_r$ self-maps $\delta_1, \ldots, \delta_{r}$ of $\gM$, which can again be arranged into a homotopy commutative cube with total homotopy cofibre $\gM/(\delta_1, \ldots, \delta_{r})$. 

\begin{prop}\label{prop:HAMethodEfficient}
$\gM/(\delta_1, \ldots, \delta_{r})$ lies in $\mathsf{W}(\bar{\lambda})$ for some $\bar{\lambda} > \lambda$.
\end{prop}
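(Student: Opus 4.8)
The plan is to mimic the proof of Proposition~\ref{prop:HAMethodHighConn} very closely, the only change being that the cube of self-maps $\delta_1,\ldots,\delta_r$ realises the chosen polynomials $\zeta_1,\ldots,\zeta_r$ in the $\xi$'s rather than the $\xi$'s themselves. As there, by Theorem~\ref{thm:DetNilp}~(\ref{it:DetNilp:1}) (applicable since $\gR \to \mathbf{q}$ satisfies $(\dagger)$ with $\theta = \tfrac{\delta}{\delta+1} > \lambda$, by Theorem~\ref{thm:IdealsSameForSmallAlgebra}~(\ref{it:IdealsSameForSmallAlgebra:2}) and Proposition~\ref{prop:ChangeOfHopfAlg}) it suffices to show that $\mathbf{q} \otimes_\gR (\gM/(\delta_1,\ldots,\delta_r))$ has a vanishing line of slope $> \lambda$. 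Write $\gN := \mathbf{q} \otimes_\gR \gM$, which lies in $\mathsf{W}(\lambda)$ as a $\mathbf{q}$-module, again by Theorem~\ref{thm:DetNilp}~(\ref{it:DetNilp:1}). Since $\delta_j$ is a $\zeta_j$ self-map, $\mathbf{q} \otimes_\gR \delta_j = u_*(\zeta_j^{p^{M_j}})$ after passing to a $p$-th power, and hence
$$\mathbf{q} \otimes_\gR (\gM/(\delta_1,\ldots,\delta_r)) \simeq \gN \otimes_\mathbf{q} \mathbf{q}/(\zeta_1^{p^{M_1}},\ldots,\zeta_r^{p^{M_r}}),$$
which lies in the thick subcategory generated by $\gN \otimes_\mathbf{q} \mathbf{q}/(\zeta_1,\ldots,\zeta_r)$. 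So it suffices to show this latter object has a vanishing line of slope $> \lambda$.

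\textbf{Reduction to $\mathbf{q}$.} Now bring in the classes $\xi_1,\ldots,\xi_{s-1} \in \pi_{*,*}(\mathbf{q})$, which have slope $< \lambda$, and consider $\gN \otimes_\mathbf{q} \mathbf{q}/(\xi_1,\ldots,\xi_{s-1},\zeta_1,\ldots,\zeta_r)$. For $1 \leq i < s$ the endomorphism $\xi_i \cdot -$ of $\gN \otimes_\mathbf{q} \mathbf{q}/(\xi_{i+1},\ldots,\xi_{s-1},\zeta_1,\ldots,\zeta_r)$ is nilpotent, since $\xi_i$ has slope $< \lambda$ while this object has a slope $\lambda$ vanishing line (it is in the thick subcategory generated by $\gN$). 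By downwards induction using Lemma~\ref{lem:ThickSubcatFacts}~(\ref{it:ThickSubcatFacts:3}), it follows that $\gN \otimes_\mathbf{q} \mathbf{q}/(\zeta_1,\ldots,\zeta_r)$ lies in the thick subcategory generated by $\gN \otimes_\mathbf{q} \mathbf{q}/(\xi_1,\ldots,\xi_{s-1},\zeta_1,\ldots,\zeta_r)$. As $\gN$ is a finite $\mathbf{q}$-module, this in turn lies in the thick subcategory generated by $\mathbf{q}/(\xi_1,\ldots,\xi_{s-1},\zeta_1,\ldots,\zeta_r)$, which by property~(i) of the choice of the $\zeta$'s has a vanishing line of slope $> \lambda$. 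Hence so does $\gN \otimes_\mathbf{q} \mathbf{q}/(\zeta_1,\ldots,\zeta_r)$, completing the argument.

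\textbf{Main obstacle.} Most of the work is already packaged in earlier results: the existence and coherent commutativity of the $\delta_j$ (Theorem~\ref{thm:RevisedXiSelfMaps}~(\ref{it:RevisedXiSelfMaps:1}) and Proposition~\ref{prop:SimStab}), the nilpotence-detection along $\gR \to \mathbf{q}$ (Theorem~\ref{thm:DetNilp}), and the collapse input that produced the $\xi_i$ in the first place (Lemma~\ref{lem:ConstructGammas}). The one genuine point requiring care is the identification $\mathbf{q} \otimes_\gR (\gM/(\delta_1,\ldots,\delta_r)) \simeq \gN \otimes_\mathbf{q} \mathbf{q}/(\zeta_1^{p^{M_1}},\ldots,\zeta_r^{p^{M_r}})$ as total cofibres of cubes: one must check that applying the monoidal functor $\mathbf{q} \otimes_\gR -$ to the commutative $r$-cube of $\zeta_j$ self-maps of $\gM$ produces (up to the equivalence $\mathbf{q} \otimes_\gR \gM \simeq \gN$) the commutative $r$-cube built from $u_*(\zeta_j^{p^{M_j}})$, so that total cofibres match. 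This is where the centrality and naturality clauses of Theorem~\ref{thm:RevisedXiSelfMaps} (parts (\ref{it:RevisedXiSelfMaps:2})--(\ref{it:RevisedXiSelfMaps:5})), together with the fact that $\mathbf{q}$ is central in $\gR\text{-}\mathsf{mod}$ (Lemma~\ref{lem:E2AlgebraIsModuleCentral}), are needed; after perhaps passing to further $p$-th powers the two cubes agree, and since both functors preserve finite colimits the total cofibres agree. Everything else is a formal manipulation of thick subcategories exactly parallel to Proposition~\ref{prop:HAMethodHighConn}.
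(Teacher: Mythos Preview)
Your proposal is correct and follows exactly the approach the paper takes: the paper's own proof is the single sentence ``By the same method as the proof of Proposition~\ref{prop:HAMethodHighConn}, we reduce to showing that the $\mathbf{q}$-module $\mathbf{q}/(\xi_1, \ldots, \xi_{s-1}, \zeta_1, \ldots, \zeta_r)$ has a vanishing line of slope $> \lambda$, but this is true by assumption,'' and you have simply spelled out that reduction in full detail. Your discussion of the cube-identification is a reasonable point of care that the paper leaves implicit.
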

\begin{proof}
By the same method as the proof of Proposition \ref{prop:HAMethodHighConn}, we reduce to showing that the $\mathbf{q}$-module $\mathbf{q}/(\xi_1, \ldots, \xi_{s-1}, \zeta_1, \ldots, \zeta_r)$ has a vanishing line of slope $> \lambda$, but this is true by assumption.
\end{proof}

Finally, we provide an analogue of Theorem \ref{thm:OmittingRedundantXs}.

\begin{prop}\label{prop:ZetasNonNilp}
Suppose that $\gM \in \mathsf{W}(\lambda)$ generates the same thick subcategory as $\gR/(\phi_1,\ldots, \phi_{s-1})$. Then the endomorphism $\delta_i$ of $\gM/(\delta_1, \ldots, \delta_{i-1})$ is not nilpotent.
\end{prop}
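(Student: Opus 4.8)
The plan is to reduce the non-nilpotence of $\delta_i$ to a non-nilpotence statement over $\mathbf{q}$, which can then be checked directly from the defining property (ii) of the $\zeta$'s, namely that $r$ is minimal. First I would observe that, by hypothesis, $\gM$ generates the same thick subcategory as $\gR/(\phi_1, \ldots, \phi_{s-1})$; since $\gM/(\delta_1, \ldots, \delta_{i-1})$ is built from $\gM$ by iterated cofibres (of the self-maps $\delta_1, \ldots, \delta_{i-1}$ arranged in a commuting cube, so by Lemma~\ref{lem:ThickSubcatFacts} each stage stays in the thick subcategory generated by the previous one), it too lies in the thick subcategory generated by $\gR/(\phi_1,\ldots,\phi_{s-1})$, hence in $\mathsf{W}(\lambda)$. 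Thus $\delta_i$ is a $\zeta_i$ self-map of an object of $\mathsf{W}(\lambda)$, so by Theorem~\ref{thm:RevisedXiSelfMaps}~(\ref{it:RevisedXiSelfMaps:3}) its nilpotence or non-nilpotence is an intrinsic property of the pair $(\zeta_i, \gM/(\delta_1,\ldots,\delta_{i-1}))$, independent of the choice of lift.

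Next I would apply the change-of-rings criterion. Since $\gR$ is an $E_3$-algebra satisfying (C), (SCE), (F), the map $\gR \to \mathbf{q}$ satisfies $(\dagger)$ with $\theta = \tfrac{\delta}{\delta+1} > \lambda$ (by Theorem~\ref{thm:IdealsSameForSmallAlgebra}~(\ref{it:IdealsSameForSmallAlgebra:2}) and Proposition~\ref{prop:ChangeOfHopfAlg}~(\ref{it:ChangeOfHopfAlg:1})). Applying Theorem~\ref{thm:DetNilp}~(\ref{it:DetNilp:2}) to $\gE := \End_\gR(\gM/(\delta_1, \ldots, \delta_{i-1}))$ (which lies in $\mathsf{W}(\lambda)$, being in the thick subcategory generated by $\gM/(\delta_1,\ldots,\delta_{i-1})$) shows that $\delta_i$ is nilpotent if and only if its image $\mathbf{q} \otimes_\gR \delta_i$ is nilpotent, and by construction $\mathbf{q} \otimes_\gR \delta_i = u_*(\zeta_i^{p^{M_i}})$ acting on $\mathbf{q} \otimes_\gR \gM/(\delta_1,\ldots,\delta_{i-1})$. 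So it suffices to show that $\zeta_i \cdot -$ is non-nilpotent on $\mathbf{q} \otimes_\gR \gM/(\delta_1,\ldots,\delta_{i-1})$, and since the latter object generates the same thick subcategory as $\mathbf{q} \otimes_\gR \gR/(\phi_1,\ldots,\phi_{s-1}, \delta_1, \ldots, \delta_{i-1})$, and (arguing as in the proof of Proposition~\ref{prop:HAMethodEfficient}, reducing first through $\mathbf{q}/(\xi_1,\ldots,\xi_{s-1})$ since the $\xi_1,\ldots,\xi_{s-1}$ act nilpotently on any object of $\mathsf{W}(\lambda)$ by a slope comparison) the same thick subcategory as $\mathbf{q}/(\xi_1, \ldots, \xi_{s-1}, \zeta_1, \ldots, \zeta_{i-1})$, it is enough to show $\zeta_i \cdot -$ is non-nilpotent on $\mathbf{q}/(\xi_1, \ldots, \xi_{s-1}, \zeta_1, \ldots, \zeta_{i-1})$.

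Finally, I would derive a contradiction from non-nilpotence failing. If $\zeta_i \cdot -$ were nilpotent on $\mathbf{q}/(\xi_1, \ldots, \xi_{s-1}, \zeta_1, \ldots, \zeta_{i-1})$, then by Lemma~\ref{lem:ThickSubcatFacts}~(\ref{it:ThickSubcatFacts:3}) the object $\mathbf{q}/(\xi_1, \ldots, \xi_{s-1}, \zeta_1, \ldots, \zeta_{i-1})$ lies in the thick subcategory generated by $\mathbf{q}/(\xi_1, \ldots, \xi_{s-1}, \zeta_1, \ldots, \zeta_i)$; hence $\mathbf{q}/(\xi_1, \ldots, \xi_{s-1}, \zeta_1, \ldots, \zeta_r)$, being in the thick subcategory of the former via killing $\zeta_{i+1}, \ldots, \zeta_r$, lies in the thick subcategory generated by $\mathbf{q}/(\xi_1, \ldots, \xi_{s-1}, \zeta_1, \ldots, \widehat{\zeta_i}, \ldots, \zeta_r)$. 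Since $\mathbf{q}/(\xi_1, \ldots, \xi_{s-1}, \zeta_1, \ldots, \zeta_r)$ has a vanishing line of slope $>\lambda$, so would $\mathbf{q}/(\xi_1, \ldots, \xi_{s-1}, \zeta_1, \ldots, \widehat{\zeta_i}, \ldots, \zeta_r)$, contradicting the minimality of $r$ in property (ii) of the $\zeta$'s. The main obstacle I anticipate is making the chain of thick-subcategory reductions in the last two paragraphs airtight --- in particular tracking that the commuting cube of the $\delta$'s lets one reorder which generators one kills, and that passing through $\mathbf{q}/(\xi_1,\ldots,\xi_{s-1})$ is legitimate because the low-slope classes $\xi_1,\ldots,\xi_{s-1}$ act nilpotently on everything in $\mathsf{W}(\lambda)$; but these are the same maneuvers already carried out in the proofs of Theorem~\ref{thm:OmittingRedundantXs} and Proposition~\ref{prop:HAMethodEfficient}, so no genuinely new difficulty should arise.
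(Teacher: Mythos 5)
Your proposal is correct and follows essentially the same route as the paper: reduce to non-nilpotence of $\zeta_i \cdot -$ on $\mathbf{q}/(\xi_1, \ldots, \xi_{s-1}, \zeta_1, \ldots, \zeta_{i-1})$ via Theorem \ref{thm:DetNilp}~(\ref{it:DetNilp:2}) and thick-subcategory bookkeeping, then contradict the minimality of $r$. The only (immaterial) difference is the order of operations — the paper first uses thickness of the nilpotence locus (via Theorem \ref{thm:RevisedXiSelfMaps}~(\ref{it:RevisedXiSelfMaps:5})) to replace $\gM$ by $\gR/(\phi_1,\ldots,\phi_{s-1})$ and only then base-changes to $\mathbf{q}$, whereas you base-change first and do the thick-subcategory reductions on the $\mathbf{q}$ side.
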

\begin{proof}
The subcategory of $\mathsf{W}(\lambda)$ of those objects whose $\zeta_i$ self-map $\delta_i$ is nilpotent is thick by Theorem \ref{thm:RevisedXiSelfMaps} (\ref{it:RevisedXiSelfMaps:5}), so it suffices to show that $\delta_i$ is non-nilpotent on $\gR/(\phi_1,\ldots, \phi_{s-1}, \delta_1, \ldots, \delta_{i-1})$. By Theorem \ref{thm:DetNilp} (\ref{it:DetNilp:2}) it suffices to show that $\zeta_i \cdot -$ is non-nilpotent on $\mathbf{q}/(\xi_1, \ldots, \xi_{s-1}, \zeta_1, \ldots, \zeta_{i-1})$. If it were, then the $\mathbf{q}$-module $\mathbf{q}/(\xi_1, \ldots, \xi_{s-1}, \zeta_1, \ldots, \zeta_{i-1}, \zeta_{i+1}, \ldots, \zeta_r)$ would lie in the thick subcategory of $\mathbf{q}/(\xi_1, \ldots, \xi_{s-1}, \zeta_1, \ldots, \zeta_{r})$. Then $\mathbf{q}/(\xi_1, \ldots, \xi_{s-1}, \zeta_1, \ldots, \zeta_{i-1}, \zeta_{i+1}, \ldots, \zeta_r)$ would have a vanishing line of slope $> \lambda$, but only use $(r-1)$ $\zeta$'s which contradicts our minimality assumption.
\end{proof}

\begin{rem}
In practice one might have $\mathbf{q}/(\xi_1, \ldots, \xi_{s-1}, \zeta_1, \ldots, \zeta_r)$ with a vanishing line of slope $>\lambda$, but find it hard to know that there is no possible shorter sequence of $\zeta$'s achieving this. If one checks that each $\mathbf{q}/(\xi_1, \ldots, \xi_{s-1}, \zeta_1, \ldots, \zeta_{i-1}, \zeta_{i+1}, \ldots, \zeta_r)$ obtained by omitting a single $\zeta$ does not have a vanishing line of slope $>\lambda$, then the argument and conclusion of Proposition \ref{prop:ZetasNonNilp} still holds.

In particular, we can always find a subsequence of $\xi_s, \ldots, \xi_t$ inducing a sequence of non-nilpotent self-maps and achieving a vanishing line of slope $>\lambda$, but as in Example \ref{ex:Inefficiency2} such a sequence will not always have the minimal length possible.
\end{rem}

\section{Miscellany}

We collate here some extended examples and more speculative discussion which would have been distracting in the middle of the text.

\subsection{Some features of homological multi-stability}

The following example shows that in the setting of multi-stability described in Section \ref{sec:multistab} one must not be too ambitious with what one tries to say about the structure of $\pi_{*,d}(\gR)$ as a $\pi_{*,0}(\gR)$-module.

\begin{example}\label{ex:MVConnectingMap}
Let $\bk=\bQ$, let $\ell \geq 2$ and $N \gg 0$ be given, and set
$$\gR := \gE_\infty(S^{1,0} r \oplus S^{1,0} b \oplus \bigoplus_{i=1}^N S^{\ell+1,\ell} x_i) \cup^{E_\infty} D^{\ell+2,\ell+1} y_1 \cup^{E_\infty} \cdots\cup^{E_\infty} D^{\ell+2,\ell+1} y_{N+1}$$
where $y_1$ is attached along $r \cdot x_1$, $y_j$ is attached along $r \cdot x_j - b \cdot x_{j-1}$ for $1 < j < N+1$, and $y_{N+1}$ is attached along $-b \cdot x_N$. This $E_\infty$-algebra satisfies (C), (SCE), and (F), and one calculates that $\pi_{n,d}(\gR/(r,b))=0$ for $d < \tfrac{\ell}{\ell+1}n$, as $\gR/(r,b)$ is equivalent to $\gE_\infty(\bigoplus_{i=1}^N S^{\ell+1,\ell} x_i \oplus \bigoplus_{j=1}^{N+1} S^{\ell+2,\ell+1} y_j)$.

Now $\pi_{*,0}(\gR) = \bQ[r,b]$, and we calculate that
\begin{align*}
\pi_{*,\ell}(\gR) &= \bQ[r,b]\{x_1, \ldots, x_N\}/(r\cdot x_1, r \cdot x_i - b \cdot x_{i-1}, -b \cdot x_N)\\
\pi_{*,\ell+1}(\gR) &= \bQ[r,b]\{b^N \cdot y_1 + b^{N-1} r \cdot y_2 + \cdots + b r^{N-1} \cdot y_N + r^N \cdot y_{N+1}\}
\end{align*}
as $\bQ[r,b]$-modules, with the next non-trivial homotopy group $\pi_{*,2\ell}(\gR)$. The calculation will look the same for $\bQ$ replaced by $\bF_p$ with $p \gg \ell$. We see several sobering features, all manifestations of the same point. 

Firstly, while $\pi_{*,\ell+1}(\gR)$ is a free $\bQ[r,b]$-module on one generator, the grading of this generator is $\ell+2+N$. So whatever one might mean by ``stability'' for a graded $\bQ[r,b]$-module the stable range here can be made arbitrarily bad by increasing $N$.

Secondly, we see that while $\pi_{*,\ell}(\gR)$ is generated in gradings $\leq \ell$ and presented in gradings $\leq \ell+1$, it has $\Tor_2^{\bQ[r,b]}(\pi_{*,\ell}(\gR), \bQ)$ non-trivial in grading $\ell+2+N$. (At one time Castelnuovo--Mumford regularity had seemed like a good measure of the ``stable range'' of a graded $\bQ[r,b]$-module, but here we see that the regularity is $\ell+N$, so can be made arbitrarily bad by changing the \emph{number} of $E_\infty$-cells in $\gR$ but not the \emph{bidegrees} in which they sit. Indeed it is a well-known feature of Castelnuovo--Mumford regularity over polynomial rings in more than one variable that one cannot estimate regularity only in terms of the gradings of generators and of relations: one must use the numbers of them too.)

Thirdly, we see that in the highly cartesian square analogous to \eqref{eq:RedBlueStab} the Mayer--Vietoris connecting map
$$\partial : \pi_{N+\ell+2,\ell+1}(\gR) \lra \pi_{N+\ell,\ell}(\gR),$$
which exists because for $N \gg 0$ these groups are in the range in which the square is cartesian, is non-trivial. Indeed, one may check that
$$\partial(b^N \cdot y_1 + b^{N-1} r \cdot y_2 + \cdots + b r^{N-1} \cdot y_N + r^N \cdot y_{N+1}) = [b^{N-1}\cdot x_1] \neq 0.$$
\end{example}

\begin{rem}
This example cannot arise by $\bQ$-linearising an $E_2$-algebra in $\mathsf{Top}$. If it did then as $\pi_{*,0}(\gR)=\bQ[r,b]$ it would follow that it can be given a $\bN \times \bN$-grading in which $r$ and $b$ have gradings $(1,0)$ and $(0,1)$ respectively. But one can easily check that for $N > \ell+2$ the $x_i$ and $y_j$ cannot be given $\bN \times \bN$-gradings which make all the attaching maps homogeneous.

In fact, if $\gR$ is an $E_2$-algebra in $\bN^k$-graded chain complexes having $\pi_{*,\ldots,*,0}(\gR) = \bk[\sigma_1, \ldots, \sigma_k]$, with $\sigma_i$ of grading $(0,\ldots,0,1,0,\ldots,0)$ with 1 in the $i$-th position, then assuming (SCE) there is a stability range (of slope $\tfrac{1}{2}$) for each $\sigma_i \cdot -$ expressed in terms of the $i$-th grading. This may be proved analogously to \cite[Theorem 18.1]{e2cellsI}.
\end{rem}

\subsection{Going beyond slope 1}\label{sec:NotBeyondSlope1}

The following example shows that for an $E_2$-algebra $\gR$ satisfying (C), (SCE) and (F) one cannot generally find a Smith--Toda complex $\gR/(\alpha_1, \ldots, \alpha_r)$ having a vanishing line of slope $\geq 1$.

\begin{example}\label{ex:NotBeyondSlope1}
Let $\bk=\bF_2$ and $\gR = \gE_2(S^{1,0}\sigma)$. We will show that a finite $\gR$-module $\gM$ with a slope 1 vanishing line must be trivial.

The finite $\gR$-module $\gM$ may be filtered by skeleta, giving a spectral sequence
$$E^1_{*,*} = \pi_{*,*}(\gR) \otimes_{\bF_2} V_{*,*} \Longrightarrow \pi_{*,*}(\gM)$$
of $\pi_{*,*}(\gR)$-modules, with $V_{*,*}$ a finite-dimensional bigraded $\bF_2$-vector space. This spectral sequence collapses at a finite stage, and $\pi_{*,*}(\gM)$ is obtained from $E^\infty_{*,*}$ by finitely-many extensions, as the skeletal filtration is finite. We have
\begin{equation}\label{eq:FreeE2AtF2}
\pi_{*,*}(\gR) = \bF_2[\sigma, Q^1(\sigma), Q^2Q^1(\sigma), Q^4Q^2Q^1(\sigma), \ldots],
\end{equation}
with the class $Q^{2^{s-1}} \cdots Q^2Q^1(\sigma)$ having bidegree $(2^s, 2^s-1)$. We now make use of the theory of coherent modules and rings: \cite[Chapter I]{Cohen} is a good reference for the audience of this paper. The ring \eqref{eq:FreeE2AtF2} is coherent, for example by \cite[Proposition 1.5]{Cohen}. Therefore a $\pi_{*,*}(\gR)$-module is coherent if and only if it is finitely-presented \cite[Proposition 1.4]{Cohen}. As $V_{*,*}$ is finite-dimensional, it follows that $E^1_{*,*}$ is a coherent $\pi_{*,*}(\gR)$-module. As kernels and cokernels of maps between coherent modules are coherent \cite[Proposition 1.3]{Cohen}, it follows that each page $E^r_{*,*}$ is a coherent $\pi_{*,*}(\gR)$-module. As the spectral sequence collapses at a finite stage with finitely-many extensions, and extensions of coherent modules are coherent \cite[Proposition 1.2]{Cohen}, it follows that $\pi_{*,*}(\gM)$ is a coherent $\pi_{*,*}(\gR)$-module. Therefore it is a finitely-presented $\pi_{*,*}(\gR)$-module.

As $\pi_{*,*}(\gM)$ has a slope 1 vanishing line, each element $Q^{2^{s-1}} \cdots Q^2Q^1(\sigma)$ acts nilpotently on it, because they have slope $\tfrac{2^s-1}{2^s}<1$. As it is a finitely-presented and so in particular finitely-generated $\pi_{*,*}(\gR)$-module, it follows that $\pi_{*,*}(\gM)$ is in fact finite-dimensional over $\bF_2$.

To finish the argument we will show that a non-zero finite-dimensional module over the ring \eqref{eq:FreeE2AtF2} cannot be finitely-presented, giving a contradiction. If $N_{*,*}$ is such a module then each element $Q^{2^{s-1}} \cdots Q^2Q^1(\sigma)$ with $s$ large enough acts trivially on $N_{*,*}$, and so the module structure is induced along the quotient map
$$\bF_2[\sigma, Q^1(\sigma), Q^2Q^1(\sigma), \ldots] \lra \frac{\bF_2[\sigma, Q^1(\sigma), Q^2Q^1(\sigma), \ldots]}{(Q^{2^{s-1}} \cdots Q^2Q^1(\sigma) : s \geq K)} =: T_{*,*}$$
for some $K$. Thus
$$\Tor_1^{\bF_2[\sigma, Q^1(\sigma), Q^1Q^2(\sigma), \ldots]}(\bF_2, N_{*,*}) \cong \Tor_1^{T_{*,*}}(\bF_2, N_{*,*}) \otimes \bF_2\{Q^{2^{s-1}} \cdots Q^1(\sigma) \, : \, s \geq K\}.$$
By considering the right-hand side we see that if this is non-zero then it is infinite-dimensional: but it is non-zero, because the non-zero finite-dimensional $\bF_2$-vector space $N_{*,*}$ is certainly not a free module. As the left-hand side measures minimal relations in a resolution of $N_{*,*}$, this module is indeed not finitely-presented.
\end{example}

\subsection{A counterexample in characteristic zero}\label{sec:CharZeroCounterexample}

Theorem \ref{MainThm:A}, and its proof, requires $\bk$ to have positive characteristic. The following example, which we learnt from Robert Burklund, shows that this assumption is essential at least if we wish to keep Theorem \ref{MainThm:A} (\ref{it:MainThm:A:NonNilp}).

Let $\bk=\bQ$ and consider the $E_\infty$-algebra
$$\gR := \gE_\infty(S^{1,0}\sigma \oplus S^{2,1} z ) \cup^{E_\infty}_{\sigma z} D^{3,2} x \cup^{E_\infty}_{\sigma^2} D^{2,1} y.$$
This satisfies (C), (F), and (SCE). The homotopy groups of $\gR$ are given by the homology of the bigraded cdga
$$(\mathrm{Sym}^*_\bQ[\sigma_{1,0}, z_{2,1}, x_{3,2}, y_{2,1}], dx = \sigma z, dy = \sigma^2).$$
One calculates that its homology has basis
$$1, \sigma, z, x\sigma-yz, xz,  x(x\sigma-yz), x^2z, x^2(x\sigma-yz), x^3z, , \ldots$$
ordered by grading. Thus $\pi_{*,*}(\gR)$ has a vanishing line of slope $\tfrac{2}{3}$. But also it vanishes in positive gradings divisible by 3, so every element of $\pi_{*,*}(\gR)$ is nilpotent, and hence $\gR$ has no non-nilpotent $\gR$-module endomorphisms other than scalar multiples of the identity. 

The formula for $\gR$ makes sense for $\bk=\bZ$ and so can be reduced modulo $p$. Let us suppose $p$ is odd. The spectral sequence for $\fil_*^{E_\infty} \gR_{\bF_p}$ takes the form
$$E^1_{*,*,*} = \pi_{*,*,*}(\gE_\infty(S^{1,0,-1} \sigma \oplus S^{2,1,-1} z \oplus S^{3,2,-1} x \oplus S^{2,1,-1} y)) \Longrightarrow \pi_{*,*}(\gR_{\bF_p}).$$
There is a differential $d^1(x) = \sigma z$ and so (by the principle explained in Remark \ref{rem:PowerOpsInSS}, cf.\ \cite[Theorem 16.8]{e2cellsI}) the class $x^p$ survives until $E^p_{3p,2p,-p}$ and then
$$d^p(x^p) = d^p(Q^{1}(x)) = Q^1(\sigma z) = Q^1(\sigma) Q^0(z) + Q^0(\sigma) Q^1(z),$$
which vanishes because $Q^0(z)=0$ by instability, and $Q^0(\sigma) = \sigma^p$ vanishes as there is an earlier differential $d^1(y) = \sigma^2$. In fact, considering $E^2_{*,*,*}$ we see that there are no possible targets for differentials on $x^p$ so it is a permanent cycle: thus there is an filtered homotopy class $\xi_p \in \pi_{3p,2p,-p}(\fil_* \gR_{\bF_p})$ detected by $x^p$. It is easy to see that $\tau^{-1}(\fil_* \gR_{\bF_p}/\xi_p)$ has a vanishing line of slope $\geq \tfrac{3}{4}$. 

Here we see another reason why an integral or rational statement is not possible: an endomorphism of $\gR_\bQ$ is defined after inverting finitely-many primes, and so can be reduced modulo $p$ for the infinitely-many remaining primes. But the minimal period of a non-nilpotent slope $\tfrac{2}{3}$ endomorphism of $\gR_{\bF_p}$ depends on $p$.

\subsection{Change-of-coefficients}

The ring homomorphisms $\bQ \leftarrow \mathbb{Z} \to \bF_p$ induce adjunctions
\begin{equation*}
\begin{tikzcd}
{\mathsf{D}(\bQ)} \arrow[r, swap, "U_\bQ"{name=G1}, shift left=-.8ex] 
&  \mathsf{D}(\bZ) \arrow[l, swap, "\bQ \otimes_\bZ -"{name=F1}, shift left=-.8ex] 
\arrow[r, "\bF_p \otimes_\bZ -"{name=F}, shift left=.8ex] 
& \mathsf{D}(\bF_p). \arrow[l, "U_{\bF_p}"{name=G}, shift left=.8ex] 
\arrow[phantom, from=F1, to=G1, "\scalebox{0.7}{$\dashv$}" rotate=-90]
\arrow[phantom, from=F, to=G, "\scalebox{0.7}{$\dashv$}" rotate=-90]
\end{tikzcd}
\end{equation*}
The left adjoints are strong symmetric monoidal, so the right adjoints are lax symmetric monoidal (in fact $U_\bQ$ is strong, as $\bQ \otimes_\bZ -$ is a localisation).

Suppose that $\gR \in \mathsf{Alg}_{E_2}(\mathsf{D}(\bZ)^\bZ)$, i.e.\ is defined over the integers. For $\bk \in \{\bF_p,\bQ\}$ we may form $\gR_{\bk} := \gR \otimes_\bZ \bk$, the tensor product implicitly derived. This may be considered as lying in either $\mathsf{Alg}_{E_2}(\mathsf{D}(\bk)^\bZ)$ or via the lax symmetric monoidal functor $U_\bk$ as lying in $\mathsf{Alg}_{E_2}(\mathsf{D}(\bZ)^\bZ)$: in the latter case, there is an $E_2$-algebra map $\gR \to \gR_\bk$.

Let $L^f_\lambda(-)$ denote the finite localisation as previously discussed applied in the category of $\gR$-modules, and $L^{f, \bk}_\lambda(-)$ denote the analogue in $\gR_\bk$-modules, using the class $\mathcal{A}_\lambda^{f,\bk}$ of finite $\gR_\bk$-modules with a vanishing line of slope $\lambda$. Similarly, let $L_\lambda(-)$ denote the non-finite localisation discussed in Section \ref{sec:TelescopeConj}, using the class $\mathcal{A}_\lambda$ of all $\gR$-modules with a slope $\lambda$ vanishing line, and $L^{ \bk}_\lambda(-)$ the same in $\gR_\bk$-modules.

\begin{prop}\label{prop:BaseChange}
In the following we implicitly forget down to $\gR\text{-}\mathsf{mod}$.
\begin{enumerate}[(i)]

\item\label{it:BaseChange:1} There are factorisations $\gR_\bk \to L_\lambda^f(\gR_\bk) \to L_\lambda^{f, \bk}(\gR_\bk)$ and $\gR_\bk \to L_\lambda(\gR_\bk) \to L_\lambda^{\bk}(\gR_\bk)$.

\item\label{it:BaseChange:2} $L^f_\lambda(\gR_\bk) \simeq L^f_\lambda(\gR) \otimes_\bZ \bk$.

\item\label{it:BaseChange:3} $L_\lambda(\gR_{\bF_p}) \simeq L_\lambda(\gR)\otimes_\bZ \bF_p$.

\item\label{it:BaseChange:4} When $\bk=\bF_p$, the map $L^f_\lambda(\gR_{\bF_p}) \to L^{f, \bF_p}_\lambda(\gR_{\bF_p})$ is an equivalence.

\item\label{it:BaseChange:5} When $\bk=\bQ$, the map $L_\lambda(\gR_{\bQ}) \to L^{ \bQ}_\lambda(\gR_{\bQ})$ is an equivalence.

\end{enumerate}
\end{prop}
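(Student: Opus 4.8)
\textbf{Proof proposal for Proposition \ref{prop:BaseChange}.}

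The plan is to prove the five items essentially in the stated order, since (\ref{it:BaseChange:4}) and (\ref{it:BaseChange:5}) rely on the explicit smashing descriptions established in (\ref{it:BaseChange:2}) and (\ref{it:BaseChange:3}), which in turn rely on the factorisations of (\ref{it:BaseChange:1}). For (\ref{it:BaseChange:1}), note that base change $- \otimes_\bZ \bk : \gR\text{-}\mathsf{mod} \to \gR_\bk\text{-}\mathsf{mod}$ sends finite modules to finite modules and preserves vanishing lines of slope $\lambda$ (it is computed underlying, and $\pi_{n,d}$ of a tensor with $\bk$ over $\bZ$ is controlled by $\pi_{n,d}$ and $\pi_{n,d-1}$ of the original via the universal coefficient/K\"unneth sequence \eqref{eq:KunnethSS}, which does not move the slope of a vanishing line). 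Hence $\mathcal{A}_\lambda^f \otimes_\bZ \bk \subseteq \mathcal{A}_\lambda^{f,\bk}$ after forgetting, and more importantly $L_\lambda^f(\gR_\bk)$ (localising in $\gR$-modules) is $\mathcal{A}_\lambda^f$-local, which is implied by, but a priori weaker than, being $\mathcal{A}_\lambda^{f,\bk}$-local; so the map $\gR_\bk \to L_\lambda^f(\gR_\bk) \to L_\lambda^{f,\bk}(\gR_\bk)$ is obtained by localising further. The same argument with the non-finite classes gives the second factorisation. One should also check the fibre of $\gR_\bk \to L_\lambda^f(\gR_\bk)$ (formed in $\gR$-modules) is $\mathcal{A}_\lambda^f$-acyclic as an $\gR$-module, which is immediate from the telescopic construction of Proposition \ref{prop:LocExists}.

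For (\ref{it:BaseChange:2}) and (\ref{it:BaseChange:3}) I would use smashing: by Proposition \ref{prop:Localisation} (\ref{it:Localisation4}), $L_\lambda^f(\gR) \simeq L_\lambda^f(\gR) \otimes_\gR \gR$, so $L_\lambda^f(\gR) \otimes_\bZ \bk \simeq L_\lambda^f(\gR) \otimes_\gR \gR_\bk$ is an $\gR_\bk$-module. One must verify that $\gR_\bk \to L_\lambda^f(\gR) \otimes_\gR \gR_\bk$ is an $\mathcal{A}_\lambda^f$-localisation in $\gR_\bk$-modules (i.e.\ with respect to finite $\gR_\bk$-modules with slope $\lambda$ vanishing line). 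Its fibre is $C_\lambda^f(\gR) \otimes_\gR \gR_\bk$, which is $\mathcal{A}_\lambda^f$-acyclic in $\gR$-modules; since every finite $\gR_\bk$-module, when restricted, is a finite $\gR$-module (as $\gR_\bk$ is a perfect $\gR$-module: $\bk$ is perfect over $\bZ$), acyclicity in $\gR$-modules implies acyclicity against $\gR_\bk$-local objects. Conversely, for locality, one writes an arbitrary finite $\gR_\bk$-module as built from $\gR_\bk$ and uses that $L_\lambda^f(\gR) \otimes_\gR \gR_\bk$ is a module over the idempotent algebra $L_\lambda^f(\gR)$, on which every object of $\mathcal{A}_\lambda^f$ (over $\gR$, hence over $\gR_\bk$ after restriction) acts trivially. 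This gives $L_\lambda^f(\gR_\bk) \simeq L_\lambda^f(\gR)\otimes_\bZ\bk$ as an $\gR_\bk$-module. For (\ref{it:BaseChange:3}) the same template works for $L_\lambda$, which is also smashing here because the assumptions of the ``telescope conjecture'' theorem of Section \ref{sec:TelescopeConj} are in force (admissible Smith--Toda complexes exist over $\bF_p$ by Theorem \ref{thm:STExist}); alternatively one argues directly that $L_\lambda$ is smashing over $\bF_p$ using the compact generation afforded by a Smith--Toda complex, so the same base-change computation applies. The restriction to $\bF_p$ in (\ref{it:BaseChange:3}) is because we want $L_\lambda$ to be smashing.

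For (\ref{it:BaseChange:4}): by (\ref{it:BaseChange:2}), $L^f_\lambda(\gR_{\bF_p}) \simeq L^f_\lambda(\gR)\otimes_\bZ \bF_p$ is a module over the $E_2$-algebra (in $\gR$-modules) $L^f_\lambda(\gR)$. By Theorem \ref{thm:STExist} over $\bF_p$ there is an admissible Smith--Toda complex $\gR_{\bF_p}/(\alpha_1,\dots,\alpha_r)$ with a slope $\lambda$ vanishing line, and by Theorem \ref{thm:STQuantisation} (\ref{it:STQuantisation:2}) $L^{f,\bF_p}_\lambda(-)$ is Bousfield localisation away from this single object; so it suffices to check that $L^f_\lambda(\gR_{\bF_p})$ is already local with respect to it. But $\gR_{\bF_p}/(\alpha_1,\dots,\alpha_r)$, when restricted to $\gR$-modules, is a finite $\gR$-module with a slope $\lambda$ vanishing line, hence lies in $\mathcal{A}_\lambda^f$, hence is killed by $L_\lambda^f(\gR)$; since $L_\lambda^f(\gR_{\bF_p})$ is an $L_\lambda^f(\gR)$-module, mapping out of it annihilates $\gR_{\bF_p}/(\alpha_1,\dots,\alpha_r)$. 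Therefore $\gR_{\bF_p} \to L_\lambda^f(\gR_{\bF_p})$ is already an $\mathcal{A}_\lambda^{f,\bF_p}$-localisation, giving the equivalence. For (\ref{it:BaseChange:5}): by (\ref{it:BaseChange:2}) applied with $\bk=\bQ$ (note $U_\bQ$ is strong monoidal, as remarked), $L_\lambda^f(\gR_\bQ) \simeq L_\lambda^f(\gR)\otimes_\bZ\bQ$, and it suffices to show every $\gR_\bQ$-module with a slope $\lambda$ vanishing line is $\mathcal{A}_\lambda^{f,\bQ}$-acyclic, i.e.\ that $L_\lambda^f$ and $L_\lambda$ coincide over $\bQ$; then combining with (\ref{it:BaseChange:1}) and (\ref{it:BaseChange:4})-style reasoning gives the claim. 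Here is where the main obstacle lies: Theorem \ref{thm:STExist}, hence the ``telescope conjecture'' of Section \ref{sec:TelescopeConj}, is unavailable over $\bQ$ (indeed Section \ref{sec:CharZeroCounterexample} shows admissible Smith--Toda complexes can fail to exist rationally). So I expect to argue instead directly that the inclusion $\mathcal{A}_\lambda^{f,\bQ}\text{-}\mathsf{loc} \subseteq \mathcal{A}_\lambda^{\bQ}\text{-}\mathsf{loc}$ is an equality over $\bQ$: one uses that $\mathsf{D}(\bQ)$ is semisimple so every $\gR_\bQ$-module splits as a sum (over $(n,d)$) of shifts of its homotopy groups viewed through $\gR_\bQ \to \pi_{*,0}(\gR_\bQ)$, reducing the statement to a purely algebraic claim about graded modules over the graded-commutative ring $\pi_{*,*}(\gR_\bQ)$ with a slope $\lambda$ vanishing line being built, as $\mathcal{A}_\lambda$-acyclics, out of finite such modules — which follows because such a module is a filtered colimit of its finitely generated submodules, each of which still has a slope $\lambda$ vanishing line and is finitely presented (using that $\pi_{*,*}(\gR_\bQ)$ is coherent, cf.\ the argument of Example \ref{ex:NotBeyondSlope1}), hence finite as an $\gR_\bQ$-module. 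This coherence/finite-presentation input is the technical heart and the step I would expect to need the most care.
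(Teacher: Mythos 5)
Your items (i) and (ii) are essentially fine insofar as they prove what is actually asserted: (i) is the adjunction argument (base change sends $\mathcal{A}_\lambda^f$ into $\mathcal{A}_\lambda^{f,\bk}$, so $\mathcal{A}_\lambda^{f,\bk}$-local objects are $\mathcal{A}_\lambda^f$-local as $\gR$-modules), and (ii) is the one-line consequence of smashing, $L_\lambda^f(\gR_\bk)\simeq L_\lambda^f(\gR)\otimes_\gR\gR_\bk\simeq L_\lambda^f(\gR)\otimes_\bZ\bk$. But in (ii) you then set out to show the stronger claim that $\gR_\bk\to L_\lambda^f(\gR)\otimes_\bZ\bk$ is the localisation \emph{with respect to finite $\gR_\bk$-modules}; that is item (iv), it is false for $\bk=\bQ$ (see the example immediately following the proposition), and your justification rests on the false assertion that $\gR_\bQ$ is a perfect $\gR$-module ($\bQ$ is not a compact object of $\mathsf{D}(\bZ)$; it is a filtered colimit of copies of $\bZ$). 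Statement (ii) only concerns the localisation in $\gR$-modules and needs none of this.

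The genuine gaps are in (iii), (iv) and (v). For (iii) you invoke smashing of $L_\lambda$ via the telescope conjecture, but Theorem \ref{thm:STExist} requires the ground field to have positive characteristic, so no admissible Smith--Toda complex is available in $\gR$-modules over $\bZ$ and the telescope-conjecture theorem does not apply; the intended argument is much softer: $L_\lambda$ is a left adjoint and cofibres in the local subcategory agree with ambient ones, so applying $L_\lambda$ to $\gR\xrightarrow{p}\gR\to\gR_{\bF_p}$ gives the claim (this is also why (iii) is stated only for $\bF_p$ and not for $\bQ$). For (iv) your argument goes through, but only under the hypotheses (C), (SCE), (F) and $\lambda<1$ needed for Theorems \ref{thm:STExist} and \ref{thm:STQuantisation}, which the proposition does not assume; the unconditional argument uses only that $\gR_{\bF_p}$ is a finite $\gR$-module (the cofibre of $p$), so that finite $\gR_{\bF_p}$-modules with a slope $\lambda$ vanishing line restrict to objects of $\mathcal{A}_\lambda^f$. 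For (v) you aim at a rational telescope conjecture ($L_\lambda^{f,\bQ}=L_\lambda^{\bQ}$), which is neither what (v) asserts nor established by your sketch: semisimplicity of $\mathsf{D}(\bQ)$ does not make $\gR_\bQ$-modules split as sums of shifts of their homotopy groups, and coherence of $\pi_{*,*}(\gR_\bQ)$ is not available for a general $\gR$ (Example \ref{ex:NotBeyondSlope1} exploits a very particular free $E_2$-algebra). The actual content of (v) is the comparison of the two \emph{non-finite} localisations, and it is short precisely because no finiteness needs to be preserved: any $\gR_\bQ$-module with a slope $\lambda$ vanishing line restricts to an object of $\mathcal{A}_\lambda$, the $\gR_\bQ$-algebra $L_\lambda(\gR_\bQ)$ is rational, and since rationalisation is a localisation one has $\map_\gR(\gT^\bQ, L_\lambda(\gR_\bQ))\simeq\map_{\gR_\bQ}(\gT^\bQ,L_\lambda(\gR_\bQ))$, so $L_\lambda(\gR_\bQ)$ is already $\mathcal{A}_\lambda^\bQ$-local and the universal properties finish the argument.
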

\begin{proof}
For (\ref{it:BaseChange:1}), if $\gT \in \mathcal{A}^f_\lambda$ then $\gT_\bk \in \mathcal{A}_\lambda^{f,\bk}$ by the Universal Coefficient Theorem, so $\map_\gR(\gT, L_\lambda^{f, \bk}(\gR_\bk)) \simeq \map_{\gR_\bk}(\gT_\bk, L_\lambda^{f, \bk}(\gR_\bk)) \simeq 0$, and therefore $L_\lambda^{f, \bk}(\gR_\bk)$ is $\mathcal{A}_\lambda$-local: this gives a factorisation
$$\gR_\bk \lra L_\lambda^f(\gR_\bk) \lra L_\lambda^{f, \bk}(\gR_\bk).$$
The argument for the non-finite localisation is the same.

For (\ref{it:BaseChange:2}) we use that $L_\lambda^f$ is smashing to write $L^f_\lambda(\gR_\bk) = L^f_\lambda(\gR) \otimes_\gR \gR_\bk = L^f_\lambda(\gR) \otimes_\gR \gR \otimes_\bZ \bk = L^f_\lambda(\gR) \otimes_\bZ \bk$. For (\ref{it:BaseChange:3}) we apply the left adjoint $L_\lambda$ to the cofibre sequence $\gR \overset{p\cdot -}\to \gR \to \gR_{\bF_p}$, to get a cofibre sequence $L_\lambda(\gR) \overset{p\cdot -}\to L_\lambda(\gR) \to L_\lambda(\gR_{\bF_p})$ in the $\mathcal{A}_\lambda$-local category. But cofibres in the $\mathcal{A}_\lambda$-local category may be computed in the ambient category (as they are suspended fibres), so $L_\lambda(\gR_{\bF_p})$ is the cofibre of multiplication by $p$ on $L_\lambda(\gR)$.

For (\ref{it:BaseChange:4}), if $\gT^{\bF_p}$ is a finite $\gR_{\bF_p}$-module with a slope $\lambda$ vanishing line, then it is also finite as an $\gR$-module, because $\gR_{\bF_p}$ is a finite $\gR$-module (namely the cofibre of $p \cdot - : \gR \to \gR$). Thus it lies in $\mathcal{A}_\lambda^f$, so $\mathcal{A}^f_\lambda$-local objects are in particular $\mathcal{A}_\lambda^{f, \bF_p}$-local, giving a map $L_\lambda^{f, \bF_p}(\gR_{\bF_p}) \to L_\lambda^{f}(\gR_{\bF_p})$. The universal properties show that it is a homotopy inverse to the natural map.

For (\ref{it:BaseChange:5}), first note that $L_\lambda(\gR_\bQ)$ is a $\gR_\bQ$-algebra, and so is rational. If $\gT^\bQ$ is a $\gR_\bQ$-module with a slope $\lambda$ vanishing line, then it is also a $\gR$-module with a slope $\lambda$ vanishing line. As rationalising is a localisation
$$0 \simeq \map_{\gR}(\gT^\bQ, L_\lambda(\gR_\bQ)) \simeq \map_{\gR_\bQ}(\gT^\bQ, L_\lambda(\gR_\bQ))$$
so $L_\lambda(\gR_\bQ)$ is in particular $\mathcal{A}^{\bQ}_\lambda$-local, giving a map $L^\bQ_\lambda(\gR_\bQ) \to L_\lambda(\gR_\bQ)$. The universal properties show that it is a homotopy inverse to the natural map.
\end{proof}

The following example shows that the conclusion of Proposition \ref{prop:BaseChange} (\ref{it:BaseChange:4}) cannot hold for $\bk=\bQ$.

\begin{example}
Let $\bk=\bZ$ and  $\gR := \gE_2(S^{1,0} \sigma)$. We first claim that any finite $\gR$-module $\gM$ with a slope 1 vanishing line is rationally trivial. This is because the finite $\gR_{\bF_2}$-module $\gM_{\bF_2}$ again has a slope 1 vanishing line, so by Example \ref{ex:NotBeyondSlope1} is trivial. Thus 2 is invertible on $\gM$, so its homotopy groups are $\bZ[\tfrac{1}{2}]$-modules, but they are also finitely-generated abelian groups so must be finite, and therefore $\gM$ is rationally trivial.

If $\gT \in \mathcal{A}_1^f$ it follows from the claim that $\gT_\bQ=0$, which means that $\map_\gR(\gT, \gR_\bQ) \simeq \map_{\gR_\bQ}(\gT_\bQ, \gR_\bQ)=0$. Thus $\gR_\bQ$ is $\mathcal{A}_1^f$-local, so $L^f_1(\gR_\bQ) = \gR_\bQ$ and therefore $L^f_1(\gR_\bQ/\sigma) = \gR_\bQ/\sigma$. On the other hand we have
$$\pi_{*,*}(\gR_\bQ) = \bQ[\sigma] \otimes \Lambda_\bQ[\,[\sigma,\sigma]\,],$$
so $\pi_{*,*}(\gR_\bQ/\sigma) \cong \Lambda_\bQ[\,[\sigma,\sigma]\,]$. This is finite-dimensional over $\bQ$ so has a vanishing line of slope 1 (or in fact of any slope), so $\gR_\bQ/\sigma \in \mathcal{A}_1^{f,\bQ}$ and hence $L_1^{f, \bQ}(\gR_\bQ/\sigma) =0 \neq L^f_1(\gR_\bQ/\sigma)$. It follows that 
$$\text{the map }L_1^f(\gR_\bQ) \lra L_1^{f, \bQ}(\gR_\bQ) \text{ is not an equivalence}.$$
(In fact one may show that this is the map $\gR_\bQ \to \sigma^{-1} \gR_\bQ$.)
\end{example}

\subsection{Further structure on Smith--Toda complexes}

Let us briefly indicate some further things which can most likely be done with our constructions of Smith--Toda complexes, though we have not checked in detail. Let us write $x_1, \ldots, x_i$ for the $x$'s of $\tfrac{d}{n}$-slope $< \lambda$.

Firstly, following \cite[Proposition 4.11]{HoveyStrickland}, it seems that one may produce filtered Smith--Toda complexes $\fil_*\gR/(\phi_1, \ldots, \phi_i)$ equipped with a (non-associative!) left-unital multiplication $\mu$ in $\fil_*\gR\text{-}\mathsf{mod}$. Such filtered Smith--Toda complexes are ``atomic'' in the sense used there, because $C\tau \otimes (\fil_*\gR/(\phi_1, \ldots, \phi_i)) = (C\tau \otimes \fil_*\gR)/(x_1^{p^{M_1}}, \ldots, x_i^{p^{M_i}})$ and $x_1^{p^{M_1}}, \ldots, x_i^{p^{M_i}}$ is a regular sequence in $\pi_{*,*,*}(C\tau \otimes \fil_*\gR)$, so any endomorphism of $C\tau \otimes (\fil_*\gR/(\phi_1, \ldots, \phi_i))$ under $C\tau \otimes \fil_*\gR$ is an equivalence, and hence the same is true before applying $C\tau \otimes -$. The (modest) advantage of having such a multiplication is that to produce a self-map of $\fil_*\gR/(\phi_1, \ldots, \phi_i)$ it now suffices to give it on the bottom cell, i.e.\ to give a filtered homotopy class. Using the methods of \cite{BurklundMoore} one may probably even construct $\fil_*\gR/(\phi_1, \ldots, \phi_i)$ as an $E_2$-algebra.

Secondly, following \cite[Section 4, Proposition 7.10 (a)]{HoveyStrickland} \cite[Section 3]{MahowaldSadofsky}, if $\fil_*\gR/(\phi_1, \ldots, \phi_i)$ is a filtered Smith--Toda complex obtained by the proof of Theorem \ref{thm:FiltSTExist}, where $C\tau \otimes \phi_j = (C\tau \otimes u)_*(x_j^{p^{M_j}})$, then it is not hard to show using Theorem \ref{thm:xiSelfMapsUniqueAndNatural}  that there are divergent sequences $M_1(n), \ldots, M_i(n)$ and a sequence of filtered Smith--Toda complexes $\fil_*\gR/(\phi_1(n), \ldots, \phi_i(n))$ with $C\tau \otimes \phi_j(n) = (C\tau \otimes u)_*(x_j^{p^{M_j(n)}})$ and starting with $\phi_j(0)=\phi_j$, assembling into a tower
$$\cdots \lra \fil_*\gR/(\phi_1(2), \ldots, \phi_i(2)) \lra \fil_*\gR/(\phi_1(1), \ldots, \phi_i(1)) \lra \fil_*\gR/(\phi_1, \ldots, \phi_i)$$
under $\fil_*\gR$. Inverting $\tau$ and then dualising with $D^r(-) = \underline{\map}_\gR^r(-, \gR)$ gives a direct system of $\gR$-modules over $\gR$, whose colimit can be shown to be $C_\lambda^f(\gR) \to \gR$. As in \cite[Proposition 4.18]{HoveyStrickland} these filtered Smith--Toda complexes can be constructed to be self-dual up to a shift, so this colimit describing $C_\lambda^f(\gR)$ is quite explicit. Its cofibre therefore gives another explicit model for $L_\lambda^f(\gR)$ different from that of Section \ref{sec:OrthCalcModel}. 

Thirdly, when a filtered Smith--Toda complex $\fil_*\gR/(\phi_1, \ldots, \phi_i)$ exists, we may Bousfield localise away from it in $\fil_*\gR\text{-}\mathsf{mod}$: provisionally call this $L_\lambda^{f, \mathrm{fil}}(-)$. Constructing this localisation by the analogue of Section \ref{sec:OrthCalcModel}, or perhaps as in the previous paragraph, one finds that $\tau^{-1}L_\lambda^{f, \mathrm{fil}}(\fil_*\gR)  = L_\lambda^f(\gR)$ and that $C\tau \otimes L_\lambda^{f, \mathrm{fil}}(\fil_*\gR)$ is Bousfield localisation of $C\tau \otimes \fil_*\gR$ away from $(C\tau \otimes \fil_*\gR)/(x_1^{p^{M_1}}, \ldots, x_i^{p^{M_i}})$. The latter may be described as the homotopy pull back in $C\tau \otimes \fil_*\gR$-modules of the punctured $i$-cube obtained by tensoring together the maps $C\tau \otimes \fil_*\gR \to x_j^{-1}(C\tau \otimes \fil_*\gR)$ for $j=1,\ldots,i$, as in Example \ref{ex:RBLocalissation}. The completeness of the filtered object $L_\lambda^{f, \mathrm{fil}}(\fil_*\gR)$ does not seem obvious, so convergence of the associated spectral sequence in unclear: but in principle this gives some access to $L_\lambda^f(\gR)$. See \cite[Section 7]{MahowaldSadofsky} for the analogue in stable homotopy theory.

\section{Worked examples}\label{sec:examples}

Our goal in this section is to explain how our discussion applies to various well-known examples, and how several recent results about these examples can be efficiently encoded in statements about their stability Hopf algebras. While this leads to some new results, that is not the goal of this section: rather we simply wish to illustrate what has been discussed. Readers looking for more substantial applications are referred to the work of Wang \cite{wang}, which we will also mention often below.

\subsection{Free $E_k$-algebras}

Let $\gR := \gE_k(S^{1,0}_\bk \sigma)$ be the free $E_k$-algebra on one generator, with $k \geq 2$. Then
$$\bk \otimes_\gR \bk \simeq \gE_{k-1}(S^{1,1}_\bk \bar{\sigma})$$
as an $E_{k-1}$-algebra, so taking its diagonal homotopy groups we have
$$\Delta_\gR \cong \begin{cases}
\bk[\bar{\sigma}] & \text{$k=2$, or $\bk=\bF_2$ and $k \geq 2$}\\
\bk[\bar{\sigma}]/(\bar{\sigma}^2) & \text{$k >2$ and $\bk$ has odd characteristic}
\end{cases}$$
as an algebra. In both cases $\bar{\sigma}$ must be primitive, which determines the Hopf algebra structure completely. In the first case, as $\bar{\sigma}$ has odd degree and is primitive it follows that $\bar{\sigma}^2$ is primitive too. Writing $x$ and $y$ for the duals to these elements, the dual algebra is
$$\Delta_\gR^\vee \cong \begin{cases}
\Lambda_\bk[x] \otimes \Gamma_\bk[y] & \text{$k=2$, or $\bk=\bF_2$ and $k \geq 2$}\\
\Lambda_\bk[x] & \text{$k >2$ and $\bk$ has odd characteristic.}
\end{cases}$$
Thus for $\mathbf{r} = \Cotor(\Delta_\gR)$ we may calculate
$$\pi_{*,*}(\mathbf{r}) = \begin{cases}
\bk[\sigma, \xi(\sigma), \xi\xi(\sigma),\ldots] & \text{$\bk=\bF_2$ and $k \geq 2$}\\
\!\begin{aligned}%[b]
       & \bk[\sigma, \zeta[\sigma,\sigma], \zeta\xi[\sigma,\sigma], \ldots]  \\
       & \,\,\,\,\otimes \Lambda_\bk[[\sigma,\sigma], \xi[\sigma,\sigma], \xi\xi[\sigma,\sigma] \ldots]
    \end{aligned}
& \text{$\bk=\bF_p$ with $p$ odd and $k = 2$}\\
\bk[\sigma] & \text{$k >2$ and $\bk$ has odd characteristic.}
\end{cases}$$

\begin{rem}
$\gR := \gE_2(S^{1,0}\sigma)$ satisfies $\bk \otimes_\gR \bk \simeq \gE_{1}(S^{1,1}_\bk \bar{\sigma})$, which lies in the heart of the diagonal $t$-structure and so is identified with the polynomial algebra $\Delta_\gR = \bk[\bar{\sigma}]$. Thus $\mathbf{r} = \Cobar(\Delta_\gR) \simeq \gR$ in this case.

This has a curious consequence when $\bk=\bF_2$ and $k=\infty$, as then $\gR := \gE_\infty(S^{1,0}\sigma)$ has $\Delta_\gR =  \bF_2[\bar{\sigma}]$ and so $\mathbf{r} = \Cobar(\Delta_\gR) \simeq \gE_2(S^{1,0}\sigma)$. But being Cobar of a commutative Hopf algebra this is an $E_\infty$-algebra. In other words, $\gE_2(S^{1,0}\sigma)$ has an $E_\infty$-structure, and $\gR \to \mathbf{r}$ has the form of an $E_\infty$-map $\gE_\infty(S^{1,0}\sigma) \to \gE_2(S^{1,0}\sigma)$ in this case. The same happens for any $2 \leq k \leq \infty$. (If $\bk$ does not have characteristic 2 then the algebra $\bk[\bar{\sigma}]$ is \emph{not} commutative, as the Koszul sign rule would mean that $2 \bar{\sigma}^2=0$.)
\end{rem}

With these calculations we can produce Smith--Toda complexes for $\gR$, as follows. In each case the claimed vanishing line can be checked after base-change along $\gR \to \mathbf{r}$, using Theorem \ref{thm:DetNilp} (\ref{it:DetNilp:1}).

\begin{enumerate}[(i)]

\item If $\bk=\bF_2$ then we may take the sequence of Smith--Toda complexes to be
$$\gR/(\sigma, \xi(\sigma), \ldots, \xi^{s-1}(\sigma)),$$
whose homotopy groups vanish in bidegrees $(n,d)$ satisfying $d < \tfrac{2^s-1}{2^s}n$. 

\item If $\bk=\bF_p$ with $p$ odd and $k=2$ then we may take the sequence of Smith--Toda complexes to be
$$\gR/(\sigma, \zeta[\sigma,\sigma], \zeta\xi[\sigma,\sigma], \ldots, \zeta\xi^{s-2}[\sigma,\sigma])$$
whose homotopy groups vanish below the line of slope $\tfrac{2p^s-2}{2p^s} = \tfrac{p^s-1}{p^s}$ passing through the element $[\sigma,\sigma] \cdot \xi[\sigma,\sigma] \cdots \xi^{s-1}[\sigma, \sigma]$ of bidegree $(2\tfrac{p^s-1}{p-1}, 2\tfrac{p^s-1}{p-1}-s)$, i.e.\ for bidegrees $(n,d)$ satisfying $d-(2\tfrac{p^s-1}{p-1}-s) < \tfrac{p^s-1}{p^s}(n - 2\tfrac{p^s-1}{p-1})$. 

\item If $\bk=\bF_p$ with $p$ odd and $k>2$ then $\gR/\sigma$ has vanishing homotopy groups in bidegrees $(n,d)$ satisfying $d < n$.
\end{enumerate}

\subsection{Revisiting \cite[Section 18]{e2cellsI}}\label{sec:RevisitSec18}

In Theorem 18.1 of \cite{e2cellsI} Galatius, Kupers, and the author established a ``generic homological stability theorem'' intended to cover some of the most common examples. Here we discuss the hypotheses and conclusions of that Theorem from the point of view of stability Hopf algebras.

Let $\gR \in \mathsf{Alg}_{E_2}(\mathsf{D}(\bF_p)^\bZ)$ satisfy (C) and (SCE), and suppose furthermore that $\pi_{*,0}(\gR) = \bF_p[\sigma]$ for a class $\sigma$ of grading 1. In low degrees the $E^1$-page of the bar spectral sequence 
$$E^1_{n,s,t} = \pi_{n,t}{(\gR^{\otimes s})} \Longrightarrow \pi_{n, s+t}(\bF_p \otimes_\gR \bF_p)$$
takes the form
\begin{equation*}
\begin{tikzcd}
& \pi_{*,1}(\gR) & \bF_p[\sigma] \otimes \pi_{*,1}(\gR) \oplus \pi_{*,1}(\gR) \otimes \bF_p[\sigma] \lar & \cdots \lar\\
\bF_p & \bF_p[\sigma] \lar & \bF_p[\sigma] \otimes \bF_p[\sigma] \lar& \cdots \lar
\end{tikzcd}
\end{equation*}
depicted in the $(s,t)$-plane. Using this we find that $\Delta_\gR(0) = \bF_p\{1\}$ and $\Delta_\gR(1) = \bF_p\{\bar{\sigma}\}$. Now $\gr(\fil^\mathrm{aug}_* \Delta_\gR)$ is a primitively generated Hopf algebra, so by the theorem of Borel we have
$$\gr(\fil^\mathrm{aug}_* \Delta_\gR)^\vee = 
\begin{cases}
\bF_2[\bar{\sigma}^\vee]/((\bar{\sigma}^\vee)^{2^\ell}) & p=2\\
\Lambda_{\bF_p}[\bar{\sigma}^\vee] & p \text{ odd}
\end{cases} \otimes \left\{\substack{\text{monogenic Hopf algebras} \\ \text{generated in gradings $\geq 2$}}\right\}$$
as an algebra. It follows that there is a spectral sequence converging to $\pi_{*,*}(\mathbf{r})$ and starting with
$$\begin{cases}
\Lambda_{\bF_2}[\sigma] \otimes \bF_2[t] & p=2 \text{ and } \ell>1\\
\bF_p[\sigma] & p \text{ odd, or } p=2 \text{ and } \ell=1
\end{cases} \otimes \left\{\substack{\text{free graded-commutative algebra} \\ \text{on generators of slope $\geq \tfrac{1}{2}$}}\right\},$$
where in the first case $t$ has bidegree $(2^\ell - 2, 2^\ell)$. Now the class $\sigma \in \pi_{*,*}(\mathbf{r})$ is not nilpotent (if it were then it would also be nilpotent in $\pi_{*,*}(\gR)$ by Theorem \ref{thm:DetNilp}), so we see that the second case must occur: that is, if $p=2$ then we must have $\ell=1$. It follows that there is a spectral sequence
$$\left\{\substack{\text{free graded-commutative algebra} \\ \text{on generators of slope $\geq \tfrac{1}{2}$}}\right\} \Longrightarrow \pi_{*,*}(\mathbf{r}/\sigma)$$
so $\pi_{n,d}(\mathbf{r}/\sigma)$ for  $d < \tfrac{1}{2}n$. This recovers the first part of \cite[Theorem 18.1]{e2cellsI}.

For the second part of that Theorem we furthermore assume that the map $\sigma \cdot - : \pi_{1,1}(\gR) \to \pi_{2,1}(\gR)$ is surjective. In the bar spectral sequence, for  an element $x \in \pi_{*,1}(\gR)$ we have $d^1(\sigma \otimes x) = -\sigma \cdot x$ so $d^1 : E^1_{*,2,1} \to E^1_{*,1,1}$ is surjective. Furthermore $E^2_{*,2,0} = \mathrm{Tor}_2^{\bF_p[\sigma]}(\bF_p, \bF_p)=0$, so it follows that $\pi_{*,2}(\bF_p \otimes_\gR \bF_p)=0$ and hence that $\Delta_\gR(2)=0$. As above we then have
$$\gr(\fil^\mathrm{aug}_* \Delta_\gR)^\vee = \Lambda_{\bF_p}[\bar{\sigma}^\vee]  \otimes \left\{\substack{\text{monogenic Hopf algebras} \\ \text{generated in gradings $\geq 3$}}\right\}$$
leading to a spectral sequence
$$\left\{\substack{\text{free graded-commutative algebra} \\ \text{on generators of slope $\geq \tfrac{2}{3}$}}\right\} \Longrightarrow \pi_{*,*}(\mathbf{r}/\sigma)$$
so that $\pi_{n,d}(\mathbf{r}/\sigma)$ for  $d < \tfrac{2}{3}n$. This recovers the second part of \cite[Theorem 18.1]{e2cellsI}.

Thus the two parts of this Theorem correspond precisely to knowing that the stability Hopf algebra of $\gR$ has one of the following forms:
\setlength{\tabcolsep}{10pt} % Default value: 6pt
\renewcommand{\arraystretch}{1.5} % Default value: 1
\begin{table}[h]
\begin{tabular}{l|lll}
$n$ & 0 & 1 & 2   \\ \hline
$\Delta_\gR(n)$ & $\bF_p\{1\}$  & $\bF_p\{\bar{\sigma}\}$  & ? \\
$\Delta_\gR(n)$ & $\bF_p\{1\}$  & $\bF_p\{\bar{\sigma}\}$  & 0  
\end{tabular}
\end{table}

\subsection{General linear groups}

Let $\mathcal{O}$ be a Dedekind domain. Galatius, Kupers, and the author \cite[Section 18.2]{e2cellsI} have constructed and analysed an $E_\infty$-algebra $\mathbf{BGL}$ in $\mathsf{D}(\bk)^\bZ$ having
$$\pi_{n,d}(\mathbf{BGL}) \cong \bigoplus_{\substack{[M]\\ \rk(M)=n}}H_d(\GL(M) ; \bk),$$
where the sum is over isomorphism classes finitely-generated projective $\mathcal{O}$-modules of rank $n$. It satisfies (C), and satisfies (SCE) by a theorem of Charney. It need not satisfy (F) (for example $H_1(\GL_1(\bQ);\bQ) = \bQ^\infty$), but will satisfy it if $\mathcal{O}$ is a finite field, or if it is the ring of integers in a number field (using \cite{Raghunathan}).\footnote{In any case, axiom (F) was only necessary for the general construction of Smith--Toda complexes: other things we have discussed can still be done.} The terms in its stability Hopf algebra may be described as
$$\Delta_\mathbf{BGL}(n) = \bigoplus_{\substack{[M]\\ \rk(M)=n}} H_0(\GL(M) ; \mathrm{St}^{E_1}(M)),$$
the coinvariants of the so-called split (or $E_1$-) Steinberg modules.

Clausen and Jansen \cite{ClausenJansen} have introduced a new form of unstable algebraic $K$-theory, in the guise of the reductive Borel--Serre spaces $|\mathrm{RBS}(M)|$, which come with maps $B\GL(M) \to |\mathrm{RBS}(M)| \to \Omega^\infty_0 \mathrm{K}(\cO)$. Jansen \cite{Jansen2} has explained how these too may be assembled into an $E_\infty$-algebra $\mathbf{RBS}$ having
$$\pi_{n,d}(\mathbf{RBS}) \cong \bigoplus_{\substack{[M]\\ \rk(M)=n}}H_d(|\mathrm{RBS}(M)| ; \bk).$$
This again satisfies (C), and satisfies (SCE) by a theorem of Jansen \cite[Theorem 10.11]{Jansen2}. Again, it might not satisfy (F).  The terms in its stability Hopf algebra may be described as
$$\Delta_\mathbf{RBS}(n) = \bigoplus_{\substack{[M]\\ \rk(M)=n}} H_0(\GL(M) ; \mathrm{St}(M)),$$
the coinvariants of the classical Steinberg modules.

\subsubsection{$\mathbf{RBS}$ for fields}
If $\mathcal{O}$ is a field then it follows from a theorem of Lee--Szczarba \cite[Theorem 4.1]{LS2} that $H_0(\GL(\mathcal{O}^n) ; \mathrm{St}(\mathcal{O}^n))=0$ for $n \geq 2$, meaning that the stability Hopf algebra is $\Delta_\mathbf{RBS} = \Lambda_\bk[\bar{\sigma}]$.  It follows that $\mathbf{rbs} = \bk[\sigma]$, so $\mathbf{rbs}/\sigma \simeq \bk$ has a slope $\infty$ vanishing line, and hence by Theorem \ref{thm:IdealsSameForSmallAlgebra} (\ref{it:IdealsSameForSmallAlgebra:2}) and Theorem \ref{thm:DetNilp} (\ref{it:DetNilp:1}) we have $\pi_{n,d}(\mathbf{RBS}/\sigma)=0$ for $d < n$.

\subsubsection{Infinite fields}
If $\mathcal{O}$ is an infinite field then the natural map $\mathbf{BGL} \to \mathbf{RBS}$ is an equivalence by \cite[Theorem 1.3]{ClausenJansen}, as $\mathcal{O}$ is a ``ring with many units''. In particular their stability Hopf algebras agree, so by the discussion of $\mathbf{RBS}$ above we have $\Delta_\mathbf{BGL} = \Lambda_\bk[\bar{\sigma}]$ too and so $\pi_{n,d}(\mathbf{BGL}/\sigma)=0$ for $d < n$ by the same reasoning. Note however that in this setting much stronger results are known, see \cite{e2cellsIV}.

\subsubsection{Finite fields}
If $\mathcal{O}=\bF_q$ is a finite field with $q \neq 2$, then it follows from \cite[Theorem 5.1]{e2cellsIII} that $\Delta_\mathbf{BGL} = \Lambda_\bk[\bar{\sigma}]$, which as above gives $\pi_{n,d}(\mathbf{BGL}/\sigma)=0$ for $d < n$. This recovers \cite[Theorem A]{e2cellsIII} and \cite{SprehnWahl}.

The situation for $\mathcal{O}=\bF_2$ is much more interesting, and has been analysed in detail by Wang \cite{wang}.

\subsubsection{Principal ideal domains}\label{sec:PIDs}
For $\mathcal{O}$ a Dedekind domain of class number 1 (i.e.\ a PID), it was shown in \cite[Section 18.2]{e2cellsI} that if $H_1(\GL_2(\cO);\GL_1(\cO);\bk)=0$ then $\pi_{n,d}(\mathbf{BGL})=0$ for $d < \tfrac{2}{3}n$. This was a consequence of the ``generic homological stability theorem'', which we have explained from the point of view of the stability Hopf algebra in Section \ref{sec:RevisitSec18}.

A more elaborate argument was given by Kupers--Miller--Patzt \cite[Theorem C]{KMP} to obtain a similar conclusion for $\bk=\bZ$ under the assumption that $\cO$ is a Euclidean domain for which $\cO/2$ is generated as an abelian group by the units of $\cO$. This applies for example to $\cO=\bZ$. Let us explain this case from the point of view of the stability Hopf algebra. All the subtlety concerns the case $\bk=\bF_2$. 

The hypotheses are used to argue \cite[Lemma 6.4]{KMP} that $\mathbf{BGL}$ can be constructed as an $E_\infty$-algebra from $\gE_\infty(S^{1,0}\sigma)$ using no $(2,1)$-cells, and that $\sigma \cdot Q^1(\sigma) \in \pi_{3,1}(\mathbf{BGL})$ must destabilise twice, so there must be a $(3,2)$-cell $\delta$ attached along an element of the form $\sigma Q^1(\sigma)-\sigma^2 u$ for some $u \in \pi_{1,1}(\mathbf{BGL})$. Results of Wang \cite[Theorem 5.6]{wang} allow us to translate such cell-attachments into a decription of the stability Hopf algebra in a range of degrees, as
\setlength{\tabcolsep}{10pt} % Default value: 6pt
\renewcommand{\arraystretch}{1.5} % Default value: 1
\begin{table}[h]
\begin{tabular}{l|llll}
$n$ & 0 & 1 & 2 &3  \\ \hline
$\Delta_\mathbf{BGL}(n)$ & $\bF_2\{1\}$  & $\bF_2\{\bar{\sigma}\}$  & $\bF_2\{\bar{\sigma}^2\}$ & $\bF_2\{\delta\} \oplus ?$ 
\end{tabular}
\end{table}

\noindent with $\bar{\sigma}$ and $\bar{\sigma}^2$ primitive, and $\psi(\delta) = \delta \otimes 1 + \bar{\sigma} \otimes \bar{\sigma} ^2 + 1 \otimes \delta$. This is obtained just as in \cite[Section 6.1]{wang}. From this Hopf algebra structure we see that
$$\gr(\fil_*^\text{aug} \Delta_\mathbf{BGL}) = \bF_2[\bar{\sigma}]/(\bar{\sigma}^{2^\ell}) \otimes \left\{\substack{\text{monogenic Hopf algebras} \\ \text{generated in gradings $\geq 3$}}\right\}$$
for some $\ell \geq 2$, and so the associated spectral sequence for $\mathbf{bgl}/(\sigma, Q^1(\sigma))$ has the form
\begin{align*}
&\bF_2[\sigma, Q^1(\sigma)]/(\sigma, Q^1(\sigma)) \otimes \left\{\substack{\text{free graded-commutative algebra} \\ \text{on generators of slope $\geq \tfrac{2}{3}$}}\right\}\\
&\quad \Longrightarrow \pi_{*,*}(\mathbf{bgl}/(\sigma, Q^1(\sigma))),
\end{align*}
and hence $\pi_{n,d}(\mathbf{bgl}/(\sigma, Q^1(\sigma)))=0$ for $d < \tfrac{2}{3}n$. Now $\mathbf{bgl}/(\sigma, Q^1(\sigma)^3)$ has a filtration with associated graded
$$\mathbf{bgl}/(\sigma, Q^1(\sigma)) \oplus \Sigma^{2,1} \mathbf{bgl}/(\sigma, Q^1(\sigma)) \oplus \Sigma^{4,2} \mathbf{bgl}/(\sigma, Q^1(\sigma)),$$
so $\pi_{n,d}(\mathbf{bgl}/(\sigma, Q^1(\sigma)^3))=0$ for $d-2 < \tfrac{2}{3}(n-4)$, i.e.\ for $d < \tfrac{2}{3}(n-1)$.

On the other hand, applying $Q^2(-)$ to the relation $\sigma Q^1(\sigma) = \sigma^2 u \in \pi_{3,1}(\mathbf{BGL})$ shows that
$$Q^2(\sigma)Q^1(\sigma)^2 + Q^1(\sigma)^3 + \sigma^2 Q^2 Q^1(\sigma) = Q^2(\sigma^2 u) \in \pi_{6,3}(\mathbf{BGL}).$$
When this is mapped to $\mathbf{bgl}$ we may use the vanishing of
\begin{align*}
u \in \pi_{1,1}(\mathbf{bgl}) = \Cotor^0_{\Delta_\mathbf{BGL}}(\bF_2, \bF_2)_1=0\\
Q^2(\sigma) \in \pi_{2,2}(\mathbf{bgl}) = \Cotor^0_{\Delta_\mathbf{BGL}}(\bF_2, \bF_2)_2=0
\end{align*}
to simplify it to $Q^1(\sigma)^3 = \sigma^2 Q^2 Q^1(\sigma) \in \pi_{6,3}(\mathbf{bgl})$. It follows that the map
$$Q^1(\sigma)^3 \cdot - : S^{6,3} \otimes \mathbf{bgl}/\sigma \lra \mathbf{bgl}/\sigma$$
is nullhomotopic, since the map $\sigma^2 \cdot -$ is nullhomotopic on $\mathbf{bgl}/\sigma$, and therefore that $\mathbf{bgl}/\sigma$ is a retract of $\mathbf{bgl}/(\sigma, Q^1(\sigma)^3)$. But then $\pi_{n,d}(\mathbf{bgl}/\sigma)=0$ for $d < \tfrac{2}{3}(n-1)$, and so $\pi_{n,d}(\mathbf{BGL}/\sigma)=0$ for $d < \tfrac{2}{3}(n-1)$ by Theorem \ref{thm:IdealsSameForSmallAlgebra} (\ref{it:IdealsSameForSmallAlgebra:2}) and Theorem \ref{thm:DetNilp} (\ref{it:DetNilp:1}), which recovers the range in \cite[Theorem C]{KMP}.

The situation for $\mathbf{RBS}$ in the case of Euclidean domains is especially good. In that case the theorem of Lee--Szczarba \cite[Theorem 4.1]{LS2} implies that
$$\Delta_\mathbf{RBS}(n) = H_0(\GL(\cO^n) ; \mathrm{St}(\cO^n))=0 \text{ for } n \geq 2,$$
so that $\Delta_\mathbf{RBS} = \Lambda_\bk[\bar{\sigma}]$. Just as in the case of fields, it follows that $\mathbf{rbs}/\sigma$ has a slope $\infty$ vanishing line and hence that $\pi_{*,*}(\mathbf{RBS}/\sigma)=0$ for $d < n$. This recovers a theorem of Jansen \cite[Theorem 10.22]{Jansen2}. The argument given there uses a theorem of Jansen--Miller \cite[Theorem 1]{JansenMiller} whose assumption can be phrased as ``$\Delta_\gR = \Lambda_\bk[\bar{\sigma}]$'': what we have just explained re-proves the latter theorem. This delivers on our promise in \cite[Remark 1.3]{RWDedekind}.

There are several papers \cite{KMP, BernardMillerSroka, Scalamandre} implying the vanishing of $H_0(\GL_n(\cO) ; \mathrm{St}(\cO^n))$ or $H_0(\GL_n(\cO) ; \mathrm{St}^{E_1}(\cO^n))$ for $n \geq 2$, often after inverting 2. The above strategy gives a direct route to slope 1 homological stability in these cases.

\subsubsection{Dedekind domains}
In \cite{RWDedekind} we considered the case of general Dedekind domains. The new feature is that $A_{\mathrm{Pic}(\mathcal{O})} := \pi_{*,0}(\mathbf{BGL}) \simeq \tau_{\leq 0}\mathbf{BGL} \simeq \tau_{\leq 0}\mathbf{RBS}$ is itself an interesting ring, encoding the classification of finitely-generated projective $\mathcal{O}$-modules. It is given explicitly by 
$$A_{\mathrm{Pic}(\mathcal{O})} = \bk[\mathrm{Pic}(\mathcal{O})]/(\rho \cdot \rho' - \sigma \cdot (\rho \otimes \rho')),$$
 where $\sigma=[\mathcal{O}] \in \mathrm{Pic}(\mathcal{O})$ is the identity element. We explained \cite[Corollary 2.2]{RWDedekind} that the quadratically presented algebra $A_{\mathrm{Pic}(\mathcal{O})}$ is in fact Koszul, so that $\bk \otimes_{A_{\mathrm{Pic}(\mathcal{O})}} \bk$ has homotopy groups supported along the diagonal and given by the quadratic dual coalgebra $A^!_{\mathrm{Pic}(\mathcal{O})}$ (as $A_{\mathrm{Pic}(\mathcal{O})}$ is in fact commutative, its quadratic dual has the structure of a commutative Hopf algebra). In particular, taking bar constructions of the truncation map $\mathbf{BGL} \to \mathbf{RBS} \to \tau_{\leq 0}\mathbf{RBS} = A_{\mathrm{Pic}(\mathcal{O})}$ gives 
$$\bk \otimes_\mathbf{BGL} \bk \lra \bk \otimes_\mathbf{RBS} \bk \lra \bk \otimes_{A_{\mathrm{Pic}(\mathcal{O})}} \bk \simeq A^!_{\mathrm{Pic}(\mathcal{O})}$$
inducing on diagonal truncations maps of Hopf algebras
$$\Delta_\mathbf{BGL} \lra \Delta_\mathbf{RBS} \lra A^!_{\mathrm{Pic}(\mathcal{O})}.$$
We showed \cite[Theorem 3.5, Corollary 3.6]{RWDedekind} that the second map is surjective, and that the composition is surjective in gradings $\leq 4$, but follow-up work of Armeanu--Miller \cite{ArmeanuMiller} shows that the composition is surjective in all gradings. So this gives examples where the stability Hopf algebras $\Delta_\mathbf{BGL}$ or $\Delta_\mathbf{RBS}$ must be somewhat complicated. (It also demonstrates that having a somewhat complicated stability Hopf algebra does not preclude having good homological stability properties: the Hopf algebra $A^!_{\mathrm{Pic}(\mathcal{O})}$ looks relatively complex, but $\Cobar(A^!_{\mathrm{Pic}(\mathcal{O})}) \simeq A_{\mathrm{Pic}(\mathcal{O})}$ and $A_{\mathrm{Pic}(\mathcal{O})}/\sigma$ has a slope $\infty$ vanishing line.)

\begin{rem}\label{rem:WangNonvanishing}
Using the methods of this paper, Wang has shown \cite[Proposition 8.5 (i)]{wang} that if the Dedekind domain $\cO$ surjects onto $\bZ/2$ then $\mathbf{BGL}_{\bF_2}/\sigma$ does \emph{not} have a vanishing line of slope $>\tfrac{2}{3}$.
\end{rem}

\subsubsection{An extensive example}

To go a little further, consider \cite[Example 1.3]{RWDedekind}, where $\mathcal{O} :=\bZ[\sqrt{-5}]$, and so $\mathrm{Pic}(\mathcal{O}) = \bZ\{\sigma, \lambda\}$ with $\lambda$ represented by the invertible ideal $\mathfrak{l}=(2, 1+\sqrt{-5})$, and hence $A_{\mathrm{Pic}(\mathcal{O})} = \bk[\sigma, \lambda]/(\lambda^2 - \sigma^2)$. The linear dual algebra $(A_{\mathrm{Pic}(\mathcal{O})}^!)^\vee$ to the quadratic dual coalgebra $A_{\mathrm{Pic}(\mathcal{O})}^!$ has presentation $(A_{\mathrm{Pic}(\mathcal{O})}^!)^\vee = T_\bk[s,\ell]/(s^2 + \ell^2, s \ell + \ell s)$. Here $s$ and $\ell$ are primitive, and are the dual basis to $\bar{\sigma}, \bar{\lambda} \in \pi_{1,1}(\bk \otimes_{A_{\mathrm{Pic}(\mathcal{O})}} \bk)$. From here it is a problem of linear algebra to describe the Hopf algebra $A_{\mathrm{Pic}(\mathcal{O})}^!$.

With coefficients $\bk=\bZ$ the groups $\pi_{*,1}(\mathbf{BGL})$ are tabulated in \cite[Example 1.3]{RWDedekind}. We first observe that they are all 2-torsion, so base-changing to $\bF_p$ with $p$ odd we have $\pi_{*,1}(\mathbf{BGL})=0$. Thus $A_{\mathrm{Pic}(\mathcal{O})} \simeq \tau_{\leq 0} \mathbf{BGL}$ can be obtained from $\mathbf{BGL}$ by attaching $E_\infty$-cells of homological degree $\geq 3$, and it follows that $\Delta_\mathbf{BGL} \to A^!_{\mathrm{Pic}(\mathcal{O})}$ is an isomorphism in gradings $<3$. It follows from Theorem \ref{thm:IdealsSameForSmallAlgebra} (\ref{it:IdealsSameForSmallAlgebra:2}) and Proposition \ref{prop:ChangeOfHopfAlg} (\ref{it:ChangeOfHopfAlg:1}) that $\mathbf{BGL} \to \mathbf{bgl} \to A_{\mathrm{Pic}(\mathcal{O})}$ satisfies property ($\dagger$) with $\theta = \tfrac{2}{3}$, and hence by Theorem \ref{thm:DetNilp} (\ref{it:DetNilp:1}) that $\pi_{n,d}(\mathbf{BGL}/\rho)=0$ for $d < \tfrac{2}{3}(n-1)$ for any $\rho \in \mathrm{Pic}(\mathcal{O}) \subset \pi_{1,0}(\mathbf{BGL})$. This improves on \cite[Theorem 1.1]{RWDedekind} in the case of $\bF_p$-coefficients with $p$ odd (but could also have been done with the methods of that paper).

More interesting is the situation for $\bk=\bF_2$. In this case, writing $\hat{\ell} := \ell+s$ we have $(A_{\mathrm{Pic}(\mathcal{O})}^!)^\vee =\bF_2[s, \hat{\ell}]/(\hat{\ell}^2)$ with $s$ and $\hat{\ell}$ primitive. Thus it is a tensor product of Hopf algebras, and $A_{\mathrm{Pic}(\mathcal{O})}^! = \Gamma_{\bF_2}[\hat{\sigma}] \otimes \Lambda_{\bF_2}[\bar{\lambda}]$ as Hopf algebras with $\hat{\sigma} := \bar{\sigma} + \bar{\lambda}$. On the other hand, we may understand a cellular model for $\mathbf{BGL}$ in low degrees using the tabulated homology groups and the following, which refers to the notation of \cite[Appendix A]{RWDedekind} and also to the classes 
\begin{align*}
X &:=[-1] \in H_1(\GL(\cO);\bZ) \subset \pi_{1,1}(\mathbf{BGL})\\
X' &: =[-1] \in H_1(\GL(\mathfrak{l});\bZ) \subset \pi_{1,1}(\mathbf{BGL}).
\end{align*}

\begin{lem}\label{lem:DLInBGL}
There are identities
\begin{align*}
Q^1_\bZ(\sigma) &= \sigma X - 3T\\
Q^1_\bZ(\lambda) &= C - (\sigma X- 3T)
\end{align*}
in $\pi_{2,1}(\mathbf{BGL})$.
\end{lem}
\begin{proof}
To follow this we refer to \cite[Appendix A]{RWDedekind}. The element $Q^1_\bZ(\sigma)$ is represented by the matrix $\left( \begin{smallmatrix}
0 & 1\\
1 & 0
\end{smallmatrix} \right) = EA \in \GL(\cO \oplus \cO)$ so using the relations in that Appendix we have $Q^1(\sigma) = \sigma X - 3T$. The element $Q^1_\bZ(\lambda)$ is represented by the matrix $\left(\begin{smallmatrix}
0 & 1\\
1 & 0
\end{smallmatrix} \right) \in \GL(\mathfrak{l} \oplus \mathfrak{l})$ which swaps the two copies of $\mathfrak{l}$, but we must identify this group with $\GL(\cO \oplus \cO)$. This may be done using the isomorphism
\begin{align*}
\cO \oplus \cO &\overset{\sim}\lra \mathfrak{l} \oplus \mathfrak{l}\\
(x,y) &\longmapsto x\cdot(1+\sqrt{-5}, 2) + y\cdot(2, 1-\sqrt{-5}),
\end{align*}
under which the swap matrix corresponds to 
$$\left( \begin{smallmatrix}
-2\sqrt{-5} & -4-\sqrt{-5}\\
-4+\sqrt{-5} & 2\sqrt{-5}
\end{smallmatrix} \right) = C A^{-1} E^{-1} \in \GL(\cO \oplus \cO).$$ 
Using the relations in that appendix we obtain $Q^1_\bZ(\lambda) = C - (\sigma X- 3T)$.
\end{proof}

It follows that over $\bF_2$ there is a model of the form
\begin{align*}
\mathbf{BGL} &\simeq \gE_\infty(S^{1,0}\sigma \oplus S^{1,0}\lambda \oplus S^{1,1}X \oplus S^{1,1} X' \oplus S^{2,1} U \oplus S^{2,1} B \oplus S^{2,1} C' \oplus S^{2,1} D')\\
& \quad\quad \cup^{E_\infty}_{\lambda^2-\sigma^2} D^{2,1}\rho \cup^{E_\infty} \cdots,
\end{align*}
where we omit the elements $T$ and $C$ as these are $E_\infty$-decomposable by Lemma \ref{lem:DLInBGL}. 

For $\mathbf{RBS}$ we may develop the analogous tabulation of $\pi_{*,1}(\mathbf{RBS})$ using \cite[Lemma 10.14]{Jansen2}, which says that the map
$$H_1(\GL(M);\bk) \lra H_1(|\mathrm{RBS}(M)|;\bk)$$
is surjective, and its kernel is generated by classes $[P]$ where $P$ preserves some splittable flag of $M$. In the case $\cO$ is a Dedekind domain and $M$ has rank $2$, preserving a splittable flag is precisely the same as $P$ having an eigenvector when acting on $M \otimes_{\cO} \mathrm{frac}(\cO)$, or equivalently $P$ having an eigenvalue in $\mathrm{frac}(\cO)$. Consulting the matrices in \cite[Appendix A]{RWDedekind} one immediately sees that in $\GL(\cO \oplus \cO)$ the matrices $J,T,U,A,E$ all have eigenvalues $\pm 1$. Less obvious is that $TB$ and $CEA$ both have eigenvalue 1 too. Similarly, in $\GL(\cO \oplus \mathfrak{l})$ the matrices $J,A,V,C,D,E$ all have eigenvalues $\pm  1$. It follows that the classes $U,T,B,C,C',D' \in \pi_{2,1}(\mathbf{BGL})$ all die under the surjection to $\pi_{2,1}(\mathbf{RBS})$. (The classes $\sigma^i X$ and $\sigma^i X'$ do not, as $\mathbf{BGL}$ and $\mathbf{RBS}$ have the same stable homology.) Thus we have
$$\pi_{*,1}(\mathbf{RBS}) = A_{\mathrm{Pic}(\mathcal{O})}\{X, X'\}/(\lambda X - \sigma X', \sigma X - \lambda X')$$
as an $A_{\mathrm{Pic}(\mathcal{O})}$-module. Using Lemma \ref{lem:DLInBGL} we see that $Q^1(\sigma) = \sigma X$ and $Q^1(\lambda) = -\sigma X$. It follows that 
over $\bF_2$ there is a model of the form
\begin{align*}
\mathbf{RBS} \simeq& \gE_\infty(S^{1,0}\sigma \oplus S^{1,0}\lambda \oplus S^{1,1}X \oplus S^{1,1} X')\\
&\quad\quad  \cup^{E_\infty}_{\lambda^2-\sigma^2} D^{2,1}\rho \cup^{E_\infty}_{Q^1(\sigma)-\sigma X} D^{2,2}\rho_2 \cup^{E_\infty}_{Q^1(\lambda)+\sigma X} D^{2,2}\rho_3 \cup \cdots,
\end{align*}

From these two models, using the formulas of Wang \cite[Theorem 5.6]{wang} we determine the stability Hopf algebras in a range:
\setlength{\tabcolsep}{5pt} % Default value: 6pt
\renewcommand{\arraystretch}{1.5} % Default value: 1
\begin{table}[h]
\begin{tabular}{l|llll}
$n$ & 0 & 1 & 2 & 3   \\ \hline
$\Delta_\mathbf{BGL}(n)$ & $\bF_2\{1\}$  & $\bF_2\{\bar{\sigma}, \bar{\lambda}\}$  & $\bF_2\{\bar{\sigma}^2, \bar{\sigma}\bar{\lambda}, \bar{\lambda}^2,  \bar{U}, \bar{B}, \bar{C}', \bar{D}', \bar{\rho}\}$  & $\bF_2\{?\}$  \\
$\Delta_\mathbf{RBS}(n)$ & $\bF_2\{1\}$  & $\bF_2\{\bar{\sigma}, \bar{\lambda}\}$  & $\bF_2\{ \bar{\sigma}\bar{\lambda},  \bar{\rho}\}$  & $\bF_2\{?\}$  \\
$A^!_{\mathrm{Pic}(\mathcal{O})}$ & $\bF_2\{1\}$  & $\bF_2\{\hat{\sigma}, \bar{\lambda}\}$  & $\bF_2\{\hat{\sigma}^{[2]}, \bar{\lambda} \bar{\sigma}' \}$  & $\bF_2\{\hat{\sigma}\hat{\sigma}^{[2]}, \bar{\lambda} \hat{\sigma}^{[2]}\}$  
\end{tabular}
\end{table}

\noindent The only interesting coproduct is $\psi(\bar{\rho}) = \bar{\rho} \otimes 1 + \bar{\lambda} \otimes \bar{\lambda} + \bar{\sigma} \otimes \bar{\sigma}  + 1 \otimes \bar{\rho}$. The maps to $A^!_{\mathrm{Pic}(\mathcal{O})}$ are given by $\bar{\sigma} \mapsto \hat{\sigma}+ \bar{\lambda}$ and $\bar{\lambda} \mapsto \bar{\lambda}$, $\bar{U}, \bar{B}, \bar{C}', \bar{D}' \mapsto 0$, and $\bar{\rho} \mapsto \hat{\sigma}^{[2]} + \bar{\lambda}\hat{\sigma}$ (which may be seen by taking coproducts). In particular, we see that $\Delta_\mathbf{RBS} \to A^!_{\mathrm{Pic}(\mathcal{O})}$ is an isomorphism in gradings $\leq 2$, and it is an epimorphism in all gradings as discussed above. This has the following consequence for its homological stability.

\begin{cor}
For $\cO = \bZ[\sqrt{-5}]$ and any $[L] \in \mathrm{Pic}(\cO)$, the map
$$(L \oplus -)_* : H_d(|\mathrm{RBS}(M)|;\bk) \lra H_d(|\mathrm{RBS}(L \oplus M)|;\bk)$$
is an epimorphism for $d< \tfrac{2}{3}\mathrm{rk}(M)$, and an isomorphism for $d< \tfrac{2}{3}\mathrm{rk}(M)-1$.
\end{cor}
\begin{proof}
By Theorem \ref{thm:IdealsSameForSmallAlgebra} (\ref{it:IdealsSameForSmallAlgebra:2}) and Proposition \ref{prop:ChangeOfHopfAlg} (\ref{it:ChangeOfHopfAlg:1}) the composition $\mathbf{RBS} \to \mathbf{rbs} \to A_{\mathrm{Pic}(\cO)}$ satisfies hypothesis ($\dagger$) of Theorem~\ref{thm:DetNilp}~(\ref{it:DetNilp:1}) with $\theta=\tfrac{2}{3}$, and hence by that theorem for any $\rho \in \mathrm{Pic}(\cO) \subset \pi_{1,0}(\mathrm{RBS})$ the object $\mathbf{RBS}/\rho$ has the same vanishing line as $A_{\mathrm{Pic}(\mathcal{O})}/\rho$ up to slope $\tfrac{2}{3}$. The latter vanishes for $d < \tfrac{2}{3}(n-1)$, so $\mathbf{RBS}/\rho$ does too.
\end{proof}

\begin{rem}
It seems too optimistic to hope that $\Delta_\mathbf{RBS} \to A^!_{\mathrm{Pic}(\mathcal{O})}$ is an isomorphism for general Dedekind domains, or even for rings of integers in number fields (though we have seen in Section \ref{sec:PIDs} that it is the case for Euclidean domains): if this were so, then $\mathbf{rbs} = A_{\mathrm{Pic}(\mathcal{O})}$ and so $\mathbf{RBS}$ would have slope 1 stability. It would be nice to have a concrete counterexample: probably \cite{MillerPatztWilsonYasaki} is a good starting place.
\end{rem}

By Remark \ref{rem:WangNonvanishing} the $E_\infty$-algebra $\mathbf{BGL}$ cannot have homological stability with slope $> \tfrac{2}{3}$, but the situation for slopes $\leq \tfrac{2}{3}$ is not yet clear. Each of $U,T,B,C,C',D' \in \pi_{2,1}(\mathbf{BGL})$ induce secondary stabilisation maps of $\mathbf{BGL}/\sigma$ or $\mathbf{BGL}/\lambda$. An argument parallel to that of Section \ref{sec:PIDs} shows that $T = \sigma X + Q^1(\sigma)$ and $C = Q^1(\sigma) + Q^1(\lambda)$ induce nilpotent maps on $\mathbf{bgl}/\sigma$ and $\mathbf{bgl}/\lambda$, but the fate of the stabilisation maps $U\cdot-, B\cdot -, C'\cdot -, D'\cdot -$ on these depends on the structure of $\Delta_\mathbf{BGL}$ in higher degrees (in the form of questions such as: does the term $\bar{U} \otimes \bar{U}$ arise in the coproduct of any elements of degree 4?) and cannot be determined from the data in the table above.

The classes $U,T,B,T+C,C',D' \in \pi_{2,1}(\mathbf{BGL})$ are all annihilated by $\sigma$ and by $\lambda$, so there must be $E_\infty$-$(3,2)$-cells enforcing this. It follows that there are elements of $\Delta_\mathbf{BGL}(3)$ whose reduced coproduct is each of 
$$\bar{\sigma} \otimes \bar{U}, \bar{\sigma} \otimes \bar{\sigma}^2, \bar{\sigma} \otimes \bar{B}, \bar{\sigma} \otimes \bar{\lambda}^2, \bar{\sigma} \otimes \bar{C}', \bar{\sigma} \otimes \bar{D}'$$
and similarly for $\bar{\lambda} \otimes -$; similarly also for the factors reversed. With the fact that it also surjects onto $A^!_{\mathrm{Pic}(\mathcal{O})}(3)$ it follows that $\Delta_\mathbf{BGL}(3)$ is at least $26$-dimensional.

\subsection{Mapping class groups}

Galatius, Kupers, and the author \cite{e2cellsII} have constructed and analysed an $E_2$-algebra $\mathbf{MCG}$ in $\mathsf{D}(\bZ)^\bZ$ having
$$\pi_{g,d}(\mathbf{MCG}) \cong H_d(\Gamma_{g,1} ; \bZ),$$
where $\Gamma_{g,1}$ denotes the mapping class group of a genus $g$ surface with one boundary component. It satisfies axiom (C) trivially, satisfies (SCE) by \cite[Corollary 3.5]{e2cellsII}, and also satisfies (F) (because e.g.\ the $\Gamma_{g,1}$ are virtual duality groups). The endomorphism
$$\sigma : S^{1,0} \otimes \mathbf{MCG} \lra \mathbf{MCG},$$
given by multiplication by the canonical element $\sigma \in \pi_{1,0}(\mathbf{MCG}) = H_0(\Gamma_{1,1};\bZ)$, has $\pi_{g,d}(\mathbf{MCG}/\sigma)=0$ for $d < \tfrac{2}{3}g$, and there is an endomorphism
$$\varphi : S^{3,2} \otimes \mathbf{MCG}/\sigma \lra \mathbf{MCG}/\sigma$$
with $\pi_{g,d}(\mathbf{MCG}/(\sigma, \varphi))=0$ for $d < \tfrac{3}{4}g$ (and even $\pi_{g,d}(\mathbf{MCG}/(\sigma, \varphi)) \otimes \bQ=0$ for $d < \tfrac{4}{5}g$). Applying Theorem \ref{thm:HigherStabRanges} gives:

\begin{cor}\label{cor:MCGConnectivities}
The fibre of $\mathbf{MCG} \to L^f_{2/3}(\mathbf{MCG}) = \sigma^{-1}\mathbf{MCG}$ has trivial homotopy groups in bidegrees $(g,d)$ satisfying $d < \tfrac{2g-1}{3}$ or $d < 0$.

The fibre of $\mathbf{MCG} \to L^f_{3/4}(\mathbf{MCG})$ has trivial homotopy groups in bidegrees $(g,d)$ satisfying $d < \tfrac{3g-4}{4}$ or $d < \tfrac{2g-4}{3}$.
\end{cor}

Our first goal in this section is to spell out the consequences of Adams Periodicity (Theorem \ref{thm:AdamsPeriodicityGeneral}) in this example. This concerns the homotopy groups of the object
$$\gF := \mathrm{fib}(\mathbf{MCG} \to \sigma^{-1} \mathbf{MCG})$$
in a slope $\tfrac{3}{4}$ range of degrees. We consider this as describing the ``purely unstable'' part of the homology of mapping class groups. It has $\pi_{g,d}(\gF) = H_{d+1}(\Gamma_{\infty,1}, \Gamma_{g,1} ;\bZ)$, and from the exact sequence
$$ H_{d+1}(\Gamma_{g,1} ;\bZ) \to H_{d+1}(\Gamma_{\infty,1} ;\bZ) \overset{\partial}\to \pi_{g,d}(\gF) \to H_d(\Gamma_{g,1} ;\bZ) \to H_d(\Gamma_{\infty,1}; \bZ)$$
we see that $\pi_{g,*}(\gF)$ has a contribution from the homology of $\Gamma_{g,1}$ which is in the kernel of stabilisation, as well as from the stable homology which does not destabilise to genus $g$.

\subsubsection{The case $\tfrac{1}{10} \in \bk$}

In this case then after base-change to $\bk$ the endomorphism $\varphi$ may be realised as multiplication by a certain class $\tfrac{1}{10}\lambda \in \pi_{3,2}(\mathbf{MCG}_\bk)$, cf.\ \cite[Lemma 5.10]{e2cellsII}. In particular, the endomorphism $\varphi$ is defined \emph{before} taking the quotient by $\sigma$. This drastically simplifies matters, and we deal with this case first.

\begin{prop}
Suppose that $\tfrac{1}{10} \in \bk$. Then there is a map
$$\tfrac{1}{10}\lambda \cdot - : \pi_{g-3, d-2}(\gF_\bk) \lra \pi_{g,d}(\gF_\bk)$$
which is an epimorphism for $d < \tfrac{3g-1}{4}$ and an isomorphism for $d < \tfrac{3g-5}{4}$. If $\bk=\bQ$ then it is an epimorphism for $d < \tfrac{4g-1}{5}$ and an isomorphism for $d < \tfrac{4g-6}{5}$.
\end{prop}
\begin{proof}
Writing $\varphi : S^{3,2} \otimes \mathbf{MCG}_\bk \to \mathbf{MCG}_\bk$ for multiplication by $\tfrac{1}{10}\lambda$, the claim is that $\pi_{g,d}(\gF_\bk/\varphi)=0$ for $d < \tfrac{3g-1}{4}$. This can be approached similarly to the proof of Theorem \ref{thm:HigherStabRanges}. Namely we have $\sigma^{-1}\gF_\bk=0$, as $\mathbf{MCG}_\bk \to \sigma^{-1}\mathbf{MCG}_\bk$ becomes an equivalence when we invert $\sigma$, and so $\sigma^{-1}(\gF_\bk/\varphi) = (\sigma^{-1} \gF_\bk)/\varphi=0$. As in the proof of Theorem \ref{thm:HigherStabRanges} it follows that $S^{0,1} \otimes \gF_\bk/\varphi$ has a filtration with associated graded
$$\bigoplus_{p \geq 1} S^{-p, 0} \otimes \gF_\bk/(\sigma, \varphi).$$
But $\gF_\bk/\sigma \overset{\sim}\to \mathbf{MCG}_\bk/\sigma$, as $(\sigma^{-1} \mathbf{MCG}_\bk)/\sigma=0$, so $\gF_\bk/(\sigma, \varphi) \simeq \mathbf{MCG}_\bk/(\sigma, \varphi)$. The homotopy groups of this object vanish for $d < \tfrac{3g}{4}$, or for $d < \tfrac{4g}{5}$ if $\bk=\bQ$. The spectral sequence then shows that $S^{0,1} \otimes \gF_\bk/\varphi$ has trivial homotopy groups in bigradings $(g,d)$ satisfying $d < \tfrac{3g+3}{4}$, or for $d < \tfrac{4g+4}{5}$ if $\bk=\bQ$. Desuspending once gives the claimed result.
\end{proof}

\subsubsection{The cases $\bk \in \{\bF_2, \bF_5\}$}

In these cases the secondary stabilisation map $\varphi$ \emph{cannot} be obtained from an endomorphism of $\mathbf{MCG}_\bk$ itself, but only of $\mathbf{MCG}_\bk/\sigma$. The fibre of the induced map
$$\gF = \mathrm{fib}(\mathbf{MCG} \to \sigma^{-1} \mathbf{MCG}) \lra M_{3/4}^f(\mathbf{MCG}) = \mathrm{fib}(L_{3/4}^f(\mathbf{MCG}) \to L_{2/3}^f(\mathbf{MCG}))$$
is the same as that of $\mathbf{MCG} \to L_{3/4}^f(\mathbf{MCG})$, so it follows from Corollary \ref{cor:MCGConnectivities} that it has trivial homotopy groups in bidegrees $(g,d)$ satisfying $d < \tfrac{3g-4}{4}$ or $d < \tfrac{2g-4}{3}$. So in this range of degrees we may as well work with the monochromatic object $M_{3/4}^f(\mathbf{MCG})$, and here we may apply our Adams periodicity results. These involve a $\varphi$-self map of $\mathbf{MCG}_{\bF_p}/(\sigma^{p^r})$, which exists

\begin{lem}\label{lem:MCGlongSTcx}
Let $p=2$ or 5. For each $r \geq 0$ there are maps 
$$\varphi_{r} : S^{3 \cdot p^{r}, 2 \cdot p^{r}} \otimes \mathbf{MCG}_{\bF_p}/(\sigma^{p^r}) \lra \mathbf{MCG}_{\bF_p}/(\sigma^{p^r})$$
whose cofibre $\mathbf{MCG}_{\bF_p}/(\sigma^{p^r}, \varphi_{r})$ has trivial homotopy groups in bidegrees $(g,d)$ satisfying $d  <  \tfrac{3g -4(p^r-1)}{4}$.
\end{lem}
\begin{proof}
We use the map
$$\gA := \gE_2(S^{1,0} \sigma \oplus S^{1,1} t) \cup^{E_2}_{10\sigma t} \gD^{2,2}\rho_1 \cup^{E_2}_{Q^1_\bZ(\sigma)-3\sigma t} \gD^{2,2} \rho_2 \cup^{E_2}_{\sigma^2 t} \gD^{3,2} \rho_3 \lra \mathbf{MCG}$$
constructed in \cite[\S 5.3.2]{e2cellsII}, which satisfies $H_{g,d}(\mathbf{MCG}, \gA) = 0$ for $4d \leq 3g - 1$ by \cite[Lemma 5.11]{e2cellsII}. (In fact $\rho_1$ becomes trivially attached when we tensor with $\bF_p$ and so could be omitted.) It suffices to construct $\varphi_{r} : S^{3 \cdot p^{r}, 2 \cdot p^{r}} \otimes \gA_{\bF_p}/(\sigma^{p^r}) \to \gA_{\bF_p}/(\sigma^{p^r})$ having the stated property, and then base change to $\mathbf{MCG}_{\bF_p}$.

We discuss the case $p=2$, and mention the minor changes in the case $p=5$ at the end. Consider the canonical multiplicative filtration of $\gA_{\bF_2}$ and the filtered map $\tilde{\sigma} : S^{1,0,-1} \to \fil_*\gA_{\bF_2}$ detected by $\sigma \in \pi_{1,0,-1}(C\tau \otimes\fil_* \gA_{\bF_2})$. The $E^1$-page $\pi_{*,*,*}(C\tau \otimes \fil_* \gA_{\bF_2})$ of the spectral sequence of the filtered object $\fil_* \gA_{\bF_2}$ is
\begin{align*}
& \pi_{*,*,*}(\gE_2(S^{1,0,-1}\sigma \oplus S^{1,1,-1}t \oplus S^{2,2,-1} \rho_1 \oplus S^{2,2,-1} \rho_2 \oplus S^{3,2,-1} \rho_3))\\
&\quad\quad = \bF_2[\sigma, t, Q^1(\sigma), \rho_2, \rho_3] \otimes \left\{\substack{\text{free graded-commutative algebra} \\ \text{on generators of slope $\geq \tfrac{3}{4}$}}\right\},
\end{align*}
and there are differentials $d^1(\rho_2) = Q^1(\sigma)-\sigma t$, $d^1(\rho_3)=0$, and $d^2(\rho_3) = \sigma^2 t$. 

The class $\rho_3$ survives until $E^2_{3,2,-1}(\fil_* \gA_{\bF_2})$, and so the class $\rho_3^{2^r}$ survives until $E^{2 \cdot 2^{r}}_{3 \cdot 2^r,2 \cdot 2^r,- 2^r}(\fil_* \gA_{\bF_2})$; so its image $[\rho_3^{2^r}]$ survives until $E^{2 \cdot 2^{r}}_{3 \cdot 2^r,2 \cdot 2^r,- 2^r}(\fil_* \gA_{\bF_2}/(\tilde{\sigma}^{2^r}))$. We wish to show that it is a permanent cycle in the latter spectral sequence. For this, and for later use, we establish two kinds of vanishing ranges.

\begin{samepage}
\begin{claim}\mbox{}
\begin{enumerate}[(i)]
\item $E^2_{g,d,f}(\fil_*\gA_{\bF_2}/(\tilde{\sigma}^{2^r}))=0$ for $2d - g + f < -2(2^r-1)$,

\item $\pi_{g,d,f}(\fil_*\gA_{\bF_2}/(\tilde{\sigma}^{2^r}))$ is $\tau$-torsion for $d < \tfrac{2}{3}(g-(2^{r}-1))$.

\end{enumerate}
\end{claim}
\end{samepage}

\begin{proof}[Proof of Claim]
For (i), consider the linear function $\ell(g,d,f) := 2d-g+f$ on trigradings $(g,d,f)$. The multiplicative generators of
$$E^1_{*,*,*}(\fil_* \gA_{\bF_2}) = W_1(S^{1,0,-1}\sigma \oplus S^{1,1,-1}t \oplus S^{2,2,-1} \rho_1 \oplus S^{2,2,-1} \rho_2 \oplus S^{3,2,-1} \rho_3)$$
are $\sigma, t, \rho_1, \rho_2, \rho_3$, then Lie words on these given by the Browder bracket, then certain Dyer--Lashof (and ``top'') operations applied to these. We have
$$\ell(\sigma) = -2 \quad \ell(t)=0 \quad \ell(\rho_1) = 1 \quad \ell(\rho_2) = 1 \quad \ell(\rho_3) = 0$$
as well as $\ell([x_1,x_2]) = 2 + \ell(x_1) + \ell(x_2)$ and $\ell(Q^s(x)) = 2\ell(x)+2s$ and $\ell(\xi(x)) = 2\ell(x) + 2 + 2d(x)$. Thus $\ell(Q^1(\sigma)) = -2$, but all other multiplicative generators have $\ell \geq 0$. (Here we use that $[\sigma, \sigma]=0$, and that $Q^s Q^1(\sigma)$ is a multiplicative generator only when $s \geq 2$, in which case $\ell(Q^s Q^1(\sigma)) = 2\cdot(-2) + 2s \geq 0$.) The differential $d^1(\rho_1) = Q^1(\sigma)-\sigma t$ shows that $E^2_{*,*,*}(\fil_* \gA_{\bF_2}/(\tilde{\sigma}^{2^r}))$ is a subquotient of
$$\bF_2[\sigma]/(\sigma^{2^r}) \otimes \left\{\substack{\text{free graded-commutative algebra} \\ \text{on generators with $\ell \geq 0$}}\right\}.$$
As $\ell(\sigma)=-2$, this means that $E^2_{*,*,*}(\fil_* \gA_{\bF_2}/(\tilde{\sigma}^{2^r}))$ is supported in tridegrees $(g,d,f)$ with $\ell(g,d,f) \geq -2(2^r-1)$, so it vanishes for $2d-g+f < -2(2^r-1)$.

For (ii), a much simpler consideration using the linear function $3d-2g$ shows that $E^2_{*,*,*}(\fil_* \gA_{\bF_2}/(\tilde{\sigma}^{2^r}))$ is a subquotient of
$$\bF_2[\sigma]/(\sigma^{2^r}) \otimes \left\{\substack{\text{free graded-commutative algebra} \\ \text{on generators with $\tfrac{d}{g} \geq \tfrac{2}{3}$}}\right\},$$
so that it vanishes in tridegrees $(g,d,f)$ satisfying $d < \tfrac{2}{3}(g-(2^{r}-1))$. The vanishing of the $E^2$-page in this range implies the filtered homotopy groups being $\tau$-torsion, as in the proof of (I) $\Rightarrow$ (A) in Proposition \ref{prop:WeaklyTypeiEq}.
\end{proof}

In the spectral sequence $E_{g,d,f}^*(\fil_* \gA_{\bF_2}/(\tilde{\sigma}^{2^r}))$ a potential differential $d^s([\rho_3^{2^{r}}])$ lands in $E^s_{3 \cdot 2^r,2 \cdot 2^r-1,- 2^r-s}(\fil_* \gA_{\bF_2}/(\tilde{\sigma}^{2^r}))$. We have
$$2(2 \cdot 2^{r}-1)-(3 \cdot 2^{r}) + (-2^{r}-s)  < -2(2^r-1)$$
as long as $s > 2 \cdot 2^{r}-4$, so in particular for $s \geq 2 \cdot 2^{r}$. By part (i) of the Claim it follows that for $s\geq 2 \cdot 2^{r}$ the element $d^s([\rho_3^{2^{r}}])$ lies in a group that is zero, and therefore $[\rho_3^{2^{r}}] \in E^{2^{r+1}}_{3 \cdot 2^r,2 \cdot 2^r,- 2^r}(\fil_* \gA_{\bF_2}/(\tilde{\sigma}^{2^r}))$ is a permanent cycle. It may be represented by a filtered map
$$\tilde{\varphi}_{r}' : S^{3 \cdot 2^{r}, 2 \cdot 2^{r}, -2^{r}} \otimes \fil_* \gA_{\bF_2} \lra \fil_* \gA_{\bF_2}/(\tilde{\sigma}^{2^r}).$$

The composite
$$S^{3 \cdot 2^{r}, 2 \cdot 2^{r}, -2^{r}} \otimes S^{2^r, 0, -2^r} \otimes \fil_* \gA_{\bF_2} \overset{\tilde{\sigma}^{2^r}}\lra S^{3 \cdot 2^{r}, 2 \cdot 2^{r}, -2^{r}} \otimes \fil_* \gA_{\bF_2} \overset{\tilde{\varphi}'_{r}}\lra \fil_* \gA_{\bF_2}/(\tilde{\sigma}^{2^r})$$
represents a class in the filtered homotopy group $\pi_{4 \cdot 2^{r}, 2 \cdot 2^{r}, -2 \cdot 2^{r}}(\fil_* \gA_{\bF_2}/(\tilde{\sigma}^{2^r}))$, and as $2 \cdot 2^r < \tfrac{2}{3}(4 \cdot 2^r - (2^r-1))$ it follows from part (ii) of the Claim that this filtered homotopy group is $\tau$ torsion. Thus $(\tilde{\varphi}'_{r} \cdot \tau)\circ\tilde{\sigma}^{2^r}$ is nullhomotopic as a filtered map, so the map $\tilde{\varphi}'_{r} \cdot \tau$ extends to a filtered endomorphism
$$\tilde{\varphi}_{r} : S^{3 \cdot 2^{r}, 2 \cdot 2^{r}, -2^{r}+1} \otimes \fil_* \gA_{\bF_2}/(\tilde{\sigma}^{2^r}) \lra \fil_* \gA_{\bF_2}/(\tilde{\sigma}^{2^r}).$$
We let $\varphi_r$ denote the map induced by $\tilde{\varphi}_{r}$ on colimits.

As $\pi_{*,*,*}(C\tau \otimes \fil_* \gA_{\bF_2}/(\tilde{\sigma}^{2^r})) = \pi_{*,*,*}(C\tau \otimes \fil_* \gA_{\bF_2})/({\sigma}^{2^r})$ we see that $C\tau \otimes \tilde{\varphi}_{r}$ induces the zero map, so we have
$$\pi_{*,*,*}(C\tau \otimes \fil_* \gA_{\bF_2}/(\tilde{\sigma}^{2^r}, \tilde{\varphi}_{r})) = \pi_{*,*,*}(C\tau \otimes \fil_* \gA_{\bF_2})/({\sigma}^{2^r}) \otimes \Lambda[x]$$
with $x$ of tridegree $(3 \cdot 2^{r}, 2 \cdot 2^{r}+1, -2^{r}+1)$ supporting a differential $d^1(1 \otimes x) = \rho_3^{2^r}\otimes 1$. With the differential $d^1(\rho_2 \otimes 1) = (Q^1(\sigma)-\sigma t) \otimes 1$, this implies that $E^{2}_{*,*,*}(\fil_* \gA_{\bF_2}/(\tilde{\sigma}^{2^r}, \tilde{\varphi}_{r}))$ is a subquotient of
$$\bF_2[\sigma, \rho_3]/(\sigma^{2^r}, \rho_3^{2^r}) \otimes \left\{\substack{\text{free graded-commutative algebra} \\ \text{on generators with $\tfrac{d}{g} \geq \tfrac{3}{4}$}}\right\}.$$
From this we see that $\gA_{\bF_2}/(\sigma^{2^r}, \varphi_r)$ has a vanishing line of slope $\tfrac{3}{4}$ passing through the bidegree of the class $\sigma^{2^r-1} \rho_3^{2^{r}-1}$, namely $(2^r-1 + 3(2^{r}-1), 2(2^{r}-1))$. That is, 
$$\pi_{g,d}(\gA_{\bF_2}/(\sigma^{2^r}, \varphi_r))=0 \text{ for } d-2(2^{r}-1) < \tfrac{3}{4}(g-(2^r-1) - 3(2^{r}-1)),$$
which upon rearranging gives the claimed range.

If $p=5$ then the argument is identical, except that $Q^1(\sigma)$ should be denoted by $\tfrac{1}{2}[\sigma,\sigma]$.
\end{proof}

\begin{cor}
For $p=2$ or $5$ and any $r \geq 0$ there are isomorphisms
$$(\varphi_r)_* : \pi_{g-3 \cdot p^r, d-2 \cdot p^r}(M_{3/4}^f(\mathbf{MCG}_{\bF_p})) \lra \pi_{g,d}(M_{3/4}^f(\mathbf{MCG}_{\bF_p}))$$
defined as long as $d <  \tfrac{2g + 2 \cdot p^r -4}{3}$. Thus there are isomorphisms
$$(\varphi_r)_* : \pi_{g-3 \cdot p^r, d-2 \cdot p^r}(\gF_{\bF_p}) \lra \pi_{g,d}(\gF_{\bF_p})$$
defined as long as $d < \tfrac{3g-4-p^r}{4}$, and $d <  \tfrac{2g + 2 \cdot p^r -4}{3}$.
\end{cor}
\begin{proof}
The first part will follow from Theorem \ref{thm:AdamsPeriodicityGeneral} using the Smith--Toda complexes $\mathbf{MCG}_{\bF_p}/(\sigma^{p^r}, \varphi_r)$ produced by Lemma \ref{lem:MCGlongSTcx}. Concretely, it follows from Corollary \ref{cor:MCGConnectivities} that $M_{3/4}^f(\mathbf{MCG}_{\bF_p})$ has trivial homotopy groups in bidegrees $(g, d)$ satisfying $d < \tfrac{2g-1}{3} = \tfrac{2}{3}g - \tfrac{1}{3}$, so we take $(\lambda',\kappa')=(\tfrac{2}{3}, - \tfrac{1}{3})$. Then Theorem \ref{thm:AdamsPeriodicityGeneral} says to take
$$b := - \tfrac{1}{3} + (\tfrac{2}{3} p^r-0-1) = \tfrac{2 \cdot p^r-4}{3}$$
and that $\varphi_r$ induces an equivalence
$$S^{3 \cdot p^r, 2 \cdot p^r} \otimes \tau_{\leq b}^{2/3} M_{3/4}^f(\mathbf{MCG}_{\bF_p}) \overset{\sim}\lra \tau_{\leq b}^{2/3} M_{3/4}^f(\mathbf{MCG}_{\bF_p}).$$
Taking homotopy groups gives the first part of the statement. The second part follows using that the fibre of $\gF \to M_{3/4}^f(\mathbf{MCG}_{\bF_p})$ has trivial homotopy groups in bidegrees $(g,d)$ satisfying $d < \tfrac{3g-4}{4}$ or $d < \tfrac{2g-4}{3}$.
\end{proof}

\subsubsection{Detecting a periodic family}\label{sec:MCGPeriodicExample}

In a different direction, we will now demonstrate the idea suggested in Section \ref{sec:DetectingPeriodic}. (A much more sophisticated demonstration is given by Wang \cite{wang}.) We use it to prove the following.

\begin{thm}\label{thm:MCGLambdaAndXHit}
There is a surjective map of graded-commutative rings
$$\pi_{*,*}(\mathbf{MCG}_\bQ) \lra \mathbb{Q}[\sigma, \lambda],$$
splitting the natural map in the other direction, as well as a surjective map 
$$\pi_{*,*}(\mathbf{MCG}_\bQ/\sigma) \lra \mathbb{Q}[\lambda]$$
of modules over the above, splitting the natural map in the other direction. The secondary stabilisation map $\varphi$ of $\mathbf{MCG}_\bQ/\sigma$ intertwines multiplication by $\lambda$.
\end{thm}

\begin{proof}
Just before \cite[Lemma 6.3]{e2cellsII} we produced a map
$$\gB := \gE_2(S^{1,0}\sigma \oplus S^{2,3}\lambda) \cup^{E_2}_{[\sigma,\sigma]} D^{2,2} \rho \cup^{E_2}_{[\sigma,\lambda]} D^{4,4}\rho' \lra \mathbf{MCG}_\bQ$$
of unital $E_2$-algebras, such that $H^{E_2}_{g,d}(\gI_{\mathbf{MCG}}, \gI_\gB)=0$ for $d < \tfrac{4}{5}g$. Adapting \cite[Theorem 4.11 (ii), Theorem 5.6 (ii)]{wang} to $\bQ$-coefficients, the corresponding stability Hopf algebra is
$$\Delta_\gB = T_\bQ[\bar{\sigma}, \bar{\lambda}]/(\bar{\sigma}^2, \bar{\sigma}\bar{\lambda} + \bar{\lambda}\bar{\sigma}),$$
and there is a map of Hopf algebras $\Delta_\gB \to \Delta_\mathbf{MCG}$. By the vanishing range for relative $E_2$-cells, this map is an isomorphism in gradings $\leq 4$. Thus we have
\setlength{\tabcolsep}{10pt} % Default value: 6pt
\renewcommand{\arraystretch}{1.5} % Default value: 1
\begin{table}[h]
\begin{tabular}{l|lllllll}
$n$ & 0 & 1 & 2 & 3 & 4 & 5 &6 \\ \hline
$\Delta_\gB(n)$ & $\bQ\{1\}$  & $\bQ\{\bar{\sigma}\}$  & 0  & $\bQ\{\bar{\lambda}\}$  & $\bQ\{\bar{\sigma}\bar{\lambda}\}$  & 0 & $\bQ\{\bar{\lambda}^2\}$ \\
$\Delta_\mathbf{MCG}(n)$ & $\bQ\{1\}$  & $\bQ\{\bar{\sigma}\}$  & 0  & $\bQ\{\bar{\lambda}\}$  & $\bQ\{\bar{\sigma}\bar{\lambda}\}$  & $V$ & $W$
\end{tabular}
\end{table}

\noindent for some vector spaces $V$ and $W$. If $v \in V$ has coproduct $\psi(v) = 1 \otimes v + A \cdot \bar{\sigma} \otimes \bar{\sigma}\bar{\lambda} + B \cdot \bar{\sigma}\bar{\lambda} \otimes \bar{\sigma} + v \otimes 1$ then writing out the formula for coassociativity of the coproduct we find that $A=B=0$, so $V$ consists entirely of primitive elements. If $w \in W$ has coproduct
$\psi(w) = w \otimes 1 + \bar{\sigma} \otimes v + A \cdot \bar{\lambda} \otimes \bar{\lambda} + v' \otimes \bar{\sigma} + 1 \otimes w$
for $v, v' \in V$, then one may check that there are no constraints on $v,v',A$ coming from coassociativity. However:

\begin{claim}
Such elements have $A=0$.
\end{claim}

\begin{proof}[Proof of Claim]
We have elements
\begin{align*}
\sigma = [\bar{\sigma}] &\in \Cotor^1_{\Delta_\mathbf{MCG}}(\bQ,\bQ)_1 = \pi_{1,0}(\mathbf{mcg})\\
\lambda = [\bar{\lambda}] &\in \Cotor^1_{\Delta_\mathbf{MCG}}(\bQ,\bQ)_3 = \pi_{3,2}(\mathbf{mcg})\\
\underline{v} = [v], \underline{v}' = [v'] &\in \Cotor^1_{\Delta_\mathbf{MCG}}(\bQ,\bQ)_5 = \pi_{5,4}(\mathbf{mcg})
\end{align*}
and, by calculating in the cobar complex, a coproduct $\psi(w)$ of the above form implies the relation
$$\sigma \cdot \underline{v} + A \cdot \lambda^2 + \underline{v}' \cdot {\sigma} = 0 \in \Cotor^2_{\Delta_\mathbf{MCG}}(\bQ,\bQ)_6 = \pi_{6,4}(\mathbf{mcg}),$$
so if $A \neq 0$ then it implies that inverting $\lambda$ in the ring $\pi_{*,*}(\mathbf{mcg})$ also inverts $\sigma$. But then we would have
\begin{align*}
\lambda^{-1} \mathbf{mcg}/\sigma &= (\lambda^{-1} \mathbf{mcg}) \otimes_{\mathbf{mcg}} \mathbf{mcg}/\sigma = (\sigma^{-1} \lambda^{-1} \mathbf{mcg}) \otimes_{\mathbf{mcg}} \mathbf{mcg}/\sigma\\
&= (\sigma^{-1}\lambda^{-1} \mathbf{mcg}) \otimes_{\sigma^{-1}\mathbf{mcg}} (\sigma^{-1}\mathbf{mcg}/\sigma)=0
\end{align*}
and so the endomorphism $\lambda \cdot -$ would be nilpotent on $\mathbf{mcg}/\sigma$. By Theorem~\ref{thm:IdealsSameForSmallAlgebra}~(\ref{it:IdealsSameForSmallAlgebra:3}) it would also be nilpotent on $\mathbf{MCG}_\bQ/\sigma$. With e.g.\ \cite[Corollary 6.4]{e2cellsII} this would imply that mapping class groups have rational homological stability with slope $\tfrac{4}{5}$, but in fact slope $\tfrac{2}{3}$ is sharp, by e.g. \cite[Corollary 6.7]{e2cellsII}.
\end{proof}

Form a quotient Hopf algebra $Q$ of $\Delta_\mathbf{MCG}$ by repeatedly quotienting out the two-sided ideal generated by the primitive elements of grading $ \geq 5$, until there are none left. Set $\mathbf{q} := \Cobar(Q)$, so there are unital $E_2$-algebra maps
\begin{equation*}
\begin{tikzcd}%[column sep = 0.7em]
\gB \rar \dar& \mathbf{MCG}_\bQ \dar\\
\mathbf{b} \rar & \mathbf{mcg} \rar & \mathbf{q}.
\end{tikzcd}
\end{equation*}

We now wish to analyse the structure of the Hopf algebra $Q$. By construction 
$$\Prim(Q) = \bQ\{\bar{\sigma}, \bar{\lambda}\}$$
of degrees $1$ and $3$. Passing to dual Hopf algebras, this means there is a surjection $T_\bQ[\bar{\sigma}^\vee, \bar{\lambda}^\vee] \to Q^\vee$. We easily verify that in $Q^\vee$ we have $\bar{\sigma}^\vee \bar{\sigma}^\vee=0$ and $\bar{\sigma}^\vee\bar{\lambda}^\vee + \bar{\lambda}^\vee\bar{\sigma}^\vee =0$, as these are in the range of degrees in which we have complete information about $Q$. Claim 1 above implies that $\bar{\lambda}^\vee\bar{\lambda}^\vee=0$ too, giving a surjection 
$$T_\bQ[\bar{\sigma}^\vee, \bar{\lambda}^\vee]/((\bar{\sigma}^\vee)^2, \bar{\sigma}^\vee\bar{\lambda}^\vee + \bar{\lambda}^\vee\bar{\sigma}^\vee, (\bar{\lambda}^\vee)^2) \lra Q^\vee,$$
which is in fact an isomorphism as any further relation would require killing one of $\bar{\sigma}^\vee$ or $\bar{\lambda}^\vee$. On dualising back we have $Q = \Lambda_\bQ[\bar{\sigma}, \bar{\lambda}]$. This is a tensor product of monogenic Hopf algebras, so we immediately calculate that
$$\pi_{*,*}(\mathbf{q}) = \Cotor_Q^*(\bQ,\bQ)_* = \bQ[\sigma, \lambda].$$
It follows from the construction that the map $\mathbf{MCG}_\bQ \to \mathbf{q}$ sends the classes $\sigma$ and $ \lambda$ in $\pi_{*,*}(\mathbf{MCG}_\bQ)$ to the classes of the same name in $\pi_{*,*}(\mathbf{q})$. This proves the first part of the theorem. For the second part, we just consider the induced map $\mathbf{MCG}_\bQ/\sigma \to \mathbf{q}/\sigma$.
\end{proof}

\begin{rem}
Something like Theorem \ref{thm:MCGLambdaAndXHit} can be done with $\bF_p$-coefficients, using the approximation $\gA \to \mathbf{MCG}_{\bF_p}$ given in \cite[Section 5.3.2]{e2cellsII}. We briefly comment on how it goes. In this case one should form the quotient $\Delta_\mathbf{MCG} \to Q$ by repeatedly killing all primitives in gradings $\geq 4$, and a similar analysis will show that $Q \cong \Lambda_{\bF_p}[\bar{\sigma}, \bar{\rho}_3]$, and will yield a ring homomorphism
$$\pi_{*,*}(\mathbf{MCG}_{\bF_p}) \lra \pi_{*,*}(\mathbf{q}) = \bF_p[\sigma, \rho_3].$$
This can be checked to send $\lambda \in \pi_{3,2}(\mathbf{MCG}_{\bF_p})$ to $10 \rho_3$, so for $p \not\in\{2,5\}$ we again see that it is surjective, but for $p \in \{2,5\}$ it will not be surjective as it will not hit $\rho_3$. (It is interesting to ask what its image is: for example, is $\rho_3^2$ hit? Even after inverting $\sigma$ this is not immediately clear.) However the induced map
$$\pi_{*,*}(\mathbf{MCG}_{\bF_p}/\sigma) \lra \pi_{*,*}(\mathbf{q}/\sigma) = \bF_p[\rho_3]$$
sends $\mu \in \pi_{3,2}(\mathbf{MCG}_{\bF_p}/\sigma)$ to $\rho_3$, and the secondary stabilisation map $\varphi : S^{3,2} \otimes \mathbf{MCG}_{\bF_p}/\sigma \to \mathbf{MCG}_{\bF_p}/\sigma$ constructed in \cite[5.3.1]{e2cellsII} intertwines multiplication by $\rho_3$ on $\pi_{*,*}(\mathbf{q}/\sigma)$, so this module map \emph{is} surjective.
\end{rem}

\subsection{Steinberg homology}

For a ring $\mathcal{O}$ such that every projective module is free, and such that the Tits building is spherical, the relation between the bar construction and $E_1$-homology \cite[Theorem 13.7]{e2cellsI} together with Jansen's theorem \cite[Theorem 10.11]{Jansen2} shows that the homology groups $H_*(\GL(\mathcal{O}^n) ; \St(\mathcal{O}^n))$ with coefficients in the classical Steinberg module arise as the homotopy groups of
\begin{equation}\label{eq:DefnSt}
\mathbf{St} := \Bar(\mathbf{RBS}) \in \mathsf{Alg}_{E_\infty}(\mathsf{D}(\bk)^\bZ),
\end{equation}
by $\pi_{n,d}(\mathbf{St}) \cong H_{d-n}(\GL(\mathcal{O}^n) ; \St(\mathcal{O}^n))$. We wish to explain how a vanishing range for these groups can be approached using the principles of this paper applied to $\mathbf{St}$. It does not strictly speaking fit into the axiomatic framework used here, but can be approached by the same principles.

The following result, with essentially the same proof, has been discovered independently by Galatius and by Kupers--Rudenko--Sierra \cite[Theorem 4.8]{KRS}. It improves by a factor of 2 the range of Ash--Putman--Sam \cite{APS} in the case of $\GL$ for infinite fields (the case of finite fields was done in \cite[Corollary 7.2]{e2cellsIII}).

\begin{thm}\label{thm:VanishingSteinbergHomologyFields}
If $\mathcal{O}$ is an infinite field then $H_{d}(\GL(\mathcal{O}^n);\St(\mathcal{O}^n))=0$ for $d < n-1$.
\end{thm}
\begin{proof}
Such an $\mathcal{O}$ is a ring with many units, so the natural map $\mathbf{BGL} \to \mathbf{RBS}$ is an equivalence, and hence we may instead use $\mathbf{BGL}$ in the definition \eqref{eq:DefnSt} of $\mathbf{St}$.

Now $\Bar(\mathbf{St}) = \Bar(\Bar(\mathbf{BGL})) \simeq B^{E_2}(\mathbf{BGL})$ and in \cite{e2cellsIV} we have studied this in detail. There we showed that $\pi_{n,d}(\Bar(\mathbf{St}))=0$ for $d < 2n$ (\cite[Theorem 6.5]{e2cellsIV}), and also that the slope 2 diagonal
$$\Delta_\mathbf{St} := \bigoplus_{n \geq 0} \pi_{n,2n}(\Bar(\mathbf{St}))$$
is isomorphic as an algebra to the divided power algebra $\Gamma_\bk[\bar{\bar{\sigma}}]$ (\cite[Theorem 6.9]{e2cellsIV}). By \cite[Lemma 6.10]{e2cellsIV} this isomorphism is induced by applying $\Bar$ twice to the 0-truncation map $\mathbf{BGL} \to \bk[\sigma]$, i.e.\ the induced map
$$\Delta_\mathbf{St} \lra \Bar(\Bar(\bk[\sigma])) =  \Bar(\Lambda_\bk[\bar{\sigma}]) = \Gamma_\bk[\bar{\bar{\sigma}}]$$
is an isomorphism of Hopf algebras (and in particular the dual $\Delta^\vee_\mathbf{St}$ is a polynomial algebra on one generator).

Truncation to the slope 2 diagonal gives a bialgebra map $\Bar(\mathbf{St}) \to \Delta_\mathbf{St}$, and taking cobar constructions gives an $E_\infty$-map
$$\mathbf{St} \lra \Cobar(\Delta_\mathbf{St}) \simeq \Lambda_\bk[\bar{\sigma}].$$
This map satisfies hypothesis ($\dagger$) of Theorem \ref{thm:DetNilp} (\ref{it:DetNilp:1}) with $\theta=2$, by the argument of Theorem \ref{thm:IdealsSameForSmallAlgebra} (\ref{it:IdealsSameForSmallAlgebra:1}) and (\ref{it:IdealsSameForSmallAlgebra:2}) with slope 1 replaced by slope 2: this entails only notational changes to the argument. Now $\Lambda_\bk[\bar{\sigma}]$ has vanishing homotopy groups in bidegrees $(n,d)$ satisfying $d < 2n-1$, so by that theorem $\mathbf{St}$ does too.
\end{proof}

\begin{rem}
As a consequence of this theorem, it follows that the $E_\infty$-map $\mathbf{St} \to \Lambda_\bk[\bar{\sigma}]$ is split as a map of $E_1$-algebras. To see this, observe that as an $E_1$-algebra $\Lambda_\bk[\bar{\sigma}]$ has a cell structure of the form
$$\Lambda_\bk[\bar{\sigma}] \simeq \gE_1(S^{1,1} \bar{\sigma}) \cup^{E_1}_{\bar{\sigma}^2} D^{2,1} \rho_2 \cup^{E_1} D^{3,5} \rho_3 \cup^{E_1} D^{4,7} \rho_4 \cdots.$$
(This is because $\Bar(\Lambda_\bk[\bar{\sigma}]) \simeq \Gamma_\bk[\bar{\bar{\sigma}}]$ is 1-dimensional in each bidegree $(n,2n)$.) The attaching map for the relation $\rho_n$ is an element of $\pi_{n,2n-2}$, so the obstructions to extending the canonical $E_1$-map $\gE_1(S^{1,1} \bar{\sigma}) \to \mathbf{St}$ to a map out of $\Lambda_\bk[\bar{\sigma}]$ lie in $\pi_{n,2n-2}(\mathbf{St})$, but the proof of the theorem shows that these homotopy groups vanish.

A choice of $E_1$-map $\Lambda_\bk[\bar{\sigma}] \to \mathbf{St}$ allows us to form the left $\mathbf{St}$-module 
$$\gM := \mathbf{St} \otimes_{\Lambda_\bk[\bar{\sigma}]} \bk.$$
Then $\Lambda_\bk[\bar{\sigma}] \otimes_\mathbf{St} \gM \simeq \bk$ has a slope 2 vanishing line through the origin, so by Theorem \ref{thm:DetNilp} (\ref{it:DetNilp:1}) it follows that $\gM$ also has a slope 2 vanishing line through the origin.
\end{rem}

The same principle used in the proof of Theorem \ref{thm:VanishingSteinbergHomologyFields} also gives the following: to improve it to slope 1 as in that theorem one would need to solve \cite[Conjecture 7.8]{e2cellsIV}. It improves upon \cite[Theorem 10.2]{e2cellsIV}.

\begin{thm}
If $\mathcal{O}$ is a connected semi-local ring with all residue fields infinite, then $H_{d}(\GL(\mathcal{O}^n);\St(\mathcal{O}^n))=0$ for $d < \tfrac{3}{4}(n-1)$.
\end{thm}
\begin{proof}
Finitely-generated projective $\mathcal{O}$-modules are free, and the Tits buildings for $\mathcal{O}$ are spherical by \cite[Theorem 7.1]{e2cellsIV}. Using \cite[Corollary 4.23]{MPW} it follows that
$$\pi_{n,d}(\Bar(\mathbf{St})) \cong  H_{d-2n}(\GL(\mathcal{O}^n) ; \St^2(\mathcal{O}^n)),$$
for the 2-dimensional Steinberg module $\St^2(\mathcal{O}^n)$ defined in \cite[Definition 7.2]{e2cellsIV}.

The 0-truncation map $\mathbf{RBS} \to \bk[\sigma]$ again induces a map
$$\Delta_\mathbf{St} \lra \Bar(\Bar(\bk[\sigma])) = \Bar(\Lambda_\bk[\bar{\sigma}]) = \Gamma_\bk[\bar{\bar{\sigma}}]$$
but in this case using \cite[Theorem 7.7]{e2cellsIV} we only know that the map is surjective and is an isomorphism in gradings $\leq 3$. This gives an $E_\infty$-map $\mathbf{St} \to \Lambda_\bk[\bar{\sigma}]$ which satisfies hypothesis ($\dagger$) of Theorem \ref{thm:DetNilp} (\ref{it:DetNilp:1}) with $\theta=\tfrac{7}{4}$, by a slope 2 version of Proposition \ref{prop:ChangeOfHopfAlg} (\ref{it:ChangeOfHopfAlg:1}). That theorem implies that $\mathbf{St}$ has the same slope $\tfrac{7}{4}$ vanishing line as $\Lambda_\bk[\bar{\sigma}]$, i.e.\ $\pi_{n,d}(\mathbf{St})=0$ for $d-1 < \tfrac{7}{4}(n-1)$, which translates to the claimed vanishing range.
\end{proof}

\bibliographystyle{plain}
\bibliography{MainBib}  

\begin{thebibliography}{10}

\bibitem{Adams58}
J.~F. Adams.
\newblock On the structure and applications of the {S}teenrod algebra.
\newblock {\em Comment. Math. Helv.}, 32:180--214, 1958.

\bibitem{AdamsPeriodicity}
J.~F. Adams.
\newblock A periodicity theorem in homological algebra.
\newblock {\em Proc. Cambridge Philos. Soc.}, 62:365--377, 1966.

\bibitem{ABFJtidy}
M.~Anel, G.~Biedermann, E.~Finster, and A.~Joyal.
\newblock Left exact monoidal localizations from tidy maps.
\newblock \url{https://arxiv.org/abs/2606.04263}, 2026.

\bibitem{KrauseAppendix}
S~Ariotta.
\newblock Coherent cochain complexes and {B}eilinson $t$-structures.
\newblock With an appendix by {A}.~{K}rause.
  \url{https://arxiv.org/abs/2109.01017}, 2021.

\bibitem{ArmeanuMiller}
D.~Armeanu and J.~Miller.
\newblock On split {S}teinberg modules and {S}teinberg modules.
\newblock {\em J. Homotopy Relat. Struct.}, 20(2):323--329, 2025.

\bibitem{APS}
A.~Ash, A.~Putman, and S.~V. Sam.
\newblock Homological vanishing for the {S}teinberg representation.
\newblock {\em Compos. Math.}, 154(6):1111--1130, 2018.

\bibitem{BajerSadofsky}
A.~Bajer and H.~Sadofsky.
\newblock Cohomology of finite-dimensional connected cocommutative {H}opf
  algebras.
\newblock {\em J. Pure Appl. Algebra}, 94(2):115--126, 1994.

\bibitem{BHV}
T.~Barthel, D.~Heard, and G.~Valenzuela.
\newblock Local duality in algebra and topology.
\newblock {\em Adv. Math.}, 335:563--663, 2018.

\bibitem{BMSPartialBases}
C.~Bernard, J.~Miller, and R.~J. Sroka.
\newblock Partial bases and homological stability of {$\mathrm{GL}_{n}(R)$}
  revisited.
\newblock Trans.\ AMS, to appear. \url{https://arxiv.org/abs/2405.09998}, 2024.

\bibitem{BernardMillerSroka}
C.~Bernard, J.~Miller, and R.~J. Sroka.
\newblock Partial bases and homological stability of {$\mathrm{GL}_n(R)$}
  revisited.
\newblock arXiv:2405.09998, 2024.

\bibitem{Boardman}
J.~M. Boardman.
\newblock Conditionally convergent spectral sequences.
\newblock In {\em Homotopy invariant algebraic structures ({B}altimore, {MD},
  1998)}, volume 239 of {\em Contemp. Math.}, pages 49--84. Amer. Math. Soc.,
  Providence, RI, 1999.

\bibitem{BurklundMoore}
R.~Burklund.
\newblock Multiplicative structures on {M}oore spectra.
\newblock \url{https://arxiv.org/abs/2203.14787}, 2022.

\bibitem{BHS}
R.~Burklund, J.~Hahn, and A.~Senger.
\newblock On the boundaries of highly connected, almost closed manifolds.
\newblock {\em Acta Math.}, 231(2):205--344, 2023.

\bibitem{ClausenJansen}
D.~Clausen and M.~Ø. Jansen.
\newblock The reductive {B}orel-{S}erre compactification as a model for
  unstable algebraic {K}-theory.
\newblock {\em Selecta Math. (N.S.)}, 30(1):Paper No. 10, 93, 2024.

\bibitem{CLM}
F.~R. Cohen, T.~J. Lada, and J.~P. May.
\newblock {\em The homology of iterated loop spaces}.
\newblock Lecture Notes in Mathematics, Vol. 533. Springer-Verlag, Berlin-New
  York, 1976.

\bibitem{Cohen}
J.~M. Cohen.
\newblock Coherent graded rings and the non-existence of spaces of finite
  stable homotopy type.
\newblock {\em Comment. Math. Helv.}, 44:217--228, 1969.

\bibitem{DwyerGreenlees}
W.~G. Dwyer and J.~P.~C. Greenlees.
\newblock Complete modules and torsion modules.
\newblock {\em Amer. J. Math.}, 124(1):199--220, 2002.

\bibitem{FrancisCotangent}
J.~Francis.
\newblock The tangent complex and {H}ochschild cohomology of {$E_n$}-rings.
\newblock {\em Compos. Math.}, 149(3):430--480, 2013.

\bibitem{e2cellsII}
S.~Galatius, A.~Kupers, and O.~Randal-Williams.
\newblock {$E_2$}-cells and mapping class groups.
\newblock {\em Publ. Math. Inst. Hautes \'{E}tudes Sci.}, 130:1--61, 2019.

\bibitem{e2cellsIII}
S.~Galatius, A.~Kupers, and O.~Randal-Williams.
\newblock {$E_\infty$}-cells and general linear groups of finite fields.
\newblock {\em Ann. Sci. \'Ec. Norm. Sup\'er. (4)}, 57(6):1845--1882, 2024.

\bibitem{e2cellsI}
S.~Galatius, A.~Kupers, and O.~Randal-Williams.
\newblock Cellular {$E_k$}-algebras.
\newblock {\em Ast\'erisque}, (460):x+299, 2025.

\bibitem{e2cellsIV}
S.~Galatius, A.~Kupers, and O.~Randal-Williams.
\newblock {$E_\infty$}-cells and general linear groups of infinite fields.
\newblock {\em Duke Math. J.}, 174(14):2927--3046, 2025.

\bibitem{GreenleesMay}
J.~P.~C. Greenlees and J.~P. May.
\newblock Completions in algebra and topology.
\newblock In {\em Handbook of algebraic topology}, pages 255--276.
  North-Holland, Amsterdam, 1995.

\bibitem{HarperHess}
J.~E. Harper and K.~Hess.
\newblock Homotopy completion and topological {Q}uillen homology of structured
  ring spectra.
\newblock {\em Geom. Topol.}, 17(3):1325--1416, 2013.

\bibitem{HaugsengSSeq}
R.~Haugseng.
\newblock {$\infty$}-operads via symmetric sequences.
\newblock {\em Math. Z.}, 301(1):115--171, 2022.

\bibitem{HaugsengSSeqAlg}
R.~Haugseng.
\newblock Algebras for enriched {$\infty $}-operads.
\newblock {\em Algebr. Geom. Topol.}, 25(7):3789--3811, 2025.

\bibitem{Heuts}
G.~Heuts.
\newblock Koszul duality and a conjecture of {F}rancis--{G}aitsgory.
\newblock \url{https://arxiv.org/abs/2408.06173}, 2024.

\bibitem{Himes}
Z.~Himes.
\newblock Secondary homological stability for unordered configuration spaces.
\newblock {\em Trans. Amer. Math. Soc.}, 377(5):3173--3241, 2024.

\bibitem{HPS}
M.~J. Hopkins, J.~H. Palmieri, and J.~H. Smith.
\newblock Vanishing lines in generalized {A}dams spectral sequences are
  generic.
\newblock {\em Geom. Topol.}, 3:155--165, 1999.

\bibitem{HopkinsSmith}
M.~J. Hopkins and J.~H. Smith.
\newblock Nilpotence and stable homotopy theory. {II}.
\newblock {\em Ann. of Math. (2)}, 148(1):1--49, 1998.

\bibitem{HoveyBook}
M.~Hovey.
\newblock {\em Model categories}, volume~63 of {\em Mathematical Surveys and
  Monographs}.
\newblock American Mathematical Society, Providence, RI, 1999.

\bibitem{HoveyComodules}
M.~Hovey.
\newblock Homotopy theory of comodules over a {H}opf algebroid.
\newblock In {\em Homotopy theory: relations with algebraic geometry, group
  cohomology, and algebraic {$K$}-theory}, volume 346 of {\em Contemp. Math.},
  pages 261--304. Amer. Math. Soc., Providence, RI, 2004.

\bibitem{HoveyStrickland}
M.~Hovey and N.~P. Strickland.
\newblock Morava {$K$}-theories and localisation.
\newblock {\em Mem. Amer. Math. Soc.}, 139(666):viii+100, 1999.

\bibitem{Ivanovsky}
L.~N. Ivanovsky.
\newblock Cohomologies of a {S}teenrod algebra.
\newblock {\em Dokl. Akad. Nauk SSSR}, 157:1284--1287, 1964.

\bibitem{Jansen2}
M.~Ø. Jansen.
\newblock Unstable algebraic {K}-theory: homological stability and other
  observations.
\newblock \url{https://arxiv.org/abs/2405.02065}, 2024.

\bibitem{JansenMiller}
M.~Ø. Jansen and J.~Miller.
\newblock A criterion for slope 1 homological stability.
\newblock Algebr.\ Geom. Topol., to appear.
  \url{https://arxiv.org/abs/2407.01124}, 2024.

\bibitem{KrauseThesis}
A.\ Krause.
\newblock {\em Periodicity in motivic homotopy theory and over BP}.
\newblock PhD thesis, Universität Bonn, 2018.
\newblock \url{https://hdl.handle.net/20.500.11811/7592}.

\bibitem{KuhnPereira}
N.~J. Kuhn and L.~A. Pereira.
\newblock Operad bimodules and composition products on {A}ndr\'e-{Q}uillen
  filtrations of algebras.
\newblock {\em Algebr. Geom. Topol.}, 17(2):1105--1130, 2017.

\bibitem{KupersMiller}
A.~Kupers and J.~Miller.
\newblock {$E_n$}-cell attachments and a local-to-global principle for
  homological stability.
\newblock {\em Math. Ann.}, 370(1-2):209--269, 2018.

\bibitem{KMPImproved}
A.~Kupers, J.~Miller, and P.~Patzt.
\newblock Improved homological stability for certain general linear groups.
\newblock {\em Proc. Lond. Math. Soc. (3)}, 125(3):511--542, 2022.

\bibitem{KMP}
A.~Kupers, J.~Miller, and P.~Patzt.
\newblock Improved homological stability for certain general linear groups.
\newblock {\em Proc. Lond. Math. Soc. (3)}, 125(3):511--542, 2022.

\bibitem{KRS}
A.~Kupers, D.~Rudenko, and I.~Sierra.
\newblock The {G}oncharov {L}ie coalgebra of a field.
\newblock \url{https://arxiv.org/abs/2606.23863}, 2026.

\bibitem{LS2}
R.~Lee and R.~H. Szczarba.
\newblock On the torsion in {$K\sb{4}({\bf Z})$} and {$K\sb{5}({\bf Z})$}.
\newblock {\em Duke Math. J.}, 45(1):101--129, 1978.

\bibitem{Lin}
W.~Lin.
\newblock On the {M}ay spectral sequence at the prime 2.
\newblock {\em Adv. Math.}, 408:Paper No. 108606, 73, 2022.

\bibitem{HTT}
J.~Lurie.
\newblock {\em Higher topos theory}, volume 170 of {\em Annals of Mathematics
  Studies}.
\newblock Princeton University Press, Princeton, NJ, 2009.

\bibitem{HA}
J.~Lurie.
\newblock Higher algebra.
\newblock Available online at
  \url{https://people.math.harvard.edu/~lurie/papers/HA.pdf}, 2017.

\bibitem{MahowaldSadofsky}
M.~Mahowald and H.~Sadofsky.
\newblock {$v_n$} telescopes and the {A}dams spectral sequence.
\newblock {\em Duke Math. J.}, 78(1):101--129, 1995.

\bibitem{MaySS}
J.~P. May.
\newblock The cohomology of restricted {L}ie algebras and of {H}opf algebras.
\newblock {\em J. Algebra}, 3:123--146, 1966.

\bibitem{McDuffSegal}
D.~McDuff and G.~Segal.
\newblock Homology fibrations and the ``group-completion'' theorem.
\newblock {\em Invent. Math.}, 31(3):279--284, 1975/76.

\bibitem{MillerFin}
H.~Miller.
\newblock Finite localizations.
\newblock {\em Bol. Soc. Mat. Mexicana (2)}, 37(1-2):383--389, 1992.
\newblock Papers in honor of Jos\'e{} Adem.

\bibitem{MPW}
J.~Miller, P.~Patzt, and J.~C.~H. Wilson.
\newblock On rank filtrations of algebraic {$K$}-theory and {S}teinberg
  modules.
\newblock To appear in J. Eur. Math. Soc.,
  \url{https://doi.org/10.4171/jems/1628}.

\bibitem{MillerPatztWilsonYasaki}
J.~Miller, P.~Patzt, J.~C.~H. Wilson, and D.~Yasaki.
\newblock Non-integrality of some {S}teinberg modules.
\newblock {\em J. Topol.}, 13(2):441--459, 2020.

\bibitem{MillerWilson}
J.~Miller and J.~C.~H. Wilson.
\newblock Higher-order representation stability and ordered configuration
  spaces of manifolds.
\newblock {\em Geom. Topol.}, 23(5):2519--2591, 2019.

\bibitem{MilnorMoore}
J.~W. Milnor and J.~C. Moore.
\newblock On the structure of {H}opf algebras.
\newblock {\em Ann. of Math. (2)}, 81:211--264, 1965.

\bibitem{Palmieri}
J.~H. Palmieri.
\newblock Stable homotopy over the {S}teenrod algebra.
\newblock {\em Mem. Amer. Math. Soc.}, 151(716):xiv+172, 2001.

\bibitem{Raghunathan}
M.~S. Raghunathan.
\newblock A note on quotients of real algebraic groups by arithmetic subgroups.
\newblock {\em Invent. Math.}, 4:318--335, 1967/68.

\bibitem{RamziNote}
M.~Ramzi.
\newblock $f^2=0$.
\newblock \url{https://sites.google.com/view/maxime-ramzi-en/notes/f20}.

\bibitem{RWDedekind}
O.~Randal-Williams.
\newblock Homological stability for general linear groups over {D}edekind
  domains.
\newblock \url{https://arxiv.org/abs/2405.07566}, 2024.

\bibitem{RavenelNP}
D.~C. Ravenel.
\newblock {\em Nilpotence and periodicity in stable homotopy theory}, volume
  128 of {\em Annals of Mathematics Studies}.
\newblock Princeton University Press, Princeton, NJ, 1992.
\newblock Appendix C by Jeff Smith.

\bibitem{Scalamandre}
M.~Scalamandre.
\newblock A {S}olomon--{T}its theorem for rings.
\newblock \url{https://arxiv.org/abs/2310.07175}, 2023.

\bibitem{ShiThesis}
Y.~Shi.
\newblock {\em Unstable Periodic Homotopy Theory}.
\newblock PhD thesis, Utrecht University, 2023.
\newblock \url{https://doi.org/10.33540/1966}.

\bibitem{SprehnWahl}
D.~Sprehn and N.~Wahl.
\newblock Homological stability for classical groups.
\newblock {\em Trans. Amer. Math. Soc.}, 373(7):4807--4861, 2020.

\bibitem{Tangora}
M.~C. Tangora.
\newblock On the cohomology of the {S}teenrod algebra.
\newblock {\em Math. Z.}, 116:18--64, 1970.

\bibitem{wang}
K.~Wang.
\newblock Periodic families in the homology of {$GL_n(\mathbb{F}_2)$}.
\newblock \url{https://arxiv.org/abs/2601.12431}, 2026.

\bibitem{Weiss}
M.~S. Weiss.
\newblock Orthogonal calculus.
\newblock {\em Trans. Amer. Math. Soc.}, 347(10):3743--3796, 1995.

\bibitem{WeissErratum}
M.~S. Weiss.
\newblock Erratum: ``{O}rthogonal calculus'' [{T}rans. {A}mer. {M}ath. {S}oc.
  {\bf 347} (1995), no. 10, 3743--3796].
\newblock {\em Trans. Amer. Math. Soc.}, 350(2):851--855, 1998.

\bibitem{Wilkerson}
C.~Wilkerson.
\newblock The cohomology algebras of finite-dimensional {H}opf algebras.
\newblock {\em Trans. Amer. Math. Soc.}, 264(1):137--150, 1981.

\end{thebibliography}

\end{document}